\newtheorem{lemma}{Lemma}[section]
\newtheorem{theorem}[lemma]{Theorem}
\newtheorem*{theorem*}{Theorem}
\newtheorem{prop}[lemma]{Proposition}
\newtheorem{cor}[lemma]{Corollary}
\newtheorem{conj}[lemma]{Conjecture}
\newtheorem{claim*}{Claim}
\newtheorem{thm}[lemma]{Theorem}
\theoremstyle{definition}
\newtheorem{rmk}[lemma]{Remark}
\newtheorem{example}[lemma]{Example}
\newtheorem{remark}[lemma]{Remark}
\newtheorem{defn}[lemma]{Definition}
\newcommand{\Aff}{{\mathbb A}}
\newcommand{\PP}{{\mathbb P}}
\newcommand{\C}{{\mathbb C}}
\newcommand{\F}{{\mathbb F}}
\newcommand{\Q}{{\mathbb Q}}
\newcommand{\R}{{\mathbb R}}
\newcommand{\Z}{{\mathbb Z}}
\newcommand{\bbS}{\mathbb S}
\newcommand{\Xbar}{{\overline{X}}}
\newcommand{\Qbar}{{\overline{\Q}}}
\newcommand{\Vbar}{{\overline{V}}}
\newcommand{\Adeles}{{\mathbb A}}
\newcommand{\ve}{{\varepsilon}}
\newcommand{\calA}{{\mathcal A}}
\newcommand{\calB}{{\mathcal B}}
\newcommand{\calD}{{\mathcal D}}
\newcommand{\calF}{{\mathcal F}}
\newcommand{\calH}{{\mathcal H}}
\newcommand{\calI}{{\mathcal I}}
\newcommand{\calK}{{\mathcal K}}
\newcommand{\calL}{{\mathcal L}}
\newcommand{\calO}{{\mathcal O}}
\newcommand{\calS}{{\mathcal S}}
\newcommand{\calU}{{\mathcal U}}
\newcommand{\calX}{{\mathcal X}}
\newcommand{\calZ}{{\mathcal Z}}
\newcommand{\fraka}{{\mathfrak a}}
\newcommand{\frakb}{{\mathfrak b}}
\newcommand{\frakc}{{\mathfrak c}}
\newcommand{\frakd}{{\mathfrak d}}
\newcommand{\resH}{\mathcal H}
\DeclareMathOperator{\HH}{H}
\DeclareMathOperator{\Tr}{Tr}
\DeclareMathOperator{\Frob}{Frob}
\DeclareMathOperator{\coker}{coker}
\DeclareMathOperator{\inv}{inv}
\DeclareMathOperator{\im}{im}
\DeclareMathOperator{\Rea}{Re}
\DeclareMathOperator{\Aut}{Aut}
\DeclareMathOperator{\Gal}{Gal}
\DeclareMathOperator{\cores}{cores}
\DeclareMathOperator{\res}{res}
\DeclareMathOperator{\Res}{Res}
\DeclareMathOperator{\Br}{Br}
\DeclareMathOperator{\Cl}{Cl}
\DeclareMathOperator{\divv}{div}
\DeclareMathOperator{\Bl}{Bl}
\DeclareMathOperator{\Sym}{Sym}
\DeclareMathOperator{\Pic}{Pic}
\DeclareMathOperator{\Jac}{Jac}
\DeclareMathOperator{\Spec}{Spec}
\DeclareMathOperator{\tors}{tors}
\DeclareMathOperator{\vol}{vol}
\DeclareMathOperator{\rank}{rk}
\DeclareMathOperator{\Hilb}{Hilb}
\DeclareMathOperator{\NS}{NS}
\newcommand{\isom}{\cong}
\newcommand{\sumS}{\sideset{}{^S}\sum}
\newcommand{\prodS}{\sideset{}{^S}\prod}
\numberwithin{equation}{section}
\numberwithin{table}{section}
\NewDocumentCommand\stackar{ m O{} }
{
    \int_step_inline:nn { #1 }
    {
        \ar[->,shift~left=\fp_eval:n {(#1-1)/2*3 - (##1-1)*3 }pt,#2]
    }
}
\NewDocumentCommand\altstackar{ m O{} }
{
    \int_step_inline:nn { #1 }
    {
        \int_if_odd:nTF { ##1 }
        {
            \ar[->,shift~left=\fp_eval:n {(#1-1)/2*1.5 - (##1-1)*1.5 }pt,#2]
        }
        {
            \ar[<-,shift~left=\fp_eval:n {(#1-1)/2*1.5 - (##1-1)*1.5 }pt,#2]
        }
    }
}
\title{ Counting quadratic points on Fano varieties}
\author{Francesca Balestrieri}
\author{Kevin Destagnol}
\author{Julian Lyczak}
\author{Jennifer Park}
\author{Nick Rome}
\begin{document}
\maketitle
\begin{abstract}
This paper initiates the systematic study of the number of points of bounded height on symmetric squares of weak Fano varieties. We provide a general framework for establishing the point count on $\Sym^2 X$. In the specific case of surfaces, we relate this to the Manin--Peyre conjecture for $\Hilb^2 X$, and prove the conjecture for an infinite family of non-split quadric surfaces. 
In order to achieve the predicted asymptotic, we show that a type II thin set of a new flavour must be removed.

To establish our counting result for the specific family of surfaces, we generalise existing lattice point counting techniques to lattices defined over rings of integers. This reduces the dimension of the problem and yields improved error terms. Another key tool we develop is a collection of results for summing Euler products over quadratic extensions. We use this to show moments of $L$-functions at $s=1$ are constant on average in quadratic twist families.

\end{abstract}

\setcounter{tocdepth}{1}
\tableofcontents

\section{Introduction}

The central object of study in arithmetic geometry is the set of rational points on varieties over number fields. However, focusing on the set of $F$-points of a variety for a single number field $F$ provides a limited perspective on the arithmetic of the variety. A richer perspective is given by looking at all $\Qbar$-points, while accounting for the action of the absolute Galois group $\Gal(\Qbar/\Q)$. The set of closed points of a variety $X$ defined over $\Q$, corresponding to the orbits under $\Gal(\Qbar/\Q)$ of $X(\Qbar)$, admits a partition by their degree 
$$
\bigsqcup_{r=1}^{\infty}X(\Q,r), \quad  \mbox{where}
\quad 
X(\Q,r) := \{P \in X: [\Q(P):\Q] = r\}.
$$
A point of degree $r$ gives rise to a rational point on the symmetric power $\Sym^r X = X^r /S_r$ and the arithmetic of $\Sym^r X$ is intimately related to the study of $X(\Q,r)$, the set of the degree $r$ points on $X$.

In the case of curves $X=C$, the sets $\Sym^r C(\Q)$ have been studied extensively over many years: Faltings~\cite{MR718935} (rational points on the image of $\Sym^rC$ mapped to $\Jac(C)$), Gross--Rohrlich~\cite{MR491708} ($C$ being the Fermat curve), Hindry~\cite{MR907946} ($r = 2$), Mazur~\cite{MR488287} ($C = X_0(N)$), Harris--Silverman~\cite{MR1055774} ($r=2$), Abramovich--Harris~\cite{MR1104789} ($r = 2, 3, 4$), Vojta~\cite{MR1151542} (heights of points of bounded degree), Masser--Vaaler~\cite{MR2247898} ($C = \PP^1$ and general $r$), and many others. 
Studying fixed degree points has been particularly fruitful for modular curves, yielding applications such as modularity of elliptic curves over totally real number fields \cite{MR3359051, MR4150250,MR4402658}, classification of torsion groups of elliptic curves over number fields \cite{MR1172689}, and the resolution of variants of Fermat's Last Theorem over number fields \cite{MR3383161}.

In contrast, the $\Q$-points on $\Sym^r X$ for higher dimensional $X$ have been less studied. While there have been results on the existence of degree $r$ points, such as the work of Coray \cite{MR429731} and Creutz--Viray \cite{MR4633628, CreutzViray24}, the only results on the quantitative study of $\Sym^r X(\Q)$ so far were restricted to the case when $X$ is either isomorphic to $\PP^n$ or when $X$ is $\PP^1 \times \PP^1$, these results are listed in \textsection\ref{ss:uniform}. 
For the quantitative study of $\Sym^r X$, we want to describe the asymptotic behaviour of the counting function
\[
N_\Q(\calU, B) : = \#\{ x \in \calU : H(x) \leq B\},
\]
where $ \calU \subseteq \Sym^r X (\Q)$ is a suitable subset and $H$ is an anticanonical adelic height on $\Sym^r X$.

\subsection{Main results} 
The purpose of this paper is twofold. The first objective is to provide a systematic framework for the quantitative study of the set of rational points on the symmetric square of any smooth weak Fano variety $X$ over $\Q$. Our second objective is to use this framework to count the rational points on the symmetric squares of non-split quadric surfaces.
We observe that the case of surfaces is of particular interest, as when $\dim X=2$ the counting problem on $\Sym^2 X$ falls under the Manin--Peyre conjecture.
 
 \begin{conj}[Manin--Peyre]\label{conj:manin}
Let $V$ be a smooth weak Fano variety over a number field $F$ such that $V(F)$ is Zariski dense, with a relative height function associated to the anticanonical line bundle. Then, there exists a cothin subset $\calU \subseteq V(F)$ and a constant $c_{V,F}$, called the \emph{Peyre constant} which depends on the height, such that 
\begin{equation}\label{eq:manin}
N_F(\calU, B) \sim c_{V,F} B (\log B)^{\rank (\Pic V)-1}.
\end{equation}
Furthermore, the value of the leading constant takes the form $c_{V,F} = \alpha(V) \beta(V) \tau(V)$, whose definitions we recall in \textsection\ref{S:maninpeyre}.
\end{conj}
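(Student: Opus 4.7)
The statement is the Manin--Peyre conjecture in full generality, which is a major open problem; what follows is therefore a sketch of the template used to establish it in known cases, together with the obstacles that arise in general.

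The strategy proceeds in three stages. First, one seeks an explicit parametrisation of $V(F)$ that trivialises the height: most successfully, one passes to a universal torsor $T \to V$, so that rational points on $V$ of bounded height lift to integral points on $T$ in an expanding region subject to coprimality conditions, with the anticanonical height becoming a monomial on $T$. For a symmetric power $\Sym^r X$ this must be done $S_r$-equivariantly, and the height on $\Sym^r X$ has to be related to a symmetric height expression on $X^r$. Second, the resulting counting problem is evaluated with an appropriate analytic tool: the circle method for hypersurfaces of small degree, harmonic analysis on an adelic group for equivariant compactifications of a linear group, or direct lattice-point estimates combined with M\"obius inversion for del Pezzo and related surfaces. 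Third, one checks that the leading constant produced by the analytic estimate factors as $\alpha(V)\beta(V)\tau(V)$: this means reading $\alpha$ off the effective cone of $\Pic V$, identifying $\beta$ with the Brauer--Manin obstruction, and matching the Euler product of local densities with $\tau$ through a careful local analysis at the ramified primes.

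The central obstacle is the determination of the cothin locus $\calU$: rational points can accumulate on subvarieties that contribute at a rate exceeding $B(\log B)^{\rank \Pic V - 1}$, classically lines on cubic surfaces and conic bundles on higher del Pezzo surfaces, and excising these is delicate. For $V = \Sym^2 X$ one anticipates new accumulating loci coming from the symmetric-square structure, such as the image of the diagonal and pairs of points lying on rational curves of $X$; identifying the correct thin set in this setting, which the abstract flags as a type II thin set of a new flavour, is where I expect the principal difficulty to lie. A secondary but serious obstacle is obtaining enough uniformity in arithmetic parameters to carry out the local density calculation: for a non-split quadric surface this naturally generates sums of twisted $L$-values at $s=1$, and one needs to control their average behaviour in quadratic twist families in order to sum the Euler product cleanly.
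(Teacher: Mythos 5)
This statement is a conjecture, and the paper does not prove it in general --- it proves it for a specific infinite family of surfaces $\Sym^2 X_d$. So there is no ``paper's own proof'' to compare against; the fair comparison is between your proposed template and the machinery the paper actually builds for its special case. You correctly identify the three downstream difficulties (accumulating thin loci on $\Sym^2 X$, matching local densities to $\tau$, and controlling twisted $L$-values at $s=1$), and all three do appear in the paper's argument: the thin set $\calZ$ of conjugate pairs from a single quadratic field, the careful matching $\tau(\Hilb^2 X) \leftrightarrow \sum_\Xi \widetilde\delta_\Xi w_\Xi(X) \prod \widetilde\lambda_p^{-1}\widetilde w_p$, and Proposition~\ref{prop:trunc} / Corollary~\ref{cor:galtwists} on moments of $L(1,\rho\otimes\chi_K)$.

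However, your Step 1 goes down a route the paper deliberately avoids. You propose lifting to a universal torsor of $\Sym^2 X$ (or an $S_2$-equivariant torsor on $X^2$), and the main content of the paper is a demonstration that one does \emph{not} need to do this. Instead the paper partitions $\Sym^2 X(\Q)$ by quadratic field $K$ via the twists $\Res_{K/\Q} X_K \to \Sym^2 X$ of the $S_2$-quasitorsor $X^2 \to \Sym^2 X$, reduces to counting \emph{pure} $K$-points on $X$ with error terms uniform in $K$, and sums over $K$ up to a discriminant cutoff $|\Delta_K| \ll B^\gamma$ determined by a new ``optimal cutoff'' invariant (\textsection\ref{S:fringes}). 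The leading constant $c_{\Hilb^2 X,\Q}$ is then recovered not from a torsor Euler product but from Theorem~\ref{thm:summingc}, which shows $\sum_{|\Delta_K|\leq Y}\tau(X_K) = \tau(\Hilb^2 X)\log Y + O(1)$, together with the stability of $\alpha,\beta,\rho$ under quadratic base change (Proposition~\ref{peyrequad}). The ``direct lattice-point estimates'' you list as a secondary option is actually the primary engine, but with a twist: one must count not over $\Z$-lattices but over $\calO_L$-lattices (\textsection\ref{s:lattices}), exploiting the extra $\calO_L$-symmetry of ideals in the biquadratic field $M = KL$ to halve the number of successive minima. This is what produces a usable dependence on $|\Delta_{K/\Q}|$ in the error term of the per-field count (Theorem~\ref{thm:widmer+}); an $S_2$-equivariant torsor calculation would face exactly the uniformity-in-$K$ problem you flag, and there is no indication it can be made uniform enough. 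So the short version: your obstacle analysis is on target, but the paper's mechanism for circumventing those obstacles --- field-by-field counting via Weil restriction, cutoffs, and $\calO_L$-lattice geometry of numbers --- is essentially orthogonal to a torsor parametrisation.
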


 Thus we verify this central conjecture for the symmetric squares of our infinite family.

\subsubsection{General framework for counting quadratic points}\label{sss:framework}
Let $X/\Q$ be a weak Fano variety. If $\dim X =1$ then $\Sym^2X \cong \PP^2$ and the point count is already well understood. If $\dim X \geq 2$ then to count rational points on $\Sym^2 X$ we start by partitioning these points as
\[
\Sym^2X(\Q) = \{[P_1,P_2]: P_1,P_2\in X(\Q)\} \sqcup \bigsqcup_{[K:\Q] = 2}\{[P, \bar{P}]: P \in X(K), [\Q(P):\Q] = 2\}.
\]
The natural strategy for studying quadratic points on $X$, used by Schmidt in the case of $\PP^n$~\cite{MR1330740}, is to count the $K$-points on $X_K$ in each quadratic field $K$ and then sum all these contributions over almost all quadratic extensions $K/\Q$. For a single quadratic field $K$ the number of points of bounded height in $X(K)$ is predicted by the Manin--Peyre conjecture.
In order to execute our game plan to count rational points on $\Sym^2 X$, we must be able to sum the leading term in Conjecture~\ref{conj:manin} over the quadratic extensions $K/\Q$. 
We show that $\rank(\Pic X), \alpha(X)$, and $\beta(X)$ do not vary under base change for all but finitely many quadratic extensions (see Proposition~\ref{peyrequad}). This implies that the central difficulty in summing the Peyre constants over quadratic fields lies in summing the Tamagawa numbers $\tau(X_K)$ over almost all $K$. 

The first main result in this paper is not only an asymptotic for this sum in full generality, but also the direct relation to the Tamagawa number of the crepant resolution $\Hilb^2 X \to \Sym^2 X$ in the case that $X$ is a surface.

\begin{thm}[Theorem~\ref{thm:summingc}] \label{thm:introsumconsts}
Let $X/\Q$ be a smooth weak Fano variety.
\begin{itemize}
\item[(a)] If $\dim X = 2$, that is $X$ is a generalised del Pezzo surface, then as $Y \to \infty$
\[
\sum_{\substack{[K: \Q] = 2\\ \vert \Delta_{K/\Q} \vert \leq Y
}} \tau(X_K) = \tau(\Hilb^2 X) \log Y + O_X(1).
\]
\item[(b)] If $\dim X > 3$, then
\[
\sum_{\substack{[K: \Q] = 2\\ \vert \Delta_{K/\Q} \vert \leq Y
}} \tau(X_K) 
\]
converges absolutely as $Y \to \infty$.
\end{itemize}
\end{thm}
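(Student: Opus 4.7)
The plan is to unpack Peyre's definition of $\tau(X_K)$ as an Euler product of local densities, use the quadratic character $\chi_K$ associated to $K$ to sum the resulting expression over quadratic extensions, and then either extract a leading term via a Selberg--Delange type analysis (case (a)) or demonstrate absolute convergence directly (case (b)). Throughout, I would invoke Proposition~\ref{peyrequad} to ensure that $\rank \Pic X_K$, $\alpha(X_K)$ and $\beta(X_K)$ are constant for all but finitely many $K$, so that the entire $K$-dependence is concentrated in the Tamagawa measure $\tau(X_K)$ itself.

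At a prime $p$ of good reduction for $X$, the local Tamagawa factor $\tau_p(X_K)$ depends on $K$ only through the \'etale $\Q_p$-algebra $K \otimes_\Q \Q_p$, and hence only on the Kronecker symbol $\chi_K(p) \in \{-1, 0, +1\}$. I would therefore write
\[
\tau_p(X_K) = A_p + B_p \chi_K(p) + C_p \mathbf{1}_{p \mid \Delta_K},
\]
with $A_p$, $B_p$, $C_p$ explicit in terms of $\#X(\F_p)$, $\#X(\F_{p^2})$ and the local $L$-factor. Together with the factor $|\Delta_K|^{-\dim X/2}$ coming from the Tamagawa measure, the generating series $\sum_K \tau(X_K) |\Delta_K|^{-s}$ factorises as an Euler product whose local factor at $p$ is a finite sum over the three splitting types weighted by Dirichlet character values. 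This is precisely the class of Euler products that the abstract alludes to developing machinery for, and I would combine that machinery with Perron's formula to control the resulting Dirichlet series.

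For part (a) with $\dim X = 2$, the Tamagawa normalisation $|\Delta_K|^{-1}$ produces a simple pole of the Dirichlet series whose residue, by a standard Tauberian step, is the coefficient of $\log Y$. The main task, and the main obstacle, is identifying this residue with $\tau(\Hilb^2 X)$. My approach is a prime-by-prime comparison: the crepant resolution $\Hilb^2 X \to \Sym^2 X$ endows $\Hilb^2 X(\Q_p)$ with the structure of a space parameterising pairs of $\overline{\Q}_p$-points of $X$ stratified by their common field of definition, and I would prove that integrating the local Tamagawa form on $\Hilb^2 X$ over $\Q_p$-points equals the weighted average, over quadratic \'etale $\Q_p$-algebras, of the local densities of $X_{K_v}$. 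Executing this identity requires tracking the Tamagawa measure through the blow-up along the diagonal of $\Sym^2 X$, and treating primes of bad reduction and ramified primes separately via compatibility statements for $\tau$.

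For part (b) with $\dim X \geq 4$, the normalisation $|\Delta_K|^{-\dim X/2}$ contributes a decay of at least $|\Delta_K|^{-2}$. Combined with the estimate $\tau_p(X_K) = 1 + O(p^{-1})$ arising from the Lang--Weil bounds, and the fact that the number of quadratic fields of absolute discriminant at most $Y$ grows linearly in $Y$, the Euler product converges absolutely, so that summing over $K$ yields an absolutely convergent series by partial summation. No delicate local identification is needed in this regime.
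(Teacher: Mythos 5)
The decomposition $\tau_p(X_K) = A_p + B_p\chi_K(p) + C_p\mathbf{1}_{p\mid\Delta_K}$ is exactly the paper's local analysis (it appears as equation \eqref{eq:localfactor}), and your prime-by-prime identification of the leading constant with $\tau(\Hilb^2 X)$ correctly anticipates the content of Theorem~\ref{thm:Tamagawa of Hilb2} (Propositions~\ref{prop:wptilde}, \ref{prop:paritition of local points} and \ref{prop:tamagawa measure at bad places}). But your analytic plan has two genuine gaps.

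First, the claim that $\sum_K \tau(X_K)\,|\Delta_K|^{-s}$ \emph{factorises as an Euler product} over rational primes is false as stated. Each summand $\tau(X_K)$ is a product over $p$ of local factors depending on $\chi_K(p)$, and the values $\chi_K(p)$ at different $p$ are correlated by the constraint that $\Delta_K$ runs over fundamental discriminants. Interchanging $\sum_K$ and $\prod_p$ is precisely the non-trivial step: one must expand each Euler factor around its ``trivial'' part, exchange summations, and land on sums of the shape $\sum_{\Delta}\chi_\Delta(m)g(\Delta)$ for multiplicative $g$, which are handled by Proposition~\ref{prop:discsum}. Only the terms with $m$ a perfect square survive, and it is the \emph{resulting} sum that re-assembles into an Euler product matching $\prod_p \widetilde\lambda_p^{-1}\widetilde w_p$. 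Your proposal skips the rearrangement where all the work actually lives.

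Second, the expression for $\tau(X_K)$ contains the factor $L_{\Q,\bbS}(1, \Pic\Xbar\otimes\chi_K)$, which is only conditionally convergent as an Euler product at $s=1$. Any attempt to exchange $\sum_K$ with this Euler product, or to feed it into a Perron/Selberg--Delange analysis, requires a truncation of the $L$-series that is uniform enough in $K$. Pointwise bounds on $L(1,\rho\otimes\chi_K)$ do not give an acceptable error term, and the paper has to develop an on-average truncation (Proposition~\ref{prop:trunc}), proved via the quadratic large sieve, specifically to overcome this. Your proposal does not anticipate this difficulty. You also need to restrict to the cothin family $\mathcal K(X)$ of quadratic fields linearly disjoint from the splitting field of $\Pic\Xbar$ so that Proposition~\ref{peyrequad} and Lemma~\ref{lem:agree} apply; this is easy but cannot be omitted. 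Your argument for part (b) is essentially sound once one notes that $\tau_\bbS(X_K) \ll_\epsilon |\Delta_K|^\epsilon$, so the sum converges for $\dim X > 2$; note the paper proves this for all $\dim X > 2$, not merely $\dim X \geq 4$.
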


The proof requires a deep understanding of the connections between the Peyre constants of the original variety $X$ and of its quadratic base changes $X_K$. In the case of a surface $X$, we additionally need to relate these invariants to the Peyre constants of $\Hilb^2 X$.

An application of the strategy of counting by field requires a very precise uniform control of the point counts in quadratic fields. In practice, providing such uniform point count estimates over all quadratic extensions $K$ seems to be very difficult -- we refer to \textsection\ref{ss:uniform} for more details. One major takeaway from this paper is that if one can provide uniform enough estimates for the $K$-points in almost all quadratic extensions $K/\Q$, then our machinery immediately gives the point count for $\Sym^2 X$ for any weak Fano variety $X$ of dimension at least $3$. In the case of surfaces, there are is an additional hurdle to overcome, namely determining the correct relation between the $B$ in Conjecture~\ref{conj:manin} and the $Y$ in Theorem~\ref{thm:introsumconsts} which is captured by the \emph{cutoff}(cf.\ \S\ref{ss:introcutoffs}).

\subsubsection{Establishing the quadratic Manin--Peyre conjecture for an infinite family}

For any del Pezzo surface $X$ the crepant resolution of singularities $\Hilb^2 X$ is weak Fano. Thus, the point count on $\Sym^2X$ falls under the remit of the Manin--Peyre conjecture, explaining the appearance of $\Hilb^2 X$ in Theorem~\ref{thm:introsumconsts}(a). Previously, the Manin--Peyre conjecture for $\Sym^2X$ of a surface $X$ had only been established for $X = \PP^2$ and $\PP^1 \times \PP^1$ by Le Rudulier \cite{lerudulier-thesis}, building on the counting results of Schmidt~\cite{MR1330740}.
As the second main objective of this paper, we produce an asymptotic formula for the counting function of $\Sym^2 X(\Q)$ for $X$ belonging to an infinite family of non-split quadric surfaces, thus proving the quadratic Manin--Peyre conjecture for this infinite family. Interestingly, one has to remove a new flavour of type II thin set to obtain the predicted leading constant (see Lemma~\ref{lem:thinset}).

\begin{thm} \label{thm:mainthm1}
Let $d$ be a squarefree integer and let $X_d = \{x^2 - dy^2 = zw\} \subseteq \PP^3$. Then there is an infinite family of anticanonical adelic heights on $X_d$ for which there exists an explicit thin set $\calZ \subseteq \Sym^2 X_d(\Q)$ such that 
\begin{enumerate}
    \item[(a)] We have the asymptotic formula
    \[N_\Q(\Sym^2X_d(\Q) \setminus \calZ,B) \sim c_{\Hilb^2 X_d, \Q} B \log B,\]
    which agrees with the order of magnitude predicted by Manin \emph{and} the leading constant predicted by Peyre;
\item[(b)] There exists an explicit  $c_\calZ>0$ such that \[N_\Q(\calZ, B) \sim c_\calZ B \log B.\] 
\end{enumerate}
\end{thm}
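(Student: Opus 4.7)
The plan is to implement the framework of \S\ref{sss:framework} for the specific family $X_d$. Structurally, $X_d$ is a smooth non-split quadric surface of Picard rank $1$ over $\Q$ which becomes isomorphic to $\PP^1 \times \PP^1$ over $K_0 := \Q(\sqrt{d})$ --- visible from the factorisation $x^2-dy^2 = (x-\sqrt{d}\,y)(x+\sqrt{d}\,y) = zw$ --- yet remains non-split of Picard rank $1$ over every other quadratic extension. My starting point is the partition
\[
\Sym^2 X_d(\Q) = \{[P_1,P_2] : P_1,P_2 \in X_d(\Q)\} \;\sqcup\; \bigsqcup_{[K:\Q] = 2} \{[P, \bar P] : P \in X_d(K) \setminus X_d(\Q)\}.
\]
The first piece is a self-convolution of the $X_d(\Q)$-count; since Manin for a non-split rank-$1$ quadric gives linear density $B$, a standard hyperbola-sum argument will yield order $B \log B$ with an arithmetic constant unrelated to the Peyre prediction for $\Hilb^2 X_d$. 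This set, suitably enlarged to accommodate any further degenerate loci, will form the thin set $\calZ$ and directly prove part (b) once its leading constant is computed.

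For part (a), the central task is to establish, \emph{uniformly in $K$}, a Manin--Peyre asymptotic for the $X_d(K)$-count with field-dependent cutoff $B_K$ determined by how the anticanonical height on $\Sym^2 X_d$ restricts to each factor. For all but the single exceptional $K = K_0$ the Picard rank is $1$ and the main term is $c_{X_d,K}\,B_K$; for $K = K_0$ the rank jumps to $2$ and an extra logarithm appears, but this lone contribution is of lower order after summation. I would realise this count via a universal torsor over $\OO_K$ and invoke the lattice-point counting over rings of integers developed earlier in the paper; this lowers the effective dimension and delivers error terms polynomial in $|\Delta_{K/\Q}|$ with usable power saving. The $K$-dependent arithmetic constants that emerge, essentially moments of $L(1,\chi)$ and related Euler products, are handled using the quadratic-twist summation technology announced in the abstract.

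With uniform counts in hand, I would sum over $K$ according to the cutoff analysis of \S\ref{ss:introcutoffs}, which converts the $\Sym^2 X_d$-height bound $B$ into a joint bound $(Y(B), B_K)$ with $|\Delta_{K/\Q}| \leq Y(B)$ and $B_K$ scaled appropriately. The total count then factors into a contribution of order $B$ (from the $X_d(K)$-counts, after pulling out the density factors) times the Tamagawa sum $\sum_{|\Delta_{K/\Q}| \leq Y(B)} \tau((X_d)_K)$, which Theorem~\ref{thm:introsumconsts}(a) evaluates as $\tau(\Hilb^2 X_d) \log Y(B) + O(1)$. Combined with Proposition~\ref{peyrequad} (stability of $\alpha$, $\beta$ and $\rank \Pic$ under almost all quadratic base changes), these pieces assemble into exactly $c_{\Hilb^2 X_d, \Q}\,B \log B$, as predicted by Peyre for the crepant resolution.

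The hardest part will be the uniformity of the error term in $K$: without a power saving polynomial in $|\Delta_{K/\Q}|$, the accumulated errors from individual quadratic fields will overwhelm the main term after summation, and classical universal-torsor methods over $\Z$ do not deliver this --- hence the need for the new lattice-over-$\OO_K$ machinery. A secondary but serious difficulty will be pinning down $\calZ$ completely: beyond the rational-pair locus, one must check whether other loci (for example quadratic points $[P, \bar P]$ supported on one of the Galois-swapped rulings when $K = K_0$) also contribute at the $B \log B$ level and must be included to match the new flavour of type II thin set isolated in Lemma~\ref{lem:thinset}.
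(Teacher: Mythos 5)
Your structural outline (partition, field-uniform count, cutoff, Tamagawa sum) matches the paper, and you correctly isolate the two hard points: uniformity in $K$ and pinning down $\calZ$. However, the mechanism you propose for the uniform count has a genuine gap that would cause the approach to fail.

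\textbf{The base ring for lattice counting.} You propose to ``realise this count via a universal torsor over $\OO_K$ and invoke the lattice-point counting over rings of integers.'' But the $\calO_F$-lattice machinery of \S\ref{s:lattices} carries implicit constants depending on the base field $F$ (see the $C_{F,n}$ in Proposition~\ref{prop:generalised Davenport} and the $\asymp_{F,n}$ in Lemma~\ref{lem:comparing succ min}). Working over $\calO_K$ with $K$ \emph{varying} therefore produces error terms with unquantified $K$-dependence, which is exactly the problem you set out to avoid. The paper's key structural move --- absent from your proposal --- is that $X_d \cong \Res_{L/\Q}\PP^1_L$ with $L = \Q(\sqrt d)$ \emph{fixed}, so that $X_d(K) \cong \PP^1(KL)$. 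Counting pure $K$-points on $X_d$ thus becomes counting pure $M = KL$-points on $\PP^1$, by adapting Schmidt's ideal-class decomposition; the lattices are fractional ideals of the biquadratic field $M$, viewed as rank-$2$ lattices over the \emph{fixed} ring $\calO_L$ (not the varying $\calO_K$). This is the ``largest field whose size does not need to be controlled'' philosophy: rank drops from $4$ (over $\Z$) to $2$ (over $\calO_L$), the constants depend only on $L$ and hence only on $d$, and the extra trick of grouping the unit-lattice partitions (Proposition~\ref{prop:secondschmidttrick}) combined with the moment bounds on $\res_{s=1}\zeta_M$ gives the power saving in $|\Delta_{K/\Q}|$ stated in Theorem~\ref{thm:widmer+}. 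A universal torsor over $\calO_K$ does not deliver this, and the paper's remark at the end of \S\ref{s:widmerproof} shows that even the $\Z$-lattice version (fixed constants, higher rank) just barely fails.

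\textbf{The thin set $\calZ$.} You identify the $\Q$-pair locus and correctly flag the locus coming from $K_0 = \Q(\sqrt d) = L$ as suspicious, but you leave it unresolved. It must be included: over $L$ the surface splits into $\PP^1_L \times \PP^1_L$, so $\rho(X_{d,L}) = 2$ and the conjugate $L$-pairs alone already contribute $\asymp B\log B$. The paper's $\calZ$ is exactly the union of the two type II thin sets from Lemma~\ref{lem:thinset} for the \'etale algebras $\Q\times\Q$ and $L$. Your parenthetical about ``quadratic points supported on one of the Galois-swapped rulings'' is not quite the right description --- it is the entire conjugate-pair locus $\{[P,\bar P] : P \in X_d(L)\}$ that accumulates, not a sub-ruling.
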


For the heights under consideration, see Definition~\ref{defn:heightonXd} and Remark~\ref{rem:infinitely many heights}. We also note that the thin set $\calZ$ contributes with the same order of magnitude to the total counting function and hence needs to be removed to obtain the predicted leading constant.

The surfaces whose quadratic points are studied in Theorem \ref{thm:mainthm1} are del Pezzo surfaces of degree 8.
The next surfaces for which the quantitative study of quadratic points could be in reach are those for which the Manin--Peyre conjecture is already settled for the rational points.
For example, the conjecture is known to hold for $\Q$-points on del Pezzo surfaces of degree $\geq 6$ \cite{MR1353919}, split del Pezzo surfaces of degree 5 \cite{MR1909606},  del Pezzo surfaces of degree 4 with a conic fibration \cite{MR2838351}, certain singular quartic del Pezzo surfaces~\cite{MR2543667,MR2980925,MR2853047}, and certain singular cubic surfaces \cite{MR1679838,MR2332351,MR2520769,MR2990624}. Moreover, Ch{\^a}telet surfaces have received considerable attention \cite{MR2874644,MR3103132,MR3011504,MR3517531}, culminating in the recent proof of the Manin--Peyre conjecture for all Ch{\^a}telet surfaces by Woo~\cite{Woo}.
Theorem \ref{thm:mainthm1} illustrates that it is now possible to move beyond counting $\Q$-points and investigate systematically the quantitative arithmetic of higher degree points on surfaces.

\begin{remark}
     The Manin--Peyre conjecture has been extended to the realm of algebraic stacks by Ellenberg--Satriano--Zureick-Brown \cite{MR4557890} and Darda--Yasuda \cite{DY}, unifying Conjecture \ref{conj:manin} with other completely distinct arithmetic problems, such as the distribution of elliptic curves with certain properties or Malle's conjecture on the distribution of number fields with a given Galois group. Recently, Yin~\cite{Yin} provided upper and lower bounds of the expected order of magnitude for the stacky point count on the stacky symmetric square of $\PP^1$. The techniques used in Theorem \ref{thm:mainthm1} can be adapted to the stacky setting, and in forthcoming work of the authors, we will prove an asymptotic formula for the stacky version of the count.
\end{remark}

\subsection{Additional notable results} In the course of proving the two main theorems of our paper, we also offer the following new results and insights.

\subsubsection{A new form of thin sets.} In Manin's original prediction, it was observed that one might need to remove accumulating closed subsets. However, due to the many counterexamples that appeared throughout the years (see e.g.\ \cite{MR1401626,lerudulier-thesis,MR4167086, GaoManin}), the modern formulation of the Manin--Peyre conjecture requires  the potential removal of \emph{thin} subsets instead. An important question is about understanding the type of thin sets that should be removed, and a conjectural answer has been provided by Lehmann--Sengupta--Tanimoto~\cite{MR4472281}. In a recent AIM workshop entitled ``Degree $d$ points on algebraic surfaces", it was asked what the shape of the accumulating subset in Manin's conjecture for $\Sym^r X$ ought to be\footnote{This question can be found in the notes from the Problem Session at: \url{http://www.aimpl.org/degreedsurface/1/}}.

Through the cases of $\Sym^2\PP^2$ and $\Sym^2 (\PP^1 \times \PP^1)$, new interest in the study of thin sets and their connection to Manin's conjecture was stimulated by the work of Le Rudulier~\cite{lerudulier-thesis}. In fact, the set described by Le Rudulier is a special case of a thin set which needs to be removed in general in order for the Manin--Peyre conjecture to hold.

\begin{lemma}[Lemma \ref{lem:kptsthin}]
\label{lem:thinset}
Let $X/\Q$ be a variety. For a fixed quadratic extension $K$, the points $[P_1,P_2]\in \Sym^2 X(\Q)$ for conjugate $P_i \in V(K)$ form a thin set of type II.
\end{lemma}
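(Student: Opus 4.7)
The plan is to realize the set $\{[P,\bar P] \in \Sym^2 X(\Q) : P \in X(K)\}$ as the image on $\Q$-points of a morphism $\phi\colon \Res_{K/\Q} X \to \Sym^2 X$ that is generically finite of degree $2$, where $\Res_{K/\Q}$ denotes Weil restriction of scalars. This directly exhibits the set as a thin set of type II in the standard sense.

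First I would introduce the Weil restriction $Y' := \Res_{K/\Q} X$, a $\Q$-variety of dimension $2\dim X$. Since $K \otimes_\Q \Qbar \cong \Qbar \times \Qbar$, there is a canonical identification $Y'_{\Qbar} \cong X_{\Qbar} \times X_{\Qbar}$, under which the $\Gal(\Qbar/\Q)$-action on the right-hand side is the product Galois action twisted by the swap of the two factors induced by the nontrivial element of $\Gal(K/\Q)$. In particular $Y'(\Q)$ is naturally in bijection with $X(K)$, with a $K$-point $P$ corresponding to the pair $(P, \bar P)$.

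Next I would construct the morphism $\phi\colon Y' \to \Sym^2 X$. Over $\Qbar$, we simply take the $S_2$-quotient map $X_{\Qbar} \times X_{\Qbar} \to (\Sym^2 X)_{\Qbar}$. The Galois twist on $Y'_{\Qbar}$ is precisely the swap of the two factors, and this swap becomes trivial on the $S_2$-quotient, so the map is Galois-equivariant and descends by Galois descent to a $\Q$-morphism $\phi$. On $\Q$-points $\phi$ sends $P \in X(K) = Y'(\Q)$ to $[P, \bar P]$, so the image is precisely the set appearing in the statement.

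Finally, I would verify the two conditions required for witnessing a thin set of type II: geometric irreducibility of $Y'$, and generic finiteness of $\phi$ of degree $\geq 2$. The first reduces to geometric irreducibility of $X$, since $X \times X$ is irreducible whenever $X$ is. The second is immediate, as the $S_2$-quotient $X \times X \to \Sym^2 X$ is generically \'etale of degree $2$ (ramified along the diagonal), and the source and target both have dimension $2\dim X$. The one step demanding real care, and the main subtlety of the argument, is the Galois descent: one must confirm that the twisted product Galois action on $X_{\Qbar} \times X_{\Qbar}$ maps equivariantly to the standard Galois action on $(\Sym^2 X)_{\Qbar}$, so that $\phi$ truly exists as a $\Q$-morphism. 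Once this verification is in place, the lemma follows at once.
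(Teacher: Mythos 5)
Your proposal is correct and is essentially the approach taken in the paper: the paper proves this as Lemma~\ref{lem:kptsthin}(b) by appealing to Proposition~\ref{prop:map from res to sym}, which constructs (via Galois descent exactly as you describe) the finite degree-$2$ morphism $\eta\colon \Res_{K/\Q} X_K \to \Sym^2 X$ whose image on $\Q$-points is the set of conjugate pairs. The only cosmetic difference is that the paper works in the generality of $\Res_{F'/F} V_{F'} \to \Sym^r V$ for a degree-$r$ \'etale extension and then specializes, whereas you handle the quadratic case directly.
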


In fact, Le Rudulier's thin set corresponds to the {\'e}tale extension $\Q\times \Q$ and the thin set $\calZ$ in the statement of Theorem~\ref{thm:mainthm1} is the union of the points in Lemma~\ref{lem:thinset} for $K = \Q\times\Q$ and $K=\Q(\sqrt{d})$.
    
\subsubsection{Quadratic points of small height on surfaces}\label{ss:introcutoffs}
The Northcott property tells us that there are only finitely many quadratic points of bounded height on $X$. Therefore if the height of a quadratic point is bounded, then the discriminant of its field of definition should be too. 
Theorem~\ref{thm:introsumconsts}(a) shows that for surfaces the sum of $\tau(X_K)$ over all quadratic fields diverges. For the Manin--Peyre conjecture to hold the sum over fields $K$ must be truncated at a power of $B$, since $\rank \Pic ( \Hilb^2 X) = \rank \Pic( X) + 1$ by Proposition~\ref{prop:rho and beta of Hilb2}. More precisely, there should exist some $\gamma >0$, called a \emph{cutoff}, such that for all $K$ and $P \in X(K)\setminus X(\Q)$
\[
2\gamma \log H(P) \geq \log \vert \Delta_{K/\Q} \vert + O(1).
\]
The factor of 2 is included for convenience in \textsection\ref{S:fringes}.
Understanding precisely which fields can contribute points of bounded height is a subtle point. 
Silverman \cite{MR747871} first studied this problem for points in projective spaces and abelian varieties. We extend the study of cutoffs to weak Fano varieties in \textsection\ref{S:fringes}, to determine where to truncate the sum over quadratic fields. We are able to prove general properties of cutoffs, including their existence, that they only depend on the underlying line bundle of the height function, and their behaviour under certain geometric constructions. In addition, we provide techniques for producing lower bounds for cutoffs and determining the smallest possible cutoff.

\begin{remark} In order to prove Theorem \ref{thm:mainthm1}, it is pivotal that the smallest possible cutoff for $X_d$ be computed precisely, as it contributes to the final leading constant. 
In Lemma~\ref{lem:constsxd} we explicitly compute the cutoff $\gamma = 1/2$. In fact, we show that this is the optimal cutoff and that the alpha constants are related by the expression $\alpha(\Hilb^2 X) = \tfrac12 \cdot \gamma \cdot  \alpha(X)$. While we expect this relationship to hold for some other surfaces $X$, it is much more complex in general, as can be seen in the example of $\PP^1 \times \PP^1$ (see Remark~\ref{rem:p1p1}).
\end{remark}

\subsubsection{Uniform point counting}\label{ss:uniform}
In order to prove Theorem \ref{thm:mainthm1}, we must produce new asymptotic formulae for the point counts of $X_d(K)$ for which the main and error terms are both explicit in their dependence on the underlying field $K$, so that we can sum them over varying $K$. This is the first instance of such uniform counts on varieties of dimension at least 2 and of projective degree at least 2.

\begin{prop}\label{prop:introwidmer}
Let $K/\Q$ be any quadratic field other than $\Q(\sqrt{d})$, and let us write $M=K(\sqrt d)$. For any $B \geq 1$ we have that $\#\{ P \in X_d(K): H_K(P) \leq B, \Q(P) = K\}$ is equal to
\[
c_{X_d, K}B + O_{d,\epsilon}\left(B^{7/8}\frac{(h_{M}R_M)^{3/4} (1+\res_{s=1} \zeta_M(s))}{|\Delta_{K/\Q}|^{3/2}}\right).
\]
\end{prop}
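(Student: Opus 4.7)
The plan is to exploit the Weil restriction structure $X_d \cong R_{M/K}(\PP^1_M)$ — valid because $\sqrt d \notin K$, so that $M/K$ is a proper quadratic extension — in order to identify $X_d(K)$ with $\PP^1(M)$ and then reduce the counting problem to a lattice point count over $\calO_K$. Explicitly, writing $A = a_1 + a_2\sqrt d$ and $B = b_1 + b_2\sqrt d$ with $a_i, b_i \in K$, the point $[A:B] \in \PP^1(M)$ corresponds to
\[
[a_1 b_1 - d a_2 b_2 \,:\, a_2 b_1 - a_1 b_2 \,:\, a_1^2 - d a_2^2 \,:\, b_1^2 - d b_2^2] \in X_d(K),
\]
which is derived from the Segre embedding of $\PP^1 \times \PP^1$ (with the Galois swap) into $\{uv = zw\}$ followed by the change of variables $u = x + \sqrt d y$, $v = x - \sqrt d y$. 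The condition $\Q(P) = K$ corresponds to $[A:B] \notin \PP^1(K)$; the contribution of $\PP^1(K)$ is controlled by an elementary Schanuel estimate and absorbed into the error.

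The next step is a careful place-by-place height comparison. Since $[M:K] = 2$, the anticanonical $K$-height (which is the square of the standard $\PP^3$-height) satisfies $H_K(P) \asymp H_M([A:B])^2$ up to bounded local distortion at the finitely many places dividing $2d$. Hence the original counting problem becomes $\#\{[A:B] \in \PP^1(M) \setminus \PP^1(K) : H_M([A:B]) \leq B^{1/2}\}$. I then parametrise $\PP^1(M)$ by coprime pairs $(A, B) \in \calO_M^2$ modulo $\calO_M^\times$, summing over ideal classes of $\calO_M$, and use the fact that $\calO_M$ is a free $\calO_K$-module of rank $2$ to realise $(A, B)$ as an element of the rank-$4$ $\calO_K$-lattice $\calO_K^4$. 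The height bound then cuts out an explicit scaled region in $(\R \otimes_\Q K)^4$ whose volume delivers the main term $c_{X_d,K} B$.

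The decisive step is to apply the paper's generalised Widmer-type lattice point counting theorem for $\calO_K$-lattices, developed earlier in the paper. The gain is sharp: applying Widmer--Schanuel over $\Z$ to $\calO_M^2 \subset \R^8$ would yield only an $O(B^{7/8})$ error, whereas counting over the rank-$4$ $\calO_K$-lattice exploits $[K:\Q] = 2$ to produce an error of size $O(T^{2 - 1/[K:\Q]}) = O(T^{3/2})$ with $T = B^{1/2}$, which is exactly the stated $O(B^{3/4})$. The precise form of the error in $K$ then arises from: (i) the covolume $|\Delta_{K/\Q}|^{1/2}$ of $\calO_K$ in $\R \otimes_\Q K$, contributing the $|\Delta_{K/\Q}|^{-1}$ factor (or, via a sharper Lipschitz argument, $|\Delta_{K/\Q}|^{-2/3 - \epsilon}$); (ii) integration over a fundamental domain for $\calO_M^\times$ in its logarithmic embedding, contributing $\sqrt{R_M}$; (iii) summing over the $h_M$ ideal classes of $\calO_M$, giving $\sqrt{h_M}$ upon combining with (ii) via Dirichlet's analytic class number formula; and (iv) a coprimality sieve over $\calO_M$ whose tail bound produces the factor $\res_{s=1}\zeta_M(s)$.

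The main obstacle is the systematic uniformity in $K$ of every implicit constant. This uniformity is essential because the proof of Theorem~\ref{thm:mainthm1} will sum this estimate over all quadratic fields $K$ with $|\Delta_{K/\Q}|$ up to a power of $B$. Every ingredient — the Lipschitz bound for the boundary of the height region, the shape of the unit fundamental domain, and the tail of the coprimality Euler product over $\calO_M$ — has to be carried out with explicit polynomial dependence on $|\Delta_{K/\Q}|$, $h_M$, and $R_M$, because any unquantified loss at this stage would overwhelm the anticipated main term $B \log B$ after summation over $K$.
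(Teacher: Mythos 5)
Your proposal identifies the correct starting point (the Weil restriction structure, the explicit Segre parametrisation, the reduction to a lattice count over $\calO_M^2$ modulo units and ideal classes, and the use of the paper's generalised Davenport lemma), and the final exponent arithmetic matches. However, the crucial choice of base ring for the lattice count is backwards, and this is not a cosmetic difference: it is the step that makes or breaks the uniformity in $K$ that you correctly flag as the main obstacle.

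You propose to view $\calO_M^2$ (or more precisely the lattice $\fraka \times \fraka$) as a rank-$4$ $\calO_K$-lattice, and to apply Proposition~\ref{prop:generalised Davenport} with $F = K$. The paper works instead with rank-$4$ $\calO_L$-lattices, $L = \Q(\sqrt d)$, and this is not interchangeable. The implicit constants $C_{F,n}$ in Proposition~\ref{prop:generalised Davenport} and the implicit constants in Lemma~\ref{lem:comparing succ min} and Proposition~\ref{prop:prod suc min} all depend on the base field $F$, via the size of an integral basis of $\calO_F$ and the Bombieri--Vaaler constant. For $F = L$ fixed these are absolute constants (absorbed into $O_d$), but for $F = K$ varying they grow polynomially in $\vert \Delta_{K/\Q}\vert$ — an LLL-reduced integral basis already forces a factor $\gg \vert\Delta_{K/\Q}\vert^{1/4}$ into the successive-minima comparison, which compounds. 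Tracing this through, the error term acquires extra unspecified positive powers of $\vert\Delta_{K/\Q}\vert$, and when summed over fields up to $\vert\Delta_{K/\Q}\vert \ll B^{\gamma}$ this swamps the $B \log B$ main term. The paper's slogan is precisely that one should carry out the lattice count over the largest \emph{fixed} subfield: $L$ is fixed, $K$ varies, so the lattice must be taken over $\calO_L$. Your entire uniformity strategy collapses if instead $\calO_K$ is the base.

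Two smaller points. First, $\calO_M$ is \emph{not} in general a free $\calO_K$-module of rank $2$; it is only projective of rank $2$, and its Steinitz class can be nontrivial. The paper's basis-free development of $\calO_F$-lattice counting is designed exactly to avoid this assumption, so you should not invoke freeness. Second, under the identification $X_d(K) \cong \PP^1(M)$ coming from $X_d = \Res_{L/\Q}\PP^1_L$, the pure-point condition $\Q(P) = K$ is equivalent to the image not lying in $\PP^1(L)$ (equivalently $L(P) = M$), not to $[A:B] \notin \PP^1(K)$; the subfield you must exclude is $L$, which happily is fixed, so its Schanuel count is $O_d(B^{1/2})$ uniformly in $K$.
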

There are many examples of the Manin--Peyre conjecture being proven for varieties defined over number fields of degree larger than $1$ via every known method, whether using universal torsors \cite{MR0557080, MR3552013,derenthalbernert}, Poisson summation \cite{MR1353919, MR1797654} or the circle method \cite{MR1446148,MR3229043,MR3610007}.
However, it is standard to fix the field and allow all error terms to depend on it in a completely unspecified manner. Thus, none of these results can be used directly in the study of symmetric powers.

While it is likely that these methods might provide some uniformity if the proofs are scrutinised closely, there is a further complication for which we need to account. Every quadratic field contains the rational numbers;  hence, if one counts all the points in a given quadratic field, one counts the rational points repeatedly for each field. To avoid this enormous overcount, it is necessary, as in Proposition~\ref{prop:introwidmer}, to  count only the \emph{pure} points -- namely, those points whose field of definition is the quadratic field itself. Finding a way to capture only these points is a major obstacle in extending the standard techniques of Manin's conjecture to the setting of symmetric squares.

These two problems -- uniform point counting and counting pure points -- have previously only been simultaneously overcome for (linear subvarieties of) projective space $\PP_F^n$:
\begin{itemize}
    \item Schmidt~\cite{MR1330740}, $r=2$, any $n$, $F=\Q$;
    \item Gao~\cite{MR2693933}, $r \geq 3$, $n \geq r+1$, $F=\Q$;
    \item Widmer~\cite{MR2651154,MR2645051}, any $r$,  $n > \frac{5r}{2} + 4 + \frac{2}{[F:\Q]r}$, any number field $F$.
    \end{itemize}
This allowed Schmidt, Gao and Widmer to count degree $r$ points on $\PP_F^n$ in their respective regimes. Moreover, Guignard~\cite{MR3541701} was able to count degree $r$ points on $\PP^n_F$ for any number field $F$ when $r$ is a prime and $n>r+2$. However the error terms achieved in the $n=1$ cases of these results are always insufficient for the field by field approach to succeed. Instead, Schmidt and Masser--Vaaler~\cite{MR2247898} were able to count degree 2 and general degree $r$ points, respectively, on $\PP^1_\Q$ by parametrising the points via polynomials and counting these.
A similar viewpoint was used by Kass--Thorne~\cite{KassThorne} to produce an asymptotic of the predicted magnitude for $\Hilb^2 \PP^2$, and by Derenthal~\cite{ulrichfourfold} who established Manin's conjecture for the chordal cubic fourfold, which is isomorphic to $\Sym^2 \PP^2_\Q$.

\subsubsection{$\calO_F$-lattices}
Since Schmidt's approach combined with the traditional geometry of numbers machinery falls just short of what is needed to establish Proposition~\ref{prop:introwidmer}, we instead develop the theory of counting in $\calO_F$-lattices culminating in a general version of Davenport's lemma, Proposition~\ref{prop:generalised Davenport}. An ideal in a number field $M$ is not only invariant under multiplication by integers, but also under multiplication by the integers in any fixed subfield. We exploit this extra symmetry by viewing an ideal in  a number field not as a finite dimensional $\Z$-module, but rather as a finite dimensional module over the ring of integers of a fixed subfield $F$ of $M$. In this way, we are able to reduce the dimension of the problem, affording us better control on the successive minima and thus the final error term. 

Indeed if $F/\Q$ is a number field of degree $d$, $\Lambda$ an $\calO_F$-lattice of rank $n$ and $\calS$ a suitable bounded region of diameter $R$, then the error term in the classical Davenport's lemma saves a factor of $\frac{R}{\lambda_{\Q,nd}}$ where $\lambda_{\Q,nd}$ is the largest $\Z$-successive minimum of $\Lambda$. In comparison, when viewing $\Lambda$ over $\mathcal O_F$, the saving is of $\frac{R}{\lambda_{F,n}}$, where $\lambda_{F,n}$ is the largest $\mathcal O_F$-successive minimum,  which in practice makes the error term much easier to control, and in turn bound, effectively.
%

The only previous instances of counting in $\calO_F$-lattices are due to Bombieri--Vaaler~\cite{BV}, Broberg~\cite{broberg} and Glas--Hochfilzer~\cite{GH}. However, each of these assumes we are given an explicit basis of the $\calO_F$-vector space which the lattice spans. This becomes increasingly tricky to do explicitly and uniformly as one varies ideals and as one varies the underlying field. To handle this issue we develop from the ground up an entirely basis-free version of the theory of $\calO_F$-lattice point counting in \textsection\ref{s:lattices}. 

We believe that the advantage gained from the use of $\calO_F$-lattices and from the framework developed in this paper has potential to be useful for a wide variety of problems in arithmetic statistics. Indeed our success suggests the philosophy that one should perform lattice point counting over the largest field whose size does not need to be controlled.

\subsubsection{New analytic results}\label{ss:introanal}
In order to establish Theorem \ref{thm:introsumconsts}, we need to sum Euler products over prime ideals in quadratic number fields for varying quadratic fields. 
This requires us to produce results on summing multiplicative functions  evaluated at discriminants of quadratic fields (Proposition \ref{prop:discsum}) which may have further uses beyond the focus of this paper.
Another key ingredient (Proposition \ref{prop:trunc}) is an on-average truncation of twisted Artin $L$-series which will also likely be of independent interest. 
An immediate consequence of this truncation result is the following formula for all the moments of the value at $s=1$ of Artin $L$-series in quadratic twist families.

\begin{prop}[Special case of Corollary~\ref{cor:galtwists}]
Let $\rho$ be a Galois representation which factors through a finite extension $L/\Q$, and $t \geq 1$ an integer. There exists an explicit constant $C_{\rho, t}>0$ such that for any $Y \geq 1$ and any $\epsilon>0$ we have
    \[
    \sum_{\substack{[K:\Q]=2\\ \vert \Delta_{K/\Q} \vert \leq Y \\ K \text{ lin.\ disjoint from } L}}
    L(1, \rho \otimes \chi_K)^t
    =
    C_{\rho, t} Y + O_{t,\epsilon} \left( Y^{\frac{2}{3} + \epsilon}\right),
    \]
where $\chi_K$ is the quadratic character associated to the field $K$.
\end{prop}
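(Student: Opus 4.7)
The plan is to expand the twisted $L$-series into a Dirichlet series, truncate it on average via Proposition~\ref{prop:trunc}, interchange summations, and apply Proposition~\ref{prop:discsum} to the resulting inner sums of quadratic characters over discriminants.

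Expanding the Euler product, for every prime $p$ unramified in $L$ and coprime to $\Delta_{K/\Q}$ one has $L_p(s,\rho\otimes\chi_K)^{-1}=\det(1-\chi_K(p)p^{-s}\rho(\Frob_p))$, so taking $t$-th powers and multiplying over primes produces
\[
L(1,\rho\otimes\chi_K)^t=\sum_{n\geq 1}\frac{a^{(t)}_\rho(n)\,\chi_K(n)}{n}
\]
up to a uniformly bounded local correction at the finitely many primes ramified in $L$ and at primes dividing $\Delta_{K/\Q}$. Here $a^{(t)}_\rho$ is a multiplicative function determined by $\rho$ and $t$ alone. The bad local factors can be isolated by a standard inclusion--exclusion over the finitely many affected primes.

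Next, I would invoke Proposition~\ref{prop:trunc} to truncate each $L(1,\rho\otimes\chi_K)^t$ at a length $N=Y^\alpha$, with the overall error tracked on average over the quadratic fields $K$. Swapping the orders of summation then yields
\[
\sum_{K}L(1,\rho\otimes\chi_K)^t=\sum_{n\leq N}\frac{a^{(t)}_\rho(n)}{n}\sum_{K}\chi_K(n)\,+\,\text{averaged truncation error},
\]
where each sum over $K$ runs through quadratic fields linearly disjoint from $L$ with $|\Delta_{K/\Q}|\leq Y$. Applying Proposition~\ref{prop:discsum} to the inner character sum yields a main term proportional to $Y$ precisely when $n$ is (essentially) a perfect square, and exhibits cancellation otherwise. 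Reassembling the square contributions and extending the resulting absolutely convergent $m$-sum to infinity produces the explicit constant
\[
C_{\rho,t}=(\text{density factor})\cdot\sum_{m\geq 1}\frac{a^{(t)}_\rho(m^2)}{m^2}.
\]

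The delicate step is calibrating $N=Y^\alpha$ so that the truncation error from Proposition~\ref{prop:trunc} balances both the contribution of non-square $n$ (where one exploits cancellation in $\sum_K\chi_K(n)$) and the tail of the main-term $m$-series, yielding the final exponent $\tfrac{2}{3}+\epsilon$. This calibration requires sharp cancellation in the inner character sum over discriminants together with coarse growth bounds on $a^{(t)}_\rho$, which are available since $t$ is fixed. The linear-disjointness hypothesis ensures that $\rho\otimes\chi_K$ does not degenerate for any $K$ under consideration, so that the Euler factors combine uniformly without pathological contributions.
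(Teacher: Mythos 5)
Your proposal follows essentially the same route as the paper's proof of Corollary~\ref{cor:galtwists}: expand the $t$-th power of the twisted Artin $L$-function into a Dirichlet series with multiplicative coefficients (your $a^{(t)}_\rho$ is the paper's $\psi^{\ast t}$, arising from replacing $\rho$ by $\rho^{\oplus t}$ so that Proposition~\ref{prop:trunc} applies), truncate on average at length $Z$, exchange summation, apply Proposition~\ref{prop:discsum} to pick out the main term from square $n$, extend the $m$-sum to infinity, and optimise $Z = Y^{2/3}$.

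One small point worth correcting: you flag the primes $p\mid\Delta_{K/\Q}$ as requiring a local correction to be handled by inclusion--exclusion, but this would be a $K$-dependent and hence unbounded set of primes. In fact no correction is needed there, since $\chi_K(p)=0$ at such $p$ forces the local Euler factor of $\rho\otimes\chi_K$ to be trivial (inertia invariants vanish), so the identity $L(s,\rho\otimes\chi_K)=\sum_n\psi(n)\chi_K(n)n^{-s}$ already holds over all $p$ unramified for $\rho$; the only fixed set of bad primes is $\bbS$, which is why the paper works with $L_{\Q,\bbS}$ throughout.
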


One way to think about this result is that the values $L(1,\rho \otimes \chi_K)$ are constant on average. This is a fact which we take repeated advantage of throughout the paper. For instance, in Lemma \ref{lem:landau}, we prove a uniform bound for the number of ideals of bounded norm in a biquadratic field, in the spirit of recent work of Lowry-Duda--Taniguchi--Thorne~\cite[Thm.\ 3]{MR4381213}. We replace a $\log^{3} X$ term in their error term by the residue of the associated Dedekind zeta function. When this error is summed, we gain important savings by exploiting that this residue is constant on average.

As another example of how these ideas could prove independently useful, in \textsection\ref{ss:moments}, we establish the following corollary on the moments of class numbers of quadratic fields.

\begin{thm}[Consequence of Corollary~\ref{cor:galtwists}]\label{cor:classnumbers}
For any $Y \geq 1$, let $\mathcal D^\pm(Y)$ denote the set of positive or negative fundamental discriminants, respectively, of absolute value less than $Y$.
Let $r\geq 1$ be an integer. There is an explicit constant $\widetilde{c}_r >0$ such that for any $\epsilon > 0$ we have
    \[
\sum_{d \in \mathcal{D}^-(Y)}  h(d)^r
=
\widetilde{c}_r Y^{\frac{r}{2}+1}
+
O_{r, \epsilon}\left(Y^{\frac{r}{2} + \frac{2}{3} + \epsilon}\right),
    \]
    and
    \[
    \sum_{d \in \mathcal{D}^+(Y)}  h(d)^rR(d)^r
=
\widetilde{c}_r Y^{\frac{r}{2}+1}
+
O_{r, \epsilon}\left(Y^{\frac{r}{2} + \frac{2}{3} + \epsilon}\right),
    \] where $h( d)$ denotes the class number of the quadratic field $\Q(\sqrt{ d})$ and $R(d)$ denotes the regulator of $\Q(\sqrt{d})$.
\end{thm}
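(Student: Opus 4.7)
The plan is to deduce this moment formula directly from Corollary~\ref{cor:galtwists} (applied to the trivial Galois representation $\rho$) by combining it with the Dirichlet class number formula and a standard partial summation argument.

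First, I would recall that for a fundamental discriminant $d$, the class number formula gives, up to finitely many exceptional $d$ (namely $d=-3,-4$, which contribute only $O(1)$ and are harmless),
\[
h(d) = \frac{\sqrt{\vert d\vert}}{\pi} L(1,\chi_d) \quad (d<0), \qquad h(d)R(d) = \frac{\sqrt{d}}{2} L(1,\chi_d) \quad (d>0).
\]
Raising to the $r$-th power turns both $h(d)^r$ and $h(d)^rR(d)^r$ into an explicit constant multiple of $\vert d\vert^{r/2} L(1,\chi_d)^r$. Thus it suffices to establish an asymptotic for
\[
M^{\pm}(Y) := \sum_{d \in \mathcal{D}^{\pm}(Y)} \vert d\vert^{r/2} L(1,\chi_d)^r.
\]

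Next, I would apply the special case of Corollary~\ref{cor:galtwists} with $\rho$ trivial (so that the condition of being linearly disjoint from $L=\Q$ is automatic), obtaining, for each signature, an asymptotic of the form
\[
T^{\pm}(Y) := \sum_{d\in \mathcal{D}^{\pm}(Y)} L(1,\chi_d)^r = C^{\pm}_r Y + O_{r,\epsilon}\bigl(Y^{2/3+\epsilon}\bigr).
\]
Restricting Corollary~\ref{cor:galtwists} to quadratic fields of a fixed signature (real, resp.\ imaginary) is harmless, since signature corresponds to the sign of $\Delta_{K/\Q}$ and can be built into the underlying sum over fields. Then I would apply Abel summation with the weight $t^{r/2}$:
\[
M^{\pm}(Y) = Y^{r/2}T^{\pm}(Y) - \frac{r}{2}\int_1^Y t^{r/2-1}\,T^{\pm}(t)\,dt.
\]
Inserting the asymptotic for $T^{\pm}(t)$, the main terms combine to give $\tfrac{2C^{\pm}_r}{r+2}Y^{r/2+1}$, while the error term $O(t^{2/3+\epsilon})$ for $T^{\pm}(t)$ contributes $Y^{r/2}\cdot O(Y^{2/3+\epsilon}) + O\bigl(\int_1^Y t^{r/2-1/3+\epsilon}dt\bigr)=O(Y^{r/2+2/3+\epsilon})$, matching exactly the claimed error. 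Multiplying by the constants from the class number formula then yields the explicit value of $\widetilde{c}_r$.

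The main obstacle is not the partial summation, which is routine, but rather ensuring that Corollary~\ref{cor:galtwists} can be applied with the discriminant restricted to a fixed sign. This requires a minor refinement of the corollary: the truncation argument of Proposition~\ref{prop:trunc}, which drives Corollary~\ref{cor:galtwists}, needs to be run on the two subfamilies $\mathcal{D}^\pm$ separately. Since the archimedean local factor of $\chi_K$ depends only on the signature, inserting the indicator of real or imaginary quadratic fields produces the same analytic behaviour and the same error exponent $2/3+\epsilon$. Once this refinement is recorded, the rest of the proof is purely formal. The constant $\widetilde{c}_r$ can then be read off as $\tfrac{2 C^{-}_r}{(r+2)\pi^{r}}$ in the imaginary case and $\tfrac{2C^{+}_r}{(r+2)2^{r}}$ in the real case.
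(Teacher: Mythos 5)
Your argument is correct and, in outline, is the same as the paper's: reduce to moments of $L(1,\chi_d)$ via the Dirichlet class number formula, then apply partial summation with the weight $|d|^{r/2}$. The Abel-summation bookkeeping you perform (producing the main-term coefficient $\frac{2C}{r+2}$ and the error $O(Y^{r/2+2/3+\epsilon})$) matches what is needed, and you are right that the finitely many discriminants with extra roots of unity contribute only $O(1)$.

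The one thing you flag as an ``obstacle'' is not actually one: you say a minor refinement of Corollary~\ref{cor:galtwists} is needed in order to restrict the sum to a fixed signature. But the corollary is already stated with a choice of $\bbS$ and local splitting data $\Xi=(\xi_v)_{v\in\bbS}$, and the family $\mathcal F_\Xi^\circ$ is exactly the fundamental discriminants with prescribed local behaviour at the places in $\bbS$. Taking $\bbS=\{\infty\}$ and $\xi_\infty=\pm 1$ (so $\delta_\infty=\tfrac12$) restricts to positive or negative fundamental discriminants without any change to Proposition~\ref{prop:trunc}. That is precisely how the paper derives its ``Moments of $L(1,\chi)$'' corollary, so no re-running of the truncation argument on the subfamilies is required. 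It is also worth noting that, because $\delta_\infty=\tfrac12$ for both signs, the constants $C_r^\pm$ coincide, so the only discrepancy between the two displayed asymptotics comes from the $\pi^{-r}$ versus $2^{-r}$ factors from the class number formula; the theorem's notation $\widetilde c_r$ should therefore be read as potentially different constants in the two cases, exactly as you computed.
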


\noindent
In the case of real fields, this improves upon the $r=2$ error term due to Taniguchi \cite{MR2410385} and is the first example of such asymptotics for $r>3$. Asymptotics for all moments were produced by Raulf \cite{MR3498624} with error terms saving a factor of $Y^\epsilon$, however her sums are ordered by regulator of the field in the spirit of Sarnak \cite{sarnakI, sarnakII} instead of by discriminant as above. Moreover, in the imaginary case, Theorem \ref{cor:classnumbers} improves the best known exponent for the error term in this problem, $\frac r2 +\frac 34 + \epsilon$, established by Wolke~\cite{MR0252322} in 1972.

\subsection{Sketch of the setup and roadmap to the proof} Our setup is the following. We work on the $S_2$-quasitorsor $\pi \colon X^2 \to \Sym^2 X$ ramified along the diagonal $\Delta^{(2)}=\{[P,P] \colon P \in X\} \subseteq \Sym^2 X$, which is an accumulating subset for the rational points.  By using standard properties of torsors (see e.g.\ \cite{Sko01}), away from the diagonal $\Delta^{(2)}$ we can partition the rational points on $\Sym^2 X$ using the twists $\pi^\sigma$ indexed by $\sigma \in \HH^1(\Q, S_2)$. We prove that such a twist is a natural morphism $\pi^\sigma \colon \Res_{K/\Q} X_K \to \Sym^2 X$, where $K/\Q$ is the quadratic \'etale extension determined by $\sigma$. This geometrically justifies counting rational points on $\Sym^2 X (\Q)$ by counting pure $K$-points on $X_K$ for all quadratic extensions $K/\Q$.  Building on the work of Loughran \cite{MR3430268}, we use the connection to the restriction of scalars to understand some of the invariants of $\Sym^2 X$ and $\Hilb^2 X$, such as the Tamagawa number, in terms of those of $X_K$. Similarly, we can exploit this connection to understand the Brauer--Manin set of $\Sym^2 X$ in terms of those of the $X_K$.

Given this setup, we can now detail the roadmap to the proof. 
In \textsection\ref{S:preliminaries}, we recall standard conventions and facts around Peyre's conjecture. Experts may skip ahead, but we note that, in order to facilitate the comparison across different fields, we use a more general notion of adelic metric and reinterpret parts of the Manin--Peyre conjecture in terms of Galois invariant objects over an algebraically closed field. In \textsection\ref{s:symres},  we undertake the comparison between the geometry of the symmetric square and the restriction of scalars, using the twists of the quasitorsors described above. In \textsection\ref{S:PeyreoverK}, we describe the various invariants in the conjecture in terms of the arithmetic of $X$ over a quadratic extension $K/\Q$. In \textsection\ref{S:Tamagawakening},  we elaborate the full Manin--Peyre prediction for $\Hilb^2 X$ for any del Pezzo surface $X$. In \textsection\ref{S:summingtheconstants}, we develop the tools to sum the Tamagawa numbers of $X_K$ for all quadratic extensions, culminating in the proof of Thereom \ref{thm:introsumconsts}. In \textsection\ref{S:fringes}, we develop the theory of cutoffs, and in \textsection\ref{s:lattices} the theory of general $\calO_F$-lattices. Then \textsection\ref{s:nonsplit} is devoted to the infinite family $\{X_d\colon d \in \Z \textrm{ squarefree}\}$ of non-split quadric surfaces. By viewing these as a restriction of scalars, we establish the optimal cutoff and prove uniform counting theorems for $K$-points.

\section*{Acknowledgments}
This project began while the authors were all attending the trimester programme ``{\`A} la red{\'e}couverte des points rationnels" at Institut Henri Poincar{\'e}. We would like to thank the organisers David Harari, Emmanuel Peyre and Alexei Skorobogatov for bringing us together. It would not have been possible to complete the paper without the support of several institutions and their short term research in teams opportunities. In particular, we benefited from the hospitality of the  the Institute for Advanced Studies in Princeton and its Summer Collaborators program, and the International Centre for Mathematical Sciences in Edinburgh and its Research-in-Groups initiative. Moreover, the team was hosted at ISTA by Tim Browning. To all these institutions, and the associated staff, we owe a huge debt of gratitude.

We would like to thank Martin Widmer for a discussion on his paper  \cite{MR2645051}, pointing out an oversight in a previous version of this paper and informing the authors of the paper \cite{debaene}. The gratitude of the authors is also extended to Jakob Glas for introducing them to the theory of $\calO_F$-lattices. NR would like to thank Christopher Frei for discussions on lattice point counting applications in number fields. In addition, we are grateful to Tim Browning, Bjorn Poonen, Bianca Viray and Joseph Silverman for helpful conversations around this project.

During part of this work, FB was supported by the European Union’s Horizon 2020 research and innovation programme under the Marie Sk\l{}odowska-Curie grant 840684. Over the course of this project, JL has been funded by the European Union's Horizon 2020 research and innovation programme under the Marie Sk\l odowska-Curie grant agreement No.~754411, and UKRI MR/V021362/1. JP was funded by NSF DMS-1902199 and NSF DMS-2152182. NR is funded by FWF project ESP 441-NBL. 

\section*{Notation} As is customary in analytic number theory, $\epsilon$ will be a small positive real parameter that will not be fixed but instead is allowed to shrink as the proof goes on. 

Throughout, $F$, $F'$, $K$ and $L$ will be fields. Usually $F$ will be the base field, $F'$ a general field extension, $K$ a quadratic extension and $L$ a field of definition or splitting field. By a variety over $F$, we will mean a separated scheme of finite type over $F$. We will denote by $V$ any $F$-variety and $X$ will refer specifically to a weak Fano variety of dimension at least 2 which for us are geometrically integral, and smooth and projective over the base field.

If $F$ is a field, we fix once and for all a separable closure $\overline{F}$, and we denote by $\Gamma_F:= \Gal(\overline{F}/F)$ the absolute Galois group of $F$.

If $F$ is a number field we will write $\Omega_F$ for the set of places of $F$, and we frequently use $v$ for a general element. We will write $\Omega_F^\infty$ for the subset of archimedean places. For a finite place $v$ of a number field $F$ dividing a rational prime $p$ we define
\[
|x|_v = |N_{F_v/\mathbb Q_p}(X)|^{1/[F_v\colon \mathbb Q_p]}_p,
\]
hence we get a valuation $|.|_p$ on $\bar{\mathbb Q}_p$, or even on its completion $\mathbb C_p$.
For an extension of number fields $L/F$, we write $\Delta_{L/F}$ for the fundamental relative discriminant, which is an ideal in $F$. Fractional ideals of $\Q$ we might identify with their positive generator.
The Haar measures $\mu_v$ on the $F_v$ are normalised such that $\mu_v(\mathcal O_v) = 1$. For the infinite places we use the Lebesgue measure on $\R$ and twice the Lebesgue measure on $\C$.

We denote by $\bbS$ a finite set of places of $\Q$ which contains the archimedean place, and on occasion we will write $S = \prod_{\substack{p \in \bbS \\ p < \infty}}p$. By $\prod^S_p$, we mean a product of primes which are not contained in $\bbS$. Similarly, $\sum^S_n$ refers to a sum over all positive integers which have no prime factors in $\bbS$.

We use the variable $\xi$ to classify quadratic \'etale extensions $F_\xi$ of the base field $F$. In the case of both quadratic extensions of $\mathbb R$ we define $\delta_{\xi,\infty}=\frac12$ and over $\Q_p$ we define the weighted local factor of the discriminant $\delta_{\xi,p} = \frac{1}{2} \vert \Delta_{\Q_{p,\xi}/\Q_p} \vert_p$. For a choice of local extensions $\Xi = (\xi_v) \in \prod_{v \in \bbS} \Q_v^\times/\Q_v^{\times,2}$ we define $\delta_\Xi = \prod_{v\in \bbS} \delta_{\xi,v}$. The collection of all quadratic fields $K/\Q$ for which $K_v/\Q_v$ corresponds to $\xi_v$ for all $v \in \bbS$ is denoted $\mathcal{F}_\Xi$. The set of fields in $\calF_\Xi$ with absolute discriminant bounded by $Y$ is denoted $\calF_\Xi(Y)$. Finally, we write $\calF_\Xi^\circ(Y)$ for those fields in $\calF_\Xi(Y)$ which are linearly disjoint from a known field $L$, usually the splitting field of $\Pic \Xbar$.

As in the introduction, $d$ will always be a squarefree integer and $X_d\subseteq \mathbb P^3_\Q$ the surface defined by $x^2-dy^2=zw$. We refer throughout to the counting functions $N_K(\calU, B)$ and $N_K^*(\calU, B)$ which denote, respectively, the number of $K$-points in the cothin set $\calU$ of height at most $B$ and the number of those points which are pure $K$-points.
Peyre's constant for the variety $V$ over the number field $F$ will be denoted $c_{V,F}$. In general, the dependency on the field is sometimes omitted when $F=\Q$.

We denote by $\Delta$ the diagonal in $X \times X$ which descends to $\Delta^{(2)}$ on $\Sym^2 X$, and pulls back to $\Delta^{[2]}$ on $\Hilb^2 X$. In general, the superscript $^{(2)}$ will be adorned for an object living on $\Sym^2 X$ and the superscript $^{[2]}$ for an object on $\Hilb^2 X$.

\section{Preliminaries}
\label{S:preliminaries}

\subsection{Adelic metrics}

We follow Zhang \cite{MR1311351} in defining the notion of adelic metrics.

\begin{defn}
Let $F$ be a global field and $v$ a place of $F$. Consider a quasi-projective $F$-variety $V$  with a line bundle $(V,\mathcal L)$ together with a collection of norms $\|.\|_v$ on the $\mathbb C_v$-vector spaces $\mathcal L(P_v)$ for all $P_v \in V(\mathbb C_v)$. We say that $\|.\|_v$ is a \emph{$v$-adic metric} if for all open $U \subseteq V$ and every section $s \in \Gamma(U,\mathcal L)_{\mathbb C_v}$ the following two conditions hold.
\begin{itemize}
\item[(i)] The map $\|s(\cdot)\|_v \colon U(\mathbb C_v) \to \mathbb R_{\geq 0}$ is $v$-adically continuous.
\item[(ii)] For any $\sigma \in \Gal(\mathbb C_v/F_v)$, and $P_v \in U(\mathbb C_v)$ we have
\[
\|\sigma(s)(P_v)\|_v = \|s(P_v^\sigma)\|_v.
\]
\end{itemize}
\end{defn}

If $\calL$ is very ample, then for a basis $s_i \in \Gamma(V,\mathcal L)$ the \emph{usual $v$-adic metric} is given for each $P \in U(\C_v)$ by
\[
\|s(P)\|_v = \left(\max_i \left| \frac{s_i(P)}{s(P)}\right|_v\right)^{-1}.
\]

\begin{defn}\label{defn:adelically metrised}
Consider a line bundle $\mathcal L$ together with a collection of $v$-adic metrics $\{\|.\|_v\}_{v \in \Omega_F}$.
\begin{enumerate}
    \item[(a)] If $\calL$ is very ample we say that it is \emph{adelically metrised} if there is a basis $s_i \in \Gamma(V,\mathcal L)$ such that the function    \begin{equation}\label{eq:metrised}
        \left(\max_i \left| \frac{s_i(P)}{s(P)}\right|_v\right)\cdot \|s(P)\|_v
    \end{equation}
    is bounded on $V(\C_v)$ for all $v \in \Omega_F$ and identically $1$ for all but finitely many $v$.
    \item[(b)] A general line bundle $\mathcal L$ is \emph{adelically metrised} if there are two very ample adelically metrised line bundles $(\calL_i,\|.\|_{i,v})$ such that $\calL \cong \calL_1 \otimes \calL_2^{-1}$ and $\|.\|_{v,1}$ is the product of the $v$-adic metrics $\|.\|_v$ and $\|.\|_{2,v}$.
\end{enumerate} 

In addition, if $\|s(P)\|_v$ in (a) is the usual adelic metric defined by a basis $s_i \in \Gamma(V,\mathcal L)$, then we say that $\|.\|_v$ is \emph{induced by a model}. 
\end{defn}

\begin{remark}
    One can show that if \eqref{eq:metrised} is bounded on $V(\C_v)$ for all $v \in \Omega_F$, then it is bounded for any generating set $s_i \in \Gamma(V,\calL)$. Also, it implies that that for a given $P$ and $s \in \Gamma(V,\calL)$ the norm $\|s(P)\|_v=1$ for all but finitely many $v$.

    For proofs of these fact the reader is referred to \cite[\textsection1.1]{lerudulier-thesis}. 
\end{remark}

\begin{remark}
    It is more typical to define adelic metrics following \cite{MR1340296} on $V(F_v)$ instead of $V(\C_v)$; the advantage of our choice in following \cite{MR1311351} is the compatibility of adelic metrics over field extensions. Indeed, we can define an adelic metric on $(V_{F'},\calL_{F'})$ from the one on $(V,\calL)$ for any finite field extension $F'/F$. Let $P_v \in V(\C_v)$ and $\| \cdot \|_v$ the $v$-adic metric on $\calL(P_v)$. Given $v'|v$ a place of $F'$ and $P_{v'} \in V_{F'}(\C_{v'}) = V(\C_v)$, we can simply take $\| \cdot \|_{v'} := \| \cdot \|_v$ as the norm on $\calL(P_{v'})$. Then it is easily seen that this adelic metric is also induced by a model for all but finitely many places.
 \end{remark}

Adelic metrics allow us to define heights.

\begin{defn}
    Let $\calL$ be an adelically metrised line bundle on a quasi-projective variety $V$ over a number field $F$.
    \begin{enumerate}
        \item[(i)] For a finite extension $L/F$ we define the \emph{relative adelic height} on $V(L)$ by
        \[
        H_{\calL,L}(P) = \prod_{v \in \Omega_L} \|s(P)\|_v^{-[L_v\colon\mathbb Q_p]}
        \]
        where $s$ is a local section of $\calL$ which does not vanish at $P$.
        \item[(ii)] The \emph{absolute adelic height} of $P \in V(\overline F)$ is defined by
        \[
        H_{\calL}(P) = H_{\calL,L}(P)^{\frac1{[L \colon \Q]}}
        \]
        where $L/F$ is any finite extension such that $P \in V(L)$.
    \end{enumerate}
\end{defn}

It is well known that the height does not depend on the choice of $s$ and that the absolute height is independent of the particular choice for $L$.

\begin{example}
    In the case that the adelic metric on the line bundle $\mathcal O_{\PP^n}(1)$ consists of the usual metric at all places $v$, we recover the \emph{na\"ive absolute $\calO(1)$-height} of a point $P\in \PP^n(L)$
    \[
    H_{\calO(1)}(P) := \prod_{v \in \Omega_L} \max\{|x_0|_v, \ldots, |x_n|_v\}^{\frac{[L_v \colon \Q_p]}{[L\colon \Q]}}.
    \]
\end{example}

Let $V$ be a quasi-projective variety over a number field $F$. Fix a finite extension $L/F$ and $\mathcal U \subseteq V(L)$, and let $H$ be an absolute anticanonical height function on $V(\bar F)$. For a real parameter $B>1$, we denote the point count 
\[
N_L(\mathcal U,B) := \#\{ P \in \mathcal U \colon H(P) \leq B\}.
\]
We will also require a count of points whose field of definition is precisely $L$,
\[
N_L^*(\mathcal U,B) := \#\{ P \in \mathcal U \colon H(P) \leq B \text{ and } F(P)=L\}.
\]
We reserve the right to drop the subscript if the number fields in question $F$ and $L$ are both just $\Q$. By Northcott's theorem, these two sets are finite if $\mathcal L$ is ample.

\subsection{The Tamagawa measure}\label{ss:tamagawa}

Given a $v$-adic metric $\|.\|_v$ on the anticanonical line bundle of a smooth quasi-projective variety $V/F$, we define the \emph{local Tamagawa measure} $\omega_v$ on $V(F_v)$ as follows.

Let $n = \dim V$. Locally around $P \in V(F_v)$ this measure is constructed in the following way.  Consider an anticanonical top degree form $s=\frac{\partial}{\partial x_1} \wedge \cdots \wedge \frac{\partial}{\partial x_n}$ constructed from local coordinates $x_i$ on an open $U\subseteq V$ around $P$. We define the local measure $\omega_v^{[s]} := \mbox{d}x_1\cdots \mbox{d}x_n$ on $V(F_v)$ as the pullback of $ \mbox{d}\mu(x_1)\cdots \mbox{d}\mu(x_n)$ along $c(F_v) \colon U(F_v) \to \mathbb A_F^n(F_v) = F_v^n$ induced by the chart $c \colon U \to \mathbb A_F^n, P \mapsto (x_i(P))_i$, where $\mu(x_i)$ is the usual Haar measure. The measure $\omega_v^{[s]}$ does depend on the chosen form $s$, but not on its representation. The rescaled measure
\[
\|s\|_v^{[F_v \colon \Q_p]} \omega_v^{[s]}=
\left\| \frac{\partial}{\partial x_1} \wedge \cdots \wedge \frac{\partial}{\partial x_n}\right\|_v^{[F_v \colon \Q_p]}\mbox{d}x_1\cdots \mbox{d}x_n
\]
is independent of the choice of $s$. These local measures glue to give the local Tamagawa measure $\omega_v$ on $V(F_v)$.

We combine the local measures $\omega_v$ to form the \emph{Tamagawa measure} $\omega$ on $V(\Adeles_F)$. The product of $\omega_v(V(F_v))$ does not converge, so we proceed as follows. For a non-archimedean place $v \in \Omega_F$, let $\F_v$ be the associated residue field of characteristic $p$ and let $q_v = \# \F_v$. Let $I_v$ be the inertia group at $v$ for the extension $L/F$ and consider $(\Pic \overline{V})^{I_v}$, the inertia invariant part of the absolute Picard group. The Frobenius morphism $x \mapsto x^p$ on $\F_v$ lifts to a conjugacy class in $D_v/I_v$ and so induces an automorphism on $(\Pic \overline{V})^{I_v}$ which we denote by $\Frob_v$. 
We define
\[
L_{F,v}(s,\Pic \Vbar) := \det\left(1-q_v^{-s} \Frob_v \vert (\Pic \Vbar)^{I_v}\right)^{-1}.
\]
Note that this local factor is invariant under conjugation and hence well defined.
Now for a finite set of places $\mathbb S$, which contains the archimedean places, we introduce
\[
L_{F,\bbS}(s,\Pic \Vbar) = \prod_{v \not\in \bbS} L_{F,v}(s, \Pic \Vbar).
\]
Artin~\cite[Satz 3]{Artin} proved that this Euler product converges for $\Rea(s) > 1$. Define 
\[
\lambda_v =
\begin{cases}
    L_{F,v}(1,\Pic \Vbar) & \text{ if } v \notin \bbS;\\
    1 & \text{ otherwise.}
\end{cases}
\]
Then the product measure
\begin{equation}\label{eq:Tamagawa measure}
\omega = \vert \Delta_F\vert^{-\frac{\dim V}2}\lim_{s \to 1}\left[(s-1)^{\rank(\Pic V)} L_{F,\bbS}(s,\Pic \Vbar)\right] \prod_v \lambda^{-1}_v \omega_v
\end{equation}
converges on $V(\Adeles_F)$ and is independent of $\bbS$.

\subsection{Thin sets}

It has been understood since the genesis of Manin's conjecture that certain accumulating subsets contribute too much to the count and must be removed. The modern formulation, following the counterexamples of Batyrev--Tschinkel, Le Rudulier and Browning--Heath-Brown \cite{MR1401626,lerudulier-thesis,MR4167086}, allows for the removal of a thin set.

\begin{defn}
Let $V$ be a geometrically irreducible, integral, and quasi-projective variety over a field $F$. A subset $\calZ \subseteq V(F)$ is
\begin{itemize}
    \item[(i)] a \emph{thin set of type I} if it is contained in $Z(F)$ for a closed subscheme $Z \subsetneq V$.
    \item[(ii)] a \emph{thin set of type II} if it is a subset of $f(W(F))$ for a generically finite dominant morphism $f \colon W \to V$ of degree $\deg f > 1$ with $W$ geometrically irreducible, integral, and quasi-projective variety over $F$.
    \item[(iii)] a \emph{thin set} if it is contained in a finite union of thin sets of type I and II.
\end{itemize}
\end{defn}

\subsection{The Manin--Peyre conjecture}
\label{S:maninpeyre}
In this section, we present a version of the Manin--Peyre conjecture, which applies to all smooth weak Fano varieties.

\begin{defn}\label{def:weakfano}
We say that a projective, geometrically integral variety $V$ over a number field $F$ is \emph{weak Fano} if it is normal, Gorenstein, and $\omega_V^{\vee}$ is big and nef.    
\end{defn}

While our formulation differs from the usual statement of the Manin--Peyre conjecture \ref{conj:manin}, we will show in Lemma~\ref{lem:rankseq} that these are equivalent. Our version below is particularly useful under base change, as we will see in \textsection\ref{sec:quadconst}.

\begin{conj}[Manin--Peyre]
\label{c:BM}
Let $V$ be a smooth weak Fano variety over a number field $F$ and let $H_F$ be a relative adelic height on the anticanonical line bundle $\omega^\vee_V$ on $V$. Then there exists a cothin subset $\mathcal U \subseteq V(F)$ and a constant $c_{V,F}$ such that 
\[
\#\{ P \in \mathcal U \colon H_F(P) \leq B\} \sim c_{V,F}B(\log B)^{\rho(V)-1} \quad \text{ as } B\to \infty,
\]
where $\rho(V) := \rank( (\Pic \Vbar)^{\Gamma_F})$.
\end{conj}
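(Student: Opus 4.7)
The conjecture as stated is one of the central open problems in arithmetic geometry and cannot be established in the stated generality; indeed, the body of work cited in the introduction (Salberger, de la Bret\`eche, Browning, Woo, and many others) consists of verifying it for specific families, each by methods tailored to the underlying geometry. My proposal is therefore not to prove the conjecture outright, but to outline the general roadmap one would follow in any specific instance, and indicate where the genuine difficulty lies.

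The first step is to pin down the cothin set $\mathcal{U}$. Candidates for $V(F)\setminus \mathcal{U}$ come from two sources: closed accumulating subvarieties (i.e.\ $Z \subsetneq V$ with larger Fujita/$a$-invariant relative to $\omega_V^\vee$, so that $Z(F)$ supplies too many points of small height), and images of covers $f\colon W \to V$ of the sort flagged by Lehmann--Sengupta--Tanimoto that produce type II thin exceptional loci. One would first classify these via the geometric invariants of $V$ (the Mori cone, the Galois action on $\Pic \Vbar$, and the balanced line bundle condition) to fix the putative $\mathcal U$.

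The second step is the asymptotic count itself, for which the three available toolkits are: (i) universal torsors, lifting rational points on $V$ to integer points on a torsor $\mathcal{T} \to V$ under a quasi-torus, and then applying geometry of numbers; (ii) the circle method, applicable when $V$ is realised as a smooth complete intersection of sufficiently low degree and high dimension; and (iii) harmonic analysis on adelic groups, available when $V$ carries a transitive algebraic group action (toric varieties, equivariant compactifications, flag varieties). Whichever route is taken, one then has to identify the leading constant with Peyre's prediction $c_{V,F} = \alpha(V)\beta(V)\tau(V)$: the $\alpha$ factor drops out as a volume integral over the effective cone in $(\Pic V)_\R^\vee$, $\beta$ as the size of $H^1(\Gamma_F, \Pic \Vbar)$ arising from a class-field-theoretic count of torsors, and $\tau$ as the Tamagawa measure of the closure of $V(F)$ in $V(\mathbf{A}_F)$ defined in \textsection\ref{ss:tamagawa}. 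Note that the exponent $\rho(V)-1$ is the one used throughout; this equals $\rank(\Pic V) - 1$ by the comparison I sketched earlier, so the formulation agrees with Conjecture~\ref{conj:manin}.

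The central obstacle is that none of the above techniques is general: each is bespoke to the geometry of $V$, and no approach is known even for arbitrary smooth cubic surfaces, let alone arbitrary smooth weak Fano varieties. For the purposes of this paper, what is actually needed is a version of this conjecture as an input to the framework of \textsection\ref{sss:framework}: the proof of Theorem~\ref{thm:mainthm1} will invoke the conjecture only for the specific surfaces $X_d$ over each quadratic field $K$, and the plan outlined there is to reformulate the $K$-point count via the restriction of scalars $\Res_{K/\Q} X_{d,K}$ and then to control it uniformly in $K$ by the $\calO_F$-lattice methods developed in \textsection\ref{s:lattices}. So in practice, one would not prove Conjecture~\ref{c:BM} directly for $V = X_d$ but rather establish the stronger uniform version of Proposition~\ref{prop:introwidmer}, from which the Manin--Peyre asymptotic for $X_d$ over each $K$ is a corollary.
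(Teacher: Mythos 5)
This statement is a conjecture (the Manin--Peyre conjecture), not a theorem: the paper offers no proof of it and only verifies it for the specific family $X_d$ in Theorem~\ref{thm:mainthm1}. You correctly recognised this, and your roadmap accurately reflects both the general state of the art and the paper's actual strategy (reduction to uniform counts over quadratic fields via restriction of scalars and $\calO_F$-lattices), so there is nothing to fault — there is simply no proof to compare against.
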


The order of magnitude was first predicted by Manin and his collaborators~\cite{MR1032922}, \cite{MR974910}; Peyre~\cite{MR1340296} gave an explicit expression for the leading constant (although the~$\beta$ factor appeared later, following the work of Batyrev--Tschinkel~\cite{MR1353919} and of Salberger~\cite{MR1679841}).

We remark that the counting function in the above statement of the Manin--Peyre conjecture uses a relative height. However, we are interested in the counting functions $N_F(\mathcal U,B)$ and $N_F^*(\mathcal U,B)$ which are defined using an absolute height. This is necessary for the heights to remain consistent over finite field extensions, but this choice can lead to potential confusion so the reader is warned to beware.

The explicit form of the constant $c_{V,F} := \alpha(V) \beta(V) \tau(V)$ appearing in Conjecture~\ref{c:BM}  is predicted by Peyre \cite{MR1340296}. Similarly to $\rho(V)$ mentioned above, we introduce a different formulation for $\alpha(V)$, originally defined as a volume in the dual lattice $C^1_{\textup{eff}}(V)^{\vee} \subseteq \left[\Pic V\otimes_{\Z} \R\right]^{\vee}$ by the expression 
\begin{align*}
\alpha(V) := \frac{1}{(\rho(V)-1)!} \int_{C^1_{\textup{eff}}(V)^{\vee}} e^{-\langle K_V, y\rangle} dy,
\end{align*}
where $dy$ is the usual Lebesgue measure. 
We then show in Lemma~\ref{lem:rankseq}(c),(d) that our new formulation of $\alpha$ is equal to the original definition by Peyre.

\begin{conj}[Peyre's constant] 
\label{c:MP}
Using the same setup as above, let $C_{\textup{eff}}^1(\Vbar)$ be the effective cone in $\Pic \Vbar \otimes_{\Z} \R$ and consider the cone in  $ (\Pic \Vbar)^{\Gamma_F} \otimes_{\Z} \R$ given by
\[C_{\textup{eff}}^{1, \Gamma_F} (\Vbar):= C_{\textup{eff}}^1(\Vbar) \cap \left[(\Pic \Vbar)^{\Gamma_F} \otimes_{\Z} \R\right].\]
 Let  $C_{\textup{eff}}^{1, \Gamma_F}(\Vbar)^{\vee}$ be its dual cone in $(\Pic \Vbar)^{\Gamma_F} \otimes_{\Z} \R$, and let $K_\Vbar$ be the canonical class. Then
\[
c_{V,F} = \alpha(V)\beta(V)\tau(V),
\]
where 
\[
\alpha(V) := \frac{1}{(\rho(V)-1)!} \int_{C^{1, \Gamma_F}_{\textup{eff}}(\Vbar)^{\vee}} e^{-\langle K_\Vbar, y\rangle} dy,\]
\[\beta(V) := \#\HH^1(F, \Pic \overline V) \quad \text{ and } \quad  \tau(V) := \omega\bigl(V(\Adeles_F)^{\Br V_F}\bigr)
\] 
and $\omega$ is the Tamagawa measure on $V(\Aff_F)$.
\end{conj}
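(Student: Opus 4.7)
Since this statement is Peyre's conjecture itself (a famous open conjecture), the content that can actually be verified is that the reformulation proposed here agrees with the standard Peyre--Batyrev--Tschinkel--Salberger formula in the literature. My plan is therefore to check each ingredient separately: the exponent $\rho(V)$, the factors $\alpha(V)$, $\beta(V)$, $\tau(V)$. Only the $\rho$ and $\alpha$ steps contain non-trivial content; $\beta$ and $\tau$ match standard definitions verbatim.

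For the exponent, the task is to show that $\rho(V) := \rank((\Pic \Vbar)^{\Gamma_F})$ equals the $\rank \Pic V$ of Conjecture~\ref{conj:manin}. Since the conjecture is only non-vacuous when $V(F)\neq \emptyset$, and $V$ is smooth, projective, and geometrically integral, the low-degree terms of the Hochschild--Serre spectral sequence give
\[
0 \to \Pic V \to (\Pic \Vbar)^{\Gamma_F} \to \Br F \to \Br V,
\]
whose cokernel of the first arrow is torsion (a subgroup of $\Br F$). Tensoring with $\R$ gives the desired equality of ranks, as will be recorded in Lemma~\ref{lem:rankseq}.

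For $\alpha(V)$, Peyre's original recipe integrates $e^{-\langle K_V, y\rangle}$ over the dual cone $C^1_{\textup{eff}}(V)^{\vee} \subseteq (\Pic V \otimes \R)^{\vee}$. Using the rank comparison above, the natural map $\Pic V \otimes \R \to (\Pic \Vbar)^{\Gamma_F} \otimes \R$ is an isomorphism of real vector spaces. Under this identification I would verify that the effective cone of $V$ matches the Galois-invariant effective cone $C^{1,\Gamma_F}_{\textup{eff}}(\Vbar)$: one inclusion is automatic since pullbacks of effective classes from $V$ to $\Vbar$ remain effective and Galois-invariant; for the reverse inclusion, a Galois-invariant effective class on $\Vbar$ admits a Galois orbit of effective representatives whose sum descends to an effective class on $V$ generating the same ray in $\Pic V \otimes \R$. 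Since $K_\Vbar$ pulls back from $K_V$ and the volume integral only depends on the cone, the two versions of $\alpha$ agree.

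Finally, $\beta(V) = \#H^1(F, \Pic \Vbar)$ is the Batyrev--Tschinkel \cite{MR1353919} refinement, and $\tau(V) = \omega(V(\Adeles_F)^{\Br V_F})$ is Salberger's adjustment incorporating the Brauer--Manin obstruction, defined via the Tamagawa measure $\omega$ constructed in \S\ref{ss:tamagawa}. Neither requires verification here. The point of the reformulation is not the comparison itself (which is straightforward Galois descent on rational polyhedral cones) but its payoff later in the paper: phrasing $\rho$ and $\alpha$ in terms of $\Gamma_F$-invariants makes them behave transparently under the base change from $F = \Q$ to quadratic extensions $K$, which is the mechanism driving Proposition~\ref{peyrequad} and ultimately Theorem~\ref{thm:introsumconsts}.
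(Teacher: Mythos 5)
Your reading is correct: since the statement is a conjecture, the only verifiable content is that the reformulation of $\rho$ and $\alpha$ via $\Gamma_F$-invariants agrees with the standard Peyre constant, and this is exactly what the paper records in Lemma~\ref{lem:rankseq}. Your route is essentially the same for $\rho$ (Hochschild--Serre gives a torsion cokernel of $\Pic V \hookrightarrow (\Pic\Vbar)^{\Gamma_F}$, so ranks agree after $\otimes\,\R$) and for the innocuous $\beta$ and $\tau$. The one place you diverge from the paper is the cone comparison $C^1_{\textup{eff}}(V)=C^{1,\Gamma_F}_{\textup{eff}}(\Vbar)$: you argue the nontrivial inclusion by Galois-orbit averaging of an effective divisor representing a $\Gamma_F$-invariant class (the orbit sum is Galois-stable, hence descends, and generates the same ray), whereas the paper invokes flat base change of global sections, $\Gamma(V,\calL)\otimes_F\overline F=\Gamma(\Vbar,\overline\calL)$, so effectivity of an integer class passes up and down in one step. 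Both work; the paper's version handles both inclusions symmetrically with a single cohomological identity, while yours is marginally more elementary and makes the Galois-equivariance visible. In both arguments there is the same small implicit step of passing from integer generators to the full real cone, which is fine since $\Pic\Vbar$ is finitely generated here. One cosmetic remark: the Hochschild--Serre sequence you need, $0\to\Pic V\to(\Pic\Vbar)^{\Gamma_F}\to\Br F$, holds unconditionally for smooth proper geometrically integral $V$, so you do not need to invoke $V(F)\neq\emptyset$ for this step (the rational point is only needed to make $\Br F\to\Br V$ injective, which the argument doesn't use).
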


We conclude with the equivalence of the two formulations of the Manin--Peyre conjecture.

\begin{lemma}\label{lem:rankseq}Let $V$ be a smooth, proper, geometrically integral variety over a number field $F$ with $\Pic \Vbar$ a finitely generated abelian group. Then
\begin{enumerate}
    \item[(a)] $\rank(\Pic V) = \rank((\Pic \Vbar)^{\Gamma_F})$; 
    \item[(b)] $\Pic V \otimes_\Z \R = (\Pic \Vbar)^{\Gamma_F} \otimes_\Z \R$;
    \item[(c)] $C^1_{\textup{eff}}(V) =  C^{1, \Gamma_F}_{\textup{eff}}(\Vbar)$.
    \item[(d)] Under the identification in $(c)$, if $y \in C^1_{\textup{eff}}(V)$ and $y' \in C^{1, \Gamma_F}_{\textup{eff}}(\Vbar)$ correspond to each other, then $\langle K_V, y \rangle = \langle  K_{\Vbar}, y' \rangle$.
    \end{enumerate}
\end{lemma}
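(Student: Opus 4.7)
The plan is to treat (a) and (b) simultaneously using the Hochschild--Serre spectral sequence for \'etale $\mathbb G_m$-cohomology. Since $V$ is proper and geometrically integral, $H^0(\Vbar,\mathbb G_m)=\bar F^*$, so the low-degree terms give the exact sequence
\[
0 \to H^1(\Gamma_F,\bar F^*) \to \Pic V \to (\Pic \Vbar)^{\Gamma_F} \to H^2(\Gamma_F,\bar F^*).
\]
Hilbert 90 kills the leftmost term and the rightmost group is $\Br F$, which is torsion. I will then observe that $\Pic V$ injects into the finitely generated group $(\Pic \Vbar)^{\Gamma_F}$ with torsion cokernel; tensoring with $\R$ immediately yields both (a) and (b) at once.

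For (c), one inclusion is automatic: an effective divisor on $V$ pulls back to an effective divisor on $\Vbar$ with manifestly $\Gamma_F$-invariant class, giving $C^1_{\textup{eff}}(V) \subseteq C^{1,\Gamma_F}_{\textup{eff}}(\Vbar)$ under the identification from (b). For the reverse inclusion I will take $\alpha = \sum_i c_i [D_i] \in C^{1,\Gamma_F}_{\textup{eff}}(\Vbar)$ with $c_i \geq 0$ and $D_i$ effective on $\Vbar$, pick a finite Galois extension $L/F$ over which all the $D_i$ are defined, and exploit the $\Gamma_F$-invariance of $\alpha$ by averaging over $\Gal(L/F)$:
\[
\alpha \;=\; \frac{1}{[L:F]}\sum_{\sigma \in \Gal(L/F)}\sigma(\alpha) \;=\; \sum_i \frac{c_i}{[L:F]}\biggl[\sum_{\sigma \in \Gal(L/F)} \sigma(D_i)\biggr].
\]
Each $\sum_\sigma \sigma(D_i)$ is a $\Gal(L/F)$-invariant effective Weil divisor on $V_L$, and descends to an effective divisor on $V$, placing $\alpha$ in $C^1_{\textup{eff}}(V)$.

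The one step that I expect to require actual care is this final descent: I must verify that a Galois-invariant effective Weil divisor on $V_L$ genuinely descends to an effective divisor on $V$ and not merely to a Picard class. Standard as this is, it is the main technical point, and it follows because $\Gal(L/F)$ permutes the codimension-one integral subschemes of $V_L$ so that their orbits descend to closed subvarieties of $V$, with the multiplicities matching up. Finally, (d) is immediate from naturality: $K_{\Vbar}$ is the pullback of $K_V$ along $\Vbar \to V$, so the injection $\Pic V \hookrightarrow (\Pic \Vbar)^{\Gamma_F}$ sends $[K_V]$ to $[K_{\Vbar}]$; dualising the isomorphism in (b) identifies the ambient dual spaces and, by (c), the dual cones, so the two pairings agree under the identification.
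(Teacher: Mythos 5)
Your (a) and (b) are the same Hochschild--Serre argument as the paper, and (d) agrees with the paper's appeal to flat base change for canonical classes and intersection pairings. Where you genuinely diverge is in the reverse inclusion of (c). The paper passes through coherent cohomology: it invokes compatibility of global sections with flat base change (Hartshorne III.9.3) to say that a line bundle $\calL$ on $V$ is effective precisely when $\calL \otimes_F \overline F$ is effective. That statement is clean for a single \emph{integral} line bundle, but the effective cone is a cone in $\Pic\otimes\R$, so the paper's argument is really showing something about generators and leaves implicit how one passes to arbitrary nonnegative $\R$-combinations. Your averaging argument handles the cone directly: you expand $\alpha = \sum c_i[D_i]$ with $c_i\geq 0$, descend each Galois-orbit sum $\sum_\sigma \sigma(D_i)$ as a $\Gal(L/F)$-invariant effective Weil divisor on $V_L$ to an effective divisor on $V$ (valid because $V$ is smooth, so Weil equals Cartier, and Galois orbits of prime divisors descend), and conclude that $\alpha$ is manifestly a nonnegative combination of effective classes in $\Pic V\otimes\R$. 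This is more geometric and arguably more careful about what the effective cone actually is; the cost is the explicit invocation of Weil-divisor descent, which you correctly flag as the step needing verification. One small thing worth making explicit in a final write-up: you should note that the class $\bigl[\sum_\sigma\sigma(D_i)\bigr]$ is indeed the image of the descended divisor's class under $\Pic V \to \Pic\Vbar$, which is where the identification with $C^1_{\textup{eff}}(V)$ lands. Both proofs are correct; yours is self-contained and sidesteps the coherence input entirely.
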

   
\begin{proof}
\begin{enumerate}
    \item[(a),(b)] Under the assumptions on $V$ and $F$, the Hochschild--Serre spectral sequence yields the exact sequence
\[
0 \to \Pic V \to (\Pic \Vbar)^{\Gamma_F} \to \Br F.
\]
    It follows that the quotient $(\Pic \Vbar)^{\Gamma_F}/\Pic V $ injects into $\Br F$, which is a torsion group, and we get that $((\Pic \Vbar)^{\Gamma_F}/\Pic V)\otimes_\Z \R  = 0$. We conclude that $\Pic V \otimes_\Z \R \to (\Pic \Vbar)^{\Gamma_F}\otimes_\Z \R $ is an isomorphism of finite-dimensional $\R$-vector spaces, which in turn implies that    
    $\rank(\Pic V) = \rank((\Pic \Vbar)^{\Gamma_F})$.
\item[(c)] We note that, by definition and by (b),
     \[
     C^{1, \Gamma_F}_{\textup{eff}}(\Vbar) := C^1_{\textup{eff}}(\Vbar) \cap ((\Pic \Vbar)^{\Gamma_F} \otimes_\Z \R) = C^1_{\textup{eff}}(\Vbar) \cap (\Pic V \otimes_\Z \R).
     \]
     We claim that $C^1_{\textup{eff}}(\Vbar) \cap (\Pic V \otimes_\Z \R) = C^1_{\textup{eff}}(V)$. 
     First, let $\overline{\calL} \in C^1_{\textup{eff}}(\Vbar) \cap (\Pic V \otimes_\Z \R)$. In particular, $\overline{\calL}$ is effective and there is some $\calL \in \Pic V \otimes_\Z \R$ with $\overline{\calL} = \calL \otimes_F \overline{F}$. Since $V$ is a proper $F$-variety and field extensions are flat we have, by \cite[Prop.\ III.9.3]{Hartshorne}, that the space of global sections is compatible with base change to $\overline{F}$ in the sense that
     \[ \Gamma(V, \calL) \otimes_F \overline{F} = \Gamma(\Vbar, \calL \otimes_F \overline{F}).\]
     This proves that $\calL$ is effective if and only if $\overline{\calL}$ is effective.
\item[(d)] This follows from the identification in (c) and by the fact that for proper schemes the canonical divisor and the intersection pairing behave well under flat base change.\qedhere
\end{enumerate}
\end{proof}

\section{Symmetric squares and restriction of scalars}\label{s:symres}

In this section we will recall the basic properties of the symmetric square of a variety $X$, which will itself be a variety if $X$ is quasi-projective. However, it will be singular along the diagonal $\Delta^{(2)} \subseteq \Sym^2 X$ of dimension $\dim X$. To be able to work with line bundles, we will assume $\dim X \geq 2$, so that all varieties are smooth in codimension $1$.

The goal is to count points on $\Sym^2 X$ which we will accomplish by passing through the restriction of scalars. In the first subsection, we discuss points, line bundles, height functions and Brauer classes on the symmetric square $\Sym^2 X$ and how they relate to the associated objects on $X$. Next, we discuss these same objects on the restriction of scalars. In the third subsection, we state the geometric relationship between the symmetric square and the restriction of scalars, tying together the work of the previous two subsections.

Only in the case that $X$ is a surface is the Hilbert--Chow morphism $\Hilb^2 X \to \Sym^2 X$ a crepant desingularisation \cite[Thm.\ 2.4]{fogarty}. In this case, the point count is predicted by the Manin--Peyre conjecture for $\Hilb^2 X$. In the final subsection, we study the Hilbert scheme of two points on a surface in more detail in preparation for the computation in \textsection\ref{S:Tamagawakening} of the leading term for the point count on $\Sym^2 X$ predicted by the Manin--Peyre conjecture~\ref{conj:manin}.

\subsection{The symmetric square}\label{ss:sym2}

In this section, we are always going to take $X$ to be a smooth, quasi-projective, geometrically integral variety of dimension at least $2$ over a number field $F$.
We state some known and new properties of the symmetric square of $X$, denoted $\Sym^2 X := X^2/S_2$. This is the variety whose points are unordered pairs of points of $X$.

Let us write $[P_1,P_2]$ for the class in $X^2(\overline{F})/S_2$ of $(P_1,P_2) \in X^2(\overline{F})$. The Galois action on $X^2(\overline{F})$ induces one on $X^2(\overline{F})/S_2$. The following properties are well known.  

\begin{enumerate}
\item[(a)] The $F$-points of $\Sym^2X$ are the $\overline{F}$-points which are fixed by the absolute Galois group $\Gal(\overline{F}/F)$. That is:
\[
\Sym^2 X(F) = \{Q :=[P_1,P_2] \in X^2(\overline{F})/S_2 \colon Q \text{ is fixed by} \Gal(\overline{F}/F)\}.
\]
More concretely, the Galois group either fixes each component or it exchanges the two points. This leads to the decomposition
\[
\Sym^2X(F) = \{[P_1, P_2]: P_1, P_2 \in X(F)\} \  \sqcup  \bigsqcup_{[K:F]=2}\{[P,\overline{P}]: P \in X(K), F(P) = K\}.
\]
\item[(b)] The diagonal $\Delta \subseteq X^2$ descends to the diagonal $\Delta^{(2)} \subseteq \Sym^2 X$ parametrising points $[P,P]$ for $P \in X$, and it gives the singular locus of $\Sym^2 X$ (to see this, note that the stabiliser of $S_2$ at these points is non-trivial).
\item[(c)] The quotient $\Sym^2 X$ is Gorenstein and so admits an invertible dualising sheaf $\omega_{\Sym^2 X}$, by applying \cite[Thm.\ 3]{watanabe} to the $S_2$-invariant subrings of local rings of $X^2$.
\end{enumerate}

In what follows, we let $\pi: X^2 \to \Sym^2 X$ be the natural quotient map; since $\pi$ is a finite surjective morphism and $\Sym^2 X$ is normal as the finite quotient of a normal integral quasi-projective variety, see e.g.\ \cite[Ch.\ 0, \S2]{mumford1994geometric}), we have that $\pi$ induces a norm map, which we denote by $N_\pi$, sending line bundles on $X^2$ to line bundles on $\Sym^2 X$, see e.g.\ \cite[\href{https://stacks.math.columbia.edu/tag/0BCX}{Tag 0BCX}]{stacks-project}). We will also denote by $\pi_i: X^2 \to X$ the natural projection onto the $i$-th component, for $i = 1,2$.
\begin{defn}
For a line bundle $\calL$ on $X$, we induce a line bundle on $\Sym^2X$ via
\[
\mathcal L^{(2)} := N_\pi(\pi_1^*\calL).
\]
\end{defn}

We note that this line bundle $\calL^{(2)}$ on $\Sym^2X$ is identical to the one produced by Le Rudulier \cite[p.\ 30]{lerudulier-thesis}. 

\begin{prop}\label{prop:props of symsquare}
Let $X$ be a weak Fano variety of dimension at least $2$ over a number field $F$.
\begin{enumerate}
\item[(a)] The line bundles $\omega^{(2)}_X$ and $\omega_{\Sym^2 X}$ are naturally isomorphic.
\item[(b)] The line bundle $\omega_{\Sym^2 X}^{\vee}$ is big and nef. In particular, $\Sym^2 X$ is weak Fano.
\item[(c)] An adelic metric $\|.\|_v$ on $\calL$ induces a natural adelic metric $\|.\|_v$ on the line bundle $\mathcal L^{(2)}$ on $\Sym^2 X$ defined by
\[
\|s(Q)\|_v = \|\pi^*s(P_1,P_2))\|_v
\]
for $Q=\pi(P_1,P_2)$, where the $v$-adic norm on the right is the induced adelic metric on $\mathcal L^{\boxtimes 2}$ on $X \times X$ obtained by pulling back and taking the product.
\end{enumerate}
\end{prop}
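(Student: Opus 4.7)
The plan is to handle the three parts in turn, with (a) providing the foundation for both (b) and (c).

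For part (a), since $\dim X \geq 2$, the diagonal $\Delta^{(2)} \subseteq \Sym^2 X$ has codimension at least $2$, and on the open complement $U := \Sym^2 X \setminus \Delta^{(2)}$ the quotient $\pi$ restricts to an \'etale $S_2$-Galois cover $\pi\colon X^2 \setminus \Delta \to U$. I will invoke the standard description of the norm on such a cover: for any line bundle $\calM$ on $X^2 \setminus \Delta$, the line bundle $\calM \otimes \sigma^*\calM$ is naturally $S_2$-equivariant via the flip, and $N_\pi(\calM)$ is its descent to $U$. Applying this to $\calM = \pi_1^*\omega_X|_{\pi^{-1}(U)}$, we find that $\omega_X^{(2)}|_U$ is the descent of $\pi_1^*\omega_X \otimes \pi_2^*\omega_X = \omega_{X^2}|_{\pi^{-1}(U)}$. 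Since $\pi|_{\pi^{-1}(U)}$ is \'etale we also have $\omega_{X^2}|_{\pi^{-1}(U)} = \pi^* \omega_U$ with its canonical equivariant structure, so $\omega_{X^2}|_{\pi^{-1}(U)}$ equally descends to $\omega_{\Sym^2 X}|_U$. Hence $\omega_X^{(2)}|_U \cong \omega_{\Sym^2 X}|_U$. As $\Sym^2 X$ is normal and Gorenstein, both sheaves are line bundles, hence reflexive rank-one sheaves; such sheaves are uniquely determined by their restrictions to any open subset whose complement has codimension at least $2$, so the isomorphism on $U$ extends globally.

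For part (b), from (a) we obtain $\pi^*\omega_{\Sym^2 X}^{\vee} \cong \omega_{X^2}^{\vee} = \pi_1^*\omega_X^{\vee} \otimes \pi_2^*\omega_X^{\vee}$. Since $X$ is weak Fano, $\omega_X^{\vee}$ is big and nef, and the external tensor product of big-and-nef line bundles on a product is again big and nef (nefness is immediate from pullback, while bigness follows from K\"unneth or from positivity of the top self-intersection number). Thus $\pi^*\omega_{\Sym^2 X}^{\vee}$ is big and nef on $X^2$. Both properties descend along a finite surjective morphism to a proper variety --- nefness by pulling curves back along $\pi$, and bigness because $(\pi^*\mathcal L)^n = (\deg \pi)\cdot \mathcal L^n$ for $n = \dim \Sym^2 X$ --- so $\omega_{\Sym^2 X}^{\vee}$ is itself big and nef.

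For part (c), I will verify the axioms of Definition~\ref{defn:adelically metrised} for the prescribed formula $\|s(Q)\|_v := \|\pi^*s(P_1,P_2)\|_v$. Independence from the choice of preimage $(P_1,P_2)$ versus $(P_2, P_1)$ follows from the symmetry of the product metric on $\pi_1^*\calL \otimes \pi_2^*\calL$ under the swap together with the $S_2$-invariance of $\pi^*s$. Continuity and Galois invariance at each place transfer directly from the corresponding properties of $\|\cdot\|_v$ on $\calL$, using that $\pi(\C_v)$ is continuous and $\Gal(\C_v/F_v)$-equivariant. For the \emph{induced by a model} condition at almost all places, one spreads $X$ out to an integral model $\calX$ over $\Spec \calO_F[1/N]$ realising the given metrics at all $v\nmid N\infty$; then $\Sym^2 \calX = \calX^2/S_2$ exists after possibly enlarging $N$, the construction of $\calL^{(2)}$ via the norm extends to this integral model, and the metric induced by the extended model matches $\|\cdot\|_v$ at almost all places. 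When $\calL$ is not very ample, one writes $\calL \cong \calL_1 \otimes \calL_2^{-1}$ with very ample $\calL_i$ and uses the functoriality $N_\pi(\pi_1^*\calL) = N_\pi(\pi_1^*\calL_1) \otimes N_\pi(\pi_1^*\calL_2)^{-1}$ to reduce to the very ample case.

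The main obstacle, on which the rest rely, is the identification in part (a) of the norm $N_\pi(\pi_1^*\omega_X)$ on the \'etale locus with the descent of $\omega_{X^2}$, together with the careful use of reflexivity to extend across the singular locus. Once this is in hand, part (b) follows from standard descent of positivity along finite covers and part (c) reduces to routine place-by-place verification of the axioms of an adelic metric.
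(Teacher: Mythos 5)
Your argument follows essentially the same route as the paper's and is correct, with minor differences in presentation. For part (a) the paper works globally: since $\pi$ is \'etale in codimension~$1$ it has the crepancy isomorphism $\omega_{X^2}\cong \pi^*\omega_{\Sym^2X}$, then computes $\pi^*\omega_X^{(2)}=\pi_1^*\omega_X\otimes\sigma^*\pi_1^*\omega_X=\omega_{X^2}$, and concludes by effective descent along $\pi$. You instead restrict to the \'etale locus $U$, match the two descents there, and then extend across $\Delta^{(2)}$ by reflexivity of rank-one sheaves on the normal variety $\Sym^2X$. These are two phrasings of one idea; both hinge on the codimension-$\geq 2$ of the diagonal and on the norm description of $\calL^{(2)}$, and both leave implicit the same verification that the two $S_2$-equivariant structures on $\omega_{X^2}$ — the flip used in the norm construction and the canonical pullback action on top forms, which a priori differ by the sign $(-1)^{\dim X}$ — actually agree, a point which is harmless for surfaces but worth being aware of. For part (b) your argument matches the paper's: pull back along $\pi$, test nefness by pushing curve classes forward via the projection formula, and get bigness from degree/volume multiplicativity of the finite cover. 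For part (c) the paper simply cites Le Rudulier's thesis for the surface case and asserts the general case is identical, so your place-by-place verification of the axioms (well-definedness under the swap, continuity, Galois-equivariance, and the integral-model condition at almost all places) is a reasonable unpacking of what that citation hides rather than a genuinely different route.
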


\begin{proof}
\begin{enumerate}
\item[(a)] Since $\dim X \geq 2$, it follows that the diagonal $\Delta \subseteq X^2$ has codimension $\geq 2$ in $X^2$. It follows that $\pi$ is étale in codimension 1 and thus that we have a natural isomorphism $\omega_{X^2} = \pi^\ast\omega_{\Sym^2 X}$. Moreover, we know that $\omega_{X^2} = \pi_1^*\omega_X \otimes \pi_2^*\omega_X$ (\cite{Hartshorne}~Exercise II.8.3(b)), and that $\pi^{\ast}(\omega_X^{(2)}) = \pi^{\ast}(N_{\pi}(\pi^\ast_1 \omega_X)) = \pi^\ast_1 \omega_X \otimes \sigma^\ast \pi^\ast_1 \omega_X = \pi_1^*\omega_X \otimes \pi_2^*\omega_X$, where $\sigma$ is the non-trivial element of $S_2$. Hence,
\( \pi^\ast(\omega_{\Sym^2 X}) = \pi^{\ast}(\omega_X^{(2)}) =  \pi_1^*\omega_X \otimes \pi_2^*\omega_X, \)
and using effective descent to $\Sym^2X$ yields the natural  isomorphism $\omega_{\Sym^2 X} = \omega^{(2)}_X$.
\item[(b)]  To prove nefness, let $C \subseteq \Sym^2 X$ be a curve and consider $\tilde{C} := \pi^{-1}C \subseteq X^2$, then $\pi_* \tilde{C} = 2C$ as $\pi$ is finite in codimension $1$ of degree $2$. Since $\omega^\vee_{X^2} = \pi^{\ast}\omega^\vee_{\Sym^2 X}$, by the projection formula we get
\[
2 \omega^\vee_{\Sym^2 X} \cdot C = \omega^\vee_{\Sym^2 X} \cdot \pi_* \tilde C = \pi^*\omega^\vee_{\Sym^2 X} \cdot \tilde C = \omega^\vee_{X^2} \cdot \tilde C,
\]
and we conclude from the nefness of the anticanonical line bundle on $X^2$.

For nef line bundles, bigness is equivalent to positivity of the volume, see \cite[Thm.\ 2.2.16]{lazarsfeld}. From $\vol \omega^\vee_{X^2} = \vol(\pi^\ast\omega^\vee_{\Sym^2 X}) = \deg \pi \cdot \vol\omega^\vee_{\Sym^2 X}$ by e.g.\ \cite[Lem.\ 3.3.6]{holschbach-thesis} and the bigness of $\omega^\vee_{X^2}$, we conclude that $\omega^\vee_{\Sym^2 X}$ is big.
\item[(c)] The proof for surfaces in \cite[Prop.\ 1.33]{lerudulier-thesis} also applies to the general case. \qedhere
\end{enumerate}
\end{proof}

From the naturally induced adelic metric on $\Sym^2 X$ we obtain a natural height function.

\begin{prop}\label{prop:height on Sym}
    Let $X/F$ be as above and suppose that $X$ is equipped with an adelically metrised line bundle $\mathcal L$. With the respect to the natural adelically metrised line bundle $\mathcal L^{(2)}$ on $\Sym^2 X$ we obtain the absolute height function $H^{(2)}$ on $\Sym^2 X(\bar F)$ which is given by
    \[
    H^{(2)}_{\calL^{(2)}}(Q) := H_{\calL}(P_1)H_{\calL}(P_2)
    \]
    on a point $Q=[P_1,P_2]$.
\end{prop}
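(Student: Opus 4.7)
The plan is to reduce the claim to a purely local computation and then package the local identities into the definition of the relative height over a suitable common field of definition. Pick a finite extension $L/F$ large enough that $P_1, P_2 \in X(L)$ and hence $Q = [P_1, P_2] \in \Sym^2 X(L)$. Since the absolute height is independent of the auxiliary field over which one computes it, it suffices to verify the multiplicativity at the level of the relative height $H^{(2)}_{\calL^{(2)}, L}$ and then extract the $[L:\Q]$-th root.

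The first step is to interpret the defining formula for the metric on $\calL^{(2)}$ from Proposition~\ref{prop:props of symsquare}(c) at the level of local sections. Working away from the diagonal $\Delta^{(2)}$, the quotient map $\pi \colon X^2 \to \Sym^2 X$ is étale, and $\pi^\ast \calL^{(2)} = \pi_1^\ast \calL \otimes \pi_2^\ast \calL = \calL \boxtimes \calL$ as shown in the proof of Proposition~\ref{prop:props of symsquare}(a). On a sufficiently small étale neighbourhood of $(P_1, P_2)$, any local trivialising section $s$ of $\calL^{(2)}$ near $Q$ pulls back to a decomposable section $s_1 \otimes s_2$, where $s_i$ is a local section of $\calL$ non-vanishing at $P_i$. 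Because the adelic metric on $\calL \boxtimes \calL$ is by construction the product of the two pullback metrics, the defining identity reads
\[
\|s(Q)\|_v = \|(\pi^\ast s)(P_1, P_2)\|_v = \|s_1(P_1)\|_v \cdot \|s_2(P_2)\|_v
\]
for every place $v$ of the appropriate completion.

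The second step is to plug this into the definition of the relative height and regroup the factors. For each place $w \in \Omega_L$ above a rational prime $p$,
\[
\|s(Q)\|_w^{-[L_w : \Q_p]} = \|s_1(P_1)\|_w^{-[L_w : \Q_p]} \cdot \|s_2(P_2)\|_w^{-[L_w : \Q_p]},
\]
and multiplying over all $w$ yields $H^{(2)}_{\calL^{(2)}, L}(Q) = H_{\calL, L}(P_1)\,H_{\calL, L}(P_2)$. Taking $[L:\Q]$-th roots and using the defining relation $H_{\calM}(P) = H_{\calM, L}(P)^{1/[L:\Q]}$ for absolute versus relative heights then gives the claimed identity $H^{(2)}_{\calL^{(2)}}(Q) = H_{\calL}(P_1)\,H_{\calL}(P_2)$.

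The main subtlety—and the only step requiring care—is justifying that one can choose local sections so that $\pi^\ast s$ decomposes as $s_1 \otimes s_2$ and that the identity of metrics at $Q$ is unaffected by the choices made. This is a consequence of the étale local structure of $\pi$ off the diagonal, together with the fact that the adelic heights themselves do not depend on which non-vanishing local section is used to evaluate the norm. Since a point $Q$ lying on the diagonal corresponds to $P_1 = P_2$ and reduces to the classical identity $H_{\calL}(P)^2 = H_{\calL}(P) \cdot H_{\calL}(P)$ by a continuity/limit argument (or by reworking the computation with the natural symmetric square of the metric, which is smooth across the diagonal by Proposition~\ref{prop:props of symsquare}(c)), no additional work is needed there.
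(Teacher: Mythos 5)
Your proof is correct and fills in the argument that the paper leaves implicit (Proposition~\ref{prop:height on Sym} is stated without proof, as it follows directly from the metric formula in Proposition~\ref{prop:props of symsquare}(c)). The core chain of equalities is right: choose $L$ so that $P_1,P_2\in X(L)$, use $\|s(Q)\|_w=\|(\pi^*s)(P_1,P_2)\|_w$, recognise the right-hand side as the product metric on $\calL\boxtimes\calL$, multiply over places of $L$, and take $[L:\Q]$-th roots.

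Two parts of your write-up are more elaborate than necessary and slightly obscure the argument. First, you do not need $\pi^*s$ to decompose as $s_1\otimes s_2$: the relative height of $(P_1,P_2)\in (X\times X)(L)$ with respect to the product-metrised $\calL\boxtimes\calL$ is independent of the chosen non-vanishing local section, so you may evaluate with $\pi^*s$ directly and then invoke the standard multiplicativity of heights on products, $H_{\calL\boxtimes\calL,L}(P_1,P_2)=H_{\calL,L}(P_1)H_{\calL,L}(P_2)$. This bypasses the appeal to étale local structure entirely. Second, and as a consequence, the diagonal does not require any separate continuity/limit argument: the defining identity $\|s(Q)\|_v=\|(\pi^*s)(P_1,P_2)\|_v$ holds for every $(P_1,P_2)$ in the fibre of the finite morphism $\pi$, diagonal or not, and the product metric on $\calL\boxtimes\calL$ is defined everywhere. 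The computation $H^{(2)}([P,P])=H(P)^2$ therefore comes out of the same calculation with no caveat; flagging a potential subtlety there suggests a dependence on étaleness that the argument never actually uses.
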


We also need to understand how to descend elements of the Brauer group $\Br X$ down to $\Br(\Sym^2 X)$. First we need to understand the following fact about Brauer groups of fields, which we will apply to function fields of geometrically integral varieties.

\begin{prop} \label{prop:coresBr}
    Let $\kappa'/\kappa$ be a Galois extension of fields with Galois group $G$. An element $\sigma \in G$ induces a group homomorphism $\sigma^*  \colon \Br \kappa' \to \Br \kappa'$. The following diagram commutes
\[    \begin{tikzcd}
        \Br \kappa' \arrow[rr, "\sigma^*"] \arrow[dr,"\cores_{\kappa'/\kappa}" swap] & & \Br \kappa' \arrow[dl,"\cores_{\kappa'/\kappa}"]\\
        & \Br \kappa
    \end{tikzcd}\]
\end{prop}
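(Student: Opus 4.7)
The plan is to translate the statement into Galois cohomology and invoke a classical property of corestriction from a normal subgroup. Since $\kappa'/\kappa$ is Galois with group $G$, there is a short exact sequence of profinite groups
\[
1 \to \Gamma_{\kappa'} \to \Gamma_\kappa \to G \to 1,
\]
so $\Gamma_{\kappa'}$ is a normal subgroup of finite index in $\Gamma_\kappa$. Under the standard identifications $\Br \kappa' = H^2(\Gamma_{\kappa'}, \overline{\kappa}^\times)$ and $\Br \kappa = H^2(\Gamma_\kappa, \overline{\kappa}^\times)$, the field-theoretic corestriction $\cores_{\kappa'/\kappa}$ is precisely the cohomological transfer $\cores^{\Gamma_\kappa}_{\Gamma_{\kappa'}}$.

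First I would verify that for $\sigma \in G$ and any lift $\tilde\sigma \in \Gamma_\kappa$ of $\sigma$, the functorial map $\sigma^* \colon \Br \kappa' \to \Br \kappa'$ agrees with the conjugation action $c_{\tilde\sigma}^*$ on $H^2(\Gamma_{\kappa'}, \overline{\kappa}^\times)$. This is a direct functoriality check at the level of cocycles, and the independence from the choice of lift reflects the classical fact that inner automorphisms of $\Gamma_{\kappa'}$ act trivially on the cohomology of $\Gamma_{\kappa'}$. With this identification in hand, the commutativity of the diagram reduces to the following well-known property of group cohomology: for a normal subgroup $H \trianglelefteq G$ of finite index and any $g \in G$, the transfer satisfies $\cores_H^G \circ c_g^* = \cores_H^G$ on $H^n(H, M)$. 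At the cocycle level this is essentially formal, because $\cores_H^G$ is defined via a choice of coset representatives of $H$ in $G$ (and is independent of that choice), while conjugation by $g$ merely permutes these representatives.

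I expect the only mild obstacle to be the first step: carefully unwinding the definition of $\sigma^*$ which arises from the functoriality of the Brauer group along the field isomorphism $\sigma \colon \kappa' \to \kappa'$, and matching it precisely with the cohomological conjugation action induced by a lift $\tilde\sigma$. Once this identification is established, the commutativity of the triangle is an immediate consequence of the conjugation-invariance of the transfer for normal subgroups, and no further computation is needed.
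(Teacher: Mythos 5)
Your plan is correct and will go through, but it takes a genuinely different route from the paper. Both arguments start by translating to Galois cohomology, $\Br\kappa' = H^2(\Gamma_{\kappa'},\overline{\kappa}^\times)$ and $\Br\kappa = H^2(\Gamma_\kappa,\overline{\kappa}^\times)$, and both must make the (easy but worth flagging) identification of the functorial map $\sigma^*$ with the cohomological conjugation $c_{\tilde\sigma}^*$ for a lift $\tilde\sigma\in\Gamma_\kappa$; you are right that this is the only nontrivial unwinding in either approach. After that, you invoke the classical conjugation-invariance of the transfer: for $H\trianglelefteq G$ of finite index, $\cores^G_H\circ c_g^* = \cores^G_H$ on $H^n(H,M)$, proven at the cocycle level by permuting coset representatives. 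The paper instead argues by dimension shifting: corestriction and $\sigma^*$ are both $\delta$-functorial, so it suffices to check the diagram in degree $0$, where $\cores$ is the field norm $N_{\kappa'/\kappa}\colon\kappa'^\times\to\kappa^\times$ and the identity $N_{\kappa'/\kappa}(\sigma x)=N_{\kappa'/\kappa}(x)$ is immediate. Your argument is the one more commonly found in textbooks on group cohomology and makes the role of normality transparent; the paper's argument is shorter once dimension shifting is taken for granted and localizes the whole computation in degree $0$. Neither is more general in any essential way here, since both crucially use that $\Gamma_{\kappa'}\trianglelefteq\Gamma_\kappa$, i.e.\ that $\kappa'/\kappa$ is Galois.
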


\begin{proof}
    We identify $\Br \kappa$ and the Galois cohomology group $\HH^2(\Gal(\overline{\kappa}/\kappa), \overline{\kappa}^{\times})$ and similarly $\Br \kappa' = \HH^2(\Gal(\overline{\kappa}/\kappa'), \overline{\kappa}^{\times})$. We note that $\cores_{\kappa'/\kappa}\colon \Br \kappa' \to \Br \kappa$ is the norm map on the abelian groups by dimension shifting from the $0$-th cohomology groups $\cores_{\kappa'/\kappa}: \kappa'^{\times} \to \kappa^{\times}$. Since the action of $\sigma^*$ leaves the corestriction map unchanged on the $0$-th cohomology groups, the diagram above commutes.
\end{proof}

\begin{defn}\label{def:cores}
    Consider $\mathcal A \in \Br X$. For a projection $\pi_i \colon X^2 \to X$ we define $\mathcal A^{(2)}:= \cores_{\pi} (\pi_i^* \mathcal A) \in \Br\left(\Sym^2 X\right)$ using the finite morphism $\pi \colon X^2 \to \Sym^2 X$.
\end{defn}

The following proposition can be generalised to arbitrary symmetric powers of quasi-projective geometrically integral varieties over number fields, but we will only require it for symmetric squares.

\begin{prop}\label{prop:BrSym2}
Let $X$ be a variety as above defined over a number field $F$.
\begin{enumerate}
        \item[(a)] The Azumaya class $\mathcal A^{(2)}$ is independent of the chosen projection $\pi_i$.
        \item[(b)] Consider a local point $Q_v \in \Sym^2 X(F_v)$, which has the form
        \[
        \begin{cases}
        [P_{v,1},P_{v,2}] & \text{ for $P_{v,i} \in X(F_v)$, or}\\
        [P_{\mathfrak v},\bar P_{\mathfrak v}] & \text{ for $P \in X(K_{\mathfrak v})$}
        \end{cases}
        \]
        for a quadratic field extension $K_{\mathfrak v}/F_v$ with the unique place $\mathfrak v$. In the respective cases we have
        \[
        \inv_v \calA^{(2)}(Q_v) =
        \begin{cases}
            \inv_v \calA(P_{v,1}) + \inv_v \calA(P_{v,2})\\
            \inv_{\mathfrak v} \calA(P_{\mathfrak v}).
        \end{cases}
        \]
    \end{enumerate}
\end{prop}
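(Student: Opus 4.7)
My plan for (a) is to reduce the equality to the function field setting, where Proposition~\ref{prop:coresBr} applies directly. Let $\sigma \in S_2$ denote the non-trivial swap, which generates $\Gal(F(X^2)/F(\Sym^2 X))$. Since $\pi_1 = \pi_2 \circ \sigma$, at the level of function field Brauer groups we have $\pi_1^*\calA = \sigma^* \pi_2^*\calA$, and Proposition~\ref{prop:coresBr} then yields $\cores_\pi(\pi_1^*\calA) = \cores_\pi(\pi_2^*\calA)$ in $\Br F(\Sym^2 X)$. To descend this equality to $\Br \Sym^2 X$ itself, I would restrict to the smooth open locus $U = \Sym^2 X \setminus \Delta^{(2)}$, on which $\pi$ is \'etale; since $\Delta^{(2)}$ has codimension $\dim X \geq 2$, the classes $\cores_\pi(\pi_i^*\calA)$ extend uniquely from $U$ to $\Sym^2 X$, and the injectivity $\Br U \hookrightarrow \Br F(U)$ on the smooth $U$ allows us to conclude.

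For (b), the key tool is the base change formula for corestriction along the closed point $\iota_{Q_v}\colon \Spec F_v \to \Sym^2 X$. For the Cartesian square
\[
\begin{tikzcd}
\pi^{-1}(Q_v) \ar[r, "\tilde\iota"] \ar[d, "\tilde\pi"] & X^2 \ar[d, "\pi"] \\
\Spec F_v \ar[r, "\iota_{Q_v}"] & \Sym^2 X,
\end{tikzcd}
\]
the proper base change theorem combined with the compatibility of the norm under base change gives $\iota_{Q_v}^* \cores_\pi = \cores_{\tilde\pi} \tilde\iota^*$, which reduces the evaluation of $\calA^{(2)}(Q_v)$ to a corestriction of $\tilde\iota^*\pi_1^*\calA$ along the fiber. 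I would then analyse the two possibilities. In Case 1, $\pi^{-1}(Q_v) = \Spec F_v \sqcup \Spec F_v$ corresponding to the ordered pairs $(P_{v,1},P_{v,2})$ and $(P_{v,2},P_{v,1})$; corestriction along a disjoint union of identity morphisms is summation, and since $\tilde\iota^*\pi_1^*\calA$ evaluates to $\calA(P_{v,1})$ and $\calA(P_{v,2})$ on the two components respectively, applying $\inv_v$ yields the first formula. In Case 2, the fiber is a single closed $F_v$-point with residue field $K_{\mathfrak v}$, so $\cores_{\tilde\pi}$ is the standard Brauer group corestriction $\cores_{K_{\mathfrak v}/F_v}$ and $\tilde\iota^*\pi_1^*\calA = \calA(P_{\mathfrak v}) \in \Br K_{\mathfrak v}$; the local class field theoretic identity $\inv_v \circ \cores_{K_{\mathfrak v}/F_v} = \inv_{\mathfrak v}$ then produces the second formula.

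The main technical hurdle I anticipate is rigorously justifying the base change formula for $\cores_\pi$ when $Q_v$ lies on the ramification divisor $\Delta^{(2)}$, where $\pi$ is not \'etale (and may even fail to be flat) and the scheme-theoretic fiber may be non-reduced. For the intended application to the Brauer--Manin pairing on $\Sym^2 X$, one only needs the formula at $Q_v \notin \Delta^{(2)}$, and the degenerate diagonal case $P_{v,1}=P_{v,2}$ of Case~1 may be handled separately by direct computation on an affine chart or by specialising from the generic case, in either instance producing the expected value $2 \inv_v \calA(P_v)$ consistent with the stated formula.
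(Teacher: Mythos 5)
Your proposal is correct and follows the paper's own argument: part (a) is proved by reducing to function fields via Proposition~\ref{prop:coresBr}, and part (b) by evaluating the corestriction on the fibre $\pi^{-1}(Q_v)$ (which is $\Spec F_v \sqcup \Spec F_v$ or $\Spec K_{\mathfrak v}$ depending on the case) and invoking $\inv_v \circ \cores_{K_{\mathfrak v}/F_v} = \inv_{\mathfrak v}$, which is the same local-class-field-theoretic fact the paper reaches via the compatibility of residue maps with corestriction. Your extra care in (a) — passing through the smooth open complement of $\Delta^{(2)}$ to justify the injection into the function-field Brauer group before extending back over the singular locus — and your flagging of the non-\'etale fibre at the diagonal in (b) are sound refinements of steps the paper treats more tersely.
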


\begin{proof}
\begin{enumerate}
    \item[(a)] Let $\sigma: X^2 \to X^2$ denote the map that swaps the two factors, so that $\pi_1 \circ \sigma = \pi_2$. The induced map on the Brauer groups $\sigma^*: \Br(X^2) \to \Br(X^2)$ then satisfies $\sigma^*\pi_1^*(\calA) = (\pi_1 \circ \sigma)^*(\calA) = \pi_2^*(\calA)$.

    Furthermore, $\sigma$ restricts to the trivial automorphism on $\Sym^2 X$, and thus it corresponds to the non-trivial element of the Galois group of the function fields ${\Gal(\kappa(X^2)/\kappa(\Sym^2 X)) \isom \Z/2\Z}$. The latter isomorphism comes from the fact that $\pi:X^2 \to \Sym^2 X$ is generically $2$-to-$1$. Now the smoothness of $X^2$ implies $\Br(X^2) \subseteq \Br \kappa(X^2)$ and noting that $\cores_{\pi} = \cores_{\kappa(X^2) / \kappa(\Sym^2 X)}$, it follows that $\cores_\pi(\pi_1^* \calA) = \cores_\pi(\pi_2^* \calA)$, that is, $\calA^{(2)}$ does not depend on the choice of the projection $\pi_i$ by Proposition~\ref{prop:coresBr}.

    \item[(b)] Let us consider the case $Q_v=[P_{v,1},P_{v,2}]$ first. Note that since the action of $S_2$ on $X^2$ swaps the factors, it follows that for any $\calB \in \Br(X^2)$ we have that 
    \[
    \cores_\pi \calB ([P_{v,1}, P_{v,2}]) = \calB(P_{v,1}, P_{v,2}) + \calB(P_{v,2}, P_{v,1}).
    \]
    Hence for $\calB = \pi_1^* \calA$ we obtain
    \[
    \inv_v \calA^{(2)}(Q_v) = \inv_v \calA(P_{v,1}) + \inv_v \calA(P_{v,2}).
    \]

    For the second case $[P_{\mathfrak v},\bar P_{\mathfrak v}]$ for $P \in X(K_{\mathfrak v})$ we obtain
    \[
    \calA^{(2)}(Q_v) = \cores_{K_{\mathfrak v}/\Q_v} \calA(P_{\mathfrak v}).
    \]
    Since residue maps (as defined by Serre) commute with corestriction, see \cite[Prop.\ 1.4.7]{brauerbook}, they are the negative of those defined by Witt \cite[Thm.\ 1.4.14]{brauerbook}, and invariant maps are defined in terms of residue maps \cite[Defn.\ 13.1.7]{brauerbook}, we obtain
    \[
    \inv_v \calA^{(2)}(Q_v) = \inv_{\mathfrak v} \calA(P_{\mathfrak v}).\qedhere
    \]
\end{enumerate}
\end{proof}

\subsection{Restriction of scalars}

Let $V'$ be a variety over a number field $F'/F$. We record some facts on the restriction of scalars functor $\Res_{F'/F} V' \colon \mathbf{Sch}_{F} \to \mathbf{Set}, S \mapsto V'(S \times_F F')$.

Let $L \subseteq \overline \Q$ be the normal closure of $F'$. We consider all embeddings $j \colon F' \to L$. We will write $V'^{(j)}$ for the $L$-variety obtained by pulling back $V'$ along $j$. For an $F$-automorphism $\sigma$ of $L$ we have a natural isomorphism $V'^{(\sigma \circ j)} \to V'^{(j)}$ over $\sigma^* \colon \Spec L \to \Spec L$. 

\begin{prop}\label{prop:weilrestriction}
\begin{enumerate}
\item[(a)] The morphisms $\sigma^*$ endow $\prod_j V'^{(j)}$ with a descent datum. If $V'$ is quasi-projective over $F'$, then this datum is effective, and hence descends to an $F$-scheme $\Res_{F'/F} V'$ representing this functor.
\item[(b)] If $V'$ is smooth and (quasi-) projective over $F'$, then $\Res_{F'/F} V'$ is smooth and (quasi-) projective over $F$, respectively. 
\item[(c)] The natural bijection $\nu_{V',A} \colon \Res_{F'/F} V'(A) \xrightarrow{\cong} V'(A\otimes_F F')$ for an $F$-algebra $A$ can be realised as follows: an $A$-point on $\Res_{F'/F} V'$ pulls back to an $A\otimes_F F'$-point on $\left(\Res_{F'/F} V'\right)_{F'}$, which gives a point in $V'(A \otimes_F F')$ under the counit  of adjunction
\[
\epsilon_{V'} \colon \left(\Res_{F'/F} V'\right)_{F'} \to V'.
\]
\end{enumerate}
\end{prop}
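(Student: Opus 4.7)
The plan is to treat the three parts separately, with (a) as the main technical input via Galois descent, (b) as a formal consequence, and (c) as an unwinding of definitions.

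For part (a), I would set $G = \Gal(L/F)$, which acts on the set of embeddings $j \colon F' \hookrightarrow L$ by post-composition. For each $\sigma \in G$, the collection of natural isomorphisms $V'^{(\sigma \circ j)} \xrightarrow{\cong} V'^{(j)}$ covering $\sigma^* \colon \Spec L \to \Spec L$, as $j$ ranges over all embeddings, assembles into an isomorphism $\phi_\sigma$ between the $L$-scheme $\prod_j V'^{(j)}$ and its conjugate under $\sigma^*$. The cocycle condition $\phi_{\sigma \tau} = \sigma^* \phi_\tau \circ \phi_\sigma$ is then immediate from the naturality of the defining isomorphisms in $\sigma$, and the $S_n$-equivariance comes from the fact that post-composition by $\sigma$ just permutes the factors. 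To conclude that this descent datum is effective when $V'$ is quasi-projective, I would invoke the classical theorem that Galois descent is effective for quasi-projective schemes admitting a relatively ample line bundle (SGA~1, Exp.~VIII.7.7). The representability of the functor $S \mapsto V'(S \times_F F')$ by the resulting $F$-scheme $\Res_{F'/F} V'$ then follows from the universal property of descent combined with faithfully flat descent for morphisms.

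For part (b), I would use that smoothness and (quasi-)projectivity are stable under base change, preserved by finite products, and descend along faithfully flat and quasi-compact morphisms. Indeed, $\prod_j V'^{(j)}$ inherits smoothness and (quasi-)projectivity from $V'$ since each factor does, and then these properties descend from the faithfully flat cover $\Spec L \to \Spec F$ to the quotient $\Res_{F'/F} V'$; the requisite descent statements can be found, e.g., in the Stacks Project (tags \texttt{02YJ}, \texttt{0422}).

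For part (c), the strategy is simply to unwind the Yoneda correspondence underlying the construction in (a). An $A$-point $p \in \Res_{F'/F} V'(A)$ is a morphism $\Spec A \to \Res_{F'/F} V'$; base-changing along $F \to F'$ produces a morphism $\Spec(A \otimes_F F') \to (\Res_{F'/F} V')_{F'}$, and post-composition with the counit $\epsilon_{V'}$ of the adjunction between base change and restriction of scalars yields a morphism $\Spec(A \otimes_F F') \to V'$, i.e., an element of $V'(A \otimes_F F')$. The map $\nu_{V', A}$ constructed this way is natural in $A$ and is a bijection by Yoneda's lemma combined with the defining universal property of $\Res_{F'/F} V'$. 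The main obstacle, such as it is, is bookkeeping: one must identify $(\Res_{F'/F} V')_{F'}$ with $\prod_j V'^{(j)}$ compatibly with the descent datum from (a) and check that $\epsilon_{V'}$ is precisely the projection onto the factor indexed by the structure embedding of $F'$ into $L$. Once this identification is made, the stated recipe for $\nu_{V', A}$ agrees with the abstract adjunction by construction.
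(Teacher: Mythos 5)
Your approach for (a) and (c) is essentially the paper's: the paper delegates (a) to a reference that carries out exactly your cocycle check and appeal to effectivity of Galois descent for quasi-projective schemes, and gives a short Yoneda-style unwinding for (c). These parts are fine.

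The genuine gap is in (b). You assert that quasi-projectivity and projectivity ``descend along faithfully flat and quasi-compact morphisms'' and cite Stacks Project tags. That general principle is false: quasi-projectivity and projectivity are deliberately absent from the Stacks Project's catalogue of fpqc-local properties of morphisms, precisely because an ample line bundle on a flat cover need not descend; among the properties you invoke, only smoothness is genuinely fpqc-local on the base, so the tags you cite will not carry the weight you put on them. The conclusion is nonetheless correct here, for a reason special to a finite Galois cover of a field: if $\calL$ is ample on $(\Res_{F'/F}V')_L = \prod_j V'^{(j)}$, the norm bundle $\bigotimes_{\sigma \in \Gal(L/F)} \sigma^*\calL$ carries a canonical descent datum and remains ample, so $\Res_{F'/F}V'$ is quasi-projective over $F$; equivalently, the SGA~1 effectivity theorem you invoke in (a) already outputs a quasi-projective quotient, because it descends the equivariant ample bundle as part of the construction. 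For projectivity, the paper's route is the clean one and avoids the trap entirely: for a finite-type separated $F$-scheme, projective is equivalent to proper plus quasi-projective, properness genuinely is fpqc-local, and quasi-projectivity is handled as above. Finally, a small typecheck issue in (c): the identification you want is of $(\Res_{F'/F}V')_L$ (not $(\Res_{F'/F}V')_{F'}$) with $\prod_j V'^{(j)}$; the counit $\epsilon_{V'}$ is an $F'$-morphism and only becomes projection onto a single factor after the further base change to $L$.
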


\begin{proof}
    \begin{enumerate}
    \item[(a)] See e.g.\ \cite[Thm.\ 6.1.5]{JLMMS}.
    \item[(b)] For the quasi-projective part, see e.g.\ \cite[Prop.\ 6.2.10]{JLMMS}; the projective part follows from e.g.\  \cite[\S7.6, Prop.\ 5]{BLRNeron} since if $V'$ is separated and of finite type over a field $F'$, then its restriction of scalars is also separated and of finite type, and hence a variety, over $F$. If $V'$ is projective, then it is proper and quasi-projective, and so its restriction of scalars is also proper and quasi-projective over $F$, and thus projective over $F$. For the smoothness part, see e.g.\ \cite[\S7.6, Prop.\ 5(h)]{BLRNeron}.
    \item[(c)] An $A$-point on $\Res_{F'/F} V'$ corresponds to an $A\otimes_F L$-point on $\prod_j V'^{(j)}$ which respects the effective descent data. Equivalently, we could have given the $A \otimes_F L$-point on one of the factors $V'_L$. This respects the effective descent data for $L/F'$ and hence descends to coincide with the counit $\left(\Res_{F'/F} V'\right)_{F'} \to V'$.\qedhere
    \end{enumerate}
\end{proof}

We now study line bundles and adelic metrics under restriction of scalars. The main goal is to translate an adelic metric on the anticanonical bundle $\omega^\vee_{V'}$ to one on $\omega^\vee_{\Res_{F'/F}V'}$.

First, consider line bundles and their sections. In the notation from above, we see that a line bundle $\calL$ on $V'$ over $F'$ pulls back to line bundles $\calL^{(j)}$ on $V'^{(j)}$ for each $j$. The line bundle $\boxtimes_j \calL^{(j)}$ on $\prod_j V'^{(j)}$ is Galois invariant and descends to a line bundle $\Res \calL$ on $\Res_{F'/F} V'$. Similarly, a section $s \in \Gamma(V',\calL)$ induces sections $s^{(j)} \in \Gamma(V'^{(j)},\calL^{(j)})$, and the section $\boxtimes_j s^{(j)}$ descends to a section $\Res s$ of $\Res \calL$.

The following proposition from \cite{MR3430268} makes specific the relationship between line bundles and sections of an $F'$-variety and its restriction of scalars to a base field $F$.

\begin{prop}\label{prop:properties of Weil res}
Let $V'$ be a smooth quasi-projective $F'$-variety. We use the setup and notation from above.
\begin{enumerate}
    \item[(a)] The map $\Pic V' \to \Pic(\Res_{F'/F} V'), \calL \mapsto \Res \calL$ is an injective homomorphism.
    \item[(b)] The map $\Gamma(V', \calL) \to \Gamma(\Res_{F'/F} V', \Res_{F'/F} V'), s \mapsto \Res s$ is an isomorphism of $F$-vector spaces.
    \item[(c)] There is a canonical morphism $\Res \omega_{V'} \to \omega_{\Res V'}$, and this is an isomorphism. 
\end{enumerate}
\end{prop}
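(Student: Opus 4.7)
The plan is to reduce each assertion to a statement over the Galois closure $L/F$ of $F'$ and then descend using the Galois action described before the proposition. Recall that $(\Res_{F'/F} V')_L \cong \prod_j V'^{(j)}$, that $(\Res \calL)_L$ is identified under this with $\boxtimes_j \calL^{(j)}$, and that $\Res s$ restricts to $\boxtimes_j s^{(j)}$; the Galois group $G = \Gal(L/F)$ acts by permuting the factors via its action on the set of embeddings $j\colon F' \hookrightarrow L$. Effective descent for quasi-projective varieties then recovers the objects over $F$ from this $G$-equivariant data over $L$.

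For (a), the homomorphism property is immediate from the functoriality of pullback and the bilinearity of the exterior tensor product. For injectivity, if $\Res \calL$ is trivial on $\Res V'$, then $(\Res \calL)_L \cong \boxtimes_j \calL^{(j)}$ is trivial on $\prod_j V'^{(j)}$. Applying a Künneth-type decomposition for Picard groups of smooth proper varieties (after a further base change, if needed, to guarantee rational points on each factor), we obtain $\Pic\bigl(\prod_j V'^{(j)}\bigr) \cong \bigoplus_j \Pic(V'^{(j)})$, forcing each $\calL^{(j)}$ to be trivial. Fixing any single embedding $j_0$ then yields triviality of $\calL$ on $V'$.

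For (b), I will apply Künneth for sections to identify $\Gamma\bigl(\prod_j V'^{(j)}, \boxtimes_j \calL^{(j)}\bigr)$ with $\bigotimes_j \Gamma(V'^{(j)}, \calL^{(j)})$, and then use Galois descent to describe $\Gamma(\Res V', \Res \calL)$ as the $G$-invariants of this tensor product; tracking $\Res s$ through these identifications exhibits the explicit form of the map $s \mapsto \Res s$, and matching it against the natural $F$-structure on $\Gamma(V', \calL)$ inherited from $V' \to \Spec F'$ delivers the bijection. For (c), the analogous base change yields $\omega_{\prod_j V'^{(j)}} \cong \boxtimes_j \omega_{V'^{(j)}}$ by the product formula for dualising sheaves already used in Proposition~\ref{prop:props of symsquare}(a), whereas $(\Res \omega_{V'})_L \cong \boxtimes_j (\omega_{V'})^{(j)}$; since the formation of canonical sheaves commutes with flat base change along $j$, we have $(\omega_{V'})^{(j)} \cong \omega_{V'^{(j)}}$. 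The resulting identification is manifestly $G$-equivariant, so effective descent produces the canonical map $\Res \omega_{V'} \to \omega_{\Res V'}$ and simultaneously shows it to be an isomorphism.

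The main obstacle I anticipate is establishing the Künneth-type decompositions for Picard groups and for spaces of sections in sufficient generality to feed into Galois descent, particularly without assuming that each $V'^{(j)}$ carries a rational point; this should be manageable through standard fpqc descent arguments for smooth projective varieties, and by reducing to geometric fibres where necessary. Once these decompositions and their $G$-equivariance are in place, the three assertions follow uniformly from the same descent mechanism.
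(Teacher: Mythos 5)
Your overall strategy — base change to the Galois closure $L$ of $F'$, identify the relevant objects on $\prod_j V'^{(j)}$, and descend — is the same as the paper's, which defers parts (a) and (c) to \cite[Lem.\ 2.2]{MR3430268} and spells out the descent for (b). For (a) and (c) your sketch is compatible with that; for (a) you should additionally verify that triviality of $\calL^{(j_0)}$ on $V'^{(j_0)}$ implies triviality of $\calL$ on $V'$ itself (i.e.\ injectivity of $\Pic V' \to \Pic V'_L$), which is not automatic for a general quasi-projective $V'$.

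For (b), however, there is a genuine gap. The K\"unneth formula you invoke is the correct one: $\Gamma\bigl(\prod_j V'^{(j)}, \boxtimes_j\calL^{(j)}\bigr) \cong \bigotimes_j \Gamma(V'^{(j)},\calL^{(j)})$. But precisely this tensor-product identification makes the descent step fail. Set $d = [F':F]$ and $n = \dim_{F'}\Gamma(V',\calL)$. The $\Gal(L/F)$-invariants of this $d$-fold tensor product, being a semi-linear descent datum over $L$, form an $F$-vector space of dimension $n^d$, while $\Gamma(V',\calL)$ has $F$-dimension $dn$; these disagree generically (already $V'=\Spec F'$, $\calL=\calO$ gives $1\neq d$, and $\calO(2)$ on $\PP^1_{F'}$ over a quadratic $F'$ gives $9\neq 6$). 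Moreover, the map $s\mapsto\Res s$, being the descent of $\boxtimes_j s^{(j)}$, is degree-$d$ multilinear in $s$ and hence not $F$-linear when $d>1$, so it cannot be an isomorphism of $F$-vector spaces regardless of dimensions. Note the paper's own proof quotes a \emph{product} $\prod_j\Gamma(V'^{(j)},\calL^{(j)})$ from \cite{MR3430268}, not the tensor product you correctly wrote down; you should reconcile your computation with Loughran's before invoking Galois descent, because ``matching against the natural $F$-structure'' as you describe does not deliver a bijection.
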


\begin{proof}
\begin{enumerate}
    \item[(a)] This was proven in \cite[\textsection 2.2]{MR3430268}.
    \item[(b)] By flat base change of cohomology, the sections of $\Res \calL$ are the Galois invariant sections of $\boxtimes_j \calL^{(j)}$ on $\prod_j V'^{(j)}$. The sections of the product were computed in \cite[proof of Lem.\ 2.2]{MR3430268} to be $\prod_j \Gamma(V'^{(j)},\calL^{(j)})$. Since $\sigma \in \Gal(L/F)$ acts on the components by the natural $\sigma_* \colon \Gamma(V'^{(j)},\calL^{(j)}) \to \Gamma(V'^{(\sigma \circ j)},\calL^{(j)})$, we conclude that the Galois invariant sections are $\Gamma(V',\calL)$.
    \item[(c)] See \cite[Lem.\ 2.2]{MR3430268}.\qedhere
\end{enumerate}
\end{proof}

Now, we construct the natural adelic metric $\{\|.\|_{v}\}_{v \in \Omega_{F}}$ on  $\Res_{F'/F} V'$ (via the line bundle $\Res_{F'/F} \calL$) from an adelic metric $\{\|.\|_{v'}\}_{v' \in \Omega_{F'}}$ on $V'/F'$ using $\calL$. Fix a place $v \in \Omega_F$ and an embedding $F \to \C_v$. The isomorphism in Proposition~\ref{prop:properties of Weil res}(b) identifies vector spaces over $F'$ and $F$ respectively, so we see that it induces a natural isomorphism
\begin{align*}
\Gamma(\Res_{F'/F} V', \Res_{F'/F} \calL')_{\C_v} &= \Gamma(\Res_{F'/F} V', \Res_{F'/F} \calL') \otimes_{F}  \C_v\\
&\cong\Gamma(V', \calL') \otimes_F \C_v\\
&\cong \prod_{\phi \colon F'\to \C_v} \Gamma(V', \calL') \otimes_\phi \C_v = \prod_\phi \Gamma(V'^{(\phi)},\calL^{(\phi)}),
\end{align*}
where we have written $V'^{(\phi)}$ and $\calL^{(\phi)}$ for the pullback of $V'/F'$ and $\calL$ along the $F$-embedding $\phi \colon F' \to \mathbb C_v$. From the adelic metric on $\calL$ we obtain $v'$-adic metrics on each $\calL^{(\phi)}$.
Also, by the properties of the restriction of scalars we have
\[
\Res_{F'/F} V'(\C_v) \isom \prod_\phi V'^{(\phi)}(\C_v).
\]
It is important to note that in both isomorphisms the $[F'\colon F]$ factors are indexed by the $F$-embeddings $\phi: F' \to \C_v$. In the part (b) of the following result we express the natural adelic described in (a) in terms of the places $v'\mid v$, as they correspond to $F$-embeddings of $F' \to \C_v$ up to conjugacy.

\begin{prop}\label{prop:adelic metric on Weil res}
\begin{enumerate} 
\item[(a)] Let $t=(s_\phi) \in \Gamma(\Res_{F'/F} V', \Res_{F'/F} \calL')_{\C_v}$ and $Q=(P_\phi) \in \Res_{F'/F} V'(\C_v)$ viewed as a point in $\Res_{F'/F} V'(\C_v)$. Then
    \[
    \|t(Q)\|_v := \prod_{v'\mid v} \prod_\phi \|s_\phi(P_\phi)\|_{v'}^{\frac{[F'_{v'}\colon F_v]}{[F'\colon F]}}
    \]
    defines an adelic metric on $\Res_{F'/F} \calL$.
    \item[(b)] For a point $Q \in \Res_{F'/F} V'(F_v)$, corresponding to $(P_{v'})_{v' \mid v} \in \prod_{v' \mid v} V'(F'_{v'})=V'(F_v \otimes_F F')$, and $s \in \Gamma(V',\calL)$ we have
    \[
    \|\Res s(Q)\|_v = \prod_{v'\mid v} \|s(P_{v'})\|_{v'} ^{[F'_{v'} \colon F_v]}.
    \]
\end{enumerate}
\end{prop}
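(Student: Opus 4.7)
The plan is to unwind the identifications
\[
\Res_{F'/F} V'(\C_v) \;\cong\; \prod_\phi V'^{(\phi)}(\C_v),\qquad \Gamma(\Res V',\Res \calL)_{\C_v} \;\cong\; \prod_\phi \Gamma\bigl(V'^{(\phi)},\calL^{(\phi)}\bigr)
\]
recorded just before the statement, where $\phi$ runs over the $F$-embeddings $F'\to \C_v$. The combinatorial input is that the $\Gal(\C_v/F_v)$-orbits of such $\phi$'s are in bijection with the places $v'\mid v$ of $F'$, the orbit above $v'$ having size $[F'_{v'}:F_v]$ and the orbits partitioning the $[F':F]$ embeddings.

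For part (a), I would verify the two defining conditions of a $v$-adic metric together with the adelic bound. Continuity in $Q = (P_\phi)$ is immediate from the continuity of each factor $\|{\cdot}\|_{v'}$ and finiteness of the product. For Galois equivariance, an element $\sigma \in \Gal(\C_v/F_v)$ acts by reindexing the tuples via $\phi \mapsto \sigma\circ\phi$; since this permutation preserves the induced place $v_\phi'$, the subproduct over $\phi$'s above a fixed $v'$ is merely rearranged, and the whole expression is unchanged. The triviality at almost all $v$ and the boundedness at the remaining finitely many descend directly from the corresponding properties of each $\|{\cdot}\|_{v'}$ on $\calL$, because for any fixed $v$ only finitely many $v'\mid v$ and finitely many $\phi$ contribute to the product.

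For part (b), I would substitute a rational point into the formula of (a). By Proposition~\ref{prop:weilrestriction}(c), $Q \in \Res V'(F_v)$ corresponds to a tuple $(P_{v'})_{v'\mid v} \in \prod_{v'\mid v}V'(F'_{v'})$, and the $\phi$-component $P_\phi \in V'^{(\phi)}(\C_v)$ is the image $\widetilde\phi(P_{v_\phi'})$ along the unique continuous $F_v$-extension $\widetilde\phi\colon F'_{v_\phi'}\hookrightarrow\C_v$. Similarly, $\Res s$ has $\phi$-component $s^{(\phi)}$. Using the compatibility of adelic metrics under base change, together with the Galois equivariance of $\|{\cdot}\|_{v'}$ verified in (a), the factor $\|s^{(\phi)}(P_\phi)\|_{v'}$ equals $\|s(P_{v'})\|_{v'}$ for every $\phi$ above $v'$. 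Collecting the $[F'_{v'}:F_v]$ identical contributions from each orbit above $v'$ and combining the exponents as prescribed by the formula in (a) yields the expression in (b). I expect the principal subtlety to be the bookkeeping of embeddings and places, ensuring that the Galois equivariance from (a) and the base-change compatibility used in (b) mesh to give a product that is genuinely independent of the choice of representative embedding within each orbit.
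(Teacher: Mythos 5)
Your proposal is correct and follows essentially the same route as the paper: unwind the identifications indexed by the $F$-embeddings $\phi\colon F'\to\C_v$, observe that $\Gal(\C_v/F_v)$ acts by permuting orbits (the orbits being in bijection with the places $v'\mid v$), and in (b) collect the $[F'_{v'}:F_v]$ identical factors $\|s(P_{v'})\|_{v'}$ from each orbit because $s$ is defined over $F'$. The paper makes the Galois equivariance step slightly more explicit via the norm-preserving isomorphism $\sigma^*\colon\calL(P_\phi^\sigma)\to\calL(P_\phi)$, but this is the same mechanism you appeal to when invoking the inherited equivariance of the $v'$-adic metrics.
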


\begin{proof}
\begin{enumerate}
    \item[(a)]  A $\sigma \in \Gal(\C_v/F_v)$ induces isomorphisms $V'^{(\sigma\circ \phi)}(\C_v) \to V'^{(\phi)}(\C_v), P \mapsto P^\sigma$ and $\Gamma(V',\calL)^{(\phi)} \to \Gamma(V',\calL)^{(\sigma \circ \phi)}, s \mapsto \sigma(s)$. This gives an isomorphism $\sigma^* \colon \calL(P^\sigma_\phi) \to \calL(P_\phi)$ which maps $s_\phi(P^\sigma_\phi)$ to $\sigma^*s_\phi(P_\phi)$ which have the same $v'$-adic norm defined above. We conclude that
    \[
    \|\sigma(t)(Q)\|_v = \|t(Q^\sigma)\|_v,
    \]
    since $\sigma$ just permutes the factors in the definition.

    Similar to showing that the product of adelic metrics is an adelic metric, we can conclude that the collection of $v$-adic metrics $\{\|.\|_v\}$ on $\Res_{F'/F} V'$ is an adelic metric. Indeed, they define a product metric on $\prod_j V'^{(j)}$ which is Galois invariant.
    \item[(b)] The conjugacy class of embeddings $\phi \colon F' \to \C_v$ which factor through $F'_{v'}$, which has size $[F_{v'} \colon F_v]$, correspond to the place $v'$. Tracing through the isomorphisms we see that $P_{v'} \colon F_{v'} \to V'$ factors these $[F_{v'} \colon F_v]$ embeddings $\phi$. The section $\Res s$ pulls back to $(\phi^* s) \in \prod_\phi \Gamma(V'^{(\phi)},\calL^{(\phi)})$ as $s \in \Gamma(V',\calL)$ is defined over the base field. Hence $[F_{v'} \colon F_v]$ of the factors $\|s_\phi(P_\phi)\|_{v'}$ of the factors indexed by $\phi$ equal $\|s(P_{v'})\|_{v'}$. The result follows.\qedhere
\end{enumerate}
\end{proof}

Now, we compare the heights $\resH$ on $V'$ and $H$ on $\Res_{F'/F} V'$ induced by this adelic metric.

\begin{prop}\label{prop:height on Weil res}
A point $Q \in \Res_{F'/F} V'(\bar F)$, corresponding to $(P_\phi) \in \prod_{\phi \colon F'\to \bar F} V'(\bar F)=V'(\bar F\otimes_F F')$, satisfies
\[
H(Q) = \prod_\phi \resH(P_\phi).
\]
In particular, for $Q \in \Res_{F'/F} V'(F)$ corresponding to $P \in V'(F')$ we have
\[
H(Q) = \resH(P)^{[F'\colon F]}.
\]
\end{prop}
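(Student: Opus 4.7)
My plan is to first prove the ``in particular'' statement for an $F$-rational $Q$ directly from Proposition~\ref{prop:adelic metric on Weil res}(b), and then extend to general $\bar F$-points by base change. For the special case, let $Q \in \Res_{F'/F} V'(F)$ correspond to $P \in V'(F')$. By Proposition~\ref{prop:adelic metric on Weil res}(b) and the definition of the relative height,
\[
H_{\Res \calL, F}(Q) = \prod_{v \in \Omega_F} \prod_{v' \mid v} \|s(P)\|_{v'}^{-[F'_{v'}:F_v][F_v:\Q_p]} = \prod_{v' \in \Omega_{F'}} \|s(P)\|_{v'}^{-[F'_{v'}:\Q_p]} = H_{\calL, F'}(P),
\]
using the multiplicativity $[F'_{v'}:F_v][F_v:\Q_p] = [F'_{v'}:\Q_p]$ and regrouping the double product as a single product over places of $F'$. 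Extracting $[F:\Q]$-th roots yields $H(Q) = \resH(P)^{[F':F]}$.

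For the general statement, choose a finite Galois extension $M/F$ containing the normal closure $L$ of $F'$ and over which $Q$ is rational (with all components $P_\phi$ defined over $M$). Since $M \supseteq L$, we have $M \otimes_F F' \cong \prod_\phi M$, indexed by the $F$-embeddings $\phi \colon F' \hookrightarrow M$, and the standard base-change property of the Weil restriction gives a canonical isomorphism
\[
(\Res_{F'/F} V') \times_F M \;\cong\; \prod_\phi V'^{(\phi)}_M
\]
under which $Q$ corresponds to $(P_\phi)_\phi$ and $\Res \calL$ pulls back to $\boxtimes_\phi \calL^{(\phi)}_M$. Applying Proposition~\ref{prop:adelic metric on Weil res}(b) in this split setting---so that the ``places'' of $\prod_\phi M$ above $w \in \Omega_M$ are indexed trivially by $\phi$ with all local degrees equal to $1$---gives $\|\Res s(Q)\|_w = \prod_\phi \|s(P_\phi)\|_w$ for each $w \in \Omega_M$. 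Multiplying over $w$ with exponent $-[M_w:\Q_p]$ produces
\[
H_{\Res \calL, M}(Q) = \prod_\phi H_{\calL^{(\phi)}_M, M}(P_\phi),
\]
and taking $[M:\Q]$-th roots, together with the base-change compatibility of absolute heights which identifies $H_{\calL^{(\phi)}_M, M}(P_\phi)^{1/[M:\Q]}$ with $\resH(P_\phi)$, yields the general formula $H(Q) = \prod_\phi \resH(P_\phi)$.

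The main obstacle is the verification that the adelic metric on $\Res \calL$ pulls back under the splitting $(\Res_{F'/F} V') \times_F M \cong \prod_\phi V'^{(\phi)}_M$ to precisely the product of the base-changed adelic metrics on the factors $\calL^{(\phi)}_M$. This is where Proposition~\ref{prop:adelic metric on Weil res}(b) must be carefully extended from a single field extension to a split \'etale algebra, tracking the local degrees and the decomposition of the embeddings; once this factorisation is in hand, the multiplicativity of the relative height on a product of varieties with a box tensor product line bundle gives the conclusion immediately.
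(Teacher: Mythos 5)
Your proposal is correct, but it takes a genuinely different route from the paper's. The paper proves the general formula $H(Q)=\prod_\phi \resH(P_\phi)$ directly as a short formal computation: write out the absolute height of the $\bar F$-point $Q$ as $\prod_{v\in\Omega_F}\|\Res s(Q)\|_v^{-[F_v:\Q_p]/[F:\Q]}$, substitute the formula for $\|\Res s(Q)\|_v$ from Proposition~\ref{prop:adelic metric on Weil res}(a), regroup the product over $\phi$ and $v'$, and recognise each factor as $\resH(P_\phi)$; the ``in particular'' case is then immediate since $P_\phi=P$ for all $\phi$ when $Q$ is $F$-rational. You instead derive the $F$-rational case first from part (b), and then handle general $\bar F$-points by base changing to a Galois $M$ splitting $F'$, so that $\Res_{F'/F}V'\times_F M$ becomes a genuine product indexed by embeddings and the height factorises. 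Both are valid; your route substitutes (b), which is cleaner, for the heavier-notation (a), at the cost of a base-change argument and the auxiliary verification you flag. That verification is in fact routine given how the adelic metric on $\Res\calL$ was constructed in the paper --- it is defined place by place over $\C_v$ as a product indexed by the embeddings $F'\to\C_v$, and this construction visibly commutes with passage to a larger number field and with the splitting $F'\otimes_F M\cong\prod_\phi M$. So you are right that it must be checked, but it is not a genuine obstacle. One small presentational point: the paper also obtains the ``in particular'' statement as a trivial specialisation of the general one, whereas you do the opposite; neither order is problematic, but the paper's order makes the general formula the real content and the special case a one-line consequence.
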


\begin{proof}
We compute the height on the restriction of scalars using a section $\Res s$ and get
    \begin{align*}
    H(Q) := & \prod_{v \in \Omega_F} \|\Res s(Q)\|_v^{-\frac{[F_v \colon \Q_p]}{[F\colon \Q]}} = \prod_{v \in \Omega_F} \prod_\phi \prod_{v' \mid v} \|s(P_\phi)\|_{v'}^{-\frac{[F'_{v'} \colon \Q_p]}{[F\colon \Q]}}\\
     = & \prod_\phi \prod_{v' \in \Omega_{F'}} \|s(P_\phi)\|_{v'}^{-\frac{[F'_{v'} \colon \Q_p]}{[F\colon \Q]}} = \prod_\phi \resH(P_\phi).
    \end{align*}
    In the case that $Q$ is an $F$-point, we get $P=P_\phi$ for all $\phi$.
\end{proof}

Thus, an adelic metric induces on $V'$ induces a natural one on $\Res_{F'/F} V'$. Since $\prod_{v'\mid v} V'(F'_{v'})$ and $\Res_{F'/F} V'(F_v)$ are canonically in bijection, we can compare the local Tamagawa measures on both sides.

\begin{theorem}\label{thm:measure on restriction of scalars}
    Under the natural identification
    \[
    \nu_{V',v} \colon \Res_{F'/F} V'(F_v) \xrightarrow{\cong} V'(F'_v) = \prod_{v'\mid v} V'(F'_{v'})
    \]
    the image measure of $\omega_v$ along $\nu$ equals $\sqrt{\vert \Delta_{F'/F}\vert_v}^{\dim V'} \prod_{v' \mid v} \omega_{v'}$.
\end{theorem}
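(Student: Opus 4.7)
The local Tamagawa measures are defined by gluing local contributions, so it suffices to verify the identity on any affine chart. Two observations allow us to reduce to the case of a single local extension. First, since Weil restriction commutes with base change and distributes over the product decomposition $F' \otimes_F F_v \cong \prod_{v'\mid v} F'_{v'}$, we obtain
\[
\left(\Res_{F'/F} V'\right)_{F_v} \cong \prod_{v'\mid v} \Res_{F'_{v'}/F_v} V'_{F'_{v'}},
\]
and the natural identification $\nu_{V',v}$ factors as a product $\prod_{v'\mid v} \nu_{v'}$. Second, the local--global compatibility of discriminants gives $|\Delta_{F'/F}|_v = \prod_{v'\mid v}|\Delta_{F'_{v'}/F_v}|_v$. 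Hence it suffices to prove that, for a single finite extension of local fields $F'_{v'}/F_v$ of degree $d$, we have $(\nu_{v'})_*\omega_v = |\Delta_{F'_{v'}/F_v}|_v^{n/2}\omega_{v'}$.

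Choose next an $\mathcal{O}_{F_v}$-basis $e_1,\dots,e_d$ of $\mathcal{O}_{F'_{v'}}$, which exists because the integral closure of a complete discrete valuation ring in a finite separable extension is a free module over the base. Working in an affine patch $U\subseteq V'_{F'_{v'}}$ with \'etale coordinates $x_1,\dots,x_n$, introduce the corresponding coordinates $y_{i,k}$ on $\Res_{F'_{v'}/F_v} U$ via $x_i = \sum_{k=1}^d e_k\,y_{i,k}$. Under the base change isomorphism $\left(\Res_{F'_{v'}/F_v} V'_{F'_{v'}}\right)_{\overline{F_v}} \cong \prod_\phi V'^{(\phi)}_{\overline{F_v}}$ indexed by $F_v$-embeddings $\phi\colon F'_{v'}\hookrightarrow\overline{F_v}$, this coordinate change becomes $x_i^{(\phi)} = \sum_k\phi(e_k)\,y_{i,k}$.

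The central computation is the expression of $\Res\omega$ for $\omega = dx_1\wedge\cdots\wedge dx_n$ in the coordinates $y_{i,k}$. Differentiating yields $dx_i^{(\phi)} = \sum_k\phi(e_k)\,dy_{i,k}$, and taking exterior products over $i$ together with the external product over $\phi$ gives
\[
\bigwedge_{\phi}\bigl(dx_1^{(\phi)}\wedge\cdots\wedge dx_n^{(\phi)}\bigr) = \det(\phi(e_k))^n\cdot\bigwedge_{i,k} dy_{i,k}
\]
on the geometric fibre. By Proposition~\ref{prop:properties of Weil res}(c) this descends to a well-defined $F_v$-form $\Res\omega$ on the Weil restriction, and its $v$-adic norm relative to the coordinate form $\bigwedge_{i,k} dy_{i,k}$ equals $|\det(\phi(e_k))|_v^n$. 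Since $\det(\phi(e_k))^2$ is the discriminant of the integral basis $e_1,\dots,e_d$, which coincides with $\Delta_{F'_{v'}/F_v}$, this norm equals $|\Delta_{F'_{v'}/F_v}|_v^{n/2}$.

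To conclude, Proposition~\ref{prop:adelic metric on Weil res}(b) yields the compatibility $\|\Res\omega\|_v^{[F_v\colon\Q_p]} = \|\omega\|_{v'}^{[F'_{v'}\colon\Q_p]}$ in the single-place setting (since $[F'_{v'}\colon F_v]=d$ and $[F'_{v'}\colon\Q_p]=d[F_v\colon\Q_p]$), so the adelic metric contribution is the same on both sides. Moreover, our integral basis gives $\nu_{v'}(\mathcal{O}_{F_v}^d) = \mathcal{O}_{F'_{v'}}$, whence $(\nu_{v'})_*\mu_v^{nd} = \mu_{v'}^n$ by matching the normalised Haar measures on both compact open sets. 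Assembling these factors produces the claimed identity. The main obstacle is the correct descent of $\bigwedge_\phi\omega^{(\phi)}$ to an $F_v$-form on the Weil restriction: while the coefficient $\det(\phi(e_k))^n$ is not itself Galois invariant (it picks up a sign under a permutation of the embeddings for odd $n$), its absolute value is, so the factor $|\Delta_{F'_{v'}/F_v}|_v^{n/2}$ emerges unambiguously upon passing to the local Tamagawa measure.
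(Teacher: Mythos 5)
Your proof is correct and follows essentially the same strategy as the paper's: reduce to a single local extension, choose an integral basis, express the pushforward of the anticanonical form in the resulting coordinates to extract the factor $\bigl|\det(\phi(e_k))\bigr|_v^n = \sqrt{|\Delta_{F'_{v'}/F_v}|_v}^{\,n}$, and combine with the adelic-metric compatibility of Proposition~\ref{prop:adelic metric on Weil res}(b) and the normalization of Haar measures on $\calO_{F_v}^{nd} \cong \calO_{F'_{v'}}^n$. The only cosmetic difference is that you compute the Jacobian determinant directly in $n$ dimensions on an étale patch, whereas the paper first establishes the case $V'=\mathbb A^1$, then extends to $\mathbb A^n$ by taking products, and finally passes to general $V'$ via étale charts; your final paragraph also correctly notes that $\det(\phi(e_k))^n$ is only Galois-semi-invariant (up to sign when $n$ is odd), which the paper glosses over but which is immaterial once one passes to absolute values.
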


\begin{proof}
    Let $\Res s \in \Gamma(\Res_{F'/F} V',\omega^\vee_{\Res V'})$ be a non-vanishing section at $Q \in \Res_{F'/F} V'(F_v)$, and let $s \in \Gamma(V',\omega^\vee_{V'})$ and $(P_v') \in \prod_{v'\mid v} V'(F'_{v'})$ be the corresponding data on $\calL'$ and $V'$. We use $s$ to construct the measures $\omega_{v'}$ and the section $\Res s$ for $\omega_v$. From \ref{prop:adelic metric on Weil res}(b) it is enough to prove that the image of $\omega_v$ equals
    \[
    \sqrt{\vert \Delta_{F'/F}\vert_v}^{\dim V'} \prod_{v' \mid v} \omega^{[s]}_{v'}.
    \]
    We will do so by proving that under
    \[
    \nu \colon 
    \Res_{F'_{v'}/F_v} V_{F'_{v'}}'(F_v) \xrightarrow{\cong} V'(F'_{v'})
    \]
    we have
    \begin{equation}\label{eq:local pullback measure}
    \sqrt{\vert \Delta_{F'_{v'}/F_v}\vert_{v}}^{\dim V'} \omega_{v'}^{[s]} = \nu_*\omega_{v}^{[\Res s]}.
    \end{equation}
    We prove this statement in several steps.

    First we establish \eqref{eq:local pullback measure} for $V'=\mathbb A^1_{F'}=\Spec F'[y]$ and $s=dy$. Let us write $r=[F'_{v'} \colon F_v]$ and fix an integral basis $e_i$ for $\mathcal O_{v'}$ over $\mathcal O_v$. In this way we can identify $\Aff_{F_v}^r = \Res \Aff_{F'_{v'}}^1$, such that the composition
    \begin{equation}\label{eq:Weil restriction affine line}
    F_v^r = \Aff^r(F_v) = \Res_{F'_{v'}/F_v} \Aff^1(F_v)  \xrightarrow{\cong} \Aff^1(F'_{v'})=F'_{v'}
    \end{equation}
    is given by $(y_i) \mapsto \sum y_i e_i$. On the codomain the local Tamagawa measure equals $d\mu(y)$, as it is induced by $s= dy$.
    To determine the measure on the domain, we will compute $\Res s$ in the new coordinates on $\Res_{F'_{v'}/F_v} \Aff_{F'_{v'}}^1$. Following the construction of the restriction of scalars on schemes and sections we get $\Aff^r = \Res \Aff^1 \leftarrow \prod_{j} \Aff^{1(j)} \to \Aff^1$.
    The section $s=dy$ produces the section $\prod_j s^{(j)} = \prod_j dy^{(j)}$, which then descends to
    \[
    \bigwedge_{j} \sum_i e_i^{(j)} dx_i  = \det(e_i^{(j)})_{i,j} \bigwedge_i dx_i
    \]
    since $y^{(j)} = \sum_i x_i e_i^{(j)}$. This produces the measure $\vert \Delta_{F'_{v'}/F_v}\vert_v^\frac12 \prod d\mu(x_i)$ on $F_v^r$. To compare the measures along the identification \eqref{eq:Weil restriction affine line} we noticed that because of our choice of integral basis, we have identified $\mathcal O_v^r$ on the left with $\mathcal O_{v'}$ on the right. By our choice of Haar measures this proves the statement in this case.

    Secondly, we show that if \eqref{eq:local pullback measure} holds for two varieties over $F'_{v'}$, then it holds for their product with the expected factor. This proves the statement for $V'=\mathbb A^n_{F'}=\Spec F'[y_i]$ and $s=\bigwedge dy_i$, for any $n$.

    Lastly we establish \eqref{eq:local pullback measure} for general $V'$ and $s$. Locally on an open $U'$ around $P \in V'(F'_{v'})$ we have an \'etale chart $c \colon U' \to \Aff^{\dim V'}$, which induces an \'etale chart $\Res c \colon \Res_{F'_{v'}/F_v} V' \to \Res_{F'_{v'}/F_v} \left(\Aff^{\dim V'}\right)$ around the associated point $Q \in \Res_{F'_{v'}/F_v} V'(F_v)$. We have the commutative square
    \[
    \begin{tikzcd}
    \Res_{F'_{v'}/F_v} U'(F_v) \arrow[r,"\cong"] \arrow[d, "\Res c"] & U'(F'_{v'}) \arrow[d, "c"]\\
    \Res_{F'_{v'}/F_v}\left(\Aff^{\dim V'}\right)(F_v) \arrow[r, "\cong"] & \Aff^{\dim V'}(F'_{v'})        \end{tikzcd}
    \]
    The measures on the top row are constructed by pulling back the measures on the bottom, which differ by the required discriminantal factor by the previous step. This proves \eqref{eq:local pullback measure} in general, and completes the proof.
\end{proof}

Note that the discriminantal factor in \eqref{eq:Tamagawa measure} could be absorbed into the Tamagawa measure if one would normalise the Haar measures to be self-dual. However, for our applications it will be convenient to separate the discriminant from the measure.

\subsection{Relation between restrictions of scalars and symmetric powers}

The relevance of restriction of scalars is captured in the following proposition. We consider a variety $V/F$, and apply the results of the previous subsection to $V'=V_{F'}$.

\begin{prop}\label{prop:map from res to sym}
    Let $V$ be a quasi-projective $F$-variety and $F'/F$ be an \'etale extension of degree $r$. There is a finite morphism of degree $r!$
    \[
    \eta \colon \Res_{F'/F}V_{F'} \to \Sym^r V,
    \]
    which sends an $A$-point in $\Res_{F'/F}V_{F'}(A)=V(A\otimes_F F')$ to the tuple $V(A\otimes_F \bar F)^r$ under the $r$ embeddings $F' \to \bar F$. The morphism $\eta$ is \'etale on the complement of a closed subscheme of codimension $\dim V$. 
\end{prop}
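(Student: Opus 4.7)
The plan is to construct $\eta$ via faithfully flat descent from a natural $S_r$-quotient map after base change to $\overline{F}$, then read off finiteness, the degree, and the locus of étaleness from the corresponding properties of that quotient.

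First I would base change everything to $\overline{F}$. Since $F'/F$ is étale of degree $r$, we have $F' \otimes_F \overline{F} \cong \overline{F}^r$ via the $r$ distinct $F$-embeddings $j \colon F' \to \overline{F}$. Combined with the description of Weil restriction after base change used throughout the previous subsection (cf.\ Proposition~\ref{prop:weilrestriction}), this gives a canonical isomorphism $(\Res_{F'/F} V_{F'})_{\overline{F}} \cong \prod_j V^{(j)} \cong V_{\overline{F}}^r$, on which $\Gamma_F$ acts by permuting factors through its action on the set $\{j\}$ composed with the intrinsic Galois action on $V(\overline{F})$. Now the natural quotient $q \colon V_{\overline{F}}^r \to V_{\overline{F}}^r/S_r = \Sym^r V_{\overline{F}}$ commutes with the Galois action (the permutation of factors happens inside $S_r$, which $q$ collapses, and the intrinsic action is precisely the Galois action on the target $\Sym^r V_{\overline{F}}$). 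Hence $q$ is $\Gamma_F$-equivariant and by faithfully flat descent descends to an $F$-morphism $\eta \colon \Res_{F'/F} V_{F'} \to \Sym^r V$. Chasing an $A$-point $(P_j)_j \in V(A \otimes_F \overline{F})^r = V(A \otimes_F F' \otimes_F \overline{F})$ through the construction recovers the description in the statement.

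Next, finiteness and degree: the quotient $q$ is finite of degree $|S_r| = r!$ since $S_r$ acts on $V_{\overline{F}}^r$ by automorphisms and $V_{\overline{F}}^r$ is quasi-projective (so the quotient exists as a scheme and $q$ is a finite $S_r$-torsor on a dense open). Both properties descend along the faithfully flat base change $\Spec \overline{F} \to \Spec F$, so $\eta$ is finite of degree $r!$.

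Finally, for étaleness, the quotient $q$ is étale precisely where $S_r$ acts freely, i.e.\ on the complement of the big diagonal
\[
\Delta_{\mathrm{big}} := \bigcup_{1 \le i<j \le r} \{(P_1,\dots,P_r) : P_i = P_j\} \subseteq V_{\overline{F}}^r,
\]
each $\Delta_{ij} \cong V \times V^{r-2}$ being of codimension $\dim V$. Thus the non-étale locus of $\eta_{\overline{F}}$ has codimension $\dim V$ in $V_{\overline{F}}^r$, and since $\Delta_{\mathrm{big}}$ is $\Gamma_F$-stable (permutations of coordinates send it to itself) it descends to a closed subscheme of $\Res_{F'/F} V_{F'}$ of the same codimension, off which $\eta$ is étale. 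The only mild subtlety is the Galois-equivariance of $q$, which is really just bookkeeping between the two commuting actions; beyond that the argument is a direct translation of the well-known quotient map $V^r \to \Sym^r V$ under the identification of $\Res_{F'/F} V_{F'}$ with the Galois-twist of $V^r$.
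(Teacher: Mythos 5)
Your proof is correct and follows essentially the same route as the paper: identify $(\Res_{F'/F} V_{F'})_{\overline{F}}$ with $V_{\overline{F}}^{\,r}$ carrying the twisted descent datum (intrinsic Galois action plus permutation of factors), observe that composing with the $S_r$-quotient to $\Sym^r V_{\overline{F}}$ kills the permutation discrepancy and so is Galois-equivariant, descend, and read off étaleness away from the big diagonal. The only cosmetic differences are that the paper works over a finite normal closure $L/F$ rather than $\overline{F}$ (both are fine here since everything is finitely presented), and that you add explicit justification for finiteness and degree $r!$, which the paper leaves implicit.
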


\begin{proof}
    Let $L/F$ be a normal field extension that contains $F'$. We will construct the morphism over $L$ and descend it back to $F$.

    After base changing to $L$ we have an $L$-morphism
    \[
    \prod_j V'^{(j)} \to \prod V'_L,
    \]
    but this morphism does not respect the effective descent data; on the left, $\Gamma_F$ also permutes the factors while it does not on the right. Hence the composition
    \[
    \prod_j V'^{(j)} \to \prod V'_L \to \Sym^r V'_L
    \]
    does descend to the required morphism $\eta$.

    The morphism $\eta$ base changed to $L$ is \'etale away from the total diagonal, i.e.\ those point on $V_L^r$ for which at least $2$ coordinates coincide, which has codimension $\dim V$. Since \'etale morphism are preserved under Galois descent the result follows.
\end{proof}

The proof here also shows that $\eta$ is the twist of the map $V^r \to \Sym^r V$ with its $S_r$-action, classified by the class of $F'/F$ in $\HH^1(F,S_r)$.

\begin{cor}\label{cor:compatibility Res and Sym}
Let $X$ be a smooth, geometrically integral, quasi-projective variety of dimension $d\geq 2$ over $\Q$, and $K/\Q$ a quadratic extension. The natural morphism $\eta^* \omega_{\Sym^2 X} \to \omega_{\Res_{K/\Q} X_K}$ is an isomorphism.
This isomorphism preserves the adelic metrics on $\Sym^2 X$ and $\Res_{K/\Q} X_K$ induced from the adelic metric on $X/\Q$.
\end{cor}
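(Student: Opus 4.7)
The plan is to separate the statement into two parts: the isomorphism of canonical line bundles, and the verification that the induced adelic metrics agree under this isomorphism.

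For the line bundle isomorphism, I would combine the results already established. By Proposition~\ref{prop:map from res to sym} applied with $r=2$, the morphism $\eta$ is \'etale away from a closed subscheme of codimension $\dim X \geq 2$, and $\Res_{K/\Q} X_K$ is smooth by Proposition~\ref{prop:weilrestriction}(b). On the \'etale locus $U \subseteq \Res_{K/\Q} X_K$ there is a canonical isomorphism $\eta^*\omega_{\Sym^2 X}|_U \xrightarrow{\cong} \omega_{\Res_{K/\Q} X_K}|_U$, and since the complement of $U$ has codimension $\geq 2$ on a smooth variety, this isomorphism extends uniquely to an isomorphism of line bundles on all of $\Res_{K/\Q} X_K$. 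Alternatively, the isomorphism can be produced geometrically by base change to $\overline{\Q}$: there the morphism $\eta_{\overline{\Q}}$ becomes the familiar quotient $\pi\colon X^2_{\overline{\Q}} \to \Sym^2 X_{\overline{\Q}}$, and Proposition~\ref{prop:props of symsquare}(a) together with $\omega_{X^2} \cong \pi_1^*\omega_X \otimes \pi_2^*\omega_X$ yields $\pi^*\omega_{\Sym^2 X} \cong \omega_{X^2}$; this isomorphism is Galois-equivariant and so descends to $\Q$.

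For the compatibility of metrics I would unwind both constructions and reduce to a pointwise computation at each place $v$ of $\Q$, with a case distinction on the splitting behaviour of $v$ in $K$. Let $s$ be a local section of $\omega_X$ nonvanishing at the relevant points, and let $Q \in \Res_{K/\Q} X_K(\Q_v)$ correspond to $(P_{v'})_{v' \mid v} \in X(K\otimes_\Q \Q_v)$. Proposition~\ref{prop:adelic metric on Weil res}(b) computes the norm on the restriction-of-scalars side as $\prod_{v' \mid v}\|s(P_{v'})\|_{v'}^{[K_{v'}\colon \Q_v]}$, while Proposition~\ref{prop:props of symsquare}(c) computes the norm on the symmetric-square side at $\eta(Q)$ by pulling back through $\pi$ and taking the product metric on $X \times X$. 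In the split case, where $K \otimes_\Q \Q_v \cong \Q_v \times \Q_v$, both expressions reduce immediately to $\|s(P_{v,1})\|_v\cdot\|s(P_{v,2})\|_v$. In the inert or ramified case the restriction-of-scalars formula yields $\|s(P_{v'})\|_{v'}^{2}$, while the symmetric-square formula yields $\|s(P_{v'})\|_{v'}\cdot\|s(\bar P_{v'})\|_{v'}$.

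The main technical point, which I expect to be the only non-trivial one, is the equality in the inert or ramified case: the identity $\|s(\bar P_{v'})\|_{v'} = \|s(P_{v'})\|_{v'}$ follows from the Galois-equivariance axiom (ii) in the definition of a $v$-adic metric, applied to any lift of the non-trivial element of $\Gal(K_{v'}/\Q_v)$ and to a section $s$ defined over $\Q$. This is precisely the step that requires working with $v$-adic metrics on $V(\C_v)$ rather than only on $V(F_v)$, as highlighted in the remark following Definition~\ref{defn:adelically metrised}, since the conjugate point $\bar P_{v'}$ need not be a $\Q_v$-point; once this identification is in place, the two formulas coincide and the canonical isomorphism of the first part is an isometry place-by-place.
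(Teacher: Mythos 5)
Your proof of the line-bundle isomorphism matches the paper's argument: extend the canonical isomorphism across the codimension-$\geq 2$ complement of the \'etale locus on the normal (here in fact smooth) scheme $\Res_{K/\Q}X_K$. The descent alternative you sketch is also fine.

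For the metric compatibility there is a gap. In this paper's framework (following Zhang), a $v$-adic metric is a collection of norms on $\calL(P_v)$ for \emph{all} $P_v \in V(\C_v)$, so the assertion that the isomorphism preserves the adelic metrics means the two $v$-adic metrics agree at every $Q \in \Res_{K/\Q}X_K(\C_v)$. You verify the agreement only at $Q \in \Res_{K/\Q}X_K(\Q_v)$, using Proposition~\ref{prop:adelic metric on Weil res}(b) (the specialisation of (a) to $F_v$-points) together with a case distinction by the splitting type of $v$ in $K$. Under the identification $\Res_{K/\Q}X_K(\C_v) \cong X(\C_v)^2$, the $\Q_v$-points correspond exactly to the Galois-conjugate pairs $(P, P^\sigma)$; arbitrary pairs $(P_1,P_2)$ are not covered by your computation, and $\Q_v$-points are not dense in $\C_v$-points, so the general case does not follow by continuity. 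The fix, which is what the paper does, is to work at the level of $\C_v$-points from the outset: take a general $Q \in \Res_{K/\Q}X_K(\C_v)$ corresponding to an unconstrained pair $(P_1,P_2) \in X(\C_v)^2$, and unwind both sides directly via Proposition~\ref{prop:props of symsquare}(c) and Proposition~\ref{prop:adelic metric on Weil res}(a) to get $\|s(P_1)\|_v\|s(P_2)\|_v$ on each side --- no case distinction, and no explicit appeal to axiom~(ii). Your Galois-equivariance observation is a correct diagnosis of why the framework needs $\C_v$-points and why the inert case is consistent, but once one argues uniformly at the $\C_v$-level that step never arises; the split/inert/ramified trichotomy is an artefact of specialising too early.
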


\begin{proof}
On the open subscheme where $\eta$ is \'etale the natural morphism $\eta^* \omega_{\Sym^2 X} \to \omega_{\Res_{K/\Q} X_K}$ is an isomorphism. Since the complement of this locus has codimension $2$ and line bundles are determined up to such opens on normal schemes, we deduce that $\eta^*$ induces an isomorphism.

Start with a section $s \in \Gamma(X,\calL)$ and write $s_K \in \Gamma(X_K,\calL_K)$ for the corresponding section on the base change. We will first show that the natural section $s^{(2)}$ on $\Sym^2 X$ pulls back to the natural section $\Res s_K$ on $\Res_{K/\Q} X_K$ along $\eta$. Over $K$ the morphism $\eta$ is given by the composition
\[
\prod_j X^{(j)} \to \prod_j X_K \to \Sym^2 X_K,
\]
and by definition $s_K^{(2)}$ pulls back to $\boxtimes_j s_K$ on $\prod_j X_K$, which pulls back to $\boxtimes_j s^{(j)}$ by definition of the twisted product. Since $\boxtimes_j s^{(j)}$ descends to $\Res s_K$ by definition, the result follows.

Now we will compute the pullback adelic metric $\| \Res s_K(Q)\|_v$ at a section $\Res s_K$ on $\Res_{K/\Q} X_K$ at a point $Q \in \Res_{K/\Q} X_K(\C_v)$ which corresponds to a pair $(P_1, P_2) \in X(\C_v)^2$.
The pullback metric of $\Res s_K(Q) = (\eta^*s^{(2)})(Q)$ equals by definition
\begin{align*}
\|s^{(2)}(\eta (Q))\|_v & = \|s^{(2)}([P_1, P_2])\|_v = \|(s \boxtimes s)(P_1, P_2)\|_v\\
& = \|s(P_1)\|_v \|s(P_2)\|_v = \left(\prod_{\mathfrak v \mid v} \|s_K(P_1)\|_{\mathfrak v} \|s_K(P_2)\|_{\mathfrak v}\right)^{\frac{[K_{\mathfrak v} \colon \Q_v]}{[K \colon \Q]}} \!\!\!\!\!\! =: \|\Res s_K(Q_v)\|. \qedhere
\end{align*}
\end{proof}

\begin{remark}
    The existence of the morphism $\eta$ might make it logical to study the arithmetic of $\Sym^r X$ through the $\Res_{F'/F} X_{F'}(F)$ instead of $X(F')$, for all fields $F'/F$ of degree $r$. Indeed, \cite{MR3430268} showed that in reasonable situations all geometric invariants agree. Furthermore, one can show that if $\dim X \geq 2$ the induced morphism $\eta^*$ induces a homomorphism which makes the triangles
    \[
    \begin{tikzcd}
         & & \Pic \Sym^r X \arrow[dd, dashed, "r'\cdot \eta^*"] & & & \Br \Sym^r X \arrow[dd, dashed, "r'\cdot \eta^*"]\\
        \Pic X \arrow[drr, "\calL\mapsto \Res\calL_{F'}", sloped] \arrow[urr,"\calL\mapsto \calL^{(r)}", sloped] & & & \Br X \arrow[urr, "\calA\mapsto \calA^{(r)}", sloped] \arrow[drr, "\calA\mapsto \Res\calA_{F'}", sloped] & &\\
         & & \Pic \Res_{F'/F} X_{F'} & & & \Br \Res_{F'/F} X_{F'}
    \end{tikzcd}
    \]
    commute. Here $r'=r!/\tilde r$, where $\tilde r$ is the degree of the smallest normal extension over $F'$.
    
    In the case $r=2$ these vertical morphisms are simply $\eta^*$. This allows us to compute $p$-adic integrals on $\Sym^2 X$ on the $\Res_{K/F} X_F$, for all quadratic fields. For $r>2$ the situation is more complicated.

    We also remark that only in the case that $\dim X=2$ a general resolution of singularities is known, namely the Hilbert scheme of $r$ points $\Hilb^r X$. However, by the discussion above this resolution will only be crepant in the case $r=2$, and hence for the symmetric square of a surface we can interpret the point count on $\Sym^2 X$ within Manin's conjecture on $\Hilb^2 X$.
\end{remark}

\subsection{The Hilbert scheme of two points on a surface}\label{ss:hilb2}

In the case that $X$ is a surface, we can describe a crepant desingularisation of $\Sym^2 X$, namely the Hilbert scheme of two points $\Hilb^2 X$. It is well known that $\Hilb^2 X$ can be obtained from the blowup $\epsilon_X \colon \Hilb^2 X \to \Sym^2 X$ in the diagonal $\Delta^{(2)}$. Let us write $\Delta^{[2]}$ for the exceptional divisor on $\Hilb^2 X$. We could equivalently have constructed the Hilbert scheme by first blowing up and then taking the quotient. That is, the following diagram commutes
\[
\begin{tikzcd}
\Bl_\Delta X^2 \arrow[r, "\tilde \epsilon_X"] \arrow[d, "\tilde \pi"] & X^2 \arrow[d, "\pi"]\\
\Hilb^2 X \arrow[r,, "\epsilon_X"] & \Sym^2 X    
\end{tikzcd}
\]
where the vertical maps $\tilde \pi$ and $\pi$ are $S_2$-quotients, the horizontal maps $\tilde \epsilon_X$ and $\epsilon_X$ are blowups in respectively $\Delta$ and $\Delta^{(2)}$.

\begin{prop}\label{prop:hilbbasics}
Let $X$ be a smooth surface over $F$.
    \begin{enumerate}
        \item[(a)] The Hilbert--Chow morphism $\epsilon_X$ is a crepant resolution of singularities, that is $\epsilon_X^* \omega_{\Sym^2 X}$ is canonically isomorphic to $\omega_{\Hilb^2 X}$.
        \item[(b)] \label{hilbbasics2}
        The map $\Pic X \to \Pic(\Hilb^2  X)$, $\calL \mapsto \calL^{[2]}:= \epsilon_X^* \calL^{(2)}$ is a homomorphism of groups, and induces an isomorphism
        \[
        \Pic(\Hilb^2 X) \cong \Pic X \oplus \mathbb Z\Delta^{[2]}.
        \]
        \item[(c)] Assume $X$ is weak Fano and $\Hilb^2 X(F) \ne \emptyset$. The morphism $\Br X \to \Br(\Hilb^2 X)$, $\calA \mapsto \calA^{[2]}:= \epsilon_X^* \calA^{(2)}$ is an isomorphism.
    \end{enumerate}
\end{prop}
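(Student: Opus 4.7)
The proof of all three parts is inherently geometric, and I would work throughout on the commutative square displayed just before the proposition, with $\Bl_\Delta X^2$ at the top left, $X^2$ top right, $\Hilb^2 X$ bottom left and $\Sym^2 X$ bottom right, linked by the blowups $\tilde\epsilon_X,\epsilon_X$ and the double covers $\tilde\pi, \pi$.

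For part (a), the plan is to compute $\omega_{\Bl_\Delta X^2}$ in two ways and compare. Along the top edge, $\tilde\epsilon_X$ blows up the smooth codimension-$2$ subvariety $\Delta \subseteq X^2$, so the standard blowup formula gives $\omega_{\Bl_\Delta X^2} \cong \tilde\epsilon_X^* \omega_{X^2} \otimes \calO(E)$ where $E$ is the exceptional divisor; since $\pi$ is \'etale in codimension $1$ we have $\omega_{X^2} = \pi^* \omega_{\Sym^2 X}$, and commutativity of the square yields $\omega_{\Bl_\Delta X^2} \cong \tilde\pi^* \epsilon_X^* \omega_{\Sym^2 X} \otimes \calO(E)$. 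Along the left edge, a local coordinate calculation (in normal coordinates to $\Delta$ the $S_2$-action is by $-1$, hence trivial on the projectivised normal bundle) shows that $\tilde\pi$ is ramified along $E$ with ramification index $2$, so Riemann--Hurwitz gives $\omega_{\Bl_\Delta X^2} \cong \tilde\pi^* \omega_{\Hilb^2 X} \otimes \calO(E)$. Cancelling $\calO(E)$ and using faithful flatness of $\tilde\pi$ to descend yields the desired crepancy $\omega_{\Hilb^2 X} \cong \epsilon_X^* \omega_{\Sym^2 X}$.

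For part (b), the map $\calL \mapsto \calL^{[2]}$ is visibly a homomorphism, being the composition of $\calL \mapsto \calL^{(2)}$ (a homomorphism by the norm construction after Definition~3.2) with the pullback $\epsilon_X^*$. For the isomorphism, I would pull back along $\tilde\pi^*$, identifying $\Pic(\Hilb^2 X)$ with an $S_2$-equivariant subgroup of $\Pic(\Bl_\Delta X^2)$. The blowup formula gives $\Pic(\Bl_\Delta X^2) \cong \Pic(X^2) \oplus \Z E$, and the K\"unneth-type decomposition $\Pic(X^2) \cong \pi_1^*\Pic X \oplus \pi_2^*\Pic X$ holds once $X(\bar F) \neq \emptyset$. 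The $S_2$-invariant classes are generated by the diagonal line bundles $\pi_1^*\calL \otimes \pi_2^*\calL = \pi^*\calL^{(2)}$ together with $E$, and Fogarty's theorem ensures that these all descend to $\Hilb^2 X$. Using $\tilde\pi^*\Delta^{[2]} = 2E$ from the ramification analysis in (a), one recovers the stated decomposition $\Pic(\Hilb^2 X) \cong \Pic X \oplus \Z\Delta^{[2]}$.

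For part (c), the strategy is to reduce to Galois cohomology of Picard groups via the Hochschild--Serre spectral sequence. Since $X$ is a weak Fano surface it is geometrically rationally connected, so $\Br \bar X = 0$; likewise $\Hilb^2 \bar X$ is birational to $\bar X \times \bar X$ and hence also rationally connected, giving $\Br(\Hilb^2 \bar X) = 0$. Combined with $\Hilb^2 X(F) \neq \emptyset$, Hochschild--Serre yields $\Br X / \Br F \cong \HH^1(F,\Pic\bar X)$ and similarly for $\Hilb^2 X$. By part (b), $\Pic(\Hilb^2 \bar X) = \Pic \bar X \oplus \Z\Delta^{[2]}$, where the Galois action on the second summand is trivial since $\Delta^{[2]}$ is geometrically canonical. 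Because $\HH^1(\Gamma_F,\Z) = \Hom(\Gamma_F,\Z) = 0$, the extra summand contributes nothing to cohomology and the two Brauer groups become abstractly isomorphic. What remains is to verify that the specific map $\calA \mapsto \epsilon_X^* \calA^{(2)}$ realises this identification, which follows from functoriality of corestriction (Proposition~3.5) and pullback, together with the splitting of $\Br F$ provided by the rational point.

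The step I anticipate as the main obstacle is the factor-of-$2$ bookkeeping in part (b): the classical Fogarty decomposition is typically stated with a generator $\delta$ satisfying $2\delta = [\text{exceptional divisor}]$, so one must be attentive to the paper's convention for $\Delta^{[2]}$ and verify that the descent of $E$ along the ramified cover $\tilde\pi$ produces precisely the class called $\Delta^{[2]}$ here, rather than its half. Getting this compatibility right is essential for part (c) to go through cleanly.
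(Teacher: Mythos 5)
Your approach to parts (a) and (b) — computing canonical bundles along both edges of the commutative square via the blowup formula and Riemann--Hurwitz, and tracking the factor-of-$2$ bookkeeping for the exceptional divisor against $\Delta^{[2]}$ — is more self-contained than the paper's, which defers both parts to Le Rudulier's thesis but otherwise describes essentially the same structure. The caveat you flag at the end about the normalisation of $\Delta^{[2]}$ versus the half-class $E$ is exactly the right point to be careful about, and your Riemann--Hurwitz calculation along the ramified double cover $\tilde\pi$ is sound.

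Part (c), however, has a genuine gap. You write that ``$\Hilb^2 \bar X$ is birational to $\bar X \times \bar X$ and hence also rationally connected, giving $\Br(\Hilb^2 \bar X) = 0$.'' The first claim is false: $\Hilb^2\bar X$ is birational to $\Sym^2 \bar X = \bar X^2/S_2$, which receives a dominant degree-$2$ rational map from $\bar X^2$ but is not birational to it. Rational connectedness of $\Hilb^2 \bar X$ does follow from being dominated by a rational variety, but rational connectedness does \emph{not} imply vanishing of the geometric Brauer group once one leaves dimension $\leq 2$ — the Artin--Mumford unirational threefolds with non-trivial Brauer group are the standard counterexample. (Your earlier inference ``$X$ geometrically rationally connected $\Rightarrow$ $\Br \bar X = 0$'' is correct only because, for surfaces, rationally connected forces rational by Castelnuovo.) To conclude $\Br(\Hilb^2\Xbar) = 0$ one must either show $\Hilb^2 \bar X$ is actually \emph{rational} — true, since symmetric squares of rational surfaces are rational, but this needs an argument you have not supplied — or take the route the paper takes: use that $\Bl_\Delta \bar X^2 \to \Hilb^2 \bar X$ is a degree-$2$ Galois cover of smooth varieties with trivial Brauer group upstairs to show $\Br(\Hilb^2\Xbar)$ is a $2$-group, combine this with torsion-freeness of $\Pic(\Hilb^2\Xbar)$ to kill the divisible part, and deduce that $\Br(\Hilb^2\Xbar)^{\Gamma_F}$ vanishes, which is a weaker conclusion than $\Br(\Hilb^2\Xbar)=0$ but already suffices for the Hochschild--Serre step.
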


\begin{proof}
    \begin{enumerate}
        \item[(a)] See \cite[Prop.\ 3.3]{lerudulier-thesis}.
        \item[(b)] This follows from tracing through the maps given in \cite[\textsection 3.1.1]{lerudulier-thesis} with the conclusion being in Proposition 3.1 \textit{loc.\ cit.}. In particular, we note that the map
        \[
        \rho^*\colon \Pic(\Hilb^2 X) \to \Pic(\widetilde{X}^2)^{S_2},
        \]
        which is described by pulling back an $S_2$-invariant line bundle on $\widetilde{X^2}$ to $\Hilb^2 X = \widetilde{X^2/S_2}$
 coincides with taking the norm of a line bundle $\calL \in \Pic X$ and pulling back; that is, $\ve_X^*N_{\pi}(\calL) = \rho^*(\calL^{\otimes m})$.
        \item[(c)]
Defining the usual filtration for any variety over a field $F$
\[
\Br_0 V \subseteq \Br_1 V \subseteq \Br V
\]
with $\Br_0 V := \im[\Br F \to \Br V]$ and $\Br_1 V := \ker[\Br V \to \Br \Vbar]$, we begin by proving that
\[
\Br_1 X/\Br_0 X \isom \Br_1 \Hilb^2X/\Br_0 \Hilb^2X.
\]
By \cite[Rmk.\ 5.4.3(3)]{brauerbook}, the above isomorphism is equivalent to showing that
\[
\HH^1(F, \Pic \Xbar) \isom \HH^1(F, \Pic(\Hilb^2\Xbar)).
\]
This isomorphism is clear from part (b) since
\[
\HH^1(F, \Pic(\Hilb^2\Xbar)) \isom \HH^1(F, \Pic \Xbar\oplus \Z) \isom \HH^1(F, \Pic \Xbar).
\]

Next we will show that the transcendental Brauer groups of $X$ and $\Hilb^2X$ are trivial, i.e. $\Br_1 X = \Br X$ and $\Br_1(\Hilb^2X) = \Br(\Hilb^2X)$.

The first equality follows since $X$ being a weak Fano surface implies that $X$ is rational. Since $\Br$ is a birational invariant for proper, smooth varieties over a field of characteristic $0$, we have that $\Br \Xbar = \Br \PP^2_{\overline{F}} = 0$.

So it remains to show that $\Br(\Hilb^2 \Xbar) = 0$. By the exact sequence (\cite[Prop.\ 5.4.8]{brauerbook}) we have 
\[
\coker(\Br(\Hilb^2\Xbar)_{\divv}^{\Gamma} \to \Br(\Hilb^2\Xbar)^{\Gamma}) = \ker(\HH^2(F, \Pic(\Hilb^2\Xbar)) \to \HH^2(F, \NS(\Hilb^2\Xbar)/\tors)).
\]
As $\Xbar$ is rational, we have that $\Pic(\Hilb^2\Xbar) = \Pic \Xbar \oplus \Z$ is torsion-free, so $\Pic(\Hilb^2\Xbar) = \NS(\Hilb^2\Xbar)$. Hence the kernel is trivial, and we obtain the truncated exact sequence
\[
\Br(\Hilb^2\Xbar)_{\divv}^{\Gamma} \to \Br(\Hilb^2\Xbar)^{\Gamma} \to 0.
\]

Finally, we claim that $\Br(\Hilb^2\Xbar)$ is a $2$-group. Recall the diagram 
\[
    \begin{tikzcd}
        \Bl_\Delta X^2 \arrow[d] \arrow[r] & X^2 \arrow[d]\\
        \Hilb^2 X \arrow[r] & \Sym^2 X
    \end{tikzcd}
    \]
    By \cite[Prop.\ 11.2.1]{brauerbook} $\Br(\Xbar^2)$ is generated by $\Br \Xbar  = \Br(\PP^2_{\overline F}) = 0$, so $\Br(\Xbar^2) = 0$, and by the birational invariance of the Brauer groups we have $\Br(\Bl_{\Delta}\Xbar^2) = 0$. Since $\Bl_\Delta{\Xbar^2} \to \Hilb^2 \Xbar$ is a finite Galois morphism of degree $2$ of smooth varieties, we have that the map
\[
\Br(\Hilb^2 \Xbar)[n] \to \Br(\Bl_\Delta{\Xbar^2})^{S_2}[n] = 0
\]
is an isomorphism for odd $n$, see \cite[Thm.\ 3.8.5]{brauerbook}. Thus, $\Br(\Hilb^2 \Xbar)$ is a $2$-group, as claimed.

Hence $\Br(\Hilb^2\Xbar)_{\divv} = 0$ and $\Br(\Hilb^2\Xbar)^{\Gamma} = 0$ from the above exact sequence. Then since the transcendental part of the Brauer group embeds as 
\[
\Br(\Hilb^2\Xbar)/\Br_1(\Hilb^2\Xbar)\hookrightarrow \Br (\Hilb^2\Xbar)^{\Gamma} = 0,\]
we can conclude that the transcendental part is trivial, i.e.\ $\Br_1 \Hilb^2X = \Br(\Hilb^2X)$. Hence, the induced morphism $\Br X \to \Br \Hilb^2 X$ is an isomorphism on the constant subgroups, and on the quotients $\Br X/\Br_0 X$ and $\Br \Hilb^2X/\Br_0 \Hilb^2X$. By the snake lemma we conclude that $\Br X \to \Br \Hilb^2 X$ is an isomorphism, where we have used that $\Br F \to \Br(\Hilb^2 X)$ is injective due to the presence of a rational point. \qedhere
\end{enumerate}
\end{proof}

Since the elements $\mathcal A^{[2]}$ are pullbacks of the $\mathcal A^{(2)}$ on $\Sym^2 X$ we can use the expressions above to compute invariant maps at a subscheme $Z \subseteq X$ of dimension $0$ and degree $2$.
We can also pullback adelic metrics along $\epsilon_X$ to produce height functions on $\Hilb^2 X$, which will satisfy the Northcott property away from $\Delta^{[2]}$. Since $\Hilb^2 X\setminus \Delta^{[2]} \cong \Sym^2 X\setminus \Delta^{(2)}$, we will be able do many computations on the Hilbert scheme on the symmetric square instead.

\section{The contribution from a single field to quadratic Manin--Peyre}\label{S:PeyreoverK}

We are interested in establishing asymptotics for
\[
N_{\calL^{(2)}}(\calU,B) := \{Q \in \mathcal U \colon H^{(2)}(Q) \leq B\}
\]
for an appropriate cothin set $\calU \subseteq \Sym^2 X(\Q)$ for $X$ a weak Fano variety. We note that weak Fano varieties are rationally connected, see \cite{Zhang+2006+131+142}, and so their geometric Picard groups are free and finitely generated as $\Z$-modules and their Brauer groups modulo constants are finite. 

In this section we will first examine thin sets on symmetric squares and where we expect accumulating subsets with too many points. 
When $X$ is a surface, and thus $\Hilb^2 X$ is in the domain of the Manin--Peyre conjecture, one must identify and remove such sets in order to achieve the expected growth of the counting function. Even in higher dimensions it is necessary to identify fields whose contribution to the count can be of a greater order of magnitude than the typical quadratic field and which would thus dominate the count.

Once these thin sets are identified, we will count the number of points in a cothin subset $\mathcal U_K$ of $X(K)$ for (almost) every quadratic extension $K/\Q$, making completely explicit how the predicted leading term in the asymptotic formula for $N_K(\mathcal U_K, B)$ depends on $K$. This will then allow us to sum this term over all quadratic fields containing a point of bounded height.

\subsection{Accumulating subsets of $\Sym^2 X(\Q)$}\label{ss:accumulating subsets}

\begin{lemma}\label{lem:kptsthin}
The following sets are thin sets of $\Sym^2 X(\Q)$:
\begin{enumerate}
\item[(a)] $\{ [P_1,P_2] \colon P_1,P_2 \in X(\Q)\}$;
\item[(b)] $\{ [P,\bar P] \colon P \in X(K)\}$ for any fixed quadratic field $K$;\label{kpts2}
\item[(c)] $\{[P_1,P_2]  \colon P_1 \in Z(\Qbar) \text{ or } P_2 \in Z(\Qbar)\}$ 
for a proper closed subscheme $Z \subseteq V$, and
\item[(d)] $\{[P_1,P_2] \colon P_1 \in f(W(\Qbar)) \text{ or } P_2 \in f(W(\Qbar))\}$ 
for a generically finite morphism  $f \colon W \to X$ of degree larger than $1$.
\end{enumerate}
\end{lemma}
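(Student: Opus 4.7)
In each of the four cases my plan is to exhibit either a proper closed subscheme of $\Sym^2 X$ (giving a type I thin set) or a generically finite dominant morphism of degree $>1$ from a geometrically integral variety (giving a type II thin set), whose image contains the given set.

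For (a), the quotient $\pi\colon X\times X \to \Sym^2 X$ is itself such a morphism: it is generically finite of degree $2$, the product $X\times X$ is geometrically integral as a product of geometrically integral varieties, and the image of $X(\Q)\times X(\Q)$ is exactly the set in (a). For (b), I would take the map $\eta\colon\Res_{K/\Q}X_K \to \Sym^2 X$ of Proposition~\ref{prop:map from res to sym}, which is generically finite of degree $2!=2$; its domain is geometrically integral because its base change to $\bar\Q$ is $X_{\bar\Q}\times X_{\bar\Q}$, and under the identification $\Res_{K/\Q}X_K(\Q)=X(K)$ from Proposition~\ref{prop:weilrestriction}(c), $\eta$ sends $P\in X(K)$ to the unordered pair $[P,\bar P]$ of its Galois conjugates. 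For (c), I would use the subscheme $Y:=\pi(Z\times X\cup X\times Z)\subseteq\Sym^2 X$, which is closed (since $\pi$ is finite) and a proper subscheme (since $Z\subsetneq X$); any $\Q$-point $[P_1,P_2]$ with some $P_i\in Z(\Qbar)$ is automatically a Galois-stable $\Qbar$-point of $Y$, and hence a $\Q$-point of $Y$, so (c)$\subseteq Y(\Q)$.

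For (d), my plan is to use $\psi:=\pi\circ(f\times\id_X)\colon W\times X\to\Sym^2 X$. A generic-fibre count gives $\deg\psi=2\deg f>1$: the preimage of a generic $[P_1,P_2]$ in $W\times X$ consists of the $\deg f$ lifts of $P_1$ paired with $P_2$ together with the symmetric configuration obtained by interchanging the roles of $P_1$ and $P_2$. Since $W\times X$ is geometrically integral as a product, this exhibits the set in (d) as a type II thin set, the image of $\psi$ consisting of unordered pairs with some component in the image of $f$. The one point that demands care is the bridging between $\Qbar$-rational and $\Q$-rational preimages: for configurations in (d) arising from $\Qbar$-preimages one should handle separately the rational pairs (via Hilbert irreducibility applied to $f$) and the conjugate pairs $[P,\bar P]$ (by reducing via (b) to the base change $f_K$), assembling the result as a finite union of type II thin sets. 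Beyond this cleanup everything reduces directly to the definition of a thin set.
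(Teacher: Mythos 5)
Your treatment of (a)--(c), and your main construction $\psi=\pi\circ(f\times\id_X)$ for (d), match the paper's (terse) proof, which uses the descended morphism $X'\times X\to\Sym^2X$ for a morphism $X'\to X$: a closed immersion when $X'=Z$, and a generically finite dominant morphism of degree $2\deg f$ when $X'=W$.

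The final cleanup paragraph on (d), however, contains a genuine gap, and the repairs you sketch do not close it. Read literally with $\Qbar$, (d) is false as stated: $f\colon W\to X$ is a dominant morphism of projective varieties, so its image is closed and dense, hence $f(W(\Qbar))=X(\Qbar)$, and the set in (d) is all of $\Sym^2X(\Q)$, which is not thin. (For (c) the $\Qbar$/$\Q$ distinction is harmless since $Z(\Qbar)\cap X(\Q)=Z(\Q)$; for (d) it matters, and the intended reading is $f(W(\Q))$.) Neither proposed fix applies: Hilbert irreducibility controls the \emph{complement} of $f(W(\Q))$ in $X(\Q)$ and says nothing about $\Qbar$-preimages, while invoking (b) for the base changes $f_K$ would require a union over \emph{all} quadratic $K$, which is not a finite union and so does not produce a thin set.

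Once (d) is read with $f(W(\Q))$, no such machinery is needed, and the cleanup is immediate. If $[P_1,P_2]\in\Sym^2X(\Q)$ with, say, $P_1=f(Q_1)$ for some $Q_1\in W(\Q)$, then $P_1\in X(\Q)$; Galois-stability of the unordered pair $\{P_1,P_2\}$ together with the Galois-fixedness of $P_1$ forces $P_2\in X(\Q)$. Hence $(Q_1,P_2)\in(W\times X)(\Q)$ and $\psi(Q_1,P_2)=[P_1,P_2]$, exhibiting the set in (d) as a subset of $\psi\big((W\times X)(\Q)\big)$, which is a thin set of type II since $W\times X$ is geometrically integral and $\deg\psi=2\deg f>1$.
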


\begin{proof}
\begin{enumerate}
    \item[(a)] This follows from the existence of the finite morphism $X \times X \to \Sym^2 X$.
    \item[(b)] This follows from Proposition \ref{prop:map from res to sym}.
    \item[(c),(d)] Any morphism $X'\to X$ gives a morphism $(X' \times X) \coprod (X \times X') \to X \times X$ which descends to a morphism to $\Sym^2 X$. For $X' = Z$ this gives a proper closed immersion, and for $X'=W$ we obtain a generically finite morphism of degree at least $1$. \qedhere
\end{enumerate}
\end{proof}

Now consider a weak Fano variety $X$ with an anticanonical height function. From Manin's conjecture we see that the sets
\[
\{ [P_1,P_2] \colon P_1,P_2 \in X(\Q)\}
\]
and
\[
\{ [P,\bar P] \colon P \in X(K)\setminus X(\Q)\}
\]
for fields with $\rho(X_K) > \rho(X)+1$ are expected to be accumulating, since $\rho(\Hilb^2 X) = \rho(X)+1$. The pairs of conjugate $K$-points for the quadratic fields $K$ with $\rho(X_K) = \rho(X)+1$ contribute the expected order of growth, but we will see that we will need to remove those too. For other quadratic fields the contribution has a smaller order of magnitude individually than the expected total and we should be able to remove any finite number of them without changing the asymptotics. 

In the higher dimensional setting, any single field with for which the rank of the Picard group jumps will dominate the count completely and it seems reasonable to want to remove the contribution from such fields.
For simplicity we define a finite collection of quadratic fields to exclude at this point.

\begin{defn}
    For a weak Fano variety $X/\Q$, let $L$ be the splitting field of $\Pic \Xbar$ (we remark that $\Pic \Xbar$ is finitely generated and free as a $\Z$-module, and thus $L/\Q$ is a finite Galois extension). Let $\calK := \calK(X)$ be the collection of quadratic fields $K/\Q$ which are linearly disjoint from $L$.
\end{defn}

\begin{lemma}\label{lem:lindisjointL}
    There are finitely many quadratic fields not in $\mathcal K(X)$.
\end{lemma}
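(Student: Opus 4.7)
The plan is to unpack the definition of $\calK(X)$ and reduce the claim to the finiteness of quadratic subfields of a fixed number field. Since $\Pic \Xbar$ is finitely generated and free as a $\Z$-module (because weak Fano varieties are rationally connected, as noted at the start of the section), the absolute Galois group $\Gamma_\Q$ acts on it through a finite quotient, so the splitting field $L$ is a finite Galois extension of $\Q$. In particular $[L:\Q]<\infty$.

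Next, I would translate ``not linearly disjoint from $L$'' into a containment statement. For a quadratic extension $K/\Q$, the intersection $K\cap L$ is a subfield of $K$, hence equals either $\Q$ or $K$. Standard Galois theory tells us that $K$ and $L$ are linearly disjoint over $\Q$ precisely when $K\cap L=\Q$ (this uses that $L/\Q$ is Galois). Consequently, a quadratic field $K\notin\calK(X)$ if and only if $K\subseteq L$.

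Finally, I would invoke the Galois correspondence: the subfields of $L/\Q$ are in bijection with the subgroups of the finite group $\Gal(L/\Q)$, so there are only finitely many of them, and in particular only finitely many quadratic subfields. Hence the set of quadratic fields $K/\Q$ failing to lie in $\calK(X)$ is finite.

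There is no real obstacle here; the only subtlety worth flagging is ensuring $L/\Q$ is genuinely a finite extension, which rests on the fact that $\Pic\Xbar$ is a finitely generated $\Z$-module so that its automorphism group is finite, forcing the Galois representation defining $L$ to factor through a finite quotient of $\Gamma_\Q$.
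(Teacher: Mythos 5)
Your proof is correct and follows essentially the same route as the paper: both reduce the claim to the finiteness of $L/\Q$ (coming from $\Pic\Xbar$ being finitely generated), translate non-disjointness into containment $K\subseteq L$ using that $L/\Q$ is Galois, and finish by noting a finite separable extension has only finitely many subfields. The only cosmetic difference is that you invoke the Galois correspondence explicitly where the paper simply cites separability.
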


\begin{proof}
    A quadratic field is linearly disjoint from the splitting field $L$ if and only if it is not contained in $L$. Since $\Pic \Xbar$ is finitely generated, we have that $L/\Q$ is a finite extension. Since $L/\Q$ is moreover separable, it has finitely many subfields, in particular of degree $2$. 
\end{proof}

\subsection{Peyre constants under quadratic extensions}\label{sec:quadconst}

We will need to understand the expected order of growth in Manin's conjecture \ref{c:BM} for $X_K$ for all but finitely many quadratic fields $K$. Thanks to our equivalent reformulation of the Manin--Peyre invariants (c.f. Lemma \ref{lem:rankseq}), we can easily understand how $\alpha$ and $\beta$ appearing in Peyre's constant as well as $\rho$ behave under finite extensions disjoint from the splitting field of the geometric Picard group.

\begin{prop} \label{peyrequad} Let $X$ be a weak Fano variety over $\Q$. Then, for any $K \in \calK(X)$, we have    $\rho(X_K) = \rho(X)$, $\alpha(X_K) = \alpha(X)$,  and $\beta(X_K) = \beta(X)$.
\end{prop}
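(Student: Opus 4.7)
The plan is to reduce all three equalities to a single observation: for $K \in \calK(X)$, the action of $\Gamma_K$ on $\Pic \Xbar$ coincides with the action of $\Gamma_\Q$. Indeed, since $L$ is the splitting field of $\Pic \Xbar$, the action of $\Gamma_\Q$ factors through $\Gal(L/\Q)$, and the action of $\Gamma_K$ on $\Pic \overline{X_K} = \Pic \Xbar$ factors through $\Gal(L\cdot K/K)$. By linear disjointness, the natural restriction map $\Gal(L\cdot K/K) \to \Gal(L/\Q)$ is an isomorphism, so both Galois groups act on $\Pic \Xbar$ through the same finite quotient and the same action. Thus $(\Pic \Xbar)^{\Gamma_K} = (\Pic \Xbar)^{\Gamma_\Q}$ as subgroups of $\Pic \Xbar$.

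The equality $\rho(X_K) = \rho(X)$ is then immediate from the definition $\rho(V) = \rank((\Pic \Vbar)^{\Gamma_F})$. For $\beta$, inflation-restriction (or simply the observation that cohomology of a finite-image representation depends only on the image) gives
\[
\HH^1(\Gamma_K, \Pic \Xbar) = \HH^1(\Gal(L\cdot K/K), \Pic \Xbar) \cong \HH^1(\Gal(L/\Q), \Pic \Xbar) = \HH^1(\Gamma_\Q, \Pic \Xbar),
\]
so $\beta(X_K) = \beta(X)$.

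For $\alpha$ I would unpack the definition. Using the formulation from Conjecture~\ref{c:MP},
\[
\alpha(V) = \frac{1}{(\rho(V)-1)!} \int_{C^{1,\Gamma_F}_{\textup{eff}}(\Vbar)^{\vee}} e^{-\langle K_\Vbar, y\rangle}\, dy.
\]
The effective cone $C^1_{\textup{eff}}(\Xbar) \subseteq \Pic \Xbar \otimes_\Z \R$ is intrinsic to $\Xbar$ and independent of the base field, and the canonical class $K_{\Xbar}$ also depends only on $\Xbar$. Since the two Galois actions on $\Pic \Xbar$ agree, the intersections with $(\Pic \Xbar)^{\Gamma_\Q} \otimes \R$ and $(\Pic \Xbar)^{\Gamma_K} \otimes \R$ are identical, so $C^{1,\Gamma_\Q}_{\textup{eff}}(\Xbar) = C^{1,\Gamma_K}_{\textup{eff}}(\overline{X_K})$, and their dual cones coincide inside the same real vector space. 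The Lebesgue measure $dy$ on the dual space is likewise determined by the integral lattice $((\Pic \Xbar)^{\Gamma_\Q})^\vee = ((\Pic \Xbar)^{\Gamma_K})^\vee$, and combined with $\rho(X_K) = \rho(X)$ we conclude $\alpha(X_K) = \alpha(X)$.

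The only subtlety worth double-checking is that the identification $\Pic \overline{X_K} = \Pic \Xbar$ is canonical, together with the canonical class and effective cone, which follows because $\Xbar = \overline{X_K}$ as schemes over $\overline{\Q}$; there is no genuine obstacle here, just bookkeeping.
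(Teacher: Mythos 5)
Your argument is essentially the same as the paper's: both reduce everything to the observation that, by linear disjointness of $K$ and $L$, the actions of $\Gamma_\Q$ and $\Gamma_K$ on $\Pic\Xbar$ factor through the same finite quotient $\Gal(L/\Q)\cong\Gal(LK/K)$ and yield the same invariant sublattice.

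One small caution on the $\beta$ step: your parenthetical ``observation that cohomology of a finite-image representation depends only on the image'' is not a general fact. If $G$ acts on $M$ through $\bar G = G/H$, then $\HH^1(G,M) \cong \HH^1(\bar G, M)$ requires, via the inflation--restriction sequence, that $\HH^1(H,M)=0$ (take $G=\hat\Z$, $M=\Z/2$ trivial: $\HH^1(\hat\Z,\Z/2)=\Z/2\ne 0=\HH^1(\{1\},\Z/2)$). Here the vanishing does hold because $\Pic\Xbar$ is finitely generated and torsion-free (as $X$ is rationally connected), so $\HH^1(\Gamma_L,\Pic\Xbar)=\Hom_{\text{cont}}(\Gamma_L,\Z^n)=0$ and likewise for $\Gamma_{LK}$; the paper states this explicitly, and you should too if you go the inflation--restriction route. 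With that added, your proof is complete and matches the paper's.
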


\begin{proof}  
 Let $K \in \calK(X)$ and let $L/\Q$ be the splitting field of $\Pic X$, then $L$ and $K$ are linearly disjoint over $\Q$. It follows that $\Gamma_\Q = \Gamma_L \Gamma_K$. Since $\Gamma_L$ acts trivially on $\Pic \Xbar$, it follows that the natural map $(\Pic \Xbar)^{\Gamma_\Q} \to (\Pic \Xbar)^{\Gamma_K}$ is an isomorphism. By Lemma \ref{lem:rankseq}, this shows that $\rho(X_K) = \rho(X)$ and $\alpha(X_K) = \alpha(X)$.
         Since we can identify $\Pic (\Xbar)$ with $\Pic X_L$ and as $\Gamma_L$ is profinite and $\Pic \Xbar$ is torsion-free we have $\HH^1(L, \Pic \Xbar)= 0$. It follows that $\HH^1(\Q, \Pic \Xbar)$ can be identified with $\HH^1(\Gal(L/\Q), \Pic X_L)$. Similarly, we can make the identification $\HH^1(K, \Pic \Xbar) = \HH^1(\Gal(LK/K), \Pic X_{LK})$. But $\Gal(LK/K) = \Gal(L/\Q)$ and $\Pic X_L = \Pic X_{LK}$, since $K \in \calK(X)$ and $L$ is the splitting field for $\Pic X$. Hence, 
    \[
    \HH^1(\Q, \Pic\Xbar) =\HH^1(\Gal(L/\Q), \Pic X_L) = \HH^1(\Gal(LK/K), \Pic X_{LK}) =  \HH^1(K, \Pic\Xbar),
    \]
    meaning that $\beta(X) = \beta(X_K)$.
    \end{proof}

The Tamagawa number of $X_K$ will have a much greater dependence on $K$. While almost all local Tamagawa numbers are easily computed by point counting over finite fields, there are some exceptional places.

\begin{defn}\label{defn:bad places for X}
    Let $X$ be an anticanonically adelically metrised weak Fano variety  over $\Q$. We let $\bbS := \bbS_{X, \Q}$ be the finite set of places of $\Q$ consisting of
    \begin{itemize}
        \item[(i)] the places $\infty$, $2$,
        \item[(ii)] all places of bad reduction of $X$,
        \item[(iii)] all places for which $\|.\|_v$ is not induced by a model, and
        \item[(iv)] all places $v$ for which  $X(\Q_v) = \emptyset$.
    \end{itemize}
    If $K/\Q$ is a finite field extension, we let $\bbS_{K}$ be the places of $K$ above the places in $\bbS$. 
\end{defn}

\begin{remark}
\begin{enumerate}
    \item By spreading out and Lang--Weil, we know that for places $v$ with $q_v \gg_X 1$ there is a model $\calX/ \calO_v$ such that $\calX_{\F_v}/\F_v$ is smooth and has an $\F_v$-point, which lifts to an $\Q_v$-point using Hensel's lemma. Hence, $\bbS$ is indeed a finite set. 
    \item In practice, for the purposes of this paper, we can and often do work with any finite set of places containing $\bbS$.
    \item Let $K/\Q$ be a quadratic extension. It is easily deduced that if $\mathfrak{v} \notin \bbS_K$, then $\mathfrak{v}$ is a finite odd place of $K$ of good reduction for $X_K$, the adelic metric at  $\mathfrak{v}$ is induced by a model, and $X_K(K_{\mathfrak{v}}) \neq \emptyset$.
\end{enumerate}
\end{remark}

To compute the Tamagawa number over a quadratic field, we begin by understanding the Brauer set of $X$ over a quadratic field.

\begin{lemma}\label{lem:agree} Let $K \in \calK(X)$. Then we have the equality
\[
X_K(\Adeles_K)^{\Br X_K} = X(\Adeles_K)^{\Br X}.
\]
\end{lemma}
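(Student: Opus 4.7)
The plan is to prove both inclusions using the base change map $\Br X \to \Br X_K$ induced by $X_K \to X$. The inclusion $X_K(\Adeles_K)^{\Br X_K} \subseteq X(\Adeles_K)^{\Br X}$ is immediate: for $\alpha \in \Br X$ and $P_v \in X(K_v) = X_K(K_v)$ the evaluations $\alpha(P_v)$ and $\alpha_K(P_v)$ agree in $\Br K_v$ by commutativity of pullback and evaluation, so orthogonality to the full $\Br X_K$ forces orthogonality to the image of $\Br X$.

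For the reverse inclusion, elements of $\Br_0 X_K$ pair trivially with every adelic $K$-point by the global reciprocity law (as they are pulled back from $\Br K$, whose sum of local invariants vanishes). Consequently, the Brauer--Manin set attached to $\Br X_K$ is determined by the quotient $\Br X_K / \Br_0 X_K$, and it suffices to prove that the natural map $\Br X / \Br_0 X \to \Br X_K / \Br_0 X_K$ is surjective. Given this, any $\alpha \in \Br X_K$ decomposes as $\beta_K + c$ with $\beta \in \Br X$ and $c \in \Br_0 X_K$, and for $(P_v) \in X(\Adeles_K)^{\Br X}$ the sum
\[
\sum_v \inv_v \alpha(P_v) \;=\; \sum_v \inv_v \beta_K(P_v) \,+\, \sum_v \inv_v c
\]
vanishes by the hypothesis and by reciprocity, proving $(P_v) \in X_K(\Adeles_K)^{\Br X_K}$.

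To establish the required surjectivity, note that $X$ being weak Fano is rationally connected, so $\Br \Xbar = 0$ and hence $\Br X = \Br_1 X$, $\Br X_K = \Br_1 X_K$. The Hochschild--Serre spectral sequence realises $\Br_1 V / \Br_0 V$ as $\ker\!\bigl(d_2 \colon \HH^1(k, \Pic \overline{V}) \to \HH^3(k, \overline{k}^\times)\bigr)$ for $V/k$ equal to $X/\Q$ and $X_K/K$. The proof of Proposition \ref{peyrequad} shows that, because $K \in \calK(X)$, we have $\Gal(LK/K) \cong \Gal(L/\Q)$ acting on $\Pic X_L = \Pic X_{LK}$ identically, so that the restriction $\HH^1(\Q, \Pic \Xbar) \to \HH^1(K, \Pic \Xbar)$ is an isomorphism. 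Functoriality of Hochschild--Serre together with this isomorphism then identifies the corresponding kernels of $d_2$ and yields the desired bijection on $\Br/\Br_0$.

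The hardest step is verifying the compatibility of the two $d_2$ differentials, whose targets $\HH^3(\Q, \Qbar^\times)$ and $\HH^3(K, \Kbar^\times)$ are related only via restriction. The key input handling this is that both spectral sequences can be computed entirely in terms of the same finite Galois module $\Pic X_L$ over the common group $\Gal(L/\Q)$, which is exactly what the hypothesis $K \in \calK(X)$ buys us.
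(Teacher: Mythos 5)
Your argument is correct in substance and takes the same route as the paper: reduce to surjectivity of the base-change map $\Br X/\Br_0 X \to \Br X_K/\Br_0 X_K$, identify both quotients with $\HH^1$ of $\Pic\Xbar$ via Hochschild--Serre, and use that $K \in \calK(X)$ makes the restriction $\HH^1(\Q,\Pic\Xbar) \to \HH^1(K,\Pic\Xbar)$ an isomorphism (exactly the computation from Proposition~\ref{peyrequad}). However, your closing paragraph misdiagnoses the ``hardest step.'' There is no compatibility of $d_2$ differentials to verify: for any number field $k$ one has $\HH^3(\Gamma_k,\bar k^\times)=0$, so both differentials vanish and $\Br_1 V/\Br_0 V \cong \HH^1(k,\Pic\bar V)$ holds outright for $V$ over $\Q$ and over $K$. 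Moreover, the resolution you offer would not in fact work even if there were something to check: the claim that ``both spectral sequences can be computed entirely in terms of the same finite Galois module $\Pic X_L$ over $\Gal(L/\Q)$'' does not address the $E_2^{3,0}$-terms, which involve $\overline{k}^\times$ rather than $\Pic X_L$ and are not controlled by the hypothesis $K \in \calK(X)$. This inaccuracy is harmless only because the worry it addresses is vacuous; you should replace that paragraph by the one-line invocation of $\HH^3(k,\bar k^\times)=0$.
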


When $K \in \calK(X)$, we will write $X(\Adeles_K)^{\Br}$ for this set.

\begin{proof}
    We obtain
    \[
    \Br X/\Br_0 X = \HH^1(\Q,\Pic \Xbar) = \HH^1(K,\Pic \Xbar) = \Br X_K/\Br_0 X_K,
    \]
    where the middle equality follows from $K$ being linearly disjoint to the splitting field of $\Pic \Xbar$, and the extremal identifications come from the Hochschild--Serre spectral sequence.
    Now pick elements $\calA_i \in \Br X$, which generate the quotient $\Br X/\Br_0 X$, then their restrictions $\calA_{i,K}$ to $\Br X_K$ generate $\Br X_K/\Br_0 X_K$.
    We get
    \[
    X_K(\Adeles_K)^{\Br X_K} = \left\{ (y'_{\mathfrak v}) \in X_K(\Adeles_K):  \sum_{\mathfrak v} \inv_{\mathfrak v} \calA_{i,K}(y'_{\mathfrak v}) = 0\right\}
    \]
    and
    \[
    X(\Adeles_K)^{\Br X} =  \left\{ (y_{\mathfrak v}) \in X(\Adeles_K): \sum_{\mathfrak v} \inv_{\mathfrak v} \calA_i(y_{\mathfrak v}) = 0\right\}.
    \]
    Since $y_{\mathfrak v} \colon \Spec K_{\mathfrak v} \to X$ decomposes as $\Spec K_{\mathfrak v} \xrightarrow{y'_{\mathfrak v}} X_K \to X$ we see that the sets agree.
\end{proof}

\begin{defn} For any $K \in \calK(X)$, we define 
    \[ \biggl(\prod_{\mathfrak v \in \bbS_K} X(K_{v})\biggr)^{\!\text{Br}} := \textrm{pr}_{\bbS_K} \biggl( X(\Adeles_K)^{\Br} \biggr), \]
    where $\textrm{pr}_{\bbS_K} : X(\Adeles_K) \to \prod_{\mathfrak v \in \bbS_K} X(K_{\mathfrak v})$ is the projection map, and we are viewing $X(\Adeles_K)^{\Br}$ as a subset of $X(\Adeles_K)$.
\end{defn}

Note that by the assumption that $X(\Q_v)\ne \emptyset$ for $v\notin\bbS$ we see that the projection $\textrm{pr}_{\bbS_K}$ is surjective for all $K$.

\begin{lemma}\label{lem:bmoplaces}
    For any $K \in \calK(X)$,  we have
    \[
    X(\Adeles_K)^{\Br} = \biggl(\prod_{\mathfrak v \in \bbS_K} X(K_{\mathfrak v})\biggr)^{\Br} \times \prod_{\mathfrak v \not \in \bbS_K} X(K_v).
    \]
\end{lemma}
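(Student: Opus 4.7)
The plan is to prove both inclusions separately. The forward inclusion is essentially by definition, whereas the reverse relies on the classical vanishing of Brauer evaluations at places of good reduction.

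For the forward inclusion, given $(y_{\mathfrak v}) \in X(\Adeles_K)^{\Br}$, its image under $\textup{pr}_{\bbS_K}$ lies in $\bigl(\prod_{\mathfrak v \in \bbS_K} X(K_{\mathfrak v})\bigr)^{\Br}$ by the very definition of that set, while the remaining local components lie in $\prod_{\mathfrak v \notin \bbS_K} X(K_{\mathfrak v})$ automatically.

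For the reverse inclusion, the central claim I would establish first is the following vanishing: for every $\mathfrak v \notin \bbS_K$ and every $\calA \in \Br X$, one has $\inv_{\mathfrak v}\calA(y_{\mathfrak v}) = 0$ for all $y_{\mathfrak v} \in X(K_{\mathfrak v})$. Granting this, let $(y_{\mathfrak v})$ lie in the right-hand side. By definition of the projected Brauer--Manin set, there exists $(y'_{\mathfrak v}) \in X(\Adeles_K)^{\Br}$ agreeing with $(y_{\mathfrak v})$ on $\bbS_K$, and for any $\calA \in \Br X$ one computes
\[
\sum_{\mathfrak v} \inv_{\mathfrak v}\calA(y_{\mathfrak v})
= \sum_{\mathfrak v \in \bbS_K} \inv_{\mathfrak v}\calA(y_{\mathfrak v})
= \sum_{\mathfrak v \in \bbS_K} \inv_{\mathfrak v}\calA(y'_{\mathfrak v})
= \sum_{\mathfrak v} \inv_{\mathfrak v}\calA(y'_{\mathfrak v})
= 0,
\]
where the first and third equalities use the vanishing claim, the second uses that the local components agree on $\bbS_K$, and the fourth uses that $(y'_{\mathfrak v})$ lies in $X(\Adeles_K)^{\Br}$.

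To verify the vanishing claim, since $X$ is weak Fano and hence rationally connected, the group $\Br X / \Br_0 X$ is finite, so I would fix a finite set of representatives and, using the flexibility to enlarge $\bbS$ to a finite set granted by the remark after Definition \ref{defn:bad places for X}, ensure that each representative extends to a Brauer class on a smooth proper model $\calX_{\mathfrak v}$ of $X_K$ over $\calO_{\mathfrak v}$ for every $\mathfrak v \notin \bbS_K$. Any $y_{\mathfrak v} \in X(K_{\mathfrak v})$ then extends by the valuative criterion of properness, combined with smoothness of $\calX_{\mathfrak v}$, to a section $\tilde y \colon \Spec \calO_{\mathfrak v} \to \calX_{\mathfrak v}$, and the pullback $\tilde y^\ast \calA$ lies in $\Br \calO_{\mathfrak v} = 0$, as $\calO_{\mathfrak v}$ is a Henselian discrete valuation ring with finite residue field. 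Constant Brauer classes do not contribute to the Brauer--Manin pairing and can be ignored. The main obstacle is guaranteeing uniformly, as $\mathfrak v$ ranges outside $\bbS_K$, that every chosen representative of $\Br X/\Br_0 X$ extends to the integral model; this relies on purity-type results for the Brauer group at places of good reduction and is exactly what the definition of $\bbS$ --- after possibly enlarging --- is designed to ensure.
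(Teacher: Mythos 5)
Your approach essentially unpacks the content of the theorem the paper cites, so the two proofs are the same in substance though packaged differently. The paper's proof verifies the hypotheses of Thm.\ 13.3.15 of Colliot-Th\'el\`ene--Skorobogatov (using rational connectedness to get $\Br\Xbar=0$, $\HH^1(\Xbar,\calO_{\Xbar})=0$, and $\Pic\Xbar$ finitely generated torsion-free, hence $\Br X_K/\Br_0 X_K$ finite, together with a smooth proper model over $\calO_{K,\bbS_K}$) and then appeals to that theorem, whose proof is precisely the good-reduction evaluation argument you reconstruct. Two points worth tightening. Your vanishing claim is stated for ``every $\calA\in\Br X$'', which as written is false for constant classes (which you do note can be ignored), and more importantly, for a general $\calA\in\Br X$ that does not a priori extend to the integral model, one only obtains \emph{constancy} of $\inv_{\mathfrak v}\calA(\cdot)$ on $X(K_{\mathfrak v})$, not vanishing: by Bright's result (the one cited in the proof of Proposition~\ref{prop:constev}) the image of $\Br X\to\Br X_{K_{\mathfrak v}}$ is generated by $\Br K_{\mathfrak v}$ and $\Br\calX$, so the evaluation is constant with value the invariant of a class in $\Br K_{\mathfrak v}$. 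Constancy already suffices for your chain of four equalities. Finally, the uniformity concern you raise at the end has a cleaner resolution than ``possibly enlarging $\bbS$'' suggests: the representatives $\calA_1,\dots,\calA_n$ of $\Br X/\Br_0 X$ live on $X$ over $\Q$, so one spreads them out to $\Br\calX$ over a single $\Z[1/N]$, and then for every $K\in\calK(X)$ and every $\mathfrak v$ above a prime $p\nmid N$ the restriction $\calA_{i,K_{\mathfrak v}}$ automatically lands in $\Br(\calX\otimes\calO_{\mathfrak v})$. This is what makes the (possibly enlarged) $\bbS$ independent of $K$, which is the whole point of the lemma.
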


\begin{proof}
Since $X$ is (geometrically) rationally connected (see \cite{Zhang+2006+131+142}), we have $\Br \Xbar = 0$, $\HH^1(\Xbar, \calO_{\Xbar}) = 0$, and $\Pic \Xbar = \NS \Xbar$ is a finitely generated, torsion-free abelian group. Hence, for any quadratic extension $K/\Q$, we have that $\Br_1 X_K = \Br X_K$ and that $\Br X_K/\Br_0 X_K = \HH^1(K, \Pic \Xbar)$ is finite. Since $\bbS_K$ contains all the places of bad reduction of $X_K$, we can fix an integral model $\calX_K$ proper and smooth over $\Spec(\calO_{K, \bbS_K})$. The proof of \cite[Thm.\ 13.3.15]{brauerbook} then yields the required result. 
\end{proof}

\begin{remark}
    We warn the reader that the notation $\left( \prod_{\mathfrak{v} \in \bbS_K}X(K_{\mathfrak v})\right)^{\Br}$ on the right hand side of the equation in the previous lemma is non-standard. This is merely a label which we have chosen to give to a certain subset of $\prod_{\mathfrak{v} \in \bbS_K} X(K_{\mathfrak v})$. In particular, it is not necessarily the subset of points whose invariants at $\mathfrak v \in \bbS_K$ sum to $0$. 
\end{remark}

Understanding the behaviour of $X(K_v)$ with $v \in \bbS_K$  is crucial in computing the Tamagawa number of $X$. We begin by partitioning the set of quadratic fields by their splitting behaviours at the places over $\bbS$.

\begin{defn}\label{def:Fxi}
    For a place $v \in \Omega_\Q$ we let $\Q_{v,\xi_v}$ be the quadratic \'etale algebra indexed by $\xi_v \in \Q_v^\times/\Q_v^{\times,2}$, that is
    \[
    \Q_{v,\xi_v} =
    \begin{cases}
        \Q_v \times \Q_v & \text{ if } \xi \equiv 1;\\
        \Q_v(\sqrt \xi_v) & \text{ otherwise}.
    \end{cases}
    \]
    For an element $\Xi := (\xi_v)_{v \in \bbS} \in \prod_{v \in \bbS} \Q_v^\times/\Q_v^{\times,2}$, define
    \[
    \mathcal F_\Xi := \{K/\Q \colon [K\colon \Q]=2, \forall v \in \bbS, K_v \cong \Q_{v,\xi_v}\},
    \]
    where $K_v := \prod_{{\mathfrak v} \mid v} K_{\mathfrak v}$, and we let $\mathcal F_\Xi^\circ := \mathcal F_\Xi\cap \mathcal K(X)$. We also denote by either $\mathcal{F}_\Xi$ or $\mathcal{F}_\Xi^\circ$ the associated set of fundamental discriminants, while the set of all fundamental discriminants associated to fields in $\mathcal{K}(X)$ is denoted by $\mathcal F^\circ$.
\end{defn}

Furthermore, letting $\mathfrak v \in \bbS_K$ be the unique place above $v \in \bbS$ in the local field extension $\Q_{v, \xi_v}$ of $\Q_v$, we have the natural idenification
\[
\prod_{\mathfrak v \in \bbS_K} X(K_{\mathfrak v}) = \prod_{v \in \bbS} X(\Q_{v,\xi_v}),
\]
which allows us to compare the $K_{\mathfrak v}$-points of $X$ as we range over $K \in \calF_{\Xi}^{\circ}$.

\begin{prop}\label{prop:independenceatbadprimes}
    Let $\Xi \in \prod_{v \in \bbS}\Q_v^{\times}/\Q_v^{\times 2}$ be a fixed element. The subset
    \[
    \biggl(\prod_{\mathfrak v \in \bbS_K} X(K_{\mathfrak v})\biggr)^{\!\Br} \subseteq \prod_{\mathfrak v \in \bbS_K} X(K_{\mathfrak v})  = \prod_{v \in \bbS} X(\Q_{v,\xi_v})
    \]
    is independent of the choice of $K \in \calF_{\Xi}^{\circ}$, and it is henceforth denoted $\biggl(\prod_{v \in \bbS} X(\Q_{v,\xi_v})\biggr)^{\!\text{Br}}$.
\end{prop}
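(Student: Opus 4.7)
The plan is to unpack the definition of the Brauer set using the preceding two lemmas, pick a set of generators of $\Br X/\Br_0 X$ once and for all, and then observe that the local invariant pairings appearing in the membership condition are functorial in the local fields — which, by the definition of $\calF_\Xi^\circ$, depend only on $\Xi$.

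First, I would fix $K \in \calF_\Xi^\circ$ and apply Lemma~\ref{lem:bmoplaces} to write $X(\Adeles_K)^{\Br} = \left(\prod_{\mathfrak v\in \bbS_K} X(K_{\mathfrak v})\right)^{\!\Br} \times \prod_{\mathfrak v\notin\bbS_K} X(K_{\mathfrak v})$. Next I would combine this with Lemma~\ref{lem:agree}, so that a tuple $(y_{\mathfrak v})_{\mathfrak v \in \bbS_K} \in \prod_{\mathfrak v \in \bbS_K}X(K_{\mathfrak v})$ lies in the designated subset precisely when it can be completed to an adelic point $(y_{\mathfrak v})_{\mathfrak v \in \Omega_K} \in X(\Adeles_K)$ whose invariants under every class in $\Br X_K$ sum to zero. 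Choose generators $\calA_1, \ldots, \calA_n \in \Br X$ of $\Br X/\Br_0 X$ once and for all (independent of $K$); by the proof of Lemma~\ref{lem:agree} their restrictions $\calA_{i,K}$ generate $\Br X_K/\Br_0 X_K$.

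The second step is to observe that for $\mathfrak v \notin \bbS_K$ the local invariant $\inv_{\mathfrak v}(\calA_i)(y_{\mathfrak v})$ vanishes for every $y_{\mathfrak v} \in X(K_{\mathfrak v})$, since this is exactly what powers Lemma~\ref{lem:bmoplaces}: at such places $\calA_i$ extends to an element of $\Br \calX_{K,\mathfrak v}$ for a smooth proper model, and $\Br \calO_{K,\mathfrak v} = 0$. Consequently the condition cutting out $\left(\prod_{\mathfrak v\in\bbS_K} X(K_{\mathfrak v})\right)^{\!\Br}$ collapses to the finite requirement
\[
\sum_{\mathfrak v\in\bbS_K} \inv_{\mathfrak v}(\calA_i)(y_{\mathfrak v}) = 0 \qquad (i=1,\ldots,n).
\]

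Finally, I would verify that each summand $\inv_{\mathfrak v}(\calA_i)(y_{\mathfrak v})$ depends on $K$ only through the isomorphism class of $K_{\mathfrak v}$ (together with $\calA_i$, fixed, and $y_{\mathfrak v}$). This is just functoriality of the local invariant: $y_{\mathfrak v}\colon \Spec K_{\mathfrak v}\to X$ pulls $\calA_i$ back to $y_{\mathfrak v}^*\calA_i \in \Br K_{\mathfrak v}$, and its image in $\Q/\Z$ depends only on the local field $K_{\mathfrak v}$. Grouping the places $\mathfrak v \mid v$ together for each $v \in \bbS$, we have $\prod_{\mathfrak v\mid v} K_{\mathfrak v} \cong \Q_{v,\xi_v}$ by the defining property of $\calF_\Xi^\circ$, so under the natural identification $\prod_{\mathfrak v\in\bbS_K}X(K_{\mathfrak v}) = \prod_{v\in\bbS}X(\Q_{v,\xi_v})$ the pairing depends solely on $\Xi$ and the fixed $\calA_i$. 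Therefore the cut-out subset coincides with one intrinsic to $\Xi$, independent of the choice of $K\in\calF_\Xi^\circ$.

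There is no significant obstacle; the only thing to watch is bookkeeping between the product decomposition $K_v = \prod_{\mathfrak v\mid v}K_{\mathfrak v} \cong \Q_{v,\xi_v}$ and the additivity of invariants over places above $v$, and making sure the chosen generators $\calA_i$ (not their $K$-base-changed versions) are used throughout — both of which are built into the setup of Lemmas~\ref{lem:agree} and~\ref{lem:bmoplaces}.
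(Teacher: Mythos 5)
Your strategy is structurally the same as the paper's (unpack Lemma~\ref{lem:agree} and Lemma~\ref{lem:bmoplaces}, fix generators $\calA_1,\dots,\calA_n$ of $\Br X/\Br_0 X$ once and for all, and reduce to a condition intrinsic to $\Xi$), but there is a genuine gap in your second step. You assert that for $\mathfrak v \notin \bbS_K$ the local invariant $\inv_{\mathfrak v}(\calA_i)(y_{\mathfrak v})$ \emph{vanishes} for every $y_{\mathfrak v}$, on the grounds that $\calA_i$ extends to $\Br\calX_{K,\mathfrak v}$ for a smooth proper model. That justification is not quite right: the image of $\Br X \to \Br X_{\Q_p}$ is generated by $\Br\Q_p$ \emph{together with} $\Br\calX$, not by $\Br\calX$ alone, so $\calA_i$ may differ from an integral class by a nonzero constant Brauer class. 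The correct conclusion (and the one the paper uses, cf.\ Lemma~\ref{lem:bmoplaces} and Proposition~\ref{prop:constev}) is that $\inv_{\mathfrak v}(\calA_i)$ is merely \emph{constant} on $X(K_{\mathfrak v})$, not identically zero.

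Because of this, the membership condition does not collapse to $\sum_{\mathfrak v\in\bbS_K}\inv_{\mathfrak v}\calA_i(y_{\mathfrak v})=0$ but rather to
\[
\sum_{\mathfrak v\in\bbS_K}\inv_{\mathfrak v}\calA_i(y_{\mathfrak v}) \;=\; -\sum_{\mathfrak v\notin\bbS_K}\inv_{\mathfrak v}\calA_i(y_{\mathfrak v}),
\]
where the right-hand side is a constant that a priori could depend on $K$. You therefore need an extra step showing this constant is independent of $K\in\calF_\Xi^\circ$; your third step (functoriality of the invariant) only addresses the left-hand side. The paper closes this gap by fixing, for each $v\notin\bbS$, a point $x_v\in X(\Q_v)$ (using that $X(\Q_v)\neq\emptyset$ by the definition of $\bbS$), completing any tuple $(y_{\mathfrak v})_{\mathfrak v\in\bbS_K}$ to an adelic point by setting $y'_{\mathfrak v}=x_v$ for $\mathfrak v\mid v$ with $v\notin\bbS$, and computing the right-hand side as $-[K:\Q]\sum_{v\notin\bbS}\inv_v\calA_i(x_v)$, which is manifestly independent of $K$ since $[K:\Q]=2$ always. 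Adding this step would repair your argument; as written, it is incomplete.
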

\begin{proof}
By definition of $\bbS$, we have $X(\Q_v) \neq \emptyset$ for any $v \notin \bbS$, we fix some $x_v \in X(\Q_v)$ for all $v \notin \bbS$.
\newlength{\myww}%
\settowidth{\myww}{ for ${\mathfrak v} \mid v$ }%
Let us start with an adelic point $(y_{\mathfrak v})_{\mathfrak v} \in X(\Adeles_K)$, and consider the adelic point $(y'_{\mathfrak v})_{\mathfrak v}$ constructed as
\[
y'_{\mathfrak v} :=
\begin{cases}
y_{\mathfrak v} & \makebox[\myww][l]{ }\textrm{if ${\mathfrak v} \in \bbS_K$,}\\
x_v & \textrm{ for ${\mathfrak v} \mid v$ if  ${\mathfrak v} \notin \bbS_{K}$.}
\end{cases}\]
For any $\calA \in \Br X$, Lemma \ref{lem:bmoplaces} yields that $\calA$ is constant on $X(K_{\mathfrak v})$ for ${\mathfrak v} \notin \bbS_K$. Hence, $X(\Adeles_K)^{\Br}$ contains $(y_{\mathfrak v})_{\mathfrak v}$ if and only if it contains $(y'_{\mathfrak v})_{\mathfrak v}$.
We conclude that any point $(y_{\mathfrak v})_{{\mathfrak v} \in \bbS_K} \in \prod_{\mathfrak v \in \bbS_K} X(K_{\mathfrak v})$ is the projection of an adelic point, and it lies in $\biggl(\prod_{\mathfrak v \in \bbS_K} X(K_{\mathfrak v})\biggr)^{\!\Br}$ if and only if
\[
\sum_{{\mathfrak v} \in \bbS_K} \inv_{\mathfrak v} \calA(y_{\mathfrak v}) = - \sum_{{\mathfrak v} \not\in \bbS_K} \inv_{\mathfrak v} \calA(x_v) = -[K \colon \Q] \sum_{v \not\in \bbS} \inv_v\calA(x_v).
\]
Since $\prod_{\mathfrak v \in \bbS_K} X(K_{\mathfrak v})$ only depends on $\Xi$ and the latter condition is independent of $K$, the conclusion follows.
\end{proof}

\settowidth{\myww}{$\omega_v \times \omega_v$}

\begin{defn}\label{defn:wXi}
    Let $v$ be a place of $\Q$, and choose $\xi_v \in \Q_v^{\times}/\Q_v^{\times2}$. Define $\omega_v$ to be the measure on $X(\Q_{v,\xi_v})$ given by 
    \[
    \begin{cases}
        \omega_v \times \omega_v \text{ on } X(\Q_{v,\xi_v})=X(\Q_v) \times X(\Q_v) & \text{ if } \xi_v = 1,\\
        \makebox[\myww][l]{$\omega_{\mathfrak v}$} \text{ on } X(\Q_{v,\xi_v}) & \text{ otherwise}.
    \end{cases}
    \]
    where $\mathfrak v$ is the unique place of the field $\Q_{v,\xi_v}$ for $\xi_v \not \neq 1$. 
    We define 
    \[
    \omega_\Xi(X) := \left( \prod_{v \in \bbS} \omega_v \right)\biggl(\prod_{v \in \bbS} X(\Q_{v,\xi_v})\biggr)^{\!\text{Br}}.
    \]
\end{defn}
Now we study the local Tamagawa numbers at primes $p$ away from places in $\bbS_K$. These depend on the splitting of $p$ in $K$, which we encode in the quadratic character associated to $K$.

\begin{defn}\label{defn:local convergence factors and Tamagawa numbers}
    Denote by $\chi_K$ both the Galois representation associated to the quadratic field $K$ and the associated quadratic character. For $p \not \in \bbS$ and $\mathcal X/\Z_p$ a smooth model, define
    \begin{enumerate}
    \item[(a)] the local Tamagawa numbers
    \[
    w_p =  \frac{\#\calX(\F_p)}{p^{\dim X}} \quad \text{ and } \quad w_{p^2} =  \frac{\# \calX(\F_{p^2})}{p^{2\dim X}},
    \]
    \item[(b)] the modified convergence factor
    \begin{align*}
    \bar \lambda_p = & L_{\Q,p}(1,\Pic \Xbar \otimes \chi_K).
    \end{align*}
    Note that this should be compared with (and will occasionally appear alongside) the traditional convergence factors 
    \[
    \lambda_p = L_{\Q,p}(1, \Pic \overline X) \quad \text{ and } \quad \lambda_{\mathfrak p}=L_{K,\mathfrak p}(1, \Pic \overline{X}).
    \]
    \end{enumerate}
\end{defn}

Note that if we write $\lambda_p$ as a rational function in the variable $\tfrac1p$, then $\bar \lambda_p$ is the same function in $\chi_K(p)/p$. Then $\bar \lambda_p = 1$  for ramified primes, and $\bar \lambda_p = \lambda_p$ for split primes.

The following generalises the fact that $\zeta_K(s)=\zeta_\Q(s)L_{\Q}(s,\chi_K)$ for quadratic fields $K$.

\begin{prop}\label{prop:localfactors}
    Let $K/\Q$ be a quadratic extension and let $p$ be a prime number. Then
    \[
    \prod_{\mathfrak p \mid p} L_{K,\mathfrak p}(s,\Pic \Xbar) = L_{\Q,p}(s,\Pic \Xbar) L_{\Q,p}(s,\Pic \Xbar \otimes \chi_K).
    \]
\end{prop}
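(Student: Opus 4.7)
The plan is to prove the identity by analysing the splitting behaviour of $p$ in $K$. The conceptual reason the identity holds is the standard Artin formalism: for a quadratic extension $K/\Q$ one has $\Ind_{\Gamma_K}^{\Gamma_\Q}\mathbf{1} \cong \mathbf{1}\oplus \chi_K$, combined with the projection formula $\rho\otimes \Ind_{\Gamma_K}^{\Gamma_\Q}\mathbf{1}\cong \Ind_{\Gamma_K}^{\Gamma_\Q}(\rho|_{\Gamma_K})$ and the compatibility of local Artin $L$-factors with induction. Specialising to $\rho = \Pic \Xbar$ (which has finite image since $\Pic\Xbar$ is finitely generated) gives the proposition. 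To stay within the explicit framework of \textsection\ref{ss:tamagawa}, I would carry this out by a direct case analysis on the splitting type of $p$.

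Write $V = \Pic\Xbar$, let $I_p\subseteq \Gamma_\Q$ denote an inertia subgroup at $p$, and let $\Frob_p$ denote a choice of Frobenius lift. If $p$ splits in $K$, there are two primes $\mathfrak p_1,\mathfrak p_2\mid p$ with $K_{\mathfrak p_i}\cong \Q_p$, hence $I_{\mathfrak p_i}=I_p$ and $\Frob_{\mathfrak p_i}=\Frob_p$, so each $L_{K,\mathfrak p_i}(s,V)=L_{\Q,p}(s,V)$; since $\chi_K(p)=1$, the right-hand side equals $L_{\Q,p}(s,V)^2$ and the identity is immediate. If $p$ is inert then $I_\mathfrak p = I_p$ but $\Frob_\mathfrak p=\Frob_p^2$ on $V^{I_p}$; writing the eigenvalues of $\Frob_p$ on $V^{I_p}$ as $\alpha_i$, the left-hand side becomes $\prod_i(1-\alpha_i^2 p^{-2s})^{-1}=\prod_i(1-\alpha_i p^{-s})^{-1}(1+\alpha_i p^{-s})^{-1}$, which matches the right-hand side using $\chi_K(p)=-1$.

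The ramified case is the main obstacle. Here there is a unique $\mathfrak p\mid p$, $K_\mathfrak p/\Q_p$ is totally ramified of degree $2$, so $I_\mathfrak p\subsetneq I_p$ has index $2$, the residue degree is one and thus $\Frob_\mathfrak p = \Frob_p$ and $q_\mathfrak p=p$. The key observation is that $\chi_K$ restricted to $I_p$ is precisely the nontrivial character of $I_p/I_\mathfrak p\cong \Z/2$, because $p$ ramifies in $K$. Since $K/\Q$ is Galois, $\Gamma_K\trianglelefteq \Gamma_\Q$ and $\Frob_p$ normalises $I_\mathfrak p$, so $\Frob_p$ acts on $V^{I_\mathfrak p}$, preserving the decomposition into isotypic components for $I_p/I_\mathfrak p$:
\[
V^{I_\mathfrak p} = V^{I_p}\oplus W,
\]
where $W$ is the $(-1)$-isotypic part. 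A direct computation then shows that $(V\otimes \chi_K)^{I_p} = W$ as a vector space. Moreover, because $I_p$ already surjects onto $\Gal(K/\Q)$ one can choose $\Frob_p$ trivial in $\Gal(K/\Q)$, so that $\chi_K(\Frob_p)=1$ and hence $\Frob_p$ acts on $(V\otimes\chi_K)^{I_p}$ as $\Frob_p|_W$.

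Putting these together,
\[
L_{\Q,p}(s,V)\,L_{\Q,p}(s,V\otimes\chi_K)
= \det\bigl(1-p^{-s}\Frob_p \,\big|\, V^{I_p}\bigr)^{-1}\det\bigl(1-p^{-s}\Frob_p\,\big|\, W\bigr)^{-1},
\]
which factors as $\det(1-p^{-s}\Frob_p \mid V^{I_\mathfrak p})^{-1} = L_{K,\mathfrak p}(s,V)$, as required. The crux of the argument is therefore the identification of $(V\otimes \chi_K)^{I_p}$ with the $(-1)$-eigenspace $W$ and the triviality of $\chi_K(\Frob_p)$ for an appropriately chosen Frobenius lift at a ramified prime.
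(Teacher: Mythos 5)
Your proof is correct. The paper's proof is shorter: it invokes the inductivity of Artin $L$-functions as a black box, writing $L_K(s,\rho) = L_\Q(s, \Ind_{\Gal(LK/K)}^{\Gal(LK/\Q)}\rho)$ and then using $\Ind \rho \cong \rho \oplus (\rho\otimes\chi_K)$, so it never touches the local structure at all; the desired identity of Euler factors is read off from the matching of global $L$-functions. You cite the same conceptual ingredients but then deliberately bypass the inductivity theorem and verify the Euler-factor identity directly in each of the three splitting types, which makes the argument self-contained. The split and inert cases are routine, and the real content of your argument is the ramified case, where the identification $(V\otimes\chi_K)^{I_p} = W$ with the $(-1)$-isotypic piece of $V^{I_{\mathfrak p}}$ under $I_p/I_{\mathfrak p}$, together with the choice of a Frobenius lift inside $\Gamma_K$ so that $\chi_K(\Frob_p)=1$, is precisely what a proof of local inductivity of Artin $L$-factors for a quadratic extension amounts to. In effect you are re-proving the piece of \cite[Satz 3]{Artin} that the paper cites implicitly. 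The trade-off: the paper's route is faster and cleanly modular, while yours is longer but explains \emph{why} the identity holds at ramified primes rather than appealing to a theorem whose proof contains exactly your computation.
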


\begin{proof} We note that since $K/\Q$ is quadratic, if $K \subseteq L$ then $LK = L$ and so $LK/K$ is Galois since $L/\Q$ is Galois by definition. If $K$ is not contained in $L$, then $L$ and $K$ are linearly disjoint over $\Q$ and so $LK/K$ is again Galois. 
    Let $(\rho, V)$ denote the $\Gal(LK/K)$ representation associated to $\Pic X_{LK} = \Pic \overline{X}$. There is the induced $\Gal(LK/\Q)$ representation $(\text{Ind}^{\Gal(LK/K)}_{\Gal(LK/\Q)}\rho, V')$ where $V'$ can be written explicitly as $V \oplus \sigma V$  for $\sigma \in \Gal(LK/\Q)$ representing the non-trivial element of $\Gal(K/\Q)$. By the inductivity property of Artin $L$-functions
    \[
    L_K(s,\rho) = L_\Q\left(s,\text{Ind}^{\Gal(LK/K)}_{\Gal(LK/\Q)}\rho\right),
    \] and so in particular the local factor of the Euler product at any prime $p$ is the same for both $L$-functions.
    By definition, the local factor of $L_K(s,\rho)$ is $\prod_{\mathfrak{p} \mid p} L_{K, \mathfrak{p}}(s, \rho)$. Moreover, $\text{Ind}^{\Gal(LK/K)}_{\Gal(LK/\Q)}\rho$  can be written as $\rho \oplus (\rho \otimes\chi_K)$, with the associated module being $V' = V \oplus (V \otimes\chi_K)$. 
    Therefore,
    \[
    L_{\Q, p}\left(s,\text{Ind}^{\Gal(LK/K)}_{\Gal(LK/\Q)}\rho\right)
    = L_{\Q,p}(s,\rho \oplus (\rho \otimes \chi_K))
    = L_{\Q,p}(s,\rho) L_{\Q,p}(s,\rho \otimes \chi_K),
    \] which gives the desired equality.
\end{proof}

\begin{prop}\label{prop:eulerfactors}
    Let $K \in \mathcal K(X)$ be a quadratic extension and $p \not \in \bbS$.
    We have
    \[
        \prod_{\mathfrak p \mid p} \lambda_{\mathfrak p} =  \lambda_{p} \bar \lambda_p =
        \begin{cases}
        \lambda_p & \textup{ if } p \text{ ramifies in } K;\\
        \lambda^2_p & \textup{ if } p \text{ splits completely in } K;\\
        \lambda_{p} \bar \lambda_p & \textup{ if } p \text{ is inert in } K,
    \end{cases}
    \]
    and
    \[
        \prod_{\mathfrak p \mid p} \omega_{\mathfrak p}(X(K_{\mathfrak p})) =
        \begin{cases}
        w_p & \textup{ if } p \text{ ramifies in } K;\\
        w^2_p & \textup{ if } p \text{ splits completely in } K;\\
        w_{p^2} & \textup{ if } p \text{ is inert in } K.
    \end{cases}
    \]
\end{prop}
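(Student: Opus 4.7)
The plan is to treat the two formulas separately, deducing the Euler factor identity from Proposition~\ref{prop:localfactors} combined with a case analysis of the modified factor $\bar\lambda_p$, and then computing the measures via the Weil--Oesterl\'e mass formula at places of good reduction.

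For the first formula, I would first evaluate Proposition~\ref{prop:localfactors} at $s=1$ to obtain $\prod_{\mathfrak p\mid p}\lambda_{\mathfrak p}=\lambda_p\bar\lambda_p$ without any further work. To identify $\bar\lambda_p$ in each splitting case, I would exploit the linear disjointness of $K$ from the splitting field $L$ of $\Pic\Xbar$, which yields $\Gal(LK/\Q)=\Gal(L/\Q)\times\Gal(K/\Q)$ and consequently $I_p=I_p^L\times I_p^K$ inside this product. Since $p\notin\bbS$ gives good reduction of $X$ at $p$, smooth proper base change ensures $I_p^L$ acts trivially on $\Pic\Xbar$, whence $(\Pic\Xbar\otimes\chi_K)^{I_p}=\Pic\Xbar\otimes\chi_K^{I_p^K}$. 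If $p$ splits in $K$, $\chi_K(\Frob_p)=1$ and $\bar\lambda_p=\lambda_p$; if $p$ is inert, the factor is genuinely new and must be left as $\bar\lambda_p$; if $p$ ramifies in $K$, $I_p^K$ acts by $-1$ on $\chi_K$, so the invariants vanish and $\bar\lambda_p=1$. Assembling these three cases with the identity $\prod_{\mathfrak p\mid p}\lambda_{\mathfrak p}=\lambda_p\bar\lambda_p$ gives the first claim.

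For the second formula, I would invoke the Weil--Oesterl\'e formula: for a place $\mathfrak p$ of $K$ of good reduction with residue field $\F_{q_{\mathfrak p}}$ and smooth model $\calX_{\calO_{\mathfrak p}}$ pulled back from a smooth $\Z_p$-model of $X$, one has $\omega_{\mathfrak p}(X(K_{\mathfrak p}))=\#\calX(\F_{q_{\mathfrak p}})/q_{\mathfrak p}^{\dim X}$. Since $p\notin\bbS$ contains all the bad places, this formula applies at every $\mathfrak p\mid p$. It then remains to read off $q_{\mathfrak p}=p^{f_{\mathfrak p}}$ from the splitting of $p$ in $K$: in the ramified case, there is a single prime with $f=1$; in the split case, two primes with $f=1$; and in the inert case, a single prime with $f=2$. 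Substituting these into Definition~\ref{defn:local convergence factors and Tamagawa numbers} gives $w_p$, $w_p^2$, and $w_{p^2}$ respectively.

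Neither step appears to present a genuine obstacle once one is willing to quote Proposition~\ref{prop:localfactors} and the standard volume-via-point-count formula. The one point that warrants verification is that $I_p^L$ acts trivially on $\Pic\Xbar$ for $p\notin\bbS$; this is a standard consequence of smooth proper base change applied to $\mathrm{NS}(\Xbar)=\Pic\Xbar$ (equality since $X$ is weak Fano, hence rationally connected with torsion-free Picard group), so it is safe to take for granted. The remaining manipulations are routine linear algebra of inertia invariants and bookkeeping of residue field degrees.
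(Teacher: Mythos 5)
Your proposal is correct and follows essentially the same route as the paper. Both parts rest on the same two ingredients: Proposition~\ref{prop:localfactors} for the identity $\prod_{\mathfrak p\mid p}\lambda_{\mathfrak p}=\lambda_p\bar\lambda_p$, and Peyre's good-reduction point-count formula (what you call Weil--Oesterl\'e, cited in the paper as \cite[Lem.\ 2.2.1]{MR1340296}) together with the residue degrees $f_{\mathfrak p}$ for the measure computation. The only cosmetic difference is in identifying $\bar\lambda_p$ in the three cases: the paper reads off $\chi_K(p)\in\{0,1,-1\}$ directly as a Dirichlet character value and substitutes into the Euler factor, while you unwind the same fact through the inertia decomposition $I_p=I_p^L\times I_p^K$ and the vanishing of $(\chi_K)^{I_p^K}$ in the ramified case. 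Both are valid; yours is a bit more explicit about why $I_p$ acts through $I_p^K$ (via good reduction and linear disjointness), which the paper leaves implicit.
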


\begin{proof}
    The statement concerning the convergence factors $\lambda_{\mathfrak p}$ follows almost immediately from the previous proposition. Indeed, for inert primes there is nothing to prove. For split primes $p$, note that $\chi_K(p) = 1$ and therefore $L_{\Q, p}(s,\rho \otimes \chi_K) = L_{\Q, p}(s,\rho)$, and for ramified primes $p$, we have $\chi_K(p) = 0$ and thus $L_{\Q, p}(s,\rho \otimes \chi_K) = 1$.

    Now suppose $p$ is a rational prime which ramifies in $K$ and that $p\mathcal{O}_K = \mathfrak{p}^2$. Since in this case $K_{\mathfrak p} = \Q_p$, we have
    \[
    \omega_{\mathfrak p}(X(K_{\mathfrak p}))
    =
    \omega_p(X(\Q_p)).
    \]
    Hence by \cite[Lem.\ 2.2.1]{MR1340296}, $\prod_{\mathfrak p \mid p} \omega_{\mathfrak p}(X(K_{\mathfrak p})) = \omega_p(X(\Q_p)) = w_p$. Similarly, if $p$ 
    splits in $K$ then for every $\mathfrak p$ above $p$ 
    \[
    \omega_{\mathfrak p}(X(K_{\mathfrak{p}})) 
    =
    \omega_p(X(\Q_p)) = w_p.
    \]
    Hence $\prod_{\mathfrak{p} \mid p} \omega_{\mathfrak p}(X(K_{\mathfrak{p}})) = w_p^2$. Finally, if $p$ is inert then the residue field associated to $K_{\mathfrak{p}}$ has order $p^2$. Since $p \not \in \bbS$ and thus is of good reduction, \cite[Lem.\ 2.2.1]{MR1340296} implies that  $\omega_{\mathfrak{p}}(X(K_{\mathfrak p})) = w_{p^2}$.
\end{proof}

\begin{thm}\label{thm:Tamagawa of XK}
For $K \in \mathcal K(X)$ the Tamagawa number of $X_K$ is given by
\[
\tau(X_K) = \vert \Delta_K \vert^{-\frac{\dim X_K}{2}} \left( \lim_{s \to 1} (s-1)^{\rho(X)} L_{\Q,\bbS}(s,\Pic \Xbar)\right) L_{\Q,\bbS}(1,\Pic \Xbar \otimes \chi_K) \hspace{3cm}
\]
\[
\hspace{3cm} \times\ \omega_\Xi(X) \prodS_{p}
\begin{cases}
\lambda^{-1}_p w_p & \textup{ if } p \text{ ramifies in } K;\\
\lambda^{-2}_p w^2_p & \textup{ if } p \text{ splits completely in } K;\\
\lambda^{-1}_p \bar \lambda^{-1}_p w_{p^2} & \textup{ if } p \text{ is inert in } K.
\end{cases}
\]
\end{thm}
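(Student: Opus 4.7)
The plan is essentially a bookkeeping argument that combines all the structural results of this subsection. The starting point is to apply the definition of the Tamagawa measure \eqref{eq:Tamagawa measure} to $X_K/K$, so that
\[
\tau(X_K) = |\Delta_K|^{-\dim X_K/2}\left[\lim_{s \to 1}(s-1)^{\rho(X_K)}L_{K,\bbS_K}(s,\Pic \Xbar)\right]\prod_{\mathfrak v} \lambda_{\mathfrak v}^{-1} \omega_{\mathfrak v}\bigl(X_K(\Adeles_K)^{\Br X_K}\bigr|_{\mathfrak v}\bigr).
\]
Two immediate simplifications apply: $\dim X_K = \dim X$, and $\rho(X_K) = \rho(X)$ by Proposition~\ref{peyrequad}. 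These give the discriminant factor and the exponent on $(s-1)$ exactly as in the statement.

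Next, I would split the local integral into a product over $\mathfrak v \in \bbS_K$ and over $\mathfrak p \notin \bbS_K$. Combining Lemmas~\ref{lem:agree} and \ref{lem:bmoplaces} shows the Brauer--Manin set factors compatibly with this splitting. Since $\lambda_{\mathfrak v} = 1$ for $\mathfrak v \in \bbS_K$ by the definition of the local convergence factors, the integral over the bad factor is precisely $\omega_\Xi(X)$ by Proposition~\ref{prop:independenceatbadprimes} (which shows this integral depends only on the local splitting data $\Xi$) together with Definition~\ref{defn:wXi}.

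For the good places, the $L$-function and the local measures are handled separately. Proposition~\ref{prop:localfactors} applied prime by prime gives the global factorisation
\[
L_{K,\bbS_K}(s,\Pic \Xbar) = L_{\Q,\bbS}(s,\Pic \Xbar)\cdot L_{\Q,\bbS}(s,\Pic \Xbar \otimes \chi_K);
\]
since the left side has a pole of order $\rho(X_K) = \rho(X)$ at $s=1$ and the first factor on the right has a pole of the same order at $s=1$, the twisted $L$-function must be regular at $s=1$, so it can simply be evaluated there. Meanwhile, for each $p \notin \bbS$, Proposition~\ref{prop:eulerfactors} directly gives $\prod_{\mathfrak p \mid p}\lambda_{\mathfrak p}^{-1}\omega_{\mathfrak p}(X(K_{\mathfrak p}))$ as the case-by-case expression appearing in the statement. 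Collecting the discriminant factor, the separated residue, the evaluated twisted $L$-function, the bad-place integral $\omega_\Xi(X)$, and the prime-by-prime good-place factors yields the desired formula.

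The main obstacle is not computational but conceptual: one must confirm that the twisted $L$-function $L_{\Q,\bbS}(s,\Pic \Xbar \otimes \chi_K)$ is holomorphic (and nonzero) at $s=1$ so that it may be separated from the residue. This is where the linear disjointness hypothesis $K \in \mathcal K(X)$ is used decisively: comparing pole orders forces $\chi_K$ to not appear as a subrepresentation of $\Pic \Xbar$, which is automatic once $K$ is not contained in the splitting field $L$ of $\Pic \Xbar$. Once this is in place, everything else is a direct substitution from the propositions above.
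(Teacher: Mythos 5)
Your proposal is correct and follows essentially the same route as the paper's proof: apply the definition of the Tamagawa measure, factor $L_{K,\bbS}(s,\Pic\overline{X_K})$ via Proposition~\ref{prop:localfactors}, observe that linear disjointness of $K$ from the splitting field makes $L_{\Q,\bbS}(s,\Pic\Xbar\otimes\chi_K)$ regular at $s=1$, factor the Brauer--Manin constraint through $\bbS$ via Lemma~\ref{lem:bmoplaces}, and plug in Proposition~\ref{prop:eulerfactors} for the good Euler factors. The only slight difference is that you justify the regularity of the twisted $L$-function by comparing pole orders on both sides of the factorisation, whereas the paper invokes it directly; both are fine, and your pole-count argument is a nice sanity check on the same fact.
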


\begin{proof}
  By the definition of the Tamagawa number
  \begin{align*}
      \tau(X_K) &= \omega(X_K(\Aff_K))^{\Br}\\
      &= \vert \Delta \vert^{-\frac{\dim X_K}{2}} \lim_{s \rightarrow 1^+} [(s-1)^{\rho(X_K)} L_{K, \bbS}(s, \Pic \overline{X_K})] \left[\prod_{\mathfrak v} \lambda^{-1}_{\mathfrak v}\omega_{\mathfrak v} \right]\left(X(\Aff_K)^{\Br}\right).
  \end{align*}
  By Proposition \ref{prop:localfactors}, we see that
  \[ L_{K, \bbS}(s, \Pic \overline{X_K})
  =
  L_{\Q,\bbS}(s,\Pic \Xbar) L_{\Q,\bbS}(s,\Pic \Xbar \otimes \chi_K).\]
  Note that for $K \in \mathcal{K}(X)$, the $L$-function $L_{\Q,\bbS}(s,\Pic \Xbar \otimes \chi_K)$ converges absolutely at $s=1$. By Lemma \ref{lem:bmoplaces}, the influence of the Brauer--Manin obstruction factors through the places in $\bbS$ and so we have
  \[
  \left[\prod_{\mathfrak v} \lambda^{-1}_{\mathfrak v}\omega_{\mathfrak v} \right]\left(X(\Aff_K)^{\Br}\right) 
      =
      \left[\prod_{\mathfrak v \not \in \bbS_K}\lambda_{\mathfrak v}^{-1} \omega_{\mathfrak v}(X(K_{\mathfrak{v}}))\right]
     \left( \prod_{v \in \bbS} \omega_v\right)\left( \prod_{v \in \bbS} X(\Q_{v, \xi_v}) \right)^{\Br}.
  \]
  The factor on the far right is precisely $\omega_\Xi(X)$ by definition. For the remaining product, we apply Proposition \ref{prop:eulerfactors}.
\end{proof}

\section{The Peyre constant for the Hilbert scheme of $2$ points on a surface $X$}
\label{S:Tamagawakening}

If $X$ is a surface, then the crepant desingularisation $\Hilb^2 X$ of $\Sym^2 X$ is a weak Fano variety. Hence, Manin's conjecture applies to the point count on the symmetric square and the result depends on the invariants of the Hilbert scheme. In this section, we study the Peyre constant of the $\Hilb^2X$ in relation to the Peyre constant of the surface $X$.

\subsection{Cohomological invariants}

From the description of the Picard group of $\Hilb^2 X$ in Proposition \ref{prop:hilbbasics} we have the following result.

\begin{prop}\label{prop:rho and beta of Hilb2}
    We have
    \[
    \rho(\Hilb^2 X) =\rho(X) + 1 \quad \text{ and } \quad \beta(\Hilb^2 X) = \beta(X).
    \]
\end{prop}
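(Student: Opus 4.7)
The plan is to upgrade the decomposition of Proposition~\ref{prop:hilbbasics}(b) to a statement of Galois modules and then extract both equalities at once.

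First, I would check that the isomorphism
\[
\Pic(\Hilb^2 \Xbar) \;\cong\; \Pic \Xbar \;\oplus\; \Z\cdot \Delta^{[2]}
\]
provided by Proposition~\ref{prop:hilbbasics}(b) is $\Gamma_\Q$-equivariant, where $\Gamma_\Q$ acts trivially on the $\Z$-summand. The equivariance of the map $\calL \mapsto \calL^{[2]} = \epsilon_X^* N_\pi(\pi_1^* \calL)$ follows from the fact that it is built out of pullbacks and norms along the $\Q$-morphisms $\pi_1$, $\pi$ and $\epsilon_X$, all of which commute with the $\Gamma_\Q$-action. For the second summand, the class $\Delta^{[2]}$ is the class of the exceptional divisor above the Galois-stable diagonal $\Delta^{(2)} \subseteq \Sym^2 X$, hence is itself $\Gamma_\Q$-fixed.

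For the statement on $\rho$, I would take $\Gamma_\Q$-invariants in the above Galois-equivariant decomposition to obtain
\[
(\Pic \Hilb^2\Xbar)^{\Gamma_\Q} \;\cong\; (\Pic \Xbar)^{\Gamma_\Q} \;\oplus\; \Z,
\]
and read off ranks. For the statement on $\beta$, I would take Galois cohomology of the same decomposition, yielding
\[
\HH^1(\Q, \Pic \Hilb^2 \Xbar) \;\cong\; \HH^1(\Q, \Pic \Xbar) \;\oplus\; \HH^1(\Q, \Z),
\]
and then use that $\HH^1(\Q, \Z) = \Hom_{\text{cont}}(\Gamma_\Q, \Z) = 0$ since $\Gamma_\Q$ is profinite while $\Z$ is discrete and torsion-free. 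Taking cardinalities gives $\beta(\Hilb^2 X) = \beta(X)$.

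The only real point requiring attention is the Galois equivariance of the splitting; beyond that the argument is a direct application of Proposition~\ref{prop:hilbbasics}(b), so I would not expect any serious obstacle.
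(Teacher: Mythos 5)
Your proposal is correct and follows essentially the same route as the paper: the paper's proof simply invokes the Galois-module isomorphism $\Pic(\Hilb^2 \Xbar) \cong \Pic \Xbar \oplus \Z\Delta^{[2]}$ from Proposition~\ref{prop:hilbbasics}(b) for the rank statement, and uses $\HH^1(\Q,\Z)=0$ for the $\beta$ statement. Your extra care in checking Galois equivariance of the splitting is a reasonable elaboration of the same argument.
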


\begin{proof}
    The isomorphism $\Pic(\Hilb^2 \Xbar) \cong \Pic \Xbar \oplus \mathbb Z\Delta^{[2]}$ of Galois modules proves the first statement. The second statement follows from $\HH^1(\Q,\Z)=0$.
\end{proof}

We note in particular that the equality $\beta(\Hilb^2 X) = \beta(X)$ is induced by the isomorphism
\[
\HH^1(\Q,\Pic \Xbar) \xrightarrow{\cong}   \HH^1(\Q, \Pic(\Hilb^2 \Xbar)) \cong \Br(\Hilb^2 X)/\Br_0(\Hilb^2 X)
\]
which is compatible with the homomorphism
\[
\Br X \to \Br(\Hilb^2 X), \quad \mathcal A \mapsto \mathcal A^{[2]},
\]
from Proposition~\ref{prop:hilbbasics}, and invariant maps of $\mathcal A^{[2]}$ can be computed as in Proposition~\ref{prop:BrSym2}, since the homomorphism factors through $\Br(\Sym^2 X)$.

The effective cone of $\Hilb^2X$, and hence $\alpha(\Hilb^2X)$, are more mysterious even for the case of $X$ being a surface, and were only successfully studied in a few cases \cite{lerudulier-thesis,Bertram2013}. For us there will be relation to a combination of arithmetical and geometrical properties, which are studied in depth in \textsection\ref{S:fringes}. 

\subsection{The Tamagawa number}

In this section, we relate the Tamagawa number of $\Hilb^2X$ to the Tamagawa numbers of the $X_K$ for all quadratic extensions $K/\Q$. For the good places $p \not \in \bbS$ the local Tamagawa number and convergence factor for $\Hilb^2 X$ and are directly described in terms of the invariants of $X$ from Definition~\ref{defn:bad places for X}, as these places play no role in the Brauer--Manin obstruction. We will show, see Proposition~\ref{prop:wptilde} and Proposition~\ref{prop:lambdaptilde}, that the following definition for $\widetilde w_p$ and $\widetilde \lambda_p$ correspond to $\omega_p(\Hilb^2X)$ and the convergence factors for $\Hilb^2X$.

\begin{defn}
For $p \not \in \bbS$ we define
\[
\widetilde{w}_p := \frac{w_p}p +\frac{w_p^2+w_{p^2}}2 \quad \text{ and } \quad \widetilde \lambda_p := \left(1-\frac1p\right)^{-1} \lambda_p 
\]
\end{defn}

When considering the effect of the Brauer--Manin obstruction at the places $v \in \bbS$ we will need to sort the quadratic extensions by splitting type using the $\Xi = (\xi_v)_v \in \prod_{v \in \bbS} \Q_v^\times/\Q_v^{\times,2}$ as in \textsection\ref{sec:quadconst}. We will see that the local factor of $\Hilb^2 X$ at $v\in \bbS$ is intimately connected to the distribution of quadratic extensions $K/\Q$ with splitting type $\Xi$. Therefore, we introduce succinct notation for a local factor involving the local part of the discriminant of such extensions.

\begin{defn}\label{def:deltaxi}
For a finite place $p \in \bbS \setminus \{\infty\}$ and $\xi_p \in \Q_p^\times/\Q_p^{\times,2}$ we define
\[
\delta_{\xi_p,p} = \frac12 |\Delta_{\Q_{p,\xi_p}/\Q_{p}}|_p. 
\]
In the case of $v=\infty$ we define the local factor as $\delta_{\xi_\infty,\infty}=\frac 12$, for both quadratic \'etale extensions of $\R$.

For $\Xi = (\xi_v)_v \in\prod_{v \in \bbS} \Q_v^\times/\Q_v^{\times,2}$ we collect these terms at all bad places in
\[
\delta_\Xi = \prod_{v \in \mathbb S} \delta_{\xi_v,v} \quad \text{ and } \quad \widetilde \delta_\Xi = \delta_\Xi\prod_{p \in \bbS\setminus\{\infty\}} \left(1-\frac1p\right).
\]
\end{defn}

Finally, we combine the above ingredients to complete the description of the Tamagawa measure on $\Hilb^2 X$, which should be compared to our expression of $\tau(X_K)$ in Theorem~\ref{thm:Tamagawa of XK}. 
In that Theorem, the Tamagawa number at bad places captured information about the specific local behaviour of the extension $K/\Q$. On the Hilbert scheme, we are interested in all quadratic points and this is reflected in the Tamagawa number taking the form of a sum over all possible local factors in $\tau(X_K)$ weighted by the local discriminant of the corresponding extension.
The existence of such a description is in no way obvious and is also completely pivotal for the strategy to counting points on $\Hilb^2 X$ outlined in \textsection\ref{sss:framework}.

\begin{thm}\label{thm:Tamagawa of Hilb2}
If $X$ is a smooth weak Fano surface over $\Q$, we have
\[
\tau(\Hilb^2 X)= \lim_{s\to 1}\left[ (s-1)^{\rho(X)}L_{\Q,\bbS}(s,\Pic \Xbar)\right]
\left[\sum_\Xi  \widetilde \delta_\Xi w_\Xi(X)\right] \left[\prodS_p \widetilde\lambda^{-1}_p \widetilde{w}_p\right].
\]
\end{thm}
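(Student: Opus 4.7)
The plan is to unpack $\tau(\Hilb^2 X) = \omega(\Hilb^2 X(\Adeles_\Q)^{\Br})$ from \eqref{eq:Tamagawa measure}, split the good- and bad-place contributions, and at the bad places transfer the integrals onto $X$ via the degree-$2$ morphisms $\eta$ of Proposition~\ref{prop:map from res to sym}.

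The first step is to factor the $L$-series. By Proposition~\ref{prop:rho and beta of Hilb2} together with Proposition~\ref{prop:hilbbasics}(b), $\Pic \Hilb^2 \Xbar \cong \Pic \Xbar \oplus \Z\Delta^{[2]}$ as $\Gamma_\Q$-modules, hence
\[
L_{\Q,\bbS}(s,\Pic \Hilb^2 \Xbar) = L_{\Q,\bbS}(s,\Pic \Xbar)\cdot L_{\Q,\bbS}(s,\Z).
\]
Combined with $\rho(\Hilb^2 X) = \rho(X) + 1$ and $\lim_{s\to 1}(s-1)L_{\Q,\bbS}(s,\Z) = \prod_{p \in \bbS\setminus\{\infty\}}(1-p^{-1})$, the residue factor in $\tau(\Hilb^2 X)$ rewrites as $\prod_{p \in \bbS\setminus\{\infty\}}(1-p^{-1})\cdot \lim_{s\to 1}[(s-1)^{\rho(X)}L_{\Q,\bbS}(s,\Pic \Xbar)]$; these prefactors will be absorbed into $\widetilde\delta_\Xi$ at the end.

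At each good place $p \notin \bbS$, the plan is to invoke the (forthcoming) Propositions~\ref{prop:wptilde} and~\ref{prop:lambdaptilde}, which assert $\omega_p(\Hilb^2 X(\Q_p)) = \widetilde w_p$ and $L_{\Q,p}(1,\Pic \Hilb^2 \Xbar) = \widetilde\lambda_p$. Since $\Br \Hilb^2 X \cong \Br X$ by Proposition~\ref{prop:hilbbasics}(c), the argument of Lemma~\ref{lem:bmoplaces} adapted to $\Hilb^2 X$ shows the Brauer--Manin condition is vacuous at good places, so these places assemble into the Euler product $\prodS_p \widetilde\lambda_p^{-1}\widetilde w_p$. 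For the bad places $v \in \bbS$, use that $\epsilon_X$ is a crepant resolution (Proposition~\ref{prop:hilbbasics}(a)) whose exceptional locus has measure zero to identify $\omega_v^{\Hilb^2 X}$ with $\omega_v^{\Sym^2 X}$ on open complements, and then stratify
\[
\Sym^2 X(\Q_v) = \bigsqcup_{\xi \in \Q_v^\times/\Q_v^{\times,2}} \eta_\xi\bigl(\Res_{\Q_{v,\xi}/\Q_v} X_{\Q_{v,\xi}}(\Q_v)\bigr)
\]
via the restrictions of $\eta$ to each quadratic étale algebra over $\Q_v$. Corollary~\ref{cor:compatibility Res and Sym} provides the compatibility of adelic metrics along $\eta_\xi$, so Theorem~\ref{thm:measure on restriction of scalars} expresses the pushforward of $\omega_v^{\Res}$ onto the $\xi$-stratum as $|\Delta_{\Q_{v,\xi}/\Q_v}|_v^{\dim X/2}$ times the product measure $\prod_{\mathfrak v \mid v} \omega_{\mathfrak v}^X$. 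With $\dim X = 2$ and the generic $2$-to-$1$ degree of $\eta_\xi$, this prefactor becomes exactly $\delta_{\xi,v}$ by Definition~\ref{def:deltaxi}, while Proposition~\ref{prop:independenceatbadprimes} identifies the Brauer--Manin cut-out with the set defining $w_\Xi(X)$ in Definition~\ref{defn:wXi}. Summing over $\Xi = (\xi_v)_v$ and combining with the $(1-p^{-1})$ factors extracted from the residue produces $\sum_\Xi \widetilde\delta_\Xi w_\Xi(X)$, completing the match with the stated formula.

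The hard part will be the normalisation bookkeeping at bad archimedean and $2$-adic places: the $\tfrac12$ implicit in $\delta_{\xi_\infty,\infty}$, the $2$-to-$1$ degree of $\eta_\xi$, and the discriminant factor from Theorem~\ref{thm:measure on restriction of scalars} must conspire to yield exactly $\delta_{\xi_v,v}$ at every $v \in \bbS$. One must also verify that the Brauer--Manin condition on $\Sym^2 X$, cut out by the corestricted classes $\mathcal A^{(2)}$ of Definition~\ref{def:cores} with local invariants computed by Proposition~\ref{prop:BrSym2}(b), pulls back along $\eta_\xi$ to the condition on $X(\Q_{v,\xi_v})$ that defines $w_\Xi(X)$; this is what ultimately makes the bad-place sum factor cleanly through the $\Xi$-stratification.
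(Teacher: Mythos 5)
Your proposal follows the same strategy as the paper's proof: factor the residue term using $\Pic(\Hilb^2\Xbar)\cong\Pic\Xbar\oplus\Z$, split off the good-place Euler product using Propositions~\ref{prop:constev}, \ref{prop:wptilde}, and~\ref{prop:lambdaptilde}, and at the bad places transfer the measure from $\Hilb^2 X$ to $\Sym^2 X$ (crepancy) and then onto the $X(\Q_{v,\xi})$ via the degree-$2$ maps $\eta$ together with Theorem~\ref{thm:measure on restriction of scalars}, matching the $\delta_{\xi,v}$ normalization. The paper packages your bad-place sketch into Propositions~\ref{prop:paritition of local points} and~\ref{prop:tamagawa measure at bad places} and then cites them, and uses Proposition~\ref{prop:constev} rather than an adapted Lemma~\ref{lem:bmoplaces} for vacuousness at good places, but the underlying computations are identical, including the observation that the $\prod_{p\in\bbS\setminus\{\infty\}}(1-p^{-1})$ residue factor is what turns $\delta_\Xi$ into $\widetilde\delta_\Xi$.
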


This result implies the following.

\begin{cor} \label{cor:equivtamagawa}
    The following are equivalent.
    \begin{itemize}
        \item[(i)] $\tau(\Hilb^2 X) > 0$.
        \item[(ii)] $\tau(X_K) >0$ for a quadratic extension $K/\Q$.
        \item[(iii)] $\tau(X_K) >0$ for infinitely many quadratic extensions $K/\Q$.
    \end{itemize}
\end{cor}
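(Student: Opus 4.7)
The plan is to use the explicit formulas in Theorem~\ref{thm:Tamagawa of XK} and Theorem~\ref{thm:Tamagawa of Hilb2} to reduce the positivity of all three quantities to a single common condition: the existence of a splitting type $\Xi \in \prod_{v \in \bbS}\Q_v^\times/\Q_v^{\times,2}$ at the bad places with $w_\Xi(X)>0$.

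First I would dispatch (iii)$\Rightarrow$(ii) as trivial. Next I would establish the equivalence (i)$\Leftrightarrow$(ii) by comparing the two Tamagawa number formulas side by side. Both contain the factor $\lim_{s\to 1}(s-1)^{\rho(X)}L_{\Q,\bbS}(s,\Pic\Xbar)$ and a convergent Euler product over primes $p \notin \bbS$, and I would verify that these are manifestly positive: $w_p$ and $w_{p^2}$ are positive by good reduction plus Lang--Weil, while $\lambda_p$, $\bar\lambda_p$ and $\widetilde\lambda_p$ are Artin $L$-factors at $s=1$ evaluated on representations whose Frobenius eigenvalues are roots of unity, so each equals a product $\prod(1-\alpha/p)^{-1}$ with positive real value. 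After dividing through by these positive common factors, Theorem~\ref{thm:Tamagawa of Hilb2} reduces $\tau(\Hilb^2 X)>0$ to $\sum_\Xi \widetilde\delta_\Xi w_\Xi(X)>0$; since $\widetilde\delta_\Xi>0$ (Definition~\ref{def:deltaxi}) and each summand is non-negative (being a measure), this is equivalent to the existence of some $\Xi$ with $w_\Xi(X)>0$. On the other hand, Theorem~\ref{thm:Tamagawa of XK} reduces $\tau(X_K)>0$ for $K \in \calK(X)$ to the positivity of $w_\Xi(X)$ for the splitting type $\Xi$ of $K$; the twisted $L$-value $L_{\Q,\bbS}(1,\Pic\Xbar\otimes\chi_K)$ is a positive real by the same Artin-factor argument, with absolute convergence at $s=1$ guaranteed by $K$ being linearly disjoint from the splitting field of $\Pic\Xbar$. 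These two reductions are manifestly equivalent.

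Finally, I would upgrade (ii)$\Rightarrow$(iii) by invoking Proposition~\ref{prop:independenceatbadprimes}: if $w_\Xi(X)>0$ for one $K \in \calF_\Xi^\circ$, the same positivity holds for every $K' \in \calF_\Xi^\circ$, and this set is infinite---one may produce fields $\Q(\sqrt{d})$ using the Chinese Remainder Theorem to prescribe the discriminant residue classes at primes of $\bbS$, then discard the finitely many exceptions lying inside the splitting field of $\Pic\Xbar$ (cf.\ Lemma~\ref{lem:lindisjointL}). Applying (i)$\Leftrightarrow$(ii) to each such $K'$ then gives $\tau(X_{K'})>0$, completing the chain.

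The main obstacle is careful bookkeeping: ensuring that each factor labelled ``positive'' is strictly so rather than merely non-negative. The only non-routine positivity is that of $L_{\Q,\bbS}(1,\Pic\Xbar\otimes\chi_K)$, which I would handle exactly as above via the absolute convergence already established in the proof of Theorem~\ref{thm:Tamagawa of XK}, together with the roots-of-unity property of Artin eigenvalues at good primes.
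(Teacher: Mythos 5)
Your proof is correct and takes essentially the same route as the paper: both reduce all three conditions to the existence of a splitting type $\Xi$ with $w_\Xi(X) > 0$, using Theorems~\ref{thm:Tamagawa of XK} and~\ref{thm:Tamagawa of Hilb2}, together with Proposition~\ref{prop:independenceatbadprimes} to pass from one field to infinitely many in $\calF_\Xi^\circ$. The only cosmetic difference is that the paper invokes the inverse-function theorem to dispatch the positivity of the remaining factors in one stroke, whereas you verify factor-by-factor (Lang--Weil for $w_p$, root-of-unity eigenvalues for $\lambda_p$, etc.), which is also fine.
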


\begin{remark}\label{rem:ctconj}
    Colliot-Th{\'e}l{\`e}ne~\cite{CT03} conjectured that, for any rationally connected variety $V$ over a number field $K$, the set of rational points $V(K)$ has dense image in $V(\Adeles_K)^{\Br}$. This conjecture applies to both the rational surface $X$ and the unirational variety $\Hilb^2 X$. So, under this conjecture, the equivalent conditions in Corollary~\ref{cor:equivtamagawa} could be replaced by ``$X(K)$ is non-empty for a quadratic field $K$'', or ``$\Hilb^2X$ has a $\Q$-point''.
\end{remark}

For the proof we first write the Brauer--Manin set of $\Hilb^2 X$ as a disjoint union over all $\Xi$, and then compute the Tamagawa measure of each of the finitely many parts. 

\subsubsection{The Brauer--Manin set}\label{s:BMsetofHilb}

To compute the Tamagawa number, we need to first understand the Brauer--Manin set $\Hilb^2X(\Aff_{\Q})^{\Br} \subseteq \prod_{p} \Hilb^2X(\Q_p)$. When $p \notin \bbS$, there is no constraint imposed on the factor $\Hilb^2X(\Q_p)$ by the Brauer group.

\begin{prop}\label{prop:constev}
\begin{itemize}
    \item[(a)] For $p \not \in \bbS$ and $\mathcal A' \in \Br(\Hilb^2 X)$, the map
    \[
    \inv_p \calA' \colon \Hilb^2 X(\Q_p) \to \Q/\Z
    \]
    is constant.
    \item[(b)] The set $\Hilb^2 X(\Adeles_\Q)^{\!\textup{Br}}$ is a product with $\prod_{p \not \in \bbS} \Hilb^2 X(\Q_p)$ as one of the factors.
\end{itemize}
\end{prop}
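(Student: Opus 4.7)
The plan is to reduce part (a) to a statement about $\Sym^2 X$ via the Hilbert--Chow morphism, and then to exploit the structure of local Brauer evaluations on the symmetric square described in Proposition~\ref{prop:BrSym2}. First, given $\calA' \in \Br(\Hilb^2 X)$, I would invoke Proposition~\ref{prop:hilbbasics}(c) to write $\calA' = \calA^{[2]} = \epsilon_X^*\calA^{(2)}$ for some $\calA \in \Br X$. Functoriality of invariant maps then reduces the claim to showing that $\inv_p \calA^{(2)}$ is constant on $\Sym^2 X(\Q_p)$, since $\epsilon_X$ maps $\Hilb^2 X(\Q_p)$ into $\Sym^2 X(\Q_p)$.

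Next, I would use the description of $\Sym^2 X(\Q_p)$ from the start of \textsection\ref{ss:sym2}: a point $Q \in \Sym^2 X(\Q_p)$ is either of the form $[P_{p,1},P_{p,2}]$ with $P_{p,i}\in X(\Q_p)$, or $[P_{\mathfrak p},\bar P_{\mathfrak p}]$ with $P_{\mathfrak p}\in X(K_{\mathfrak p})$ for some quadratic extension $K_{\mathfrak p}/\Q_p$. Proposition~\ref{prop:BrSym2}(b) then computes $\inv_p \calA^{(2)}(Q)$ in each case in terms of invariants of $\calA$ over $\Q_p$, or of $\calA_{K_{\mathfrak p}} \in \Br X_{K_{\mathfrak p}}$ over $K_{\mathfrak p}$, respectively. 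Since $p \notin \bbS$, the variety $X$ has good reduction at $p$ and the adelic metric is induced by a model; the same fact invoked in Lemma~\ref{lem:bmoplaces} (via \cite[Thm.\ 13.3.15]{brauerbook}) gives that $\inv_p \calA$ is constant on $X(\Q_p)$ with some value $\alpha_p$. Applying the same fact over each quadratic extension $K_{\mathfrak p}/\Q_p$ (the key point being that $\mathfrak p \notin \bbS_K$ for any $\mathfrak p \mid p$ when $p \notin \bbS$, by definition of $\bbS_K$) gives that $\inv_{\mathfrak p} \calA_{K_{\mathfrak p}}$ is constant on $X(K_{\mathfrak p})$. Using the inclusion $X(\Q_p)\subseteq X(K_{\mathfrak p})$, which is non-empty by the definition of $\bbS$, together with the standard compatibility $\inv_{\mathfrak p}\circ \res = [K_{\mathfrak p}:\Q_p]\inv_p$, this constant must equal $2\alpha_p$. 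Substituting back into Proposition~\ref{prop:BrSym2}(b) yields $\inv_p \calA^{(2)}(Q) = 2\alpha_p$ in both cases, which proves (a).

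For part (b), I would deduce it directly from (a) together with the fact that $\Hilb^2 X(\Q_p)\ne\emptyset$ for every $p\notin\bbS$ (since $X(\Q_p)\ne\emptyset$ by the definition of $\bbS$, and any $\Q_p$-point of $X$ lifts to $\Hilb^2 X(\Q_p)$ either through a pair in $X^2(\Q_p)$ off the diagonal or through a tangent direction on the exceptional divisor $\Delta^{[2]}$). Since the Brauer group modulo constants is finite, fix generators $\calA'_1,\dots,\calA'_n$ of $\Br(\Hilb^2 X)/\Br_0(\Hilb^2 X)$; by (a), for each $i$ and each $p \notin \bbS$, $\inv_p\calA'_i$ is a constant $c_{i,p} \in \Q/\Z$, and so the adelic pairing $\sum_p \inv_p\calA'_i(Q_p)$ depends only on the components at $p\in\bbS$ up to the fixed constant $\sum_{p\notin\bbS} c_{i,p}$. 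This exhibits $\Hilb^2 X(\Adeles_\Q)^{\textup{Br}}$ as a product with $\prod_{p\notin\bbS}\Hilb^2 X(\Q_p)$.

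The step I expect to require the most care is matching the constants in the two point types of part (a): the split case $[P_1,P_2]$ with $P_i\in X(\Q_p)$ is transparent, but in the non-trivial quadratic case, especially when $K_{\mathfrak p}/\Q_p$ is ramified (which can occur at odd $p\notin\bbS$), the matching to $2\alpha_p$ relies on the preservation of good reduction under base change together with the restriction--corestriction compatibility of invariants, and must be checked uniformly across all possible splitting behaviours of $p$.
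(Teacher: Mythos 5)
Your proof is correct, and the broad strategy — reduce to $\Sym^2 X(\Q_p)$ via the Hilbert--Chow morphism, then apply Proposition~\ref{prop:BrSym2}(b) — matches the paper's. Where you diverge is in establishing the key constancy step. You establish constancy of $\inv_{\mathfrak p}\calA$ on $X(K_{\mathfrak p})$ over each quadratic $K_{\mathfrak p}/\Q_p$ separately (via good reduction), and then pin down all the constants to $2\alpha_p$ by using that $X(\Q_p)\ne\emptyset$ together with the compatibility $\inv_{\mathfrak p}\circ\res=[K_{\mathfrak p}:\Q_p]\,\inv_p$. This is exactly the argument the paper outlines in the remark following Proposition~\ref{prop:constev} as a valid alternative. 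The paper's own proof is slightly more direct: it cites \cite[Prop.\ 2.3(i)]{MR2995366} to decompose the image of $\Br X\to\Br X_{\Q_p}$ as generated by $\Br\Q_p$ and $\Br\calX$, and observes that the $\Br\calX$ part has trivial evaluation on all $X(K_{\mathfrak p})$ because $\Br\calO_{K_{\mathfrak p}}=0$. That approach avoids invoking $X(\Q_p)\ne\emptyset$ (which is built into $\bbS$ here, so no loss in this paper, but it means the paper's argument works for a potentially smaller exceptional set, as the remark also notes). Your version is a perfectly good proof under the hypotheses at hand; it just leans on the nonemptiness assumption where the paper's does not.
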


\begin{proof}
\begin{itemize}
    \item[(a)] Proposition~\ref{prop:hilbbasics} gives an isomorphism  $\Br X \rightarrow \Br(\Hilb^2 X)$ so let $\mathcal{A} \in \Br X$ be such that $\mathcal{A}^{[2]} =\calA'$. Recall that we have constructed $\calA^{(2)} = \cores_{\pi}(\pi_i^*\calA) \in \Br(\Sym^2X)$ in Definition~\ref{def:cores}, which pulls back to $\calA^{[2]}$ along the Hilbert--Chow morphism. Then by Proposition~\ref{prop:BrSym2}, the evaluation maps of $\mathcal{A}^{[2]}$ on $\Hilb^2 X(\Q_p)$ is expressed in terms of the evaluation maps of $\mathcal{A}$ on $X(K_v)$ for $K/\Q$ some quadratic extension and $v$ a place above $p$.

    Let $\mathcal{X}$ be a $\Z_p$-model for $X_{\Q_p}$. By \cite[Prop. 2.3(i)]{MR2995366}, the image of $\Br X \rightarrow \Br X_{\Q_p}$ is generated by $\Br \Q_p$ and $\Br \mathcal X$. Thus, the image of $\Br X_K \rightarrow \Br X_{K_v}$ is also generated by $\Br \Q_p$ and $\Br \mathcal X$. 
    The evaluation maps are constant on the constant classes in $\Br \Q_p$; The classes in $\Br X_{\Q_p}$ coming from $\Br \mathcal X$ can be pulled back there and since $\Br \mathcal O_{K_v}$ is trivial, these classes have trivial evaluation maps. Thus $\inv_v \mathcal{A}(P)$ is constant on $X(K_v)$ for all $K_v/\Q_p$ with value independent of the specific field $K$.
    \item[(b)] This follows directly from (a).\qedhere
\end{itemize}
\end{proof}

\begin{remark}
    The work of \cite[Cor.\ 3.2]{MR2995366} directly shows that $\inv_{\mathfrak p} \calA$ is constant on $X(K_{\mathfrak p})$ for every $K$ and $\mathfrak p \mid p$ for $p \notin \bbS$. Then using the assumption that $X(\Q_p) \ne \emptyset$ we can conclude that this value is independent of $K$, and hence only depends on $p$. However, the above proof does not rely on this last assumption and is true for a smaller set of exceptional primes $\bbS$.
\end{remark}


As a consequence of the above theorem, we have the following factorisation of $\Hilb^2 X(\Adeles_\Q)^{\text{Br}}$.

\begin{defn}\label{dfn:BM-set of Hilb2}
    Let $\biggl(\prod_{v \in \bbS} \Hilb^2 X(\Q_v)\biggr)^{\!\text{Br}}$ be the subset of $\prod_{v \in \bbS} \Hilb^2 X(\Q_v)$ such that
    \[
    \Hilb^2 X(\Adeles_\Q)^{\text{Br}} = \biggl(\prod_{v \in \bbS} \Hilb^2 X(\Q_v)\biggr)^{\!\text{Br}} \times \prod_{p \not \in \bbS} \Hilb^2 X(\Q_p).
    \]
\end{defn}

\subsubsection{The local Tamagawa numbers of the Hilbert scheme}\label{s:localTamagawaHilb}

To compute the Tamagawa measure of local and adelic sets on the Hilbert scheme we will use the natural morphisms $\Res_{K/\Q} X_K \to \Sym^2 X \leftarrow \Hilb^2 X$, both preserving the anticanonical line bundle and adelic metrics. For the first morphism, constructed in Proposition~\ref{prop:map from res to sym}, this follows from Corollary~\ref{cor:compatibility Res and Sym}, and for Hilbert--Chow morphism this follows from Proposition~\ref{prop:hilbbasics}(a). These morphism allow us to translate local integrals on $\Hilb^2 X$ to $\Res_{K/\Q} X_K$ and in turn back to $X_K$ using Theorem~\ref{thm:measure on restriction of scalars}.

By the factorisation above the Tamagawa number of $\Hilb^2 X$ will be a product of the local Tamagawa number at each $p \not \in S$, and a single factor which will incorporate the effect of the Brauer--Manin obstrution at the bad places $v \in S$. For both cases we will use the following partition of $\Sym^2 X(\Q_v)$ for a single place $v$.

\begin{prop}\label{prop:paritition of local points}
    Consider the morphism $\vartheta_v$ constructed as the composition
    \[
     \vartheta_v \colon \coprod_{\xi \in \Q^\times_v/\Q^{\times,2}_v} X(\Q_{v,\xi}) \xrightarrow{\nu^{-1}_{X,v}} \coprod_{\xi \in \Q^\times_v/\Q^{\times,2}_v}\left[\Res_{\Q_{v,\xi}/\Q_v}X_{\Q_{v,\xi}}\right](\Q_{v}) \xrightarrow{\eta_{X,v}} \Sym^2 X(\Q_v).
    \]
    \begin{enumerate}
        \item[(a)] The morphism $\vartheta_v$ is a surjective degree $2$ local diffeomorphism away from measure zero loci.
        \item[(b)] Recall the measure $\omega_v$ from Definition~\ref{defn:wXi} on $X(\Q_{v,\xi})$. The pushforward of the measure 
        \[
	\sum_{\xi \in \Q^\times_v/\Q^{\times,2}_v} \delta_{\xi,v} \omega_v
        \]
        along $\vartheta_v$ equals the Tamagawa measure $\omega_v^{(2)}$ on $\Sym^2 X$.
    \end{enumerate}
\end{prop}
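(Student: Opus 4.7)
The plan is to prove (a) and (b) in sequence, using (a) to identify the disjoint pieces over which the measure computation in (b) takes place.

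For (a), the approach is geometric. By Proposition \ref{prop:map from res to sym}, $\eta_X$ is a finite morphism of degree $2$ which is \'etale outside a closed subscheme of codimension $\dim X \geq 1$. After base change to $\Q_v$ and passing to local points, this makes $\eta_{X,v}$ a $2$-to-$1$ local diffeomorphism away from a set of measure zero. For surjectivity of $\vartheta_v$, I would argue as in \textsection\ref{ss:sym2}: any $\Q_v$-point $[P_1,P_2]$ of $\Sym^2 X$ is either a pair of $\Q_v$-points (the image of the $\xi = 1$ component, via $\Q_{v,1} = \Q_v \times \Q_v$) or a Galois-conjugate pair over a quadratic field extension $K_{\mathfrak v}/\Q_v$, determined by the unique non-trivial class $\xi \in \Q_v^\times/\Q_v^{\times,2}$ with $K_{\mathfrak v} \cong \Q_v(\sqrt{\xi})$. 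Since the images of the $\vartheta_v$-components for different $\xi$ are disjoint, $\vartheta_v$ has total degree exactly $2$.

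For (b), the plan is to analyse each $\xi$-component separately and then assemble. First I would apply Theorem \ref{thm:measure on restriction of scalars} to $V' = X_{\Q_{v,\xi}}$: the change-of-variables map $\nu_{X,v}^{-1}$ sends $\omega_v$ on $X(\Q_{v,\xi})$ to the local Tamagawa measure on $[\Res_{\Q_{v,\xi}/\Q_v} X_{\Q_{v,\xi}}](\Q_v)$, up to a factor $\sqrt{|\Delta_{\Q_{v,\xi}/\Q_v}|_v}^{\,\dim X}$. Since $X$ is a surface throughout this section, this factor is just $|\Delta_{\Q_{v,\xi}/\Q_v}|_v$. Next, Corollary \ref{cor:compatibility Res and Sym} shows that $\eta_{X,v}$ preserves the anticanonical adelic metric, so it is locally measure-preserving, and being $2$-to-$1$ generically, its pushforward scales by $2$. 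Combining the two steps yields
\[
(\vartheta_v)_* \omega_v \;=\; 2 \, |\Delta_{\Q_{v,\xi}/\Q_v}|_v^{-1}\; \omega^{(2)}_v
\]
on the image of $\vartheta_v$ for that $\xi$. Multiplying by $\delta_{\xi,v} = \tfrac{1}{2}|\Delta_{\Q_{v,\xi}/\Q_v}|_v$ produces exactly $\omega^{(2)}_v$ on that piece, and summing over all $\xi$ covers $\Sym^2 X(\Q_v)$ in the disjoint fashion established in (a), giving the full Tamagawa measure.

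The main obstacle I anticipate is a bookkeeping one at the archimedean place: $|\Delta_{\Q_{\infty,\xi}/\R}|_\infty$ must be interpreted with the conventions fixed in the notation section (in particular the normalisation of twice the Lebesgue measure on $\C$), in order to match the definition $\delta_{\xi,\infty} = 1/2$. I would resolve this by reworking the proof of Theorem \ref{thm:measure on restriction of scalars} in the trivial and non-trivial archimedean cases, verifying that the integral basis computation proceeds identically and that the resulting factor reduces to the stated $\delta_{\xi,\infty}$. Every other step is a direct application of results already in place.
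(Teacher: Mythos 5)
Your proposal is correct and follows essentially the same route as the paper: factor $\vartheta_v$ into $\eta_{X,v}\circ\nu_{X,v}^{-1}$, push forward through the first factor via Theorem~\ref{thm:measure on restriction of scalars} (using $\dim X = 2$ to turn $\sqrt{|\Delta_{\Q_{v,\xi}/\Q_v}|_v}^{\dim X}$ into $2\delta_{\xi,v}$), and then use the crepant, generically $2$-to-$1$ \'etale property from Corollary~\ref{cor:compatibility Res and Sym} to push forward through the second. One small imprecision: in (a) you say the images of the $\xi$-components are disjoint so that the total degree is $2$, but in fact every component image contains the diagonal $\Delta^{(2)}(\Q_v)$; the images are only disjoint away from the diagonal, which is however a measure-zero locus, so the conclusion is unaffected. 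The archimedean subtlety you flag at the end is genuine and is the same point the paper handles implicitly through the normalisation conventions ($\delta_{\xi,\infty}=\frac12$ for both local extensions and the twice-Lebesgue convention on $\C$); your plan to re-examine the $\Aff^1$ step of Theorem~\ref{thm:measure on restriction of scalars} in the archimedean case is the correct way to close that loop.
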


\begin{proof}
    \begin{enumerate}
        \item[(a)] The map $\nu_{X,v}$ is a bijection and the maps $\eta_{X,v}$ are degree 2 étale maps away from the diagonal according to Proposition~\ref{prop:map from res to sym} so one gets a degree 2 map. The surjectivity does not hold for an individual map $\eta_{X,v}$ but follows from taking the disjoint union over all possible quadratic extensions of $\mathbb Q_v$. 
        \item[(b)] As $\vartheta_v$ is defined as a composition, let us compute the effect of pushing forward along each of the parts. Let us write $W_v$ for the Tamagawa measure on the middle scheme in the definition of $\vartheta_v$. As $X$ is of the dimension $2$ Theorem~\ref{thm:measure on restriction of scalars} says that $\sum_\xi \delta_{\xi,v} \omega_v$ pushes forward to $\frac 12 W_v$. For the final step, we remark that $\prod_\xi \Res_{\Q_{v,\xi}/\Q_v}X_{\Q_{v,\xi}} \to \Sym^2 X_{\Q_v}$ is crepant, see Corollary~\ref{cor:compatibility Res and Sym}, and \'etale away from the diagonal, and hence locally the Tamagawa measures on $\coprod_\xi \left[\Res_{\Q_{v,\xi}/\Q_v}X_{\Q_{v,\xi}}\right](\Q_{v})$ and $\Sym^2 X(\Q_v)$ agree. To compute the image measure, let $U$ be an open subset of $\Sym^2 X(\Q_v)$ contained in the complement of $\Delta$, then $\eta_{X,v}^{-1}(U)=U_1 \coprod U_2$ with $U$ diffeomorphic to $U_i$ for $i\in \{1,2\}$ and $$\int_{U_i} \eta_{X,v}^{\ast}\omega_v^{(2)}=\int_U \omega_v^{(2)}.
        $$
        Therefore, by definition of the image measure
        \[
        \eta_{X,v*} W_v(U) =\sum_i W_v(U_i) = 2 \omega_v(U).
        \]
        Hence the image measure of $\frac 12 W_v$ equals $\omega_v$. \qedhere
    \end{enumerate}
\end{proof}

This yields the following result at a good place.

\begin{prop}\label{prop:wptilde}
    For $p \not \in \bbS$ we have
        \[
        \omega_p(\Hilb^2 X(\mathbb Q_p)) = \widetilde{w}_p.
        \]
\end{prop}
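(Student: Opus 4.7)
The plan is to reduce the computation from $\Hilb^2 X$ to $\Sym^2 X$ via the crepant Hilbert--Chow morphism, and then apply the pushforward formula in Proposition~\ref{prop:paritition of local points}(b) to break the local integral into a finite sum indexed by the étale quadratic $\Q_p$-algebras. The main work is then a bookkeeping calculation across the four classes $\xi \in \Q_p^\times/\Q_p^{\times,2}$ at an odd place $p$ of good reduction.

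\emph{Step 1: Reduction to the symmetric square.} By Proposition~\ref{prop:hilbbasics}(a) the Hilbert--Chow morphism $\epsilon_X \colon \Hilb^2 X \to \Sym^2 X$ is crepant, hence pulls back the anticanonical adelic metric, and hence identifies the local Tamagawa measures on the open complement of the exceptional divisor $\Delta^{[2]}$. Since $\Delta^{[2]}$ is a proper closed subset (of codimension one) and $\Delta^{(2)} \subseteq \Sym^2 X$ likewise has lower dimension than $\Sym^2 X$, both have $p$-adic measure zero, so
\[
\omega_p\bigl(\Hilb^2 X(\Q_p)\bigr) = \omega_p^{(2)}\bigl(\Sym^2 X(\Q_p)\bigr).
\]

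\emph{Step 2: Applying the partition.} Proposition~\ref{prop:paritition of local points}(b) gives
\[
\omega_p^{(2)}\bigl(\Sym^2 X(\Q_p)\bigr) = \sum_{\xi \in \Q_p^\times/\Q_p^{\times,2}} \delta_{\xi,p}\, \omega_p\bigl(X(\Q_{p,\xi})\bigr).
\]
Since $p$ is odd (as $2 \in \bbS$), there are four classes in $\Q_p^\times/\Q_p^{\times,2}$, represented by $1$, a non-square unit $u$, a uniformiser $p$, and $up$. The corresponding $\Q_{p,\xi}$ are the split algebra, the unramified quadratic extension, and two ramified quadratic extensions.

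\emph{Step 3: Computing each local factor.} From Definition~\ref{def:deltaxi}, the discriminant weights are $\delta_{1,p} = \delta_{u,p} = \tfrac12$ and $\delta_{p,p} = \delta_{up,p} = \tfrac{1}{2p}$. For the measures we use Definition~\ref{defn:wXi} and the good-reduction computation from Proposition~\ref{prop:eulerfactors}: in the split case $\omega_p(X(\Q_{p,1})) = w_p^2$; in the unramified case the unique prime above $p$ has residue field $\F_{p^2}$, giving $w_{p^2}$; in each of the two ramified cases the residue field is $\F_p$, giving $w_p$. Summing yields
\[
\omega_p^{(2)}\bigl(\Sym^2 X(\Q_p)\bigr) = \tfrac12 w_p^2 + \tfrac12 w_{p^2} + 2 \cdot \tfrac{1}{2p} w_p = \frac{w_p}{p} + \frac{w_p^2 + w_{p^2}}{2} = \widetilde w_p,
\]
as desired. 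The only genuine obstacle is ensuring that the pushforward in Step~2 correctly absorbs the generic $2$-to-$1$ behaviour of $\vartheta_p$ together with the discriminant twists coming from restriction of scalars; but this is precisely what Proposition~\ref{prop:paritition of local points} encodes, so once that statement is in hand the result is a direct calculation.
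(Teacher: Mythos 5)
Your proof is correct and follows the same approach as the paper, applying Proposition~\ref{prop:paritition of local points}(b) after reducing from $\Hilb^2 X$ to $\Sym^2 X$ via the crepant Hilbert--Chow morphism and then summing the four local contributions indexed by $\Q_p^\times/\Q_p^{\times,2}$. You merely spell out Step~1 (the reduction across $\epsilon_X$) and the enumeration of the four classes more explicitly than the paper does, with a small cosmetic slip in citing Definition~\ref{def:deltaxi} (which is formally stated for $p\in\bbS$) rather than the general normalisation in the Notation section, though the values used are right.
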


\begin{proof}
	In this case, by Proposition~\ref{prop:paritition of local points} we see that
	\[
	\omega_p(\Hilb^2 X(\mathbb Q_p)) = \sum_\xi \delta_{\xi,p} \omega_p(X(\Q_{p,\xi})) = 2 \frac{w_p}{2p} + \frac{w_{p^2}}{2} + \frac{w^2_p}2,
	\]
	as $\Q_p$ has two ramified quadratic extensions, an inert one and one split extension $\Q_p^2$.
\end{proof}

\begin{rmk}
Equivalently, one could prove that the adelic metric on $\Hilb^2 X$ at $p \not\in \bbS$ is defined by a model, and hence we could count $\F_p$-points on $\Hilb^2 \calX$, which correpond to zero-dimensional degree $2$ subschemes on a $\mathbb Z_p$-model $\calX$ of $X$. These are either pairs of disjoint points in $\calX(\F_p)$, conjugate pairs in $\calX(\F_{p^2})\setminus \calX(\F_p)$ or a point in $\calX(\F_p)$ with one of the $p+1$ tangent directions. This yields the same result.
\end{rmk}

We now describe and compute the measure of the remaining factor
\[
\biggl(\prod_{v \in \bbS} \Hilb^2 X(\Q_v)\biggr)^{\!\text{Br}},
\]
which is more complicated, since there might be non-trivial reciprocity conditions imposed by the Brauer group. We generalise Proposition~\ref{prop:paritition of local points} to include all primes appearing in the Brauer--Manin obstruction.

\begin{prop}\label{prop:tamagawa measure at bad places}
    Consider the morphism
    \[
    \vartheta^{{\normalfont{Br}}} \colon
    \coprod_{\Xi} \biggl(\prod_{v \in \bbS} X(\Q_{v,\xi_v}) \biggr)^{\!\text{Br}} \subseteq \coprod_{\Xi} \biggl(\prod_{v \in \bbS} X(\Q_{v,\xi_v}) \biggr) = \prod_{v \in \bbS} \biggl(\coprod_{\xi_v}  X(\Q_{v,\xi_v})\biggr) \xrightarrow{\prod_v \vartheta_v} \prod_{v \in \bbS} \Sym^2 X(\Q_v),
    \]
    where $\Xi$ runs over all tuples $(\xi_v)_v \in \prod_{v \in \bbS} \Q_v^\times/\Q^{\times,2}_v$.
    \begin{enumerate}
        \item[(a)] The image of $\vartheta^{\text{Br}}$ coincides up to a measure 0 locus with the image of
        \[
        \biggl(\prod_{v \in \bbS} \Hilb^2 X(\Q_v)\biggr)^{\!\text{Br}} \to \prod_{v \in \bbS} \Sym^2 X(\Q_v)
        \]
        induced by the Hilbert--Chow morphism $\epsilon_X \colon \Hilb^2 X \to \Sym^2 X$.
        \item[(b)] We have the following equality
        \[
        \left( \prod_{v \in \bbS} \omega_v^{[2]}\right) \left[\biggl(\prod_{v \in \bbS} \Hilb^2 X(\Q_v)\biggr)^{\!\text{Br}} \right] = \sum_{\Xi} \delta_\Xi w_\Xi(X),
        \]
        where $w_\Xi(X) = \left(\prod_{v \in \bbS} \omega_v\right)\left[\left(\prod_{v \in \bbS} X(\Q_{v,\xi_v})\right)^{\Br}\right]$ as in Definition~\ref{defn:wXi}.
    \end{enumerate}
\end{prop}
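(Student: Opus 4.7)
The plan is to derive (a) by tracing Brauer invariants through the morphisms $\epsilon_X$ and $\vartheta_v$, and then to obtain (b) by combining (a) with Proposition~\ref{prop:paritition of local points}(b) across $v \in \bbS$. Throughout one works modulo the diagonal $\Delta^{(2)}$ (and its preimage $\Delta^{[2]}$), which has measure zero at every place and therefore affects neither image identifications nor Tamagawa integrals.

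For (a), I would first invoke Proposition~\ref{prop:hilbbasics}(a),(c) together with Definition~\ref{def:cores} to write each class in $\Br \Hilb^2 X$ as $\epsilon_X^\ast \calA^{(2)}$ for some $\calA \in \Br X$. Since $\epsilon_X$ is an isomorphism off $\Delta^{[2]}$ and preserves invariants, the reciprocity condition cutting out $\bigl(\prod_{v \in \bbS} \Hilb^2 X(\Q_v)\bigr)^{\Br}$ in Definition~\ref{dfn:BM-set of Hilb2} translates into the corresponding condition on $\prod_{v \in \bbS} \Sym^2 X(\Q_v)$ coming from $\Br \Sym^2 X$. Now, for a point $Q_v = \vartheta_v(P_v)$ with $P_v \in X(\Q_{v,\xi_v})$, Proposition~\ref{prop:BrSym2}(b) provides the uniform identity
\[
\inv_v \calA^{(2)}(Q_v) = \sum_{\mathfrak v \mid v} \inv_{\mathfrak v} \calA(P_{\mathfrak v})
\]
valid in both split and non-split cases. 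Summing over $v \in \bbS$ and using Proposition~\ref{prop:constev}(a) to absorb the (constant) contributions at good places, this matches the reciprocity condition defining $\bigl(\prod_{v \in \bbS} X(\Q_{v,\xi_v})\bigr)^{\Br}$ via Proposition~\ref{prop:independenceatbadprimes} for any $K \in \calF_\Xi^\circ$. Hence the image of $\vartheta^{\Br}$ coincides, up to measure zero, with the image of $\bigl(\prod_v \Hilb^2 X(\Q_v)\bigr)^{\Br}$ in $\prod_v \Sym^2 X(\Q_v)$.

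For (b), by Proposition~\ref{prop:hilbbasics}(a) and Corollary~\ref{cor:compatibility Res and Sym}, the measures $\omega_v^{[2]}$ and $\omega_v^{(2)}$ correspond under $\epsilon_X$ off the diagonal, so by (a) the left-hand side equals $\bigl(\prod_{v \in \bbS} \omega_v^{(2)}\bigr)\bigl[\mathrm{image}(\vartheta^{\Br})\bigr]$. Taking the product over $v \in \bbS$ of the pushforward identity in Proposition~\ref{prop:paritition of local points}(b) shows that $\prod_{v \in \bbS}\omega_v^{(2)}$ is the pushforward along $\prod_v \vartheta_v$ of $\sum_\Xi \delta_\Xi \prod_{v \in \bbS} \omega_v$, viewed as a measure on $\coprod_\Xi \prod_{v \in \bbS} X(\Q_{v,\xi_v})$. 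Outside the diagonal, each $Q_v$ lies in the image of a unique $\vartheta_v|_{X(\Q_{v,\xi_v})}$, so the $\Xi$-components have disjoint images and the preimage of $\mathrm{image}(\vartheta^{\Br})$ under $\prod_v \vartheta_v$ equals $\coprod_\Xi \bigl(\prod_{v \in \bbS} X(\Q_{v,\xi_v})\bigr)^{\Br}$ by (a). The claimed formula $\sum_\Xi \delta_\Xi w_\Xi(X)$ then follows directly from the definition of pushforward measure.

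The main obstacle is the uniform matching of invariants in part (a): one has to check that the corestriction defining $\calA^{(2)}$, evaluated on points of $\vartheta_v(X(\Q_{v,\xi_v}))$, produces the expected sum of local invariants for $X$ over a quadratic extension $K \in \calF_\Xi^\circ$ both in the split case (a sum of two $\Q_v$-invariants) and in the non-split case (a single invariant over $\Q_{v,\xi_v}$). Once this identification is in place, the remainder is a measure-theoretic bookkeeping exercise via Fubini and Proposition~\ref{prop:paritition of local points}(b).
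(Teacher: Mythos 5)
Your proposal is correct and follows essentially the same route as the paper: reduce the reciprocity condition on $\Hilb^2 X$ to one on $\Sym^2 X$ via $\epsilon_X$, use Proposition~\ref{prop:BrSym2}(b) to rewrite $\inv_v \calA^{[2]}(Q_v)$ as a sum of local invariants of $\calA$ over the $\mathfrak v \mid v$, and then for (b) exploit crepancy and Proposition~\ref{prop:paritition of local points}(b). The only stylistic difference is in how the constant contributions outside $\bbS$ are handled: the paper first replaces $\calA$ (modulo a constant class) by a normalised representative with $\sum_{p\notin\bbS}\inv_p\calA \equiv 0$ on $X(\Adeles_\Q)$, observes via Proposition~\ref{prop:BrSym2} that the corresponding sums vanish for $\calA^{[2]}$ on $\Hilb^2 X(\Adeles_\Q)$ and for $\calA$ on $X(\Adeles_K)$, and so reduces the match to the $\bbS$-conditions alone; you instead keep the unnormalised class and invoke Propositions~\ref{prop:constev}(a) and~\ref{prop:independenceatbadprimes} to cancel the constants on both sides. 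Both work, but the normalisation makes the bookkeeping cleaner — when you ``absorb'' the constants you should still note that the good-place contribution of $\calA^{[2]}$ is exactly twice that of $\calA$ (apply Proposition~\ref{prop:BrSym2}(b) to a diagonal point), matching the factor $[K:\Q]=2$ appearing in Proposition~\ref{prop:independenceatbadprimes}.
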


\begin{proof}
   \begin{enumerate}
       \item[(a)] We remove here and in what follows the measure zero loci corresponding to the diagonal $\Delta^{(2)} \subseteq \Sym^2 X$ and its preimages.
       
       It also suffices to compute the Brauer--Manin set with respect to the elements $\calA \in \Br X$ for which $\sum_{p \notin \bbS} \inv_p$ is identically zero on $X(\Adeles_\Q)$, since every element of the Brauer group is of this form modulo a constant class. For these classes we see that $\sum_{p \notin \bbS} \inv_p \calA^{[2]}$ is also identically zero on $\Hilb^2 X(\Adeles_\Q)$ by Proposition~\ref{prop:BrSym2}. It is also immediate that $\sum_{\mathfrak p \notin \bbS_K} \inv_{\mathfrak p} \calA$ is identically zero on $X(\Adeles_K)$ for every quadratic field $K/\Q$.

       Let us start from a point $(Q_v)_v \in \prod_{v \in \bbS} \Hilb^2 X(\Q_v)$. Each $Q_v$ corresponds to a $\Q_{v,\xi_v}$-point on $X$ for some $\xi_v$. Let $K$ be a quadratic field such that for all $v \in \bbS$ we have $K_v \cong \Q_{v,\xi}$. Then $Q_v$ corresponds to $(P_{\mathfrak v})_{\mathfrak v \mid v} \in \prod_{\mathfrak v \mid v} X(K_{\mathfrak v})$ and by Proposition~\ref{prop:BrSym2} we have $\inv_v \calA^{[2]}(Q_v) =  \sum_{\mathfrak v \mid v} \inv_{\mathfrak v} \calA(P_{\mathfrak v})$.
       We conclude that for $(P_{\mathfrak v}) \in \prod_{\mathfrak v \in \bbS_K} X(K_{\mathfrak v})$ we have
       \[
       \sum_{v \in \bbS} \inv_v \calA^{[2]}(Q_v) = \sum_{v \in \bbS} \sum_{\mathfrak v \mid v} \inv_{\mathfrak v} \calA(P_{\mathfrak v}).
       \]

       Since $\sum_{\mathfrak p \notin \bbS_K} \inv_{\mathfrak p}$ and $\sum_{p \notin \bbS} \inv_p$ are identically zero on $X(\Adeles_K)$ and $\Hilb^2 X(\Adeles_\Q)$ respectively, we see that
       \[
       (Q_v) \in \biggl(\prod_{v \in \bbS} \Hilb^2 X(\Q_v)\biggr)^{\!\text{Br}}
       \quad \text{ if and only if } \quad 
       (P_{\mathfrak v}) \in \biggl(\prod_{\mathfrak v \in \bbS_K} X(K_{\mathfrak v})\biggr)^{\!\Br}.
       \]
       \item[(b)] Since the Hilbert--Chow morphism is a crepant resolution of singularities with exceptional locus the preimage of the diagonal, 
       $$
       \left( \prod_{v \in \bbS} \omega_v^{[2]}\right) \left[\biggl(\prod_{v \in \bbS} \Hilb^2 X(\Q_v)\biggr)^{\!\text{Br}} \right]
       $$
       is the measure of the image inside $\prod_{v \in \bbS} \Sym^2 X(\Q_v)$. We then use Proposition~\ref{prop:paritition of local points} to conclude. \qedhere
   \end{enumerate} 
\end{proof}

\subsubsection{The proof of Theorem~\ref{thm:Tamagawa of Hilb2}}\label{ss:tamagawahilb2}

There is just one things left, before you can prove the results stated at the beginning of the subsection.

\begin{prop}\label{prop:lambdaptilde}
The quantity $\widetilde \lambda_p$ is the convergence factor for $\Hilb^2X$.
\end{prop}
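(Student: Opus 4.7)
The plan is to show that the local $L$-factor for the Picard group of $\Hilb^2 \Xbar$ at an unramified prime $p \notin \bbS$ factors, via the canonical decomposition of Proposition~\ref{prop:hilbbasics}(b), into the $L$-factor for $\Pic \Xbar$ times a Riemann-type factor coming from the rank-one summand $\mathbb Z\Delta^{[2]}$.

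First I would invoke Proposition~\ref{prop:hilbbasics}(b), which supplies an isomorphism of abelian groups
\[
\Pic(\Hilb^2 \Xbar) \;\cong\; \Pic \Xbar \oplus \mathbb Z\Delta^{[2]}.
\]
The key observation is that this is in fact an isomorphism of Galois modules: the classes of line bundles pulled back from $\Sym^2 X$ via $\epsilon_X$ are clearly Galois-equivariant, and the exceptional divisor $\Delta^{[2]}$ is defined over $\Q$ because it is the preimage of the $\Q$-subscheme $\Delta^{(2)} \subseteq \Sym^2 X$, so $\Gamma_\Q$ acts trivially on the summand $\mathbb Z\Delta^{[2]}$. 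Consequently the inertia subgroup $I_p$ acts trivially on $\mathbb Z\Delta^{[2]}$, and the Frobenius $\Frob_p$ acts on this summand as the identity.

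Next I would take inertia invariants and apply $\det(1 - p^{-s}\Frob_p \mid \cdot)^{-1}$ term-by-term in the direct sum. Since the action on $\mathbb Z \Delta^{[2]}$ is trivial, its local factor is $(1 - p^{-s})^{-1}$, giving
\[
L_{\Q,p}(s, \Pic(\overline{\Hilb^2 X})) \;=\; L_{\Q,p}(s, \Pic \Xbar) \cdot (1 - p^{-s})^{-1}.
\]
Evaluating at $s=1$ and comparing with the definition $\widetilde\lambda_p = (1 - 1/p)^{-1}\lambda_p$ yields the claim.

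I do not anticipate any real obstacle here; the only nontrivial ingredient is the Galois-equivariance of the splitting of $\Pic(\Hilb^2\Xbar)$, which follows immediately from the fact that $\Delta^{[2]}$ is defined over $\Q$. The computation is then a direct unwinding of the definition of the local factor from \textsection\ref{ss:tamagawa}.
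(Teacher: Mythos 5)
Your proof is correct and takes essentially the same route as the paper: it invokes the Galois-module decomposition $\Pic(\Hilb^2\Xbar) \cong \Pic\Xbar \oplus \Z\Delta^{[2]}$ from Proposition~\ref{prop:hilbbasics}(b) and observes that the trivial Galois action on the rank-one summand contributes the factor $(1-p^{-1})^{-1}$ to the local $L$-factor. You spell out the Galois-equivariance of the splitting and the Euler-factor computation in more detail than the paper, but the key step is identical.
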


\begin{proof}
This follows directly from the isomorphism of Galois modules $\Pic( \Hilb^2 X) \cong \Pic X \oplus \Z$ from Proposition~\ref{prop:hilbbasics}(b).
\end{proof}

We can now put everything together and compute the Tamagawa number of $\Hilb^2 X$.

\begin{proof}[Proof of Theorem~\ref{thm:Tamagawa of Hilb2}]
Using the definition of the Tamagawa number and $L_{\Q, \bbS}(s, \Hilb^2 \Xbar)=\zeta_{\bbS}(s) L_{\Q, \bbS}(s, \Pic \Xbar)$ we get
\[
    \tau(\Hilb^2 X) = \lim_{s \rightarrow 1^+} [(s-1)^{\rho(X)+1} L_{\Q, \bbS}(s, \Pic \Xbar)\zeta(s)] 
    \prod_{p \in \bbS} \left( 1 - \frac{1}{p} \right)
    \left( \prod_v \widetilde{\lambda}_v^{-1} \omega^{[2]}_v \right)\biggl[\Hilb^2 X(\Aff_\Q)^{\Br}\biggr].
\]
Since $\lim_{s\rightarrow 1^+}(s-1) \zeta(s) =1$, we may drop this term producing an expression more easily comparable to that in Theorem \ref{thm:Tamagawa of XK}.

By virtue of Proposition \ref{prop:constev} and Definition \ref{dfn:BM-set of Hilb2}, we know that we can write
\[
\left( \prod_v \widetilde{\lambda}_v^{-1} \omega^{[2]}_v \right)\biggl[\Hilb^2 X(\Aff_\Q)^{\Br}\biggr]
=
\left( \prod_{v \in \bbS} \omega_v^{[2]}\right) \left[
\biggl(\prod_{v \in \bbS} \Hilb^2 X(\Q_v)\biggr)^{\!\text{Br}}\right]
\prodS_p \widetilde{\lambda}_p^{-1}\omega_p(\Hilb^2 X(\Q_p)).
\]
Moreover, by Corollary~\ref{prop:wptilde}, 
$
 \omega_p(\Hilb^2 X(\Q_p))
=
\widetilde{w}_p,
$
and by the previous proposition we have
\[
\left( \prod_{v \in \bbS} \omega_v^{[2]}\right)\left[
\biggl(\prod_{v \in \bbS} \Hilb^2 X(\Q_v)\biggr)^{\!\text{Br}}\right]
=
\sum_{\Xi} \delta_\Xi w_\Xi(X).
\]
This last sum becomes $\sum_{\Xi} \widetilde{\delta}_\Xi w_\Xi(X)$ upon incorporating the term $\prod_{p \in \bbS} \left( 1 - \frac{1}{p} \right)$.
\end{proof}

\begin{proof}[Proof of Corollary~\ref{cor:equivtamagawa}]
    By the inverse function theorem, a smooth variety over a number field has a positive Tamagawa measure if and only if its Brauer--Manin set is non-empty.
    Theorem~\ref{thm:Tamagawa of Hilb2} shows that (i) is equivalent to $w_\Xi > 0$ for some $\Xi$. By Theorem~\ref{thm:Tamagawa of XK} we see that $\tau(X_K)>0$ for $K \in \calF_\Xi$ holds precisely if $w_\Xi > 0$.
\end{proof}

\section{A formal sum of quadratic Tamagawa numbers}
\label{S:summingtheconstants}
The aim of the present section is to establish Theorem \ref{thm:introsumconsts} on the sum of the Tamagawa numbers $\tau(X_K)$ over quadratic fields $K$ of bounded discriminant, with $X$ a weak Fano variety of dimension at least $2$. That is, if $\dim X \geq 3$ then we show that the sum 
\[
 \sum  \limits_{K \in \calF^\circ(Y)}\tau(X_K)
\]
converges absolutely. If $X$ is a surface then we obtain
\[
\sum_{K \in \mathcal F^\circ(Y)} \tau(X_K) = \tau(\Hilb^2 X) \log Y + O(1),
\]
with the leading constant matching the expression developed in the previous section.

By Theorem~\ref{thm:Tamagawa of XK} the Tamagawa number $\tau(X_K)$ over a quadratic extension $K/\Q$ is the product of the following three components:
\begin{enumerate}
    \item the discriminant factor $|\Delta_{K/\Q}|^{-\dim X/2}$;
    \item the Haar volume of $X$ over the good primes along with their convergence factors coming from the twisted Artin $L$-function, given by
    \[
\tau_{\bbS}(X_K) := L_{\Q,\bbS}(1,\Pic \Xbar \otimes \chi_K) \prodS_{p}
\begin{cases}
\lambda^{-1}_p w_p & \textup{ if } \chi_\Delta(p)=0;\\
\lambda^{-2}_p w^2_p & \textup{ if } \chi_\Delta(p)=1;\\
\lambda^{-1}_p \bar \lambda^{-1}_p w_{p^2} & \textup{ if } \chi_\Delta(p)=-1;
\end{cases}
\]
    \item the weighted local volume over the bad primes, given by
    \[
    \kappa_\Xi = \lim_{s\to 1}\left[ (s-1)^{\rho(X)}L_{\Q,\bbS}(s,\Pic \Xbar)\right] w_\Xi(X).
    \]
    We note that by Proposition~\ref{prop:independenceatbadprimes}, this factor depends only on the splitting types of $K$ over the primes lying above $\bbS$.
\end{enumerate}

The main challenge of summing the Tamagawa number is summing the factor $\tau_{\bbS}(X_K)$ over quadratic fields with a fixed splitting type $\Xi$. The factor $\kappa_\Xi$ is constant among those fields, and we can work in the discriminant factor using partial summations. In order to accomplish the summing of $\tau_\bbS(X_K)$, we will re-arrange the order of summation to bring the $K$ sum inside. The result will be a sum of the shape
\[
\sum_K \chi_{K}(m)g(\Delta_{K/\Q}),
\]
where $g$ is a multiplicative function. 
Sums of this form are treated in \S\ref{ss:mult}. In order to apply these results to the sum of $\tau_\bbS(X_K)$, we need to express the $L_{\Q,\bbS}(1, \Pic \Xbar \otimes \chi_K)$ factor as a function of $\chi_K$ and $\Delta_{K/\Q}$.
To accomplish this we prove very general on-average truncation results for such $L$-values  in \S\ref{ss:Lfunctions}. The remaining sections are devoted to proving Theorem~\ref{thm:mainthm1} and deducing some relevant and interesting corollaries to the developed theory in this section.

\subsection{Sums of multiplicative functions over fundamental discriminants}\label{ss:mult}

The purpose of this section is to establish Proposition \ref{prop:discsum} which allows us to sum multiplicative functions of the discriminant of quadratic fields. The key ingredient is the following result on mean values of multiplicative functions on squarefree integers.

\begin{lemma}\label{lem:SD}
Let $m \in \Z_{\geq 1}$ and let $g$ be a multiplicative function such that $g(p) = 1 + O(p^{-1})$. Then, for $Y \geq 2$, we have
\[
\sum_{\substack{0< d \leq Y\\ \gcd(d,m)=1}} \mu^2(d) g(d) = Y\prod_{p\mid m} \left( 1 - \frac{1}{p} \right) \prod_{p \nmid m} \left(1-\frac1p\right)\left(1+\frac{g(p)}p\right) + O_\epsilon\left(m^{\epsilon}Y^{\frac{1}{2}+\epsilon}\right).
\]
\end{lemma}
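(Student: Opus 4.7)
The plan is to attack this via Dirichlet series and a Dirichlet hyperbola / convolution argument. Writing $\widetilde g(n) := \mu^2(n) g(n) \mathbf{1}_{\gcd(n,m)=1}$, the associated Dirichlet series factors as
\[
\sum_{n \geq 1} \frac{\widetilde g(n)}{n^s} \;=\; \prod_{p \nmid m}\!\left(1 + \frac{g(p)}{p^s}\right) \;=\; \zeta(s)\, H_m(s),
\]
where
\[
H_m(s) \;:=\; \prod_{p \mid m}\!\left(1 - \frac{1}{p^s}\right) \prod_{p \nmid m}\!\left(1 - \frac{1}{p^s}\right)\!\left(1 + \frac{g(p)}{p^s}\right).
\]
The point of this factorisation is that $H_m(1)$ is exactly the product in the desired main term.

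Next I would expand $H_m(s) = \sum_{a \geq 1} h(a)/a^s$, where $h$ is multiplicative, and check from $(1-p^{-s})(1+g(p)/p^s) = 1 + (g(p)-1)/p^s - g(p)/p^{2s}$ that
\[
h(p) = g(p)-1 = O(1/p),\qquad h(p^2) = -g(p) = O(1),\qquad h(p^k) = 0 \text{ for }k\geq 3
\]
when $p \nmid m$, while $h(p) = -1$ and $h(p^k)=0$ for $k \geq 2$ when $p \mid m$. With these bounds the Euler product for $\sum_a |h(a)|/a^s$ converges absolutely in $\Rea(s) > 1/2$, and at $s = 1/2 + \epsilon$ one obtains
\[
\sum_{a \geq 1} \frac{|h(a)|}{a^{1/2+\epsilon}} \;\leq\; \prod_{p \mid m}\!\bigl(1+p^{-1/2-\epsilon}\bigr) \prod_{p}\!\bigl(1 + O(p^{-3/2-\epsilon}) + O(p^{-1-2\epsilon})\bigr) \;\ll_\epsilon\; m^{\epsilon}.
\]

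Finally I would use the convolution identity $\widetilde g(n) = \sum_{a \mid n} h(a)$ to swap the order of summation:
\[
\sum_{n \leq Y} \widetilde g(n) \;=\; \sum_{a \leq Y} h(a) \lfloor Y/a \rfloor \;=\; Y\!\sum_{a \geq 1} \frac{h(a)}{a} \;-\; Y\!\sum_{a > Y}\frac{h(a)}{a} \;+\; O\!\Bigl(\sum_{a \leq Y}|h(a)|\Bigr).
\]
The first term is the main term $Y H_m(1)$. The tail is controlled by $1/a \leq Y^{-1/2+\epsilon}/a^{1/2+\epsilon}$ for $a > Y$, giving
\[
Y\!\sum_{a > Y}\frac{|h(a)|}{a} \;\leq\; Y^{1/2+\epsilon}\sum_{a \geq 1}\frac{|h(a)|}{a^{1/2+\epsilon}} \;\ll_\epsilon\; m^{\epsilon}Y^{1/2+\epsilon},
\]
and the floor error is bounded by the same quantity via $|h(a)| \leq a^{1/2+\epsilon}\cdot |h(a)|/a^{1/2+\epsilon}$. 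This yields the claimed error $O_\epsilon(m^\epsilon Y^{1/2+\epsilon})$. The only subtlety is tracking the $m$-dependence through the local factors at primes dividing $m$, where the naive bound $2^{\omega(m)} \ll m^\epsilon$ suffices and is the source of the $m^\epsilon$ in the error term.
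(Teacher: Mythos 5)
Your proof is correct and takes a genuinely different route from the paper's. The paper reaches the $Y^{1/2+\epsilon}$ error via the Selberg--Delange method: truncated Perron's formula, a contour shift to $\Rea(s) = \tfrac12 + \epsilon$, Weyl's subconvexity bound for $\zeta$, and a dyadic decomposition bounded with Heath-Brown's estimate $\int_T^{2T}|\zeta(\tfrac12+\epsilon+it)|\,\mathrm{d}t \ll T(\log T)^{1/4}$ --- the authors even remark that without the fractional-moment input the contour method only yields $Y^{5/6+\epsilon}$. You instead write $\widetilde g = \mathbf{1}\ast h$ with $h$ supported on cubefree integers and $h(p)=O(1/p)$, $h(p^2)=O(1)$, observe that $\sum_a |h(a)|/a^{1/2+\epsilon}$ converges with a factor $\prod_{p\mid m}(1+p^{-1/2-\epsilon}) \leq 2^{\omega(m)} \ll_\epsilon m^\epsilon$, and then run the standard divisor-switching computation $\sum_{n\leq Y}\widetilde g(n) = \sum_{a\leq Y}h(a)\lfloor Y/a\rfloor$. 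Both the tail $Y\sum_{a>Y}h(a)/a$ and the floor error $\sum_{a\leq Y}|h(a)|$ are bounded by $Y^{1/2+\epsilon}\sum_a |h(a)|/a^{1/2+\epsilon}\ll_\epsilon m^\epsilon Y^{1/2+\epsilon}$, using that $\mu^2$ kills all $h(p^k)$ for $k\geq 3$ so the abscissa of absolute convergence of $H_m$ is $1/2$. This reaches the same error term while avoiding subconvexity and moment estimates entirely, which is a genuine simplification of the argument for this particular lemma; the more analytic machinery in the paper would become necessary only if $h$ had nonvanishing coefficients at higher prime powers pushing the abscissa of $H_m$ to the right of $1/2$, which the squarefree condition precludes.
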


With the error term $O_\epsilon\left( m^\epsilon Y^{\frac{5}{6} + \epsilon}\right)$ this would be a standard consequence of the contour integration method in analytic number theory. We record the proof here since a judicious application of fractional moments bounds allows us to produce an error term that is best possible, which will be important for a later application.

\begin{proof}
We proceed by the Selberg--Delange method. The Dirichlet series associated to the sum is
\[
F(s) := \sum_{\substack{d=1\\ \gcd(d,m)=1}}^{\infty} \frac{\mu^2(d) g(d)}{d^s} = \prod_{p \nmid m} \left( 1 + \frac{g(p)}{p^s} \right),
\] for $\Rea(s) > 1$. The assumption on our function $g$ implies that $F(s) = \zeta(s) G(s)$ for $G$ some function which is holomorphic and absolutely convergent for $\Rea(s) > \frac{1}{2}$. The truncated Perron formula (e.g.\ \cite[Lem.\ 3.19]{MR0882550}) gives
\[
\sum_{\substack{d \leq Y \\ \gcd(d,m)=1}} \mu^2(d) g(d) = \frac{1}{2\pi i} \int_{1+\frac{1}{\log Y} - iT}^{1+\frac{1}{\log Y} +iT} F(s) \frac{Y^s}{s} \mathrm{d}s + O\left(\tau(m)\frac{Y \log Y}{T}\right)
\]
for any $T \geq 1$, which will be chosen at the end of the proof. We now move the line of integration back to the line $\Rea(s) = \frac{1}{2} + \epsilon$, using a rectangular contour with a keyhole around $s=1$. The contribution from the circle around $s=1$ will be the residue of the integrand at the pole, namely $Y G(1)$. It remains to bound the contribution from the rest of the contour.
Along the horizontal strips, applying Weyl's subconvexity bound for $\zeta(s)$, we have the contribution
\[
\frac{1}{2 \pi i} \int_{\frac{1}{2} + \epsilon}^{1+\frac{1}{\log Y}} F(\sigma \pm iT) \frac{Y^{\sigma \pm iT}}{\sigma \pm iT} \mathrm{d}\sigma
\ll
m^{\epsilon}
\frac{1}{T} \int_{\frac{1}{2} + \epsilon}^{1+\frac{1}{\log Y}} T^{\frac{1-\sigma}{3}} Y^{\sigma} \mathrm{d} \sigma
\ll
m^\epsilon
\frac{Y}{T\log(Y/T^{1/3})}.
\]
For the contribution from the vertical strip, we break into dyadic intervals
\begin{align*}
\frac{1}{2\pi i} &\int_{-T}^T F\left(\frac{1}{2} + \epsilon + it\right) \frac{Y^{\frac{1}{2} + \epsilon + it}}{\frac{1}{2}+\epsilon + it} \mathrm{d}t\\
&\leq
\frac{Y^{\frac{1}{2} + \epsilon}}{2 \pi} 
\left(
2\sum_{\ell = 0}^{\log_2 T} 
\int_{T/2^{\ell+1}}^{T/2^{\ell}}
\left \vert F\left( \frac{1}{2} + \epsilon + it\right) \right\vert\frac{\mathrm{d}t}{1 + \vert t \vert} + \int_0^{\frac{1}{2}} \left \vert F\left( \frac{1}{2} + \epsilon + it\right) \right\vert \frac{\mathrm{d}t}{1 + \vert t \vert}\right)\\
&\ll
m^{\epsilon}
Y^{\frac{1}{2} + \epsilon} 
\sum_{\ell = 0}^{\log_2 T} \frac{1}{T/2^{\ell}}
\int_{T/2^{\ell+1}}^{T/2^{\ell}}
\left \vert \zeta\left( \frac{1}{2} + \epsilon + it\right) \right\vert \mathrm{d}t.
\end{align*}
The innermost integral is a small fractional moment of zeta on the critical line which is known to be $\ll \frac{T}{2^{\ell}} \log (T/2^{\ell})^{1/4}$, by work of Heath-Brown~\cite{MR0623671}. Computing the summation over $\ell$ gives the bound $\ll Y^{\frac{1}{2} + \epsilon} m^{\epsilon} T^{\epsilon}$.
The combined error is minimised by taking $T = Y^{\frac{1}{2}}$, from which the lemma follows.
\end{proof}

Recall from Definition \ref{def:Fxi}, that $\mathcal{F}_\Xi$ is the family of quadratic extensions of $\Q$ with splitting type over the primes in $\bbS$ specified by $\Xi=(\xi_v)_v \in\prod_{v \in \bbS} \Q_v^\times/\Q_v^{\times,2}$. For any $K \in \mathcal{F}_\Xi$, we denote by $\Delta_K$ the associated fundamental discriminant.

\begin{prop}\label{prop:discsum} Let $\bbS$ be a finite set of places of $\Q$ and set $S := \prod_{\substack{p \in \bbS \\ p < \infty}}p$.
Let $g$ be a multiplicative function which is $1$ on primes in $\bbS$ and otherwise $g(p) = 1 + O(p^{-1})$. Let $m$ be an integer coprime to $S$. Then we have
\settowidth{\myww}{$\quad \quad +$}
\[
\sum_{K \in \mathcal F_\Xi(Y)} \!\! \chi_{\Delta_K}(m) g(\Delta_K) = \begin{cases}
\makebox[\myww][l]{ } O\left(m^{1/4}\sqrt{\log(4mS)} \sqrt{Y}\right)& \hspace{-1.2cm}  \text{ if $m \ne \square$};\\
\delta_\Xi Y \prod_{p \in \bbS} \left(1-\frac1p\right) \prod^S_{p \mid m} \left(1-\frac1p\right) \prod^S_{p \nmid m} \left(1-\frac1p\right)\left(1+\frac{g(p)}p\right)  & \\
\quad \quad + O_\epsilon\left((mS)^{\epsilon} Y^{\frac{1}{2} + \epsilon}  \right) & \hspace{-1.2cm} \text{ if $m = \square$}.
\end{cases}
\]
Here $\delta_\Xi$ is the constant defined in Definition \ref{def:deltaxi}.
\end{prop}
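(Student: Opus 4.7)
The strategy is to translate the sum over fields $K \in \mathcal F_\Xi(Y)$ into a sum over squarefree integers, and then apply Lemma \ref{lem:SD} or a character sum argument depending on whether $m$ is a square. Each $K \in \mathcal F_\Xi$ is uniquely determined by its fundamental discriminant $\Delta_K$. The local constraints in $\Xi$ at the finite places $p \in \bbS$ fix the $p$-parts of $\Delta_K$, and the sign condition $\xi_\infty$ fixes the sign of $\Delta_K$. Hence we may write $\Delta_K = \Delta_\bbS \cdot d$ for a distinguished integer $\Delta_\bbS$ depending only on $\Xi$, while $d$ ranges over squarefree integers of a fixed sign with $\gcd(d,S)=1$ and $|d| \leq Y/|\Delta_\bbS|$. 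Since $g$ is multiplicative and equal to $1$ on primes in $\bbS$, and the Kronecker symbol is multiplicative in its upper argument, the sum factors as
\[
\chi_{\Delta_\bbS}(m) \sum_{\substack{|d| \leq Y/|\Delta_\bbS| \\ \mu^2(d)=1,\, \gcd(d,S)=1 \\ \operatorname{sgn}(d)=\pm}} \chi_d(m)\, g(d).
\]
The factor $\delta_\Xi$ in the main term will arise from combining $\tfrac12$ (for the sign choice) with the finite-place densities $\delta_{\xi,p}$ absorbed into the identification of admissible $d$, together with the factor $|\Delta_\bbS|^{-1}\cdot|\Delta_\bbS| = 1$ after rescaling the summation range.

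\textbf{Square case.} If $m$ is a perfect square, then for $d$ squarefree with $\gcd(d,m)=1$ we have $\chi_d(m)=1$, and otherwise $\chi_d(m)=0$. The sum therefore reduces to
\[
\chi_{\Delta_\bbS}(m) \sum_{\substack{|d| \leq Y/|\Delta_\bbS| \\ \mu^2(d)=1,\, \gcd(d,mS)=1 \\ \operatorname{sgn}(d)=\pm}} g(d),
\]
and we apply Lemma \ref{lem:SD} to the multiplicative function $\tilde g$ which equals $g$ on primes coprime to $mS$ and $0$ on primes dividing $mS$ (the sign restriction halves the count). The leading term there contributes $\prod_{p \mid mS}(1-1/p) \prod_{p\nmid mS}(1-1/p)(1+g(p)/p)$, and separating the $p \in \bbS$ factors from the $p \mid m$ factors and combining with the sign factor $\tfrac12$ and $|\Delta_\bbS|$ reproduces $\delta_\Xi Y \prod_{p \in \bbS}(1-1/p)\prod^S_{p\mid m}(1-1/p)\prod^S_{p\nmid m}(1-1/p)(1+g(p)/p)$. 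The error term $(mS)^\epsilon Y^{1/2+\epsilon}$ is inherited from Lemma \ref{lem:SD}.

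\textbf{Non-square case.} Write $m = m_0^2 m_1$ with $m_1 > 1$ squarefree. Quadratic reciprocity identifies $d \mapsto \chi_d(m)$ with a non-principal Dirichlet character $\chi^\ast$ modulo a divisor of $4|m_1|$, up to a sign depending on $d \bmod 8$ that can be handled by splitting the sum into residue classes modulo $8$. We thus reduce to bounding
\[
\sum_{\substack{|d| \leq Y/|\Delta_\bbS| \\ \mu^2(d)=1,\, \gcd(d,S)=1}} \chi^\ast(d)\, g(d).
\]
The associated Dirichlet series $F(s) = \sum_{\gcd(d,S)=1} \mu^2(d)\chi^\ast(d) g(d)/d^s$ factors as $L(s,\chi^\ast) H(s)$, where $H$ is absolutely convergent and uniformly bounded in $\operatorname{Re}(s) \geq 1/2+\epsilon$. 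Applying a truncated Perron formula and shifting the contour past the line $\operatorname{Re}(s) = 1/2$ picks up no pole, since $L(s,\chi^\ast)$ is entire. The resulting contour integral is bounded by combining the convexity bound $|L(\tfrac12+it,\chi^\ast)| \ll (|m|(1+|t|))^{1/4}$ with a small fractional-moment estimate for $L(\tfrac12+it,\chi^\ast)$, in exact parallel with the treatment of $\zeta(\tfrac12+it)$ in the proof of Lemma \ref{lem:SD}. Optimising the truncation height $T$ yields the stated bound $O(m^{1/4}\sqrt{\log(4mS)}\sqrt{Y})$.

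\textbf{Main obstacle.} The principal difficulty lies in the non-square case, where one must secure the precise error term $m^{1/4}\sqrt{\log(4mS)}\sqrt Y$ without any $Y^\epsilon$ loss: a naive application of Polya--Vinogradov gives $\sqrt m$ in place of $m^{1/4}$, while a direct contour shift with convexity bounds introduces an unwanted $Y^\epsilon$. Threading this needle requires the small fractional-moment technique, which must also be uniform in the modulus of $\chi^\ast$. A secondary bookkeeping challenge is to verify that the combinatorial factor $\delta_\Xi\prod_{p\in\bbS}(1-1/p)$ emerges correctly after separating the local data at $\bbS$ from the summation over $d$, particularly at $p=2$ where the relationship between $\xi_2$ and the $2$-part of $\Delta_K$ is not purely multiplicative.
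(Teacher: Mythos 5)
Your reduction to a single sum over squarefree integers omits a crucial constraint: membership in $\mathcal F_\Xi$ does \emph{not} reduce to fixing the $p$-parts of $\Delta_K$ and taking $d$ coprime to $S$ of a given sign. For odd $p \in \bbS$, the choice of $\xi_p$ among the two ramified (or among the unramified and split) quadratic \'etale algebras of $\Q_p$ imposes a quadratic-residue condition on $\Delta_K/\Delta_\bbS \bmod p$; the situation at $p=2$ is similar but involves residue classes mod $8$ or $16$. These conditions cut the density of admissible $d$ roughly in half at each prime of $\bbS$, which is exactly where the factors of $\tfrac12$ in $\delta_\Xi$ come from. Your phrase about ``finite-place densities absorbed into the identification of admissible $d$'' acknowledges the issue but is contradicted by the explicit claim that $d$ ranges over \emph{all} squarefree integers coprime to $S$. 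Without inserting these congruence conditions (which the paper detects via Dirichlet characters modulo $8S'$ or $16S^2$, carefully separating primitive from non-primitive residue classes), both the application of Lemma~\ref{lem:SD} and the computation of the leading constant go wrong by a factor of $2^{\#\bbS-1}$.

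For the non-square case your proposed route diverges from the paper's and runs into a harder problem than you acknowledge. You appeal to a small fractional-moment estimate for $L(\tfrac12+it,\chi^\ast)$ ``in exact parallel'' with the $\zeta$-moment used in Lemma~\ref{lem:SD}, but Heath-Brown's fractional-moment bound is specific to $\zeta$; a uniform-in-$q$ analogue for Dirichlet $L$-functions with a $\log$-power saving is not a black-box input and would require its own substantial argument. The paper sidesteps this entirely: it applies the identity $\mu^2(n)=\sum_{d^2\mid n}\mu(d)$, writes $g=\mathbf1\ast h$, and bounds the resulting innermost character sum with P\'olya--Vinogradov, splitting the outer ranges at the point where the trivial bound and P\'olya--Vinogradov balance. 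This elementary balancing is what produces $m^{1/4}$ rather than $m^{1/2}$ --- so your worry that P\'olya--Vinogradov can only give $\sqrt m$ is misplaced; the convolution structure is doing the work. The contour-shift route might in principle be made to work, but it trades a soft elementary argument for an open-ended analytic one.
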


The case $g\equiv 1$ and $\bbS=\{\infty\}$ was handled previously by Schmidt \cite[Lem.\ 15]{MR1330740}.

\begin{proof}
Denote $S' :=  \prod_{\substack{ p \neq 2 \\ p \in \bbS}}p$.
We will break the proof into cases depending on the splitting types specified by $\Xi$. We have two subcases depending on whether we choose $+1$ or $-1$ at the place $\infty$, let us suppose for now that we have chosen $+1$; the other case has a similar proof replacing $d$ by $-d$. 

We first deal with the case when each quadratic extension $\Q_{v}$ is unramified, so we start here.  We will represent fundamental discriminants by positive integers $d$ with $d\equiv 1 \mod 4$ or $4d$ where $d\equiv 2$ or $3 \mod 4$. The other splitting conditions for each $p \in \bbS$ can be expressed as a congruence condition 
\[
\text{i.e. } \exists H \subseteq \Z/8S'\Z \text{ such that for } K \in \mathcal{F}(Y), \Delta_K \in \mathcal{F}_{\Xi}(Y) \iff \Delta_K \in H \bmod {8S'}.
\]
If $\xi_2$ corresponds to an unramified extension then $\Delta_K$ is equal to a power of 4 times a number which is congruent to 1 mod 8. The only way this can happen is if $\Delta_K \equiv 1 \mod 8$.
At odd primes, the assumption that each extension associated to $\Xi$ is unramified means that each residue class in $H$ is primitive. 

Hence, the sum we are left to evaluate is
\[
\sum_{a \in H} 
\sum_{\substack{0 < d \leq Y\\ d \equiv a \bmod {8S'}}} \mu^2(d) \chi_d(m) g(d).
\]
We will remove the congruence condition with a sum of Dirichlet characters (here it is important that our congruences are primitive), giving
\[
\frac{1}{4\phi(S')} \sum_{a \in H} \sum_{\psi \bmod {8S'}} \overline{\psi(a)} \sum_{0 < d \leq Y} \mu^2(d) \chi_d(m) \psi(d) g(d).
\]
When the product $\chi_d(m) \psi(d)$ is a non-principal character there will be oscillation and the contribution will be negligible. The main contribution comes when the product corresponds to the principal character. On such an occasion, the inner sum above becomes after using Lemma \ref{lem:SD}
\[
\sum_{\substack{0 < d \leq Y\\ \gcd(d, mS)=1}} \mu^2(d) g(d)= Y\prod_{p \mid mS} \left(1 - \frac{1}{p} \right) \prod_{p \nmid mS} \left(1-\frac1p\right)\left(1+\frac{g(p)}p\right) + O_\epsilon\left((mS)^{\epsilon} Y^{\frac{1}{2} + \epsilon}\right).\]
If $m \neq 1$ then, since $m$ is coprime to $S$, the product of characters is only principal when both characters are principal. Otherwise, the product is principal precisely when $\psi$ is the unique character mod $S$ which is induced by the non-principal character mod 4, and by the principal character modulo all other primes in $\bbS$. Either way, there is a single unique $\psi$ mod $S$ which contributes to the main term. Therefore, the total sum in question is equal to
\[
\frac{\#H}{4\phi(S')}Y\prod_{p \mid mS} \left(1 - \frac{1}{p} \right)\prod_{p \nmid mS} \left(1-\frac1p\right)\left(1+\frac{g(p)}p\right) + O_\epsilon\left((mS)^{\epsilon} Y^{\frac{1}{2} + \epsilon}\right).
\]All that remains is to observe that this leading term is equal to $\frac{1}{4}\prod_{p \mid S'} \frac{\#H_p}{p-1}$, where $H_p$ is the image of $H$ under the map $\Z/S\Z \rightarrow \Z/p\Z$. For odd primes $p$, $H_p$ consists either of all the quadratic residues or of the quadratic non-residues modulo $p$, hence we get a factor of $\frac{1}{2}$ at each place in $S$.
In the case, that $\chi_d(m)\psi(d)$ is a non-principal character, say $\sigma$, it has modulus at most $4mS$. Applying the identity $\mu^2(n) = \sum_{d^2 \mid n} \mu(d)$, we have
\[
\sum_{0 < d \leq Y} \mu^2(d) \sigma(d) g(d)
=
\sum_{\ell \leq \sqrt{Y}} \mu(\ell)\sigma(\ell)^2 g(\ell^2) \sum_{d \leq Y/\ell^2} \sigma(d) g(d).
\]
We will write $g= \mathbf{1}\ast h$ for $h(n)$ a multiplicative function which, by virtue of our assumption on $g$, satisfies $h(p) = O\left(\frac{1}{p}\right).$ Then
\begin{align*}
\sum_{0 < d \leq Y} \mu^2(d) \sigma(d) g(d)
&=
\sum_{\ell \leq \sqrt{Y}} \mu(\ell)\sigma(\ell)^2 g(\ell^2) \sum_{ab \leq Y/\ell^2} \sigma(ab) h(a)\\
&=
\sum_{a \leq Y} \sigma(a) h(a) \sum_{\ell \leq \sqrt{Y/a}} \mu(\ell)\sigma(\ell)^2 g(\ell^2) \sum_{b \leq Y/\ell^2a} \sigma(b).
\end{align*}
This inner sum is at most $ \max\{\sqrt{4mS} \log 4mS, Y/\ell^2a\}$ by the P{\'o}lya--Vinogradov inequality. Thus
\[
\sum_{0 < d \leq Y} \mu^2(d) \sigma(d) g(d)
\ll
\sqrt{mS} \log(4mS) \sum_{a \leq \sqrt{mS}\log(4mS)} \vert h(a) \vert \sum_{\ell \leq \sqrt{Y}(mS)^{-1/4}(\log(4mS)^{-1}a^{-1/2}} \mu^2(\ell) \vert g(\ell) \vert\]\[
+ 
Y
\sum_{a <Y} \frac{\vert h(a) \vert}{a} \sum_{\sqrt{\sqrt{mS}\log(4mS)}/a< \ell < \sqrt{Y/a}} \mu^2(\ell) \frac{\vert g(\ell) \vert}{\ell^2} 
\] By Lemma \ref{lem:SD}, this is
\[
\ll_\epsilon
(mS)^{\frac{1}{4} + \epsilon}\sqrt{Y} \sum_{a \leq Y} \frac{\vert h(a) \vert }{\sqrt{a}}.
\]
The inner sum converges due to the conditions on $h$.

Now, we must allow for wild ramification, and thus non-primitive congruences. As before, the splitting conditions can be expressed as congruences, but at odd primes $p$ where the extension corresponding to $\Xi$ is ramified we use a non-primitive congruence modulo $p^2$. Let $H$ be the set of permissible congruences to the modulus $16S^2$. Let $a \in H$ and denote $k = \gcd(a, S')$. Note that this definition is independent of the choice of $a$. We begin at the same starting point as before then remove a factor of $k$ to make all the congruences modulo odd primes primitive. Moreover, if the extension at 2 is ramified then we must remove a factor of 4. This gives
\[
\sum_{a \in H} 
\sum_{\substack{0 < d \leq Y/4^{\delta}\\ d \equiv a \bmod S^2}} \mu^2(d) \chi_d(m) g(d)
=
\chi_{4k}(m) g(4k)
\sum_{b \in \widetilde{H}} \sum_{\substack{0 < d \leq Y/4^{\delta}k \\ d \equiv b \bmod 16S \\ \gcd(d,k) =1}} \chi_d(m) g(d),
\] where $\widetilde{H}$ is a set of primitive congruence classes modulo $S$ capturing the splitting information of $H$ and $\delta$ is either 1 or 0, depending on the splitting condition at 2. For instance, at an odd prime $p$, the extension specified by $\xi_p$ is generated either by $\sqrt{p}$ or by $\sqrt{ap}$ for $a$ a quadratic non-residue mod $p$. The two ramified extensions of $\Q_p$ correspond to congruence classes $ap$ mod $p^2$ where $a$ is either a quadratic residue or quadratic non-residue depending on the desired extension. $\Q_2$ has 7 unique quadratic extensions: 3 generated by pairs of primitive residue classes mod 16 (although it is impossible in our situation for $\Delta_K$ to be congruent to 5 mod 16) and 4 generated by a unique even residue mod 16. 
We will break our sum over $b \in \widetilde{H}$ into two cases corresponding to whether the residue classes specified mod 16 are even or odd, denoted $b \in \widetilde{H}^e$ or $d \in \widetilde{H}^{o}$. If $\xi$ corresponds to the trivial extension at 2 then all the residue classes are odd (namely 1 and 9 mod 16).
If $b \in \widetilde{H}^o$, then the analysis may now proceed as before. Again, when $m$ is non-square we get the same error term. When $m$ is a square, we have $\chi_k(m) = 1$ and since $g$ is defined to be 1 on all primes in $S$ the main term becomes
\[
\frac{1}{8\phi(S)} \sum_{b \in \widetilde{H}^e} \sum_{0 < d \leq Y/4^{\delta}k} \mu^2(d)  g(d).
\]
If only odd primes ramify then we see that the only difference between this and the previous case is the presence of $k$ on the denominator, as explained in the factor $\delta_{\Xi}$. If the splitting condition specified by $\xi_2$ corresponds to the extensions $\Q_2(\sqrt{3})$ or $\Q_2(\sqrt{7})$ then 2 ramifies and so $\delta =1$ and then there are two possible residue classes mod 16 which $b$ may occupy (and $\widetilde{H}^e$ is empty). This corresponds to dividing by precisely the factor expected in the definition of $\delta_{\Xi}$.

If $\xi_2$ corresponds to any of the extensions $\Q_2(\sqrt{2}), \Q_2(\sqrt{6}), \Q_2(\sqrt{10})$ or $\Q_2(\sqrt{14})$, then $\widetilde{H}^o$ is empty. To handle the sum over $b \in \widetilde{H}^e$, we must divide through by 2. Now the splitting condition at 2 corresponds to a choice of primitive residue class mod 8. Hence the main term contribution is
\[
\frac{1}{4\phi(S)} \sum_{b \in \widetilde{H}^e} \sum_{0 < d \leq Y/(2\times4k)} \mu^2(d)  g(d).
\]
Again we see that the factor out front after applying Lemma \ref{lem:SD} will correspond precisely to the definition of $\delta_{\Xi}$, completing the proof.
\end{proof}

\subsection{Sums of twists of $L$-functions}\label{ss:Lfunctions}

We will require a truncation estimate for special values of quadratic twist $L$-functions. Unfortunately, the pointwise bounds that one is able to provide (without assuming GRH) are insufficient. However, the following on-average result will suffice for our application.

\begin{prop}\label{prop:trunc}
Let $\rho$ be a Galois representation which factors through a finite extension $L/\Q$. For a real parameter $Y \geq 1$, recall the notation $\mathcal{F}^\circ(Y)$ for the set of fundamental discriminants $\Delta$ absolutely bounded by $Y$ whose associated quadratic field $K$ is linearly disjoint from $L$.
Let $Z \geq Y^{\delta}$ be a real parameter, for some $\delta>0$, and let $c_\Delta$ be an absolutely bounded sequence of complex numbers, supported on fundamental discriminants $\Delta$. Then, for any $\epsilon>0$, we have
\[
\sum_{\Delta  \in \mathcal F^\circ(Y)} c_\Delta L_{\Q,\bbS}(1, \rho \otimes \chi_\Delta)
=
\sum_{\Delta  \in \mathcal F^\circ(Y)}  c_\Delta  \sumS_{n \leq Z} \frac{\psi(n)\chi_\Delta(n)}{n} + O_{\bbS, \rho, \epsilon}\left(Y^{\frac{1}{2} + \epsilon}Z^{\epsilon}+Y^{1 + \epsilon}Z^{-\frac{1}{2}+\epsilon}\right),
\]
where $\psi(n)$ is the multiplicative function defined by
\[
L_{\Q,p}(s,\rho) = \sum_{j\geq 0} \frac{\psi(p^j)}{p^{js}}.
\]
\end{prop}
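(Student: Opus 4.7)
My strategy is to rewrite the difference as a conditionally convergent tail $\sum_{\Delta\in \mathcal F^\circ(Y)} c_\Delta \sumS_{n>Z} \psi(n)\chi_\Delta(n)/n$, then apply Abel summation to the inner sum to express it as an absolutely convergent integral $\int_Z^\infty t^{-2}(B(t)-B(Z))\,dt$, where $B(t):=\sum_\Delta c_\Delta \sumS_{n\leq t}\psi(n)\chi_\Delta(n)$. Since the integrand is absolutely convergent, Fubini legitimises swapping the order of summation, yielding $B(t)=\sumS_{n\leq t}\psi(n)T(n)$ with $T(n):=\sum_{\Delta\in \mathcal F^\circ(Y)} c_\Delta \chi_\Delta(n)$. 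The task thus reduces to obtaining a uniform bound on $B(t)$ and integrating it against $t^{-2}$.

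Next, I would split the sum defining $B(t)$ according to whether $n$ is a perfect square. For $n=m^2$, one has $\chi_\Delta(m^2)=\mathbf{1}_{\gcd(m,\Delta)=1}$, so the trivial bound $|T(m^2)|\ll Y$ combined with $|\psi(m^2)|\ll_\epsilon m^\epsilon$ yields $|B_\square(t)|\ll Y t^{1/2+\epsilon}$; integrating against $t^{-2}$ from $Z$ to $\infty$ then produces the error term $O(Y^{1+\epsilon}Z^{-1/2+\epsilon})$. For non-square $n$, quadratic reciprocity identifies $\Delta\mapsto \chi_\Delta(n)$ with a non-principal Dirichlet character of modulus dividing $4n$, so the method employed in the proof of Proposition~\ref{prop:discsum} (an orthogonality expansion across residue classes together with P\'olya--Vinogradov and the restriction to fundamental discriminants) can be adapted to bounded but non-multiplicative $c_\Delta$ via Cauchy--Schwarz in combination with Heath-Brown's large sieve for real characters, namely
\[
\sum_{\Delta}\Bigl|\sum_{n\leq N} a_n\chi_\Delta(n)\Bigr|^2\ll (YN)^\epsilon(Y+N)\sum_{n\leq N}|a_n|^2.
\]
This yields the bound $|B_{\neq\square}(t)|\ll Y t^{1/2}(Yt)^\epsilon$ valid in the range $t\leq Y$, whose integrated contribution is precisely the remaining error $O(Y^{1/2+\epsilon} Z^\epsilon)$.

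The main obstacle will be handling the contribution to $B_{\neq\square}(t)$ in the regime $t > Y$, where the large sieve estimate is weaker than the trivial bound and a direct application produces a divergent integral. I would circumvent this by performing a contour shift on the Dirichlet series
\[
F(s):=\sum_{\Delta\in \mathcal F^\circ(Y)} c_\Delta L_{\Q,\bbS}(s,\rho\otimes\chi_\Delta)=\sumS_n \frac{\psi(n)T(n)}{n^s},
\]
which is absolutely convergent for $\Rea(s)>1$ and admits analytic continuation to the critical strip as a finite linear combination of twisted Artin $L$-series. Applying Perron's formula to the truncated sum and shifting the contour past $s=1$ into the strip $\Rea(s)>1/2$ reduces the matter to controlling an average of $|L_{\Q,\bbS}(\sigma+it,\rho\otimes\chi_\Delta)|$ over $\Delta$ just inside the critical strip; this mean value is in turn bounded via Cauchy--Schwarz, the large sieve, and the convexity bound in the $t$-aspect, producing the announced error with the correct $Z$-dependence and uniformity in $\rho$ through the implied constant $O_{\bbS,\rho,\epsilon}$.
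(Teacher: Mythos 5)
Your proposal uses the right tools (Heath--Brown's quadratic large sieve for real characters, a square/non-square decomposition, Abel summation), and the early steps are sound, but there is a genuine gap in the regime you identify as "the main obstacle." Two issues:

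First, your diagnosis of why $t>Y$ is problematic is incorrect. The quadratic large sieve $\sum_{\Delta}|\sum_{n\leq N}a_n\chi_\Delta(n)|^2\ll(YN)^\epsilon(Y+N)\sum|a_n|^2$, combined with Cauchy--Schwarz, still saves a factor of $Y^{1/2}$ over the trivial bound even when $N>Y^2$; the dyadic contribution $\sum_{\Delta}|\sum_{N\leq n<2N}\psi(n)\chi_\Delta(n)|\ll Y^{1/2}(YN)^\epsilon$ is never worse than trivial. The real obstruction is that you cannot sum over infinitely many dyadic blocks $N\to\infty$: each block costs $\gg Y^{1/2}$, and you only have the $1/N$ weight from the Abel step to compensate, which converges but leaves you with the question of absolute convergence of the Abel integral in the first place. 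For a fixed $\Delta$ the partial sum $\sum_{n\leq t}\psi(n)\chi_\Delta(n)$ is only $o(t)$ because of a prime-number-theorem-type argument, and that input needs to be supplied somewhere; the paper's choice is to run the large sieve only on $Z<n\leq e^{Z^\epsilon}$ (a range of $O(Z^\epsilon)$ dyadic blocks, which gives the $Y^{1/2+\epsilon}Z^\epsilon$ term) and to dispatch $n>e^{Z^\epsilon}$ with an explicit PNT for Hecke $L$-functions obtained through Brauer induction.

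Second, and more seriously, the Perron/contour-shift plan for the far tail does not go through unconditionally. The Dirichlet series $F(s)=\sum_{\Delta}c_\Delta L_{\Q,\bbS}(s,\rho\otimes\chi_\Delta)$ is a finite linear combination of Artin $L$-series, and in the generality stated ($\rho$ an arbitrary representation factoring through a finite, not necessarily solvable, extension) its continuation to $\Rea(s)>1/2$ is only \emph{meromorphic}: Brauer induction expresses $L(s,\rho\otimes\chi_\Delta)$ as a product of Hecke $L$-functions to integer powers, and nothing prevents those powers from being negative. Thus the shifted integrand can acquire poles at zeros of the constituent Hecke $L$-functions lying to the right of the line $\Rea(s)=1/2+\epsilon$; absent GRH or a suitable zero-density input, you cannot shift the contour into the strip and estimate only by mean values on a vertical line. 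The paper avoids this precisely by never leaving the region where the Hecke $L$-functions are zero-free: it uses the classical zero-free-region PNT bound $\sum_{b\leq n}\psi(b)\chi_\Delta(b)\ll_A n(\log n)^{-A}|\Delta|$ (proved by running the prime number theorem for the Hecke factors and convolving the pieces when $\rho$ is reducible), and then partial summation. To repair your argument you would need to replace the contour shift with exactly this kind of zero-free-region bound for the far tail, and truncate the large sieve at a point such as $e^{Z^\epsilon}$ so that the dyadic decomposition has only polynomially many blocks in $Z$.

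One smaller bookkeeping point: the integral $\int_Z^Y Y t^{1/2}(Yt)^\epsilon t^{-2}\,dt$ evaluates to $O(Y^{1+\epsilon}Z^{-1/2+\epsilon})$, not $O(Y^{1/2+\epsilon}Z^\epsilon)$; the $Y^{1/2+\epsilon}Z^\epsilon$ term in the statement actually arises from the large-$N$ regime of the large sieve (where the bound becomes $Y^{1/2}(\log N)^c$ and one sums over the $O(Z^\epsilon)$ dyadic blocks up to $e^{Z^\epsilon}$).
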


In the above statement, $\bbS$ can be taken to be any finite set of places and we remind the reader of our conventions that $S= \prod_{p \in \bbS,p<\infty} p$, $\sum^S_n$ denotes a sum over integers coprime to $S$, and $\prod^S_p$ denotes a product indexed by primes not in $\bbS$, respectively.
We will ultimately be applying this result in the case that $\rho$ is the Galois representation associated to $\Pic \Xbar$. In this case, $L$ is the splitting field of $\Pic \Xbar$, as in \textsection\ref{S:PeyreoverK}.

\begin{remark}
    In the work of Schmidt and Le Rudulier on $\Hilb^2 \PP_\Q^2$ and $\Hilb^2(\PP_\Q^1 \times \PP_\Q^1)$, it was also necessary to truncate $L$-series. In Schmidt, the $L$-series is simply $L_{\Q, \emptyset}(1,\chi_\Delta)$ and in Le Rudulier $L_{\Q,\emptyset}(1,\chi_\Delta)^2$, \cite[Lem.\ 16]{MR1330740} and \cite[Lem.\ 5.3]{lerudulier-thesis}, respectively. In these cases, it is possible to truncate each individual $L$-series with an acceptable error term. However, for more general $L$-functions (even for moments of $L_{\Q, \emptyset}(1,\chi_\Delta)$ beyond the third) this simple approach fails unconditionally hence the need for an on-average truncation.
\end{remark}

The key tool that will allow us to truncate on average is a variation on the celebrated quadratic large sieve of Heath-Brown.
 
\begin{lemma}\label{lem:quadsieve}
Let $\alpha_m$ and $\beta_n$ be complex sequence satisfying $\vert \alpha_m \vert \ll 1$ for all $m$ and $\vert \beta_n \vert \ll \frac{\tau_r(n)^{C}}{n}$ for all $n$, for absolute constants $C$ and $r$. Then, for all $M,N \geq 1$ and $\epsilon>0$, we have
\begin{itemize}
\item[(a)] if $N \leq M^2$ then
\[
\sum_{m \leq M} \sum_{N \leq n < 2N} \alpha_m \beta_n \left( \frac{m}{n} \right)
\ll_{\epsilon,C}   M^{1+\epsilon}N^{-\frac{1}{2}} + M^{\frac{1}{2}+\epsilon}, \quad \text{ and if }
\]
\item[(b)] if $N > M^2$ then
\[
\sum_{m \leq M} \sum_{N \leq n < 2N} \alpha_m \beta_n \left( \frac{m}{n} \right)
\ll_{r, C} M^{\frac{1}{2}} (\log N)^{\frac{r^{2C}-1}{2}}.
\]
\end{itemize}
\end{lemma}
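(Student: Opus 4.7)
The estimate is a bilinear character sum in the Jacobi symbol, and the central tool is Heath-Brown's quadratic large sieve, which for complex sequences supported on squarefree odd integers gives
\[
\sum_{m \leq M}^{\flat} \left| \sum_{n \leq N}^{\flat} b_n \left(\frac{m}{n}\right)\right|^2 \ll_\epsilon (MN)^\epsilon(M+N) \sum_n |b_n|^2.
\]
Standard M\"obius manipulations to handle the even and squarefull parts of $m$ and $n$ reduce the problem to this setting; the resulting divisor-like factors are absorbed by the assumption $|\beta_n| \ll \tau_r(n)^C/n$.

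Denote the sum in question by $S$. In case (a), where $N \leq M^2$, I would apply Cauchy--Schwarz to pull $\alpha_m$ outside:
\[
|S|^2 \leq \left(\sum_{m \leq M}|\alpha_m|^2\right) \sum_{m \leq M}\left|\sum_{N \leq n < 2N}\beta_n\left(\frac{m}{n}\right)\right|^2 \ll M\cdot(MN)^\epsilon(M+N)\sum_{N \leq n < 2N}|\beta_n|^2.
\]
Combined with the standard divisor-moment estimate $\sum_{N \leq n < 2N}\tau_r(n)^{2C}/n^2 \ll (\log N)^{r^{2C}-1}/N$ and taking the square root, the subcase $N \leq M$ (where $M+N \ll M$) produces the first term $M^{1+\epsilon}N^{-1/2}$, and the subcase $M < N \leq M^2$ (where $M+N \ll N$) produces the second term $M^{1/2+\epsilon}$.

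In case (b), where $N > M^2$, I would partition the $m$-sum into perfect squares and non-squares. For $m = k^2$ with $k \leq \sqrt M$, the identity $\left(\frac{k^2}{n}\right) = [\gcd(k,n)=1]$ gives a trivial bound
\[
\sqrt{M}\sum_{N \leq n < 2N}|\beta_n|\ll \sqrt M(\log N)^{r^C-1},
\]
which is dominated by the target $\sqrt{M}(\log N)^{(r^{2C}-1)/2}$ since $r^C-1 \leq (r^{2C}-1)/2$. For non-square $m$, applying Cauchy--Schwarz and Heath-Brown's large sieve as in case (a), together with the constraint $M+N \ll N$ in the regime $N > M^2$, yields essentially the claimed bound, modulo a residual $(MN)^\epsilon$ factor.

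The principal obstacle is the removal of this $(MN)^\epsilon$ factor in case (b). I expect the proof to invoke a sharper formulation of the quadratic large sieve in which the $\epsilon$ is replaced by a polylogarithmic factor (tracking $\tau(MN)^{O(1)}$ carefully through Heath-Brown's proof allows one to absorb this into $(\log N)^{(r^{2C}-1)/2}$); alternatively, one may treat the non-square contribution directly using Polya--Vinogradov, exploiting that for each fixed non-square $m \leq M$ the symbol $\left(\frac{m}{\cdot}\right)$ is a non-principal character of modulus $\ll M$ evaluated over an interval of length $N \gg M^2$, and combining this with a divisor-moment bound on $\tau_r^C$.
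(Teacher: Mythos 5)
Your treatment of part (a) is a legitimate alternative to what the paper does: you apply Cauchy--Schwarz to peel off $\alpha_m$ and then invoke the quadratic large sieve in its mean-value form, whereas the paper invokes Heath-Brown's Corollary~4 (the bilinear version) directly and removes the weight $n^{-(1-\epsilon)}$ by partial summation. Both routes land in the same place; since the target already carries an $M^\epsilon$, the $(MN)^\epsilon$ in Heath-Brown's inequality is harmless in the range $N \leq M^2$.

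Part (b) is where the proposal goes wrong. You correctly observe that Cauchy--Schwarz plus Heath-Brown leaves a residual $(MN)^\epsilon$, and you correctly note this must be removed, but neither of your two suggested remedies works. First, the $\epsilon$ in Heath-Brown's quadratic large sieve is not a cosmetic artefact of divisor-function bookkeeping: removing it (even down to a power of $\log$) is not known unconditionally, so ``tracking $\tau(MN)^{O(1)}$ carefully through the proof'' is not an available move. Second, P\'olya--Vinogradov applied to a weighted sum $\sum_n \beta_n (\frac{m}{n})$ requires structural information about $\beta_n$ (multiplicativity plus a useful convolution factorisation) beyond the hypothesis $|\beta_n| \ll \tau_r(n)^C/n$; the lemma is stated for arbitrary such sequences, and in any case a straightforward partial-summation argument does not reproduce the clean power of $\log N$ required.

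The key idea you are missing is much simpler: in the regime $N > M^2$, one should abandon Heath-Brown's refined estimate entirely and fall back on the \emph{classical} multiplicative large sieve for primitive characters (what the paper cites as \cite[Eqn.~(3)]{MR1347489}), which gives $\sum_{m \leq M} \left|\sum_{N \leq n < 2N} \beta_n \left(\frac{m}{n}\right)\right|^2 \ll (M^2 + N)\sum_n |\beta_n|^2$ with \emph{no} $\epsilon$ whatsoever. Heath-Brown's improvement of $M^2+N$ to $M+N$ is precisely what is needed in the intermediate range $M < N \leq M^2$, but once $N > M^2$ the classical factor $M^2+N$ is already $\asymp N$, and this $N$ cancels against the $N^{-1}$ coming from $\sum_{N \le n < 2N}|\beta_n|^2 \ll N^{-1}(\log N)^{r^{2C}-1}$. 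The cost of using the cruder inequality in this range is zero, and the gain is that the $(MN)^\epsilon$ disappears. This matters enormously for the application in Proposition~\ref{prop:trunc}: there $N$ runs dyadically up to $e^{Z^\epsilon}$, so any genuine factor of $N^\epsilon$ would be $\exp(\epsilon Z^\epsilon)$, obliterating the entire estimate rather than merely weakening it. Your separate treatment of square $m$, incidentally, is unnecessary under the paper's approach (the trivial character and imprimitive characters are handled within the standard large-sieve bookkeeping), though your computation $r^C - 1 \leq (r^{2C}-1)/2$ is correct.
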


\begin{proof}
\begin{enumerate}
    \item[(a)] This is a variation on \cite[Cor.\ 4]{MR1347489}. Let $\gamma_n = \beta_n n^{1-\epsilon}$, which by assumption is bounded. Then the sum in question is 
    \[
    \sum_{m \leq M} \sum_{N \leq n < 2N} \alpha_m \frac{\gamma_n}{n^{1-\epsilon}} \left(\frac{m}{n} \right).
    \]
    We write this as 
    \[
    \sum_{N \leq n < 2N} \frac{\delta_n}{n^{1-\epsilon}},
    \] where $\delta_n= \gamma_n \sum_{m \leq M} \alpha_m \left( \frac{m}{n} \right).$
    The bound now follows from partial summation using \cite[Cor.\ 4]{MR1347489} and the fact that $N^\epsilon \ll M^{2\epsilon}$.
    \item[(b)] Now suppose that $N > M^2$. By the Cauchy--Schwarz inequality, the left hand side can be bounded by 
    \[ \ll
    M^{\frac{1}{2}} \left( \sum_{m \leq M} \left \vert \sum_{N \leq n < 2N} \beta_n \left( \frac{m}{n} \right) \right \vert^2  \right)^{\frac{1}{2}}.
    \] We now apply the standard large sieve for primitive characters \cite[Eqn.\ (3)]{MR1347489} producing the bound
    \[\ll
    M^{\frac{1}{2}} \left(  (M^2+N)  \sum_{N \leq n < 2N}\vert  \beta_n \vert^2 \right)^{\frac{1}{2}}.
    \]
    This becomes 
    \[ \ll
    M^{\frac{1}{2}} \left(  \left(\frac{M^2}{N}+1\right) (\log N)^{r^{2C}-1}  \right)^{\frac{1}{2}},
    \] using the standard estimate $\sum_{N \leq n < 2N} \tau_r(n)^A \ll N (\log N)^{r^A-1}$. \qedhere
    \end{enumerate}
\end{proof}

We will also require a bound on the growth of $\psi$.

\begin{lemma}\label{lem:we need to talk about psi}
    For the multiplicative function $\psi$ associated to a Galois representation $\rho$ there are constants $C>0$ and $r>0$ only depending on $\dim \rho$ such that
    \[
    \psi(n) \ll \tau_r(n)^C.
    \]
\end{lemma}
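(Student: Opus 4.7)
The plan is to unpack the definition of $\psi$ in terms of the local Euler factors and then exploit the fact that a Galois representation factoring through a finite extension has eigenvalues of absolute value $1$.

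First, since $\rho$ factors through the finite Galois group $\Gal(L/\Q)$, for every prime $p$ the action of a Frobenius lift on the inertia invariants $V^{I_p}$ is given by an element of finite order. Hence its eigenvalues $\alpha_{p,1},\dots,\alpha_{p,d_p}$, where $d_p := \dim V^{I_p} \leq d := \dim \rho$, are roots of unity and in particular satisfy $|\alpha_{p,i}| = 1$. The local factor then reads
\[
L_{\Q,p}(s,\rho) = \prod_{i=1}^{d_p} \bigl(1-\alpha_{p,i} p^{-s}\bigr)^{-1} = \sum_{j \geq 0} h_j(\alpha_{p,1},\dots,\alpha_{p,d_p})\, p^{-js},
\]
where $h_j$ denotes the complete homogeneous symmetric polynomial of degree $j$. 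Comparing with the defining expansion of $\psi$ yields $\psi(p^j) = h_j(\alpha_{p,1},\dots,\alpha_{p,d_p})$.

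Next I would bound $|\psi(p^j)|$ using the triangle inequality. Since each $|\alpha_{p,i}| \leq 1$ and $h_j$ has non-negative integer coefficients, we obtain
\[
|\psi(p^j)| \leq h_j(1,\dots,1) = \binom{d_p + j - 1}{j} \leq \binom{d + j - 1}{j} = \tau_d(p^j),
\]
where the final equality is the standard count of factorisations of $p^j$ into $d$ ordered positive integer factors. Since both $|\psi|$ and $\tau_d$ are multiplicative, this pointwise bound at prime powers extends to all $n$, giving $|\psi(n)| \leq \tau_d(n)$ for every $n \geq 1$. The statement then follows with $r = d = \dim \rho$ and $C = 1$.

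There is no real obstacle here; the only subtle point is the bound $|\alpha_{p,i}| \leq 1$, which could potentially fail for a general Artin-type representation but is immediate here because $\rho$ is assumed to factor through a finite quotient of $\Gamma_\Q$, forcing all Frobenius eigenvalues to be roots of unity.
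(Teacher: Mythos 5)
Your proof is correct, and it takes a genuinely different route from the paper's. The paper observes that, since $\Frob_p$ acting on $V^{I_p}$ has finite order, its characteristic polynomial is a product of at most $\dim\rho$ cyclotomic polynomials of bounded degree; it then bounds the coefficients appearing in the expansion of $\Phi_n(X)^{-1}$ by a constant $C'$, obtains $\psi(p^k) \leq C'^{\dim\rho}\tau_{\dim\rho}(p^k)$, and absorbs the resulting $C'^{\omega(n)\dim\rho}$ factor via $2^{\omega(n)} \leq \tau_2(n)$, arriving at $r = \max\{2,\dim\rho\}$ with some unspecified $C$. You instead write $\psi(p^j) = h_j(\alpha_{p,1},\dots,\alpha_{p,d_p})$ directly in terms of the Frobenius eigenvalues and invoke $|\alpha_{p,i}| = 1$ plus the triangle inequality to get $|\psi(p^j)| \leq h_j(1,\dots,1) = \binom{d_p+j-1}{j} \leq \tau_{\dim\rho}(p^j)$. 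This sidesteps the cyclotomic-polynomial bookkeeping entirely and yields the cleaner, explicit bound $|\psi(n)| \leq \tau_{\dim\rho}(n)$, i.e.\ $C = 1$ and $r = \dim\rho$, which is sharper than the paper's statement and more than suffices for the downstream application in the quadratic large sieve. The only hypothesis you lean on — $|\alpha_{p,i}| = 1$ — is immediate since $\rho$ factors through a finite quotient, so no further justification is needed.
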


We need the bound in terms of divisor functions to be able to apply Lemma~\ref{lem:quadsieve}. For all other applications the bound $\psi(n) \ll n^\epsilon$ will suffice.

\begin{proof}
Consider all cyclotomic polynomials $\Phi_n$ of degree at most $\dim \rho$, of which there are finitely many. Since $(X^n-1)/\Phi_n(X)$ is a polynomial we see that the expansion of $\Phi_n(X)^{-1}$ has only finitely many different coefficients. So all their coefficients are bounded by an absolute constant $C'$ depending only on $\dim \rho$.

We will first bound $\psi(p^k)$ for a prime number $p$. Since $\Frob_p$ has finite order the characteristic polynomial is the product of at most $\dim \rho$ cyclotomic polynomials of degree at most $\dim \rho$. We conclude that $\psi(p^k) \leq C'^{\dim \rho} \tau_{\dim \rho}(p^k)$. Now we find $\psi(n) =\prod_{p^k \mid\!\mid n} \psi(p^k) \leq C'^{\omega(n)\dim \rho} \tau_{\dim \rho}(n)$. Using $2^{\omega(n)} \leq \tau_2(n)$ and taking $r=\max\{2,\dim \rho\}$ we find $\psi(n) \ll \tau_r(n)^{C}$ for $C >0$ only depending on $\dim \rho$.
\end{proof}

We now have all the necessary prerequisites to prove our on-average truncation result.

\begin{proof}[Proof of Proposition~\ref{prop:trunc}]
We start by observing that $L_{\Q,\bbS}(s,\rho \otimes \chi_\Delta)$ is the $L$-function attached to the Galois representation associated to $\rho$ twisted by the quadratic character $\chi_\Delta$. Writing the local factors $L_p(s, \rho) = \sum_{k\geq 0} \psi(p^k)p^{-ks}$, we can express the twisted local factor as $\sum_{k\geq 0} \psi(p^k)\chi_\Delta(p^k)p^{-ks}.$ Expanding the resulting Euler product gives the series appearing in the main term of the statement.

We break the sum in the main term into three parts
\begin{align*}
 \sum_{n \leq Z} \frac{\psi(n)\chi_\Delta(n)}{n} + \sum_{Z< n \leq e^{Z^\epsilon}} \frac{\psi(n)\chi_\Delta(n)}{n} + \sum_{n> e^{Z^\epsilon}} \frac{\psi(n)\chi_\Delta(n)}{n} &=: \Sigma_1 + \Sigma_2 + \Sigma_3.
\end{align*}
Substituting this into the original sum will yield 3 corresponding sums, $S_1, S_2$ and $S_3$ with 
\[
S_i = \sum_{\Delta \in \mathcal F^\circ(Y)}  c_\Delta \Sigma_i.
\]
The sum $S_1$ is the main term and hence it suffices to bound $S_2$ and $S_3$. We start with $S_3$ which can be bounded using partial summation. 
We write $$\sum_{n > e^{Z^\epsilon}} \frac{\psi(n) \chi_\Delta(n)}{n} = \sum_{n > e^{Z^\epsilon}} \frac{1}{n} (B_n - B_{n-1}),$$ where $B_n = \sum_{b \leq n} \psi(b) \chi_\Delta(b)$. 
We claim that
\[
B_n \ll_{\rho,A} \frac{n}{\log^A n} \vert \Delta \vert.
\] 
This leads to a bound of 
\[\sum_{n > e^{Z^{\epsilon}}} \frac{\psi(n) \chi_\Delta(n)}{n} \ll_{\rho,A} Z^{-A\epsilon } \vert \Delta \vert.\] Summing this over $\Delta$ gives an error of size $Z^{-A\epsilon}Y^{2}$ which is clearly sufficient if we choose $A$ large enough.

We first suppose that $\rho$ is irreducible.
The twisted representation $\rho \otimes \chi_\Delta$ factors through $\Gal(L/\Q) \times \Z/2\Z$. The conductor of this twisted representation is of size $O_\rho(\vert \Delta \vert)$. By the Brauer induction theorem (see \cite[Eqn.\ 5.108]{IK}), the Artin $L$-function attached to the twisted representation can be written as a product of integer powers of Hecke $L$-functions. Since we cannot rule out the possibility that the powers are negative, we bound using the approach of the prime number theorem for Hecke $L$-functions (e.g.\ \cite{Goldstein}). Since $\Delta \in \mathcal F^\circ(Y)$, the product $\psi \chi_\Delta$ is a non-principal Hecke character and thus we have the bound
\begin{equation}\label{eq:HeckePNT}
\sum_{b \leq n} \psi(n) \chi_\Delta(n) \ll_A \frac{n}{\log^A n} q(\rho \otimes \chi_\Delta) \ll_{\rho,A} \frac{n}{\log^A n} \vert \Delta \vert,
\end{equation}
as claimed.
Now suppose that $\rho$ is reducible and has $t$ irreducible subrepresentations. Then the function $\psi(n)$ is the convolution of $t$ multiplicative functions $\psi_1, \ldots, \psi_t$.
Moreover, since $K \in \mathcal F^\circ(Y)$, we have that $\psi_i \chi_\Delta$ is associated to a non-trivial irreducible representation for each $i =1, \ldots, t$.
Hence,
\begin{align*}
B_n 
&= \sum_{b \leq n} (\psi_1 \ast \ldots \ast \psi_t)(b)\chi_\Delta(b)
=
\sum_{b_1 \cdots b_t \leq n } \psi_1(b_1)\chi_\Delta(b_1) \cdots \psi_t(b_t)\chi_\Delta(b_t)\\
&=
\sum_{b_1 \cdots b_{t-1} \leq n } \psi_1(b_1)\chi_\Delta(b_1) \cdots \psi_{t-1}(b_{t-1})\chi_\Delta(b_{t-1})
\sum_{b_t \leq n/b_1\cdots b_{t-1}} \psi_t(b_t)\chi_\Delta(b_t).
\end{align*}
Applying the bound \eqref{eq:HeckePNT} to the innermost sum, we have the bound
\begin{align*}
B_n &\ll_{\rho,A} \vert \Delta \vert^{\deg \rho}\sum_{b_1 \cdots b_{t-1} \leq n } \frac{n}{b_1\cdots b_{t-1}\log(n/b_1\cdots b_{t-1})^{A+t}}\\
&\ll_{\rho,A}
\vert \Delta \vert^{\deg \rho} \frac{n}{\log^{A+t} n} 
\sum_{b_1 \cdots b_{t-1} \leq n } \frac{1}{b_1\cdots b_{t-1}} \left( 1 + O \left( \frac{\log(b_1\cdots b_{t-1})}{\log n} \right) \right),
\end{align*} from which the claim is immediate.

Finally, we estimate $S_2$, which is given by
\begin{align*}
&\sum_{\Delta \in \mathcal F^\circ(Y)}  c_\Delta   \sum_{Z< n \leq e^{Z^\epsilon}} \frac{\psi(n)\chi_\Delta(n)}{n}
\end{align*}
Since $\Delta$ is a fundamental discriminant, we may write it as either a squarefree integer $d\equiv 1 \mod 4$ or $4d$ for squarefree $d \equiv 2, 3 \mod 4$. We partition $\mathcal F^\circ(Y)$ into 4 disjoint sets
\[
\mathcal F^\circ(Y)
=
\mathcal{D}_{-} \cup \mathcal{D}_{+} \cup \mathcal{E}_{-} \cup \mathcal{E}_{+}.
\] Here $\mathcal{D}$ and $ \mathcal{E}$ correspond to a fundamental discriminant being either 1 or 0 mod 4, and the subscript indicates the sign of the fundamental discriminant.
We will bound the contribution from $\mathcal{D}_{+}$, the other bounds being mutatis mutandis.
We break the $n$ sum into dyadic intervals
\begin{align*}
\sum_{\substack{Z \leq N \leq e^{Z^\epsilon}\\ \text{dyadic}}} \sum_{\Delta \in \mathcal{D}_+}  c_\Delta \sum_{N \leq n < 2N} \frac{\psi(n)\chi_\Delta(n)}{n}
=
\sum_{\substack{Z \leq N \leq e^{Z^\epsilon}\\ \text{dyadic}}} \sum_{\substack{0<d \leq Y \\ d \equiv 1 \text{mod} 4\\ d \in \mathcal Q(Y)}} \mu^2(d)  c_d \sum_{N \leq n < 2N} \frac{\psi(n)\chi_d(n)}{n}.
\end{align*}
Set $\beta_n =\frac{\psi(n)}{n}$ and $\alpha_d$ to be $\mu^2(d) c_d$ multiplied by the indicator function of being congruent to 1 mod 4 and the indicator function of belonging to the set $\mathcal F^\circ(Y)$. Now we can apply Lemma~\ref{lem:quadsieve}. Observe that by definition $\alpha_d$ is absolutely bounded and $\vert \beta_n \vert \ll \frac{\tau_{r}(n)^C}{n}$ with $r$ as in Lemma~\ref{lem:we need to talk about psi}.
Hence, our sum is bounded by\[
\sum_{\substack{Z < N \leq Y^2\\ \text{dyadic} }} \left( Y^{1 + \epsilon} N^{-\frac{1}{2}} + Y^{\frac{1}{2}+\epsilon} \right)
+
\sum_{\substack{Y^2 < N \leq e^{Z^\epsilon}\\ \text{dyadic} }} Y^\frac{1}{2} \left(\log N\right)^{\frac{r^{2C}-1}{2}}.
\]
The first of these terms is at most $Y^{1+\epsilon}Z^{- \frac{1}{2}} + Y^{\frac{1}{2}+\epsilon}$,
and the latter is 
\[
Y^\frac{1}{2}\sum_{\substack{Z < N \leq e^{Z^\epsilon}\\ \text{dyadic} }} (\log N)^\frac{r^{2C}-1}{2} \ll_\epsilon Y^\frac{1}{2} Z^{\frac{r^{2C}-1}{2}\epsilon}.
\]
This suffices since $r$ depends only on $\rho$. 
\end{proof}

\subsection{Functions arising from Euler factors}\label{ss:Eulerfactors}
Our ultimate aim in the sequel is to sum an Euler product defined in quadratic fields over all such quadratic fields.
We define here some terms  which will appear in these Euler products.

\begin{defn}
For $p \notin \bbS$ and $w_p$, $w_{p^2}$, $\lambda_p$ and $\bar{\lambda}_p$ as defined in Definition~\ref{defn:local convergence factors and Tamagawa numbers} we define
\[
f_p = \lambda^{-1}_p w_p \quad \text{ and } \quad f_{p^2} = \bar{\lambda}^{-1}_p \lambda^{-1}_p w_{p^2}.
\]
\end{defn}

Recall that $X$ is a weak Fano variety over $\Q$ of dimension at least 2. The functions $f_p$ and $f_{p^2}$ arise naturally within $\tau_\bbS(X_K)$.  
Indeed, if $p$ is a prime which ramifies in $K$, then the Euler factor in $\tau_\bbS(X_K)$ is simply $f_p$. If the prime is inert, then it is $f_{p^2}$ and if it is split, the factor equals $f_p^2$. Hence, for unramified primes, the Euler factor in $\tau_\bbS(X_K)$ at $p$ can be expressed as
\begin{equation}\label{eq:localfactor}
\frac{f_p^2+f_{p^2}}2 + \chi_K(p) \frac{f_p^2-f_{p^2}}2.
\end{equation}
In manipulating $\tau_\bbS(X_K)$, we will encounter several multiplicative functions created from these $f_{p}$ factors.

\begin{defn}\label{def:eulerfunctions}
We define the following functions
    \begin{itemize}
\item[(i)] $f(d) = \prod^S_{p \mid d} f_p$;
\item[(ii)] $F(a) = \prod^S_{p \mid a} \frac{f_p^2+f_{p^2}}{2}$;
\item[(iii)] $G(b) = \prod^S_{p \mid b} \frac{f_p^2-f_{p^2}}{2}$;
\item[(iv)] $R(b)= \frac{G(b)}{F(b)}$, and
\item[(v)] $g(d) =\prod^S_{p \mid d} \frac{2f_p}{f_p^2+f_{p^2}}$.
\end{itemize}
\end{defn}

\begin{lemma}\label{L:props}
The following properties all hold.
\begin{enumerate}
    \item[(a)] All the functions in Definition~\ref{def:eulerfunctions} are multiplicative.
    \item[(b)] We have $f_{p} = 1 + O\left(\frac{1}{p^{3/2}} \right)$ and $f_{p^2} = 1 + O\left(\frac{1}{p^{3}} \right)$.
    \item[(c)] The product $\prod_p f_p$ converges.
    \item[(d)] The function $g$ is 1 on primes in $\bbS$ and otherwise $g(p) = 1 + O \left( \frac{1}{p^{3/2}}\right)$.
    \item[(e)] There exists an absolute constant $A$ such that $\vert g(n) \vert \leq A$ for all $n$.
    \item[(f)] For squarefree $n$, we have $R(n) \ll \frac{1}{n^{3/2}}.$
\end{enumerate}
\end{lemma}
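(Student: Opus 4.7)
The plan is to verify each claim by unpacking the definitions; essentially all of the work is concentrated in part (b), after which the remaining items fall out. Part (a) is immediate: each of $f$, $F$, $G$ is a product indexed by the distinct prime divisors of its argument (with empty product equal to $1$), which is the standard construction of a multiplicative function, and $R = G/F$ and $g = 2f/F$ are multiplicative as ratios of multiplicative functions.

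For the analytic input in part (b), I would use that for $p \notin \bbS$ the variety $X$ has good reduction, so Deligne's proof of the Weil conjectures applies. Combined with the vanishing $H^1(\Xbar, \Q_\ell) = 0$ (from rational connectedness) and hence, by Poincar\'e duality, $H^{2\dim X - 1}(\Xbar, \Q_\ell) = 0$, this yields
\[
\#\calX(\F_q) = q^{\dim X} + q^{\dim X - 1} \tr(\Frob_q \mid \Pic \Xbar) + O(q^{\dim X - 3/2})
\]
for $q = p$ and $q = p^2$. Writing $a_p := \tr(\Frob_p \mid \Pic \Xbar)$, this gives $w_p = 1 + a_p/p + O(p^{-3/2})$, while the definition of $\lambda_p$ immediately yields $\lambda_p^{-1} = \det(1 - p^{-1}\Frob_p \mid \Pic \Xbar) = 1 - a_p/p + O(p^{-2})$. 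The leading $a_p/p$ corrections cancel, producing $f_p = \lambda_p^{-1} w_p = 1 + O(p^{-3/2})$. Invoking the factorisation of Proposition~\ref{prop:localfactors} in the inert case gives $(\bar{\lambda}_p \lambda_p)^{-1} = \det(1 - p^{-2}\Frob_p^2 \mid \Pic \Xbar) = 1 - \tr(\Frob_p^2 \mid \Pic \Xbar)/p^2 + O(p^{-4})$, which combined with $w_{p^2} = 1 + \tr(\Frob_p^2 \mid \Pic \Xbar)/p^2 + O(p^{-3})$ produces $f_{p^2} = 1 + O(p^{-3})$ by the same cancellation phenomenon.

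The remaining parts follow mechanically from (b). Part (c) is immediate since $\sum_p|f_p - 1|$ converges absolutely. For (d), expanding $g(p) = 2f_p/(f_p^2 + f_{p^2})$ using $f_p^2 = 1 + O(p^{-3/2})$ and $f_{p^2} = 1 + O(p^{-3})$ gives numerator and denominator both equal to $2 + O(p^{-3/2})$, so $g(p) = 1 + O(p^{-3/2})$ for $p \notin \bbS$; while $g(p) = 1$ for $p \in \bbS$ by the empty product convention. For (e), note that $g$ depends only on the radical of its argument, so it suffices to consider squarefree $n$, and then $|g(n)| \leq \prod_{p \mid n,\, p \notin \bbS} |g(p)| \leq \prod_{p \notin \bbS} (1 + |g(p) - 1|)$, which is a finite absolute constant $A$ by the absolute convergence of the infinite product.

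For (f), the same expansions yield $f_p^2 - f_{p^2} = O(p^{-3/2})$ and $f_p^2 + f_{p^2} = 2 + O(p^{-3/2})$, so $|R_p| = O(p^{-3/2})$ at each $p \notin \bbS$. For squarefree $n$, multiplicativity gives $|R(n)| \leq \prod_{p \mid n,\, p \notin \bbS} |R_p|$, and a standard argument (splitting at a large but fixed threshold beyond which $|R_p| p^{3/2}$ is suitably controlled, and absorbing the finite contribution of small primes and primes in $\bbS$ into an absolute constant) yields $|R(n)| \ll n^{-3/2}$. The only non-trivial obstacle throughout is the cancellation of the $a_p/p$ term in (b), which forces us to use the extra precision of the Weil conjectures rather than crude Lang--Weil estimates; once this is established, every subsequent assertion is a routine calculation.
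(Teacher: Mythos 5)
Your proof is correct, and parts (a), (c), (d), (e), (f) follow the paper's proof essentially verbatim (you merely flesh out (e), which the paper dispatches in one line). The genuine difference is in part (b): the paper simply \emph{cites} Peyre [MR1340296, p.\,117] for the estimates $\lambda_p^{-1}\#\calX(\F_p)/p^{\dim X} = 1 + O(p^{-3/2})$ and $\bar\lambda_p^{-1}\lambda_p^{-1}\#\calX(\F_{p^2})/p^{2\dim X} = 1 + O(p^{-3})$, whereas you \emph{re-derive} them from the Grothendieck--Lefschetz trace formula and Deligne's bounds. Your derivation is sound and has the virtue of making the crucial cancellation of the $\tr(\Frob_p \mid \Pic\Xbar)/p$ terms visible rather than hidden in a reference; but it silently leans on a couple of comparison facts worth surfacing: that under good reduction $\Pic\Xbar$ specialises to $\Pic\calX_{\bar\F_p}$ as a Frobenius module, and that the $H^{2\dim X-2}$ contribution to the trace formula really does equal $q^{\dim X-1}\tr(\Frob_q\mid\Pic\Xbar)$, which by Poincar\'e duality amounts to $H^2(\Xbar,\Q_\ell(1))\cong\Pic\Xbar\otimes\Q_\ell$ --- true here because $\Br\Xbar = 0$ for a weak Fano, but not a consequence of $H^1 = 0$ alone.

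A small caveat you share with the paper in part (f): the step from $|R(p)| \ll p^{-3/2}$ to $|R(n)| \ll n^{-3/2}$ by ``multiplicativity'' requires the implicit prime constant to be $\leq 1$, since otherwise $\prod_{p\mid n}C$ can grow like $n^\epsilon$. Your phrase ``$|R_p|p^{3/2}$ is suitably controlled beyond a threshold'' is exactly the assumption that needs justifying; it is not automatic from the stated $O(p^{-3/2})$ bound when $\dim X \geq 3$ (for surfaces one in fact has $|R(p)| \ll p^{-2}$, which fixes it). The honest statement is $|R(n)|\ll_\epsilon n^{-3/2+\epsilon}$, which suffices for every downstream use in the paper. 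The paper's one-line ``the claim follows by multiplicativity'' has the same gap, so this is not a defect of your argument specifically.
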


\begin{proof}
    \begin{enumerate}
        \item[(a)] This is immediate from the definition as a product.
        \item[(b)] Since $p \notin \bbS$, we have that $w_p = \frac{\#\calX(\F_p)}{p^{\dim X}}$ and $w_{p^2} = \frac{\#\calX(\F_{p^2})}{p^{2\dim X}}$. Peyre \cite[p.\ 117]{MR1340296}) then provides the expression
        \[
        \lambda_p^{-1} \frac{\#\calX(\F_p)}{p^{\dim X}} = 1 + O\left( \frac{1}{p^{3/2}} \right) \quad \text{ and } \quad \bar\lambda_p^{-1}\lambda_p^{-1} \frac{\#\calX(\F_{p^2})}{p^{2\dim X}} = 1 + O\left( \frac{1}{p^3}\right).
        \]
        \item[(c)] This follows immediately from the previous part. Indeed, the correction factors $\lambda^{-1}_p$ are introduced precisely to ensure this convergence.
        \item[(d)] By the definition of $\prodS$, the function $g$ takes the value 1 on primes in $\bbS$. For $p \not \in \bbS$, we have 
        \[
        g(p) = \frac{2f_p}{f_p^2 + f_{p^2}}
        = \frac{2\left(1 + O\left( \frac{1}{p^{3/2}} \right)\right)}{\left(1 + O\left( \frac{1}{p^{3/2}} \right)\right)^2 + 1 + O\left( \frac{1}{p^{3}} \right)}
        =
        \frac{2 + O\left( \frac{1}{p^{3/2}} \right)}{2 + O\left( \frac{1}{p^{3/2}} \right)}.
        \]
        \item[(e)] This follows immediately from the previous part.
        \item[(f)] Similarly, we have
        \[
        R(p) = \frac{f_p^2 - f_{p^2}}{f_p^2 + f_{p^2}}
        =
        \frac{\left(1 + O\left( \frac{1}{p^{3/2}} \right)\right)^2 - 1 + O\left( \frac{1}{p^{3}} \right)}{\left(1 + O\left( \frac{1}{p^{3/2}} \right)\right)^2 + 1 + O\left( \frac{1}{p^{3}} \right)}
        = \frac{O\left( \frac{1}{p^{3/2}}\right)}{2 + O\left( \frac{1}{p^{3/2}}\right)}.
        \] We conclude that $R(p) \ll p^{-3/2}$ and the claim follows by multiplicativity. \qedhere
    \end{enumerate}
\end{proof}

\subsection{Summing Euler products over quadratic fields}
We are now ready to prove our main result on summing Tamagawa numbers.

\begin{theorem}\label{thm:summingc}
Let $X/\Q$ be a weak Fano variety of dimension at least $2$.
\begin{enumerate}
    \item[(a)] If $\dim X \geq 3$ then the sum $\sum  \limits_{K \in \calF^\circ(Y)}\tau(X_K)$ converges absolutely.
    \item[(b)] If $X$ is a surface, then as $Y \to \infty$, we have
\[
\sum_{K \in \mathcal F^\circ(Y)} \tau(X_K) = \tau(\Hilb^2 X) \log Y + O(1).
\]
\end{enumerate}
\end{theorem}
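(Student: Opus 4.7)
The plan is to start from the explicit formula for $\tau(X_K)$ in Theorem~\ref{thm:Tamagawa of XK} and sum the contributions one place at a time. We partition the sum over $K \in \calF^\circ(Y)$ according to the splitting type $\Xi = (\xi_v)_v \in \prod_{v \in \bbS} \Q_v^\times/\Q_v^{\times,2}$ at the bad places; Proposition~\ref{prop:independenceatbadprimes} ensures that the factor $\kappa_\Xi := \lim_{s \to 1}\bigl[(s-1)^{\rho(X)} L_{\Q,\bbS}(s,\Pic\Xbar)\bigr]\, w_\Xi(X)$ is constant on each $\calF_\Xi^\circ$. It therefore suffices to analyse
\[
T_\Xi(Y) := \sum_{K \in \calF_\Xi^\circ(Y)} |\Delta_K|^{-\dim X/2}\, \tau_\bbS(X_K)
\]
for each $\Xi$. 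Using the uniform expression \eqref{eq:localfactor} at each good prime together with Definition~\ref{def:eulerfunctions}, we rewrite
\[
\tau_\bbS(X_K) = L_{\Q,\bbS}(1,\Pic\Xbar \otimes \chi_K)\cdot g(n_K)\cdot P \cdot \sum_{\substack{b \text{ squarefree}\\ \gcd(b, S n_K) = 1}} \chi_K(b) R(b),
\]
where $n_K$ is the product of primes outside $\bbS$ that ramify in $K$, and $P := \prodS_p (f_p^2+f_{p^2})/2$ converges as an absolute constant by Lemma~\ref{L:props}. The inner sum over $b$ is absolutely convergent since $R(b) \ll b^{-3/2}$.

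For part (a), when $\dim X \geq 3$, a crude bound suffices: by Brauer induction and the prime number theorem for Hecke $L$-functions, $L_{\Q,\bbS}(1, \Pic\Xbar \otimes \chi_K) \ll (\log |\Delta_K|)^{O(1)}$ uniformly for $K \in \calF^\circ$, while the remaining Euler product is uniformly bounded. Hence $\tau(X_K) \ll |\Delta_K|^{-\dim X/2}(\log |\Delta_K|)^{O(1)}$, and combined with $\#\calF^\circ(Y) \ll Y$, partial summation yields absolute convergence whenever $\dim X/2 > 1$.

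For part (b), with $\dim X = 2$, we use the finer tools of \textsection\ref{ss:mult} and \textsection\ref{ss:Lfunctions}. We apply Proposition~\ref{prop:trunc} with $\rho = \Pic\Xbar$ and $Z = Y^{\theta}$ for some $\tfrac12 < \theta < 1$, noting that the coefficient $|\Delta_K|^{-1}g(n_K) P \sum_b \chi_K(b) R(b)$ is absolutely bounded once the $b$-series is truncated at a suitable (slowly growing) threshold; the tail of the $b$-series is handled by the crude $L$-value bound from part (a). After swapping the order of summation, we are reduced to evaluating, for bounded $m,b$ and $Y_0 \leq Y$,
\[
\sum_{K \in \calF_\Xi^\circ(Y_0)} |\Delta_K|^{-1}\chi_K(mb)\, g(n_K).
\]
Peeling off $|\Delta_K|^{-1}$ by partial summation and applying Proposition~\ref{prop:discsum} (using that $g(\Delta_K) = g(n_K)$ since $g \equiv 1$ on primes in $\bbS$) produces asymptotics for each such sum. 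The main term arises only when $mb$ is a perfect square; partial summation against $|\Delta_K|^{-1}$ then converts the linear-in-$Y$ main term of Proposition~\ref{prop:discsum} into $\log Y$ times a constant. Summing over $m, b$ with $mb = \square$, these constants reassemble, prime-by-prime, into the formula for $\tau(\Hilb^2 X)$ given in Theorem~\ref{thm:Tamagawa of Hilb2}: the coefficient $\delta_\Xi \prod_{p \in \bbS}(1-1/p) = \widetilde\delta_\Xi$ from Proposition~\ref{prop:discsum} combines with $\kappa_\Xi$ to yield $\widetilde\delta_\Xi w_\Xi(X) \cdot \lim_{s\to 1}[(s-1)^{\rho(X)}L_{\Q,\bbS}(s,\Pic\Xbar)]$, while the square-$mb$ contributions weighted by the Euler factors assemble into $\prodS_p \widetilde\lambda_p^{-1}\widetilde w_p$.

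The main obstacle will be the bookkeeping of errors and the precise identification of the leading constant. Ensuring that the truncation error in Proposition~\ref{prop:trunc}, the tail of the $b$-series, and the power-saving error in Proposition~\ref{prop:discsum} all combine to $o(\log Y)$ after being weighted by $|\Delta_K|^{-1}$ and summed requires a coordinated choice of the truncation thresholds (the $Y^{1/2+\epsilon}$ type savings available in both Propositions~\ref{prop:trunc} and \ref{prop:discsum} give ample room). More delicate still is the prime-by-prime reassembly: one has to verify that the values of $\psi(n)/n$ (from the $L$-truncation) and $R(b)$ (from the $b$-series), weighted by $g(n_K)P$ and restricted to $nb = \square$, recover exactly the local Tamagawa numbers $\widetilde w_p = \tfrac{w_p}{p} + \tfrac{w_p^2+w_{p^2}}{2}$ and convergence factors $\widetilde\lambda_p = (1-1/p)^{-1}\lambda_p$ of $\Hilb^2X$ at every good prime.
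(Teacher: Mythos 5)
Your overall architecture — partitioning by splitting type $\Xi$, factoring $\tau(X_K) = \kappa_\Xi\,|\Delta_K|^{-\dim X/2}\,\tau_\bbS(X_K)$, rewriting $\tau_\bbS(X_K)$ via the uniform local factor \eqref{eq:localfactor} to separate out $L_{\Q,\bbS}(1,\Pic\Xbar\otimes\chi_K)$, $g$, and the $R(b)$-series, and finishing with Propositions~\ref{prop:trunc} and \ref{prop:discsum} — is exactly the paper's blueprint. Your treatment of part (a), via a crude pointwise bound $L(1,\Pic\Xbar\otimes\chi_K) \ll |\Delta_K|^\epsilon$ (Brauer induction plus Siegel for the negative exponents) and counting $\ll Y$ fields, is a genuinely simpler alternative to the paper's route, which derives part (a) and part (b) from the same asymptotic $\sum_K \tau_\bbS(X_K) = cY + O(Y^{2/3+\epsilon})$ followed by partial summation. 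Your approach buys you a one-line convergence proof for $\dim X \geq 3$ at the cost of a standalone pointwise $L$-value bound, which is fine.

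However, your part (b) has a fatal order-of-operations error. You propose to apply Proposition~\ref{prop:trunc} taking the coefficient $c_\Delta$ to \emph{include} the weight $|\Delta_K|^{-1}$ (together with $g(n_K)$, $P$, and the $b$-series). This does satisfy the hypothesis that $c_\Delta$ be absolutely bounded, but the error term in Proposition~\ref{prop:trunc} is $O_{\bbS,\rho,\epsilon}(Y^{1/2+\epsilon}Z^\epsilon + Y^{1+\epsilon}Z^{-1/2+\epsilon})$ and does \emph{not} shrink when $c_\Delta$ is small. With $Z = Y^\theta$, $\tfrac12 < \theta < 1$, this error is $\gg Y^{1/2}$, whereas the quantity you are computing — and the main term you hope to extract — is $\asymp \log Y$. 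The error therefore swamps the main term completely, and the asymptotic you claim cannot be read off. The paper avoids this precisely by keeping the discriminant weight outside throughout: it first establishes $\sum_{K \in \calF_\Xi^\circ(Y)} \tau_\bbS(X_K) = cY + O(Y^{2/3+\epsilon})$ with no $|\Delta_K|^{-1}$ in sight, and only \emph{then} performs a single partial summation against $|\Delta_K|^{-\dim X/2}$ at the very end, under which the $O(Y^{2/3+\epsilon})$ error is absorbed into $O(1)$. Your ``peeling off $|\Delta_K|^{-1}$ by partial summation'' for each $(m,b)$ comes too late — by that point Proposition~\ref{prop:trunc}'s error has already been incurred at full strength. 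You must restructure part (b) so that the $|\Delta_K|^{-\dim X/2}$ weight is reintroduced by partial summation only after the unweighted asymptotic for $\sum_K \tau_\bbS(X_K)$ has been proved.
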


We remark that the sum can be extended to all quadratic fields of bounded discriminant with the same result, as there are only finitely many quadratic fields not linearly disjoint to $L$. We have opted to exclude them as it is possible that $N_K(X,B)$ possible has a larger order of magnitude for $K \notin \calK(X)$ by the discussion in \textsection \ref{ss:accumulating subsets}.

We first start by specifying the local splitting behaviour of the extension at the places in $\bbS$. Theorem~\ref{thm:summingc} follows by summing over all possible such choices. Recall that in the case $X$ is a surface we have
\[
\tau(\Hilb^2 X)= \lim_{s\to 1}\left[ (s-1)^{\rho(X)}L_{\Q,\bbS}(s,\Pic \Xbar)\right]
\left[\sum_\Xi  \widetilde \delta_\Xi w_\Xi(X)\right] \left[\prodS_p \widetilde\lambda^{-1}_p \widetilde{w}_p\right],
\]
so the theorem follows from the following proposition.

\begin{prop}
Let $\bbS$ denote the set of places of $\Q$ defined in Definition~\ref{defn:bad places for X}, and fix $\Xi \in \prod_{v \in \bbS} \Q_v^\times/\Q_v^{\times, 2}$.
\begin{itemize}
    \item[(a)] If $\dim X > 2$ then
    \[
    \sum_{K \in \mathcal F^\circ_\Xi(Y)} \tau(X_K)
    \]
    converges absolutely.
    \item[(b)] If $\dim X = 2$ then
    \[
\sum_{K \in \mathcal F^\circ_\Xi(Y)} \tau(X_K) \sim \lim_{s\to 1}\left[ (s-1)^{\rho(X)}L_{\Q,\bbS}(s,\Pic \Xbar)\right] \left[\widetilde \delta_\Xi w_\Xi(X) \prodS_p \widetilde\lambda^{-1}_p\widetilde w_p\right] \log Y
\]
as $Y \to \infty$.
    
\end{itemize}
\end{prop}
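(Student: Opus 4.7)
The plan is to apply Theorem~\ref{thm:Tamagawa of XK} and reformulate the local factors using Definition~\ref{def:eulerfunctions}. Letting $d$ denote the part of $|\Delta_K|$ coprime to $S$, which is squarefree for $K\in\calF^\circ_\Xi$, the relation $g(d)=f(d)/F(d)$ together with the absolutely convergent Euler product $\mathcal F_0:=\prodS_p F(p)$ (Lemma~\ref{L:props}(c)) and the absolutely convergent expansion
\[
\prodS_{p\nmid d}\bigl(1+\chi_K(p)R(p)\bigr)=\sumS_{\substack{b\text{ sqfree}\\\gcd(b,d)=1}}\mu^2(b)R(b)\chi_K(b)
\]
(absolutely convergent by Lemma~\ref{L:props}(f)) yield
\[
\tau(X_K)=\kappa_\Xi\,\mathcal F_0\,|\Delta_K|^{-\dim X/2}\,g(d)\,L_{\Q,\bbS}(1,\Pic\Xbar\otimes\chi_K)\sumS_{\substack{b\text{ sqfree}\\\gcd(b,d)=1}}\mu^2(b)R(b)\chi_K(b),
\]
where $\kappa_\Xi=\lim_{s\to 1}[(s-1)^{\rho(X)}L_{\Q,\bbS}(s,\Pic\Xbar)]\cdot w_\Xi(X)$. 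For part~(a), combining the boundedness of $g$ (Lemma~\ref{L:props}(e)), of $\mathcal F_0$ and of the $b$-sum with the convexity bound $L_{\Q,\bbS}(1,\Pic\Xbar\otimes\chi_K)\ll_\epsilon|\Delta_K|^\epsilon$ gives $\tau(X_K)\ll_\epsilon|\Delta_K|^{-\dim X/2+\epsilon}$; summing over $K\in\calF^\circ_\Xi$ (with $\#\calF^\circ_\Xi(t)\ll t$) then yields absolute convergence as $Y\to\infty$ whenever $\dim X\geq 3$.

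For part~(b), I would introduce the weighted partial sum $A(t):=\sum_{K\in\calF^\circ_\Xi(t)}\tau(X_K)|\Delta_K|$, so that Abel summation gives
\[
\sum_{K\in\calF^\circ_\Xi(Y)}\tau(X_K)=\frac{A(Y)}{Y}+\int_1^Y\frac{A(t)}{t^2}\,dt.
\]
It would then suffice to prove $A(t)=Ct+O(t^{3/4+\epsilon})$ with $C$ the Peyre factor of the statement; the integral then contributes $C\log Y+O(1)$. To analyse $A(t)$, apply Proposition~\ref{prop:trunc} with $Z=t^{1+\epsilon}$ to replace each $L$-value by $\sumS_{n\leq Z}\psi(n)\chi_K(n)/n$: the associated coefficients are absolutely bounded by Lemma~\ref{L:props}(e),(f), and the truncation error is $O(t^{1/2+\epsilon})$. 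After swapping the order of summation, the innermost sum becomes
\[
\sum_{K\in\calF^\circ_\Xi(t)}\widetilde{g}_b(|\Delta_K|)\chi_K(nb), \qquad \widetilde{g}_b(d):=g(d)[\gcd(b,d)=1],
\]
to which I would apply a routine adaptation of Proposition~\ref{prop:discsum}: the Selberg--Delange argument goes through for $\widetilde{g}_b$, the vanishing Euler factors at primes dividing $b$ merely modifying the main term constant by a factor $\prodS_{p\mid b}(1+g(p)/p)^{-1}$ and introducing an error of size $(nbS)^\epsilon t^{1/2+\epsilon}$ for $nb$ a square and $(nb)^{1/4}\sqrt{\log(4nbS)}\,b^\epsilon\sqrt{t}$ otherwise. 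Each such error, weighted by the convergent sums $\psi(n)/n$ and $\mu^2(b)R(b)\ll b^{-3/2}$, contributes $O(t^{3/4+\epsilon})$ to $A(t)$.

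The leading contribution to $A(t)$ arises from pairs $(n,b)$ with $nb$ a square, parametrised by $n=a^2b$. Using the identities $\sum_{k\geq 0}\psi(p^{2k})/p^{2k}=(\lambda_p+\bar\lambda_p)/2$ and $\sum_{k\geq 0}\psi(p^{2k+1})/p^{2k+1}=(\lambda_p-\bar\lambda_p)/2$ together with the explicit formulae for $g(p)$ and $R(p)$ from Definition~\ref{def:eulerfunctions}, the triple sum factors as an Euler product over $p\notin\bbS$, whose local factor simplifies to
\[
E_p=\frac{\lambda_p f_p^2+\bar\lambda_p f_{p^2}+2f_p/p}{f_p^2+f_{p^2}+2f_p/p}.
\]
Multiplying this local factor by the companion constant $F(p)(1-p^{-1})(1+g(p)/p)$ assembled from $\mathcal F_0$ and the main-term constant of the adapted Proposition~\ref{prop:discsum} collapses to $\widetilde\lambda_p^{-1}\widetilde{w}_p$, by direct substitution of $g(p)=2f_p/(f_p^2+f_{p^2})$ and cancelling the denominator of $E_p$. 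Taking the product over $p\notin\bbS$ and combining with the definitions of $\kappa_\Xi$ and $\widetilde\delta_\Xi$ recovers exactly the stated leading constant. The two main obstacles are: (i) adapting Proposition~\ref{prop:discsum} to $\widetilde{g}_b$ with only polynomial (indeed $b^\epsilon$) dependence on $b$ in the error term, so that weighting by $R(b)\ll b^{-3/2}$ yields a summable contribution; and (ii) verifying the algebraic identity at each good prime relating our Euler-product output to the target local factor $\widetilde\lambda_p^{-1}\widetilde{w}_p$ of Theorem~\ref{thm:Tamagawa of Hilb2}.
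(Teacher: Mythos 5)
Your proposal follows essentially the same route as the paper: factor $\tau(X_K) = \kappa_\Xi\,|\Delta_K|^{-\dim X/2}\,\tau_\bbS(X_K)$, expand $\tau_\bbS(X_K)$ via the multiplicative functions of Definition~\ref{def:eulerfunctions}, truncate the twisted $L$-function with Proposition~\ref{prop:trunc}, interchange sums, apply Proposition~\ref{prop:discsum}, sum the resulting Euler products, and finally deal with the discriminant power by Abel summation. The algebraic identity you flag as obstacle~(ii) does check out: with $g(p)F(p)=f_p$, $F(p)=(f_p^2+f_{p^2})/2$ and $G(p)=(f_p^2-f_{p^2})/2$, the product
\[
\left(1-\tfrac1p\right)\Bigl[F(p)\bigl(1+\tfrac{g(p)}p\bigr)+\bigl(\tfrac{\lambda_p+\bar\lambda_p}2-1\bigr)F(p)+\tfrac{\lambda_p-\bar\lambda_p}2\,G(p)\Bigr]
=\left(1-\tfrac1p\right)\lambda_p^{-1}\left(\tfrac{w_p}{p}+\tfrac{w_p^2+w_{p^2}}2\right)=\widetilde\lambda_p^{-1}\widetilde w_p,
\]
which is exactly what the paper computes.

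Your obstacle~(i), however, is a non-issue: the modified function $\widetilde g_b$ is not needed. Because $b$ is squarefree and divides $m=nb$, any prime $p\mid b$ with $p\mid\Delta_K$ makes $\chi_K(b)=0$, hence $\chi_K(nb)=0$. The constraint $\gcd(b,\Delta_K)=1$ is therefore absorbed for free by the character, and the inner sum is literally $\sum_{K}\chi_K(nb)g(\Delta_K)$, to which Proposition~\ref{prop:discsum} applies directly with $m=nb$. Indeed, in the main term (where $nb=u^2$) one has $b\mid u$, so the $p\mid b$ primes are already among the $p\mid m$ Euler factors of that proposition, and the ``adaptation factor'' $\prodS_{p\mid b}(1+g(p)/p)^{-1}$ you propose would not match the actual local structure; it only re-enters later, harmlessly, when the $u$-sum is reorganised into an Euler product (as in the paper's manipulation with $\prodS_{p\mid u}(1+g(p)/p)^{-1}$). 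Dropping $\widetilde g_b$ and using the proposition as stated is both cleaner and exactly what the paper does.

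Two further minor points of comparison. For part~(a), the paper deduces convergence from the same asymptotic $\sum_{|\Delta_K|\leq t}\tau_\bbS(X_K)=Ct+O(t^{2/3+\epsilon})$ plus partial summation, so it never needs a pointwise bound on $L_{\Q,\bbS}(1,\Pic\Xbar\otimes\chi_K)$. Your route via a ``convexity bound'' at $s=1$ also works, but for a genuine Artin $L$-function that bound is not quite elementary: after Brauer induction one must control possibly negative powers of Hecke $L$-values via Siegel-type lower bounds, making the estimate ineffective. That is acceptable here since only $O(1)$ per field is needed, but the paper's uniform treatment of (a) and (b) avoids this subtlety altogether. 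Finally, your choice $Z=t^{1+\epsilon}$ yields the error $O(t^{3/4+\epsilon})$ in $A(t)$, whereas the paper optimises at $Z=t^{2/3}$ to obtain $O(t^{2/3+\epsilon})$; both suffice for the stated asymptotic, so this is purely a matter of polish.
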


\begin{proof}
As discussed in the beginning of this section, the Tamagawa number  $\tau(X_K)$ may be factored as the product $\kappa_\Xi \vert \Delta_{K/\Q}\vert^{-\frac{\dim X}{2}}\tau_\bbS(X_K)$.
We will first determine $\sum_{K \in \mathcal F^\circ_\Xi(Y)} \tau_\bbS(X_K)$, then trivially incorporate $\kappa_\Xi$ and then use partial summation to work in the remaining factor $\vert \Delta_K \vert^{-1}$.

In \eqref{eq:localfactor}, we deduced an expression for the Euler factor at an unramified prime $p$. Plugging this expression, and the functions defined in Definition \ref{def:eulerfunctions}, into the expression for $\tau_\bbS(X_K)$ we find 
\begin{align*}
\sum_{K \in \mathcal F^\circ_\Xi(Y)} \tau_\bbS(X_K) & = \sum_{K \in \mathcal F^\circ_\Xi(Y)} L_{\Q,\bbS}(1,\Pic \Xbar \otimes \chi_K) \prodS_{p \mid \Delta_K}f_p \prodS_{p \nmid \Delta_K} \left[F(p) +\chi_K(p)G(p)\right] \\
& = \sum_{K \in \mathcal F^\circ_\Xi(Y)} L_{\Q,\bbS}(1,\Pic \Xbar \otimes \chi_K)  \prodS_p F(p) \prodS_{p \mid \Delta_K}\frac{f_p}{F(p)} \prodS_{p \nmid \Delta_K} \left[1 +\chi_K(p)R(p)\right]\\
& = \left(\prodS_p F(p)\right) \sum_{K \in \mathcal F^\circ_\Xi(Y)} \left(L_{\Q,\bbS}(1,\Pic \Xbar \otimes \chi_K) g(\Delta_K) \sumS_{b \geq 1} \mu^2(b) \chi_K(b) R(b)\right), 
\end{align*}
where we have used that $\prod^S F(p)$ converges absolutely since $\prod^S f_p$ does and $f_{p^2} = 1 + O(p^{-3})$.
The sum over $b$ is absolutely convergent by Lemma~\ref{L:props}(e), and is bounded uniformly in $K$.
Re-arranging, we have
\[
\left(\prodS_p F(p)\right)\sumS_{b \geq 1}  \mu^2(b) R(b) \sum_{K \in \mathcal F^\circ_\Xi(Y)} L_{\Q,\bbS}(1,\Pic \Xbar \otimes \chi_K) g(\Delta_K) \chi_K(b).
\]
As the sum defining $L_{\Q,\bbS}(1,\Pic \Xbar \otimes \chi_K)$ is only conditionally convergent and so to aid further manipulation of the sum, we will truncate it. 

We apply Proposition \ref{prop:trunc} with $c_\Delta$ equal to $\chi_K(b)g(\Delta)$. For $Z$ a power of $Y$ to be determined later,
we conclude that $\sum_{K \in \mathcal F^\circ_\Xi(Y)}\tau_\bbS(X_K)$ is given by
\begin{equation}\label{eq:newform}
\left(\prodS_p F(p)\right)\sumS_{b \geq 1}\mu^2(b) R(b) \sumS_{n \leq Z}  \left( \frac{\psi(n)}{n} \sum_{K \in \mathcal F^\circ_\Xi(Y)} \chi_K(nb)  g(\Delta_K) \right),
\end{equation} plus an error of size $O_{\bbS, \epsilon}(Z^{-\frac{1}{2} + \epsilon}Y^{1 + \epsilon} + Z^\epsilon  Y^{\frac{1}{2} + \epsilon})$.

We will attack the innermost sum in \eqref{eq:newform} with Proposition \ref{prop:discsum}, which shows that the main term will come from terms $n$ and $b$ such that $nb$ is a square. If $nb$ is not a square then applying Proposition \ref{prop:discsum} produces an error term of size
\[ S^\epsilon
\sum_{n \leq Z} \sum_{b \geq 1} \mu^2(b) \vert R(b) \vert \frac{\psi(n)}{n} Y^{\frac{1}{2} + \epsilon} (nb)^{\frac{1}{4}+\epsilon}.
\] By Lemma \ref{L:props}(e), we have that $\vert R(b) \vert \ll b^{-3/2}$. Hence the $b$ sum converges and the total error is $O_{\bbS, \epsilon}(S^\epsilon Z^{\frac{1}{4} + \epsilon} Y^{\frac{1}{2} + \epsilon})$.

We turn our attention to the contribution of $nb=u^2$ for an integer $u \geq 1$. For the inner sum, we get
\[
\sum_{\stackrel{K \in \mathcal F^\circ_\Xi(Y)}{(\Delta_K,u) =1}}  g(\Delta_K) = \delta_\Xi Y \prod_{p\in \bbS} \left(1-\frac1p\right) \prodS_{p \mid u} \left(1-\frac1p\right) \prodS_{p \nmid u} \left(1-\frac1p\right)\left(1+\frac{g(p)}p\right) + O_{\bbS, \epsilon}\left(u^{\epsilon} Y^{\frac{1}{2} + \epsilon}\right)
\]
This error term is of size $O_{\bbS, \epsilon}(Z^{\epsilon}Y^{1/2+\epsilon})$ when summed over $u$.
The main term becomes
\[
\delta_\Xi Y \left[\prod_{p \in \bbS} \left(1-\frac1p\right)\prodS_p F(p)\right]=
\widetilde \delta_\Xi Y\prodS_p F(p)
\]
multiplied by the sum over $u$, which is
\[
\sumS_{u \geq 1} \left[ \prodS_{p \mid u} \left(1-\frac1p\right) \prod_{p \nmid u} \left(1-\frac1p\right)\left(1+\frac{g(p)}p\right)  \sum_{\stackrel{nb = u^2}{n \leq Z}} \left(\mu^2(b) R(b) \frac{\psi(n)}{n}  \right)\right].
\]
 We extend the summation over $n$ to all integers so that
\[
\sumS_{u \geq 1} \left[ \prodS_{p \mid u} \left(1-\frac1p\right) \prodS_{p \nmid u} \left(1-\frac1p\right)\left(1+\frac{g(p)}p\right)  \sum_{\stackrel{n \geq 1,b \geq 1}{nb = u^2}} \left(\mu^2(b) R(b) \frac{\psi(n)}{n}  \right)\right]
\]
at the expense of an error term of size
\[Y
\sum_{u \geq 1} f(u) \sum_{\substack{nb = u^2 \\ b \geq 1, n > Z}} \mu^2(b) R(b) \frac{\psi(n)}{n} \ll Y
\sum_{u \geq Z} \frac{f(u)}{u^2} \sum_{\substack{nb = u^2 \\ b \geq 1, n > Z}} \psi(n)
\ll YZ^{-1 + \epsilon},
\]
where we have used $\vert R(b) \vert \ll 1/b$, $f(u) \ll 1$ and $\psi(n)\ll n^\epsilon$. Note that
\[
P := \prodS_p \left(1-\frac1p\right)\left(1+\frac{g(p)}p\right)
\]
converges, so we can write
\[
\prodS_{p \mid u} \left(1-\frac1p\right) \prodS_{p \nmid u} \left(1-\frac1p\right)\left(1+\frac{g(p)}p\right) = P \prodS_{p \mid u} \left(1+\frac{g(p)}p\right)^{-1}
\]
and get the main term
\[
P \sumS_{u \geq 1} \left[ \prodS_{p \mid u} \left(1+\frac{g(p)}p\right)^{-1}  \sum_{\stackrel{n \geq 1,b \geq 1}{nb = u^2}} \left(\mu^2(b) R(b) \frac{\psi(n)}{n}  \right)\right].
\]
Since the $u$ sum converges, we may write it instead as an Euler product, where the local factor at $p \not \in \bbS$ is given by
\[
1 + \sum_{r\geq 1} \left(\prod_{p \mid p^r} \left(1+\frac{g(p)}p\right)^{-1}  \sum_{\stackrel{n \geq 1,b \geq 1}{nb = p^{2r}}} \left(\mu^2(b) R(b) \frac{\psi(n)}{n}\right)  \right).
\]
On expanding the inner sum in the product, only the terms with $b=1$ and $b=p$ are non-zero,
\[
\sum_{\stackrel{b \geq 1, n\geq 1}{nb=p^{2r}}} \mu^2(b) R(b)\frac{\psi(n)}{n} = \frac{\psi(p^{2r})}{p^{2r}}+\frac{\psi(p^{2r-1})}{p^{2r-1}}R(p)
\]
and we multiply the infinite products $P$ and $\prod^S_p F(p)$ back in
\[
\prodS_p \left(F(p) \left(1-\frac1p\right)\left(1+\frac{g(p)}p\right) + \left(1-\frac1p\right)\sum_{r\geq 1} \left( \frac{\psi(p^{2r})}{p^{2r}}F(p)+\frac{\psi(p^{2r-1})}{p^{2r-1}}G(p)\right)\right).
\]
Using the power series 
\[
\sum_{r\geq 1} \frac{\psi(p^{2r})}{p^{2r}} = \frac{\lambda_p+\bar \lambda_p}2-1
\quad \text{ and } \quad 
\sum_{r\geq 1} \frac{\psi(p^{2r-1})}{p^{2r-1}} = \frac{\lambda_p-\bar \lambda_p}2
\]
we find
\[
\prodS_p \left(1-\frac1p\right)\left[F(p)\left(1+\frac{g(p)}p\right) +\left(\frac{\lambda_p+\bar \lambda_p}2-1 \right)F(p) + \left(\frac{\lambda_p-\bar \lambda_p}2 \right)G(p) \right].
\]

Expanding and using $F(p) = \frac{f_p^2+f_{p^2}}{2}$, $G(p) = \frac{f_p^2-f_{p^2}}{2}$ and $g(p)F(p) = f_p$ gives
\begin{align*}
\prodS \limits_p \left(1-\frac1p\right)\left[F(p)+\frac{g(p)F(p)}p +\frac{\lambda_p f_p^2+\bar \lambda_p f_{p^2}}2 - F(p) \right] & =\\
\prodS \limits_p \left(1-\frac1p\right)\left( \frac{f_p}p +\frac{\lambda_p f_p^2+\bar \lambda_p f_{p^2}}2\right) & =\\
\prodS \limits_p \left[ \left(1-\frac1p\right)\lambda_p^{-1} \right]\left(\frac{w_p}p +\frac{w_p^2+w_{p^2}}2 \right) &= \prodS \limits_p\widetilde \lambda^{-1}_p \widetilde w_p.
\end{align*}

Hence, we arrive at 
\[
\sum_{K \in \mathcal F^\circ_\Xi(Y)} \tau_\bbS(X_K)=\left[\widetilde \delta_\Xi \prodS_p \widetilde\lambda^{-1}_p\widetilde w_p\right]  Y + O_{\bbS, \epsilon}\left(Z^{\frac{1}{4} + \epsilon}Y^{\frac{1}{2} + \epsilon}+Z^{-\frac{1}{2} + \epsilon}Y^{1 + \epsilon}\right).
\]
This error term is minimal when $Z=Y^{\frac{2}{3}}$ which results in a total error term of size $Y^{\frac{2}{3} + \epsilon}$.

We can now work in the factor $\kappa_\Xi = \lim_{s\to 1}\left[ (s-1)^{\rho(X)}L_{\Q,\bbS}(s,\Pic \Xbar)\right] w_\Xi(X)$ which is constant on $K \in \calF^\circ_\Xi$. Finally, we will reintroduce the factor $\vert \Delta_K \vert^{-\frac{\dim X}{2}}$ which was left by the roadside at the beginning of this journey. aA application of partial summation yields
\begin{align*}
& \sum_{K \in \mathcal F^\circ_\Xi(Y)} \tau(X_K)
=
\kappa_\Xi \sum_{K \in \mathcal{F}_{\Xi}(Y)} \frac{\tau_\bbS(X_K)}{\vert \Delta_K \vert^{\frac{\dim X}{2}}}\\
=& 
\kappa_\Xi
\left( \left[\left(\widetilde \delta_\Xi \prodS \limits_p \widetilde\lambda^{-1}_p\widetilde w_p \right) Y +O_{\bbS, \epsilon}(Y^{\frac{2}{3} + \epsilon})\right] \times \frac{1}{Y^{\frac{\dim X}{2}}} + \int_1^Y \frac{\left(\widetilde \delta_\Xi\prodS\limits_p \widetilde\lambda^{-1}_p\widetilde w_p\right) t + O_{\bbS, \epsilon}(t^{\frac{2}{3} + \epsilon})}{t^{\frac{\dim X}{2}+1}} \mathrm{d}t\right).
\end{align*}
The result depends on whether $\dim X =2$ or whether it is greater.

\begin{enumerate}
\item[(a)]  In the case that $\dim X > 2$ each of the terms is $O(1)$, and the sum converges.
\item[(b)] If $X$ is a surface, the first term in the integral dominates and we find
\[
\left(\lim_{s\to 1}\left[ (s-1)^{\rho(X)}L_{\Q,\bbS}(s,\Pic \Xbar)\right] \widetilde \delta_\Xi w_\Xi(X) \prodS_p \widetilde\lambda^{-1}_p\widetilde w_p\right) \log Y  + O_{\bbS, \epsilon}(1). \qedhere
\]
\end{enumerate}
\end{proof}

\subsection{Moments of twisted $L$-functions}\label{ss:moments}
This section is dedicated to establishing a corollary to our on-average $L$-function truncation result, Proposition ~\ref{prop:trunc}.
Our main result is to give an asymptotic formula for all the moments of the value at $s=1$ of the quadratic twists (with fixed splitting data) of an Artin $L$-function. 
One way to view the result is that the values at $s=1$ of Artin $L$-functions are constant on average in quadratic twist families, an idea which we will use to estimate certain error terms in $\S$\ref{finalsection}.

\begin{cor}\label{cor:galtwists}
    Let $\rho$ be a Galois representation which factors through a finite extension $L/\Q$, let $\bbS$ a finite set of places and let $\Xi=(\xi_v)_v \in\prod_{v \in \bbS} \Q_v^\times/\Q_v^{\times,2}$. Define $\mathcal{F}_\Xi^\circ (Y)$ to be the discriminants of fields in $\mathcal{F}_\Xi (Y)$ which are linearly disjoint with $L$. Then for any $Y \geq 1$ and $t \in \Z_{\geq 1}$, we have
    \[
    \sum_{\Delta \in \mathcal F^\circ_\Xi (Y)}
    L_{\Q, \bbS}(1, \rho \otimes \chi_\Delta)^t
    = \delta_\Xi
    C_{\rho, t, \bbS} Y + O_{\bbS,\epsilon, t, \rho} \left( Y^{\frac{2}{3} + \epsilon}\right),
    \]
    for $C_{\rho,t,S}  = \frac 6{\pi^2}
    \prod_{p \in \bbS} \left(1+\frac 1p\right)^{-1}
    \prod\nolimits^S_p\left(1+\left(1+\frac 1p\right)^{-1} \sum_{k\geq 1} \frac{\psi^{\ast t}(p^{2k})}{p^{2k}} \right)$ where $\psi^{\ast t}$ is the $t$-fold convolution product of $\psi$ with itself.
\end{cor}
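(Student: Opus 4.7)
The plan is to run the same machinery used in the proof of Theorem~\ref{thm:summingc}, specialised to the simpler situation in which there are no extra Euler factors beyond the $L$-series itself. The starting observation is that
\[
L_{\Q,\bbS}(1,\rho \otimes \chi_\Delta)^t = L_{\Q,\bbS}(1,\rho^{\oplus t} \otimes \chi_\Delta),
\]
so the multiplicative function attached to the $t$-th power in Proposition~\ref{prop:trunc} is the $t$-fold Dirichlet convolution $\psi^{\ast t}$. Since $\rho^{\oplus t}$ factors through the same finite extension $L/\Q$, Proposition~\ref{prop:trunc} applies directly with $c_\Delta$ taken to be the indicator of $\mathcal F_\Xi$. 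This yields, for a truncation parameter $Z \leq Y$ to be optimised,
\[
\sum_{\Delta \in \mathcal F^\circ_\Xi(Y)} L_{\Q,\bbS}(1,\rho \otimes \chi_\Delta)^t = \sumS_{n \leq Z} \frac{\psi^{\ast t}(n)}{n} \sum_{\Delta \in \mathcal F^\circ_\Xi(Y)} \chi_\Delta(n) + O_{\rho,t,\epsilon}\bigl(Y^{1/2+\epsilon}Z^\epsilon + Y^{1+\epsilon}Z^{-1/2+\epsilon}\bigr).
\]

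Next, I would swap the two summations and invoke Proposition~\ref{prop:discsum} with the trivial choice $g\equiv 1$ on $S$-coprime integers (which trivially satisfies $g(p)=1+O(1/p)$). Non-square values of $n$ contribute $O(n^{1/4}Y^{1/2}\log(4nS)^{1/2})$ per term, and using the bound $\psi^{\ast t}(n) \ll_{t,\epsilon} n^\epsilon$ coming from Lemma~\ref{lem:we need to talk about psi} and multiplicativity, these aggregate to $O(Y^{1/2}Z^{1/4+\epsilon})$. The main term comes from the terms $n = u^2$ with $u$ coprime to $S$, where Proposition~\ref{prop:discsum} gives
\[
\sum_{\Delta \in \mathcal F^\circ_\Xi(Y)} \chi_\Delta(u^2) = \delta_\Xi Y \prod_{p\in\bbS}\!\left(1-\tfrac1p\right)\!\prodS_{p\mid u}\!\left(1-\tfrac1p\right)\prodS_{p\nmid u}\!\left(1-\tfrac1{p^2}\right) + O_\epsilon(u^\epsilon Y^{1/2+\epsilon}).
\]
Using $\prod^S_{p \mid u}(1-1/p) = \prod^S_{p\mid u}(1-1/p^2)(1+1/p)^{-1}$ and combining with $\prod_{p\in\bbS}(1-1/p)$, the constant prefactor collapses to $\tfrac{6}{\pi^2}\delta_\Xi\prod_{p\in\bbS}(1+1/p)^{-1}$, and the remaining $u$-sum becomes
\[
\sumS_{u\geq 1} \frac{\psi^{\ast t}(u^2)}{u^2}\prodS_{p\mid u}\!\left(1+\tfrac1p\right)^{-1} = \prodS_p\!\left(1 + \left(1+\tfrac1p\right)^{\!-1}\!\sum_{k\geq 1} \frac{\psi^{\ast t}(p^{2k})}{p^{2k}}\right),
\]
which is exactly the advertised Euler product. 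Extending the $u$-sum to infinity introduces an error $\ll YZ^{-1/2+\epsilon}$ via the bound $\psi^{\ast t}(u^2) \ll u^\epsilon$.

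Collecting everything, the total error is of size $Y^{1/2}Z^{1/4+\epsilon} + YZ^{-1/2+\epsilon} + Y^{1/2+\epsilon}Z^\epsilon$, optimised by taking $Z = Y^{2/3}$, producing the claimed error $O(Y^{2/3+\epsilon})$. The hard part is simply the bookkeeping of the local factors to recognise the constant exactly in the form stated; everything else is essentially a mechanical combination of the two preceding propositions, and none of the analytic difficulties (e.g.\ fractional moments of $\zeta$) resurface since they are already absorbed into Propositions~\ref{prop:trunc} and~\ref{prop:discsum}.
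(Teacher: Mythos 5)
Your proposal is correct and follows essentially the same route as the paper: apply Proposition~\ref{prop:trunc} to truncate, swap summations, apply Proposition~\ref{prop:discsum} with $g\equiv 1$, isolate the square terms $n=u^2$, and recognise the Euler product. The only cosmetic difference is that you fold the reduction $L_{\Q,\bbS}(1,\rho\otimes\chi_\Delta)^t = L_{\Q,\bbS}(1,\rho^{\oplus t}\otimes\chi_\Delta)$ in at the start and run the argument uniformly in $t$, whereas the paper treats $t=1$ first and then makes the same observation at the end; your bookkeeping of the local factors and the choice $Z=Y^{2/3}$ both match. (One negligible bookkeeping point you skip: Proposition~\ref{prop:discsum} is stated for $\mathcal F_\Xi$, so one technically passes from $\mathcal F^\circ_\Xi$ to $\mathcal F_\Xi$ at the cost of an $O_\epsilon(Z^\epsilon)$ error from the finitely many excluded fields, as the paper notes.)
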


As far as the authors are aware the only existing results concerning twisted moments at $s=1$ are for the trivial representation. We hope that Corollary~\ref{cor:galtwists} will find some new application, for instance perhaps for Malle's conjecture in towers or in the study of the distribution of relative class numbers.

\begin{proof}
    We will first prove it for $t=1$. Throughout we will allow all implicit constants in error terms to depend on $\rho$, $t$ and $\bbS$. By Proposition~\ref{prop:trunc} with $c_\Delta$ the characteristic function for $\mathcal F^\circ_\Xi$ we obtain
    \begin{equation}\label{eq:truncation only L}
    \sum_{\Delta \in \mathcal F_\Xi^\circ (Y)} L_{\Q, \bbS}(1, \rho \otimes \chi_\Delta)
    =
    \sum_{\Delta \in \mathcal F_\Xi^\circ(Y)} \sumS_{n \leq Z} \frac{\chi_\Delta(n)\psi(n)}{n} + O_\epsilon\left( Y^{\frac{1}{2}+\epsilon}Z^\epsilon + Y^{1+\epsilon}Z^{-\frac{1}{2}+\epsilon}\right).
    \end{equation}
    Exchanging summation in the main term, and observing that we are removing at most finitely many quadratic fields (c.f.\ Lemma \ref{lem:lindisjointL}), we have
    \[
    \sumS_{n \leq Z} \frac{\psi(n)}{n}\sum_{\Delta \in \mathcal F_\Xi^\circ (Y)} \chi_\Delta(n)
    =
    \sumS_{n \leq Z} \frac{\psi(n)}{n}
    \sum_{\Delta \in \mathcal F_\Xi(Y)} \chi_\Delta(n) + O_\epsilon(Z^\epsilon).
    \]
    Here we used $\psi(n) \ll n^\epsilon$ from Lemma~\ref{lem:we need to talk about psi}, which we will apply without mention in the remainder of the argument.

    Applying Proposition~\ref{prop:discsum} with $g=1$, the remaining double sum is equal to
    \[
    \sumS_{\substack{n \leq Z\\ n=\square}} \frac{\psi(n)}{n} 
     \delta_\Xi Y \prod_{p \in \bbS} \left( 1 - \frac 1p\right) \prodS_{p \mid n} \left( 1 - \frac 1p \right)\prodS_{p \nmid n} \left( 1 - \frac 1{p^2} \right)\]
    \[+O(n^\epsilon Y^{\frac12+\epsilon}) + O_\bbS(Z^\epsilon)+ O\left( \sqrt Y \sumS_{\substack{n \leq Z\\ n\ne \square}} \frac{\psi(n)}{n} n^{1/4}\sqrt{\log(4n)}\right).
    \]
     The final error term is easily seen to be of size $O\left( Y^{\frac 12 + \epsilon}Z^{\frac 14 + \epsilon}\right).$
    We extend the sum over $n$ into an infinite sum at the expense of an error term $O_\bbS(YZ^{-\frac 12+ \epsilon})$ since it converges absolutely, and obtain a main term of
    \[
    Y \left[ \delta_\Xi \frac 6{\pi^2} \prod_{p \in \bbS} \left( 1+\frac1p\right)^{-1}\right]  \sumS_{n=\square} \frac{\psi(n)}{n}
    \prod_{p \mid n} \left( 1 + \frac 1p \right)^{-1}.
    \]
    We can write the sum over $n$ as the required Euler product.
    The error term is minimised upon taking $Z=Y^{\frac23}$.

    In the case of general $t$ we note that $L(1,\rho \otimes \chi_\Delta)^t = L(1,(\rho \otimes \chi_\Delta)^{\oplus t})$ and the coefficients of that representation are $\psi^{\ast t}$.
    \end{proof}

To demonstrate the power of this result, we improve the error term for the moments of $L$-functions associated to quadratic fields.

\begin{cor}[Moments of $L(1,\chi)$]
    For $Y \geq 1$ and $t \in \mathbb{Z}_{\geq 1}$, we have
    \[
    \sum_{\Delta \in \calF_{\pm}(Y)} L(1, \chi_\Delta)^t
    =
    Y\frac 3{\pi^2} \prod_p\left(1+\left(1+\frac 1p\right)^{-1} \sum_{k\geq 1} \frac{\tau_t(p^{2k})}{p^{2k}} \right) + O_{t,\epsilon}(Y^{\frac 23 + \epsilon}),
    \]
    where $\calF_{+}$ is the set of positive, and $\calF_{-}$ the set of negative fundamental discriminants.
\end{cor}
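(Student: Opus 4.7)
The plan is to apply Corollary~\ref{cor:galtwists} directly with $\rho$ the trivial one-dimensional Galois representation and $\bbS = \{\infty\}$ the minimal admissible set of places. Since $L(s, \rho) = \zeta(s)$ in this case, the associated multiplicative function satisfies $\psi(n) = 1$ for all $n$, and therefore the $t$-fold convolution product is the standard divisor function: $\psi^{\ast t}(n) = \tau_t(n)$. Moreover, the splitting field $L$ of the trivial representation is $\Q$ itself, so every quadratic field is automatically linearly disjoint from $L$. This means $\mathcal{F}^\circ_\Xi = \mathcal{F}_\Xi$, and no finite set of quadratic fields needs to be excluded from the sum.

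Next I would identify the splitting data. With $\bbS = \{\infty\}$, the tuple $\Xi$ consists of a single element $\xi_\infty \in \R^\times/\R^{\times 2} = \{+,-\}$, and by Definition~\ref{def:deltaxi} we have $\delta_{\xi_\infty, \infty} = \tfrac12$ in both cases. The two choices $\xi_\infty = +$ and $\xi_\infty = -$ recover precisely the sets of positive and negative fundamental discriminants $\mathcal{F}_+(Y)$ and $\mathcal{F}_-(Y)$, respectively. Since $L_{\Q, \{\infty\}}(s, \chi_\Delta)$ coincides with the usual Dirichlet $L$-function $L(s, \chi_\Delta)$ (the archimedean place contributes no Euler factor), the left-hand side of Corollary~\ref{cor:galtwists} agrees with the left-hand side of the statement to be proved.

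It remains to match constants. Because the product $\prod_{p \in \bbS}(1 + 1/p)^{-1}$ is empty when $\bbS = \{\infty\}$, the formula for $C_{\rho, t, \bbS}$ reduces to
\[
C_{\rho, t, \bbS} = \frac{6}{\pi^2} \prod_p \left(1 + \Bigl(1 + \tfrac{1}{p}\Bigr)^{-1} \sum_{k \geq 1} \frac{\tau_t(p^{2k})}{p^{2k}}\right).
\]
Multiplying by $\delta_\Xi = \tfrac12$ yields the claimed leading coefficient $\tfrac{3}{\pi^2}$ times the displayed Euler product. The error term $O_{t, \epsilon}(Y^{2/3 + \epsilon})$ is inherited verbatim from Corollary~\ref{cor:galtwists}, completing the proof.

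There is no substantial obstacle here: the entire content of the corollary has been absorbed into the general framework, and this is presented as an illustration of the reach of Corollary~\ref{cor:galtwists}. The only subtlety to verify is that the bookkeeping in Proposition~\ref{prop:trunc} and Proposition~\ref{prop:discsum} (which underlie Corollary~\ref{cor:galtwists}) does not require $\rho$ to be non-trivial; the assumption that $\psi(p) = 1 + O(p^{-1/2})$ needed in Lemma~\ref{lem:quadsieve} and the convergence bounds for the Euler products are vacuously satisfied for $\psi \equiv 1$.
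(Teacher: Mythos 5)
Your proposal is correct and follows exactly the same route as the paper: specialise Corollary~\ref{cor:galtwists} to the trivial representation with $\bbS = \{\infty\}$, note $\psi^{\ast t} = \tau_t$ and $\delta_\infty = \tfrac12$, and read off the constant and error term. The paper's own proof is a one-line version of your argument; your added bookkeeping about $\mathcal{F}^\circ = \mathcal{F}$ and the empty product over $\bbS\setminus\{\infty\}$ is implicit there but harmless to spell out.
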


\begin{proof}
    This is simply the special case of the previous result where $\rho$ is the trivial representation, $\bbS = \{\infty\}$ and $\xi_\infty = \pm 1$, since the corresponding local factor $\delta_\infty$ equals $\frac12$.
\end{proof}

While the moments of $L\left( \frac{1}{2}, \chi_d\right)$ are mysteries central to prime number theory, the moments of the values at $s=1$ have been more successfully studied. Stepanov~\cite{MR0106214} was the first in 1959 to compute the asymptotic formula for all $k$, followed by Barban~\cite{MR0151441} in 1962. They established the same asymptotic formula as above except with an error term of size $O_\epsilon\left(Ye^{-c(\log Y)^{\frac{1}{2}-\epsilon}}\right)$. Wolke~\cite{MR0252322} reduced the error term to $O_\epsilon\left(Y^{\frac{3}{4} + \epsilon}\right)$. Our result hence bridges the gap between the best previously known error term for this problem and the conjectured truth of $O\left( Y^{\frac 12 + \epsilon}\right)$. One can immediately deduce from this, and the class number formula, Corollary \ref{cor:classnumbers} advertised in the introduction. Namely, if $h(-d)$ denotes the class number of the quadratic field $\Q(\sqrt{-d})$ then
\[
\sum_{d \in \calF_-(Y)} \mu^2(d) h(-d)^t
=
\widetilde{c}_t Y^{\frac t2 +1}
+
O_\epsilon\left(Y^{\frac t2 + \frac{2}{3} + \epsilon}\right).
\]
If one so desired Proposition~\ref{prop:trunc} is flexible enough to allow one to find the moments of the class number among other families of quadratic fields, say thinner families or discriminants in arithmetic progressions and so forth.

Similarly, one may deduce an analogous result for real quadratic fields although in this case, the class number is inseparable from the regulator and one concludes that
\[
    \sum_{d \in \mathcal{F}_+(Y)}  h(d)^tR(d)^t
=
\widetilde{c}_t Y^{\frac{t}{2}+1}
+
O_{t, \epsilon}\left(Y^{\frac{t}{2} + \frac{2}{3} + \epsilon}\right),
\] where $R(d)$ the regulator of the associated real quadratic field.
In the case $t=1$, the main term for the related sum over all integers $d$ was first written down by Gauss in the Disquisitiones Arithmeticae. An asymptotic was established by Siegel \cite{MR12642} and a secondary term was then established by Shintani \cite{MR384717}. Restricting this mean value to fundamental discriminants was first accomplished by Goldfeld--Hoffstein \cite{MR2041614} and was then extended to arbitrary base fields by Datskovsky \cite{MR1210518} and Taniguchi \cite{MR2327040}. Taniguchi \cite{MR2410385} proved an asymptotic with error term $o(Y^2)$ for the case $t=2$. This appears to be the only result for higher moments in this particular problem, however we note that Raulf \cite{MR3498624} established an asymptotic for all $t$ with an error term saving $Y^\epsilon$ when ordering the sum by the size of the regulator. This builds upon work of Sarnak \cite{sarnakI} who performed a similar sum for $t=1$.

\section{Cutoffs}\label{S:fringes}
When counting points of bounded height on $\Sym^2 V$, many of them arise as a quadratic point on $V$ and its conjugate. By the Northcott property, there are only finitely many such points and thus only finitely many quadratic fields will contribute points of bounded height. In this section, we introduce terminology to measure how large the quadratic field can be and still contain a point of small height.

\subsection{Cutoffs}

For this section we fix a projective variety $V$ with an adelically metrised line bundle $\calL$ over a number field $F$, with an induced absolute adelic height $H \colon V(\bar F) \to \R_{> 0}$. Any constants in this section are allowed to depend on this data.

\begin{defn} Let $K$ be a finite extension of the number field $F$.
\begin{enumerate}
\item[(i)] For a subset $\mathcal U \subseteq \Sym^2 V(F)$ define a family of sets $\mathcal U_K=
\{ P \in V(K) \colon [P, \bar P] \in \mathcal U\} \setminus V(F)$, where $K$ ranges over all quadratic extensions of $F$.
\item[(ii)] We say that $\gamma \in \R_{\geq 0}$ is a \emph{quadratic cutoff}, or simply a \emph{cutoff}, for $\calU$ with respect to $\calL$ if there exists some constant $C_\calU> 0$ such that, for any quadratic extension $K/F$ and any $P \in \calU_K$, we have
\begin{equation}\label{eq:cutoff condition}
    |N_{F/\Q} \Delta_{K/F}| \leq C_\calU H_{\calL}(P)^{2\gamma}.
\end{equation}
\item[(iii)] We say $\gamma$ is a cutoff for $V$ if it is a cutoff for $\mathcal U = \Sym^2 V(F)$.
\end{enumerate}
\end{defn}

A priori, cutoffs depend on the adelic metrisation on $\calL$. We now show that it actually depends only on the underlying line bundle.

\begin{prop} Any cutoff $\gamma$ for a subset $\calU \subseteq \Sym^2 V(F)$ with respect to an adelic height depends only on the underlying line bundle. That is, if  $H$ and $H'$ are two heights coming from adelic metrics on the same line bundle $\calL$, then $\gamma$ is a cutoff with respect to the $H$ if and only if it is so with respect to the $H'$.
\end{prop}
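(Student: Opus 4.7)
The plan is to reduce the statement to the well-known fact that two heights arising from different adelic metrics on the same line bundle are comparable up to a multiplicative constant. Concretely, if $H_{\|.\|_v}$ and $H_{\|.\|'_v}$ are the absolute heights on $V(\overline F)$ associated to two adelic metrics on the same line bundle $\calL$, then there exists a constant $C = C(\|.\|, \|.\|') \geq 1$, independent of the point $P$, such that
\[
C^{-1}\, H_{\|.\|_v}(P) \;\leq\; H_{\|.\|'_v}(P) \;\leq\; C\, H_{\|.\|_v}(P) \qquad \text{for all } P \in V(\overline F).
\]

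To justify this comparability, I would argue as follows. Fix a generating set of sections $s_i \in \Gamma(V,\calL)$ (or, in the case of a non-very-ample line bundle, work on each of the two very ample line bundles in Definition~\ref{defn:adelically metrised}(b) separately and then take a quotient). By Definition~\ref{defn:adelically metrised}(a), the ratio $\|.\|_v/\|.\|'_v$ is bounded on $V(\C_v)$ for every $v$, and equal to $1$ at all but finitely many places, since both metrics are adelic. Taking the product over the finite set of bad places yields a uniform multiplicative constant $C$ comparing the two local contributions, and hence the two relative heights over any finite extension $L/F$. Taking the $[L\colon\Q]$-th root gives the comparison for the absolute heights, with the constant $C$ independent of $L$ and of $P$.

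Once this comparability is established, the proposition is a one-line consequence. Suppose $\gamma$ is a cutoff for $\calU$ with respect to $H_{\|.\|_v}$, witnessed by a constant $C_\calU$. For any quadratic extension $K/F$ and any $P \in \calU_K$, we compute
\[
|N_{F/\Q}\Delta_{K/F}| \;\leq\; C_\calU\, H_{\|.\|_v}(P)^{2\gamma} \;\leq\; C_\calU C^{2\gamma}\, H_{\|.\|'_v}(P)^{2\gamma},
\]
which shows that $\gamma$ is also a cutoff with respect to $H_{\|.\|'_v}$, with constant $C_\calU C^{2\gamma}$. The converse direction follows by symmetry upon swapping the roles of the two metrics.

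The only mildly subtle point is the comparability step at places where the metrics are not induced by models; this is exactly where Definition~\ref{defn:adelically metrised}(a) comes in, guaranteeing that the discrepancy between any two adelic metrics on $\calL$ is bounded at every place and trivial almost everywhere. Apart from that, the argument is purely formal, and the exponent $\gamma$ transparently survives the comparison since it enters only through a fixed power of $H(P)$.
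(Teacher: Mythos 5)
Your proposal is correct and follows essentially the same argument as the paper: both reduce the statement to the standard fact that two absolute heights coming from different adelic metrics on the same line bundle differ by a bounded multiplicative constant, and then observe that this constant absorbs into the cutoff constant $C_\calU$ after raising to the $2\gamma$ power. You additionally sketch why the comparability holds from Definition~\ref{defn:adelically metrised}, whereas the paper simply cites it as a basic property of the height machine; the core deduction is identical.
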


\begin{proof}
If $\gamma$ is a cutoff with respect to $H$, then for any quadratic extension $K/F$ and any $P \in \calU_K$ we have
\[
|N_{F/\Q} \Delta_{K/F}| \leq C H(P)^{2\gamma}.\]
It is a basic property of the height machine that for two heights coming from different adelic metrisations on the same line bundle there are positive constants $c_1,c_2 > 0$, such that $c_1 H' \leq H \leq c_2 H'$. By letting $C' := c_2^{2\gamma} C$, we obtain
\[
|N_{F/\Q} \Delta_{K/F}| \leq C H(P)^{2\gamma} \leq C c_2^{2\gamma} H'(P)^{2\gamma} = C' H'(P)^{2\gamma},
\]
meaning that $\gamma$ is also a cutoff with respect to $H'$, as required.
\end{proof}

Our next fundamental fact is that cutoffs are well behaved under decreasing subsets $\calU$ and increasing $\gamma$.

\begin{lemma}
    Let $\calU \subseteq \Sym^2 V(F)$ be a subset.
    \begin{enumerate}
        \item[(a)] A cutoff $\gamma$ for $\calU$ is also a cutoff for any $\calU' \subseteq \calU$.
        \item[(b)] Let $K/F$ be a quadratic extension. A $\gamma \geq 0$ is a cutoff for $\calU$ if and only if it is a cutoff for
        \[
        \calU \setminus \{[P,\bar P] \colon P \in V(K)\}.
        \]
        \item[(c)] If $\gamma$ is a cutoff for $\calU$ with respect to $\calL$, then so is any real number $\gamma' \geq \gamma$.
    \end{enumerate}
\end{lemma}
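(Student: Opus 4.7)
The plan is to handle the three parts in increasing order of subtlety, the only non-formal input being a positive lower bound on $H$ supplied by Northcott's theorem.

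Part (a) should be dispatched in a line: the inclusion $\calU' \subseteq \calU$ gives $\calU'_K \subseteq \calU_K$ for every quadratic extension $K/F$, so the inequality defining the cutoff property for $\calU$ with constant $C_\calU$ applies verbatim to $\calU'$ with the same constant.

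For part (b), the forward implication is a special case of (a). For the converse, I would first note that setting $\calU' := \calU \setminus \{[P,\bar P] : P \in V(K)\}$ only modifies the slice of $\calU$ lying over the single quadratic extension $K$: if $K' \neq K$ is another quadratic extension and $P' \in V(K') \setminus V(F)$, then $[P',\bar P']$ has field of definition $K'$ rather than $K$ and so is not among the removed points, giving $\calU'_{K'} = \calU_{K'}$. The only new constraint when passing from $\calU'$ back to $\calU$ is therefore the inequality at the single field $K$, where $D_K := |N_{F/\Q}\Delta_{K/F}|$ is a fixed constant. I would then invoke Northcott's theorem for the number field $K$ to produce a positive lower bound $h_K > 0$ on $H(P)$ for $P \in V(K) \setminus V(F)$, and enlarge the cutoff constant to $\max(C_{\calU'}, D_K h_K^{-2\gamma})$ (with the second argument read as $D_K$ when $\gamma = 0$).

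Part (c) follows the same mechanism but now demands a height lower bound valid uniformly across all quadratic extensions. Here the plan is to appeal to the absolute Northcott theorem for points of bounded degree, producing $h_0 > 0$ such that $H(P) \geq h_0$ for every $P \in V(\bar F)$ with $[F(P):F] \leq 2$. The factorisation $H(P)^{2\gamma'} = H(P)^{2\gamma} \cdot H(P)^{2(\gamma'-\gamma)}$ combined with the lower bound $H(P)^{2(\gamma'-\gamma)} \geq \min(1, h_0^{2(\gamma'-\gamma)})$ then shows that $C_\calU$ divided by this minimum serves as a valid cutoff constant for $\gamma'$. The only potential obstacle in the whole proof is thus the Northcott input: if heights could accumulate at zero on some $\calU_K$ then no finite cutoff constant would suffice for $\gamma > 0$, and this is exactly the scenario that Northcott's theorem rules out under the paper's standing assumption that $\calL$ is big.
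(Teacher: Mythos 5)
Your proof is correct and follows the paper's approach exactly: part (a) is the trivial inclusion, while parts (b) and (c) each hinge on a Northcott lower bound for the height combined with inflating the cutoff constant to absorb the slack, and your observation in (b) that removing the $K$-conjugate pairs leaves $\calU_{K'}$ untouched for every $K' \neq K$ is the same point the paper checks case by case. As a side note, the paper's displayed constant $C' = C h^{2(\gamma'-\gamma)}$ in part (c) carries a sign typo in the exponent (it should be $C' = C h^{-2(\gamma'-\gamma)}$, and $h^{2(\gamma'-\gamma')}$ in the next line should read $h^{2(\gamma-\gamma')}$), so your formulation $C' = C_\calU/\min\bigl(1, h_0^{2(\gamma'-\gamma)}\bigr)$ is the one under which the chain of inequalities actually closes.
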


\begin{proof}
    \begin{enumerate}
        \item[(a)] Clearly if \eqref{eq:cutoff condition} is satisfied for all points $P \in \calU$ then it is for all points in $\calU'$.
        \item[(b)] By (a), if $\gamma$ is a cutoff for $\calU$ then it is a cutoff for
        \[
        \calU' := \calU \setminus \{[P,\bar P] \colon P \in V(K)\}.
        \]

        Now suppose that $\gamma$ is a cutoff for $\calU'$ with constant $C'$. Let $h$ be the minimal height among points in $X(K)$, and define $C=\max\{C', |N_{F/\Q}\Delta_{K/F}|/h^{2\gamma}\}$. Then \eqref{eq:cutoff condition} is satisfied with constant $C$ for $P \in \calU'$ since
        \[
        |N_{F/\Q}\Delta_{F(P)/F}| \leq C' H(P)^{2\gamma} \leq CH(P)^{2\gamma}
        \]
        and for $P \in V(K)$ we have
        \[
        |N_{F/\Q}\Delta_{K/F}| \leq Ch^{2\gamma} \leq C H(P)^{2\gamma}. 
        \]
        \item[(c)] By the Northcott Property there is a minimal height $h>0$ among the points in $V(\bar F)$ of degree $2$. Define $C' = C h^{2(\gamma'-\gamma)}>0$. Now let $K/F$ be a quadratic extension. Then for $P \in \calU_K$ we have
        \[
        | N_{F/\Q} \Delta_{K/F}| \leq C H(P)^{2\gamma} = C' h^{2(\gamma'-\gamma')} H(P)^{2\gamma} \leq  C'  H(P)^{2\gamma'}.
        \]
        It follows that $\gamma'$ satisfies the cutoff condition with the constant $C'$, as required. \qedhere
    \end{enumerate}
\end{proof}

The situation where $\gamma = 0$ is quite special, and demands some attention.

\begin{lemma}\label{lem:cutoff 0}
    The value $\gamma = 0$ is a cutoff for $\calU$ if and only if we have $\calU_K = \emptyset$ for all but finitely many $K$.
\end{lemma}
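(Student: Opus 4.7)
The plan is to prove both directions directly from the definition of a cutoff, using only the finiteness of number fields of bounded discriminant (Hermite--Minkowski).

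For the forward direction, I would assume $\gamma = 0$ is a cutoff for $\calU$, so there is a constant $C_\calU > 0$ such that for every quadratic extension $K/F$ and every $P \in \calU_K$ we have $|N_{F/\Q}\Delta_{K/F}| \leq C_\calU \cdot H(P)^0 = C_\calU$. In particular, if $\calU_K \neq \emptyset$ then $|N_{F/\Q}\Delta_{K/F}| \leq C_\calU$. By Hermite--Minkowski there are only finitely many number fields (and hence quadratic extensions of $F$) whose absolute discriminant is bounded by $C_\calU$, so $\calU_K = \emptyset$ for all but finitely many quadratic $K/F$.

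For the reverse direction, let $K_1, \ldots, K_n$ be the finitely many quadratic extensions of $F$ with $\calU_{K_i} \neq \emptyset$ and set
\[
C_\calU := \max_{1 \leq i \leq n} |N_{F/\Q}\Delta_{K_i/F}|,
\]
or $C_\calU := 1$ if no such $K_i$ exist. For any quadratic $K/F$ with $\calU_K = \emptyset$, the cutoff condition~\eqref{eq:cutoff condition} is vacuous. For $K = K_i$ and any $P \in \calU_{K_i}$ we have $|N_{F/\Q}\Delta_{K_i/F}| \leq C_\calU = C_\calU \cdot H(P)^{2\cdot 0}$, so $\gamma = 0$ is a cutoff for $\calU$.

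Neither direction presents a genuine obstacle; the only non-trivial input is Hermite--Minkowski in the forward direction, which is what forces the finiteness of the contributing fields once the height exponent vanishes.
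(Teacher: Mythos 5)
Your proof is correct and follows essentially the same route as the paper's: the forward direction reads off the discriminant bound from the $\gamma=0$ cutoff condition and invokes finiteness of fields of bounded discriminant, and the reverse direction takes the maximum of the finitely many relevant discriminant norms as the cutoff constant. The paper compresses this to three sentences and leaves the Hermite--Minkowski input and the reverse direction implicit, but there is no substantive difference.
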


\begin{proof}
    If $\gamma = 0$ is a cutoff we have found that for all quadratic $K/F$ and $P \in \calU_K$ we have $|N_{F/\Q} \Delta_{K/F}| \leq C$. Hence, either $K/F$ has a small discriminant or $\calU_K = \emptyset$. The reverse implication is immediate.
\end{proof}

\begin{cor}\label{cor:cutoff 0}
    If $\gamma = 0$ is a cutoff for a cothin $\calU$ then $\Sym^2 V$ fails the Hilbert property, that is, $\Sym^2 V(\Q)$ is a thin set.
\end{cor}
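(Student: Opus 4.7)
The plan is to combine Lemma~\ref{lem:cutoff 0} with the thinness statements already established in Lemma~\ref{lem:kptsthin}(a) and (b). The hypothesis that $\gamma = 0$ is a cutoff for the cothin set $\calU$ gives, via Lemma~\ref{lem:cutoff 0}, a finite set $\mathcal{K}_0$ of quadratic extensions $K/F$ such that $\calU_K = \emptyset$ for every quadratic $K \notin \mathcal{K}_0$.

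Next, I would use the decomposition of $\Sym^2 V(F)$ recorded in \textsection\ref{ss:sym2}(a): every $F$-point of $\Sym^2 V$ is either of the form $[P_1,P_2]$ with $P_1,P_2 \in V(F)$, or of the form $[P,\bar P]$ with $P \in V(K)\setminus V(F)$ for a unique quadratic extension $K/F$. Restricting this decomposition to $\calU$ and using that $\calU_K = \emptyset$ for $K \notin \mathcal{K}_0$ yields the inclusion
\[
\calU \ \subseteq\ \{[P_1,P_2] : P_1,P_2 \in V(F)\}\ \cup\ \bigcup_{K \in \mathcal{K}_0}\{[P,\bar P] : P \in V(K)\setminus V(F)\}.
\]
By Lemma~\ref{lem:kptsthin}(a) the first set on the right is thin, and by Lemma~\ref{lem:kptsthin}(b) each of the finitely many pieces indexed by $K \in \mathcal{K}_0$ is thin. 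Since a finite union of thin sets is thin, it follows that $\calU$ itself is thin.

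To conclude, I invoke the hypothesis that $\calU$ is cothin, so its complement $\Sym^2 V(F)\setminus \calU$ is thin by definition. Therefore $\Sym^2 V(F) = \calU \cup (\Sym^2 V(F)\setminus \calU)$ is a union of two thin sets, hence thin, so $\Sym^2 V$ fails the Hilbert property. No step is really an obstacle here; the content lies entirely in Lemma~\ref{lem:cutoff 0} and Lemma~\ref{lem:kptsthin}, and the corollary is a short bookkeeping argument built on top of them.
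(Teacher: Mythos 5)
Your proposal is correct and follows essentially the same route as the paper: both invoke Lemma~\ref{lem:cutoff 0} to restrict attention to finitely many quadratic fields, then observe that $\calU$ is contained in a finite union of thin sets (the paper phrases this via the images of the finite morphisms $\eta_K$ of Proposition~\ref{prop:map from res to sym}, which is exactly the content behind Lemma~\ref{lem:kptsthin}(a) and (b) that you cite), and finally conclude using that the complement of the cothin set $\calU$ is thin. Your explicit separation of the $\{[P_1,P_2]:P_1,P_2\in V(F)\}$ piece is a slightly more careful bookkeeping of a case the paper handles implicitly by allowing the split \'etale algebra $K=\Q\times\Q$ among the restrictions of scalars, but it is not a genuinely different argument.
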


\begin{proof}
    By Lemma~\ref{lem:cutoff 0} we see that $\calU_K$ is nonempty for finitely many quadratic fields $K$. Hence $\calU$ is a thin set as it is contained in the union of the images of the finite morphisms
    \[
    \eta_K \colon \Res_{K/\Q}V_K \to \Sym^2 V
    \]
    from Proposition~\ref{prop:map from res to sym}. Since $\calU$ is thin and $\Sym^2 V(\Q)\setminus \calU$ is thin we conclude that $\Sym^2 V$ fails the Hilbert property.
\end{proof}

\begin{remark}
As any rationally connected variety has finite Brauer group modulo constant elements \cite[p.\ 347]{brauerbook}, the conjecture by Colliot-Th{\'e}l{\`e}ne mentioned in Remark~\ref{rem:ctconj} implies that rationally connected varieties satisfy weak weak approximation  \cite[Defn.\ 13.2.4]{brauerbook} whenever they have rational points. Therefore, by a result of Ekedahl \cite[Thm.\ 3.5.7]{MR2363329} their sets of rational points are not thin. In particular, conjecturally $\Hilb^2 V(\Q)$ is not thin if it is not empty, which implies that $\Sym^2 V(\Q)$ is also not thin. Hence, by Corollary~\ref{cor:cutoff 0}, it is conjecturally impossible for 0 to be a cutoff for rationally connected varieties $V$ for which $\Hilb^2 V(\Q) \neq \emptyset$.
\end{remark}

To determine a cutoff in some cases, the following result of Silverman is useful.

\begin{thm}[{\cite[Lem.\ 2]{MR747871}}]\label{thm:silverman}
Let $F$ be a number field and $\delta_F$ the number of archimedean places of $F$. For a point $P \in \PP^n(\overline F)$ we define $H_{\calO(1)}(P)$ as the na\"ive absolute $\calO(1)$-height on $\PP^n(\overline F)$, $F(P)$ the minimal field of definition for $P$, and $m = [F(P)\colon F]$. Then 
we have
\[
|N_{F/\Q}(\Delta_{F(P)/F})| \leq m^{m\delta_F} H_{\calO(1)}(P)^{2m(m-1)[F \colon \Q]}.
\]
\end{thm}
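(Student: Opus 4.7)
The plan is to produce, from the coordinates of $P$, an integral primitive element $\alpha \in \calO_K$ of $K := F(P)$ over $F$ and exploit the standard divisibility $\Delta_{K/F} \mid (\operatorname{disc}(\alpha))$ of ideals in $\calO_F$; this gives $|N_{F/\Q}(\Delta_{K/F})| \leq |N_{F/\Q}(\operatorname{disc}(\alpha))|$, so it suffices to bound the right-hand side by the target expression. The case $m = 1$ is trivial.

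First I would construct $\alpha$. After permuting coordinates, assume $x_0 \neq 0$ and set $y_i := x_i/x_0$, so that $K = F(y_1, \dots, y_n)$. A pigeonhole argument over the (finitely many) proper intermediate subfields of $K/F$ yields $c_1, \dots, c_n \in \calO_F$ of bounded size, with each $c_i$ an algebraic integer satisfying $|c_i|_v \leq m-1$ at every archimedean place $v$ of $F$ and $|c_i|_v \leq 1$ elsewhere, such that $\alpha := \sum c_i y_i$ generates $K/F$. Clearing a bounded denominator yields $\alpha \in \calO_K$, affecting only the final constant.

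The key ingredient is Mahler's classical inequality, which I would derive by applying Hadamard's inequality to the \emph{rows} of the Vandermonde matrix $V = ((\alpha^{(i)})^{j-1})_{i,j=1}^m$ over the $m$ conjugate embeddings of $K$ over $F$. At each archimedean place $v$ of $F$, row $i$ has squared $\ell^2$-norm
\[
\sum_{j=0}^{m-1} |\alpha^{(i)}|_v^{2j} \;\leq\; m \cdot \max(1, |\alpha^{(i)}|_v)^{2(m-1)},
\]
so Hadamard yields
\[
|\operatorname{disc}(\alpha)|_v \;=\; |\det V|_v^2 \;\leq\; m^m\, M_v(\alpha)^{2(m-1)}, \qquad M_v(\alpha) := \prod_{i=1}^m \max(1, |\alpha^{(i)}|_v).
\]
Since $\alpha \in \calO_K$, the finite places contribute $|\operatorname{disc}(\alpha)|_v \leq 1$ and are harmless. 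Taking products over $v \in \Omega_F^\infty$, weighted by local degrees, yields the leading constant of the form $m^{m\delta_F}$ after the usual bookkeeping at complex places. The linearity $\alpha = \sum c_i y_i$ with bounded $c_i$ gives $M_v(\alpha) \ll \|P\|_v$ pointwise, and the global product $\prod_v M_v(\alpha)^{[F_v:\R]}$ is controlled by the archimedean part of the relative height $H_K(P) = H_{\calO(1)}(P)^{m[F:\Q]}$; raising to the $2(m-1)$-th power produces the exponent $2m(m-1)[F:\Q]$ on $H_{\calO(1)}(P)$.

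The delicate point is that Hadamard must be applied to the \emph{rows} of the Vandermonde matrix, not its columns. A columnwise application gives the correct constant $m^m$ but with exponent $m(m-1)$ on $M_v(\alpha)$, which is too large; conversely, a pairwise triangle-inequality estimate $|\alpha^{(i)} - \alpha^{(j)}|_v \leq 2\max(1,|\alpha^{(i)}|_v)\max(1,|\alpha^{(j)}|_v)$ gives the correct exponent $2(m-1)$ but the wrong constant $2^{m(m-1)}$. The rowwise Hadamard bound couples these two virtues: one factor of $m$ per row gives $m^m$ overall, and the $\max(1,|\alpha^{(i)}|_v)^{2(m-1)}$ appearing in each row norm gives precisely the Mahler-measure exponent $2(m-1)$. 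A secondary technicality is keeping the $c_i$ from the pigeonhole step bounded at all places simultaneously, so that their contribution is absorbed harmlessly into the final constant rather than polluting the exponent.
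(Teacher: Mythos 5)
The paper does not prove this statement: it is cited verbatim from Silverman and used only through Example~\ref{ex:pncutoff} to produce a cutoff, so there is no in-paper argument to compare against. Your reconstruction does capture the essence of Silverman's original proof: pass to a bounded primitive element $\alpha$ via a pigeonhole over proper intermediate fields, exploit $\Delta_{K/F} \mid (\Disc(\alpha))$, and invoke Mahler's discriminant inequality, established by applying Hadamard to the \emph{rows} of the Vandermonde. Your explanation of why rows are the right choice — the constant $m^m$ comes from the $m$ factors of $\sqrt m$ per row, while the row norm $\sum_{j}|\alpha^{(i)}|^{2j} \leq m\max(1,|\alpha^{(i)}|)^{2(m-1)}$ supplies precisely the Mahler exponent $2(m-1)$ — is exactly Mahler's mechanism.

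That said, two of your "bookkeeping" steps hide genuine issues. First, the constant $m^{m\delta_F}$: summing the per-embedding Mahler bound $|\Disc|_\sigma \leq m^m M_\sigma^{2m-2}$ over all $[F:\Q]$ embeddings $\sigma\colon F\to\C$ (equivalently, over archimedean places weighted by $d_v$, as you propose) yields $m^{m[F:\Q]}$, which is strictly larger than the claimed $m^{m\delta_F}$ whenever $F$ has a complex place. You have not shown how to save a factor of $m^{m r_2}$ at complex places, and I do not believe the naïve route produces it; either Silverman uses a sharper argument there or your reconstruction proves a correct but slightly weaker constant. Second, "clearing a bounded denominator yields $\alpha \in \calO_K$" is not innocuous as written: the natural denominator $x_0$ lies in $K$, not in $F$, so multiplying $\alpha = \sum c_i (x_i/x_0)$ by $x_0$ does not commute with taking $F$-conjugates and does not simply rescale the Vandermonde determinant (it can even change the field generated). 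The cleaner route is to keep $\alpha$ non-integral, pass to the content-cleared minimal polynomial $g_\alpha \in \calO_F[X]$, use that $\Delta_{K/F}$ divides $(\Disc(g_\alpha))$, and apply Mahler to $g_\alpha$ directly — the leading coefficient is then absorbed into $M(g_\alpha)$ automatically. A minor further point: the set of bad $(c_i)$ is a finite union of proper $F$-\emph{subspaces} of $F^n$ because the condition "$\sum c_i y_i \in L$" cuts out a subspace for each intermediate $L$; your specific bound $|c_i|_v \leq m-1$ would need the number of such subspaces to be at most $m$, which is not obvious in general.

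None of this threatens the exponent $2m(m-1)[F\colon\Q]$ on $H_{\calO(1)}(P)$, which you derive correctly and which is the only feature the paper uses (cutoffs are defined only up to the multiplicative constant $C_\calU$). So your proposal suffices for the paper's application, but as a reproduction of Silverman's Lemma~2 it falls short of the stated constant and should not present the complex-place savings as routine.
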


\begin{example}\label{ex:pncutoff}
    Let $F$ be a number field. The value $\frac{2[F \colon \Q]}{k}$ is a cutoff for $\PP_F^n$ with respect to $\calO(k)$. Hence a cutoff for the anticanonical height is given by $\frac{2[F \colon \Q]}{n+1}$.
\end{example}

In Corollary~\ref{cor:optimalp1} we show that these are actually the smallest possible cutoffs for $\PP^n_F$. For now, let us use the result to show that all varieties under consideration at least have a cutoff.

\begin{prop}\label{prop:cutoffs exist}
    Let $V/F$ be a variety with a  adelic height function $H$ on an ample line bundle, then there is a $\gamma\geq 0$ which is a cutoff for $V$.
\end{prop}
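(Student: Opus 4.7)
The plan is to reduce to the case of projective space, where Theorem~\ref{thm:silverman} provides an explicit cutoff, by exploiting ampleness of $\calL$ to pull back the Silverman bound along a projective embedding.

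Since $\calL$ is ample, we first pick an integer $k \geq 1$ such that $\calL^{\otimes k}$ is very ample, and choose a corresponding closed immersion $\iota \colon V \hookrightarrow \PP^n_F$ with $\iota^*\calO_{\PP^n}(1) \cong \calL^{\otimes k}$. We equip $\calO_{\PP^n}(1)$ with the usual adelic metric producing the na\"ive height $H_{\calO(1)}$. By the height machine, the pullback $\iota^*H_{\calO(1)}$ and the height $H_{\calL^{\otimes k}} = H_\calL^k$ are associated to the same line bundle on $V$, hence there is a constant $c>0$ depending only on the chosen adelic metrics such that
\[
H_{\calO(1)}(\iota(P)) \geq c\, H_\calL(P)^k
\]
for all $P \in V(\overline F)$.

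Next, for a quadratic extension $K/F$ and any $P \in V(K)\setminus V(F)$, the point $P$ has field of definition equal to $K$, and since $\iota$ is a closed immersion we also have $F(\iota(P)) = K$. Applying Theorem~\ref{thm:silverman} with $m=2$ to the point $\iota(P) \in \PP^n(K)$ gives
\[
|N_{F/\Q}(\Delta_{K/F})| \leq 2^{2\delta_F}\, H_{\calO(1)}(\iota(P))^{4[F\colon\Q]}.
\]
Combining this with the previous comparison of heights yields
\[
|N_{F/\Q}(\Delta_{K/F})| \leq 2^{2\delta_F} c^{-4[F\colon\Q]}\, H_\calL(P)^{4k[F\colon\Q]},
\]
valid for every quadratic $K/F$ and every $P \in V(K)\setminus V(F)$. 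Since this is exactly the defining inequality of a cutoff with $\calU = \Sym^2 V(F)$ and the constant $C_\calU := 2^{2\delta_F} c^{-4[F\colon\Q]}$, we conclude that $\gamma := 2k[F\colon\Q]$ is a cutoff for $V$.

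There is no real obstacle here beyond bookkeeping: the single substantive input is Silverman's theorem, and the only subtlety is ensuring that for $P \in V(K)\setminus V(F)$ the image $\iota(P)$ remains a genuine degree-$2$ point so that the $m=2$ case of Theorem~\ref{thm:silverman} applies, which is immediate because $\iota$ is a closed immersion.
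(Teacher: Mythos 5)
Your proof takes essentially the same route as the paper: embed $V$ into projective space via a very ample power $\calL^{\otimes k}$, invoke Silverman's bound (Theorem~\ref{thm:silverman}, or equivalently Example~\ref{ex:pncutoff}) for quadratic points on $\PP^n_F$, and transfer the cutoff back to $\calL$, obtaining $\gamma = 2k[F\colon\Q]$. One small slip: the displayed height comparison should read $H_{\calO(1)}(\iota(P)) \leq c^{-1} H_\calL(P)^k$ rather than $\geq c\,H_\calL(P)^k$, since you need an \emph{upper} bound on the pulled-back height to feed into Silverman's inequality; the height machine supplies both directions and your final estimate is consistent with the correct one, so this reads as a typo rather than a gap.
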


\begin{proof}
    Embed $V$ in some projective space $\PP^N_F$ via $\mathcal L^{\otimes k}$ for some $k$. Then $\calO(1)$ on $\PP_F^N$ restricts to $\calL^k$ on $V$. Since $[F \colon \Q]$ is a cutoff for $\calO(1)$, we see that $k[F \colon \Q]$ is a cutoff for $\calL$.
\end{proof}

We will see that in many examples the cutoff produced in this proof is far from optimal. For example, for a smooth quadric surface $X \subseteq \PP^3_\Q$ we will see that $\frac12$ is an anticanonical cutoff, but this result only shows that the $\calO(2)$-cutoff $1$ of $\PP^3_{\Q}$ is also a cutoff for $X$.

\subsection{Cutoffs of product varieties}

Suppose that $V'$ and $V''$ are two projective varieties over $F$ each equipped with an adelic metric on their anticanonical line bundles. Consider $V' \times V''$ with the induced product adelic metric metric on its anticanonical line bundle. Given $\mathcal U' \subseteq \Sym^2 V(F)$ and $\calU'' \subseteq \Sym^2 V''(F)$ subsets with respective cutoffs $\gamma' \geq 0$ and $\gamma'' \geq 0$, we can consider the induced subset of $\Sym^2(V' \times V'')$. In particular, we write $\mathcal U^{\text{prod}}$ for the preimage of $\calU' \times \calU''$  under the natural morphism $\Sym^2(V'\times V'') \to \Sym^2 V' \times \Sym^2 V''$. One can compare $\mathcal U^{\text{prod}}$ with $\calU'$ and $\calU''$ and, in particular, compute a cutoff for such subsets.

\begin{prop}\label{prop:prodsetfacts}
    \begin{enumerate}
        \item[(a)]  If $\, \calU'=\Sym^2 V'(\Q)$ and $\, \calU''=\Sym^2 V''(F)$ then $\, \calU^\text{prod} = \Sym^2(V' \times V'')(F)$.
        \item[(b)] We have $\gamma = \max\{\gamma',\gamma''\}$ is a cutoff for $\calU^\text{prod}$.
    \end{enumerate}
\end{prop}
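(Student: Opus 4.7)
For (a), the natural morphism $\Sym^2(V' \times V'') \to \Sym^2 V' \times \Sym^2 V''$ is a morphism of $F$-varieties, and therefore sends $F$-points to $F$-points. Consequently, when $\calU' = \Sym^2 V'(F)$ and $\calU'' = \Sym^2 V''(F)$, every element of $\Sym^2(V' \times V'')(F)$ automatically lies in the preimage of $\calU' \times \calU''$, giving $\calU^\text{prod} = \Sym^2(V' \times V'')(F)$.

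For (b), set $\gamma = \max\{\gamma', \gamma''\}$ and let $C', C''$ denote the constants witnessing the cutoff property for $\calU'$ and $\calU''$ respectively. Fix a quadratic extension $K/F$ and take a point $P = (P',P'') \in \calU^{\text{prod}}_K$. By definition $P \in (V' \times V'')(K) \setminus (V' \times V'')(F)$ and its image $([P',\bar P'],[P'',\bar P''])$ under the morphism from (a) lies in $\calU' \times \calU''$. Since $P$ is not an $F$-point, at least one coordinate fails to be defined over $F$. The plan is to split into two cases: if $P' \notin V'(F)$ then $P' \in \calU'_K$, and the cutoff hypothesis for $\calU'$ gives $|N_{F/\Q}\Delta_{K/F}| \leq C' H(P')^{2\gamma'}$; otherwise $P'' \notin V''(F)$ and the cutoff for $\calU''$ gives $|N_{F/\Q}\Delta_{K/F}| \leq C'' H(P'')^{2\gamma''}$.

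The remaining step is to convert each of these bounds into one involving $H(P)$ with the uniform exponent $2\gamma$. Since the adelic metric on $\omega^\vee_{V' \times V''}$ is the product of the metrics on $\omega^\vee_{V'}$ and $\omega^\vee_{V''}$, the height is multiplicative: $H(P) = H(P')H(P'')$. Northcott applied to points of degree at most two on the respective ample anticanonical line bundles produces positive constants $c', c''$ with $H(P') \geq c'$ and $H(P'') \geq c''$ on all such points, so in particular $H(P) \geq c := c'c''$. From $H(P'') \geq c''$ one gets $H(P') \leq H(P)/c''$, and combining with $2\gamma' - 2\gamma \leq 0$ together with $H(P)^{2\gamma' - 2\gamma} \leq c^{2\gamma' - 2\gamma}$ yields
\[
C' H(P')^{2\gamma'} \leq C'(c'')^{-2\gamma'} c^{2\gamma'-2\gamma} H(P)^{2\gamma},
\]
and symmetrically in the other case. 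Taking $C$ to be the maximum of the two resulting constants shows that $\gamma$ is a cutoff for $\calU^{\text{prod}}$.

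The only real subtlety is this last bookkeeping step: because $H$ is not a priori bounded below by $1$, turning a bound in terms of $H(P')^{2\gamma'}$ into one in terms of $H(P)^{2\gamma}$ requires \emph{both} an upper bound on the component height via multiplicativity \emph{and} a lower bound on $H(P)$ via Northcott, so that the negative exponent $2\gamma' - 2\gamma$ can be absorbed into the constant. Everything else is a direct unwinding of the definitions.
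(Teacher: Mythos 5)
Your proof is correct and follows essentially the same route as the paper's: decompose a pure $K$-point on $V'\times V''$ into its components, apply the cutoff hypothesis to whichever component is not defined over $F$, and then use multiplicativity of the height $H=H'H''$ together with a Northcott lower bound to absorb the other component. The only difference is that you explicitly track the passage from the exponent $\gamma'$ (or $\gamma''$) to $\gamma=\max\{\gamma',\gamma''\}$ via the lower bound $H(P)\geq c$, where the paper's write-up handles this implicitly (it can also be done by first invoking Lemma~\ref{lem:cutoff 0}'s companion result on monotonicity of cutoffs); either way the argument is the same in substance.
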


\begin{proof}
     Let $H', H''$, and $H=H'H''$ be the associated height functions on $V'$, $V''$, and $V=V'\times V''$ respectively. By the definition of $\gamma'$ and $\gamma''$ there are constants $C'>0$ and $C''>0$ such that for every quadratic $K/F$ and $P' \in \calU'_K$ and $P'' \in \calU''_K$ we have
    \[
    |N_{F/\Q}(\Delta_{K/F})| \leq C' H'(P')^{2\gamma'} \quad \text{ and } \quad |N_{F/\Q}(\Delta_{K/F})| \leq C'' H''(P'')^{2\gamma''}.
    \]
    \begin{enumerate}
        \item[(a)] This follows from the definition; the preimage of $\, \calU' \times \calU'' = \Sym^2 V'(F) \times \Sym^2 V''(F)$ under $\Sym^2(V'\times V'')\to \Sym^2 V' \times \Sym^2 V''$ equals $\Sym^2(V'\times V'')(F)$.
        \item[(b)] There is a constant $C_1 >0$ such that any (at most) quadratic point on $V'$ or $V''$ has height at least $C_1$.
        Now consider a point $P=(P',P'') \in \calU^\text{prod}_K \subseteq (V' \times V'')(K) \setminus (V' \times V'')(F)$. Hence at least one of the coordinates must be a pure $K$-point. Assume $P' \in V'(K)\setminus V'(F)$. By construction of $\, \calU^\text{prod}$ we have $P' \in \calU'$. Hence for $\gamma = \max\{\gamma',\gamma''\}$ we find
        \[
         |\Delta_{K/\Q}| C_1^{2\gamma}\leq H'(P')^{2 \gamma} C_1^{2\gamma} \leq H'(P')^{2 \gamma}H''(P'')^{2\gamma} = H(P)^{2\gamma},
        \]
        and similarly if $P''$ is a pure $K$-point. We conclude that $\gamma$ is a cutoff for $\, \calU^\text{prod}$. \qedhere
        \end{enumerate}
\end{proof}

It can be shown under mild assumptions (for example if $V'\times V''$ has pure $K$-points for infinitely many quadratic extensions $K/\Q$) that if $\calU'$ and $\calU''$ are cothin then $\calU^\text{prod}$ is cothin. In the counting examples considered so far (including the one in the next section) this is obvious from the choice of $\mathcal U$.

For now we use $\calU'_F := \{P \in V(F) \colon \exists P' \in V(F) \textrm{ s.t. } [P,P'] \in \calU'\}$ for the set of $F$-points of $V$ that appear as a part of a point in $\calU'$, and we define $\calU''_F$ similarly. For any quadratic field $K$, we have
\[
        \calU'_K \times \calU''_K\; \subseteq \;\calU_K^\text{prod} \; \subseteq \; (\calU'_K \times \calU''_K)\bigcup (\calU'_K \times \calU''_F) \bigcup (\calU'_F \times \calU''_K).
        \]
As we have seen in this section, if a point is quadratic then its height needs to be large.
If we have a point $(P',P'') \in \calU'_K \times \calU''_K \subseteq\mathcal{U}^{prod}$ with both components purely quadratic, the height of this point is $H_{V'}(P')H_{V''}(P'')$, say, and since both heights need to be large, and their product is bounded, neither can be \textit{too} large. 
However for a point $(P', P'') \in (\calU'_F \times \calU''_K)\subseteq\mathcal{U}^{prod}$ the height of $P'$ can be fairly small since it is a rational (not quadratic) point. This means that the height of $P''$ is not as constrained and can be quite large.
This in turn means that the fields $K$ in which this point can live could also possibly be quite large. 

In summary, the fields which contribute pairs of pure quadratic points on $V' \times V''$ might be smaller than those that can contribute pure quadratic points on either $V'$ or $V''$. We introduce now a definition to deal with this phenomenon.

\begin{defn}
    A point on $\Sym^2(V' \times V'')$ defined over $F$ is called \emph{pristine} if it is contained in $\calU'_K \times \calU''_K$ for some quadratic field $K$. That is to say pristine points are those points on $V' \times V''$ which are pure quadratic in both coordinates. We denote the set of all such points in $\mathcal U^{prod}$ by $\mathcal U^{pris}$.
\end{defn}

\begin{remark}\label{rem:pristine}
    In general, one could say a subset $\calU \subseteq \Sym^2 V(F)$ is \emph{pristine} if there are finitely many dominant fibration $\phi_i\colon V \to C_i$ with $0 < \dim C_i < \dim V$, such that $\calU$ is contained in the set
    \[
    \{[P_1,P_2] \in \Sym^2 V(F) \colon \phi_i(P_1),\phi_i(P_2) \text{ are pure quadratic points for all } i\}.
    \]
    This concept behaves well with respect to thin sets; the conjugate points $P_1$ and $P_2$ must lie in conjugate fibres, so either the fibre over a $F$-point or over a pure quadratic point. One can show that the points on $\Sym^2V(F)$ which lie in a fibre over a $F$-point form a thin set of type I. 
    Clearly, the pristine points on the product $V' \times V''$ are precisely the points which do not lie in fibres over $F$-points of either projection.

    For example, the Hirzebruch surfaces $\mathbb F_1$ and $\mathbb F_2$ admit a fibration over $\PP^1$, and the cutoff of the pure quadratic points is strictly larger than the cutoff of the pristine points with respect to this fibration. This is related to the fact that the pure quadratic points which lie over $F$-points on $\PP^1$ dominate the point count on $\Sym^2 \mathbb F_r$, and must be included in a thin set for Manin's conjecture to hold.
\end{remark}

As discussed above the set of fields in which the pristine points of bounded height can arise might not be the same as the set of fields in which we can have quadratic points on $V' \times V''$. In particular, the cutoff may be different. We formulate some results around pristine points.

\begin{prop}\label{prop:pristinefacts}
     \begin{enumerate}
        \item[(a)] If $\gamma'+\gamma'' \ne 0$ then $\gamma = \frac{\gamma'\gamma''}{\gamma'+\gamma''}$ is a cutoff for $\calU^{\text{pris}}$.
        \item[(b)] $\calU^{prod} \setminus \calU^{pris}$ is a thin subset of $\Sym^2(V' \times V'')(F)$.
    \end{enumerate}
\end{prop}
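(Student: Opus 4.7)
My plan for part (a) is to take the weighted geometric mean of the two cutoff inequalities. A pristine point $Q \in \calU^{\text{pris}}$ arises from some pair $(P', P'') \in \calU'_K \times \calU''_K$ for a quadratic $K/F$, so we have both
\[
|N_{F/\Q}\Delta_{K/F}| \leq C' H'(P')^{2\gamma'} \quad \text{and} \quad |N_{F/\Q}\Delta_{K/F}| \leq C'' H''(P'')^{2\gamma''}.
\]
Raising the first inequality to the power $t = \gamma''/(\gamma'+\gamma'')$ and the second to $1-t = \gamma'/(\gamma'+\gamma'')$ and multiplying, the key observation is that $t\gamma' = (1-t)\gamma'' = \gamma'\gamma''/(\gamma'+\gamma'') = \gamma$, so the exponents of $H'(P')$ and $H''(P'')$ in the product agree. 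Since under the product adelic metric the height on $V'\times V''$ factors as $H(P',P'') = H'(P')H''(P'')$, this yields $|N_{F/\Q}\Delta_{K/F}| \leq (C')^t(C'')^{1-t} H(P',P'')^{2\gamma}$, establishing the cutoff condition.

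For part (b), the plan is to exhaust the possible forms of a non-pristine point $Q = [R_1, R_2] \in \calU^{\text{prod}}$. By the decomposition of $\Sym^2(V' \times V'')(F)$ recalled in \textsection\ref{ss:sym2}(a), either both $R_i$ lie in $(V' \times V'')(F)$, or $R_2$ is the Galois conjugate of some $R_1 \in (V' \times V'')(K) \setminus (V' \times V'')(F)$ for a quadratic extension $K/F$. The first possibility places $Q$ in the image of the degree-$2$ quotient $(V' \times V'')^2 \to \Sym^2(V' \times V'')$, which is a thin set of type II by Lemma~\ref{lem:kptsthin}(a) applied to $V' \times V''$. In the second situation, writing $R_1 = (P', P'')$, pristineness amounts to both $P'$ and $P''$ being pure $K$-points; failure therefore means that at least one of them, say $P'$, in fact lies in $V'(F)$.

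If $P' \in V'(F)$, then $R_2$ projects on the $V'$-factor to $\overline{P'} = P'$, so under the natural morphism $\mathrm{pr}'_*\colon \Sym^2(V'\times V'') \to \Sym^2 V'$ the point $Q$ maps into the diagonal $\Delta^{(2)} \subseteq \Sym^2 V'$; the analogous conclusion holds when $P'' \in V''(F)$. Since $\dim \Delta^{(2)} = \dim V' < \dim \Sym^2 V'$, both preimages $(\mathrm{pr}'_*)^{-1}(\Delta^{(2)})$ and $(\mathrm{pr}''_*)^{-1}(\Delta^{(2)})$ are proper closed subschemes of $\Sym^2(V'\times V'')$, hence thin sets of type I. Altogether, $\calU^{\text{prod}} \setminus \calU^{\text{pris}}$ is contained in the union of three thin sets and is therefore itself thin. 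The only subtlety I foresee is executing the case analysis carefully via the natural map $\Sym^2(V'\times V'') \to \Sym^2 V' \times \Sym^2 V''$; once the three cases are separated, each reduces directly to Lemma~\ref{lem:kptsthin}, and no serious technical obstacle remains.
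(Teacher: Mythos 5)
Your proof is correct and takes the same approach as the paper's: a weighted geometric mean of the two cutoff inequalities in part (a), and preimages of diagonals under the projections to $\Sym^2 V'$ and $\Sym^2 V''$ in part (b). Your part (b) is in fact slightly more careful, as you also explicitly account for the case where both $R_i$ lie in $(V'\times V'')(F)$ (handled via Lemma~\ref{lem:kptsthin}(a)), a trivial case that the paper's proof tacitly skips by treating only conjugate pairs of pure quadratic points.
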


\begin{proof}
    We keep the same conventions as in the proof of Proposition \ref{prop:prodsetfacts}.
     \begin{enumerate}
        \item[(a)] From $\calU_K = \calU'_K \times \calU''_K$ we find for $P=(P',P'') \in \calU_K$ that
        \begin{align*}
        |N_{F/\Q}(\Delta_{K/F})|^{\gamma'+\gamma''} & \leq \left[C' H'(P')^{2\gamma'}\right]^{\gamma''} \left[C'' H''(P'')^{2\gamma''}\right]^{\gamma'} & \\
        & = C'^{\gamma''} C''^{\gamma'} \left[ H'(P')H''(P'')\right]^{2\gamma'\gamma''} = C H(P)^{2\gamma'\gamma''},
        \end{align*}
        where $C = C'^{\gamma''} C''^{\gamma'} >0$. 
        
        \item[(b)] The product points that are not pristine on $V'\times V''$ look like pure $K$-points $(P',P'') \in (V'\times V'')(K)$ where either $P'$ or $P''$ is actually a $F$-point. Consider the case $P' \in V'(K)\setminus V'(F)$ and $P'' \in V''(F)$. Then the corresponding point $[(P',P''),(\bar{P}', P'')] \in \Sym^2(V' \times V'')$ lies in the preimage of the diagonal $\Delta' \subseteq \Sym^2 V'$ under $\Sym^2(V' \times V'') \to \Sym^2V', [(P_1,P_2),(P_3,P_4)] \mapsto [P_1,P_3]$. Hence the product points which are non-pristine lie in a thin subset of type I.
        \qedhere
        \end{enumerate}
\end{proof}

One can show that the product points coming from the subsets $\calU' \setminus \{[P_1,P_2]\ | P_1,P_2 \in V'(F)\}$ and $\calU'' \setminus \{[P_1,P_2]\ | P_1,P_2 \in V''(F)\}$ are precisely the pristine product points coming from $\calU'$ and $\calU''$, hence we conclude from the remark following Proposition~\ref{prop:prodsetfacts} that $\calU^{\text{pris}}$ is also cothin if $\calU'$ and $\calU''$ are cothin.

\begin{example}\label{ex:cutoffs}
    \phantom{.}\begin{enumerate}
        \item The number $1$ is a cutoff for $\PP_\Q^1 \times \PP_\Q^1$ with respect to the anticanonical line bundle.
        \item For the pristine points a cutoff is given by $\tfrac 12$.
    \end{enumerate}
\end{example}

The second cutoff is the one implicitly used by Le Rudulier~\cite[p.\ 93]{lerudulier-thesis}, although the normalisation of her height is different in this section. Specifically, she is using a relative $\mathcal{O}(1)$-height and so replacing $B$ in her conventions by $B^{1/4}$ recovers the cutoff of $\frac{1}{2}$.

\subsection{Permanence of cutoffs under field extensions}

For the next examples we use the following result on norms of relative discriminants under base change.

\begin{prop}\label{prop:discs under base change}
    Let $F'/F$ be an extension of number fields. Then for any $K/F$ linearly disjoint from $F'$ we have
    \[
    |N_{F'/\Q}(\Delta_{KF'/F'})| = |N_{F/\Q}(\Delta_{K/F})|^{[F'\colon F]}.
    \]
\end{prop}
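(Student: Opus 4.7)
The plan is to reduce the claim to an ideal-theoretic base-change identity and then propagate up through the norm maps. Specifically, my first goal would be to establish the equality of ideals
\[
\Delta_{KF'/F'} = \Delta_{K/F}\,\mathcal{O}_{F'}
\]
in $\mathcal{O}_{F'}$. Granted this, applying $N_{F'/F}$ and using the standard identity $N_{F'/F}(\mathfrak{a}\mathcal{O}_{F'}) = \mathfrak{a}^{[F':F]}$ for $\mathfrak{a}$ an ideal of $\mathcal{O}_F$ yields $N_{F'/F}(\Delta_{KF'/F'}) = \Delta_{K/F}^{[F':F]}$. The transitivity $N_{F'/\Q} = N_{F/\Q} \circ N_{F'/F}$ then gives the claimed identity of absolute norms after taking $|\,\cdot\,|$.

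For the ideal equality, I would argue prime by prime in $\mathcal{O}_{F'}$: fix a prime $\mathfrak{P}$ of $F'$ above a prime $\mathfrak{p}$ of $F$. Linear disjointness of $K/F$ and $F'/F$ forces $[KF':F'] = [K:F]$, which means the local compositum $(KF')_{\mathfrak{P}}$ over $F'_{\mathfrak{P}}$ is obtained by base change from $K_{\mathfrak{p}}/F_{\mathfrak{p}}$ (where $K_{\mathfrak{p}} = K\otimes_F F_{\mathfrak{p}}$ decomposes as the product of completions of $K$ above $\mathfrak{p}$). I would then invoke the base-change compatibility of the relative different, $\mathfrak{d}_{(KF')_{\mathfrak{P}}/F'_{\mathfrak{P}}} = \mathfrak{d}_{K_{\mathfrak{p}}/F_{\mathfrak{p}}}\cdot \mathcal{O}_{(KF')_{\mathfrak{P}}}$, and push it down to a local valuation identity $v_{\mathfrak{P}}(\Delta_{KF'/F'}) = e(\mathfrak{P}/\mathfrak{p})\cdot v_{\mathfrak{p}}(\Delta_{K/F})$, which is exactly the $\mathfrak{P}$-local content of $\Delta_{KF'/F'} = \Delta_{K/F}\mathcal{O}_{F'}$.

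The main obstacle lies in the local base-change identity of the different at primes where both $K/F$ and $F'/F$ are (wildly) ramified, since the different of a compositum need not in general coincide with the extension of the pulled-back different. Here the linear disjointness hypothesis is essential: it ensures that the local ramification filtrations on the two factors do not collide in a way that shrinks the different of the compositum. Depending on what structure is available, one can verify this by a direct calculation with Eisenstein polynomials in the totally ramified case, or by appealing to the conductor-discriminant formula when the extensions are abelian, which reduces the question to an additive manipulation of conductors.
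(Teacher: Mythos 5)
Your proposed ideal identity $\Delta_{KF'/F'} = \Delta_{K/F}\,\calO_{F'}$ is false in general, and linear disjointness does not save it --- this is the obstacle you correctly flag, but your claim that linear disjointness resolves it is wrong. Take $F=\Q$, $K=\Q(\sqrt{2})$, $F'=\Q(\sqrt{3})$: these are linearly disjoint over $\Q$ yet both wildly ramified at $2$. The conductor--discriminant formula gives $|\Delta_{KF'/\Q}|=8\cdot 12\cdot 24=2^8\cdot 3^2$, and the tower formula $\Delta_{KF'/\Q}=\Delta_{F'/\Q}^{[KF':F']}N_{F'/\Q}(\Delta_{KF'/F'})$ then forces $|N_{F'/\Q}(\Delta_{KF'/F'})| = 2^8\cdot 3^2/12^2 = 16$, whereas $|N_{F/\Q}(\Delta_{K/F})|^{[F':F]}=8^2=64$. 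Locally, with $\mathfrak{P}$ the prime of $F'$ over $2$, your predicted $v_{\mathfrak{P}}(\Delta_{KF'/F'})=e(\mathfrak{P}/2)\cdot v_2(\Delta_{K/\Q})=2\cdot 3=6$, but the true value is $4$. Linear disjointness is a statement about the field intersection $K\cap F'=F$; it places no constraint on how the higher ramification filtrations of $K/F$ and $F'/F$ interact at a shared wildly ramified prime, which is exactly where the different of the compositum drops below what your identity predicts. The ``direct calculation with Eisenstein polynomials'' you defer to, if actually carried out, would expose this.

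For comparison, the paper's proof goes globally rather than prime-by-prime: it cites $\Delta_{KF'/F}=\Delta_{F'/F}^{[K:F]}\Delta_{K/F}^{[F':F]}$ (Neukirch, Prop.~I.2.11) and combines it with the tower formula, cancelling the $\Delta_{F'/F}$ powers. But Neukirch's proposition requires $\Delta_{K/F}$ and $\Delta_{F'/F}$ to be coprime, not merely that $K$ and $F'$ be linearly disjoint, and the same example falsifies that global product formula ($2304\ne 9216$). So the two arguments are different in form (local versus global) but share the same gap: both need the wild ramification of $K/F$ and $F'/F$ to be disjoint, e.g.\ coprimality of the relative discriminants, which is strictly stronger than the stated hypothesis. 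Under that stronger hypothesis your local base-change argument is sound; as written, you cannot derive the needed local identity from linear disjointness alone.
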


\begin{proof}
    Because $F'$ and $K$ are linearly disjoint we have, by \cite[Prop.\ I.2.11]{Neukirch}, and the remark following it, that
    \[
    \Delta_{KF'/F} = \Delta_{F'/F}^{[K\colon F]} \Delta_{K/F}^{[F'\colon F]}.
    \]
    Computing the discriminant of $KF'/F$ using the intermediate field $F'$ we have, by \cite[Cor.\ III.2.10]{Neukirch}, that
    \[
    \Delta_{KF'/F} = N_{F'/F}(\Delta_{KF'/F'}) \Delta_{F'/F}^{[KF'\colon F']} = N_{F'/F}(\Delta_{KF'/F'}) \Delta_{F'/F}^{[K\colon F]}.
    \]
    We conclude
    \[
    N_{F/\Q}(\Delta_{F'/F}^{[K\colon F]}) = N_{F/\Q}(N_{F'/F}(\Delta_{KF'/F'}) ) = N_{F'/\Q}(\Delta_{KF'/F'}).\qedhere
    \]
\end{proof}

We will now prove a stability result on cutoffs under base change.

\begin{prop}\label{prop:cutoffs under basechange}
    Consider a variety $V$ over a number field $F$, and let $F'/F$ be a field extension of finite degree. Let $\gamma$ be a cutoff for the base change $V_{F'}/F'$. Then $\frac{\gamma}{[F'\colon F]}$ is a cutoff for $V/F$.
\end{prop}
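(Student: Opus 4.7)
The plan is to reduce the cutoff bound for $V/F$ to the cutoff bound for $V_{F'}/F'$ via base change, using the discriminant formula of Proposition~\ref{prop:discs under base change}. Fix a constant $C'>0$ witnessing that $\gamma$ is a cutoff for $V_{F'}/F'$, and consider a quadratic extension $K/F$ together with a pure quadratic point $P \in V(K) \setminus V(F)$. The analysis splits into two cases depending on whether $K$ embeds into $F'$ or not.

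In the generic case, where $K$ and $F'$ are linearly disjoint over $F$, the compositum $KF' = K \otimes_F F'$ is a quadratic extension of $F'$, and $P$ determines a point $P' \in V_{F'}(KF')$. Since $P$ is fixed by $\Gal(\overline{F}/K)$ but not by any subgroup containing $\Gal(\overline{F}/F')$ (which would force $K \subseteq F'$), the point $P'$ has field of definition exactly $KF'$ and is a pure quadratic point on $V_{F'}/F'$. The absolute height is preserved by compatibility of the adelic metric under base change (see the remark following Definition~\ref{defn:adelically metrised}), so applying the cutoff hypothesis yields
\[
|N_{F'/\Q}(\Delta_{KF'/F'})| \leq C' H(P)^{2\gamma}.
\]
By Proposition~\ref{prop:discs under base change} the left-hand side equals $|N_{F/\Q}(\Delta_{K/F})|^{[F':F]}$, and extracting $[F':F]$-th roots gives the required bound
\[
|N_{F/\Q}(\Delta_{K/F})| \leq (C')^{1/[F':F]} H(P)^{2\gamma/[F':F]}.
\]

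In the remaining case $K$ sits inside $F'$. Because $F'/F$ has finite degree, it contains only finitely many quadratic subfields $K_1,\ldots,K_r$. For each $K_i$, the discriminant $|N_{F/\Q}(\Delta_{K_i/F})|$ is a fixed number, and by the Northcott property the heights of points in $V(K_i) \setminus V(F)$ are bounded below by some $h_i>0$. When $\gamma>0$ the ratio $|N_{F/\Q}(\Delta_{K_i/F})|/H(P)^{2\gamma/[F':F]}$ is therefore uniformly bounded over all such $P$; taking the maximum of $(C')^{1/[F':F]}$ and these finitely many exceptional constants yields a single constant for which the cutoff condition holds uniformly for $V/F$. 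When $\gamma=0$, Lemma~\ref{lem:cutoff 0} applied to $V_{F'}/F'$ implies that $V_{F'}$ has pure quadratic points over only finitely many quadratic extensions $K'/F'$; since every quadratic extension $K/F$ arising in the first case satisfies $KF'=K'$ for one of these $K'$ and each $K'$ contains only finitely many quadratic subfields of $F$, together with the second case we see that $V$ has pure quadratic points over only finitely many $K/F$, so Lemma~\ref{lem:cutoff 0} yields that $0$ is a cutoff for $V/F$.

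The only real obstacle is bookkeeping around the degenerate case where $K \subseteq F'$ and the separate treatment of $\gamma = 0$; the substantive content is entirely contained in the interaction between the base-change behaviour of absolute heights and the discriminantal multiplicativity of Proposition~\ref{prop:discs under base change}.
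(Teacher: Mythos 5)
Your proof is correct and takes the same essential route as the paper's: reduce to the cutoff over $F'$, then pull the bound down to $F$ via the discriminant multiplicativity of Proposition~\ref{prop:discs under base change}. The main substantive step — that a pure $K$-point on $V$ with $K$ linearly disjoint from $F'$ gives a pure $KF'$-point on $V_{F'}$, and that the absolute height is unchanged — is identified and justified correctly.

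You are actually more careful than the paper at two points. First, the paper's proof silently restricts to the generic case and does not treat quadratic subfields $K$ of $F'$, for which $KF' = F'$ is not quadratic and the discriminant formula does not apply; your Northcott-based handling of those finitely many exceptional $K$ is the right fix. Second, you explicitly verify that the base-changed point is \emph{pure} over $F'$, which the paper leaves implicit. One small inefficiency: the separate $\gamma = 0$ branch at the end is unnecessary, since the argument you give for the exceptional $K_i$ (bounding $|N_{F/\Q}(\Delta_{K_i/F})|/H(P)^{2\gamma/[F':F]}$ by a constant using the lower bound $H(P) \geq h_i$) already works verbatim when $\gamma = 0$, as does the bound in the generic case. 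Dropping that branch would shorten the proof with no loss.
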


\begin{proof}
Let $C>0$ be such that for every quadratic extension $K'/F'$ and every point $P \in V_{F'}(K')$ we have
\[
|N_{F'/\Q}(\Delta_{K'/F'})| \leq C H(P)^{2\gamma}.
\]
For any quadratic field $K$ there is a bijection between the $K$-points on $V$ and the $K'=KF$-points on $V_{F'}$, which  respects heights. Hence for any $P \in V(K)$ we have
\[
|N_{F/\Q}(\Delta_{K/F})| = |N_{F'/\Q}(\Delta_{K'/F'})|^{\frac1{[F'\colon F]}} \leq C H(P)^{2\frac{\gamma}{[F'\colon F]}}\qedhere
\]
\end{proof}

We can use this result to provide cutoffs for twists of projective spaces.

\begin{prop}\label{prop:cutoffs SB varieties}
    Let $X/F$ be a Severi--Brauer variety of dimension $n$ with an adelic anticanonical height. Then $\frac{2[F\colon \Q]}{n+1}$ is a cutoff for $X$.
\end{prop}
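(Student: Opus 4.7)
The plan is to deduce this as a direct consequence of Example~\ref{ex:pncutoff} combined with Proposition~\ref{prop:cutoffs under basechange}. By definition, a Severi--Brauer variety $X$ of dimension $n$ over $F$ is a twist of $\PP^n$, so there exists a finite splitting field extension $F'/F$ together with an isomorphism $X_{F'} \cong \PP^n_{F'}$.

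First, I would observe that, since the canonical bundle is compatible with flat base change, the isomorphism $X_{F'} \cong \PP^n_{F'}$ identifies $\omega^\vee_{X_{F'}}$ with $\omega^\vee_{\PP^n_{F'}} = \calO(n+1)$. An adelic metric on $\omega^\vee_X$ induces one on $\omega^\vee_{X_{F'}}$ and hence, via the isomorphism, an adelic metric on $\calO(n+1)$ on $\PP^n_{F'}$. Since cutoffs depend only on the underlying line bundle (by the first Proposition of Section~\ref{S:fringes}), the choice of adelic metric is irrelevant.

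Next, I would apply Example~\ref{ex:pncutoff} with base field $F'$ and $k = n+1$ to conclude that $\frac{2[F'\colon \Q]}{n+1}$ is a cutoff for $\PP^n_{F'}$ with respect to its anticanonical height, hence also for $X_{F'}/F'$. Finally, applying Proposition~\ref{prop:cutoffs under basechange} with this $F'/F$ yields that
\[
\frac{1}{[F'\colon F]} \cdot \frac{2[F'\colon \Q]}{n+1} = \frac{2[F\colon \Q]}{n+1}
\]
is a cutoff for $X/F$, as required. There is no substantive obstacle: the argument is a two-line assembly of results already established in this section, with the only minor bookkeeping being the identification of anticanonical bundles under the splitting isomorphism.
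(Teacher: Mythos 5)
Your proof is correct and follows essentially the same route as the paper: base change to a splitting field where the Severi--Brauer variety becomes $\PP^n$, invoke Example~\ref{ex:pncutoff} for the $\calO(n+1)$-cutoff there, and pull it back via Proposition~\ref{prop:cutoffs under basechange}. The only addition in your writeup is making explicit the identification of anticanonical bundles under the splitting isomorphism, which the paper leaves implicit.
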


\begin{proof}
    Let $L/\Q$ be such that $X(L) \ne \emptyset$. Since $X_L \cong \PP^n_L$ we see that $\frac{2[L\colon \Q]}{n+1}$ is a cutoff for $X_L$ and by Proposition~\ref{prop:cutoffs SB varieties} $\frac{2[L\colon \Q]}{[L\colon F](n+1)} = \frac{2[F\colon \Q]}{n+1}$ is a cutoff for $X$.
\end{proof}

\begin{remark}\label{rem:cutoffs under base change}
    The converse to Proposition~\ref{prop:cutoffs under basechange} need not hold in general; not all quadratic extensions of $F'$ are of the form $KF'$! Hence a cutoff of $V/F$ cannot possibly say anything about all quadratic extensions of $F'$. This explains why a cutoff of $V$ need not say anything about cutoffs of $V_{F'}$.
    
    For example, consider a variety $V/F$ for which $\Sym^2 V(F) = \emptyset$, but $V_{F'}/F'$ does have pure $K'$-points for infinitely many quadratic extensions $K'/F'$, then by Lemma~\ref{lem:cutoff 0} $0$ is a cutoff for $V$, but not for $V_{F'}$. This already happens in the situation in Proposition~\ref{prop:cutoffs SB varieties}; a Severi--Brauer surface split by a cubic extension will have $0$ as a cutoff, but $0$ is not a cutoff for $\PP^2_\Q$ as $\Q$ is Hilbertian. Other examples among del Pezzo surfaces are quartic del Pezzo surface of index $4$, which were proven to exist in \cite{CreutzViray24}.
\end{remark}

We conclude this subsection by considering another geometric construction which is of critical importance to this paper, the restriction of scalars.

\begin{prop}\label{p:cutoffres}
    Let $V'/F'$ be an anticanonically adelically metrised $F'$-variety, where $F'/F$ is a number field.
    If $\gamma \geq 0$ is a cutoff for $V'$, then $\frac{\gamma}{[F' \colon F]^2}$ is a cutoff for $\Res_{F'/F}V'$.
\end{prop}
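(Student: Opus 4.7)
The plan is to translate the cutoff condition for $V := \Res_{F'/F} V'$ back to one for $V'$ via the identification of $K$-points on $V$ with $(K\otimes_F F')$-points on $V'$. First I would restrict attention to quadratic extensions $K/F$ that are linearly disjoint from $F'$. Any quadratic $K$ failing this condition must satisfy $K \subseteq F'$, and there are only finitely many such $K$ (the quadratic subfields of $F'$); by iterated application of the permanence lemma (equating a cutoff for $\calU$ with one for $\calU \setminus \{[P,\bar P]\colon P \in V(K)\}$) these finitely many exceptional extensions can be absorbed at the cost of enlarging the implicit constant.

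For $K$ linearly disjoint from $F'$, set $K' := KF' = K \otimes_F F'$, which is a quadratic extension of $F'$. Proposition~\ref{prop:weilrestriction}(c) identifies $V(K)$ with $V'(K')$, and under this identification a pure $K$-point $Q \in V(K) \setminus V(F)$ corresponds to $P \in V'(K') \setminus V'(F')$, since a point of $V'(F')$ would return a point of $V(F)$. To compare heights, I would invoke Proposition~\ref{prop:height on Weil res}: viewing $Q \in V(\bar F) = \prod_{\phi} V'(\bar F)$ via the $F$-embeddings $\phi \colon F' \to \bar F$, the tuple $(P_\phi)$ forms a single $\Gamma_K$-orbit precisely because $K \otimes_F F'$ is a single field (orbits on $\Hom_F(F',\bar F)$ correspond to factors of $K \otimes_F F'$). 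Hence the $P_\phi$ are all Galois conjugates of $P$, share a common absolute height, and the product formula collapses to
\[
H(Q) = \resH(P)^{[F'\colon F]}.
\]

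I would then apply the cutoff $\gamma$ for $V'$ at the pure $K'$-point $P$, obtaining
\[
|N_{F'/\Q}\Delta_{K'/F'}| \leq C\,\resH(P)^{2\gamma},
\]
and use Proposition~\ref{prop:discs under base change} (valid thanks to the linear disjointness) to rewrite the left side as $|N_{F/\Q}\Delta_{K/F}|^{[F'\colon F]}$. Substituting $\resH(P) = H(Q)^{1/[F'\colon F]}$ and taking $[F'\colon F]$-th roots then yields the cutoff inequality for $V$ with exponent $\gamma/[F'\colon F]^2$, completing the argument.

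The main technical obstacle is the height identity $H(Q) = \resH(P)^{[F'\colon F]}$ outside the special case $Q \in V(F)$ highlighted in Proposition~\ref{prop:height on Weil res}. This rests on the Galois orbit analysis of the $\Gamma_K$-action on $\Hom_F(F',\bar F)$: linear disjointness ensures a single orbit of size $[F'\colon F]$, which is precisely what collapses the product over $\phi$ to an $[F'\colon F]$-th power. Everything else amounts to bookkeeping with the relative discriminant multiplicativity and the permanence properties of cutoffs.
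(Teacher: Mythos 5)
Your proof is correct and takes essentially the same route as the paper: reduce to $K$ linearly disjoint from $F'$, identify pure $K$-points on $\Res_{F'/F}V'$ with pure $KF'$-points on $V'$, apply the height identity $H(Q)=\resH(P)^{[F'\colon F]}$ from Proposition~\ref{prop:height on Weil res}, and combine with Proposition~\ref{prop:discs under base change}. You are slightly more careful than the paper in spelling out why the height identity extends from $Q\in V(F)$ to $Q\in V(K)$ via the single $\Gamma_K$-orbit on $\Hom_F(F',\bar F)$, which the paper cites but does not justify; that extra step is welcome.
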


\begin{proof}
    All but finitely many quadratic field $K/F$ produce a quadratic field $KF'/F'$. For those $K$ the pure $K$-points in $V(K)$ correspond to the pure $KF'$-points in $\Res_{F'/F}V'$ under the natural bijection
    \[
    \nu_{V'} \colon \Res_{F'/F}V'(K) \xrightarrow{\cong} V(KF').
    \]
    If $P \in V(KF')$ corresponds to a $Q \in \Res_{F'/F}V'(K)$ we have $H_{\Res_{F'/F}\omega}(Q) = H(P)^{[F'\colon F]}$ by Proposition~\ref{prop:height on Weil res}. Then we find for all such $P$ and $Q$
    \[
    |N_{F/\Q}(\Delta_{K/F})|^{[F' \colon F]} = |N_{F'/\Q}(\Delta_{KF'/F'})| \leq C_{V'} H(P)^{2\gamma} = C_{V'}H_{\Res_{F'/F}\omega}(Q)^{\frac{2\gamma}{[F' \colon F]}},
    \]
    where we have used Proposition~\ref{prop:discs under base change}.
\end{proof}

\begin{remark}
    As in Remark~\ref{rem:cutoffs under base change}, the converse of Proposition~\ref{p:cutoffres} need not hold, since there might be infinitely many quadratic extensions of $F'$ which are not of the form $KF'/F'$ for $K/F$ quadratic.
    When we have good bounds for an individual quadratic extension, usually Silverman's result for products of projective space, we do get an equivalence.
\end{remark}

\subsection{Optimal cutoffs and lower bounds}

In Proposition~\ref{prop:cutoffs exist} we saw how to use the Silverman bound to find examples of cutoffs. Often this does not produce the best possible cutoff, but it does provide an upper bound for the smallest possible cutoff. In this section we will concern ourselves with establishing lower bounds for cutoffs, which turns out to be much harder.

\begin{defn}\label{def:cutoff}
    We say $\gamma \geq 0$ is the \emph{optimal cutoff} if it is a cutoff and if there is no smaller cutoff.
\end{defn}

\begin{remark}
    In order to prove the Manin--Peyre conjecture for a surface using the strategy outlined in this paper, it is necessary to compute precisely the optimal cutoff as, thanks to Theorem~\ref{thm:summingc}, this value will appear in the leading constant. However, one can prove lower bounds for the counting function $N_\Q(\calU, B)$ of the correct order of magnitude without needing to compute the cutoff. Indeed, if one simply sums over all fields of discriminant bounded by $B^\epsilon$ for $\epsilon>0$ sufficiently small, one obtains the correct order of magnitude.
\end{remark}

Clearly if the infimum of all cutoffs is a cutoff too, it must be optimal. However, proving that a cutoff is optimal, or even that the optimal cutoff must exist is non-trivial in general.
Here is however a positive result.

\begin{prop}\label{p:p1opt}
    Consider $\PP^1(\Q)$, then $2$ is the optimal $\calO(1)$-cutoff.
\end{prop}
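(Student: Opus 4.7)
The plan is to prove two inclusions: (i) that $2$ is a cutoff, and (ii) that no $\gamma < 2$ is a cutoff. Part (i) is immediate upon specialising Example~\ref{ex:pncutoff} to $n=k=1$ and $F=\Q$, which in turn rests on Silverman's Theorem~\ref{thm:silverman}: for any quadratic point $P\in\PP^1(K)$ one has $|\Delta_K|\ll H_{\calO(1)}(P)^4$.

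For part (ii), I would exhibit an explicit family of quadratic points saturating Silverman's bound. Take $\alpha_n$ to be a root of $f_n(X)=X^2+nX+1$, where $n$ ranges over odd integers $\geq 3$ with $n^2-4$ squarefree, and set $K_n=\Q(\alpha_n)=\Q(\sqrt{n^2-4})$. Note that $f_n$ is irreducible since $n^2-4$ is not a perfect square for $n\geq 3$. The two facts to verify are:
\begin{itemize}
\item[(a)] Under these hypotheses $|\Delta_{K_n}|=n^2-4$, since the squarefree integer $n^2-4$ is $\equiv 1\pmod 4$ when $n$ is odd.
\item[(b)] The absolute $\calO(1)$-height of $[\alpha_n:1]\in\PP^1(K_n)$ is exactly $\sqrt{n}$. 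Indeed $\alpha_n$ is a unit in $\calO_{K_n}$ (its norm equals $+1$), so every finite place contributes trivially to the height; at the two real archimedean places the embeddings of $\alpha_n$ are the two roots $(-n\pm\sqrt{n^2-4})/2$, of absolute values asymptotic to $n$ and $1/n$. Thus $H_{\calO(1),K_n}([\alpha_n:1])=n$ and hence $H_{\calO(1)}([\alpha_n:1])=\sqrt{n}$.
\end{itemize}
Given (a) and (b), the ratio $|\Delta_{K_n}|/H_{\calO(1)}([\alpha_n:1])^{2\gamma}=(n^2-4)/n^\gamma$ tends to infinity with $n$ whenever $\gamma<2$, so no uniform constant $C$ can verify the cutoff inequality. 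This precludes any cutoff strictly smaller than $2$.

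The main (though mild) obstacle is confirming that infinitely many odd $n\geq 3$ satisfy $n^2-4=(n-2)(n+2)$ squarefree. Since $n-2$ and $n+2$ are coprime odd integers, this follows from a standard inclusion–exclusion over prime squares dividing $n\pm 2$; even the crude estimate $\sum_{p\text{ odd}}p^{-2}<1/2$ suffices to produce a positive density of such $n$, which is more than enough for our argument.
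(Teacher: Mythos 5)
Your proposal is correct and follows essentially the same strategy as the paper: part (i) via Silverman/Example~\ref{ex:pncutoff}, and part (ii) by exhibiting an explicit sequence of quadratic points saturating Silverman's bound, so that the cutoff inequality forces $\gamma\geq 2$. The paper uses the points $P_m=\bigl[1:\tfrac{-1+\sqrt{1-4m^2}}{2m}\bigr]$ with (non-monic) minimal polynomial $mx^2+x+m$, giving Mahler measure exactly $m$ and imaginary quadratic $K_m=\Q(\sqrt{1-4m^2})$; you instead use monic $X^2+nX+1$ and the real quadratic fields $\Q(\sqrt{n^2-4})$. One small slip: in (b) the equality $H_{\calO(1),K_n}([\alpha_n:1])=n$ is not exact — the value is $\tfrac{n+\sqrt{n^2-4}}{2}$ (the Mahler measure of your monic polynomial), which is only asymptotic to $n$ — but since you only need $|\Delta_{K_n}|/H^{2\gamma}\asymp n^{2-\gamma}\to\infty$ for $\gamma<2$, the argument is unaffected.
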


\begin{proof}
    We will find a constant $C'$ such that there are infinitely quadratic fields $K_i/\Q$, and points $P_i \in \PP^1(K_i)$ such that $C' H(P_i)^4 \leq |\Delta_{K_i}|$ for each $i$. Now if $\gamma \geq 0$ is a cutoff for $\PP^1_\Q$ with constant $C>0$ then we have for the point $P_i$
    \[
    C'H(P_i)^4 \leq |\Delta_{K_i}| \leq C H(P_i)^{2\gamma}.
    \]
    Since there are finitely many quadratic points of bounded height, we conclude that from $i\gg 1$ we get $\gamma \geq 2$.

    We will use the approach in \cite{smallgen} to produce infinitely many points of small height, though this paper considers them with a different height.

    Let $m > 0 $ be an integer such that $\Delta_m = 1-4m^2$ is squarefree, and $K_m/\Q$ be the associated quadratic field. We claim that the points $P_m = [2m \colon -1+\sqrt{\Delta_m}]=\left[ 1 \colon \frac{-1+\sqrt{\Delta_m}}{2m} \right] \in \PP^1(K_m)$ satisfy the conditions required above. Indeed, we see that $\frac{-1+\sqrt{\Delta_m}}{2m}$ has minimal polynomial $P(x)=mx^2+x+m=m(x-\alpha_1)(x-\alpha_2)$ in $\mathbb{Z}[x]$ and hence $H(P_m)=M(P)^{\frac{1}{2}}$ where $M(P)$ denotes the Mahler measure (see \cite{Smyth_2008}). The Mahler measure also satisfies $M(P)=m\prod_{i}\max\{1, |\alpha_i|\}=m$, as $|\alpha_1|=1=|\alpha_2|$. Hence we get $M(P)=m$, and now $H(P_m)^4 = m^2 \leq 4m^2-1 = |\Delta_{K_m}|$ follows for infinitely many points in different quadratic fields since there are infinitely many squarefree values of $1-4m^2$.
\end{proof}

\begin{cor}\label{cor:optimalp1}
    For every $k,n\geq 1$ and number field $F/\Q$, the optimal $\calO(k)$-cutoff of $\PP^n_F$ exists and is equal to $\tfrac{2[F \colon \Q]}k$.
\end{cor}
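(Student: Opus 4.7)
The upper bound $\tfrac{2[F\colon\Q]}{k}$ is already given by Example~\ref{ex:pncutoff}, so the only task is to produce the matching lower bound: no $\gamma < \tfrac{2[F\colon\Q]}{k}$ can be an $\calO(k)$-cutoff for $\PP^n_F$. Once this is established, $\tfrac{2[F\colon\Q]}{k}$ is simultaneously a cutoff and the infimum of all cutoffs, hence the optimal one.

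The plan is to reduce to the case already settled in Proposition~\ref{p:p1opt}. For every positive integer $m$ for which $1-4m^2$ is squarefree, recall from that proof the point $P_m = [2m : -1+\sqrt{1-4m^2}] \in \PP^1(K_m)$, with $K_m = \Q(\sqrt{1-4m^2})$, satisfying $H_{\calO(1)}(P_m) = m^{1/2}$. Embed $\PP^1$ linearly into $\PP^n$ via $[x:y] \mapsto [x:y:0:\cdots:0]$; this embedding pulls back $\calO(1)_{\PP^n}$ to $\calO(1)_{\PP^1}$, so viewing $P_m$ in $\PP^n(K_m)$ preserves the na\"ive $\calO(1)$-height, and hence the $\calO(k)$-height is $H_{\calO(k)}(P_m) = m^{k/2}$. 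Since $F$ contains only finitely many quadratic subfields of $\overline\Q$, the field $K_m$ is linearly disjoint from $F$ for all but finitely many $m$; for such $m$ the composite $K_mF/F$ is a genuine quadratic extension and $P_m$, viewed inside $\PP^n(K_mF)$, is a pure $K_mF$-point since it is not defined over $F$.

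Now suppose $\gamma$ is a $\calO(k)$-cutoff for $\PP^n_F$ with constant $C$. By Proposition~\ref{prop:discs under base change} applied to $F/\Q$ and $K_m/\Q$, we have
\[
|N_{F/\Q}(\Delta_{K_mF/F})| = |\Delta_{K_m/\Q}|^{[F\colon\Q]} \geq (4m^2-1)^{[F\colon\Q]},
\]
where the last inequality uses that $1-4m^2$ squarefree forces $|\Delta_{K_m/\Q}|$ to be either $|1-4m^2|$ or $4|1-4m^2|$. The cutoff inequality applied to $P_m$ therefore gives
\[
(4m^2-1)^{[F\colon\Q]} \leq C\, H_{\calO(k)}(P_m)^{2\gamma} = C m^{k\gamma}.
\]
Since this must hold for infinitely many $m$ (there are infinitely many $m$ for which $1-4m^2$ is squarefree), comparing exponents forces $2[F\colon\Q] \leq k\gamma$, i.e.\ $\gamma \geq \tfrac{2[F\colon\Q]}{k}$, as required.

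The only point that needs care is verifying that infinitely many $m$ yield squarefree $1-4m^2$ with $K_m$ linearly disjoint from $F$; the first is classical (values of a separable integer polynomial are squarefree on a density-one set), and the second is automatic once we discard the finitely many $m$ for which $K_m \subseteq F$. No hard step remains: the argument is an easy combination of the Silverman-type construction of small-height quadratic points on $\PP^1$ with the base-change identity for discriminants.
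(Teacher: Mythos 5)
Your proof is correct and takes essentially the same approach as the paper: both ultimately rest on the explicit small-height quadratic points $P_m \in \PP^1(K_m)$ from the proof of Proposition~\ref{p:p1opt}, transported into $\PP^n_F$ along a linear $\PP^1 \hookrightarrow \PP^n$, with the lower bound extracted via the discriminant base-change formula of Proposition~\ref{prop:discs under base change}. The only difference is presentational: the paper packages the $k$-dependence and the base change to $F$ as separate reduction steps (the reduction to $k=1$ and Proposition~\ref{prop:cutoffs under basechange}), whereas you inline everything into a single direct inequality.
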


\begin{proof}
    Since cutoffs behave well with respect to powers of line bundles it is enough to prove the statement for $k=1$. Proposition~\ref{prop:cutoffs under basechange} shows that the statement holds for all number fields $F$, if it does so for $F=\Q$.
    
    Since $\PP^n_\Q$ contains a line $L=\PP^1_\Q$ with $\left.\calO(1)\right|_L = \calO_{\PP^1}(1)$ we see from $\PP^1(\Q) \subseteq \PP^n(\Q)$ that any cutoff for $\PP^n_\Q$ must be cutoff for $\PP^1_\Q$. By the previous result, $\PP^1_\Q$ does not have a smaller cutoff than $1$, hence the same holds for $\PP^n_\Q$. Since we saw that $1$ is a $\calO(1)$-cutoff for $\PP^n_\Q$ it is in fact optimal.
\end{proof}

The same proof shows that if $\gamma$ is the optimal cutoff for $V/F$ then $\gamma[F'\colon F]$ is a lower bound for all cutoffs for $V_{F'}$.

In general, proving that optimal cutoffs exist or determining their value appears hard. We offer the following two approaches for producing lower bounds for cutoffs. The first of these approaches was used in the proof of Corollary~\ref{cor:optimalp1}.

\begin{example}
Let a variety $V/\Q$ contain a rational curve $C$ with $C(\Q) \ne \emptyset$, and an ample line bundle $\calL$. Define $d=C \cdot \calL$. Then any cutoff for $V$ with respect to $\calL$ is at least $\tfrac 2d$, since a cutoff for $\Sym^2 V(\Q)$ determines one for the subset $\Sym^2 C(\Q)$.

More generally, for subsets $\calU \subseteq \Sym^2 V(\Q)$ which contain all (or even just many) pure quadratic points of $C$, we can use the same trick. For example, the diagonal $\Delta$ on $\PP_\Q^1 \times \PP_\Q^1$ shows that $\frac 2{\deg \left.\calO(2,2)\right|_\Delta} = \frac 24=\frac 12$ is a lower bound for both the product and pristine points on $\PP^1_\Q \times \PP^1_\Q$. For the latter this is sharp, for the former the actual optimal cutoff is $1$, as seen by the many horizontal and vertical rational curves, which all have anticanonical degree $2$.
\end{example}

In some cases proving that a cutoff $\gamma$ for $X$ is actually the optimal cutoff is a byproduct of establishing Manin's conjecture for $\Sym^2 X$.

\begin{example}\label{ex:Manin implies optimal cutoff}
Consider $X=\PP^2_\Q$ then Manin's conjecture was proven by Schmidt~\cite{MR1330740} by establishing
\[
N_K^*(\PP^2,B^{\frac12}) = c_{\PP^2,K} B + O(c^*_{\PP^2,K} B^{\frac 56}),
\]
and an $\calO(3)$-cutoff for $X=\PP_\Q^2$ is given by $\gamma=\tfrac 23$. Hence for $\calU \subseteq \Sym^2 \PP^2(\Q)$ the cothin set of conjugate pairs of pure quadratic points on $\PP_\Q^2$ he obtains
\begin{align*}
N_\Q(\calU, B) & = \frac 12 \sum_{|\Delta| \leq B^\gamma}N_K^*(\PP^2,B^{\frac12}) = \sum_{|\Delta| \leq B^\gamma} \left[ c_{\PP^2,K} B + O(c^*_{\PP^2,K} B^{\frac 5{6}}) \right] \\
& = \frac 12 \sum_{|\Delta| \leq B^\gamma} \left[ \alpha(\PP_K^2) \tau(\PP_K^2) B + O(c^*_{\PP^2,K} B^{\frac 56}) \right] \\
& =  \frac 12 \alpha(\PP_\Q^2) \tau(\Sym^2 \PP_\Q^2) B \log (B^\gamma) + O\left( \sum_{|\Delta| \leq B^\gamma} c^*_{\PP^2,K} B^{\frac 56} \right).
\end{align*}
Schmidt proved \cite[\textsection 8]{MR1330740} that
\[
\sum_{|\Delta| \leq Y} c^*_{\PP^2,K} \ll Y^{\frac 14} \left( \log^+ Y\right)^{\frac 12},
\]
and hence
\begin{align*}
N_\Q(\calU, B) & =  \frac 12 \gamma \alpha(\PP_\Q^2) \tau(\Sym^2 \PP_\Q^2) B \log B + O\left(B^{\frac 14 \gamma} \left( \log^+ B\right)^{\frac 12} B^{\frac 56} \right)\\ & = \frac 12  \gamma \alpha(\PP_\Q^2) \tau(\Sym^2 \PP_\Q^2) B \log B + O\left(B^{\frac56 + \frac \gamma 4} \left( \log B\right)^{\frac 12} \right).
\end{align*}
Using the cutoff $\gamma = \frac 23$ he obtains the desired result.
The main point of this example is that if there would be a cutoff $\gamma < \frac 23$, then the fields with discriminant between $B^\gamma$ and $B^{\frac{2}{3}}$ should contain no points and thus summing beyond $B^\gamma$ should not change the asymptotic.  
This is contradicted by the formula above; changing $\gamma$ does alter the constant of the main term. Note that there is no such logical inconsistency in choosing $\gamma > \frac{2}{3}$ since then the error term will dominate the main term.
Hence $\frac 23$ is the optimal anticanonical cutoff for $\PP^2_\Q$.
\end{example}

We have thus seen the relevance of the optimal cutoff in the study of Manin's conjecture of $\Sym^2 X$, and the result above shows that for the optimal anticanonical cutoff $\gamma=\frac23$ for $X= \mathbb P^2_{\mathbb Q}$ we have the relation
\[
\frac12 \gamma \alpha(\PP_\Q^2) = \alpha(\Hilb^2 \PP_\Q^2).
\]
One might na{\"i}vely expect something similar for all surfaces $X$.
However, the relation between the cutoff and the $\alpha$-constants of the surface and it's Hilbert scheme can be much more complicated in the presence of fibrations.

\begin{remark}\label{rem:p1p1}
We can even use the trick in Example~\ref{ex:Manin implies optimal cutoff} to prove that the $\calO(2)$-cutoff $1$ for $\PP^1_\Q$ is the optimal cutoff, though Manin's conjecture for $\Sym^2 \PP^1_\Q$ has not been established by summing the individual terms $N'_K(\PP^1,B)$, see \cite[\textsection 9]{MR1330740}.
Le Rudulier \cite{lerudulier-thesis} implicitly used the induced cutoff $\frac12$ for the pristine point on $\PP^1_\Q \times \PP^1_\Q$ to successfully prove Manin's conjecture for $\Sym^2(\PP^1_\Q \times \PP^1_\Q)$. This proves that $\frac12$ is the optimal cutoff for the pristine points on the product, and in turn $1$ must be the optimal anticanonical cutoff for $\PP^1_\Q$.

However, for this surface the relation between the $\alpha$ constants of $X$ and $\Hilb^2 X$ is more mysterious due to the presence of fibrations. Implicitly, Le Rudulier arrives at the following equation for $X= \PP_\Q^1\times \PP_\Q^1$
\[
\alpha\left(\Hilb^2(\PP_\Q^1\times\PP_\Q^1)\right) = \frac12 \alpha(\PP_\Q^1 \times \PP_\Q^1)\gamma \left(1-\frac 1{\rho(\PP_\Q^1 \times \PP_\Q^1)} \left(\frac1{\gamma_1} + \frac1{\gamma_2} \right) \gamma \right),
\]
where $\gamma=\frac12$ is the optimal cutoff for pristine points on $\PP_\Q^1\times \PP_\Q^1$, $\gamma_1=\gamma_2=1$ are the optimal cutoffs for the factors.

We have been able to find a similar expression for $\alpha (\Hilb^2 X)$ for the Hirzebruch surfaces $\mathbb F_1$ and $\mathbb F_2$, but the relation for a general del Pezzo surface seems to be much more complicated.
\end{remark}

\section{Lattices over general rings of integers}\label{s:lattices}

In this section, we collect basic properties of lattices over the ring of integers $\calO_F$ of a number field $F$ following \cite[Ch.\ III]{corpslocal}. In particular, we develop a theory of the geometry of numbers over number fields, estimating the number of lattice points $\Lambda \cap \calS$ of an $\calO_F$-lattice $\Lambda$ in a bounded region $\calS$. The main term is given by some notion of the size of the counting region divided by the determinant of the lattice, and the error term in terms of the successive minima of the $\calO_F$-lattice. The advantage of viewing $\Lambda$ as an $\calO_F$-lattice, as opposed to its $\Z$-structure, is that the dimension of the lattice and hence the number of successive minima are reduced due to the additional symmetry. This yields an improved error term in comparison to the classical Davenport's lemma.

\subsection{The algebra of lattices}
An $\mathbf{\calO}_F$\textbf{-lattice} $\Lambda$ is a finitely generated torsion-free $\calO_F$-module. Then $\Lambda$ is an $\calO_F$-submodule of the $F$-vector space $V = \Lambda \otimes_{\calO_F} F$, so that $\Lambda$ spans $V$ over $F$. We will call $\dim_F V$ the \textbf{rank} of $\Lambda$.

Consider two $\calO_F$-lattices $\Lambda' \subseteq \Lambda$ for which $\Lambda/\Lambda'$ is an $\calO_F$-module of finite length, then there is a composition series
\[
0 \subsetneq M_1 \subsetneq \cdots \subsetneq M_r = \Lambda/\Lambda', 
\]
with simple composition factors of the form $M_i/M_{i-1} \cong \calO_F/\mathfrak p_i$ where $\mathfrak p_i$ is a prime ideal.
We define the \textbf{index ideal} of the two lattices as
\[
(\Lambda : \Lambda') = \prod_{i=1}^r \mathfrak{p}_i.
\]
By the Jordan--H\"older theorem this ideal is well defined, see \cite[I\textsection 5]{corpslocal}.
Moreover, given any two $\calO_F$-lattices $\Lambda$ and $\Lambda'$ which span the same $F$-vector space, their index ideal is given by the quotient
\[
(\Lambda: \Lambda') := \frac{(\Lambda:\Lambda \cap \Lambda')}{(\Lambda' : \Lambda \cap \Lambda')}.
\]
We note that this definition is independent of any choices and does not depend on a basis. Furthermore, the index ideal also does not depend on the base field, in the sense that for a field extension $F'/F$ and $\calO_{F'}$-lattices $\Lambda_1, \Lambda_2 \subseteq V$ we have \cite[Prop.\ I.12]{corpslocal}
\[
N_{F'/F}(\Lambda_1 \colon \Lambda_2)_{F'}  = (\Lambda_1 \colon \Lambda_2)_F.
\]
Therefore we will write $(\Lambda_1 \colon \Lambda_2)$ if the underlying field is unambiguous.

\subsection{The shape of lattices}

We describe the shape of the $\calO_F$-lattice $\Lambda$ in relation to a collection of sets $B_v \subseteq V_v$ indexed by the archimedean places $v$ of $F$. We assume that each $B_v \subseteq V_v$ is open, convex, bounded, and rotationally invariant (in the sense that $uB_v = B_v$ for all $|u|_v = 1$ in $F_v$).

We embed $V$ (and hence $\Lambda$) diagonally in $V_{F, \infty} := \prod_{v\mid \infty} V_v$ and we write $B_{F, \infty}:=\prod_{v \mid \infty} B_v$. We note that  $V_{F, \infty} = V_{\Q, \infty}$, and so in what follows we denote this by $V_\infty$. 
For each $1 \leq i \leq \dim_F V$, the \textbf{$i$-th successive minimum} $\lambda_i$ of $\Lambda$ (with respect to $B_{F, \infty}$) is defined as
\[
\lambda_i := \inf \{ \lambda \in \R_{> 0} \colon \Lambda \cap \lambda B_{F, \infty} \text{ contains } i \text{ $F$-linearly independent vectors} \}.
\]

Suppose further that $V$ is endowed with a non-degenerate $F$-bilinear
pairing $T \colon V \times V \to F$.  
Then the \textbf{discriminant} $\Delta_{\Lambda/F}$ of a lattice $\Lambda$ (with respect to the pairing $T$) is uniquely defined by the following two properties \cite[Prop.\ III.3 and 5]{corpslocal}.
\begin{itemize}
    \item[(1)] If $\Lambda$ is a free $\calO_F$-module with basis $v_i$, then $\Delta_{\Lambda/F}$ is the principal fractional ideal generated by $\det (T(v_i,v_j)_{i,j}) \in F$.
    \item[(2)] For two lattices $\Lambda_i$ in the same vector space we have $\Delta_{\Lambda_1/F} = (\Lambda_2: \Lambda_1)^2 \Delta_{\Lambda_2/F}$.
\end{itemize}

Alternatively, by \cite[Prop.\ III.10]{corpslocal} discriminants are determined locally so one can compute the local discriminants using the above properties then take the product to compute the global discriminant.  Finally, discriminants can also be defined via the dual lattice
\[
\Lambda^\vee := \{x \in V\colon \forall y \in \Lambda,T(x,y)\in \calO_F\},
\]
and then
$\Delta_{\Lambda/F} = (\Lambda^\vee \colon \Lambda)$ \cite[Prop.\ III.4]{corpslocal}. For details, we refer the reader to \cite{corpslocal} and \cite{reiner2003maximal}. 

\subsection{The covolume of a lattice}\label{ss:covolume}

To measure the volume of a lattice with respect to a chosen pairing $T\colon V \times V \to F$, which is usually suppressed in the notation, and relate the successive minima to the discriminant, we first define the \textbf{determinant} of a lattice $\Lambda$ as $${\det}_F \, \Lambda := \sqrt{N_{F/\Q}(\Delta_{\Lambda/F})},$$ or simply $\det \Lambda$ when the base field is understood.

Another invariant is given by Haar measures on $V_v = V \otimes_F F_v$ which is a locally compact group as well as a finite-dimensional $F_v$-vector space for each archimedean place $v$ of $F$. We normalise the Haar measure via the pairing $T$ in the following way. Fix an isomorphism from $V_v$ to its Pontryagin dual
$V_v \to \hat V_v, \eta \mapsto \chi_\eta := \left( x \mapsto \exp(2\pi i \Tr_{F_v/\R} T(x,\eta))\right)$ and let $\mu_v$ be the unique Haar measure which identifies with its dual measure $\hat{\mu}_v$ under this isomorphism. If $V=A$ is a separable $F$-algebra with pairing given by the trace pairing, as it will be in our applications, then under the natural identification $A_v := \prod_{w\mid v} A_w$ is a product of copies of $\R$ and $\C$ as $F_v$-algebras, the measure $\mu_v$ on $A_v$ agrees with the product measure of our normalised Haar measures on $\R$ and $\C$.

We now show that the determinant and the measure on $V_{\infty}$ have a similar dependence on the pairing. 
By our convention on normalisation of norms on local fields, we have $|\alpha|_v^{[F_v \colon \R]} = \mu_v(\alpha S)/\mu_v(S)$ for any non-zero measure set $S\subseteq V_v$.

\begin{prop}
\label{prop:indpair}
On $V_{\infty}$ we consider the measure $\mu_{F, \infty} := \prod_{v \mid \infty} \mu_v$. The ratio 
\[
\frac{\mu_{F, \infty}}{\det_F \Lambda}
\]
is independent of the chosen pairing.
\end{prop}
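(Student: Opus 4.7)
The plan is to reduce the proof to a single change-of-pairing computation and show that both $\mu_{F,\infty}$ and $\det_F\Lambda$ are rescaled by the same factor under this change, so that their ratio is invariant. Any two non-degenerate $F$-bilinear pairings $T, T'$ on $V$ correspond bijectively to $F$-linear isomorphisms $V \to V^*$, hence they differ by an $F$-linear automorphism of $V$: I would write $T'(x,y) = T(Ax,y)$ for a unique $A \in \GL_F(V)$ and set $\delta := \det_F A \in F^{\times}$. The claim to be proved is that passing from $T$ to $T'$ multiplies both $\det_F\Lambda$ and $\mu_{F,\infty}$ by $|N_{F/\Q}(\delta)|^{1/2}$.

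For the denominator, I would work locally on a basis $(v_i)$ of $\Lambda$, which suffices by property (1) and the local characterisation of the discriminant. In such a basis the Gram matrix of $T'$ is the transpose of the matrix of $A$ times the Gram matrix of $T$, so $\det(T'(v_i,v_j)) = \delta\det(T(v_i,v_j))$. Consequently $\Delta_{\Lambda/F,T'} = \delta\cdot\Delta_{\Lambda/F,T}$ as fractional $\calO_F$-ideals, and applying $\sqrt{|N_{F/\Q}(\cdot)|}$ shows that $\det_F\Lambda$ with respect to $T'$ differs from $\det_F\Lambda$ with respect to $T$ by exactly $|N_{F/\Q}(\delta)|^{1/2}$.

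For the numerator, I would fix an archimedean place $v$ and use the standard Fourier-analytic fact that, in any $\R$-basis of $V_v$, the self-dual Haar measure with respect to a character $\exp(2\pi i\,\tilde T(\cdot,\cdot))$ for a real pairing $\tilde T$ equals $|\det_\R M_{\tilde T}|^{1/2}$ times Lebesgue measure. Applied to $\tilde T = \Tr_{F_v/\R}\circ T$, passing from $T$ to $T'$ multiplies $M_{\tilde T}$ by the matrix of $A_\R$, the $\R$-linearisation of $A_v$. The key identity is
\[
|\det_\R A_\R| \;=\; |N_{F_v/\R}(\delta)| \;=\; |\delta|_v^{[F_v:\R]},
\]
which follows from the normalisation convention $|\alpha|_v^{[F_v:\R]} = \mu_v(\alpha S)/\mu_v(S)$ applied to $\R$-volume scaling by $\delta$. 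Hence $\mu_v^{T'} = |\delta|_v^{[F_v:\R]/2}\mu_v^T$, and taking the product over archimedean places together with $\prod_{v\mid\infty}|\delta|_v^{[F_v:\R]} = |N_{F/\Q}(\delta)|$ gives $\mu_{F,\infty}^{T'} = |N_{F/\Q}(\delta)|^{1/2}\mu_{F,\infty}^T$.

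The two rescaling factors coincide, so the ratio $\mu_{F,\infty}/\det_F\Lambda$ is indeed unchanged. I expect the main obstacle to be the real-determinant identity at complex archimedean places: a $\C$-linear endomorphism of $\C$-determinant $\delta$ has $\R$-determinant $|\delta|_\C^2$ rather than $|\delta|_\C$, and one must carefully check that this squaring is precisely compensated by the factor $[F_v:\R]=2$ in the exponent of $|\delta|_v$, so that the cancellation with the denominator goes through uniformly across real and complex places.
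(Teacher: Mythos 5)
Your proof is correct and takes essentially the same approach as the paper: both reduce the change-of-pairing to a single $F$-linear automorphism (your $A = T^{-1}\circ T'$ versus the paper's $\phi = (T')^{-1}\circ T = A^{-1}$), both verify the discriminant rescales by $|N_{F/\Q}(\det_F A)|^{1/2}$ via the Gram matrix on a free lattice (using the pairing-independence of index ideals), and both establish the matching rescaling of the measures via Fourier self-duality and the identity $|\det_\R A_\R| = |\delta|_v^{[F_v:\R]}$. The only cosmetic difference is on the measure side: the paper compares double Fourier transforms $\hat{\hat f}$ and $\hat{\hat f}{}'$ directly, while you invoke the closed-form expression of the self-dual measure as $|\det_\R M_{\tilde T}|^{1/2}$ times Lebesgue measure in a basis; these are the same computation in slightly different packaging. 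Your closing remark about the complex places is the right thing to worry about and your check that the squaring of $|\delta|_\C$ is precisely matched by $[F_v:\R]=2$ on both sides resolves it correctly.
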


\begin{proof}
    Let $T$ and $T'$ be two non-degenerate bilinear pairings on $V$, we will write $\mu$ and $\Delta$, and $\mu'$ and $\Delta'$ for the measures and discriminant defined using these respective pairings.
    Define the automorphism $\phi$ as the composition $V \xrightarrow{T} V^\vee \xleftarrow{T'} V$. 
    We will first see how $\det_F \Lambda$ varies under $\phi$ then show that $\mu_{F, \infty}$ changes by the same factor.

    We start by proving $\Delta_{\Lambda/F} = (\det \phi) \Delta'_{\Lambda/F}$. By the defining property (2) of determinants it suffices to prove this for a single lattice spanning $V$, since the index ideal is independent of the pairing. So suppose that $\Lambda$ is free with basis $(e_i)_{i=1}^n$ then by property (1) of determinants we find 
    \[
    \det( T(e_i, e_j)) = \det( T'(e_i, \phi e_j)) = \det \phi \det( T'(e_i,e_j)).
    \]
    We conclude $\det \Lambda = \sqrt{N_{F/\Q}(\Delta_{\Lambda/F})} = \sqrt{N_{F/\Q}(\det \phi)N_{F/\Q}(\Delta'_{\Lambda/F})} = \sqrt{N_{F/\Q}(\det \phi)}\det' \Lambda$.

    We now turn to understanding how $\mu_{F, \infty}$ varies under $\phi$. Let $v$ be an archimedean place and
    let $\mu_v$ be the self-dual measure on $V_v$ under the identification $V_v \to \hat V_v$ above using the pairing $T$. Using the measure $\mu_v$ we compute the double Fourier transform of a suitable function $f$ on $V_v$ with respect to the two identifications $V \to \hat V$ coming from the two pairings $T$ and $T'$ and find 
    \[
    \hat{\hat f}' = |\det \phi|^{[F_v \colon \R]}_v \hat{\hat f}.
    \]
    This implies that the self-dual measures with respect to the pairings $T$ and $T'$ are related by $\mu'_v := \mu_v |\det \phi|^{-\frac{[F_v \colon \R]}2}_v$. We therefore have
    \[
    \prod_{v\mid \infty} \mu_v = \prod_{v\mid \infty} |\det \phi|_v^\frac{[F_v \colon \R]}2 \mu'_v = N_{F/\Q}(\det \phi)^{\frac12} \prod_{v\mid \infty} \mu'_v,\qedhere
    \]
    which concludes the proof.
\end{proof}

\begin{rmk}
    Let $F$ be a number field of degree $d$, and identify $F_\infty$ with $\R^d$ as $\R$-algebras using the identification $\C\cong \R^2$ coming from the basis $\{1,i\}$. The trace pairing on $F_\infty$ and the standard pairing on $\R^d$ differ by a factor $2^s$, where $s$ is the number of complex places of $F$. Under these pairings $\R^d$ is endowed with the Lebesque measure, and $F_\infty$ with $2^s$ times the Lebesque measure. This explains why the determinant of $\calO_F$ in $\R^d$ equals $2^{-s} \sqrt{|\Delta_{F/\Q}|}$.
\end{rmk}

For lattices in the $F$-vector space $F^n$ with the corresponding standard pairing we find that $\sqrt{N_{F/\Q}(\Delta_{\Lambda/F})}= [\calO_F^n \colon \Lambda]$ since $\Delta_{\calO_F^n/F} = 1$. This establishes the following result for Serre's more general notion of lattice, which was proven in the case $V=F^n$ independently by Bombieri--Vaaler \cite[Thm.\ 3\&6]{BV} and McFeat \cite{McFeat}. 

\begin{prop}\label{prop:prod suc min}
    For each $v\mid\infty$ let $B_v \subseteq V_v$ be open, convex, bounded, and rotationally invariant. Then
    \[
    (\lambda_1\cdots \lambda_n)^d \asymp_{F,n} \frac{\det_F \Lambda}{\prod_{v|\infty} \mu_v(B_v)},
    \]
    for $n = \rank \Lambda$ and $d = [F \colon \Q]$. 
\end{prop}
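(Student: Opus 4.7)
The plan is to reduce to the classical Minkowski second theorem by viewing $\Lambda$ as a $\Z$-lattice of rank $nd$ inside the real vector space $V_\infty$, after comparing the $\calO_F$-successive minima $\lambda_i$ to the usual $\Z$-successive minima $\mu_j$ of $\Lambda$ with respect to the symmetric convex body $B_{F,\infty}$.

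The first step is to establish the sandwich $\lambda_i \leq \mu_{(i-1)d+1} \leq \mu_{id} \leq C_F \lambda_i$ for each $1 \leq i \leq n$, where $C_F$ depends only on $F$. The lower bound is immediate: any $(i-1)d+1$ $\Q$-linearly independent vectors in $\Lambda \cap \mu_{(i-1)d+1} B_{F,\infty}$ span an $F$-subspace of dimension at least $\lceil((i-1)d+1)/d\rceil = i$, from which one extracts $i$ $F$-linearly independent vectors. For the upper bound, I would first invoke a Minkowski-type argument on $\calO_F \subseteq F_\infty$ to produce a $\Z$-basis $e_1,\ldots,e_d$ of $\calO_F$ whose archimedean absolute values are bounded by a constant depending only on $F$. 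Given $F$-linearly independent $x_1,\ldots,x_i \in \Lambda \cap \lambda_i B_{F,\infty}$, the $id$ vectors $\{e_\ell x_j\}$ are $\Q$-linearly independent in $\Lambda$; rotational invariance together with convexity (which forces $0 \in B_v$) yields $u B_v = |u|_\infty B_v$ for $u \in F_v$, so that each $e_\ell x_j$ lies in $C_F \lambda_i B_{F,\infty}$. Multiplying the sandwich over $1 \leq i \leq n$ gives $(\lambda_1 \cdots \lambda_n)^d \asymp_{F,n} \mu_1 \cdots \mu_{nd}$.

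The classical Minkowski second theorem applied to $\Lambda$ as a $\Z$-lattice in $V_\infty$ with the measure $\mu_{F,\infty}$ then yields
\[
\mu_1 \cdots \mu_{nd} \asymp_n \frac{\operatorname{covol}_{\mu_{F,\infty}}(\Lambda)}{\prod_{v\mid\infty} \mu_v(B_v)}.
\]
It remains to identify $\operatorname{covol}_{\mu_{F,\infty}}(\Lambda) = \det_F \Lambda$. By Proposition~\ref{prop:indpair}, the ratio of these two quantities is independent of the chosen pairing, and it is also unchanged if $\Lambda$ is replaced by another $\calO_F$-lattice in $V$, since both covolume and $\det_F$ scale by $N_{F/\Q}$ of the index ideal. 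Hence it suffices to verify equality in a single model case, say $V = F^n$ with the trace pairing and $\Lambda = \calO_F^n$, where both sides reduce to $|\Delta_F|^{n/2}$ by the standard self-duality computation.

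The main subtlety lies in the careful exploitation of rotational invariance to control multiplication by $\calO_F$ on the $B_v$'s; once this is in hand, the remainder of the argument is a direct application of classical geometry of numbers. The payoff of working with $\calO_F$-lattices rather than $\Z$-lattices is visible in the sandwich: the additional $\calO_F$-symmetry collapses the $nd$ $\Z$-successive minima into $n$ comparable blocks of size $d$, which is precisely the source of the improved error term advertised in the overview.
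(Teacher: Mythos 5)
Your proof is essentially correct and takes a genuinely different route from the paper, so a comparison is warranted. The paper's argument is a direct appeal to the adelic form of Minkowski's second theorem of Bombieri--Vaaler (their Theorems~3 and~6), after which the non-archimedean contribution is reinterpreted as the reciprocal index $[\calO_F^n\colon\Lambda]^{-1}$ following Broberg. You instead reduce to the \emph{classical} Minkowski second theorem over $\Z$ by first establishing the two-sided comparison
$\lambda_i \leq \mu_{(i-1)d+1} \leq \mu_{id} \leq C_F \lambda_i$
between the $\calO_F$-minima and the $\Z$-minima. That sandwich is the correct and self-contained argument: the lower bound by a pigeonhole count on $F$-spans, the upper bound by multiplying an (approximately) minimising $F$-independent system by a fixed integral basis of $\calO_F$ and invoking rotational invariance to absorb the units $u$ with $|u|_v=1$. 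It is worth noting that your two-sided comparison is precisely Lemma~\ref{lem:comparing succ min} of the paper, which there is deduced \emph{from} the present proposition; your argument inverts that logical dependency and makes both results consequences of a direct counting argument, which is a perfectly legitimate (and arguably cleaner) organisation.

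There is one concrete error, which is harmless here but should be flagged: the identity $\operatorname{covol}_{\mu_{F,\infty}}(\Lambda) = \det_F \Lambda$ is false. The covolume of $\Lambda$ in $V_\infty$ with respect to the self-dual measure is $\det_\Q\Lambda = \sqrt{|\Delta_{\Lambda/\Q}|}$, which by Proposition~\ref{prop:disc under restriction} equals $|\Delta_{F/\Q}|^{n/2}\det_F\Lambda$, not $\det_F\Lambda$. In your model case $V=F^n$, $\Lambda=\calO_F^n$ with the standard pairing, you have $\det_F\calO_F^n = 1$ (the Gram matrix is the identity), whereas $\operatorname{covol}(\calO_F^n) = |\Delta_F|^{n/2}$; so the two sides differ, by exactly the factor $|\Delta_F|^{n/2}$ predicted above, and the claim that ``both sides reduce to $|\Delta_F|^{n/2}$'' conflates the two. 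Since $|\Delta_F|^{n/2}$ depends only on $F$ and $n$, it is absorbed into the implied constant of $\asymp_{F,n}$ and your proof of the proposition as stated goes through; but the intermediate equality you assert is wrong and would matter if you tried to extract the exact constant. One would correct this by replacing ``$\operatorname{covol}_{\mu_{F,\infty}}(\Lambda) = \det_F\Lambda$'' with ``$\operatorname{covol}_{\mu_{F,\infty}}(\Lambda) = |\Delta_F|^{n/2}\det_F\Lambda$'' and citing Proposition~\ref{prop:disc under restriction} together with Proposition~\ref{prop:ratio comparison}.
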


For the lower bound it suffices to assume the regions $B_v$ are just symmetric (i.e. $B_v = -B_v$); the rotationally invariance (i.e. $B_v = uB_v$ for all $|u|_v=1$ in $F_v$) is only required for the upper bound.

\begin{proof}
    Let us fix an $F$-isomorphism $V=F^n$, then $\det_F \Lambda= [\calO_F^n \colon \Lambda]$. The upper bound on $(\lambda_1 \cdots \lambda_n)^d$  follows from \cite[Thm.\ 3]{BV}, and the lower bound from \cite[Thm.\ 6]{BV}. These show that the left hand side is asymptotically equal to an infinite product given by $\prod_{v \mid \infty} \mu_v(B_v) \prod_{v \nmid \infty} \mu_v(\Lambda_v)$.  We now interpret $\prod_{v \nmid \infty} \mu_v(\Lambda_v)$ as the reciprocal index $[\calO_F^n \colon \Lambda]^{-1}$ as in the proof of the Corollary to Theorem~5 in \cite{broberg}. Note here that the implied pairing on $V$ comes from the canonical one on $F^n$ but we conclude using Proposition~\ref{prop:indpair}.
\end{proof}

\subsection{Lattices over different base rings}

A lattice over the ring of integers of a number field is naturally a lattice over a smaller ring of integers. In this section, we will study an $\calO_F$-lattice under its $\Z$-structure as this is the relevant case for our later work. However, the results contained in this section hold for general separable extensions. We write $V_\Q$ for $V$ as a $\Q$-vector space, and as $F/\Q$ is separable $T_\Q := \Tr_{F/\Q} \circ T$ defines a non-degenerate $\Q$-bilinear pairing on $V_\Q$.

The following statement generalises a well-known property for rings of integers which was used in the proof of Proposition~\ref{prop:discs under base change}.

\begin{prop}\label{prop:disc under restriction}
    For a field extension $F/\Q$ and an $\calO_F$-lattice $\Lambda$ we have
    \[
    \Delta_{\Lambda/\Q} = N_{F/\Q}(\Delta_{\Lambda/F}) \Delta_{F/\Q}^{\rank \Lambda}.
    \]
\end{prop}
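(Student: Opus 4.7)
The plan is to first reduce to the case where $\Lambda$ is a free $\calO_F$-module, and then perform a direct computation of both sides in explicit bases. For the reduction I would pick any full-rank free $\calO_F$-sublattice $\Lambda' \subseteq \Lambda$ (for instance, the $\calO_F$-span of any $F$-basis of $V$ lying in $\Lambda$). The defining property (2) of the discriminant gives
\[
\Delta_{\Lambda/F} = (\Lambda : \Lambda')_F^{-2}\, \Delta_{\Lambda'/F}
\quad \text{and} \quad
\Delta_{\Lambda/\Q} = (\Lambda : \Lambda')_\Z^{-2}\, \Delta_{\Lambda'/\Q}.
\]
Combined with the compatibility $N_{F/\Q}((\Lambda : \Lambda')_F) = (\Lambda : \Lambda')_\Z$ recalled at the start of the section, the identity for $\Lambda$ will follow from the identity for $\Lambda'$.

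Now assume that $\Lambda$ is free with $\calO_F$-basis $v_1,\ldots,v_n$, and fix a $\Z$-basis $e_1,\ldots,e_d$ of $\calO_F$, so that $\{e_k v_i\}$ is a $\Z$-basis of $\Lambda$. Writing $t_{ij} = T(v_i,v_j) \in F$, the $F$-bilinearity of $T$ gives $T_\Q(e_k v_i, e_l v_j) = \Tr_{F/\Q}(e_k e_l t_{ij})$. Let $T_F = (\Tr_{F/\Q}(e_k e_l))_{k,l}$ denote the Gram matrix of $\calO_F/\Z$ under the trace pairing, and let $\mu : F \hookrightarrow M_d(\Q)$ be the regular representation in the basis $(e_k)$. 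A direct computation using the identity $\Tr_{F/\Q}(e_k e_l \alpha) = (T_F \cdot \mu(\alpha))_{k,l}$, valid for all $\alpha \in F$ because $\mu$ is a $\Q$-algebra homomorphism, shows that the $(i,j)$-th $d \times d$ block of the Gram matrix $M$ of $T_\Q$ on $\Lambda$ equals $T_F \cdot \mu(t_{ij})$. Therefore $M$ factors as
\[
M = (I_n \otimes T_F) \cdot N,
\]
where $N$ is the $nd \times nd$ block matrix with $(i,j)$-block $\mu(t_{ij})$.

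Taking determinants yields $\det M = (\det T_F)^n \cdot \det N$. By property (1), $\det T_F$ generates $\Delta_{F/\Q}$, while $\det M$ generates $\Delta_{\Lambda/\Q}$. The block matrix $N$ represents, in the $\Q$-basis $\{e_k v_i\}$, the $\Q$-linear endomorphism of $F^n$ given by left multiplication by $A = (t_{ij}) \in M_n(F)$; the classical restriction-of-scalars identity $\det_\Q(A \mid F^n) = N_{F/\Q}(\det_F A)$ then gives $\det N = N_{F/\Q}(\det A)$, and $\det A$ generates $\Delta_{\Lambda/F}$. Assembling these pieces produces the claimed equality of ideals. The only slightly delicate ingredient is the block factorisation of the Gram matrix, but, as indicated above, it follows at once from $\mu$ being a $\Q$-algebra homomorphism together with the dual role played by $T_F$ in relating multiplication and the trace pairing.
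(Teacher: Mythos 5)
Your proof is correct, but takes a genuinely different route from the paper's. The paper argues conceptually via dual lattices: it shows that the $\Z$-dual $(\Lambda_\Z)^\vee$ under $T_\Q = \Tr_{F/\Q}\circ T$ equals $\delta_{F/\Q}^{-1}\Lambda^\vee$ where $\delta_{F/\Q}^{-1}$ is the codifferent, and then reads off the discriminant from $\Delta_{\Lambda/\Q} = ((\Lambda_\Z)^\vee : \Lambda_\Z)_\Q$ using the norm-compatibility of index ideals and $N_{F/\Q}(\delta_{F/\Q}) = \Delta_{F/\Q}$. Your approach instead reduces to the free case via property (2) and the same norm-compatibility, and then performs an explicit Gram-matrix computation: the block factorisation $M = (I_n \otimes T_F)\cdot N$, combined with $\det_\Q(A \mid F^n) = N_{F/\Q}(\det_F A)$, gives the result directly. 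Both arguments are valid; the paper's is shorter once the dual-lattice formalism is in place, while yours is more elementary and self-contained, needing only the bilinearity of $T$, the algebra-homomorphism identity $\Tr_{F/\Q}(e_ke_l\alpha) = (T_F\mu(\alpha))_{kl}$, and the restriction-of-scalars determinant formula — none of which requires the dual lattice or the codifferent. The reduction step is carried out carefully, including the needed identity $N_{F/\Q}((\Lambda:\Lambda')_F) = (\Lambda:\Lambda')_\Z$, and the block factorisation is justified precisely where it needs to be.
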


\begin{proof}
    The $\calO_F$-lattice $\Lambda$ naturally lives in the $F$-vector space $V$, and the $\Z$-lattice $\Lambda_{\Z}$ naturally lives in $V_\Q$. We will compare the dual of $\Lambda$ in $V$ and $\Lambda_\Z$ in $V_\Q$.

    We have
    \begin{align*}
    (\Lambda_\Z)^\vee & := \{x \in V_\Q \colon \forall y \in \Lambda_{\Z},T_\Q(x,y)\in \Z\}\\
    & = \{x \in V \colon \forall y \in \Lambda,T(x,y)\in \delta_{F/\Q}^{-1}\}\\
    & =  \delta_{F/\Q}^{-1} \Lambda^\vee,
    \end{align*}
    where $\delta_{F/\Q}^{-1}$ is the codifferent of $F/\Q$, which consists of the elements of $F$ whose integral multiples have integral traces. Now we conclude
    \begin{align*}
    \Delta_{\Lambda/\Q} & = ((\Lambda_\Z)^\vee \colon \Lambda_\Z)_\Q = N_{F/\Q}(\delta_{F/\Q}^{-1} \Lambda^\vee \colon \Lambda)_F\\
    &= N_{F/\Q}(\delta_{F/\Q}^{-1})^{\rank \Lambda} N_{F/\Q}(\Lambda^\vee \colon \Lambda) \\
    & = \Delta_{F/\Q}^{\rank \Lambda}N_{F/\Q}(\Delta_{\Lambda/F}). \qedhere
    \end{align*}
\end{proof}

Recall that the pairing $T$ induces a particular measure $\mu_{F, \infty}$ on $V_{\infty}= \prod_{v \mid \infty} V_v$, and $T_\Z$ induces a measure $\mu_{\Q, \infty}$ on $V_{\infty} = V \otimes_\Q \R$. These spaces are canonically identified and the following result compares the two measures.

\begin{prop}\label{prop:ratio comparison}
    The measures $\mu_{\Q, \infty}$ and $\mu_{F, \infty}$ on $V_{\infty}$ are the same.
\end{prop}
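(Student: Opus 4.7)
The plan is to identify the two self-dual Haar measures directly by comparing the characters used to define them.

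First, I would unwind the definitions: the measure $\mu_{F,\infty}=\prod_{v\mid\infty}\mu_v$ is the product of the self-dual Haar measures on each $V_v$ with respect to the character $\chi_{F,\eta}(x)=\exp\!\bigl(2\pi i\,\Tr_{F_v/\R}T_v(x,\eta)\bigr)$, while $\mu_{\Q,\infty}$ is the self-dual Haar measure on $V_\infty$ viewed as a real vector space with respect to the character $\chi_{\Q,\eta}(x)=\exp\!\bigl(2\pi i\,T_\Q(x,\eta)\bigr)$, where $T_\Q$ has been extended $\R$-bilinearly. Under the canonical isomorphism $F\otimes_\Q\R\cong\prod_{v\mid\infty}F_v$, we have a compatible identification $V_\infty\cong\prod_{v\mid\infty}V_v$, so it makes sense to compare the two measures on the same locally compact abelian group.

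The key step is the standard decomposition of the trace
\[
\Tr_{F/\Q}\;=\;\sum_{v\mid\infty}\Tr_{F_v/\R}
\]
as functions $F\otimes_\Q\R\to\R$. Applying this to $T_\Q=\Tr_{F/\Q}\circ T$, one gets, for $x=(x_v)$ and $\eta=(\eta_v)$ in $\prod_{v\mid\infty}V_v$,
\[
T_\Q(x,\eta)\;=\;\sum_{v\mid\infty}\Tr_{F_v/\R}T_v(x_v,\eta_v),
\]
so the global character factorises as
\[
\chi_{\Q,\eta}(x)\;=\;\prod_{v\mid\infty}\exp\!\bigl(2\pi i\,\Tr_{F_v/\R}T_v(x_v,\eta_v)\bigr)\;=\;\prod_{v\mid\infty}\chi_{F,\eta_v}(x_v).
\]
Both characters therefore induce the same topological isomorphism $V_\infty\xrightarrow{\sim}\widehat{V_\infty}$, namely the product of the local isomorphisms $V_v\xrightarrow{\sim}\widehat{V_v}$ used in the construction of the $\mu_v$.

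Finally, I would invoke the uniqueness of the self-dual Haar measure associated to a continuous identification of a locally compact abelian group with its Pontryagin dual. Since the self-dual measure on a product of groups with respect to a product of characters is the product of the corresponding self-dual measures, we conclude
\[
\mu_{\Q,\infty}\;=\;\prod_{v\mid\infty}\mu_v\;=\;\mu_{F,\infty}.
\]
There is no real obstacle here beyond carefully checking that the identification $V_\infty\cong\prod_{v\mid\infty}V_v$ intertwines $T_\Q$ with $\sum_v\Tr_{F_v/\R}T_v$; once that is in place, everything follows from the uniqueness of self-dual Haar measure and the multiplicativity of Fourier transforms on products.
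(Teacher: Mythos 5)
Your proof is correct and follows essentially the same route as the paper: you show that the two identifications $V_\infty \xrightarrow{\cong} \widehat{V_\infty}$ induced by $T$ and $T_\Q$ coincide (via the decomposition $\Tr_{F/\Q} = \sum_{v\mid\infty}\Tr_{F_v/\R}$), and then invoke uniqueness of the self-dual Haar measure. You simply spell out the character factorisation step a bit more explicitly than the paper does.
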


\begin{proof}
    The pairing $T$ on $V$ induces an identification $V_v \to \hat V_v$ for all archimedean places $v$ for $F$. These give an isomorphism $\prod_{v\mid \infty} V_v \xrightarrow{\cong} \prod_{v\mid \infty} \hat V_v$, hence $V_{\infty} \xrightarrow{\cong} \hat V_{\infty}$.
    By definition of the pairing $T_\Q$ on $V_\Q$, we see that the induced identification $V_{\Q,\infty} \to \hat V_{\Q,\infty}$ agrees with the one above. Hence, the induced measures agree as well.
\end{proof}

Thus, we drop the field from the notation and just write $\mu_\infty$ instead.

As an application, the generalised Davenport's lemma in Proposition~\ref{prop:generalised Davenport} approximates the number of lattice points of the $\calO_F$-lattice $\Lambda$ in a bounded region $\calS \subseteq V_{\infty}$. Its main term
\[
\frac{\mu_{ \infty}(\calS)}{|\Delta_{F/\Q}|^{\frac{\rank_F \Lambda}2}\det_F \Lambda}.
\]
 matches the main term obtained from $\Lambda$ viewed as a $\Z$-lattice, by Proposition~\ref{prop:disc under restriction} and Proposition~\ref{prop:ratio comparison}.

On the other hand, the error term in Davenport's lemma depends on the successive minima, which changes depending on how we view $\Lambda$. 
Indeed, denoting the successive minima of $\Lambda$ by $\lambda_{F,i}$ and $\lambda_{\Q,i}$ for $\Lambda_\Z$, we first note that $\rank_\Z \Lambda = [F \colon \Q] \rank_{\calO_F} \Lambda$, so the number of successive minima differs in these two cases. Also, the $\Z$-minima depend on a region $B_{\Q, \infty} \subseteq V_\infty$ and the $F$-minima come from regions $B_{F,v} \subseteq V_v$. So the successive minima are a priori completely different when viewing the same lattice over both $\Z$ and $\calO_F$. 

In contrast, the combination of Propositions~\ref{prop:prod suc min} and \ref{prop:ratio comparison} shows that their product remains the same. Hence, information about $\Lambda$ which could be spread out over $n$ successive minima is instead consolidated into the remaining $\frac{n}{[F \colon \Q]}$ minima. That is, over $F$, there will be fewer factors to control, but each will have a stronger dependence on $\Lambda$, yielding a stronger error term in Proposition~\ref{prop:generalised Davenport} than when using just the $\Z$-structure.

\begin{prop}\label{prop:comparing succ min}
    Let $F$ be a number field of degree $d$. For an $\calO_F$-lattice $\Lambda \subseteq V$ of rank $n$, suppose that the $n$ successive minima $\lambda_{F,i}$ are computed with respect to open, convex, bounded and rotationally invariant $B_v \subseteq V_v$.

    Let $\lambda_{\Q,i}$ for $1 \leq i \leq nd$ be the successive minima of $\Lambda_\Z$ computed with respect to $B_\infty := \prod_{v \mid \infty} B_v \subseteq V_\infty$. Then they agree with 
    \[
    \underbrace{\lambda_{F,1}, \lambda_{F,1}, \ldots, \lambda_{F,1}}_{d \text{ times}}, \underbrace{\lambda_{F,2}, \lambda_{F,2}, \ldots, \lambda_{F,2}}_{d \text{ times}}, \ldots, \underbrace{\lambda_{F,n}, \lambda_{F,n}, \ldots, \lambda_{F,n}}_{d \text{ times}}
    \]
    up to an absolute constant only depending on $F$ and $n$. That is, $\lambda_{\Q, (j-1)d + k} \asymp_{n, F} \lambda_{F, j}$ for all $1 \leq j \leq n$ and $1 \leq k\leq d$.
\end{prop}

We remark that under this choice of unit balls $B_v$ and $B_\infty$ we trivially have $\lambda_{F,1} = \lambda_{\Q,1}$, but a precise relation for the higher successive minima without the implicit constant depending on $F$ and $n$ seems not to exist in general.

\begin{proof}
    Let $x_1,\dots,x_n$ be $F$-linearly independent elements in $\Lambda \subseteq A$ such that $x_j \in \lambda_{F,j}B_v$ for all $v$.
    Fix an integral basis $e_1,\dots,e_d$ for $\calO_F$. Now $\Lambda_\Z$ is the free $\Z$-lattice $\Lambda_\Z \subseteq A$ spanned by the $e_k x_j$. For some constant $C$ depending on $F$ only, we have $|e_k|_v \leq C$ for all $k$ and $v$.
We then can diagonally embed $e_k x_j$ into $\prod_{v \mid \infty} (\lambda_{F,j} C B_v) = C \lambda_{F,j} B_{\infty}$ using the identification of real vector spaces $\prod_{v \mid \infty} V_v = V_\infty$. This provides an upper bound $\lambda_{\Q,(j-1)d+k} \ll_F \lambda_{F,j}$ for all $1 \leq j \leq n$ and $1 \leq k\leq d$.

By Proposition~\ref{prop:prod suc min} we have
    \[
    \frac{\det_\Q \Lambda}{\mu_\infty(B_\infty)} \ll_{dn} \lambda_{\Q,1}\cdots \lambda_{\Q,dn} \ll_{F,n} (\lambda_{F,1}\cdots \lambda_{F,n})^d \ll_{F,n} \frac{\det_F \Lambda}{\mu_\infty(\prod_{v \mid \infty} B_v)}.
    \]
    By Proposition~\ref{prop:disc under restriction} and the choice of unit ball for $\Lambda_\Z$ we see that the lower and upper bound agree up to a factor depending only on $F$ and $n$, which establishes
    \[
    \lambda_{\Q,1}\cdots \lambda_{\Q,dn} \asymp_{F,n} (\lambda_{F,1}\cdots \lambda_{F,n})^d.
    \]
    As we can upper bound each of the $dn$ factors on the left by the corresponding factor on the right, we obtain the reverse bound up to a factor only depending on $F$ and $n$ from the commensurability of the product. This proves the claim.
\end{proof}

\subsection{Ideals as lattices}
Now we specialise to ideals of number fields or separable algebras, which can be viewed as lattices over the ring of integers of any subfield. We establish some general facts about these  lattices. Let $A$ be a separable algebra of rank $n$ over $F$, with the nondegenerate trace pairing $T:=\text{Tr}_{A/F}$.

\begin{lemma}\label{lem:determinant of an ideal}
    A fractional ideal $\frakd \subseteq A$ is a rank $n$ lattice over $\calO_F$ with determinant $\det_F \frakd = N_{A/\Q}(\frakd)\vert\Delta_{A/\Q}\vert^{1/2}$.
\end{lemma}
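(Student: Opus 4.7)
The plan is to derive both assertions from the defining properties of the discriminant ideal, combined with multiplicativity of the relative ideal norm through the tower $\Z \subseteq \calO_F \subseteq \calO_A$. The rank claim is essentially formal, while the determinant formula reduces, via property~(2) of the discriminant, to identifying the lattice index $(\calO_A : \frakd)_F$ with $N_{A/F}(\frakd)$ and tracking how this invariant behaves under $N_{F/\Q}$.

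For the rank, I would observe that a nonzero fractional ideal $\frakd$ of an order $\calO_A \subseteq A$ is by definition a finitely generated $\calO_A$-submodule of $A$ that spans $A$ as an $F$-vector space. Restricting scalars to $\calO_F$ shows that $\frakd$ is a finitely generated torsion-free $\calO_F$-module with $\frakd \otimes_{\calO_F} F = A$, so $\rank_{\calO_F} \frakd = \dim_F A = n$.

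For the determinant, applying property~(2) of the discriminant to the two full-rank lattices $\frakd, \calO_A \subseteq A$ gives
\[
\Delta_{\frakd/F} = (\calO_A : \frakd)_F^2 \, \Delta_{A/F},
\]
where $\Delta_{A/F}$ is shorthand for $\Delta_{\calO_A/F}$. The key step is to identify the index ideal with the relative norm, $(\calO_A : \frakd)_F = N_{A/F}(\frakd)$. For an integral ideal $\frakd \subseteq \calO_A$, this follows from the Jordan--H\"older filtration: each simple $\calO_A$-composition factor $\calO_A/\frakP$ of $\calO_A/\frakd$ has $\calO_F$-length equal to the residue degree $f(\frakP/\pp)$, and so contributes $\pp^{f(\frakP/\pp)} = N_{A/F}(\frakP)$ to the index ideal in $\calO_F$. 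Multiplicativity across the prime decomposition of $\frakd$ yields the identity for integral ideals, and the extension to fractional ideals follows from the quotient definition of the index ideal recalled at the start of the section.

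To conclude, I would take $N_{F/\Q}$ of the displayed formula and take square roots:
\[
\det_F \frakd = N_{A/\Q}(\frakd) \, \bigl|N_{F/\Q}(\Delta_{A/F})\bigr|^{1/2},
\]
after which the stated formula follows by identifying $|N_{F/\Q}(\Delta_{A/F})|^{1/2}$ with $|\Delta_{A/\Q}|^{1/2}$ via Proposition~\ref{prop:disc under restriction} applied to $\Lambda = \calO_A$. The most delicate point is the verification of $(\calO_A : \frakd)_F = N_{A/F}(\frakd)$ uniformly for separable algebras $A$ rather than just number fields; I would reduce via the decomposition $A = \prod_i A_i$ into separable field extensions, where the classical theory applies to each factor and both sides are manifestly multiplicative across the factors.
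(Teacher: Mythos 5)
Your overall route coincides with the paper's: show that $\frakd$ is a finitely generated torsion-free $\calO_F$-module spanning $A$, then invoke property~(2) of the discriminant to reduce to $\Delta_{\frakd/F} = (\calO_A : \frakd)^2 \Delta_{\calO_A/F}$, identify the index ideal with the relative norm $N_{A/F}(\frakd)$ and $\Delta_{\calO_A/F}$ with $\Delta_{A/F}$ (Serre, \emph{Corps Locaux}, Ch.~III), and take $N_{F/\Q}$ and square roots. The extra Jordan--H\"older detail you supply for $(\calO_A:\frakd)_F = N_{A/F}(\frakd)$ is correct and goes beyond the paper, which simply asserts it ``by definition''; the reduction to a product of fields at the end is likewise sound.

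The one genuine problem is the closing step. You correctly arrive at $\det_F \frakd = N_{A/\Q}(\frakd)\,|N_{F/\Q}(\Delta_{A/F})|^{1/2}$, but then assert that Proposition~\ref{prop:disc under restriction} lets you replace $|N_{F/\Q}(\Delta_{A/F})|^{1/2}$ by $|\Delta_{A/\Q}|^{1/2}$. That proposition, applied to $\Lambda=\calO_A$ of rank $n$, gives
\[
\Delta_{A/\Q} \;=\; N_{F/\Q}(\Delta_{A/F})\,\Delta_{F/\Q}^{\,n},
\]
so the two quantities differ by a factor $|\Delta_{F/\Q}|^{n/2}$, which is not $1$ unless $F=\Q$. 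The proposition therefore does not close the gap — it measures it. (For what it is worth, the paper's own proof only reaches $N_{A/\Q}(\frakd)\sqrt{N_{F/\Q}(\Delta_{A/F})}$, and the surplus factor is ultimately harmless in the application since $F=L$ is fixed and $|\Delta_{F/\Q}|^{n/2}$ is absorbed into the implicit constant; but if you want to reconcile your expression with the stated formula, you cannot do so by citing Proposition~\ref{prop:disc under restriction} as an identity between them.)
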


\begin{proof}
    A fractional ideal of $A$ is a finitely generated $\calO_F$-submodule of $A$ and satisfies $\frakd \otimes_{\calO_F} F = A$. Hence $\frakd$ is a lattice of rank $\dim_FA =n$. We compute its discriminant. From $\Delta_{\frakd/F}=(\calO_A:\frakd)^2\Delta_{\calO_A/F}$ it suffices to compute the discriminant of $\calO_A$ and the index ideal $(\calO_A:\frakd)$. However, by definition, $(\calO_A:\frakd) = N_{A/F}(\frakd)$ and by \cite[Ch.\ III, Prop.\ 6]{corpslocal} we have $\Delta_{\calO_A/F} = \Delta_{A/F}$. In this last equation, the first $\Delta$ refers to the discriminant of the lattice and the second to the discriminant of the extension as is standard in algebraic number theory. In conclusion, we  see that
    \[
    {\det}_F \frakd = \sqrt{N_{F/\Q}(\Delta_{\frakd/F})}
    = \sqrt{N_{F/\Q}(N_{A/F}(\frakd))^2 \Delta_{A/F}}
    = N_{A/\Q}(\frakd) \sqrt{\Delta_{A/F}},
    \] as claimed.
\end{proof}

To study the successive minima of an ideal $\fraka \subseteq A$ we work with the archimedean places of $A$, which are the ring homomorphisms $A \to \C$ up to conjugation. We say that an archimedean place $w$ extends, or divides, a place $v$ of $F$ if the homomorphism factors as $F \to A \to \C$. If we write $A= \oplus A_i$ as the product of fields, one shows that a place $w$ of $A$ correspond to the places of the $A_i$ through the factorisation $A \to A_i \to \C$. In particular, we have that $A_v := A \otimes F_v$ is naturally isomorphic to $\prod_{w \mid v} A_w$. The $F_v$-algebra $A_v := A \otimes_F F_v$ for an archimedean place $v$ of $F$ is naturally isomorphic to $A_v = \prod_{w \mid v} A_w$ and can be identified as $F_v$-algebras with $\R^{r_v} \oplus \C^{s_v}$ where $r_v$ and $s_v$ are the number of real and complex archimedean places of $A$ dividing $v$. By choice of normalisation, the local measure $\mu_v$ on $A_v$ is then just the product measure of the usual Haar measures on $\R$ and $\C$. For a region $\calS \subseteq A_\infty$, we write $\vol(\calS) := \mu_\infty(\calS).$

From now on we write $B_v := \{(x_w)_{w \mid v} \colon \max_{w\mid v} \vert x_w \vert_w \leq 1\}$. We compute the successive minima of an ideal with respect to these unit balls or small variants, usually denoted $B'_v$, thereof. We have made this choice such that $B_\infty := \prod_{v \mid \infty} B_v \subseteq A_\infty$ is the unit ball under the maximum norm $\max_{w\mid \infty} \vert x_w \vert_w$. This independence of the base field will simplify notation and some arguments when comparing an $\calO_F$-lattice to itself as a $\Z$-lattice.

\begin{remark}
    The reader is warned that this choice of unit balls is not typical! Usually one considers unit balls in $A_v$ under the Euclidean norm, that is, $\{(x_w)_{w \mid v} : \sum_{w\mid v} \vert x_w \vert_w^2 \leq 1\}.$
    However, the two choices are commensurable, indeed
    \[
    \left\{(x_w)_{w \mid v} : \max_{w\mid v} \vert x_w \vert_w \leq \frac{1}{\sqrt{n}}\right\} \subseteq
    \left\{(x_w)_{w \mid v} : \sum_{w\mid v} \vert x_w \vert_w^2 \leq 1\right\} \subseteq  \left\{(x_w)_{w \mid v} : \max_{w\mid v} \vert x_w \vert_w \leq 1\right\},
    \]
    and yield the same successive minima up to an absolute constant which depends only on the rank $n$. The choice for the Euclidean norm would not have changed the application, but would have complicated some arguments later in the paper. The ability to designate successive minima in the way most suited to the problem at hand is yet another advantage of setting up our lattices in the level of generality which we have in this section.
\end{remark}

We now turn to our chief tool for counting lattice points in $A$ in a bounded region, which can be seen as a version of Davenport's lemma for $\calO_F$-lattices and as a generalisation of \cite[Lem.\ 3.3]{GH}. Indeed, we are able to produce an asymptotic for the number of points of an $\mathcal{O}_F$-lattice lying in some bounded region, without requiring that the lattice lives inside $F^{{\rm{rk}} \Lambda}$. 
Some constraint on the shape of the region is required and we here make a natural generalisation of the standard condition that the boundary of the region can be parametrised by a controlled number of Lipschitz maps with bounded Lipschitz constant.

\begin{defn}
    Given a map $\phi:[0,1)^m \rightarrow A_\infty$ for some $m \leq \deg_\R A_\infty$, we say that $\phi$ is $(m,R)$-Lipschitz with respect to $B'_\infty$ if
    \[
    \phi(\mathbf x) - \phi (\mathbf y) \in R\vert \mathbf x - \mathbf y \vert B'_\infty,
    \] for any $\mathbf x, \mathbf y \in [0,1)^m$.
\end{defn}
Recall that $B'_\infty$ is a variant of the unit ball $B_\infty \subseteq A_\infty$ under maximum norm. We recall the familiar situation of $\Z$-lattices where the unit ball is instead given by the unit ball in $\R^{dn}$ under the Euclidean distance and $A_\infty$ is identified  with $\R^d$.

\begin{defn}
    A subset $S \subseteq A_\infty$ is said to be in $\text{Lip}_{A}(R, N)$ (with respect to $B'_\infty$) if there exist $N$ maps $\phi$ which are each $(nd-1,R)$-Lipschitz (with respect to $B'_\infty$) and whose images together cover $S$.
\end{defn}

This definition has been made to introduce a notion of Lipschitz-parametrisability which mirrors the $\text{Lip}(n,1,L,M)$ constraint in Widmer's work \cite{MR2645051}, but also takes account of the different structure of an $\calO_F$-lattice and that makes use of the notion of length induced by the choice of local unit balls $B'_v$. Indeed when one considers the image of $A_\infty$ in $\R^{dn}$, the two notions are the same except that the Lipschitz parameter may differ by a constant depending at most on $F$ and $n$.

\begin{prop}\label{prop:generalised Davenport}
Let $F$ be a number field, $\Lambda$ an $\calO_F$-lattice in $A$ of rank $n$, and $\calS \subseteq A_\infty$ a bounded counting region. Let $\tau =(\tau_w)_w \colon A_\infty \to A_\infty$ be an $F_\infty$-linear map with $\det_\R \tau = 1$, and write $\lambda_1,\dots,\lambda_n$ for the successive minima of $\Lambda$ with respect to the local regions $B'_v := \tau_v^{-1} B_v \subseteq A_v$.

Suppose that the boundary of $\calS \subseteq A_\infty$ is in $\text{Lip}_A(R,N)$ with respect to $B'_\infty$
then there exists a constant $C_{F,n}$, depending at most on $F$ and $n$, such that  
    \[
    \left \vert
    \#(\Lambda \cap \calS) - \frac{\vol(\calS)}{|\Delta_{F/\Q}|^{\frac{\rank_F \Lambda}2}\det_F \Lambda}
    \right \vert
    \leq
    C_{F,n}N \left(\frac{R^{nd-1}}{(\lambda_1\cdots \lambda_{n-1})^d \lambda_n^{d-1}} + \sum_{i=0}^{n-1} \frac{R^{id}}{(\lambda_1\cdots \lambda_i)^d} \right),
    \]
    where we take $\lambda_1\cdots \lambda_i = 1$ when when $i=0$.
\end{prop}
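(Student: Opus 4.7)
The plan is to reduce to the classical Davenport-type lemma for $\Z$-lattices (in Widmer's form) and then convert the resulting error using the comparison between $F$- and $\Z$-successive minima from Lemma~\ref{lem:comparing succ min}.

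First I would reduce to the case $\tau = \id$. Since the hypotheses only involve the regions $B'_v$ through the successive minima and the Lipschitz condition, replacing $B_v$ with $B'_v$ throughout allows us to assume $\tau = \id$. Then I would view $\Lambda$ simultaneously as an $\calO_F$-lattice of rank $n$ and as a $\Z$-lattice $\Lambda_\Z$ of rank $dn$ inside the real vector space $A_\infty$. Proposition~\ref{prop:ratio comparison} ensures that the ambient Haar measure $\mu_\infty$ is the same under both viewpoints, and the Lipschitz condition $\text{Lip}_A(R,N)$ with respect to the max-norm unit ball $B'_\infty$ is commensurable (up to constants depending only on $F$ and $n$) with the classical Lipschitz-parametrisability condition in $\R^{dn}$, since any two unit balls in a fixed finite-dimensional real vector space are comparable.

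Applying a classical Davenport-type lemma for $\Z$-lattices to $\Lambda_\Z$ then yields
\[
\left|\#(\Lambda\cap\calS)-\frac{\vol(\calS)}{\det_\Z\Lambda_\Z}\right|\ll_{F,n}N\sum_{i=0}^{dn-1}\frac{R^i}{\lambda_{\Q,1}\cdots\lambda_{\Q,i}},
\]
where $\lambda_{\Q,i}$ denotes the $i$-th $\Z$-successive minimum of $\Lambda_\Z$. By Proposition~\ref{prop:disc under restriction} combined with Lemma~\ref{lem:determinant of an ideal}, one has $\det_\Z\Lambda_\Z=|\Delta_{F/\Q}|^{n/2}\det_F\Lambda$, so the two main terms agree.

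For the error, Lemma~\ref{lem:comparing succ min} supplies the comparison $\lambda_{\Q,(j-1)d+k}\asymp_{F,n}\lambda_{F,j}$ for $1\le j\le n$ and $1\le k\le d$. Writing $i=(j-1)d+k$ with $0\le k<d$, the product $\lambda_{\Q,1}\cdots\lambda_{\Q,i}$ becomes $(\lambda_{F,1}\cdots\lambda_{F,j-1})^d\lambda_{F,j}^k$ up to an $F,n$-constant, which allows the classical error to be rewritten as a double sum indexed by $(j,k)$. The inner geometric sums $\sum_{k=0}^{d-1}(R/\lambda_{F,j})^k$ are then bounded by $d\bigl(1+(R/\lambda_{F,j})^{d-1}\bigr)$ and split into the regimes $R\le\lambda_{F,j}$ and $R>\lambda_{F,j}$, which collect into the target sum $\sum_{i=0}^{n-1}R^{id}/(\lambda_{F,1}\cdots\lambda_{F,i})^d$.

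The main obstacle will be the residual contribution from the largest block $j=n$, which produces a potential term of order $R^{nd}/(\lambda_{F,1}\cdots\lambda_{F,n})^d$ not appearing in the target. By the $\calO_F$-Minkowski bound of Proposition~\ref{prop:prod suc min}, this quantity is comparable to $R^{nd}\mu_\infty(B'_\infty)/\det_F\Lambda$. Exploiting the Lipschitz hypothesis on $\partial\calS$---each of the $N$ parametrisations has image of diameter $\ll R$ with respect to $B'_\infty$ and hence fits in a translate of $CR\cdot B'_\infty$---one bounds this residual by $N$ times the $(n-1)$-th target term up to a constant depending only on $F$ and $n$, thereby completing the argument.
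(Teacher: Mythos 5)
Your proposal follows essentially the same route as the paper's proof: identify $A_\infty \cong \R^{nd}$, apply a classical Davenport-type lemma (Widmer's Lemma~5.4) to the $\Z$-lattice $\Lambda_\Z$, rewrite the error via Lemma~\ref{lem:comparing succ min}, and identify the main term using Propositions~\ref{prop:disc under restriction} and~\ref{prop:ratio comparison}. Absorbing $\tau$ into the unit balls is conceptually the same move as the paper's application of $\tau$ to both $\Lambda$ and $\calS$, so there is no real divergence up to that point.

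Where you are sharper than the paper is in tracking the dominance argument for the classical indices $i_\Z \in [(n-1)d,\,nd)$. You correctly see that, when $R > \lambda_n$, the converted term at $i_\Z = nd-1$, namely $R^{nd-1}/\bigl((\lambda_1\cdots\lambda_{n-1})^d\lambda_n^{d-1}\bigr)$, exceeds the largest term in the displayed sum by a factor $(R/\lambda_n)^{d-1}$. The paper's closing sentence ("if $R\ge\lambda_n$ then the first term dominates ...") only makes sense if a term $R^{nd}/(\lambda_1\cdots\lambda_n)^d$ were available, i.e.\ if the sum ran to $i=n$ rather than $i=n-1$. So the gap you flagged is genuine, not a misreading.

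Your proposed fix, however, does not close it. You claim the residual $R^{nd}/(\lambda_1\cdots\lambda_n)^d$ is $\ll N$ times the $(n-1)$-th target term; by Proposition~\ref{prop:prod suc min} the ratio of the two is $\asymp_{F,n}(R/\lambda_n)^d$, which is independent of $N$ and grows without bound in $R$. The Lipschitz hypothesis constrains neither $R$ nor $\lambda_n$ separately, so it cannot produce the inequality $(R/\lambda_n)^d \ll_{F,n} N$ you would need. The difficulty is already visible for $n=1$, $F=\Q(i)$, $\Lambda=\Z[i]$, $\calS=[0,R]^2\subset\C$: here $N=4$, $\lambda_1=1$, and the right-hand side of the proposition is a constant, while the lattice-point discrepancy is $\asymp R$. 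In other words, the stated inequality with $\sum_{i=0}^{n-1}$ cannot hold as written; your decomposition is otherwise sound, and if the range of summation is extended to $i=n$, the residual is exactly the new $i=n$ term and the proof is complete without any further argument.
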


Notice that $\#(\Lambda \cap \calS)$ does not depend on the base ring of $\Lambda$. That is, in estimating the cardinality of this set, we are free to pick the lattice structure on $\Lambda$. The result says we get a counting result with generally acceptable error terms if we pick the unit balls $B'_v$ with a similar relative shape as the counting region.

\begin{remark}
    One immediate consequence of the result is the useful upper bound
    \[
    \#(\Lambda \cap S) \ll \sum_{i=0}^n \frac{R^{id}}{(\lambda_1 \cdots \lambda_i)^d},
    \] which removes the top error term since it is dominated by the size of the main term.
\end{remark}

\begin{proof}[Proof of Proposition \ref{prop:generalised Davenport}]
    We naturally identify $A_\infty$ with $\R^{nd}$. Our assumption on the boundary of $\calS$ means that the region  $\tau \calS$ is contained within a Euclidean ball of radius $R$ in $\R^{nd}$. Applying a traditional lattice point counting result (such as \cite[Lem.\ 5.4]{MR2645051}) to the torsion-free $\Z$-module $\tau \Lambda_\Z \subseteq \R^{nd}$, we have that
    $$
    \begin{aligned}
    \left \vert
    \#(\tau \Lambda \cap \tau \calS) - \frac{\vol_{\R^{nd}}(\tau \calS)}{\det \tau\Lambda_\Z} \right \vert
    &\leq C(d,n)
    \sum_{i=0}^{nd-1} \frac{R^{i}}{\lambda_{\Q,1}\cdots \lambda_{\Q,i}}\\
    &\leq C'(F,n)\sum_{i=0}^{n-1}\sum_{k=0}^{d-1} \frac{R^{id+k}}{(\lambda_{1}\cdots \lambda_{i})^d\lambda_{i+1}^{k}},
    \end{aligned}
    $$
    where we applied Proposition~\ref{prop:comparing succ min} in the last step. 
    We may simplify the main term since $\det \tau \Lambda_\Z = \det \Lambda_\Z$ and $\vol_{\R^{nd}}(\tau S) = \mu_\infty(S)$.
    
    Finally, we observe that we can dominate each term $\frac{R^{id+k}}{(\lambda_{1}\cdots \lambda_{i})^d\lambda_{i+1}^{k}}$ independently of $k$ according to the size of $R$. Indeed if $R \geq \lambda_n$ then the term $\frac{R^{nd-1}}{(\lambda_1\cdots \lambda_{n-1})^d \lambda_n^{d-1}}$ dominates. If $\lambda_j > R \geq \lambda_{j-1}$ for some $2 \leq j  \leq n$, then the term $\frac{R^{(j-1)d}}{(\lambda_1\cdots \lambda_{j-1})^d}$ dominates. Finally, if $R <  \lambda_1$, then each term  $\frac{R^{id+k}}{(\lambda_{1}\cdots \lambda_{i})^d\lambda_{i+1}^{k}}$ is at most $O_{F,n}(1)$. This concludes the proof.
\end{proof}

In \S\ref{s:nonsplit} we will apply the counting result to lattices of rank at most $8$ over a fixed quadratic base field, so the implicit dependence on $n$ and $F$ is good enough for our applications.

\begin{rmk}
Traditionally, symmetries in lattices have played major roles in understanding structures of algebraic objects -- for example, the theory of cyclotomic fields and the theory complex multiplication. So it is not surprising that studying $\calO_F$-lattices, which implies extra symmetry over $\Z$-lattices, gives us additional information.

For example, the $\Z[i]$-lattices in $\C$ must be invariant under the rotation by $\pi/2$ (which corresponds to multiplication by $i$), so they must be square lattices. In particular, we get that $\lambda_{\Q,1} = \lambda_{\Q,2}$ when the successive minima are computed using a rotationally invariant region.

Thus, the theory of $\calO_F$-lattices should not be viewed as the simple generalisation of Davenport's lemma to number fields; rather, this should be seen as a strengthening of Davenport's lemma in the cases where the underlying lattice has extra symmetry. 
\end{rmk}

\section{A family of examples: non-split quadrics}\label{s:nonsplit}

Finally, we provide a new family of examples of symmetric squares of surfaces for which our strategy applies and on which we can explicitly count rational points in a cothin subset. The surfaces that we consider are of the form $X_d=\{ x^2 - dy^2 = zw\} \subseteq \PP_\Q^3$, where $d$ is a squarefree integer. The first key idea in our method is to use the fact that $X_d$ is the restriction of scalars of $\PP^1_{\Q(\sqrt d)}$ from $\Q(\sqrt d)$ to $\Q$.

\noindent\textbf{Notation.} We denote by $L = \Q(\sqrt d)$ the splitting field of $\Pic X_d$, and $K = \Q(\sqrt{\Delta})$ will denote the various quadratic extensions over which we consider the points of $X_d$ (which yield rational points of $\Sym^2 X_d$). Their compositum will be denoted by $M := KL = \Q(\sqrt d, \sqrt{\Delta})$.\\

Our principal aim in this section is to prove the following, which immediately gives assertion (a) in Theorem~\ref{thm:mainthm1}.

\begin{theorem}\label{thm:quadxd}
    Let $\calU \subseteq \text{Sym}^2X_d(\Q)$ denote the set of points which do not arise as a pair of $\Q$-points or a conjugate pair of $L$-points on $X_d$. Then for the absolute anticanonical height~$H^{(2)}$ from Proposition~\ref{prop:height on Sym} induced from the height $H$ defined in Definition~\ref{defn:heightonXd},
    \[
    \#\{ Q \in \calU \colon H^{(2)}(Q) \leq B\} = c_{\Hilb^2 X_d, \Q} B \log B + O_d\left(B(\log B)^{3/4}\right).
    \]
\end{theorem}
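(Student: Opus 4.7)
The plan is to carry out the field-by-field strategy outlined in Section \ref{sss:framework}. Every point of $\calU$ has the form $[P, \bar P]$ for a pure quadratic $K$-point $P \in X_d(K)$ with $K \in \calK(X_d)$: linear disjointness from $L = \Q(\sqrt d)$ excludes the conjugate pairs defined over $L$, and of course $K \neq \Q$. Since $P$ and $\bar P$ give the same symmetric square point, the assignment $P \mapsto [P,\bar P]$ is $2$-to-$1$, and Proposition~\ref{prop:height on Sym} yields the height identity $H^{(2)}([P, \bar P]) = H(P)^2 = H_K(P)$. Hence
\begin{equation*}
N_\Q(\calU, B) = \frac{1}{2} \sum_{\substack{K \in \calK(X_d) \\ [K:\Q]=2}} N_K^*(X_d, B).
\end{equation*}
By the optimal cutoff $\gamma = 1/2$ established in Lemma~\ref{lem:constsxd}, only fields with $|\Delta_K| \leq C_0 B^{1/2}$ can contribute, and for these we substitute the uniform asymptotic of Proposition~\ref{prop:introwidmer}.

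The main term contribution is $\tfrac{B}{2}\sum_K c_{X_d,K}$ over this truncated range. By Proposition~\ref{peyrequad}, the factors $\alpha(X_d)$ and $\beta(X_d)$ are constant for $K \in \calK(X_d)$, so it equals $\tfrac12\alpha(X_d)\beta(X_d)\,B\sum_K \tau(X_{d,K})$. Applying Theorem~\ref{thm:summingc}(b) with $Y = C_0 B^{1/2}$ gives $\sum_K \tau(X_{d,K}) = \tfrac12\,\tau(\Hilb^2 X_d)\log B + O_d(1)$, and so the main contribution becomes
\begin{equation*}
\tfrac14 \alpha(X_d)\beta(X_d)\tau(\Hilb^2 X_d)\, B\log B + O_d(B) = c_{\Hilb^2 X_d, \Q}\, B\log B + O_d(B),
\end{equation*}
where the last equality invokes the relation $\alpha(\Hilb^2 X_d) = \tfrac12 \gamma\, \alpha(X_d) = \tfrac14 \alpha(X_d)$ from Lemma~\ref{lem:constsxd} together with $\beta(\Hilb^2 X_d) = \beta(X_d)$ from Proposition~\ref{prop:rho and beta of Hilb2}.

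The principal technical obstacle is controlling the two summed error terms from Proposition~\ref{prop:introwidmer}. The simpler error contributes at most $B^{3/4}\log B \cdot \sum_{|\Delta_K| \leq B^{1/2}} |\Delta_K|^{-2/3-\epsilon} \ll B^{11/12}\log B$, which is comfortably inside $B\sqrt{\log B}$. The dangerous error involves the factor $\sqrt{h_M R_M}\,(1+\res_{s=1}\zeta_M(s))/|\Delta_K|$. Using the analytic class number formula combined with $|\Delta_M| \ll_d |\Delta_K|^2$ (Proposition~\ref{prop:discs under base change}), one obtains $\sqrt{h_M R_M}/|\Delta_K| \ll_d \sqrt{\res\zeta_M}/|\Delta_K|^{1/2}$. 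Since $M = KL$ is biquadratic and $L$ is fixed, the factorisation $\zeta_M = \zeta_\Q L(s,\chi_K)L(s,\chi_L)L(s,\chi_K\chi_L)$ gives $\res\zeta_M \asymp_d L(1,\chi_K)\,L(1,\chi_K\chi_L)$.

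Applying the Cauchy--Schwarz inequality with weights $|\Delta_K|^{-1/2}$ reduces the task to bounding $\sum_K |\Delta_K|^{-1} \ll \log B$ (by partial summation) and $\sum_K \res\zeta_M\,(1+\res\zeta_M)^2$. Expanding the brackets in the latter produces finitely many sums of the shape $\sum_K L(1,\chi_K)^a L(1,\chi_K\chi_L)^b$ with $a,b \leq 3$; each such expression equals $L(1, \rho \otimes \chi_K)$ for a suitable Galois representation $\rho$ factoring through $L$, so Corollary~\ref{cor:galtwists} shows these are $O_d(B^{1/2})$. Combining yields an error of order $B^{3/4} \cdot B^{1/4}\sqrt{\log B} = B\sqrt{\log B}$, matching the claimed error term and completing the argument.
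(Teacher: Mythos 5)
Your proof is correct and follows the same field-by-field strategy as the paper: truncate at the optimal cutoff $\gamma = 1/2$, apply the uniform count from Proposition~\ref{prop:introwidmer}, sum the leading constants via Proposition~\ref{peyrequad} and Theorem~\ref{thm:summingc}(b), and control the summed error terms using the class number formula, Cauchy--Schwarz and Corollary~\ref{cor:galtwists}. The only cosmetic variation is in the Cauchy--Schwarz step: you convert $\sqrt{h_M R_M}/|\Delta_K|$ to $\sqrt{\res_{s=1}\zeta_M(s)}/|\Delta_K|^{1/2}$ via the class number formula \emph{before} applying Cauchy--Schwarz, whereas the paper's Lemma~\ref{lem:errsumxd}(a) first splits $1 + \res_{s=1}\zeta_M(s)$ and applies Cauchy--Schwarz twice; both yield the same bound $B^{1/4}\sqrt{\log B}$ and hence the claimed $O_d(B\sqrt{\log B})$ error.
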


We remark that the points in $\calZ := \Sym^2 X_d(\Q) \setminus \calU$ contribute $\asymp B \log B$, but they need to be removed to obtain the correct leading constant. Indeed,
\[
\#\{P \in X_d(L) \colon H(P) \leq B^{\frac12}\} \asymp B \log B \quad \mbox{and} \quad \#\{P \in X_d(\Q) \colon H(P) \leq B\} \asymp B
\]
and from partial summation, it follows that the contributions to $\#\{ Q \in \Sym^2 X_d(\Q) \colon H(Q) \leq B\}$ from points of the form
$Q=[P_1,P_2]$ with $P_1,P_2 \in X_d(\Q)$, and of the form $Q=[P, \overline{P}]$ for $P \in X_d(L)$ are also of size $\asymp B\log B$. By Lemma~\ref{lem:kptsthin}, these points form a thin subset of $\text{Sym}^2 X_d(\Q)$.

For the remainder of the section, we focus on proving Theorem \ref{thm:quadxd}. In the first subsection, we will identify the quadric surface of $X_d$ with the restriction of scalars of $\PP^1_{L}$ from $L$ to $\Q$. This allows us to compute the various Peyre constant factors associated to $\Sym^2 X_d$ as well as the optimal cutoff of $X_d$. Moreover, in \textsection\ref{s:widmer} we identify the pure quadratic points on the surface $X_d$ with quartic points on $\PP_\Q^1$ defined over biquadratic extensions containing $L$. 
Our tool for counting these points comes from the theory of $\calO_L$-lattices which we have just developed in \textsection\ref{s:lattices}.
In \textsection\ref{s:widmerproof}, we apply this theory to produce an asymptotic for these quartic points on $\PP^1_\Q$, generalising Schmidt's~\cite{MR1330740} arguments and drawing ideas from Widmer \cite{MR2645051}. In particular, we extend all of Schmidt's approach to a quadratic base field, making very specific use of the structure of biquadratic points over a fixed quadratic subfield to gain an advantage over what can be achieved by considering just quartic points over $\Q$.

\subsection{Geometry of quadric surfaces}\label{s:geomxd}

The following result seems well known, but we add a proof for completeness.

\begin{lemma}\label{lem:XdisRes}
The surface $X_d/\mathbb Q$ is isomorphic to $\Res_{L/\mathbb Q} \mathbb P^1_{L}$.
\end{lemma}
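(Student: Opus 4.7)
The plan is to invoke Yoneda's lemma and construct a natural isomorphism between the functors of points of the two schemes. Recall that for any $\Q$-algebra $A$,
\[
(\Res_{L/\Q} \PP^1_L)(A) = \PP^1(A \otimes_\Q L).
\]
Let $\bar{\phantom{x}}$ denote the nontrivial element of $\Gal(L/\Q)$, extended to $A \otimes_\Q L$ by acting on the second tensor factor. I would first define a natural transformation
\[
\Phi_A \colon X_d(A) \to \PP^1(A \otimes_\Q L), \qquad [x:y:z:w] \longmapsto [\,x+\sqrt{d}\,y : w\,] = [\,z : x - \sqrt{d}\,y\,],
\]
where equality of the two homogeneous pairs follows from the defining identity $(x+\sqrt{d}\,y)(x-\sqrt{d}\,y) = x^2 - dy^2 = zw$. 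The first expression makes sense on the open subset where $x+\sqrt{d}\,y$ or $w$ is nonzero, and the second on the open where $z$ or $x-\sqrt{d}\,y$ is nonzero. The simultaneous vanishing of all four would force $x=y=z=w=0$, which is empty in $\PP^3$, so these two opens cover $X_d$ and $\Phi$ glues to a morphism of schemes.

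I would then construct an inverse morphism by sending a point $[\alpha:\beta] \in \PP^1(A \otimes_\Q L)$ to the point of $X_d$ with homogeneous coordinates
\[
\Psi_A([\alpha:\beta]) \;=\; \Bigl[\,\alpha\bar{\beta} + \bar{\alpha}\beta \,:\, \tfrac{\alpha\bar{\beta} - \bar{\alpha}\beta}{\sqrt{d}} \,:\, 2\alpha\bar{\alpha} \,:\, 2\beta\bar{\beta}\,\Bigr].
\]
The first coordinate is the trace $\Tr_{L/\Q}(\alpha\bar\beta)$, the second lies in $A$ since $\alpha\bar\beta-\bar\alpha\beta \in \sqrt{d}\cdot A$, and the last two are the norms $N_{L/\Q}(\alpha)$ and $N_{L/\Q}(\beta)$, so the tuple indeed lives in $A$. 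Writing $s = \alpha\bar\beta + \bar\alpha\beta$ and $\sqrt{d}\,t = \alpha\bar\beta - \bar\alpha\beta$, one has $s \pm \sqrt{d}\,t = 2\alpha\bar\beta$ or $2\bar\alpha\beta$, whence $s^2 - dt^2 = 4\alpha\bar\alpha\cdot\beta\bar\beta = (2\alpha\bar\alpha)(2\beta\bar\beta)$, so the image lies on $X_d$. Independence of the choice of representative $(\lambda\alpha,\lambda\beta)$ holds because the whole tuple rescales by $N_{L/\Q}(\lambda)$.

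Direct calculation then gives $\Phi_A \circ \Psi_A = \id$ and $\Psi_A \circ \Phi_A = \id$, and the resulting natural equivalence of functors yields the claimed isomorphism by Yoneda. The most delicate point — and the only one requiring care — is showing everything is defined scheme-theoretically rather than merely on closed points. For $\Phi$ this is the open-cover argument above; for $\Psi$ one must check that the four output coordinates cannot vanish simultaneously at a geometric point, which, together with $\alpha\bar\alpha = \beta\bar\beta = 0$ in the residue algebra, reduces by a case split on whether $A \otimes_\Q L$ splits as a product or is a field to forcing $\alpha = \beta = 0$, contradicting $[\alpha:\beta]\in\PP^1$.
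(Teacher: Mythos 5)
Your formulas check out; your $\Psi$ is exactly the paper's $\phi$ composed with the canonical identification $\nu_{\PP^1}$, up to swapping the roles of $z$ and $w$. The routes differ in framework, though. The paper works over the base change to $L$: it exhibits the $L$-isomorphism $\phi \colon \PP^1_L \times \PP^1_L \to (X_d)_L$ together with its inverse (the two projections), observes that $\phi$ intertwines the $\Gal(L/\Q)$-action permuting the two $\PP^1_L$ factors with the natural action on $(X_d)_L$, and then invokes effective Galois descent plus the paper's Proposition 3.8 (which packages $\Res_{L/\Q}$ as a descent of $\prod_j V'^{(j)}$) to finish. You instead stay over $\Q$, construct mutually inverse natural transformations $X_d(A) \rightleftarrows \PP^1(A \otimes_\Q L)$, and conclude via Yoneda. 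The descent route is a bit slicker because checking $\sigma$-equivariance of $\phi$ automatically produces the $\Q$-isomorphism without needing the less symmetric inverse $\Psi$; your route avoids invoking descent as a black box, at the cost of writing out $\Psi$ and verifying both compositions. Both buy the result at comparable effort. One small caution in your write-up: the statement that the two charts of $\Phi$ cover $X_d$ and that the four coordinates of $\Psi$ cannot all vanish are correct but stated somewhat informally; to be scheme-theoretically precise you should run the argument on local $\Q$-algebras, splitting into the cases where $A \otimes_\Q L$ is local or a product of two local rings, exactly as you sketch for $\Psi$ --- the same case split is also needed for $\Phi$.
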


\begin{proof}
We first construct an $L$-isomorphism
$$
\phi \colon \begin{array}{ccc}\PP^1_{L} \times \PP^1_{L} &\longrightarrow & (X_d)_{L}\\
([s:t], [u:v]) &\longmapsto &
 \left[\frac{sv + tu}{2}: \frac{sv - tu}{2\sqrt{d}}: tv: su\right].
 \end{array}
$$
which admits an inverse constructed from the two projections
\[
(X_d)_{L} \to \PP^1_{L}, 
[x:y:z:w] \mapsto 
[x \pm \sqrt{d}y:z] = [w:x \mp \sqrt{d}y].
\]
Note that the action of the non-trivial element of order two $\sigma \in \Gal(L/\Q)$ swaps the two factors, and hence $\phi$ respects the descent datum on $\PP^1_L \times \PP^1_L$, defined in Proposition~\ref{prop:weilrestriction}, and the effective descent datum on $(X_d)_L$. Hence $\phi$ descends along $L/\Q$ and yields an isomorphism $\Res_{L/\Q}\PP^1_L \xrightarrow{\isom} X_d$.
\end{proof}

From now on we identify $X_d$ with $\Res_{L/\Q} \PP^1_{L}$. Using this identification we can compute many geometric invariants of $X_d$ and its Hilbert scheme of two points directly from understanding the invariants of $\PP^1$ and $\PP^1 \times \PP^1$.

\begin{lemma}\label{lem:constsxd}
    For any non-square $d$, we have
    \begin{enumerate}
        \item[(a)] $\beta(X_d) = 1 = \beta( \Hilb^2 X_d)$;
        \item[(b)] $\rho(X_d) = 1$, $\rho(\Hilb^2 X_d) =2$;
        \item[(c)] $\alpha(X_d) = \frac{1}{2}$, $\alpha(\Hilb^2 X_d) = \frac{1}{8}$;
        \item[(d)] equality of the Peyre constants $c_{X_d} = c_{\PP^1,L}$, and
        \item[(e)] The optimal cutoff (in the sense of Definition~\ref{def:cutoff}) for $X_d$ is $\frac{1}{2}$.
    \end{enumerate}
\end{lemma}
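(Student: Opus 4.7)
The plan is to exploit the identification $X_d\cong \Res_{L/\Q}\PP^1_L$ from Lemma~\ref{lem:XdisRes} throughout, so that all five assertions reduce to facts about $\PP^1$ together with the restriction-of-scalars formalism already established.

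For parts (a) and (b): after base change to $L$ (a splitting field), $X_d$ becomes $\PP^1_L\times \PP^1_L$, so $\Pic\overline{X_d}\cong \Z\oplus \Z$ as a $\Gamma_\Q$-module with the nontrivial element of $\Gal(L/\Q)$ swapping the two summands. This is precisely the induced module $\Ind_{\Gamma_L}^{\Gamma_\Q}\Z$, so Shapiro's lemma gives $\HH^1(\Q,\Pic\overline{X_d})=\HH^1(L,\Z)=0$, whence $\beta(X_d)=1$, while the $S_2$-invariants form the diagonal $\Z$, giving $\rho(X_d)=1$. The $\Hilb^2$-statements then follow from Proposition~\ref{prop:rho and beta of Hilb2}.

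For part (c), since $\rho(X_d)=1$ the invariant effective cone is generated by the diagonal class $h=(1,1)$ and $-K_{X_d}$ corresponds to $(2,2)=2h$, so Conjecture~\ref{c:MP} reduces to $\alpha(X_d)=\int_0^\infty e^{-2y}\,dy=1/2$. For $\alpha(\Hilb^2 X_d)$, Proposition~\ref{prop:hilbbasics}(b) identifies the invariant Picard group with $\Z h^{[2]}\oplus \Z\Delta^{[2]}$, and Proposition~\ref{prop:props of symsquare}(a) with Proposition~\ref{prop:hilbbasics}(a) give $-K_{\Hilb^2 X_d}=2h^{[2]}$; identifying the effective cone generators as $h^{[2]}$ and (a half-integer multiple of) $\Delta^{[2]}$ and computing the dual cone integral $\int_{C^\vee_{\text{eff}}}e^{-\langle -K,y\rangle}\,dy$ over the resulting $2$-dimensional simplex yields $1/8$. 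For part (d), the equalities $\alpha(X_d)=\alpha(\PP^1_L)=1/2$ and $\beta(X_d)=1=\beta(\PP^1_L)$ are immediate from the above, and the Tamagawa identity $\tau(X_d)=\tau(\PP^1_L)$ follows from Theorem~\ref{thm:measure on restriction of scalars} (which transports $\omega_v$ on $X_d(\Q_v)$ to $\prod_{v'|v}\omega_{v'}$ on $\PP^1(L_{v'})$ up to a factor of $\sqrt{|\Delta_{L/\Q}|_v}$) together with the identity $L_{\Q,\bbS}(s,\Pic\overline{X_d})=L_{\Q,\bbS}(s,\Ind_L^\Q\Z)=\zeta_{L,\bbS}(s)$ arising from Proposition~\ref{prop:localfactors} and the Brauer induction formalism. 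Tracking the discriminantal factors via the product formula produces precisely the $|\Delta_L|^{-1/2}$ appearing in the Tamagawa number of $\PP^1_L$.

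Part (e) has two halves. The upper bound $\gamma=1/2$ comes from applying Corollary~\ref{cor:optimalp1}, which gives optimal $\calO(1)$-cutoff $2[L:\Q]=4$ for $\PP^1_L$, hence $\calO(2)$-cutoff $2$, and then Proposition~\ref{p:cutoffres} yields cutoff $2/[L:\Q]^2=1/2$ for $X_d$. For optimality, I will exhibit a smooth $\Q$-conic $C\subseteq X_d$ witnessing the bound. Concretely, using Lemma~\ref{lem:XdisRes}, the diagonal of $\PP^1\times \PP^1$ descends to the $\Q$-curve $C=\{y=0,\ x^2=4zw\}\subseteq X_d$, parametrised by $[s:t]\mapsto[2st:0:t^2:s^2]$, which is an isomorphism $\PP^1_\Q\to C$ realised through $\calO_{\PP^1}(2)$. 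Consequently $\calO_{X_d}(-K_{X_d})|_C\cong \calO_{\PP^1}(4)$ and $H_{X_d}(P)=H_{\calO(1),\PP^1}(P)^4$ for $P\in C$. Now Proposition~\ref{p:p1opt} produces an infinite sequence of pure quadratic points $P_m\in \PP^1_\Q(K_m)$ with $H_{\calO(1)}(P_m)^4\leq |\Delta_{K_m}|$; transferring via $C$, these give pure quadratic points on $X_d$ (the field of definition being intrinsic to the ambient scheme) with $|\Delta_{K_m}|\gg H_{X_d}(P_m)$, which forces any cutoff $\gamma$ to satisfy $\gamma\geq 1/2$.

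The main obstacle will be the explicit evaluation of $\alpha(\Hilb^2 X_d)=1/8$ in part~(c): this requires pinning down the effective cone of $\Hilb^2 X_d$ and the correct normalisation of the class $\Delta^{[2]}/2$ so that the dual cone integral produces the precise factor $1/8$ that matches the expected relation $\alpha(\Hilb^2 X)=\tfrac12\gamma\,\alpha(X)$; by contrast the cohomological computations in (a)--(b), the descent computation in (d), and the conic construction in (e) are essentially formal once the restriction-of-scalars dictionary has been set up.
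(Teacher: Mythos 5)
Your overall strategy — exploiting $X_d\cong\Res_{L/\Q}\PP^1_L$ and reducing each part to facts about $\PP^1$ — is exactly the route the paper takes, and parts (a), (b), and (d) are correct; for (d) you sketch a self-contained argument via Theorem~\ref{thm:measure on restriction of scalars} and Proposition~\ref{prop:localfactors} where the paper simply cites \cite[\S4.2]{MR3430268}, which is fine.

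The genuine gap is in part (c), for $\alpha(\Hilb^2 X_d)$, and you correctly anticipated this would be the sticking point — but the effective-cone description you propose is wrong. You suggest generators ``$h^{[2]}$ and (a half-integer multiple of) $\Delta^{[2]}$''. However, $H^{[2]}=H_1^{[2]}+H_2^{[2]}$ lies in the \emph{interior} of the effective cone of $\Hilb^2(\PP^1\times\PP^1)$, not on an extremal ray; if your cone were correct, the dual cone would be the full first quadrant and $\int_{C^\vee}e^{-\langle -K,y\rangle}dy$ would diverge in the $\delta$-direction, since $\langle -K, y\rangle=2\langle H^{[2]},y\rangle$ is independent of the $\Delta^{[2]}$-coordinate. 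The correct input (which the paper pulls from Le~Rudulier \cite[\S3.1]{lerudulier-thesis}) is that $C^1_{\mathrm{eff}}(\Hilb^2(\PP^1\times\PP^1))$ is generated by $H_1^{[2]}-\Delta^{[2]}$, $H_2^{[2]}-\Delta^{[2]}$, and $\Delta^{[2]}$; taking $\Gamma$-invariants yields the cone generated by $H^{[2]}-2\Delta^{[2]}$ and $\Delta^{[2]}$, whose dual is spanned by $h$ and $h+\tfrac12\delta$, and only then does the bounded triangle $\{ah+b\delta : a\geq 2b\geq 0,\ 2a\leq 1\}$ (of area $\tfrac1{16}$, hence $\alpha=\tfrac18$) emerge. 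Without the term $-2\Delta^{[2]}$ there is no bounded simplex to integrate over.

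Separately, a minor slip in (e): the diagonal of $\PP^1_L\times\PP^1_L$ descends (via the map of Lemma~\ref{lem:XdisRes}) to the conic $\{y=0,\ x^2=zw\}\subseteq X_d$, with parametrisation $[s:t]\mapsto[st:0:t^2:s^2]$, not $[2st:0:t^2:s^2]$; your curve $\{y=0,\ x^2=4zw\}$ does not lie on $X_d$. The surrounding argument — degree $4$ under $-K$, invoking Proposition~\ref{p:p1opt} on $C\cong\PP^1_\Q$ to force $\gamma\geq\tfrac12$ — is the right idea and matches the paper.
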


\begin{proof}
    \begin{enumerate}
        \item[(a)] By \cite[Lem.\ 4.2]{MR3430268} we have $\beta(X_d) = \beta(\PP_L^1) = 1$. The second part follows from Proposition \ref{prop:hilbbasics}(c).
        \item[(b)] Geometrically $X_d$ is isomorphic to $\PP^1 \times \PP^1$. Hence $\Pic \Xbar_d = \Z H_1 \oplus \Z H_2$, corresponding to the two rulings on $\Xbar_d$. The Galois action factors through $\Gal(L/\Q)$ and is generated by the element exchanging $H_1$ and $H_2$. Since $d$ is non-square, $\Pic(\Xbar_d)^\Gamma$ is generated by $H := H_1 + H_2$. This establishes that the rank of $\Pic X_d$ is 1, and the rank of the Picard group of the Hilbert scheme is one greater by Proposition \ref{prop:hilbbasics}(b).
        \item[(c)] The effective cone of $\PP^1 \times \PP^1$ is spanned by $H_1$ and $H_2$, hence the Galois invariant part of the effective cone is spanned by $H = H_1+H_2$ (Lemma~\ref{lem:rankseq}). From $-K_X = 2H$ we deduce $\alpha(X_d) = \frac12$.
        From \cite[\textsection 3.1]{lerudulier-thesis} we see that the effective cone of $\Hilb^2(\PP^1 \times \PP^1)$ is spanned by $H^{[2]}_1-\Delta^{[2]}, H^{[2]}_2-\Delta^{[2]}$ and $\Delta^{[2]}$ (see \S\ref{ss:hilb2}), so the Galois invariant part is generated by $H^{[2]} - 2\Delta^{[2]}$. The dual cone is spanned by $h$ and $h+\frac12 \delta$, so the part given by $\langle \cdot, -K_X\rangle \leq 1$ equals $\{ah+b\delta \colon 0\leq a \leq \frac12, 0 \leq b \leq \frac14\}$. We conclude that $\alpha(\Hilb^2 X_d) = \frac{\alpha(X)}{4} = \frac 18$.
        \item[(d)] We can apply the results from \cite[\textsection 4.2]{MR3430268}, since $X_d = \Res_{L/\Q} \PP^1_L$.
        \item[(e)] Since 2 is an $\calO(1)$-cutoff for $\PP^1_{L}$ by Proposition \ref{ex:pncutoff}, $\frac{1}{2}$ is a cutoff for $X_d =\Res_{L/\Q} \PP^1_{L}$ by Proposition \ref{p:cutoffres}.
        Consider the conic $C \subseteq X_d$ given by $x^2-zw=0=y$ which satisfies $\deg \left.\omega_{X_d}\right|_C=4$, whose optimal cutoff is $\frac24=\frac12$. Since a cutoff for $X_d$ would also be a cutoff for $C$ we conclude that $\frac12$ is the optimal cutoff for $X_d$. \qedhere
    \end{enumerate}
\end{proof}

We note that for this surface, as for $\mathbb P^2$, we have the relation $\alpha(\Hilb^2 X) = \tfrac12 \alpha(X) \gamma$, where~$\gamma$ is the optimal anticanonical cutoff for~$X$. For the approach of counting the points from each quadratic field $K/\Q$ separately to work for a del Pezzo surface $X$ with $\rank \Pic X=1$, this relation between the $\alpha$-constants and the optimal cutoff will need to hold.

\subsection{Counting pure quadratic points}\label{s:widmer}
From the functorial definition of the  restriction of scalars $X_d = \Res_{L/\Q} \PP^1_{L}$, we have for any extension $K$ of $\Q$ the bijection $\nu_{\PP^1} \colon  \PP^1(K \otimes_{\Q} L ) \xrightarrow{\isom} X_d(K)$ and $\PP^1(L) \xrightarrow{\isom} X_d(\Q)$. For instance, instead of counting pure quadratic points in $X_d$ whose field of definition is $K = \Q(\sqrt{\Delta})$ for some $\Delta$, we may count the points of $\PP_\Q^1$  which are defined over the biquadratic extension $M := \Q(\sqrt{d}, \sqrt{\Delta})$ but which are not defined over~$L$. Let us define the heights that we are considering on either side of this identification between $X_d$ and $\Res_{L/\Q}\PP^1_L$. We will denote the absolute heights on $X_d$ and $\PP^1_L$ by $H$ and $\resH$, respectively. If they are decorated by the subscript $\calO(1)$, then they will denote the $\calO(1)$-heights; otherwise, they will denote the anticanonical heights on the appropriate varieties.

We start off with the na\"ive absolute $\calO(1)$-height on $\PP^1_L$ given by
\[
\resH_{\calO(1)} \colon \PP^1_{L}(\overline{L}) \to \R_{>0}, \quad [s:t] \mapsto \prod_{v \in \Omega_L} \max\{|s|_v,|t|_v\}^{\frac{[L_v \colon \Q_p]}{2}}
\]
and the anticanonical height $\resH := \resH_{\calO(1)}^2$. By Proposition~\ref{prop:height on Weil res} these induce heights on $\Res_{L/\Q}\PP^1_L$.

\begin{defn}\label{defn:heightonXd}
    Let $H_{\calO(1)}$ be the $\calO(1)$-height on $X_d$ induced from the $\calO(1)$-height $\resH_{\calO(1)}$ on $\PP^1_L$ under the identification $X_d = \Res_{L/\Q}\PP^1_L$. We will use the anticanonical height $H := H^2_{\calO(1)}$ on $X_d$.
\end{defn}

In particular, by Proposition~\ref{prop:height on Weil res}, for a point $P$ on $X_d$ and its corresponding point $Q$ on $\PP^1_{L}$ we have $H(P) = \resH(Q)^2$.

\begin{remark}
Note that $H_{\calO(1)}$ is an adelic height defined by the adelic metric of Proposition~\ref{prop:adelic metric on Weil res}(a), so we have $H_{\calO(1)}(P) = \prod_v H_{\calO(1),v}(P)$ for $P=[x\colon y \colon z \colon w] \in X_d(\Qbar)$. By construction we have
\[
H_{\calO(1),v}(P) = \prod_{\nu \mid v}\max\{|x+\sqrt dy|_{\nu},|x-\sqrt dy|_{\nu}, |w|_{\nu}, |z|_{\nu}\}^\frac{[L_{\nu}:\Q_p]}{[L:\Q]}
\]
for $x,y,z,w \in L$. We can simplify these local heights easily in many cases.
\begin{enumerate}
    \item At all primes but the finitely many $p \mid 2d$ we have
    \[
    H_{\calO(1),p}(P) = \max\{|x|_{p},|y|_{p}, |w|_{p}, |z|_{p}\},
    \]
    as it should be for all but finitely many places, as $H_{\calO(1)}$ is an adelic height.
    \item If $p \mid d$ is odd, then we find that
    \[
    H_{\calO(1),p}(P) = \max\{|x|_p,|d|_p^{\frac12}|y|_p,|z|_p,|w|_p\}.
    \]
\end{enumerate}
\end{remark}

Instead of counting $K$-points on $X_d$ we count $M$-points on $\PP^1_L$. We define
\[
Z(\PP^1, K,d,Y):=\mathcal\#\left\{P \in \PP^1(M) \, : \, \Q\left(\sqrt{d},P\right)=M,\, \resH_{\calO(1)}(P)\leqslant Y\right\}.
\]
Providing a suitably uniform count of such quartic points is the subject of the following theorem and the major ingredient in the proof of Manin's conjecture for $\Sym^2 X_d$.

\begin{theorem}\label{thm:widmer+}
    For any quadratic field $K/\Q$ disjoint from $L$, any $\epsilon>0$ and any $Y \geq 1$, we have
        \[
 Z(\PP^1, K, d, Y)  =
c_{\PP^1, M}Y^8+O_{d,\epsilon}\left(Y^7\frac{(h_MR_M)^{3/4} (1+\res_{s=1} \zeta_M(s))}{|\Delta_{K/\Q}|^{3/2}}\right),
    \]
where $h_M$ and $R_M$ are the class number and regulator of the compositum $M=KL$ respectively, and $c_{\PP^1, M}$ is Schanuel's constant for $\PP^1$ over $M$ \cite[Cor., p.\ 447]{MR0557080}. The implicit constant does not depend on $K$.
\end{theorem}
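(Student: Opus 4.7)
The plan is to execute a Schanuel-type count on $\PP^1(M)$ while systematically exploiting the $\calO_L$-module structure of ideals in $\calO_M$ through the generalised Davenport lemma (Proposition~\ref{prop:generalised Davenport}). Every $P\in\PP^1(M)$ has a representative $[s:t]$ with $(s,t)\in\calO_M^2$, unique up to the $\calO_M^\times$-action. Writing $s\calO_M+t\calO_M=\mathfrak d^{-1}$ for $[\mathfrak d]$ running over the class group and stripping the primitivity condition by M\"obius inversion over $\mathfrak a\mid(s,t)\mathfrak d$ rewrites $\#\{P\in\PP^1(M)\colon \resH_{\calO(1)}(P)\leq Y\}$ as
\[
\frac{1}{|\mu_M|}\sum_{[\mathfrak d]}\sum_{\mathfrak a}\mu(\mathfrak a)\,\#\bigl\{(s,t)\in(\mathfrak a\mathfrak d^{-1})^2\cap\calS_{\mathfrak a\mathfrak d^{-1},Y}\bigr\},
\]
where $\calS_{\mathfrak b,Y}$ is the archimedean region $\prod_{v\mid\infty}\max(|s|_v,|t|_v)^{[M_v:\R]}\leq(Y/N_{M/\Q}(\mathfrak b))^{[M:\Q]}$ intersected with a fundamental domain for $\calO_M^\times$ of total measure proportional to $R_M\,(Y/N_{M/\Q}(\mathfrak b))^{2[M:\Q]}$. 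The passage from $\PP^1(M)$ to the pure biquadratic points $Z(\PP^1,K,d,Y)$ requires subtracting the points of $\PP^1(L)\subseteq\PP^1(M)$, whose count is $O_d(Y^4)$ by Schanuel over~$L$ and is therefore harmless.

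\textbf{Counting in $\calO_L$-lattices.} I view $\Lambda_{\mathfrak b}:=(\mathfrak b)^2$ as an $\calO_L$-lattice of rank $4$ rather than a $\Z$-lattice of rank $8$. By Lemma~\ref{lem:determinant of an ideal} and the linear disjointness of $K$ and $L$,
\[
{\det}_L\Lambda_{\mathfrak b}\ \asymp_d\ N_{M/\Q}(\mathfrak b)^2\,|\Delta_{K/\Q}|^2.
\]
The region $\calS_{\mathfrak b,Y}$ has Lipschitz boundary of the required form, and I choose the linear map $\tau$ in Proposition~\ref{prop:generalised Davenport} so that the local unit balls $B_v'$ conform to the shape of the unit fundamental domain and to the $L_v$-structure at each archimedean place $v$ of $L$. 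The proposition then yields
\[
\#(\Lambda_{\mathfrak b}\cap\calS_{\mathfrak b,Y})\ =\ \frac{\vol(\calS_{\mathfrak b,Y})}{|\Delta_{L/\Q}|^{2}\,{\det}_L\Lambda_{\mathfrak b}}\ +\ O\!\left(\sum_{i=0}^{3}\frac{R^{2i}}{(\lambda_1\cdots\lambda_i)^2}\right),
\]
where $\lambda_1\leq\lambda_2$ are the two $\calO_L$-successive minima of $\Lambda_{\mathfrak b}$. The reduction from eight $\Z$-minima to two $\calO_L$-minima is the decisive refinement: it will preserve the full power of $|\Delta_{K/\Q}|$ in the denominator of the error, which the classical Davenport lemma cannot.

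\textbf{Main term, error, and main obstacle.} Summing the Davenport main term over $\mathfrak a$ and $[\mathfrak d]$ is Schanuel's original calculation; the class number formula $\res_{s=1}\zeta_M(s)=2^{r_1}(2\pi)^{r_2}h_MR_M/(|\mu_M|\sqrt{|\Delta_{M/\Q}|})$ collapses the resulting Dirichlet series to $c_{\PP^1,M}Y^8$, independently of~$K$. The main obstacle is to establish the two error estimates uniformly in~$K$. For the bulk of $(\mathfrak a,\mathfrak d)$, Minkowski's first theorem applied to $\Lambda_{\mathfrak b}$ gives $\lambda_1^2\gg_d N_{M/\Q}(\mathfrak b)|\Delta_{K/\Q}|/R_M$, and the dominant boundary contribution $\vol(\calS)R/(\lambda_1^2\,{\det}_L\Lambda_{\mathfrak b})$ sums, via partial summation and a Landau-type mean value estimate for ideals of $M$ of bounded norm in the style of Lemma~\ref{lem:landau} (whose error is controlled by $\res_{s=1}\zeta_M(s)$), to the claimed $Y^6\sqrt{h_MR_M}(1+\res_{s=1}\zeta_M(s))/|\Delta_{K/\Q}|$. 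The ideals for which $\lambda_1$ is abnormally small are parametrised by a short element of $\mathfrak b$; their contribution is bounded by a divisor-type sum that I estimate via a quadratic large-sieve argument in the spirit of Proposition~\ref{prop:trunc}, producing the supplementary term $Y^6\log Y/|\Delta_{K/\Q}|^{2/3+\epsilon}$. The genuine difficulty is engineering both regimes to sum with this precise uniformity in~$K$: a purely $\Z$-lattice treatment would save only $|\Delta_{K/\Q}|^{1/2}$ in the denominator, which is insufficient for the field-by-field strategy behind Theorem~\ref{thm:mainthm1}, and it is exactly the $\calO_L$-lattice refinement of \textsection\ref{s:lattices} that makes the argument go through.
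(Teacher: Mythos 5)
Your sketch gets the high-level architecture right (M\"obius inversion over primitivity, class-group representatives, $\calO_L$-lattice structure of $\frakd^2$, and the generalised Davenport lemma giving a rank-$4$ reduction), but there are genuine gaps in the error-term analysis, which is precisely the hard part of the theorem.

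\textbf{First}, the unit fundamental domain is not a region to which Proposition~\ref{prop:generalised Davenport} can be applied once: it has diameter $\sim e^{R_M}T$, so no single unimodular $\tau$ can compress it to a Lipschitz boundary with constant $\ll T$. The paper follows Schmidt's trick of partitioning the fundamental domain into $\asymp R_M$ translates $S_{\mathbf i}(T)$, each taken to $S_0(T)$ by a determinant-one $\tau_{\mathbf i}$ (\S\ref{sss:partition}). Your sketch asserts ``$\calS_{\mathfrak b,Y}$ has Lipschitz boundary of the required form'' and tries to absorb the unit geometry into the choice of $\tau$; that cannot be made to work. Moreover, after partitioning, you must control $\sum_{\mathbf i}\lambda_1(\frakd,\mathbf i)^{-2}$, and the cheap bound $\ll R_M/N(\frakd)^{1/2}$ is insufficient. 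The crucial ingredient is Proposition~\ref{prop:secondschmidttrick}, a careful unit-lattice computation (relying on the specific subgroup $\mathbb U$ generated by fundamental units of the three quadratic subfields) which reduces this sum to $N(\frakd)^{-1/2}N(\calD^{-1})$. Your sketch omits this entirely, and it is the argument Widmer's method does not supply.

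\textbf{Second}, the claimed decomposition of the error into ``bulk $(\fraka,\frakd)$'' and ``ideals with abnormally small $\lambda_1$'' does not match the paper, and the justification is flawed. The bound $\lambda_1^2 \gg_d N(\frakb)\,|\Delta_{K/\Q}|/R_M$ is not given by Minkowski's first theorem (which gives an \emph{upper} bound on $\lambda_1$), and the elementary lower bound $\lambda_1 \gg N(\frakd)^{1/4}$ carries no factor of $|\Delta_{K/\Q}|$; the lattice could be very skewed, and the discriminant dependence sits in the product $\lambda_1\lambda_2$, not in $\lambda_1$ alone. The $|\Delta_{K/\Q}|^{-2/3+\epsilon}$ portion of the error does not come from a quadratic large sieve in the style of Proposition~\ref{prop:trunc} (that result lives in \S\ref{S:summingtheconstants} and is used to sum Tamagawa numbers); it comes from the Petrow--Young subconvexity bound for Dirichlet $L$-functions, applied inside the Landau-type ideal-counting estimate Lemma~\ref{lem:landau}, which is then fed into Proposition~\ref{prop:secondschmidttrick} via partial summation over the $t\asymp q_MR_M$ smallest ideals in a class.
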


\begin{remark}
    This result should be compared to the works of Gao \cite[Thm.\ 3.1]{MR2693933} and Widmer \cite[Thm.\ 3.1]{MR2645051}. In our particular setting, the dependence on $\Delta_{K/\Q}$ in the error term of Theorem~\ref{thm:widmer+} improves on both results, neither of which would have been sufficient for our application.
\end{remark}

Theorem~\ref{thm:widmer+} is proven by adapting Schmidt's proof to a quadratic number field base, namely $L$, and applying the theory of $\calO_L$-lattice counting, rather than the traditional $\Z$-lattice counting. Much of Schmidt's argument was already generalised to a number field base by Widmer in the proof of \cite[Thm.\ 3.1]{MR2645051}, however one particular argument could not be since Widmer handles higher degree extensions than the quadratic extensions considered in this paper and by Schmidt. In particular, we make the error terms in the geometry of numbers explicit in a particular parameter (namely, the partition parameter $\mathbf{i}$, see \S\ref{ss:fundunits}) which Widmer treats uniformly. Using a trick of Schmidt,  we are able to achieve a saving in the sum over this parameter.
Moreover, further savings are accrued by using the $\calO_L$-lattice perspective, which provides sharper error terms as we have improved control of the successive minima. The union of these savings means that the error term in Theorem \ref{thm:widmer+} only just suffices to establish Theorem \ref{thm:mainthm1}, as we shall see later.

\subsection{Reinterpreting the counting problem}\label{s:widmerproof}
 We are interested in the counting function, using the notation from the beginning of \S\ref{s:nonsplit},
\[
Z(\PP^1, K,  d, Y) =
\#\{[x_0:x_1] \in \PP^1(M): \resH_{\calO(1)}([x_0:x_1]) \leq Y \text{ and } L(x_0 \colon x_1) = M\},
\]
where $L(x_0 \colon x_1)$ is the residue field of $[x_0 \colon x_1] \in \PP^1_L$.

We begin with basic manipulations to move the counting problem to $M^2$ instead of $\PP^1(M)$. Since rescaling $(x_0,x_1)$ corresponds to the same projective point, we first remove rescaling by non-units, and reduce the problem into a lattice point counting problem at the same time.

\subsubsection{Reduction to counting in ideals}\label{sss:reduction to ideals}

Fix $\frak a_1, \ldots, \frak a_{h_M}$ integral ideals representing the classes of $\Cl M$, the class group of $M$. Now represent a projective point by coordinates $(x_0,x_1)$ such that the fractional ideal $\langle x_0,x_1\rangle$ of $M$ generated by the coordinates equals one of the $\fraka_i$. This representation is unique up to rescaling by a unit in $\calO_M^\times$, which is a finitely generated abelian group of rank $r+s-1$, where $r = r_M$ denotes the number of real embeddings of $M$ and $s = s_M$ the number of its conjugate pairs of complex embeddings. Let us decompose this group into two parts.

\begin{defn}
    Consider the three quadratic subfields $K_j$ of $M$. For the real fields $K_j$ we define $\epsilon_j$ as the unique fundamental unit in $\calO_{K_j}$ with $\epsilon_j > 1$. We let $\mathbb U \subseteq \calO^{\times}_M$ be the subgroup generated by the $\epsilon_j$, and $\bar{\mathbb U}$ the saturation of $\mathbb U$ in the full unit group.
\end{defn}

To simplify the exposition we will occasionally use $\epsilon_j=1$ when $K_j$ is imaginary.

\begin{prop}
    \begin{itemize}
    \item[(a)] The abelian group $\mathbb U$, and hence $\bar{\mathbb U}$, is torsion-free and has rank $r+s-1$.
    \item[(b)] The index $q_M$ of $\mathbb U$ in $\bar{\mathbb U}$ satisfies $q_M \mid 8$.
    \item[(c)] The index of $\mathbb U$ in $\calO_M^\times$ equals $q_Mw_M$, for $w_M$ the number of torsion units in $\calO_M$.
    \end{itemize}
\end{prop}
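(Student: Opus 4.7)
The plan is to split the proof into three parts, treating (a) first via the Galois action, viewing (c) as a structural consequence of (a), and reducing (b) to a 2-torsion computation for units of biquadratic fields using the norm maps to the three quadratic subfields.

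For (a), torsion-freeness amounts to multiplicative independence of the fundamental units $\epsilon_j$ of the real quadratic subfields $K_j \subseteq M$. The natural tool is the Galois action: writing $\Gal(M/\Q) = \{1,\sigma_1,\sigma_2,\sigma_3\}$ with $\sigma_i$ fixing $K_i$ pointwise, the restriction $\sigma_i|_{K_j}$ for $j\ne i$ is the non-trivial involution of $K_j$, which sends the real unit $\epsilon_j$ to its conjugate $\pm\epsilon_j^{-1}$. Consequently $N_{M/K_i}(\epsilon_j) = \epsilon_j\,\sigma_i(\epsilon_j) = \pm 1$ whenever $j\ne i$. Applying $N_{M/K_i}$ to a hypothetical relation $\prod_j \epsilon_j^{a_j} = \zeta \in \mu_M$ yields $\epsilon_i^{2a_i} \in \mu_M \cap K_i = \{\pm 1\}$, which forces $a_i = 0$ since $\epsilon_i$ has infinite order. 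The rank then matches Dirichlet's unit theorem by a case check on the signs of $d$ and $\Delta$: $M$ has three real quadratic subfields precisely when both $d,\Delta>0$ (so $M$ is totally real with $(r,s)=(4,0)$ and $r+s-1=3$), and exactly one real subfield otherwise (so $(r,s)=(0,2)$ and $r+s-1=1$).

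For (c), the composition $\mathbb U \hookrightarrow \calO_M^\times \twoheadrightarrow \calO_M^\times/\mu_M$ is injective by (a). Since the quotient is free of rank $r+s-1$, the exact sequence $1 \to \mu_M \to \calO_M^\times \to \calO_M^\times/\mu_M \to 1$ splits; one picks a splitting whose image contains $\mathbb U$ and takes $\bar{\mathbb U}$ to be this image (the existence of such a compatible splitting is the delicate structural point, but follows once one knows by part (b) below that $\bar{\mathbb U}/\mathbb U$ is $2$-torsion). Granting the choice, $\calO_M^\times = \mu_M \oplus \bar{\mathbb U}$ directly gives $[\calO_M^\times : \bar{\mathbb U}] = w_M$, and combined with the defining $[\bar{\mathbb U} : \mathbb U] = q_M$ this proves (c).

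The main obstacle is (b). The crux is the identity
\[
\prod_{i=1}^{3} N_{M/K_i}(u) \;=\; u^{3}\,\sigma_1(u)\sigma_2(u)\sigma_3(u) \;=\; u^{2}\,N_{M/\Q}(u),
\]
valid for any $u \in \calO_M^\times$ because $\{1,\sigma_1,\sigma_2,\sigma_3\}$ exhausts $\Gal(M/\Q)$. For $u \in \bar{\mathbb U}$, each factor $N_{M/K_i}(u)$ lies in $\calO_{K_i}^\times = \mu_{K_i}\langle\epsilon_i\rangle \subseteq \mu_M\cdot\mathbb U$, and $N_{M/\Q}(u) \in \{\pm 1\} \subseteq \mu_M$; combining, $u^{2} \in \mu_M\cdot\mathbb U$. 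Since $\bar{\mathbb U}$ meets $\mu_M$ trivially, $\bar{\mathbb U}\cap \mu_M\mathbb U = \mathbb U$, and therefore $u^{2}\in\mathbb U$. Passing to the logarithmic embedding, this means $2\bar{\mathbb U} \subseteq \mathbb U$, so $\bar{\mathbb U}/\mathbb U$ is a quotient of $\bar{\mathbb U}/2\bar{\mathbb U} \cong (\Z/2\Z)^{r+s-1}$. Since $r+s-1 \le 3$ for a biquadratic field, this gives $q_M \mid 2^{r+s-1} \mid 8$, completing the proof. The hard part will be this final chain of deductions: carefully tracking roots of unity and the $\pm 1$ signs coming from $N_{K_i/\Q}(\epsilon_i)$ to ensure the norms of $u$ genuinely land in $\mu_M \cdot \mathbb U$, and reconciling the choice of $\bar{\mathbb U}$ in (c) with the inclusion $2\bar{\mathbb U}\subseteq\mathbb U$ established here.
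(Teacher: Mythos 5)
Your approach is essentially the same as the paper's: both proofs exploit the Galois-norm structure of the biquadratic field $M$ by applying the relative norms $N_{M/K_j}$ to the three quadratic subfields. Your (a) is the same computation (conjugation by $\sigma_j$ kills $\epsilon_{j'}$ for $j'\neq j$ and squares $\epsilon_j$). Your (b) is arguably a cleaner packaging than the paper's: you observe the identity $\prod_i N_{M/K_i}(u)=u^2 N_{M/\Q}(u)$ directly, deduce $u^2\in\mu_M\mathbb U$ for $u\in\bar{\mathbb U}$, and absorb the root of unity using torsion-freeness of $\bar{\mathbb U}$ to get $2\bar{\mathbb U}\subseteq\mathbb U$. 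The paper instead factors the squaring endomorphism of $\mathbb U$ through $\bar{\mathbb U}$ and separately proves injectivity of the relative-norm map (kernel $\subseteq\mu_4$), but the underlying mechanism is identical.

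The one place I would flag a gap is in your discussion of (c). You propose to realize $\bar{\mathbb U}$ as the image of a splitting of $1\to\mu_M\to\calO_M^\times\to\calO_M^\times/\mu_M\to 1$ containing $\mathbb U$, and assert in a parenthesis that the existence of such a compatible splitting ``follows once one knows by part (b) below that $\bar{\mathbb U}/\mathbb U$ is $2$-torsion.'' This is both circular (you invoke (b), which is about $\bar{\mathbb U}$, to justify the very existence of $\bar{\mathbb U}$) and not actually a sufficient condition: having a $2$-torsion cokernel does not in general guarantee that a partial section on a finite-index sublattice extends over the whole lattice — the obstruction lives in $\Ext^1(G/V,\mu_M)$, which is typically nonzero. (The paper's own use of ``saturation'' here is also somewhat loose.) A clean way to avoid the issue entirely is to drop $\bar{\mathbb U}$ and set $q_M:=[\calO_M^\times:\mu_M\mathbb U]$. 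Since $\mathbb U\cap\mu_M=1$, one has $[\calO_M^\times:\mathbb U]=w_M\cdot q_M$ automatically (giving (c)), and the norm identity shows every $u\in\calO_M^\times$ satisfies $u^2\in\mu_M\mathbb U$, so $\calO_M^\times/\mu_M\mathbb U$ is a quotient of $(\Z/2)^{r+s-1}$ and $q_M\mid 8$ (giving (b)).
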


\begin{proof}
    In this proof we restrict to the $j$ for which $K_j$ is real.
    \begin{itemize}
            \item[(a)] Suppose that $\eta := \prod_j \epsilon_j^{c_j}$ for $c_j \in \mathbb Z$ is a torsion unit. Let $\sigma_j \in \Aut_\Q M$ be the unique non-trivial element fixing $K_j$. Since $\sigma_j$ preserves $\epsilon_j$ and sends the other $\epsilon_{j'}$ to $\pm \epsilon_{j'}^{-1}$ we see that $\eta \sigma_j(\eta)=\pm \epsilon_j^{2c_j}$ is also a torsion unit. Hence $c_j=0$ for all $j$. This also proves that the $\epsilon_j$ for $K_j$ real form a basis for $\mathbb U$. Hence the rank equals either $1$ or $3$ depending on whether $(r,s)=(0,2)$ or $(4,0)$.
    \item[(b)] From the definition of the $\epsilon_j$ we have an isomorphism $\calO_{K_j}^\times/\{\pm 1\} \cong \langle \epsilon_j\}$. Consider the composition $\prod_j \langle \epsilon_j \rangle \to \bar{\mathbb U} \to \prod_j \calO_{K_j}^\times/\{\pm 1\} \cong \prod_j \langle \epsilon_j \rangle$  of the inclusion and relative norms, which equals squaring. The first homomorphism is injective by (a). We prove that the second homomorphism is injective too. Suppose that $x \in M$ has norm $1$ in all three quadratic subfields. This means that $\sigma(x)=x^{-1}$ for all non-trivial $\sigma \in \Aut_\Q M$, hence for $j \ne j'$ we have $x^{-1} = \sigma_j \circ \sigma_{j'} (x) = \sigma_j( \sigma_{j'} x) = \sigma_j(x^{-1}) = x$. Hence $x=\pm 1$ and it represents the unit of $\bar{\mathbb U}$.
    
    Hence the composite homomorphism is injective and the rank of the free group is at most 3, the result follows.
    \item[(c)] As $\bar{\mathbb U}$ is a maximal free subgroup of $\calO_M^\times$ we see that the index of $\bar{\mathbb U}$ in the unit group is $w_M$. The result follows. \qedhere
    \end{itemize}
\end{proof}

\begin{rmk}
    In \cite{MR1330740} and \cite{MR2645051} the unit action is split into a maximal free subgroup, $\bar{\mathbb U}$ in our case, and the finite quotient which is isomorphic to the torsion subgroup of $\calO_M^{\times}$. The approach works for any subgroup of finite index, that is of maximal rank $r+s-1$. For our application this particular choice $\mathbb U$ will be important.
\end{rmk}

Note that an element of $M^2 \setminus (0,0)$ up to the action by $\mathbb U$ has $q_M w_M$ representatives which produce the same projective point. We find that for all $Y \geq 1$ the function $Z(\PP^1, K, d, Y)$ equals
\[
\frac{1}{q_Mw_M}\sum_{i = 1}^{h_M} \#\{(x_0, x_1) \in M^2\setminus(0,0): \resH_{\calO(1)}([x_0:x_1]) \leq Y, \langle x_0, x_1 \rangle = \fraka_i, L(x_0\colon x_1) = M\}/\mathbb U.
\]
The definition of the absolute $\calO(1)$-height for the points in the inner cardinality above evaluates to
$$
\resH_{\calO(1)}([x_0:x_1])=N_{M/\Q}(\fraka_i)^{-1/4}\prod_{w \in \Omega_M^\infty} \max \{ \vert x_0 \vert_w, \vert x_1 \vert_w \}^{\frac{d_w}{4}}.
$$
Let $\mu_M$ denotes the generalised M\"obius function on the ideals of $\calO_M$. Using M\"obius inversion we arrive at the expression
\[
\frac{1}{w_Mq_M}\sum_{i=1}^{h_M}\sum_{0 \neq \frakb \subseteq \calO_M} \!\!\mu_M(\frakb)\, \#\{(x_0, x_1) \colon\!  \prod_{w \mid \infty}\resH_{\calO(1),w}(x_0,x_1)^{\frac{d_w}4} \leq T, x_0, x_1 \in \fraka_i\frakb,L(x_0\colon x_1) = M\}/\mathbb U
\]
with $\resH_{\calO(1),w}(x_0,x_1)=\max\{|x_0|_w,|x_1|_w\}$ and $T=N_{M/\Q}(\fraka_i)^{\frac14}Y$.

So, it suffices to count pairs $x_0,x_1 \in \fraka_i\frakb$ such that $\prod_{w \mid \infty} \calH_{\calO(1),w}(x_0,x_1)^{\frac{d_w}4}\leq T$. We now rewrite this condition while accounting for the action of $\mathbb U$.

\subsubsection{Accounting for the action of $\mathbb U$}\label{ss:fundunits}

For a number field $F$, the unit group modulo torsion units  embeds as a lattice in the hyperplane in $\mathbb R^{r_F+s_F}$ given by $\sum d_v x_v = 0$ under the map $\ell \colon F^\times \to \R^{r_F+s_F}, x \mapsto (\log |x|_v)_{v \mid \infty}$. By definition of the regulator $R_F$ of $F$ the determinant of this lattice equals $\sqrt{r_F+s_F} R_F$. For the biquadratic field $F=M$ we will suppress the subscript in the case of $r$ and $s$.

We diagonally embed $M$ in $M_\infty = \prod_{w \mid \infty} M_w$ via the natural diagonal embedding $\alpha\mapsto (\sigma_w(\alpha))_{w \mid \infty}$. We extend $M \hookrightarrow M_\infty$ componentwise to embed $M^2$ into $M_\infty^2$. Given an ideal $\fraka$ in $M$, we write $\Lambda(\fraka)$ for $\fraka \times \fraka \subseteq M^2$ which naturally lies in $M_\infty^2$. General pairs of coordinates in $M^2$ map to $\R^{r+s}_{\geq 0}$ by
\begin{equation}\label{eq:ideallattice}
M^2 \hookrightarrow M^2_\infty \xrightarrow{(\calH_{\calO(1),w})_w}{\R^{r+s}_{\geq 0}},
\end{equation}
where the local height function $\calH_{\calO(1),w} \colon M^2_w \to \mathbb R_{\geq 0}$ collects its arguments from the two factors of $M^2_\infty$ indexed by the infinite place $w$ of $M/\Q$.

To account for the action of $\mathbb U$ we will find a $\mathbb U$-fundamental region. For every $j$ with $K_j$ a real quadratic field let us write $v_j$ for the place of $K_j$ for which $|\epsilon_j|_{v_j} > 1$ and $v'_j$ for the conjugate place. We define for $x=(x_1,\dots,x_n) \in M^n_\infty$  with $x_i = (x_{i,w})_w \in \prod_{w\mid \infty} M_w$ the function
\[
\psi_j(x) = \frac{\prod_{w \mid v_j} \max_i \{|x_{i,w}|_w\}^{d_w}}{\prod_{w \mid v'_j} \max_i\{|x_{i,w}|_w\}^{d_w}},
\]
which we use for both $n=1$ and $n=2$. Note that for $n=1$ the function $\psi_j$ simplifies to
\[
\psi_j(x) = \frac{|N_{M/K_j}(x)|_{v_j}}{|N_{M/K_j}(x)|_{v'_j}} = \frac{|N_{M/K_j}(x)|^2_{v_j}}{|N_{M/\Q}(x)|}
\]
where $v'_j$ is the conjugate place of $v_j$. Here we have used that $d_{v_j}=d_{v'_j}=1$ as the two places of $K_j$ are real.

\begin{lemma}\label{lem:unit action on psis}
\begin{itemize}
    \item[(a)] Let $v$ be a place of one of the $K_j$. We have
    \[
    \prod_{w \mid v} |\epsilon_{j'}|_w^{d_w} = 
    \begin{cases}
        1 & \text{ if } j \ne j';\\
        |\epsilon_j|_v^2 & \text{ if } j=j'.
    \end{cases}
    \]
    \item[(b)] For $x \in M^n_\infty$ we have
    \[
    \psi_j(\epsilon_{j'} x) =
    \begin{cases}
        \psi_j(x) & \text{ if } j \ne j';\\
        \epsilon_j^4 \psi_j(x) & \text{ if } j=j'.
    \end{cases}
    \]
\end{itemize}
\end{lemma}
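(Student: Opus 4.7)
The plan is to reduce both statements to a single norm identity, namely that for $\alpha\in M^\times$ and $v$ an archimedean place of $K_j$, one has
\[
\prod_{w \mid v} |\alpha|_w^{d_w} = |N_{M/K_j}(\alpha)|_v.
\]
This identity follows from the standard decomposition $M \otimes_{K_j} K_{j,v} \cong \prod_{w \mid v} M_w$ together with the normalisation $|\alpha|_w = |N_{M_w/\R}(\alpha)|_\infty^{1/[M_w:\R]}$, so that $|\alpha|_w^{d_w} = |N_{M_w/\R}(\alpha)|_\infty$ and the product telescopes across all $w \mid v$ into the full relative norm.

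With this identity in hand, part (a) splits into two cases depending on whether the fundamental unit being multiplied lives in the fixed field. If $j = j'$, the nontrivial $\sigma_j \in \Gal(M/K_j)$ fixes $\epsilon_j \in K_j$ and hence $N_{M/K_j}(\epsilon_j) = \epsilon_j^2$, yielding $|\epsilon_j|_v^2$. If $j \neq j'$, then since $K_{j'} \cap K_j = \Q$ and $M = K_j \cdot K_{j'}$, the restriction of $\sigma_j$ to $K_{j'}$ is the nontrivial element of $\Gal(K_{j'}/\Q)$. Therefore $N_{M/K_j}(\epsilon_{j'}) = \epsilon_{j'}\sigma_j(\epsilon_{j'}) = N_{K_{j'}/\Q}(\epsilon_{j'}) = \pm 1$ because $\epsilon_{j'}$ is a unit, so the product equals $1$.

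For part (b), I would simply substitute $\epsilon_{j'} x$ into the definition of $\psi_j$. Since $(\epsilon_{j'} x)_{i,w} = \sigma_w(\epsilon_{j'}) x_{i,w}$, the rotational-type identity $\max_i |(\epsilon_{j'} x)_{i,w}|_w = |\epsilon_{j'}|_w \max_i |x_{i,w}|_w$ factors out of the maximum, so numerator and denominator of $\psi_j(\epsilon_{j'} x)/\psi_j(x)$ become $\prod_{w\mid v_j}|\epsilon_{j'}|_w^{d_w}$ and $\prod_{w\mid v'_j}|\epsilon_{j'}|_w^{d_w}$ respectively. For $j\neq j'$ part (a) makes both factors $1$. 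For $j = j'$ part (a) gives $|\epsilon_j|_{v_j}^2 = \epsilon_j^2$ (since $\epsilon_j>1$ at $v_j$ by definition) in the numerator and $|\epsilon_j|_{v'_j}^2 = \epsilon_j^{-2}$ in the denominator (since $\sigma(\epsilon_j) = \pm \epsilon_j^{-1}$ as $N_{K_j/\Q}(\epsilon_j) = \pm 1$), producing the ratio $\epsilon_j^4$.

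There is no serious obstacle here; the only delicate point is keeping the normalisations of $|\cdot|_w$ and the weights $d_w$ straight so that the telescoping to $|N_{M/K_j}(\alpha)|_v$ is clean. Once (a) is stated correctly, (b) is a formal substitution together with the elementary observation that $|\epsilon_j|_{v'_j} = \epsilon_j^{-1}$ for the fundamental unit of a real quadratic subfield.
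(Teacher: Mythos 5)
Your proof is correct and follows essentially the same route as the paper: reduce part (a) to the norm identity and then read off part (b). One small imprecision: the identity you invoke should be $\prod_{w\mid v}|\alpha|_w^{d_w}=|N_{M/K_j}(\alpha)|_v^{d_v}$, not $|N_{M/K_j}(\alpha)|_v$; the telescoping you describe (through $N_{K_{j,v}/\R}\circ N_{M/K_j}$) actually produces the extra power $d_v$. This is immaterial here because $d_v=1$ when $K_j$ is real, and $\epsilon_j=1$ by convention when $K_j$ is imaginary, so both sides are $1$ in the remaining cases; but the identity as you stated it is not quite right in general. For part (b) your direct substitution into the defining expression for $\psi_j$ (applying part (a) separately at $v_j$ and $v'_j$) is equivalent to, and arguably a bit cleaner than, the paper's use of multiplicativity of $\psi_j$ together with the product-formula rewriting $\psi_j(x)=|N_{M/K_j}(x)|_{v_j}^2/|N_{M/\Q}(x)|$.
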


\begin{proof}
\begin{itemize}
    \item[(a)] We find
    \[
    \prod_{w \mid v} |\epsilon_{j'}|_w^{d_w} = |N_{M/K_j}(\epsilon_{j'})|^{d_v}_v.
    \]
    If $j=j'$ then $N_{M/K_j}(\epsilon_{j})=\epsilon^2_j$, and either $\epsilon_j=1$ in the complex case or $d_v=1$ in the real case, and if $j\neq j'$ then $N_{M/K_j}(\epsilon_{j'})=1$.
    \item[(b)] By definition and the discussion above we have 
\[
\psi_j(\epsilon_{j'}x) = \psi_j(\epsilon_{j'})\psi_j(x)
= \frac{\prod_{w \mid v} |\epsilon_{j'}|_w^{2d_w}}{|N_{M/\Q}(\epsilon_{j'})|} \psi_j(x). 
\]
The result now follows from part (a). \qedhere
\end{itemize}
\end{proof}

\begin{cor}\label{cor:quasi-fundamental domain}
        An element $(x_0,x_1)\in M^n \setminus (0,0)$ has a unique representative under the action of $\mathbb U$ for which
    \[
    \psi_j(x) \in [\epsilon_j^{-2}, \epsilon_j^2)
    \]
    for all $j$, for which $K_j$ is real.
\end{cor}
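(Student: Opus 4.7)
The plan is to pass to logarithms and reduce the claim to a standard fundamental-domain argument for a diagonal translation action. Write $J$ for the set of indices $j$ with $K_j$ real, so that by the preceding proposition $\mathbb U$ is free on the generators $\{\epsilon_j : j \in J\}$ of rank $|J| = r+s-1$. The first step is to define
\[
\Phi \colon M^n \setminus \{0\} \to \R^J, \qquad x \mapsto (\log \psi_j(x))_{j \in J}.
\]
For this to make sense one needs positivity of each $\psi_j(x)$; but since each coordinate $x_i \in M$ embeds injectively into every completion $M_w$, the condition $x \ne 0$ forces $\max_i |x_{i,w}|_w > 0$ for every infinite place $w$, so both the numerator and denominator in the definition of $\psi_j(x)$ are positive.

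Next, I would invoke Lemma~\ref{lem:unit action on psis}(b) to unwind the $\mathbb U$-action on the image of $\Phi$. For any $j, j' \in J$ it gives $\psi_j(\epsilon_{j'} x) = \psi_j(x)$ when $j \ne j'$ and $\psi_j(\epsilon_j x) = \epsilon_j^4 \psi_j(x)$, which on taking logs becomes
\[
\Phi(\epsilon_{j'} x) = \Phi(x) + 4(\log \epsilon_{j'})\, e_{j'},
\]
where $e_{j'}$ is the standard basis vector. Consequently $\mathbb U$ acts on $\R^J$ through the homomorphism $\prod_j \epsilon_j^{c_j} \mapsto (4 c_j \log \epsilon_j)_{j \in J}$, whose image is the diagonal lattice
\[
\Lambda_\epsilon := \bigoplus_{j \in J} 4(\log \epsilon_j)\, \Z\, e_j \subseteq \R^J.
\]
Since every $\log \epsilon_j > 0$ and the $\epsilon_j$ form a $\Z$-basis of $\mathbb U$, this map is an isomorphism $\mathbb U \xrightarrow{\sim} \Lambda_\epsilon$ of free abelian groups of rank $|J|$.

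The final step is to recognise the target condition $\psi_j(x) \in [\epsilon_j^{-2}, \epsilon_j^2)$ for all $j \in J$ as $\Phi(x) \in \mathcal R := \prod_{j \in J} [-2\log \epsilon_j, 2\log \epsilon_j)$. Because $\mathcal R$ is a half-open box of side length exactly $4 \log \epsilon_j$ in the $j$-th coordinate, it is a strict fundamental domain for $\Lambda_\epsilon$: every coset $\Phi(x) + \Lambda_\epsilon$ meets $\mathcal R$ in exactly one point. Combined with the isomorphism $\mathbb U \cong \Lambda_\epsilon$ from the previous step, this yields both the existence and uniqueness of a $\mathbb U$-representative of $x$ satisfying the stated condition. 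I do not expect any real obstacle here: the substantive identity is already contained in Lemma~\ref{lem:unit action on psis}(b), and what remains is clean bookkeeping; the closest thing to a pitfall is remembering that one must restrict the fundamental-domain argument to real subfields $K_j$, since for imaginary $K_j$ the generator is taken to be trivial and $\psi_j$ imposes no constraint.
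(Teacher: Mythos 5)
Your proof is correct, and it spells out the natural argument the paper leaves implicit: the corollary is stated without a proof precisely because it is the standard "pass to logarithms, identify the $\mathbb U$-action with translation by a full-rank lattice $\Lambda_\epsilon = \bigoplus_j 4(\log\epsilon_j)\Z\, e_j$, and note that the half-open box $\prod_j [-2\log\epsilon_j, 2\log\epsilon_j)$ is a strict fundamental domain" argument. The substantive content is indeed exactly Lemma~\ref{lem:unit action on psis}(b) together with the fact (from the preceding proposition) that $\{\epsilon_j : K_j\ \text{real}\}$ is a $\Z$-basis of $\mathbb U$; everything else is the bookkeeping you describe. The only caveat worth keeping in mind — which you do flag — is that imaginary $K_j$ contribute nothing, so the logarithm map and the box live in $\R^J$ with $J$ the set of real subfields, matching the rank $r+s-1$ of $\mathbb U$.
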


Let us write $S^{(n)}(T)$ for the union of $(0,0)$ and 
\[
\left\{(x_i)_i \in M_\infty^n \setminus (0,0)\colon \prod_{w \mid \infty}\max_i\{|x_{i,w}|_w\}^{\frac{d_w}4} \leq T, \,\,\psi_j(x)\in [\epsilon_j^{-2}, \epsilon_j^2)\, \text{ for all } j \text{ with } K_j \text{ real} \right\},
\]
so that $S^{(n)}(T)$ is cut out by the conditions in Corollary~\ref{cor:quasi-fundamental domain} and contains all points in $M^n\setminus (0,0)$ of height at most $T$.

\begin{prop}\label{prop:volume of quasifundamental domain}
    We have $S^{(n)}(T) = T S^{(n)}(1)$ for any $T>0$, and 
    \[
    \vol S^{(n)}(T) = q_M \left(2^r \pi^s\right)^{n+1}(n+1)^{r+s-1}R_M T^n.
    \]
\end{prop}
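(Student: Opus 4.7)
The plan is to first establish the scaling relation $S^{(n)}(T) = T \cdot S^{(n)}(1)$, then compute the volume of $S^{(n)}(1)$ by an explicit chain of coordinate changes, and finally convert the resulting product of logarithms of fundamental units into the regulator $R_M$. For the scaling, I would check that both defining conditions are compatible with real dilation $x \mapsto T x$ on $M_\infty^n$: the height $\prod_{w\mid\infty}\max_i\{|x_{i,w}|_w\}^{d_w/4}$ is $1$-homogeneous under positive real scaling because $\sum_w d_w = [M:\Q] = 4$ forces the exponents $d_w/4$ to sum to $1$, and each $\psi_j$ is scale-invariant under positive real scaling since numerator and denominator scale by the same factor $\lambda^{[M:K_j]}$ (a special case of Lemma~\ref{lem:unit action on psis}(b) with the acting unit replaced by a positive real scalar).

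To compute $\vol S^{(n)}(T)$, I would decompose $M_w^n$ at each archimedean place $w$ of $M$ by polar--max coordinates: write $x_w = \rho_w u_w$ with $\rho_w = \max_k |x_w^{(k)}|_w$ and $u_w$ on the max-unit sphere in $M_w^n$, so that the local volume form factors as $\rho_w^{n d_w - 1}\,d\rho_w$ times a measure on the max-sphere whose total mass is a combinatorial constant depending only on $n$ and on whether $w$ is real or complex. Passing to logarithmic variables $\tau_w = d_w \log \rho_w$ linearises the defining conditions: the height reads $\sum_w \tau_w \leq 4\log T$, and each $\psi_j$-condition becomes a slab of length $4\log\epsilon_j$ in the direction $\sum_{w\mid v_j}\tau_w - \sum_{w\mid v'_j}\tau_w$. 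A further affine change of variables to $(\sigma = \sum_w \tau_w,\,(\beta_j = \log\psi_j)_{j\colon K_j\text{ real}})$ then separates the integral into a $\sigma$-integral against $e^{n\sigma}\,d\sigma$ against the half-line $\sigma \leq 4\log T$, times independent $\beta_j$-integrals producing $4\log\epsilon_j$ each; the Jacobian of this linear map is a Hadamard-type determinant coming from the character table of $\Gal(M/\Q)$.

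The last step is to convert $\prod_{j\colon K_j \text{ real}} \log\epsilon_j$ into the regulator by computing the Gram matrix of $\ell(\mathbb U) \subseteq \{\sum d_w x_w = 0\} \subset \R^{r+s}$. The biquadratic structure of $M$ forces $\ell(\epsilon_j)_w = \pm \log\epsilon_j$, the sign being the value at $w$ of the quadratic character cutting out $K_j$; orthogonality of distinct non-trivial characters of $\Gal(M/\Q)$ then makes this Gram matrix diagonal with entries $(r+s)(\log\epsilon_j)^2$, so that the covolume of $\ell(\mathbb U)$ in the trace-zero hyperplane equals $(r+s)^{(r+s-1)/2}\prod_j \log\epsilon_j$. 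Equating this with $q_M \sqrt{r+s}\,R_M$, using $[\bar{\mathbb U}:\mathbb U] = q_M$ together with the paper's normalisation of $R_M$, pins down the product. The main remaining obstacle is purely bookkeeping: the multiplicative constants coming from the sphere measures at real versus complex places, from the Hadamard Jacobian, from the $\sigma$-integral, and from this regulator conversion must be multiplied together so that the prefactor collapses to the advertised form $q_M(2^r\pi^s)^{n+1}(n+1)^{r+s-1} R_M$.
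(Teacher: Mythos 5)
Your plan is sound for the scaling identity, and your direct-computation strategy for the volume could in principle be carried to completion, but it is a genuinely different route from what the paper does, and it is substantially heavier. The paper dispatches the volume in two sentences: the region $S^{(n)}(T)$ you are measuring is cut out by the $\mathbb U$-conditions, and since $\mathbb U$ has index $q_M$ in the maximal free subgroup $\bar{\mathbb U}$ of $\calO_M^\times$, its fundamental domain has exactly $q_M$ times the volume of the $\bar{\mathbb U}$-fundamental domain; the latter volume, $\left(2^r \pi^s\right)^{n+1}(n+1)^{r+s-1}R_M$, is exactly what Widmer computed in \cite[p.\ 33]{MR2645051}. That observation lets you inherit all the polar/log-coordinate bookkeeping, sphere-measure constants, and Jacobians from the literature at zero marginal cost. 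Your plan re-derives that entire pipeline from scratch, which is a legitimate thing to do but doubles the work for no gain.

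Two concrete cautions if you do push the direct computation through. First, your Gram-matrix claim that $\ell(\mathbb U)$ has diagonal Gram matrix with entries $(r+s)(\log\epsilon_j)^2$ is correct in the totally real case $(r,s)=(4,0)$ but not in the CM case $(r,s)=(0,2)$: there the places carry weights $d_w=2$ and the diagonal entry becomes $[M:\Q](\log\epsilon_j)^2 = 4(\log\epsilon_j)^2$, not $2(\log\epsilon_j)^2$ — so the prefactor you would assemble would be off unless you track the $d_w$-weighted pairing carefully. Second, and more structurally, you are reinventing a fact the paper already cites elsewhere (Proposition~\ref{prop:t asymp qMhM} invokes $q_M R_M \asymp \prod_j R_{K_j}$ from \cite[Ch.\ VIII, (7.26)--(7.27)]{FT}); making that relation exact rather than up to absolute constants is precisely the delicate part of your regulator-conversion step, and it is easier to treat $\mathbb U \subseteq \bar{\mathbb U}$ via the index $q_M$ than to recompute the covolume of $\ell(\mathbb U)$ directly.
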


\begin{proof}
    It is clear that the condition on $S^{(n)}(T)$ imposed by the $\psi_j$ is invariant under multiplying an element of $M_\infty^n$ by a positive real number. On the other hand, from $\prod_{w\mid \infty} T^{d_w/4} = T$ we see that the height condition scales linearly with $T$.

    For the second part, the volume of a $\mathbb U$-fundamental domain is $q_M$ times the volume of a $\bar{\mathbb U}$-fundamental region. Such a volume was computed by Widmer \cite[p.\ 33]{MR2645051} to be
    \[
    \left(2^r \pi^s\right)^{n+1}(n+1)^{r+s-1}R_M.
    \qedhere
    \]
\end{proof}

Now we can write the counting function at the end of \textsection \ref{sss:reduction to ideals} as 
\[
\frac{1}{q_M w_M}\sum_{i=1}^{h_M}\sum_{0 \neq \frakb \subseteq \calO_M} \!\!\mu_M(\frakb)\, \#\{x_0,x_1 \in \fraka_i\frakb \colon\!  (x_0,x_1) \in S^{(2)}(T),L(x_0\colon x_1) = M\}
\]
with $T=N_{M/\Q}(\fraka_i)^{\frac14}Y$, because of the local height function $\resH_{\calO(1),w}(x_0,x_1)=\max\{|x_0|_w,|x_1|_w\}$.

\subsubsection{The partition of the fundamental region}\label{sss:partition}
We would like to count the appropriate lattice points inside $S(T) := S^{(2)}(T)$. However, a priori, this region might be contained within a ball with radius of order $e^{R_M}T$ which would give catastrophically large dependence of the error term on the regulator in an application of Proposition~\ref{prop:generalised Davenport}. To handle this, we follow Schmidt~\cite[\textsection 4]{MR1330740} to partition the counting region $S(T)$ into approximately $R_M$ subsets of radius approximately $T$ so that this dependence is reduced significantly.
When $K_j$ is real, we define $t_j = \lfloor R_{K_j} \rfloor + 1$ and $u_j = e^{R_{K_j}/t_j}$. Hence $u_j^{t_j} = \epsilon_j$, and Schmidt shows
\begin{equation}\label{eq:bounding uj}
1< u_j \ll 1.
\end{equation}
For $K_j/\Q$ an imaginary quadratic extension, we have $R_{K_j} = 1$ and define $t_j = 1$ and $u_j = 1$.

We take $\mathbf i = (i_j)_j$ with $i_j$ an integer in $[-t_j,t_j)$. We then define
$
S_{\mathbf i}(T)
$
as the region given by the height condition and $\psi_j(x) \in [u_j^{2i_j},u_j^{2i_j+2})$ for all $j$. We now have the partition $S(T) = \bigcup_{\mathbf{i} \in \mathcal I} S_{\mathbf{i}}(T)$. We will write $S_0(T)$ for the part corresponding to $\mathbf i=(0,0,0)$. Note that if $K_j$ is imaginary, we have $i_j \in \{-1,0\}$, but $S_{\mathbf i}(T)$ for any $\mathbf i$ with $i_j = -1$ will be empty. In what follows we ignore those parts, and we define $\calI$ as the collection of such triples $\mathbf i$ as above with $i_j=0$ whenever $K_j$ is imaginary. Let us write $t := \# \calI$.

\begin{prop}\label{prop:t asymp qMhM}
    We have $t \asymp q_M R_M$ with an absolute implicit constant.
\end{prop}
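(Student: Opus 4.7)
The plan is to analyse $\#\mathcal{I}$ case by case according to the signature of $M$, which as a biquadratic field is either totally real (signature $(4,0)$) or of signature $(0,2)$, and to compare the answer with $R_M$ via an explicit computation of the regulator of $\mathbb{U}$.

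First I would observe that if $K_j/\Q$ is real quadratic then its fundamental unit satisfies $\epsilon_j \geq (1+\sqrt{5})/2$, so $R_{K_j} \geq \log((1+\sqrt{5})/2)$ is bounded below by an absolute positive constant. Combined with the definition $t_j = \lfloor R_{K_j} \rfloor + 1$, this gives $t_j \asymp R_{K_j}$ with an absolute implicit constant, and this is the only step where the lower bound on real quadratic regulators intervenes.

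Next, I would convert between $R_{\mathbb{U}}$ (the covolume of $\mathbb{U}$ under the logarithmic embedding) and $R_M$. Using parts (b) and (c) of the proposition preceding Corollary~\ref{cor:quasi-fundamental domain}, $\bar{\mathbb{U}}$ has index $w_M$ in $\mathcal{O}_M^\times$ and is torsion-free of the full rank $r+s-1$, so $\bar{\mathbb{U}}$ maps isomorphically onto $\mathcal{O}_M^\times/\mu_M$, giving $R_{\bar{\mathbb{U}}} = R_M$ and hence $R_{\mathbb{U}} = q_M R_M$. It then suffices to compute $R_{\mathbb{U}}$ in each signature case. In the totally real case I would parametrise the four real places of $M = \Q(\sqrt a,\sqrt b)$ by sign choices $w_{\varepsilon_1,\varepsilon_2}$, observe that $\log|\epsilon_j|_w = \pm R_{K_j}$ depending on the restriction $w|_{K_j}$, and compute that any $3\times 3$ minor of the resulting sign matrix has determinant $\pm 4$; this yields $R_{\mathbb{U}} = 4 R_{K_1}R_{K_2}R_{K_3}$ and thus $\#\mathcal{I} = 8\,t_1t_2t_3 \asymp 8R_{K_1}R_{K_2}R_{K_3} = 2q_M R_M$. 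In the signature $(0,2)$ case, exactly one $K_j$ is real (say $K_1$), the other two have $\epsilon_j=1$ by convention, and $\mathbb{U} = \langle\epsilon_1\rangle$ has rank $1$; the two complex places of $M$ lie one above each real place of $K_1$, so $\ell(\epsilon_1) = (2R_{K_1},-2R_{K_1})$ and $R_{\mathbb{U}} = 2R_{K_1}$, giving $\#\mathcal{I} = 2t_1 \asymp R_{K_1} = \tfrac{1}{2}q_M R_M$.

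The main potential pitfall is uniformity of the implicit constant in $M$: the combinatorial factor from the determinant computation divided by $q_M$ must remain absolutely bounded. This is guaranteed by $q_M \mid 8$ from the same proposition. Putting the two cases together yields $t \asymp q_M R_M$ with an absolute constant, as claimed.
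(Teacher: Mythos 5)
Your proof is correct, and it takes a genuinely different route from the paper's. The paper's proof is two sentences: it observes $t_j \asymp R_{K_j}$ absolutely (as you do), and then cites Fröhlich--Taylor, Chapter~VIII, (7.26)--(7.27) for the relation $q_M R_M \asymp \prod_j R_{K_j}$, leaving the reader to unpack a regulator identity for biquadratic fields from the reference. You instead derive that relation directly by computing the covolume $R_{\mathbb U}$ of the subgroup $\mathbb U = \langle \epsilon_1,\epsilon_2,\epsilon_3\rangle$ under the logarithmic embedding, split by signature: in the totally real case every $3\times 3$ minor of the sign matrix is $\pm 4$, giving $R_{\mathbb U} = 4 R_{K_1}R_{K_2}R_{K_3}$; in the signature $(0,2)$ case the normalization factor $d_w = 2$ at the two complex places gives $R_{\mathbb U} = 2R_{K_1}$. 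Combined with the chain $R_{\mathbb U} = [\bar{\mathbb U}:\mathbb U]\cdot R_{\bar{\mathbb U}} = q_M R_M$ (using that torsion is killed by $\ell$, so $R_{\bar{\mathbb U}} = R_M$), and $t = \prod_{j:K_j\text{ real}} 2t_j$, you obtain the claim with explicit constants. Your approach is more self-contained and explains exactly where the constant comes from at the cost of the case analysis; the paper's is shorter but offloads the crucial identity to a citation. Both the computation of the minors and the covolume bookkeeping check out, and your observation that $q_M \mid 8$ guarantees absolute constants is the right thing to record.
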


\begin{proof}
    By definition $t_j \asymp R_{K_j}$ with absolute implicit constant. The equality $q_M R_M \asymp \prod_j R_{K_j}$ follows from (7.26) and (7.27) in \cite[Ch.\ VIII]{FT}.
\end{proof}

For every $j$ we have an $\R$-linear morphism $\tau_j \colon M_w \to M_w$ given by $x \mapsto |\epsilon_j|_w^{-1/2t_j} x$, which gives a linear morphism $\tau_j \colon M_\infty \to M_\infty$ of determinant $1$. We now define $\tau_{\mathbf i} = \prod_j \tau_j^{i_j}$. We often abuse notation and write $\tau_{\mathbf i}$ also for the induced linear automorphism of $M_\infty^2$.

\begin{lemma}\label{lem:tau maps Si to S0}
    The linear automorphism $\tau_{\mathbf i}$ of $M^2_\infty$ maps $S_{\mathbf i}(T)$ to $S_0(T)$.    
\end{lemma}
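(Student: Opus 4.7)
The plan is to verify that $\tau_{\mathbf i}$ preserves both defining conditions of $S_{\mathbf i}(T)$: the height bound, and the ``window'' conditions $\psi_j(x) \in [u_j^{2i_j}, u_j^{2i_j+2})$, sending the latter to $[1, u_j^2)$, which is exactly the definition of $S_0(T)$. Since $\tau_{\mathbf i} = \prod_{j'} \tau_{j'}^{i_{j'}}$ acts on each local factor $M_w$ as multiplication by the positive scalar $a_w := \prod_{j'} |\epsilon_{j'}|_w^{-i_{j'}/2t_{j'}}$, both verifications reduce to bookkeeping with local absolute values, together with the product formula $\prod_{w \mid \infty} |\epsilon_{j'}|_w^{d_w} = 1$ for each unit $\epsilon_{j'}$ and the computation already recorded in Lemma~\ref{lem:unit action on psis}(a).

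First I would handle the height. From $\max_k |(\tau_{\mathbf i} x)_{k,w}|_w = a_w \max_k |x_{k,w}|_w$ one obtains
\[
\prod_{w \mid \infty} \bigl(\max_k |(\tau_{\mathbf i} x)_{k,w}|_w\bigr)^{d_w/4}
= \prod_{j'} \Bigl(\prod_{w \mid \infty} |\epsilon_{j'}|_w^{d_w}\Bigr)^{-i_{j'}/(8t_{j'})} \prod_{w \mid \infty} \bigl(\max_k |x_{k,w}|_w\bigr)^{d_w/4},
\]
and the inner products each equal $1$ by the product formula, so the height is preserved. In particular, $\tau_{\mathbf i} x \in S^{(2)}(T)$ whenever $x \in S^{(2)}(T)$.

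Next I would compute $\psi_j(\tau_{\mathbf i} x)/\psi_j(x)$. By the same local scaling and multiplicativity it factors as $\prod_{j'} R_{j,j'}^{-i_{j'}/(2t_{j'})}$, where $R_{j,j'} := \bigl(\prod_{w \mid v_j} |\epsilon_{j'}|_w^{d_w}\bigr)\big/\bigl(\prod_{w \mid v'_j} |\epsilon_{j'}|_w^{d_w}\bigr)$. Lemma~\ref{lem:unit action on psis}(a) identifies each of the two inner products as $1$ when $j' \neq j$, and as $|\epsilon_j|_v^2$ (with $v \in \{v_j, v'_j\}$) when $j' = j$. Combined with the definitions $|\epsilon_j|_{v_j} = \epsilon_j = u_j^{t_j}$ and $|\epsilon_j|_{v'_j} = u_j^{-t_j}$, the ratio collapses to $u_j^{-i_j}/u_j^{i_j} = u_j^{-2i_j}$. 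Hence $\psi_j(x) \in [u_j^{2i_j}, u_j^{2i_j+2})$ if and only if $\psi_j(\tau_{\mathbf i} x) \in [1, u_j^2)$; for imaginary $K_j$ the condition is vacuous since $i_j = 0$ by the convention fixed just before the lemma. Putting the two verifications together shows $\tau_{\mathbf i}(S_{\mathbf i}(T)) \subseteq S_0(T)$. The argument is purely mechanical and the only (very mild) subtlety is keeping track of the $d_w$-exponents in the normalised absolute values; I do not anticipate any genuine obstacle.
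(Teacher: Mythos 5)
Your proof is correct and follows essentially the same route as the paper's: it uses the product formula to show the height condition is invariant, and Lemma~\ref{lem:unit action on psis}(a) to compute $\psi_j(\tau_{\mathbf i}x) = u_j^{-2i_j}\psi_j(x)$, concluding that the window condition transforms as claimed. You spell out the local bookkeeping in more detail than the paper does, but the substance is identical; just note that since all your comparisons are equivalences (``if and only if''), you have in fact shown $\tau_{\mathbf i}(S_{\mathbf i}(T)) = S_0(T)$, not merely the inclusion stated in your last sentence.
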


\begin{proof}
    It is immediate by the product formula that the height condition
    \[
    \prod_{w \mid \infty}\max_i\{|x_{i,w}|_w\}^{\frac{d_w}4} \leq T
    \]
    is satisfied for $x=(x_0,x_1) \in M^2_\infty \setminus (0,0)$ if and only if it satisfied for $\tau_{\mathbf i} x$.

    We consider the point $\tau_{\mathbf i}x \in M^2_\infty$ with coordinates $x_i  \prod_{j'} |\epsilon_{j'}|_w^{-i_{j'}/2t_{j'}} \in M_w$ indexed by $i \in \{0,1\}$ and $w \in \Omega^\infty_M$. The function $\psi_j$ only detects the factor with $j'=j$, and we have
    \[
    \psi_j(\tau_{\mathbf i} x) = u_j^{-2 i_j} \psi_j(x).
    \]
    Hence $\psi_j(x) \in [u_j^{2i_j}, u_j^{2i_j+2})$ for all $j$ if and only if $\psi_j(\tau_{\mathbf i} x) \in [1, u_j^2)$ for all $j$.
\end{proof}

\begin{remark}
     Our description of the action of the unit group and the partitioning of the fundamental domain is rather different from that of Gao and Widmer and has more in common with Schmidt's approach. This is so that we can use explicitly the way in which the unit structure in $M$ is more or less inherited from the units in its quadratic subfields. In more general extensions, there is not such a clear relationship and this perspective is no longer practical or useful.
\end{remark}

\begin{prop}\label{prop:Lipschitzity of counting region}
    The region $S_{\mathbf i}(T)$ is $\text{Lip}_{M^2}(R,N)$ with respect to $B'_{\infty, \mathbf i} := \tau_{\mathbf i} B_\infty$, where $R \leq \kappa T$ for a constant $\kappa$ depending at most on $d$ and $N \ll 1$.
\end{prop}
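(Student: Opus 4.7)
The plan is to reduce the problem to parametrising the boundary of the compact region $S_0(1)$, by using two successive simplifications. First, we exploit the linear bijection $\tau_{\mathbf{i}}\colon S_{\mathbf{i}}(T) \to S_0(T)$ from Lemma~\ref{lem:tau maps Si to S0}: any Lipschitz parametrisation $\phi\colon[0,1)^{7}\to M_\infty^2$ of $\partial S_0(T)$ with parameter $R$ with respect to $B_\infty$ pulls back via $\tau_{\mathbf{i}}^{-1}$ to a parametrisation of $\partial S_{\mathbf{i}}(T)$ with the same parameter $R$ but measured with respect to $\tau_{\mathbf{i}}^{-1}(B_\infty)$, which (modulo the sign convention on $\mathbf{i}$) agrees with $B'_{\infty,\mathbf{i}}=\tau_{\mathbf{i}}B_\infty$. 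Second, by the homogeneity $S_0(T)=T\cdot S_0(1)$ established in Proposition~\ref{prop:volume of quasifundamental domain}, the scaling $\phi \mapsto T\phi$ transforms any parametrisation of $\partial S_0(1)$ with parameter $R_0$ into one of $\partial S_0(T)$ with parameter $R_0T$. So it is enough to produce $O(1)$ Lipschitz maps covering $\partial S_0(1)$, each with Lipschitz constant $O_d(1)$ with respect to $B_\infty$.

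The region $S_0(1)\subseteq M_\infty^2 \cong \R^{8}$ is bounded because the conditions $\psi_j(x)\in[1,u_j^2)$, together with the absolute bound $u_j\ll 1$ from \eqref{eq:bounding uj}, force $|x_{i,w}|_w^{d_w}$ to be comparable at every conjugate pair of places and thus prevent the coordinates from escaping to infinity despite the product-type height constraint. Its boundary is a finite union of real-analytic hypersurfaces: the height hypersurface $\prod_{w\mid\infty}\max_i|x_{i,w}|_w^{d_w/4}=1$, together with the $\psi_j$-level sets $\{\psi_j=1\}$ and $\{\psi_j=u_j^2\}$ for each real quadratic subfield $K_j$ of $M$. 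After decomposing each piece into cells according to which coordinate attains the maximum at each infinite place of $M$ (of which there are $O(1)$, since $[M:\Q]=4$), we obtain $O(1)$ smooth semi-algebraic cells, each a bounded portion of a smooth hypersurface in $\R^{8}$, each admitting an explicit $C^1$-parametrisation $[0,1)^{7}\to M_\infty^2$ with Lipschitz constant controlled by the $u_j$'s and the max-norm structure, hence $O_d(1)$ with respect to $B_\infty$. Combining these parametrisations with the reductions above yields $N\ll 1$ maps covering $\partial S_{\mathbf{i}}(T)$ with parameter $R\leq \kappa T$ for some $\kappa$ depending on $d$.

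The main technical obstacle lies in controlling the singular strata of $\partial S_0(1)$: the places where two coordinates tie for the maximum in the height function, and the intersections of the $\psi_j$-level sets with the height hypersurface. Because the height function $\max_i|x_{i,w}|_w$ is only piecewise smooth, a naive parametrisation breaks down there. We sidestep this by decomposing $S_0(1)$ into finitely many closed sub-regions on each of which a fixed coordinate $x_{i,w}$ is the maximum (for every $w$) and each $\psi_j$ is represented by an explicit real-analytic formula; then every boundary cell of every sub-region admits a smooth parametrisation by classical semi-algebraic/semi-analytic techniques. Since there are only $O(1)$ such sub-regions (as $[M:\Q]=4$ and each place has finitely many options), both $N \ll 1$ and $R \ll_d T$ follow, completing the plan.
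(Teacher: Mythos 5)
Your reduction to $S_0(T)$ via the linear map $\tau_{\mathbf i}$, and then to $S_0(1)$ via the homogeneity $S_0(T) = T\,S_0(1)$, is exactly the same first step the paper uses. You also correctly spot (and flag) that the proposition's notation $B'_{\infty,\mathbf i} := \tau_{\mathbf i}B_\infty$ is inconsistent with the usage elsewhere in the section, where $B'_{\mathbf i,v} = \tau_{\mathbf i,v}^{-1}B_v$; the pullback by $\tau_{\mathbf i}^{-1}$ converts a Lipschitz bound with respect to $B_\infty$ into one with respect to $\tau_{\mathbf i}^{-1}B_\infty$, which is the notion that actually feeds into Proposition~\ref{prop:generalised Davenport}.

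Where you diverge from the paper is the final step: the paper disposes of the Lipschitz-parametrisability of $S_0(T)$ in one line by citing \cite[Lem.~A.1]{MR2645051}, whereas you sketch a self-contained argument. Your sketch is plausible — the boundedness of $S_0(1)$ does follow from the $\psi_j$-constraints combined with $u_j \ll 1$ and the product height bound (one checks this by taking logarithms and seeing that the log-coordinates of $\max_i|x_{i,w}|_w^{d_w}$ are forced to be within $O(1)$ of one another and bounded above), and a decomposition of $\partial S_0(1)$ into cells indexed by which coordinate attains each local max and which constraint is active is the right structure. However, the passage from ``finitely many bounded smooth semi-analytic cells'' to ``each admits a parametrisation by $[0,1)^7$ with Lipschitz constant $O_d(1)$'' is the actual content of Widmer's lemma and is not automatic from $C^1$-smoothness alone; it requires either a semi-algebraic Yomdin--Gromov-type input or a hands-on inductive covering by Lipschitz graphs as Widmer does. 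So your outline is correct and more self-contained than the paper's citation, but at the crucial point it still gestures at the same external technique rather than replacing it.
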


\begin{proof}
    We only need to prove $S_0(T)$ is $\text{Lip}_{M^2}$ with respect to $B_{\infty}$. This follows from \cite[Lem.\ A.1]{MR2645051}.
\end{proof}

\subsubsection{Ideals under different lattice structures}

We now consider the ideal $\frakd \subseteq \calO_M$ with lattice structure inherited from the partition function. To that end, define $B_v\subseteq M_v$ to be the unit region under the maximum norm $\{(x_w)_{w \mid v} \colon \vert x_w\vert_w\leq 1\}$. We consider $\frakd$ as lattice with respect to the local regions $B'_{\mathbf i, v} := \tau_{\mathbf i,v}^{-1} B_v = \{(x_w)_{w \mid v} \colon \vert x_w\vert_w\prod_j|\epsilon_j|_w^{-i_j/2t_j}\leq 1\}$.

\begin{defn}\label{defn:succ min for different regions}
    Let $\lambda_i(\frakd,\mathbf i)$ denote the successive minimum of the $\mathcal{O}_L$-lattice $\frakd \subseteq M$ with respect to $B'_{v,\mathbf i}$.
\end{defn}

To study how the successive minima of $\frakd$, especially the first, vary with $\mathbf i$ we introduce the following functions.

\begin{defn}
    We define for $K_j$ a real quadratic field
    \[
    m^{(j)} \colon M_\infty \to \R_{\geq 0}, (x_w)_w \mapsto \max_{v \in \Omega_{K_j}^\infty} \left\{ \prod_{w \mid v} |x_w|^{d_w/2}_w\right\},
    \quad \text{ and } \quad m := \left[ \prod_{j: K_j \text{ real}} m^{(j)}\right]^{1/\#\{j: K_j \text{ real}\}}.
    \]
    We also define
     $m^{(j)}_{\mathbf i}(x) := m^{(j)}(\tau_{\mathbf i} x)$ and
     $m_{\mathbf i}(x) := m(\tau_{\mathbf i} x)$.
\end{defn}

Note that $m^{(j)}(x)$ is the maximum among the numerator and denominator of $\psi_j(x)$ for $x \in M_\infty$.

Using Proposition~\ref{lem:unit action on psis}(a) we see that
    \begin{align*}
    m_{\textbf i}^{(j)}(x) & = \max_{v \in \Omega_{K_j}^\infty} \left\{ \prod_{w \mid v} \left[ (\prod_{j'} \vert\epsilon_{j'}\vert_w^{-d_wi_{j'}/4t_{j'}}) |x_w|^{d_w/2}_w \right] \right\}\\
    & = \max_{v \in \Omega_{K_j}^\infty} \left\{\left[ \prod_{j'} \left( \prod_{w \mid v}\vert\epsilon_{j'}\vert_w^{d_w/2} \right)^{-i_{j'}/2t_{j'}} \right] \prod_{w \mid v} |x|_w^{d_w/2}\right\}\\
    & = \max_{v \in \Omega_{K_j}^\infty} \left(\vert\epsilon_{j}\vert_v^{-i_{j}/2t_{j}} \prod_{w \mid v} |x_w|^{d_w/2}_w \right)
    = \max_{v \in \Omega_{K_j}^\infty} \left(\vert\epsilon_{j}\vert_v^{-i_{j}/2t_{j}} |N_{M/K_j}(x)|^{d_v/2}_v\right)
    \end{align*}
which simplifies further as $d_v=1$ for $v$ a place of a real quadratic field $K_j$.

\begin{lemma}
    The function $m_{\mathbf i}$ assumes a positive minimum on the non-zero elements of an ideal $\frakd \subseteq M$.
\end{lemma}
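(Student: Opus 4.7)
The plan is twofold: first, establish that $m_{\mathbf i}$ is bounded below by a positive constant on $\frakd \setminus \{0\}$; then promote this bound to attainment of the infimum via the discreteness of $\frakd$ as a lattice in $M_\infty$.

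For the lower bound, note that for each real quadratic subfield $K_j$, the function $m^{(j)}(x)$ is the maximum over the two real places of $K_j$ (each with $d_v = 1$) of $|N_{M/K_j}(x)|_v^{1/2}$. Since the product of these two values equals $|N_{K_j/\Q}(N_{M/K_j}(x))|^{1/2} = |N_{M/\Q}(x)|^{1/2}$, the max is at least the geometric mean, giving $m^{(j)}(x) \geq |N_{M/\Q}(x)|^{1/4}$. Taking the geometric mean over all real $j$ preserves this bound, so $m(x) \geq |N_{M/\Q}(x)|^{1/4}$. Using $\prod_{w \mid \infty}|\epsilon_j|_w^{d_w} = |N_{K_j/\Q}(\epsilon_j)| = 1$, the automorphism $\tau_{\mathbf i}$ preserves the quantity $\prod_{w\mid\infty} |y|_w^{d_w} = |N_{M/\Q}(y)|$ on $M_\infty$, so $m_{\mathbf i}(x) = m(\tau_{\mathbf i}x) \geq |N_{M/\Q}(x)|^{1/4}$. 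For $x \in \frakd \setminus \{0\}$, the inclusion $(x) \subseteq \frakd$ gives $|N_{M/\Q}(x)| \geq N_{M/\Q}(\frakd) > 0$, yielding the desired positive lower bound.

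For attainment, it suffices to show that $\{x \in \frakd : m_{\mathbf i}(x) \leq C\}$ is finite for every $C > 0$. Since $\tau_{\mathbf i}$ is an invertible $\R$-linear automorphism of $M_\infty$, this reduces to showing that $\{y \in M_\infty : m(y) \leq C,\ |N_{M/\Q}(y)| \geq N_{M/\Q}(\frakd)\}$ is bounded in $M_\infty$. Since at least one of $d$, $\Delta$, $d\Delta$ is positive, at least one $K_j$ is real. If $M$ is totally imaginary, then $K_1$ is the unique real subfield, each of its real places lies below exactly one complex place of $M$, and thus $m(y) = m^{(1)}(y) = \max_w |y|_w$ is the sup-norm, which is trivially proper. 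If $M$ is totally real, the three index-$2$ subgroups of $\Gal(M/\Q) \cong (\Z/2\Z)^2$ induce three distinct pairings of the four real places of $M$, with the combinatorial property that any fixed place $w_0$ is paired with each of the other three places $w_1, w_2, w_3$ under exactly one subgroup. For each such $w_0$, selecting the term containing $w_0$ from each max yields
\[
m(y)^3 \geq \sqrt{|y|_{w_0}|y|_{w_1}}\cdot\sqrt{|y|_{w_0}|y|_{w_2}}\cdot\sqrt{|y|_{w_0}|y|_{w_3}} = |y|_{w_0}\cdot|N_{M/\Q}(y)|^{1/2},
\]
whence $|y|_{w_0} \leq C^3 N_{M/\Q}(\frakd)^{-1/2}$ uniformly in $w_0$, proving boundedness.

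The main delicate point is the signature analysis in the attainment step. The function $m$ is \emph{not} by itself proper on $M_\infty$ in the totally real case (it is a product of maxes, several factors of which can mutually cancel), but the combinatorial observation above shows that it becomes proper once restricted to the locus where $|N_{M/\Q}|$ is bounded below, which is exactly what the lattice condition $x \in \frakd \setminus \{0\}$ enforces.
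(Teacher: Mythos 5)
Your proof is correct but takes a genuinely different route from the paper. Both arguments reduce to showing that the sublevel sets $\{x \in \frakd \setminus \{0\} : m_{\mathbf i}(x) \leq C\}$ are finite. The paper observes that $m^{(j)}_{\mathbf i}(x)^2 = \max_{v \in \Omega_{K_j}^\infty}\bigl(|\epsilon_j|_v^{-i_j/t_j}|N_{M/K_j}(x)|_v\bigr)$ is a weighted max-norm of $N_{M/K_j}(x)$, an element of the fractional ideal $N_{M/K_j}(\frakd)$ of the real quadratic field $K_j$, and then invokes the generalised Davenport lemma (Proposition~\ref{prop:generalised Davenport}) to conclude each $m^{(j)}_{\mathbf i}$ has discrete image; the conclusion for the geometric mean $m_{\mathbf i}$ then uses, implicitly, that each $m^{(j)}_{\mathbf i}$ is bounded below on $\frakd \setminus \{0\}$ — exactly the positive-lower-bound fact you make explicit. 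Your proof instead establishes properness of $m$ on the locus $|N_{M/\Q}| \geq N_{M/\Q}(\frakd)$ directly, with a case analysis on the signature of $M$: trivial in the CM case where $m = m^{(1)}$ is the sup-norm, and via the pleasant combinatorial fact in the totally real case that the three subgroups of $\Gal(M/\Q)$ induce three perfect matchings on the four real places, each matching $w_0$ to a different one of the other three, giving $m(y)^3 \geq |y_{w_0}|\,|N_{M/\Q}(y)|^{1/2}$. The paper's reduction to a single quadratic field sidesteps the signature dichotomy and reuses machinery already developed in \textsection\ref{s:lattices}, whereas yours is self-contained and elementary — at the cost of some place-counting bookkeeping that is specific to the biquadratic setting. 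One small slip: you wrote $|N_{K_j/\Q}(\epsilon_j)|$ where, for the identity $\prod_{w \mid \infty}|\epsilon_j|_w^{d_w} = 1$ as a product over places of $M$, you mean $|N_{M/\Q}(\epsilon_j)|$; both equal $1$ so nothing breaks, but the notation should match the product being computed.
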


We will write $\mu_1(\frakd, \mathbf i)>0$ for this minimum.

\begin{proof}
    It is clear that $m_{\textbf i}(x)=0$ if and only if $x=0$, hence the minimum is positive if it exists.
    
    Now note that for a fractional ideal $\frakd$ of a number field $F$, and a sequence of numbers $\lambda_v \in \mathbb R_{>0}$ indexed by the archimedean places of $F$ the function
    \[
    x \mapsto \max_v (\lambda_v |x|_v)
    \]
    on $x \in \frakd$ takes its values in a discrete subset of $\mathbb R_{\geq 0}$ as there are only finitely many elements of $x \in \frakd$ with $\max_v (\lambda_v |x|_v)\leq X$ for every $X > 0$ by Proposition~\ref{prop:generalised Davenport}. 
    
    For each $j$ with $K_j$ a real quadratic field, we have
    \[
    m_{\textbf i}^{(j)}(x)^2 = \max_{v \in \Omega_{K_j}^\infty} \left(\vert\epsilon_{j}\vert_v^{-i_{j}/t_{j}} |N_{M/K_j}(x)|_v\right).
    \]
    Hence by the result applied to a real $K_j$ with the ideal $N_{M/K_j}(\frakd)$ we see that $m_{\textbf i}^{(j)}$ takes its value in a discrete set.

    We conclude that the image of $m_{\textbf i}(\frakd)$ is a discrete subset of $\R_{\geq 0}$, which proves the claim.
\end{proof}

The definition of $\mu_1(\frakd, \mathbf i)$ allows us to prove the bound in Proposition~\ref{prop:secondschmidttrick} which will be crucial to handle the error terms in our count after the application of Proposition~\ref{prop:generalised Davenport}.

Let $\mathcal{C}$ be an ideal class in $\mathcal{O}_M$. Recalling that $t$ denotes the number of partitions of the $\mathbb{U}$-fundamental domain of the unit lattice, we define
    $$
    N(\mathcal{C}):=\sum_{n=1}^t\frac{1}{N_{M/\Q}(\frakc_n)^{1/4}},
    $$

    where the ideals $\frakc_n$ belong to the ideal class $\mathcal{C}$, and they are ordered by increasing norms $N_{M/\Q}(\frakc_1)\leqslant N_{M/\Q}(\frakc_2)\leqslant \cdots$. We will relate the sum of the reciprocals of the first successive minima to this norm, thus allowing for some non-trivial savings in the sum of the error term over the different parts of the partition.

\begin{prop}\label{prop:secondschmidttrick}
    We have
     \[
    \sum_{\mathbf i \in \calI} \frac 1{\lambda_1(\frakd, \mathbf i)} \ll N_{M/\Q}(\frakd)^{-1/4} N(\calD^{-1}),
    \]
     where $\calD$ is the ideal class of $\frakd$.
\end{prop}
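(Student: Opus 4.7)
The plan is to associate to each $\mathbf i \in \calI$ an integral ideal $\frakc(\mathbf i) \in \calD^{-1}$ coming from the minimizer of $\lambda_1(\frakd, \mathbf i)$, and to bound the sum by controlling (i) the contribution of each fibre of the map $\phi: \mathbf i \mapsto \frakc(\mathbf i)$, and (ii) the size of its image. For each $\mathbf i$, let $x_{\mathbf i} \in \frakd \setminus \{0\}$ attain the first successive minimum, so that $\lambda_1(\frakd, \mathbf i) = f_{\mathbf i}(x_{\mathbf i})$, where $f_{\mathbf i}(y) := \max_{w \mid \infty} |\tau_{\mathbf i, w}(y)|_w$. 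Using the archimedean product formula $\prod_w |y|_w^{d_w} = |N_{M/\Q}(y)|$, the fact that $|N_{M/\Q}(\epsilon_j)|=1$ (so the $\epsilon_j$-factors cancel under the product), and weighted AM-GM, one obtains $f_{\mathbf i}(y)^4 \geq |N_{M/\Q}(y)|$ for every $y \in M\setminus\{0\}$. Setting $\frakc(\mathbf i) := (x_{\mathbf i})\frakd^{-1}$, which lies in $\calD^{-1}$ and satisfies $|N_{M/\Q}(x_{\mathbf i})| = N_{M/\Q}(\frakd) N_{M/\Q}(\frakc(\mathbf i))$, this yields the crude pointwise bound $\lambda_1(\frakd, \mathbf i)^{-2} \leq N_{M/\Q}(\frakd)^{-1/2} N_{M/\Q}(\frakc(\mathbf i))^{-1/2}$.

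The heart of the proof is a sharper fibre-wise estimate: for each $\frakc \in \phi(\calI)$,
\[
\sum_{\substack{\mathbf i \in \calI \\ \frakc(\mathbf i) = \frakc}} \lambda_1(\frakd, \mathbf i)^{-2} \ll N_{M/\Q}(\frakd)^{-1/2} N_{M/\Q}(\frakc)^{-1/2},
\]
with an implied constant depending only on $M$. Indeed, fixing a generator $x$ of $\frakc\frakd$, the function $\mathbf i \mapsto \log f_{\mathbf i}(\eta x)$ for any $\eta \in \calO_M^\times$ is the maximum of affine functions of $\mathbf i$, hence a convex piecewise-linear function taking its minimum $(1/4)\log|N_{M/\Q}(x)|$ at a unique ``balance point'' $\mathbf i^*(\eta x)$ where the values $|\tau_{\mathbf i, w}(\eta x)|_w$ are equal across all $w$. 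Away from this balance point, $\log f_{\mathbf i}(\eta x)$ grows proportionally to $\|\mathbf i - \mathbf i^*(\eta x)\|_1$, with slopes $\log|\epsilon_j|_{v_j}/(2t_j)$ bounded below by an absolute constant thanks to \eqref{eq:bounding uj}. A direct computation shows that as $\eta$ ranges through the unit group, the balance point $\mathbf i^*(\eta x)$ translates through the lattice $\prod_j 2t_j\Z\mathbf e_j$, for which $\calI$ is a fundamental domain. Hence summing $\lambda_1(\frakd, \mathbf i)^{-2}$ over the fibre $\phi^{-1}(\frakc)$ reduces to a sum of the form $(N_{M/\Q}(\frakd)N_{M/\Q}(\frakc))^{-1/2}\sum_{\mathbf j \in \Z^{r+s-1}} e^{-c\|\mathbf j\|_1}$, which converges to an absolute constant.

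Combining the fibre bound with the trivial inequality $|\phi(\calI)| \leq |\calI| = t$ gives
\[
\sum_{\mathbf i \in \calI} \lambda_1(\frakd, \mathbf i)^{-2}
\ll
N_{M/\Q}(\frakd)^{-1/2} \sum_{\frakc \in \phi(\calI)} N_{M/\Q}(\frakc)^{-1/2}
\leq
N_{M/\Q}(\frakd)^{-1/2}\, N(\calD^{-1}),
\]
where the final inequality uses that among all subsets of at most $t$ ideals of $\calD^{-1}$, the sum of $N_{M/\Q}(\frakc)^{-1/2}$ is maximised by taking the $t$ ideals of smallest norm, whose sum is by definition $N(\calD^{-1})$. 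The main obstacle is the fibre bound of the second paragraph: it requires careful book-keeping of the signs of $\log|\epsilon_j|_w$ across the archimedean places of $M$ and their precise interplay with the partition defining $\calI$, to verify both the translation-by-lattice behaviour of the balance points and the uniform geometric decay of $f_{\mathbf i}(\eta x)^{-2}$ away from them.
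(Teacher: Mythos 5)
Your proposal is correct in its overall structure and reaches the same bound as the paper, but the mechanics of the crucial ``geometric series'' estimate are genuinely different. The paper does not work with $\lambda_1$ (i.e., with $f_{\mathbf i}$) directly; it first shows $\mu_1(\frakd,\mathbf i)\le\lambda_1(\frakd,\mathbf i)$, where $\mu_1$ is the minimum on $\frakd$ of the auxiliary function $m_{\mathbf i}=\bigl(\prod_j m^{(j)}_{\mathbf i}\bigr)^{1/\#\{j:K_j\text{ real}\}}$. The point of $m_{\mathbf i}$ is that $m^{(j)}_{\mathbf i}$ depends only on the single coordinate $i_j$ (through $|\epsilon_j|_v^{-i_j/2t_j}$ and $N_{M/K_j}$), so the sum over $\mathbf i\in\calI$ \emph{factors} into a product of one-variable geometric series whose ratios are controlled by $u_j>1$; it also isolates, via the condition $\psi_j(\alpha)\in[\epsilon_j^{-4},\epsilon_j^4)$, the $\le 16q_M$ elements $\alpha$ achieving $\mu_1$ for a given ideal. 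You instead keep $f_{\mathbf i}$ and must establish a \emph{multivariate} lower bound of the form $\log f_{\mathbf j}(ux)-\tfrac14\log|N_{M/\Q}(x)|\gg\|\mathbf j-\mathbf j^*\|_1$ with a uniform implied constant, which is precisely the ``book-keeping of signs'' you flag at the end: one has to check that the slope vectors $\mathbf s_w=\bigl(-\log|\epsilon_j|_w/2t_j\bigr)_j$ attached to the archimedean places $w\mid\infty$ of $M$ have the sign pattern of the even vertices of a box, so that their convex hull contains a ball about the origin of radius $\gg\min_j(\log u_j)$ --- which is bounded below by an absolute constant only because $\log u_j=R_{K_j}/t_j$ and real quadratic regulators are bounded away from $0$ (a fact implicit in the paper's citation of Schmidt's bound $1<u_j\ll 1$). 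Your fibre decomposition $\mathbf i\mapsto\frakc(\mathbf i)=(x_{\mathbf i})\frakd^{-1}$, the identity $\lambda_1(\frakd,\mathbf i)=\min_{\eta}f_{\mathbf i}(\eta x)$ on each fibre, and the unfolding of the $\mathbb U$-action into a translation by $\prod_j 2t_j\Z$ are all correct and mirror the paper's device of counting distinct minimizers $\alpha(\frakd,\mathbf i)$; just be aware that the full unit group is only $\mathbb U$ up to the bounded index $q_Mw_M$, so the unfolding picks up an extra bounded factor. In short: same endgame, different engine --- the paper's $m_{\mathbf i}$ is a trick that reduces the hard multivariate decay to a product of trivially convergent one-variable series, while your direct approach is sound but makes you pay the convexity/uniformity cost up front.
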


\begin{remark}
    The above result is more or less the only facet of Schmidt's paper which was not generalised previously to  base fields other than $\Q$, although a version was established for ideals viewed as $\Z$-lattices by Debaene~\cite[Thm 11]{debaene}. In Widmer's approach this sum is estimated by $N_{M/\Q}(\frakd)^{-1/4} R_M$ which would not suffice for our purposes.
\end{remark}

\begin{proof}[Proof of Proposition \ref{prop:secondschmidttrick}]
    Let $\beta \in \frakd$ achieve $\lambda_1(\frakd, \mathbf i)$, that is $\max_{w\mid \infty} \left( |\beta|_w \prod_j |\epsilon_j|_w^{-i_j/2t_j}\right)$ is minimal. Firstly, observe that for each $j$ we have
    \begin{align*}
    m_{\textbf i}^{(j)}(\beta) = \max_{v \in \Omega_{K_j}^\infty} \left\{\prod_{w \mid v}(\prod_j \vert\epsilon_j\vert_w^{-d_wi_j/4t_j}) \vert \beta \vert_w^{d_w/2}\right\}
   & \leq
     \max_{v \in \Omega_{K_j}^\infty} \max_{w \mid v}(\prod_j \vert\epsilon_j\vert_w^{-i_j/2t_j}) \vert \beta \vert_w\\
     &=
     \max_{w\mid \infty} (\prod_j \vert\epsilon_j\vert_w^{-i_j/2t_j})\vert \beta\vert_w = \lambda_1(\frakd, \mathbf i).
    \end{align*}
    To deduce the inequality above we use $\prod_w x_w^{d_w/2} \leq \max_w x_w$ since $\sum_{w\mid v} d_w = 2d_v=2$. Hence, we have $m_{\mathbf i}(\beta) \leq \lambda_1(\frakd,\mathbf i)$. Thus it suffices to bound
    \[
    \sum_{\mathbf i \in \calI} \frac{1}{\mu_1(\frakd, \mathbf i)}.
    \]

    Let $\alpha := \alpha(\frakd, \mathbf i)$ be an element of $\frakd$ achieving $\mu_1(\frakd, \mathbf i)$. Then we claim that $\psi_{j}(\alpha) \in[\epsilon_j^{-4},\epsilon_j^4)$ for each $j$ such that $K_j$ is real.
    
    Indeed suppose that $\psi_1(\alpha)>\epsilon_1^4$, noting that this is only possible if $K_j$ is real. We will prove that $m_{\mathbf i}(\alpha) > m_{\mathbf i}(\epsilon_1^{-1}\alpha)$, contradicting the minimality of $m_{\mathbf i}$ at $\alpha$.

    First note that the terms in $m^{(2)}_j$ and $m^{(3)}_j$ have been set up so they are unaffected by multiplying by $\epsilon_1$ as per Proposition \ref{lem:unit action on psis}(a). To prove $m^{(1)}_{\mathbf i}(\alpha) > m^{(1)}_{\mathbf i}(\epsilon_1^{-1}\alpha)$ we recall $m^{(1)}_{\mathbf i}(x) = \max_{v \in \Omega_{K_1}^\infty} \left(\vert\epsilon_1\vert_v^{-i_1/2t_1} |N_{M/K_1}(x)|^{1/2}_v\right)$ and first prove that the maximum in $m^{(1)}_{\mathbf i}(\epsilon_1^{-1} \alpha)$ is attained for $v=v_1$ the place $K_1$ for which $v_1(\epsilon_1) > 1$ as in \S\ref{ss:fundunits}. Indeed,
    \[
\frac{\vert\epsilon_1\vert_{v_1}^{-i_1/t_1} |N_{M/K_1}(\epsilon_1^{-1} \alpha)|_{v_1}}{\vert\epsilon_1\vert_{v'_1}^{-i_1/t_1} |N_{M/K_1}( \epsilon_1^{-1}\alpha)|_{v'_1}} = \epsilon_1^{-2i_1/t_1} \psi_1(\epsilon_1^{-1} \alpha) \geq \epsilon_1^{-2} \epsilon_1^{-2} \epsilon_1^4 = 1
    \]
    since $i_1 \in [-t_1,t_1)$ and $\psi_1(\epsilon_1^{-1} \alpha) = \epsilon_1^{-2} \psi_1(\alpha)$ by Proposition \ref{lem:unit action on psis}(b).
    We now get
    \begin{align*}
    m^{(1)}_{\mathbf i}(\alpha)^2 & \geq \vert\epsilon_1\vert_{v_1}^{-i_1/t_1} |N_{M/K_1}( \alpha)|_{v_1} > \epsilon_1^{-2} \left( \vert\epsilon_1\vert_{v_1}^{-i_1/t_1} |N_{M/K_1}( \alpha)|_{v_1} \right)\\
    & = \vert\epsilon_1\vert_{v_1}^{-i_1/t_1} |N_{M/K_1}( \epsilon_1^{-1}\alpha)|_{v_1} = m^{(1)}_{\mathbf i}(\epsilon_1^{-1} \alpha)^2.
    \end{align*}
    Thus we conclude that
   \[
    m_{\mathbf i}(\alpha) = \prod_j m_{\mathbf i}^{(j)}(\alpha) ^{1/\#\{j:K_j \text{ real}\}} > \prod_j m^{(j)}_{\mathbf i}(\epsilon_1^{-1}\alpha)^{1/\#\{j:K_j \text{ real}\}} = m_{\mathbf i}(\epsilon_1^{-1}\alpha),
    \] which contradicts the minimality of $\alpha$. The proof in the case $\psi_1(\alpha) < \epsilon_1^{-4}$ is  exactly the same just with all the exponents reversed.

For any $\alpha \in \frakd$ with $\psi_j(\alpha) \in [\epsilon_j^{-4},\epsilon_j^4)$, we have
    \[
    \sum_{\substack{\mathbf i \in \calI \\ \alpha(\frakd, \mathbf i) = \alpha}} \frac{1}{\mu_1(\frakd, \mathbf i)}
    = \sum_{\substack{\mathbf i \in \calI\\ \alpha(\frakd,\mathbf i)=\alpha}} \left(\prod_{j: K_j \text{ real}} \min_{ v \in \Omega^\infty_{K_j}} \vert\epsilon_j\vert_v^{i_j/2t_j} |N_{M/K_j}(\alpha)|_v^{-1/2} \right)^{1/\#\{j : K_j \text{ real}\}}
    \]
Since $\prod_w |\alpha|^{d_w}_w=|N_{M/\Q}(\alpha)|$ and the $\epsilon_j$ are linearly independent in the unit lattice of $M$, we can solve for $\mathbf c_j \in \mathbb R$ such that
    \[
    |\alpha|_w= |N_{M/\Q}(\alpha)|^{1/4} \prod_{j'} |\epsilon_{j'}|_w^{c_{j'}},
    \]
    and hence
    \begin{align*}
    |N_{M/K_j}(\alpha)|_v &= |N_{M/\Q}(\alpha)|^{1/2} \prod_{w \mid v} \prod_{j'} |\epsilon_{j'}|_w^{d_wc_{j'}}\\
    &= |N_{M/\Q}(\alpha)|^{1/2} \prod_{j'} |N_{M/K_j}(\epsilon_{j'})|_v^{c_{j'}} = |N_{M/\Q}(\alpha)|^{1/2} |\epsilon_j|_v^{2c_j}.
    \end{align*}
Therefore, we conclude that 
\begin{align*}
\sum_{\substack{\mathbf i \in \calI \\ \alpha(\frakd, \mathbf i) = \alpha}} \frac{1}{\mu_1(\frakd, \mathbf i)}
     &=
      |N_{M/\Q}(\alpha)|^{-1/4} \sum_{\substack{\mathbf i \in \calI \\ \alpha(\frakd, \mathbf i) = \alpha}} \left( \prod_{j : K_j \text{ real}}\min_{v \in \Omega_{K_j}^\infty   } \vert \epsilon_j \vert_v^{\frac{\mathbf i_j}{2t_j}-c_j} \right)^{1/\#\{j: K_j \text{ real}\}}.
\end{align*}
The parameters $i_j$ go up to $t_j$ but for the purposes of an upper bound we may extend the sum to infinity for those $j$ such that $K_j$ is real. This shows that 
\[
\sum_{\substack{\mathbf i \in \calI \\ \alpha(\frakd, \mathbf i) = \alpha}} \frac{1}{\mu_1(\frakd, \mathbf i)}
     \leq
      |N_{M/\Q}(\alpha)|^{-1/4}
      \sum_{\substack{j: K_j \text{ real}\\ k_j \in \Z}} \min_{v \in \Omega_{K_j}^\infty}
      \vert \epsilon_j \vert_v^{\frac{k_j/2t_j-c_j}{\#\{j:K_j \text{ real}\}}}.
\]
When $\frac{k_j}{2t_j} - c_j \leq 0$ the minimum is attained at $v=v_j$, conversely when $\frac{k_j}{2t_j} - c_j \geq 0$ the minimum is attained at $v=v_j'$. Since $\vert \epsilon_j \vert_{v_j} \vert \epsilon_j \vert_{v_j'} = 1$, we have that 
\[
 \vert \epsilon_j \vert_{v_j'}^{\frac{k_j/2t_j-c_j}{\#\{j:K_j \text{ real}\}}}= \vert \epsilon_j \vert_{v_j}^{\frac{c_j-k_j/2t_j}{\#\{j:K_j \text{ real}\}}}.
\]
Therefore we see that each range corresponds to summing the term $ \vert \epsilon_j \vert_{v_j}^{\frac{\ell_j}{2t_j\#\{j:K_j \text{ real}\}}}=u_j ^{\frac{\ell_j}{2\#\{j:K_j \text{ real}\}}}$ as $\ell_j$ runs over non-positive integers. In summary, we have
\[
\sum_{\substack{\mathbf i \in \calI \\ \alpha(\frakd, \mathbf i) = \alpha}} \frac{1}{\mu_1(\frakd, \mathbf i)}
     \leq
      |N_{M/\Q}(\alpha)|^{-1/4}
      \prod_{j:K_j \text{ real}} \frac{2}{u_j^{1/2 \#\{j: K_j \text{ real}\}}-1}
      \ll |N_{M/\Q}(\alpha)|^{-1/4},
\]
where we have used $u_j > 1$ from \eqref{eq:bounding uj}.

    For each $\mathbf i \in \calI$, there exists an integral ideal $\frakd_{\mathbf i}'$ in the ideal class $\calD^{-1}$ such that $\alpha(\frakd, \mathbf i) \calO_M = \frakd \frakd_{\mathbf i}'$. Conversely, suppose we are given an ideal $\frak c$ in $\calD^{-1}$ and we want to find an element $\alpha \in \calO_M$ such that $\alpha \calO_M = \frakd \frakc$ and such that $\psi_j(\alpha) \in [\epsilon_j^{-4}, \epsilon_j^4)$. Since $\psi_j(\pm\epsilon_1^{k_1}\epsilon_2^{k_2}\epsilon_3^{k_3}\alpha) = \epsilon_j^{4k_j}\psi_j(\alpha)$ by Lemma \ref{lem:unit action on psis}, it follows that there are at most $16q_M\ll 1$ ways to choose $\alpha$. 
    
    Now supposing that $s\leq t$ distinct numbers $\alpha_1, \ldots, \alpha_s$ arise among the $\alpha(\frakd, \mathbf i)$ as $\mathbf i$ varies, we deduce that 
    \[
    \sum_{\mathbf i \in \calI} \frac{1}{\mu_1(\frakd, \mathbf i)} \ll \sum_{n=1}^s |N_{M/\Q}(\alpha_n)|^{-1/4}.
    \]
    Hence for $t$ certain distinct integral ideals $\frakc_1, \ldots, \frakc_t$, we conclude
    \[
    \sum_{\mathbf i \in \calI} \frac{1}{\mu_1(\frakd, \mathbf i)} \ll |N_{M/\Q}(\frakd)|^{-1/4}\sum_{n=1}^t |N_{M/\Q}(\frakc_n)|^{-1/4} \leq |N_{M/\Q}(\frakd)|^{-1/4}N(\calD^{-1}),
    \] as desired.
\end{proof}

\subsubsection{Counting pure points in ideals}
For a finitely generated $\calO_L$-submodule $\Lambda \subseteq M^2$ and an open bounded $\calS \subseteq M^2_\infty$ (or the closure thereof), let
\[
Z^*(\Lambda, \calS) := 
\#\{ \omega \in \Lambda \cap \calS \colon \omega \neq \boldsymbol{0}, L(\omega_0\colon \omega_1) = M\}
\]
Recall that any fractional ideal $\frakd$ of $M$ defines a lattice $\Lambda(\frakd) \subseteq M^2$. In general, we will be using $\Lambda(\frakd)$ for $\Lambda$ and the sets $S(T)$ or $S_{\mathbf{i}}(T)$ for $\calS$.
 
Over the last sections we have recovered the following decomposition of the counting function, analogous to that from \cite{MR2645051}.

\begin{lemma}\label{lem:widmerdecomp}
Let $w_M$ denote the number of roots of unity in $M$. Then for $Y>0$, we have
\[
Z(\PP^1,K,d,Y) = \frac1{q_Mw_M}
\sum_{\fraka} \sum_{0 \neq \frakb \subseteq \mathcal{O}_M} \mu_M(\frakb) \sum_{\mathbf i \in \mathcal I} Z^*(\Lambda(\fraka\frakb), S_{\mathbf i}(N(\fraka)^{1/4}Y)),
\]
where the $\fraka$ sum runs over any set of ideal class representatives of $\Cl(M)$ and $\mu_M$ is the number field M{\"o}bius function. 
\end{lemma}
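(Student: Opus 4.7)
The plan is to simply assemble the various reductions performed in \textsection\ref{sss:reduction to ideals}--\textsection\ref{sss:partition} into a single identity. First, I would recall the expression obtained at the end of \textsection\ref{sss:reduction to ideals}: after choosing integral ideal representatives $\fraka_1,\dots,\fraka_{h_M}$ of $\Cl(M)$, normalising coordinates so that $\langle x_0,x_1\rangle = \fraka_i$ (which is unique up to the action of $\calO_M^\times$, a group of order $q_M w_M$ times the index of $\mathbb U$), and applying M\"obius inversion over ideals of $\calO_M$ to eliminate the coprimality condition $\langle x_0,x_1\rangle = \fraka_i$ in favour of the divisibility condition $x_0,x_1 \in \fraka_i\frakb$, we have
\[
Z(\PP^1,K,d,Y) = \frac{1}{q_M w_M}\sum_{i=1}^{h_M}\sum_{0\ne \frakb \subseteq \calO_M} \mu_M(\frakb)\, \#\bigl\{(x_0,x_1) \in (\fraka_i\frakb)^2 : \text{conditions}\bigr\}/\mathbb U,
\]
where the conditions demand $\prod_{w\mid\infty} \resH_{\calO(1),w}(x_0,x_1)^{d_w/4}\le T$ with $T = N_{M/\Q}(\fraka_i)^{1/4}Y$, together with $L(x_0\colon x_1)=M$.

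Next, I would replace the quotient by $\mathbb U$ with a choice of fundamental region. By Corollary~\ref{cor:quasi-fundamental domain}, every nonzero $\mathbb U$-orbit in $M^2$ has a unique representative $(x_0,x_1)$ satisfying $\psi_j(x)\in[\epsilon_j^{-2},\epsilon_j^2)$ for each real $K_j$, and the height condition together with this constraint is precisely the definition of the set $S(T)=S^{(2)}(T)\subseteq M^2_\infty$. Thus the inner cardinality equals the number of $(x_0,x_1)\in (\fraka_i\frakb)^2$ with $(x_0,x_1)\in S(T)$ and $L(x_0\colon x_1)=M$ (the condition $L(x_0\colon x_1)=M$ is invariant under the $\mathbb U$-action, so it passes through the quotient intact).

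Finally, I would apply the partition $S(T)=\bigsqcup_{\mathbf i\in\calI} S_{\mathbf i}(T)$ introduced in \textsection\ref{sss:partition}, which breaks each $\psi_j$-interval $[\epsilon_j^{-2},\epsilon_j^2)$ into $2t_j$ geometric sub-intervals $[u_j^{2i_j},u_j^{2i_j+2})$ when $K_j$ is real (and is trivial when $K_j$ is imaginary). Inserting the definition of $Z^*(\Lambda(\fraka_i\frakb), S_{\mathbf i}(T))$, identifying $\Lambda(\fraka_i\frakb)=\fraka_i\frakb\times\fraka_i\frakb$ diagonally embedded in $M^2_\infty$, and swapping the finite sums yields exactly the claimed formula. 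There is no real obstacle here beyond bookkeeping; the only points warranting care are ensuring that the normalising factor $1/(q_M w_M)$ correctly accounts for the fact that the representatives in the fundamental region of $\mathbb U$ exhaust each projective orbit with multiplicity $q_M w_M$ (coming from the torsion units together with the $\mathbb U\subseteq\bar{\mathbb U}\subseteq\calO_M^\times$ tower), and that the partition $\calI$ ignores the vacuous $i_j=-1$ slots at imaginary places, which is harmless since the corresponding $S_{\mathbf i}(T)$ are empty.
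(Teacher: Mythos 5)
Your proposal is correct and follows exactly the route the paper takes: the paper offers no separate proof of the lemma, instead presenting it as the summary of the reductions in \textsection\ref{sss:reduction to ideals}--\textsection\ref{sss:partition} (class representatives and M\"obius inversion, passage to the $\mathbb U$-fundamental region $S(T)$ via Corollary~\ref{cor:quasi-fundamental domain}, then the partition $S(T)=\bigsqcup_{\mathbf i}S_{\mathbf i}(T)$), and your write-up assembles those three steps in the same order with the same bookkeeping of the factor $1/(q_M w_M)$.
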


Our aim will be to apply the counting result in Proposition~\ref{prop:generalised Davenport} to estimate the various terms of the form $Z^*(\Lambda(\frakd), S_{\mathbf i}(T))$. To that end let us remark that $B_v \times B_v \subseteq M_v^2$ is the unit region under the maximum of the $w$-adic valuations for all $w \mid v$ of both coordinates, similarly $B_\infty \times B_\infty \subseteq M_\infty^2$ is the maximum among all $w$-adic valuations. Recall the linear automorphism $\tau_{\mathbf i}$ of $M_\infty$ and the induced $\tau_{\mathbf i}$ on $M^2_{\infty}$ from \textsection\ref{sss:partition}, and the notation for successive minima $\lambda_i(\frakd,\mathbf i)$ with respect to the transformed unit balls $B'_{v,\mathbf i} := \tau_{\mathbf i}^{-1} B_v$ from Definition~\ref{defn:succ min for different regions}.

\begin{lemma}
    The successive minima of $\Lambda(\frakd):=\frakd \times \frakd \subseteq M^2$ with respect to the balls $B'_{v,\mathbf i} \times B'_{v,\mathbf i}$ are
    \[
    \lambda_1(\frakd,\mathbf i),\; \lambda_1(\frakd,\mathbf i), \;\lambda_2(\frakd,\mathbf i) \quad \text{ and } \quad \lambda_2(\frakd,\mathbf i).
    \]
\end{lemma}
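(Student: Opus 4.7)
The plan is to exploit the fact that both the lattice and the unit ball split as a direct product. Namely, $\Lambda(\frakd) = \frakd \oplus \frakd$ as an $\calO_L$-module (of rank $4$, since $\frakd$ has rank $2$ over $\calO_L$), and the local unit ball $B'_{v,\mathbf i} \times B'_{v,\mathbf i}$ on $M_v \oplus M_v$ is literally the Cartesian product of the unit balls used to define $\lambda_i(\frakd,\mathbf i)$. In this ``orthogonal direct sum'' situation, the successive minima of the total lattice are the sorted union of the successive minima of the summands, which here gives the sequence $\lambda_1,\lambda_1,\lambda_2,\lambda_2$. I would prove this directly in our setting rather than appeal to an external statement.

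For the upper bound, let $\alpha_1,\alpha_2 \in \frakd$ be $L$-linearly independent elements realising $\lambda_1(\frakd,\mathbf i)$ and $\lambda_2(\frakd,\mathbf i)$. Then the four elements $(\alpha_1,0), (0,\alpha_1), (\alpha_2,0), (0,\alpha_2)$ are $L$-linearly independent in $\Lambda(\frakd)$ and lie in $\lambda_1(\frakd,\mathbf i),\lambda_1(\frakd,\mathbf i),\lambda_2(\frakd,\mathbf i),\lambda_2(\frakd,\mathbf i)$ times the unit ball $B'_{v,\mathbf i} \times B'_{v,\mathbf i}$, respectively, since scaling the product unit ball by a factor $\lambda$ scales each component by $\lambda$ and the components are either $0$ or realise the claimed minimum for $\frakd$. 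This gives the four inequalities $\lambda_j(\Lambda(\frakd)) \leq \lambda_{\lceil j/2\rceil}(\frakd,\mathbf i)$ for $j=1,2,3,4$.

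For the lower bound, suppose $w_1,\dots,w_k \in \Lambda(\frakd)$ are $L$-linearly independent and each $w_i \in \mu \cdot (B'_{v,\mathbf i}\times B'_{v,\mathbf i})$ for every $v\mid\infty$. Writing $w_i=(\beta_i,\gamma_i)$, the product structure of the unit ball forces $\beta_i,\gamma_i \in \mu B'_{v,\mathbf i}$ at every $v$. Let $r := \dim_L \Span_L\{\beta_1,\dots,\beta_k\}$ and $s := \dim_L \Span_L\{\gamma_1,\dots,\gamma_k\}$. Since the projection $\Lambda(\frakd)\otimes_{\calO_L} L = M\oplus M \to M\oplus M$ is the identity, linear independence of the $w_i$ forces $r+s \geq k$. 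Choosing $r$ linearly independent elements among the $\beta_i$'s, all of which lie in $\mu B'_{v,\mathbf i}$, we get $\mu \geq \lambda_r(\frakd,\mathbf i)$, and similarly $\mu \geq \lambda_s(\frakd,\mathbf i)$, with the convention $\lambda_0=0$. Thus $\mu \geq \max\{\lambda_r(\frakd,\mathbf i),\lambda_s(\frakd,\mathbf i)\}$ for some pair with $r+s\geq k$.

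The final step is a short case check: as $(r,s)$ ranges over pairs with $r+s\geq k$ and $r,s\in\{0,1,2\}$, the minimum of $\max\{\lambda_r,\lambda_s\}$ equals the $k$-th entry of the sorted sequence $\lambda_1,\lambda_1,\lambda_2,\lambda_2$, for each $k\in\{1,2,3,4\}$. Combining this with the upper bound from the previous paragraph gives equality at every index. There is no real obstacle here; everything reduces to the direct-product nature of the unit balls and elementary linear algebra over $L$, and the only bookkeeping is the case analysis in the last step.
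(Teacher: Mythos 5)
Your proof is correct. The paper itself does not give an argument here but simply declares the proof ``identical to'' Widmer's Lemma 4.2, which proves the analogous statement for $\Z$-lattices; your write-up supplies a clean, self-contained version that works directly over $\calO_L$, and the key mechanism — the product structure of the ball $B'_{v,\mathbf i}\times B'_{v,\mathbf i}$ forcing both coordinates of a short vector to be short, together with the rank inequality $r+s\geq k$ from the vanishing of $\ker p_1\cap\ker p_2$ — is exactly what one needs. Two small points worth flagging: the upper-bound step implicitly assumes the infima $\lambda_j(\frakd,\mathbf i)$ are attained, which for open balls requires the standard closure/$\epsilon$ argument (harmless, but worth a word); and in the final case check it is clearer to phrase the lower bound as ``$\max\{\lambda_r,\lambda_s\}\geq\Lambda_k$ for \emph{every} admissible pair $(r,s)$'' rather than taking a minimum over pairs, since the pair is determined by the chosen vectors rather than free to optimize — the two formulations coincide here, but the uniform statement is the one that actually closes the argument.
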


\begin{proof}
    The proof is identical to the proof of \cite[Lem.\ 4.2]{MR2645051}.
\end{proof}
By Lemma~\ref{lem:tau maps Si to S0} and Proposition~\ref{prop:Lipschitzity of counting region} we have
\[
S_{\mathbf i}(T) = \tau^{-1}_{\mathbf i} S_0(T) \subseteq \tau^{-1}_{\mathbf i}\kappa T (B_\infty \times B_\infty) = \kappa T (B'_{\infty,\mathbf i} \times B'_{\infty,\mathbf i}),
\]
which shows that in the application of Proposition~\ref{prop:generalised Davenport} to $Z^*(\Lambda(\frakd), S_{\mathbf i}(T))$ it makes sense to compute the successive minima with respect to these regions.

\begin{lemma}
\label{lem:lambda2ub}
Let $\frakd$ be a fractional ideal of $M$ and $T\geqslant 1$. Consider the absolute constant~$\kappa$ from Proposition~\ref{prop:Lipschitzity of counting region}. If $\lambda_2(\frakd,\mathbf i)>\kappa T$, then we have
\[
Z^{\ast}\big(\Lambda(\frakd),S_{\mathbf i}(T)\big)=0.
\]
\end{lemma}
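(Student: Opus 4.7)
The plan is to use the successive minima structure of $\Lambda(\frakd) = \frakd \times \frakd$ as an $\mathcal O_L$-lattice (of rank $4$) inside $M^2$, together with the containment $S_{\mathbf i}(T) \subseteq \kappa T (B'_{\infty,\mathbf i} \times B'_{\infty,\mathbf i})$ noted just before the statement, to show that every nonzero lattice point in the counting region must define a projective point in $\mathbb P^1(L)$, not in $\mathbb P^1(M)$.

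First, I will dispose of the trivial case $\lambda_1(\frakd,\mathbf i) > \kappa T$: then $\Lambda(\frakd) \cap \kappa T (B'_{\infty,\mathbf i} \times B'_{\infty,\mathbf i}) = \{\boldsymbol 0\}$, and the conclusion follows because the only point of the intersection is $\boldsymbol 0$, which is excluded from $Z^\ast$. So assume $\lambda_1(\frakd,\mathbf i) \le \kappa T < \lambda_2(\frakd,\mathbf i)$, and pick $v_1 \in \frakd \setminus \{0\}$ realising $\lambda_1(\frakd,\mathbf i)$, i.e.\ $v_1 \in \lambda_1(\frakd,\mathbf i)\, B'_{v,\mathbf i}$ for every archimedean place $v$ of $L$. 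Such an element exists because $\frakd$ is discrete in $M_\infty$, so the infimum defining $\lambda_1$ is attained.

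By the lemma immediately preceding the statement, the ordered successive minima of $\Lambda(\frakd)$ with respect to the product regions $B'_{v,\mathbf i} \times B'_{v,\mathbf i}$ are $\lambda_1(\frakd,\mathbf i), \lambda_1(\frakd,\mathbf i), \lambda_2(\frakd,\mathbf i), \lambda_2(\frakd,\mathbf i)$. Since the third successive minimum exceeds $\kappa T$, the intersection $\Lambda(\frakd) \cap \kappa T (B'_{\infty,\mathbf i} \times B'_{\infty,\mathbf i})$ spans an $L$-subspace of $M^2$ of dimension at most $2$. However, the vectors $(v_1,0)$ and $(0,v_1)$ both lie in $\lambda_1(\frakd,\mathbf i)(B'_{\infty,\mathbf i} \times B'_{\infty,\mathbf i}) \subseteq \kappa T(B'_{\infty,\mathbf i} \times B'_{\infty,\mathbf i})$ and are manifestly $L$-linearly independent. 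Hence the $L$-span is exactly the $2$-dimensional subspace $L v_1 \times L v_1 \subseteq M \times M$.

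Therefore every $(\omega_0,\omega_1) \in \Lambda(\frakd) \cap S_{\mathbf i}(T) \subseteq \Lambda(\frakd) \cap \kappa T (B'_{\infty,\mathbf i} \times B'_{\infty,\mathbf i})$ has the form $(\omega_0,\omega_1) = (a v_1, b v_1)$ for some $a,b \in L$, not both zero. Then $[\omega_0 : \omega_1] = [a : b] \in \mathbb P^1(L)$, so the residue field $L(\omega_0 : \omega_1)$ is contained in $L$, and in particular is different from $M$ (since $K$ is assumed disjoint from $L$, so $[M : L] = 2$). Such points are excluded from $Z^\ast$ by definition, which gives $Z^\ast(\Lambda(\frakd), S_{\mathbf i}(T)) = 0$. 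There is no real obstacle here; the only point demanding care is the identification of the $2$-dimensional $L$-subspace as $L v_1 \times L v_1$, which is forced once $(v_1,0)$ and $(0,v_1)$ are exhibited inside it.
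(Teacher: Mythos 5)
Your proof is correct, but it takes a more roundabout route than the paper's. The paper argues directly on the rank-$2$ $\calO_L$-lattice $\frakd \subseteq M$: if $(x_0,x_1)$ is counted in $Z^*$ then the condition $L(x_0\colon x_1)=M$ forces $x_0,x_1 \in \frakd$ to be $L$-linearly independent; since $S_{\mathbf i}(T) \subseteq \kappa T\,(B'_{\infty,\mathbf i}\times B'_{\infty,\mathbf i})$ puts both $x_0$ and $x_1$ inside $\kappa T\, B'_{\infty,\mathbf i}$, the definition of $\lambda_2(\frakd,\mathbf i)$ yields $\lambda_2(\frakd,\mathbf i)\le \kappa T$, a contradiction. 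No reference to the rank-$4$ lattice $\Lambda(\frakd)$ or its successive minima is needed.

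You instead invoke the preceding lemma (the successive minima of $\Lambda(\frakd)$ are $\lambda_1,\lambda_1,\lambda_2,\lambda_2$), use $\lambda_3(\Lambda(\frakd))=\lambda_2(\frakd,\mathbf i)>\kappa T$ to bound the dimension of the $L$-span of $\Lambda(\frakd)\cap\kappa T(B'_{\infty,\mathbf i}\times B'_{\infty,\mathbf i})$ by $2$, exhibit $(v_1,0)$ and $(0,v_1)$ to pin the span down as $Lv_1\times Lv_1$, and conclude that every lattice point in the region gives a projective point over $L$. This is genuinely different and is actually a slightly stronger structural statement (it describes the shape of all lattice points in the region, not just rules out the pure ones); the trade-off is that it is longer and depends on the preceding lemma, which the paper's one-line contradiction does not. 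Your reasoning is valid, though the step where you claim $v_1$ \emph{attains} $\lambda_1$ and hence $v_1 \in \lambda_1 B'_{v,\mathbf i}$ is slightly delicate given the regions are defined as open sets and the infimum could sit on the boundary; this is easily patched by working at any $\lambda$ with $\lambda_1 \le \kappa T < \lambda < \lambda_2$ instead, and the paper itself is not entirely consistent on open versus closed balls, so this is a minor point rather than a gap.
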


\begin{proof}
    Suppose $(x_0,x_1) \in \Lambda(\frakd) \setminus (0,0)$ is counted in $Z^{\ast}\big(\Lambda(\frakd),S_{\mathbf i}(T)\big)$. The condition $L(x_0 \colon x_1) = M$ implies that $x_0,x_1 \in \frakd$ are $L$-linearly independent, hence by definition of $\lambda_2(\frakd, \mathfrak i)$ for every $\lambda < \lambda_2(\frakd, \mathfrak i)$ there is an $i$ with $x_i \not\in \lambda B'_{\infty, \mathbf i}$. However, from $(x_0,x_1) \in S_{\mathbf i}(T)$, we get $x_i \in \kappa T B'_{v,\mathbf i} \subseteq (\lambda_2(\frakd, \mathfrak i)-\epsilon) B'_{v,\mathbf i}$ for some $\epsilon > 0$. This is a contradiction.
\end{proof}

With this result in hand, we have the following point counting estimate.

\begin{prop}\label{prop:counting}
    \[
    Z^*(\Lambda(\frakd), S_{\mathbf i}(T))
    =
    \frac{\vol S_{\mathbf i}(T)}{\det \Lambda(\frakd)}
+ O_L\left( \frac{T^7}{\lambda_1(\frakd,\mathbf i) \det(\frakd)^{3/2} } \right)
    \]
\end{prop}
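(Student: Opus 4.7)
The plan is to apply the generalised Davenport lemma (Proposition~\ref{prop:generalised Davenport}) directly to the $\calO_L$-lattice $\Lambda(\frakd) = \frakd \times \frakd$, which has $L$-rank $n=4$, with counting region $\calS = S_{\mathbf{i}}(T) \subseteq M^2_\infty$. For the endomorphism $\tau$ appearing in the statement of Davenport I will take $\tau_{\mathbf{i}}$ acting diagonally on $M^2_\infty$; this automorphism has real determinant $1$ since each $\tau_{j}$ rescales $M_\infty$ by $|N_{M/\Q}(\epsilon_j)|^{-1/(2t_j)} = 1$, and consequently the balls $B'_{v,\mathbf{i}} = \tau^{-1}_{\mathbf i,v} B_v$ satisfy $\mu_v(B'_{v,\mathbf{i}}) \asymp_L \mu_v(B_v) \asymp_L 1$. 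The preceding lemma supplies the successive minima of $\Lambda(\frakd)$ with respect to $B'_{v,\mathbf{i}} \times B'_{v,\mathbf{i}}$ as the ordered tuple $(\lambda_1,\lambda_1,\lambda_2,\lambda_2)$, with $\lambda_j := \lambda_j(\frakd,\mathbf{i})$, and Proposition~\ref{prop:Lipschitzity of counting region} furnishes the Lipschitz parameters $R \leq \kappa T$, $N \ll 1$ for the boundary of $S_{\mathbf{i}}(T)$.

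Feeding these ingredients into Proposition~\ref{prop:generalised Davenport} gives
\[
\#\bigl(\Lambda(\frakd) \cap S_{\mathbf{i}}(T)\bigr) = \frac{\vol S_{\mathbf{i}}(T)}{|\Delta_{L/\Q}|^{2}\det_L \Lambda(\frakd)} + O_L\!\left(1 + \frac{T^2}{\lambda_1^2} + \frac{T^4}{\lambda_1^4} + \frac{T^6}{\lambda_1^4\lambda_2^2}\right).
\]
Proposition~\ref{prop:disc under restriction} applied to the rank-$4$ $\calO_L$-lattice $\Lambda(\frakd)$ identifies the denominator of the main term with the $\Q$-determinant $\det \Lambda(\frakd)$ used in the statement. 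Replacing $\#(\Lambda(\frakd)\cap S_{\mathbf{i}}(T))$ by $Z^\ast$ costs only the removal of the zero vector and of pairs $(x_0,x_1)$ that are $L$-proportional; the latter are precisely the $\calO_L$-multiples of a fixed generator of a rank-$1$ sublattice contained in $\kappa T \cdot B'_{\infty,\mathbf i}$, and a one-dimensional Davenport bound shows their number is $O_L(T^2/\lambda_1^2)$, comfortably absorbed into the existing error.

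To finish, I invoke Lemma~\ref{lem:lambda2ub} to restrict to the regime $\lambda_2 \leq \kappa T$ (outside which $Z^\ast = 0$ and there is nothing to prove). Here $\lambda_1 \leq \lambda_2 \ll T$, so every factor $T^2/\lambda_j^2$ is $\gg 1$ and the four error terms are in strictly increasing order, collapsing into $O_L\bigl(T^6/(\lambda_1^4\lambda_2^2)\bigr)$. The endgame is Proposition~\ref{prop:prod suc min} applied to the rank-$2$ $\calO_L$-lattice $\frakd$: it gives $(\lambda_1\lambda_2)^2 \asymp_L \det_L \frakd$, whence
\[
\frac{T^6}{\lambda_1^4\lambda_2^2} \asymp_L \frac{T^6}{\lambda_1^2 \det_L\frakd} \asymp_L \frac{T^6}{\lambda_1^2 \det \frakd},
\]
the last step being one more application of Proposition~\ref{prop:disc under restriction}.

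The argument is essentially bookkeeping once one has committed to the $\calO_L$-perspective: this $L$-symmetry forces the ordered minima to coincide in pairs and collapses what would be an eight-term Davenport error for $\Lambda(\frakd)$ seen as a $\Z$-lattice into the four-term estimate above, and it is this collapse that makes the dominant error depend only on $\lambda_1$ and $\det \frakd$. The only point demanding care is keeping the constants depending only on $L$ (equivalently, on the fixed $d$) visible as one toggles between $L$- and $\Q$-determinants via Proposition~\ref{prop:disc under restriction}; there is no deeper obstacle.
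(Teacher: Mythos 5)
Your opening setup is right: applying Proposition~\ref{prop:generalised Davenport} to the rank-$4$ $\calO_L$-lattice $\Lambda(\frakd)$ with $\tau = \tau_{\mathbf i}$ (real determinant $1$), using the successive minima $\lambda_1,\lambda_1,\lambda_2,\lambda_2$, and the Lipschitz data from Proposition~\ref{prop:Lipschitzity of counting region}. The identification of the denominator in the main term via Proposition~\ref{prop:disc under restriction} is also fine. Two of your later steps are, however, not correct as written.

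The decisive gap is the removal of the $L$-proportional pairs. You claim these are ``precisely the $\calO_L$-multiples of a fixed generator of a rank-$1$ sublattice'' and bound them by $O_L(T^2/\lambda_1^2)$ with a one-dimensional Davenport estimate. This is not the structure of that set. The bad locus is
\[
\bigcup_{[\lambda_0:\lambda_1]\in\PP^1(L)}\{(\lambda_0 m,\lambda_1 m)\colon m\in M\}\cap\Lambda(\frakd),
\]
a union of $M$-lines; already the single line $\{(\alpha,0)\colon\alpha\in\frakd\}$ contributes a rank-$2$ $\calO_L$-sublattice, and there are infinitely many such lines to track. Consequently no single one-dimensional application of Davenport can bound the count. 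The correct estimate is obtained in two stages, as the paper does: first bound the number of choices for $\alpha_0\in\frakd$ in $S^{(1)}_{\mathbf i}(T)$ via Proposition~\ref{prop:generalised Davenport} applied to the rank-$2$ lattice $\frakd$ (giving $\ll T^4/\det\frakd$), then, for each fixed $\alpha_0$, bound the number of $\alpha_1\in\frakd$ lying on $L_\infty\alpha_0\cap S^{(1)}_{\mathbf i}(T)$, again by Davenport for $\frakd$ with a measure-zero region (giving $\ll T^2/\lambda_1^2$). The product, $\ll T^6/(\lambda_1^2\det\frakd)$, is not ``comfortably absorbed'' but is exactly the size of the target error term; the estimate you wrote down is a factor of roughly $T^4/\det\frakd$ too small, and is not justified by any Davenport argument.

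The second, smaller gap is the dismissal of the regime $\lambda_2(\frakd,\mathbf i)>\kappa T$ with ``there is nothing to prove.'' There is: when $Z^\ast=0$ the statement asserts that the main term itself is $O_L\bigl(T^6/(\lambda_1^2\det\frakd)\bigr)$, and this needs to be checked. It is true — using Proposition~\ref{prop:prod suc min} one has $\vol S_{\mathbf i}(T)/\det\Lambda(\frakd)\asymp_L T^8/(\lambda_1\lambda_2)^4\ll T^6/(\lambda_1^4\lambda_2^2)$ precisely because $T\ll\lambda_2$ — but it is a verification and not a tautology, and it is the reason the paper splits the proof into the two cases $\lambda_2>\kappa T$ and $\lambda_2\ll T$.
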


\begin{proof}
    For simplicity let us write $\lambda_i$ for $\lambda_i(\frakd,\mathbf i)$.
    
    The proof is split into two cases. Firstly, suppose that $\lambda_2 > \kappa T$. In this case, we know from Lemma~\ref{lem:lambda2ub} that $Z^*(\Lambda(\frakd), S_{\mathbf i}(T)) = 0$. Moreover, we have
    \[
    \frac{\vol S_{\mathbf i}(T)}{\det \Lambda(\frakd)}
    \ll \frac{T^8}{\lambda_1 \lambda_2 \det(\frakd)^{3/2}} \ll \frac{T^7}{\lambda_1 \det(\frak d)^{3/2}},
    \] which verifies that the claimed bound holds.

    Otherwise, suppose that $\lambda_2 \ll T$. By Lemma \ref{prop:Lipschitzity of counting region}, the region $\tau_{\mathbf i}S_{\mathbf i}(T)$ is in $\text{Lip}(\kappa T,N)$, for some constant $\kappa$ depending at most on $d$ and some absolute constant  $N$. Hence we may apply Proposition~\ref{prop:generalised Davenport}, which gives
    \[
    \left \vert \#\big(\Lambda(\frakd) \cap S_{\mathbf i}(T)\big)
    -
    \frac{\vol S_{\mathbf i}(T)}{\det \Lambda(\frakd)} \right \vert\ll_L 1+ \frac{T^2}{\lambda^2_1} + \frac{T^4}{\lambda^4_1} + \frac{T^6}{\lambda^4_1\lambda^2_2} + \frac{T^7}{\lambda^4_1\lambda^3_2}.
    \]
    But since $T \gg \lambda_2 \geq \lambda_1$, we have 
    \[
    \frac{T}{\lambda_1} \geq \frac{T}{\lambda_2} \gg 1,
    \]
    and hence the $T^7$ term in the sum dominates the others.  To turn this into an estimate for $Z^*(\Lambda(\frakd), S_{\mathbf i}(T))$, we need to remove the set of lattice points in $\Lambda(\frakd)$ which do not generate~$M$ over $L$. Such points are given by pairs $(\alpha_0,\alpha_1)$ with $\alpha_i \in \frakd$ which are linearly dependent over $L$. Without loss of generality, assume that $\alpha_0 \neq 0$. Then $\alpha_0$ lies in the $\mathbb U$-fundamental region $S^{(1)}_{\mathbf i}(T) \subseteq M_\infty$ and hence we may apply Proposition~\ref{prop:generalised Davenport} to the $\calO_L$-lattice $\frakd$ of rank~2 to deduce that the number of choices for $\alpha_0$ is
    \[
    \frac{\vol S^{(1)}_{\mathbf i}(T)}{\det\frakd}
    + O_L\left(1 + \frac{T^2}{\lambda_1^2} + \frac{T^3}{\lambda_1^2 \lambda_2} \right)
    \ll \frac{T^4}{\det \frakd}.
    \]
    Moreover, given a choice of $\alpha_0$, the $L$-linearly independent choices for $\alpha_1$ lie in $S^{(1)}_{\mathbf i}(T) \subseteq M_\infty$. We apply Proposition~\ref{prop:generalised Davenport} once more to the lattice $\frakd$ and the counting region $L_\infty \alpha_0 \cap S^{(1)}_{\mathbf i}(T) \subseteq M_\infty$. As the volume of this region is $0$, we can bound the number of choices of $\alpha_1$ by $1 + \frac{T^2}{\lambda_1^2} +  \frac{T^3}{\lambda_1^2 \lambda_2} \ll_L \frac{T^3}{\lambda_1^2 \lambda_2} \ll \frac{T^3}{\lambda_1 (\det \frak d)^{1/2}}$. Combining these two  bounds we see that the contribution from these linearly dependent pairs is contained within the claimed error term.
\end{proof}

\subsection{Proof of Theorem~\ref{thm:widmer+}}
We are now almost ready to conclude our main counting result. Before doing so we require one final ingredient, an upper bound in the vein of Landau's theorem on ideals of bounded norm. This will allow us to estimate the norm $N(\calD^{-1})$ which arises in Proposition \ref{prop:secondschmidttrick}.
\begin{lemma}\label{lem:landau}
With $K$ and $L$ as before and $M=KL$, we have
\[
\#\{ \fraka \subseteq \calO_M : N_{M/\Q}(\fraka) \leq X\}
\ll_{L, \epsilon}
X \res_{s=1} \zeta_M(s) + X^{1/2}\vert \Delta_{K/\Q} \vert^{1/3 + \epsilon}.
\]
\end{lemma}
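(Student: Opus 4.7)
I would factor the Dedekind zeta function of the biquadratic field $M$ through the subfield $K$ and feed in classical estimates that are uniform in $K$. Since $M = K(\sqrt{d})$ with $L = \Q(\sqrt{d})$ fixed, we have
\[
\zeta_M(s) = \zeta_K(s)\, L_K(s, \chi),
\]
where $\chi = \chi_{M/K}$ is the quadratic Hecke character of $K$ cutting out $M/K$. The crucial observation is that the conductor of $\chi$ is supported on primes of $K$ above the \emph{fixed} discriminant of $L$, so $N_{K/\Q}(\mathfrak f_\chi) = O_L(1)$ uniformly in $K$. Unpacking coefficient-wise, this translates to
\[
\#\{\fraka \subseteq \calO_M : N\fraka \leq X\} \;=\; \sum_{\frakc \subseteq \calO_K,\; N\frakc \leq X} \chi(\frakc)\, T_K\!\left(\tfrac{X}{N\frakc}\right), \qquad T_K(Z) := \#\{\frakb \subseteq \calO_K : N\frakb \leq Z\},
\]
thereby reducing the problem from counting in the quartic field $M$ to counting in the quadratic field $K$ twisted by $\chi$.

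The two classical inputs I would plug in are: a quantitative Landau theorem for quadratic fields,
\(T_K(Z) = \kappa_K Z + O\!\left(|\Delta_K|^{1/3+\epsilon} Z^{1/3+\epsilon}\right)\) uniformly in $K$ (with $\kappa_K := \res_{s=1}\zeta_K(s)$), whose $1/3$-exponent is precisely what appears in the desired bound; and a uniform Polya--Vinogradov bound for $\sum_{N\frakc \leq Z}\chi(\frakc)$ obtained via the analytic conductor of $L_K(s,\chi)$, which is $\asymp_L |\Delta_K|$. Substituting the Landau asymptotic and truncating the twisted $L$-series by partial summation against the character bound produces the main term
\[
\kappa_K X \cdot L_K(1,\chi) \;=\; \kappa_M X \;=\; X\,\res_{s=1}\zeta_M(s),
\]
with a manageable tail error. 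This identification of the leading term with the residue of $\zeta_M$, in place of the coarser $(\log X)^3$ bound of Lowry-Duda--Taniguchi--Thorne, is exactly the improvement required for our downstream applications (notably to be combined with the average $L$-value cancellation of Corollary~\ref{cor:galtwists} and the analysis in Theorem~\ref{thm:summingc}).

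It then remains to bound the residual sum $\sum_{\frakc}\chi(\frakc)E_K(X/N\frakc)$, where $E_K$ is the Landau remainder. I would apply the Dirichlet hyperbola method, splitting at a parameter $Y$: in the range $N\frakc \leq Y$ the remainder $E_K(X/N\frakc)$ is bounded directly, while in the complementary range the character cancellation in $\chi$ is exploited via the Polya--Vinogradov bound. Optimising $Y$ will balance these two sources of error into the claimed uniform estimate $X^{1/2}|\Delta_K|^{1/3+\epsilon}$. The principal technical obstacle is the careful parameter bookkeeping in this balance, ensuring that the $|\Delta_K|^{1/2}$ from Polya--Vinogradov combines with the $|\Delta_K|^{1/3}$ from Landau to yield a final error of only $|\Delta_K|^{1/3+\epsilon}$; this will require closely following the argument of Lowry-Duda--Taniguchi--Thorne, perhaps with a sharpened character sum bound adapted to the specific structure of $\chi_{M/K}$.
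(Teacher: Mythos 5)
Your factorization $\zeta_M(s) = \zeta_K(s)\, L_K(s, \chi_{M/K})$ with $N_{K/\Q}(\mathfrak f_\chi) = O_L(1)$ is correct, and the reduction to a Dirichlet convolution over $\calO_K$-ideals is valid. However, the paper proves this lemma by a contour-shift argument (Mellin transform with $\Gamma$-smoothing, shifting to $\Rea(s)=\tfrac12$), which is a genuinely different route, and — more importantly — your proposal as written cannot recover the exponent $|\Delta_{K/\Q}|^{1/3+\epsilon}$, which is precisely the nontrivial content of the lemma.

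The crux: the exponent $\tfrac13$ in the paper comes from the Weyl-type subconvexity bound $|L(\tfrac12+it,\chi)| \ll (q(1+|t|))^{1/6+\epsilon}$ of Petrow--Young, applied to the two Dirichlet factors $L(s,\chi_K)$ and $L(s,\chi_K\chi_L)$ of $\zeta_M$ whose conductor grows with $K$ (giving $2\cdot\tfrac16 = \tfrac13$). P\'olya--Vinogradov is the character-sum analogue of the convexity bound and yields only $|\Delta_{K/\Q}|^{1/2+\epsilon}$; not even Burgess ($|\Delta_{K/\Q}|^{3/8+\epsilon}$) reaches $\tfrac13$. Your closing remark that one would need ``perhaps a sharpened character sum bound adapted to the specific structure of $\chi_{M/K}$'' is exactly where the real work lies — and that work is a deep recent analytic result, not a variant of P\'olya--Vinogradov. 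The exponent $\tfrac12$ would be fatal downstream: tracing through the partial summation in the proof of Theorem~\ref{thm:widmer+} and then Lemma~\ref{lem:errsumxd}(b), you would obtain a contribution of size $B^{3/4+\gamma/2+\epsilon}$, which at the optimal cutoff $\gamma=\tfrac12$ matches the main term $B\log B$; the exponent $\tfrac13$ is exactly what makes $B^{3/4+\gamma/3+\epsilon}$ negligible.

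There is also a secondary gap: the uniform Landau estimate $T_K(Z)=\kappa_K Z + O(|\Delta_{K/\Q}|^{1/3+\epsilon}Z^{1/3+\epsilon})$ you invoke is not a standard result in that shape (the hyperbola method with P\'olya--Vinogradov gives roughly $O(Z^{1/2}|\Delta_{K/\Q}|^{1/4+\epsilon})$, with a worse $Z$-exponent), and feeding any remainder of shape $Z^\beta$ with $\beta>0$ into a hyperbola split at $Y$ produces a final $X$-exponent $1/(2-\beta)>\tfrac12$ after optimisation, not $\tfrac12$. The paper avoids any such balancing entirely: the $X^{1/2}$ factor is automatic from evaluating the Mellin integral on the half-line, and the discriminant cost is concentrated in a single subconvex $L$-value bound.
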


\begin{remark}
    A field uniform version of Landau's theorem was established by Lowry-Duda--Taniguchi--Thorne~\cite[Thm.\ 3]{MR4381213}. Unfortunately their result contains a $\log^{d-1} X$ term in general which is too large for our purposes. Instead we are able to use the fact that we are working with a biquadratic field, that we are only concerned with the dependence on the discriminant of two out of the three quadratic subfields,
    and that we only require an upper bound to remove this term. Pointwise the residue appearing in the main term can only be bounded by something like $\log^3 \vert \Delta_{K/\Q}\vert$ which can be as large as $\log^{3} X$ again in our application but the advantage that we gain is that on average this residue is constant.
\end{remark}

\begin{proof}[Proof of Lemma \ref{lem:landau}]
Let $a_n$ denote the number of ideals of norm $n$ and $X\geqslant 1$ be fixed. We start by smoothing the sum
    \[
    \#\{ \fraka: N_{M/\Q}(\fraka) \leq X\}
    =
    \sum_{n \leq X} a_n 
    \leq \sum_{n \geq 1} a_n e^{1-n/X}.
    \]
    Performing a Mellin inversion, we have
    \[
    \#\{ \fraka: N_{M/\Q}(\fraka) \leq X\}
    \leq
    \frac{1}{2\pi i}
    \int_{(2)} \zeta_M(s) X^s \Gamma(s) \mathrm{d}s.
    \]
    Let $T>0$ be a parameter that we will ultimately take going to infinity. We first truncate the integral to
    $$
    \int_{(2)} \zeta_M(s) X^s \Gamma(s) \mathrm{d}s=\int_{2-iT}^{2+iT} \zeta_M(s) X^s \Gamma(s) \mathrm{d}s+\int_{|{\rm{Im}}(s)|>T}\zeta_M(s) X^s \Gamma(s) \mathrm{d}s
    $$
    and note that for $s=2+it$ and $|t|\geqslant T$
    $$
    \zeta_M(s) X^s \Gamma(s) \ll \zeta(2)^4 X^2 |t|^{3/2}e^{-\frac{\pi |t|}{2}}
    $$
    by Stirling's formula. Hence,
    $$
    \int_{|{\rm{Im}}(s)|>T}\zeta_M(s) X^s \Gamma(s) \mathrm{d}s \ll X^2 \int_{|t|\geqslant T} |t|^{3/2} e^{-\frac{\pi |t|}{2}}\mbox{d}t=o_{T\to +\infty} (1),
    $$
    since this is the remainder of a convergent integral.
    We now move the contour back to the half line, in the course of which we acquire the contribution of the residue at the simple pole $s=1$. Thus
    \[
    \#\{ \fraka: N_{M/\Q}(\fraka) \leq X\}
    \leq
    X \res_{s=1} \zeta_M(s) +
    \frac{1}{2\pi i}
    \int_{\frac{1}{2}-iT}^{\frac{1}{2}+iT} \zeta_M(s) X^s \Gamma(s) \mathrm{d}s+|I_{-}|+|I_{+}|+o_{T\to +\infty} (1)
    \]
    where 
    $$
    I_{\pm}=\int_{\frac{1}{2} \pm iT}^{2\pm iT} \zeta_M(s) X^s \Gamma(s) \mathrm{d}s.
    $$
     Note that $\zeta_M(s) = \zeta(s) L(s, \chi_K) L(s, \chi_L) L(s, \chi_K\chi_L)$ and that we have available the following Weyl-type subconvexity bounds
    \[
        \left \vert \zeta\left( \frac{1}{2} +it \right) \right \vert \ll (1 + \vert t \vert)^{1/6  + \epsilon} \text{ and }
        \left \vert L\left( \frac{1}{2} +it, \chi \right) \right \vert \ll (1 + q\vert t \vert)^{1/6  + \epsilon},
    \] for $\chi$ a Dirichlet character of modulus $q$.
    The first bound is classical and due to Weyl, the second is work of Petrow--Young \cite{MR4624371}.
    Moreover, from Stirling's formula we infer the bound
    \[
    \left \vert \Gamma \left( \sigma + i t \right) \right \vert
    \ll e^{-\frac{\pi \vert t \vert}{2}}\vert t \vert^{\sigma - \frac{1}{2}}.
    \]
In combination, we have for $s=\sigma \pm iT$ with $\sigma \leq 1$,
$$
\zeta_M(s) X^s \Gamma(s)\ll e^{-\frac{\pi T}{2}} X^{\sigma} T^{\sigma - \frac{1}{2} + \frac{4(1 - \sigma)}{3} + \epsilon}|\Delta_{K/\Q}|^{\frac{2(1-\sigma)}{3}},
$$
for $\sigma>1$, we use instead the bound $\zeta_M(s) \leq \zeta(s)^4.$ 
Together this implies that
$$
I_{\pm} \ll e^{-\frac{\pi T}{2}}T^{\frac{2}{3} + \epsilon}|\Delta_{K/\Q}|^{1/3}\int_{\frac{1}{2}}^2 X^{\sigma} \mbox{d}\sigma=o_{T\to +\infty} (1).
$$
   We therefore have
   $$
   \#\{ \fraka: N_{M/\Q}(\fraka) \leq X\}
    \leq
    X \res_{s=1} \zeta_M(s) +
    \frac{1}{2\pi i}
    \int_{\frac{1}{2}-iT}^{\frac{1}{2}+iT} \zeta_M(s) X^s \Gamma(s) \mathrm{d}s+o_{T\to +\infty} (1).
   $$
    Another application of the subconvexity bounds and Stirling's formula produces the bound 
    \[
    \left \vert
    \zeta_M\left( \frac{1}{2} + it\right)
    X^{\frac{1}{2}+it}
    \Gamma \left( \frac{1}{2} + it\right)
    \right \vert
    \ll X^{\frac{1}{2}}
    \vert \Delta_{K/\Q} \vert^{\frac{1}{3} + \epsilon}
    \vert t \vert^{\frac{2}{3} + \epsilon}
    e^{-\frac{\pi \vert t \vert}{2}}.
    \]
    The remaining $t$ integral is then convergent and it suffices to bound $\int_{-T}^T$ by $\int_{\R}$ and to let $T$ go to (positive) infinity.
\end{proof}

We are now ready to prove the main theorem of this section. 

\begin{proof}[Proof of Theorem \ref{thm:widmer+}] 
Fix $\epsilon>0$. Throughout all implicit constants are allowed to depend on $d$, hence $L$, and on $\epsilon$.
For any ideal $\frakd$ and parameter $T \geq 1$, Proposition~\ref{prop:counting} gives the inequality 
    \[
    \left\vert
Z^*(\Lambda(\frakd), S_{\mathbf i}(T) )  - \frac{\vol S_{\mathbf i}(T)}{\det  \Lambda(\frakd)} \right \vert\ll
\frac{T^7}{\lambda_1(\frakd, \mathbf i) \det(\frakd)^{3/2}}
.
    \]
    Applying Proposition~\ref{prop:prod suc min} and Lemma \ref{lem:determinant of an ideal}, we may rewrite the error so that 
    \[
    Z^*(\Lambda(\frakd), S_{\mathbf i}(T) )
=
\frac{\vol S_{\mathbf i}(T)}{\det \Lambda(\frakd)} 
+
O \left(
\frac{T^7}{\lambda_1(\mathbf i, \frakd) \vert\Delta_{K/\Q}\vert^{3/2} N_{M/\Q}(\frakd)^{3/2}}
\right).
\]
Upon taking $\frakd =\fraka\frakb$, $T=N_{M/\Q}(\fraka)^{1/4}Y$ and applying Lemma \ref{lem:widmerdecomp} we see that  
\[
\begin{aligned}
 Z(\PP^1,K,d,Y) =& \frac{1}{q_Mw_M}
\sum_{\fraka} \sum_{0 \neq \frakb \subseteq \mathcal{O}_M} \mu_M(\frakb) \sum_{\mathbf i \in \mathcal I}
\left[\frac{\vol S_{\mathbf i}( N_{M/\Q}(\fraka)^{1/4}Y )}{\det \Lambda(\fraka\frakb)}\right. \\
&+
\left. O \left(
\frac{(N_{M/\Q}(\fraka)^{1/4}Y)^7}{\lambda_1(\mathbf i, \fraka \frakb) \vert\Delta_{K/\Q}\vert^{3/2} N_{M/\Q}(\fraka \frakb)^{3/2}}
\right) \right].
\end{aligned}
    \]
    By Proposition~\ref{prop:volume of quasifundamental domain}, we have that
    \[
    \sum_{\mathbf i \in \calI} \vol S_{\mathbf i }(N_{M/\Q}(\fraka)^{1/4}Y)
    =
    N_{M/\Q}(\fraka)^2 Y^8 \sum_{\mathbf i \in \calI} \vol S_{\mathbf i} (1) 
    =
    N_{M/\Q}(\fraka)^2 Y^8 2^{3r+ s -1}\pi^{2s}q_MR_M.
    \]
    The remaining sums in the main term are precisely as in \cite[p.\ 33]{MR2645051}, hence we conclude in the same manner that the main term is $c_{\PP^1, M}Y^8$. 
    We turn to the error term.

    Let $\calA$ and $\calB$ be the ideal classes of $\fraka$ and $\frakb$ respectively. 
   Then it follows from Proposition \ref{prop:secondschmidttrick} that  
   
    \[
        \sum_ {\mathbf i \in \mathcal I} \frac{(N_{M/\Q}(\fraka)^{1/4}Y)^7}{\lambda_1(\mathbf i, \fraka \frakb) \vert\Delta_{K/\Q}\vert^{3/2} N_{M/\Q}(\fraka \frakb)^{3/2}} \ll \frac{Y^7 N((\mathcal{AB})^{-1})}{\vert\Delta_{K/\Q}\vert^{3/2} N_{M/\Q}(\frakb)^{7/4}}.
    \]
   The $\frakb$ sum converges absolutely and both the sum over ideal classes and that defining $N((\calA\calB)^{-1})$ are finite so we can exchange summations. The final computation remaining is therefore a sum over $th_M$ ideals of the reciprocal of the norms. We cannot handle this sum in the elementary manner of the proof of \cite[Thm.\ 2]{MR1330740} as this would result in an outsized power of $\log Y$. Instead, we will use partial summation and Lemma~\ref{lem:landau}. Before this, we must handle the unusual summation condition. In order to do so we split the sum into ideals of norm less than $th_M$ and those of larger norm. Let us suppose that there are $I$ ideals of norm less than $th_M$. Then
\[
\sum_{i=1}^{th_M} N(\frakc_i)^{-1/4}
\ll
\sum_{N_{M/\Q}(\frakc) \leq th_M} N(\frakc)^{-1/4}
+
\sum_{i = I+1}^{th_M} N(\frakc_i)^{-1/4}.
\]
Each summand in the second sum is at most $(th_M)^{-1/4}$ and there are at most $th_M$ terms so the sum is bounded by $(th_M)^{3/4}$. Applying partial summation to Lemma~\ref{lem:landau}, and writing $X=th_M$, the former sum is bounded by
\[
 \frac{X \res_{s=1} \zeta_M(s) + X^{1/2}\vert \Delta_{K/\Q} \vert^{1/3 + \epsilon}}{X^{1/4}}
+
\int_1^{X} \frac{u \res_{s=1} \zeta_M(s) + u^{1/2}\vert \Delta_{K/\Q} \vert^{1/3 + \epsilon}}{u^{5/4}}\mathrm{d}u
\]
\[
\ll X^{3/4} \res_{s=1} \zeta_M(s) + X^{1/4} \vert \Delta_{K/\Q} \vert^{1/3 + \epsilon }.
\]
Since $th_M \ll q_MR_Mh_M \ll h_MR_M$ by Proposition~\ref{prop:t asymp qMhM} with an implicit constant independent of $K$, we find the a total error term of order
\[\frac{(h_M R_M)^{3/4} + (h_M R_M)^{3/4}  \res_{s=1} \zeta_M(s) + (h_M R_M)^{1/4} \vert \Delta_{K/\Q} \vert^{1/3 + \epsilon }}{|\Delta_{K/\Q}|^{3/2}}.
\]
By the Brauer--Siegel theorem the first term dominates the third, and we arrive at the required bound.
\end{proof}

\begin{remark}
    Our passage to the $\calO_L$-lattice structure was critical in providing the error term which we have achieved in Theorem \ref{thm:widmer+}. Without doing so, merely treating $\Lambda(\frakd)$ as a $\Z$-lattice one would need control of four successive minima. Using Widmer's lower bounds on the successive minima (\cite[Lem.\ 9.7]{MR2645051}) one could achieve a result of the shape
    \[
    \left\vert
Z^*(\Lambda(\frakd), S_{\mathbf i}(T) )  - \frac{\vol S_{\mathbf i}(T)}{\det  \Lambda(\frakd)} \right \vert\ll
\frac{T^7}{\lambda_{\Q,1}(\frakd, \mathbf i)^4\lambda_{\Q,3}(\frakd, \mathbf i)^3} \ll \frac{T^7}{N_{M/\Q}(\frakd)^7 \vert \Delta_{M/\Q}\vert^{3/8}},
    \]
    ultimately culminating (assuming equivalent savings in the $\mathbf i$ and class group sums) in an estimate of the form
    \[
    Z(\PP^1,K,d,Y)
=
c_{\PP^1, M}Y^8+O_d\left(Y^7\frac{\log^4(h_MR_M)}{|\Delta_{K/\Q}|^{3/4}}\right).
    \]
    This is just barely insufficient for Theorem \ref{thm:mainthm1} since the error term overwhelms the main term once $\vert \Delta_{K/\Q} \vert \geq  B^{1/2}(\log B)^{-3}$.
\end{remark}

\subsection{Proof of Theorem \ref{thm:quadxd}}\label{finalsection}
We now conclude the proof of Theorem~\ref{thm:quadxd}. For the optimal cutoff $\gamma=1/2$ for $X_d$ thanks to Lemma~\ref{lem:constsxd}, we have
\begin{align*}
\#\{ x \in \mathcal U: H(x) \leq B\}
&= \frac12
\sum_{\substack{[K:\Q]=2\\ |\Delta_{K/\Q}| \ll B^\gamma}} \#\{P \in X_d(K) : \Q(P)=K, H(P)H(\overline{P}) \leq B\}\\
&=\frac12 \sum_{\substack{[K:\Q]=2\\ |\Delta_{K/\Q}| \ll B^\gamma}} \#\{P \in X_d(K) : \Q(P)=K, H(P) \leq B^{1/2}\}.
\end{align*}

By virtue of the height which we have chosen, the one induced by the na\"ive relative $\mathcal{O}(1)$-height $\resH_{\calO(1)}$ on $\PP^1$, the inner cardinality is precisely
\[
\#\{ Q \in \PP^1(M) : L(Q) = M, \resH_{\calO(1)}(Q) \leq B^{1/2}\}
=
Z(\PP^1, K, d, B^{1/8}).
\]
Applying Theorem \ref{thm:widmer+}, the cardinality $\#\{ x \in \mathcal U: H(x) \leq B\}$ is given by 
    \[
    \frac12 \sum_{\substack{[K:\Q]=2\\ \vert \Delta_{K/\Q} \vert\ll B^\gamma}} \left[ c_{\PP^1, M} B + O_d\left(B^{7/8}\frac{(h_MR_M)^{3/4} (1+\res_{s=1} \zeta_M(s))}{|\Delta_{K/\Q}|^{3/2}}\right)\right].
    \]
We first perform the sum of the main terms.

\begin{lemma}\label{lem:constsumxd}
    We have
    \[
    \frac12 \sum_{\substack{[K:\Q]=2\\ |\Delta_{K/\Q}| \ll B^\gamma}} c_{\PP^1, M}  = \frac{\gamma}4  \tau(\Hilb^2 X_d) \log B +O( 1).
    \]
    When summing up to the optimal cutoff $\gamma=\frac12$ the leading constant equals $c_{\Hilb^2 X_d, \Q}$.
\end{lemma}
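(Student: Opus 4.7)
The plan is to reduce the sum to an application of Theorem~\ref{thm:summingc}(b) by first rewriting $c_{\PP^1, M}$ as $\tfrac12 \tau((X_d)_K)$, using the restriction-of-scalars interpretation of $X_d$ and the base-change stability of the Peyre invariants $\alpha$ and $\beta$.

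First I would observe that, for any quadratic $K/\Q$ linearly disjoint from $L=\Q(\sqrt d)$, base changing the isomorphism $X_d \cong \Res_{L/\Q} \PP^1_L$ of Lemma~\ref{lem:XdisRes} yields $(X_d)_K \cong \Res_{M/K} \PP^1_M$ where $M=KL$. Hence the argument of Lemma~\ref{lem:constsxd}(d), applied over the base field $K$ rather than $\Q$, gives
\[
c_{(X_d)_K, K} = c_{\PP^1, M}.
\]
On the other hand, by definition $c_{(X_d)_K,K} = \alpha((X_d)_K) \beta((X_d)_K) \tau((X_d)_K)$, and Proposition~\ref{peyrequad} (together with Lemma~\ref{lem:constsxd}(a),(c)) tells us that for all $K \in \mathcal K(X_d)$ we have $\alpha((X_d)_K) = \alpha(X_d) = \tfrac12$ and $\beta((X_d)_K) = \beta(X_d) = 1$. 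Combining these identities gives $c_{\PP^1, M} = \tfrac12 \tau((X_d)_K)$ for every $K \in \mathcal K(X_d)$.

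Next I would split the sum according to whether $K$ lies in $\mathcal K(X_d)$. By Lemma~\ref{lem:lindisjointL} there are only finitely many quadratic fields $K \notin \mathcal K(X_d)$, and each contributes a bounded amount, so the total contribution from such $K$ is $O(1)$. Writing $Y$ for the cutoff implicit in $|\Delta_{K/\Q}| \ll B^\gamma$, we obtain
\[
\frac12 \sum_{\substack{[K:\Q]=2\\ |\Delta_{K/\Q}| \ll B^\gamma}} c_{\PP^1, M}
= \frac14 \sum_{K \in \mathcal F^\circ(Y)} \tau((X_d)_K) + O(1).
\]
Applying Theorem~\ref{thm:summingc}(b) to the weak Fano surface $X_d$ yields
\[
\sum_{K \in \mathcal F^\circ(Y)} \tau((X_d)_K) = \tau(\Hilb^2 X_d) \log Y + O(1),
\]
and since $\log Y = \gamma \log B + O(1)$, the claimed asymptotic $\tfrac{\gamma}{4}\tau(\Hilb^2 X_d) \log B + O(1)$ follows.

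Finally, to match the leading constant at the optimal cutoff $\gamma = \tfrac12$, I would invoke the explicit values $\alpha(\Hilb^2 X_d) = \tfrac18$ and $\beta(\Hilb^2 X_d) = 1$ from Lemma~\ref{lem:constsxd}(a),(c) to compute
\[
c_{\Hilb^2 X_d, \Q} = \alpha(\Hilb^2 X_d)\,\beta(\Hilb^2 X_d)\,\tau(\Hilb^2 X_d) = \tfrac18 \tau(\Hilb^2 X_d) = \tfrac{1/2}{4}\tau(\Hilb^2 X_d),
\]
which is exactly $\tfrac{\gamma}{4}\tau(\Hilb^2 X_d)$ when $\gamma = \tfrac12$. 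No step requires serious technical work beyond assembling the preceding results; the only mild obstacle is verifying that the proof of Lemma~\ref{lem:constsxd}(d), which invoked \cite[\S4.2]{MR3430268} for the base $\Q$, applies verbatim after base change to any $K \in \mathcal K(X_d)$, which is immediate since the cited results hold over arbitrary number fields.
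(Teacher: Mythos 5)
Your proposal is correct and takes essentially the same approach as the paper: both identities $c_{\PP^1,M} = \tfrac12\tau((X_d)_K)$ and the reduction to Theorem~\ref{thm:summingc}(b). The paper reaches that identity slightly more directly by writing $c_{\PP^1,M} = \tfrac12\tau(\PP^1_M)$ (since $\alpha(\PP^1_M)=\tfrac12$, $\beta(\PP^1_M)=1$) and then citing Loughran's Theorem~4.3 to equate $\tau(\PP^1_M)$ with $\tau((X_d)_K)$, whereas you route through $c_{(X_d)_K,K}$ and Proposition~\ref{peyrequad}, but the two derivations are equivalent.
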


\begin{proof}
  We have $c_{\PP^1, M} = \frac12 \tau(\PP^1_M)$, and from \cite[Thm.\ 4.3]{MR3430268} we obtain
  \[
  \tau(\PP^1_M) = \tau(\Res_{M/K}\PP^1_M) = \tau(X_{d,K}),
  \]
  since $K$ and $L$ are linearly disjoint.
  Applying Theorem~\ref{thm:summingc}, we have
  \[
  \sum_{\substack{[K:\Q]=2\\ |\Delta_{K/\Q}| \ll B^\gamma}} \tau(X_{d, K})
  =
  \tau(\Hilb^2 X_d) \log(B^\gamma) +O(1),
  \]
  which proves the first result. The last statement follows from the values $\alpha(\Hilb^2 X_d)=\tfrac18$ and $\beta(\Hilb^2 X_d)=1$ computed in Lemma~\ref{lem:constsxd}.
\end{proof}
Next, we gather estimates on the sums of the error terms.

\begin{lemma}\label{lem:errsumxd}
Fix $\epsilon >0$ and $\gamma >0$. Recall that $M=KL= K(\sqrt{d})$. We have 
%
\[
\sum \limits_{\substack{[K: \Q]=2 \\ |\Delta_K| \ll B^\gamma}}
\frac{(h_MR_M)^{3/4} \left(1+\res_{s=1} \zeta_M(s)\right)}{\vert\Delta_{K/\Q}\vert^{3/2}} \ll B^{\gamma/4} (\log B)^{3/4}.
\]
where the implicit constant is allowed to depend on $\epsilon, \gamma$ and $L$.
\end{lemma}

\begin{proof}

        We start by splitting the sum into two separate sums
        \[
        \displaystyle\sum \limits_{\substack{[K: \Q]=2 \\ |\Delta_K| \ll B^\gamma}}
\frac{(h_MR_M)^{3/4} }{\vert\Delta_{K/\Q}\vert^{3/2}}+\displaystyle\sum \limits_{\substack{[K: \Q]=2 \\ |\Delta_K| \ll B^\gamma}}
\frac{(h_MR_M)^{3/4} \res_{s=1} \zeta_M(s)}{\vert\Delta_{K/\Q}\vert^{3/2}}.
        \]
We start with the first term. Applying H\"older's inequality, we may bound this by
\[
\ll B^{\gamma/4}
\left(\displaystyle\sum \limits_{\substack{[K: \Q]=2 \\ |\Delta_K| \ll B^\gamma}}
\frac{h_MR_M }{\vert\Delta_{K/\Q}\vert^2}\right)^{3/4}.
\]
Using $|\Delta_{M/\Q}|  = |\Delta_{K/\Q}|^2|\Delta_{L/\Q}|^2$ and the class number formula, we have
    \[
        \frac{h_MR_M}{\vert \Delta_{K/\Q} \vert^2}
        \ll_L \frac1{\vert \Delta_{K/\Q} \vert} \frac{h_MR_M}{\vert \Delta_{M/\Q} \vert^{1/2}} =
        \frac{2^{-r_M} (2\pi)^{-s_M}w_M\res_{s=1} \zeta_M(s)}{\vert \Delta_{K/\Q} \vert}.
    \] 
    Since $M$ is a biquadratic field, the Dedekind zeta function can be  decomposed and we see that 
    \[
        \res_{s=1} \zeta_M(s)
        = L(1, \chi_K) L(1, \chi_L) L(1,\chi_K\chi_L).
    \]
    The value $L(1, \chi_L)$ is a constant which does not depend on $K$ and hence can be incorporated into the implicit constant. 
    As in the proof of Theorem \ref{thm:summingc}, we first handle the sum without the term $\vert \Delta_{K/\Q} \vert$. Our starting point then is the sum for $Y\geqslant 1$
    \[
     \sum \limits_{\substack{[K: \Q]=2 \\  |\Delta_K| \leq Y}} 2^{-r_M} (2\pi)^{-s_M} L(1, \chi_K)L(1,\chi_K \chi_L).
    \]
    The remaining $L$-function $L(s, \chi_K)L(s, \chi_K \chi_L)$ is precisely the one attached to the Galois representation $\mathbf{1} \oplus \chi_L$ twisted by $\chi_K$. Thus we may separate the fields according to their embedding type and number of roots of unity and
    appeal to Corollary~\ref{cor:galtwists} with the conditions incorporated into the sequence $c_\Delta$ and $t=1$ to deduce the upper bound
    \[
        \sum_{\substack{(r,s)\\
    w \in \{2,3,4,6,8,12\}}} \hspace{-0.6cm} 2^{-r}(2\pi)^{-s}w
    \hspace{-0.5cm} \sum \limits_{\substack{[K: \Q]=2 \\ |\Delta_K| \leq Y\\ (r_M, s_M) = (r,s)\\ w_M = w}}
    L(1, \chi_K)L(1,\chi_{K}\chi_L)
    \ll Y.
    \]
    Thus by partial summation, we arrive at
    \[
    \displaystyle\sum \limits_{\substack{[K: \Q]=2 \\ |\Delta_K| \ll B^\gamma}}
\frac{h_MR_M }{\vert\Delta_{K/\Q}\vert^2}\ll \gamma \log B,
    \] as desired.

    The second sum is handled similarly. Again, one may apply H\"older to arrive at the bound
    \[
    \ll
    \left( \displaystyle\sum \limits_{\substack{[K: \Q]=2 \\ |\Delta_K| \ll B^\gamma}}
\frac{h_MR_M }{\vert\Delta_{K/\Q}\vert^2}\right)^{3/4}\left(\displaystyle\sum \limits_{\substack{[K: \Q]=2 \\ |\Delta_K| \ll B^\gamma}}
(\res_{s=1} \zeta_M(s))^4 \right)^{1/4}
.
    \]
    We know already that the first term is $O((\log B)^{3/4})$, hence it suffices to produce the bound $O(B^{\gamma/4})$ for the second term. This is accomplished in the same way by considering
    \[
      \sum \limits_{\substack{[K: \Q]=2 \\  |\Delta_K| \leq B^\gamma}}  L(1, \chi_K)^4L(1,\chi_K \chi_L)^4,
    \]
    which is bounded by $O_d(B^\gamma)$ by the $t=4$ case of Corollary~\ref{cor:galtwists}.
\end{proof}
 In conclusion, upon summing up to the optimal cutoff $\gamma=\frac12$ we find that
\begin{align*}
   \#\{ x \in \mathcal U: H(x) \leq B\}
   &= \frac{\gamma}4  \tau(\Hilb^2 X_d) B \log B + O(B) + O_d(B^{7/8+\gamma/4}(\log B)^{3/4} )\\
   &=c_{\Hilb^2 X_d, \Q} B\log B + O_d(B(\log B)^{3/4}),
\end{align*}
establishing Theorem~\ref{thm:quadxd} and with it completing the proof of Theorem~\ref{thm:mainthm1}.

From the success of the approach we rediscover, as in Example~\ref{ex:Manin implies optimal cutoff} and Remark~\ref{rem:p1p1}, that the cutoff $\gamma=\frac12$ for $X_d$ is indeed optimal.

\begin{remark}\label{rem:infinitely many heights}
    As Widmer has shown the geometry of numbers approach is compatible with any adelic height which satisfies the Lipschitz parametrisability conditions required for counting. While we have only established our results for the height induced by the standard height on $\PP^1$, the arguments do not use this explicitly and we could have chosen any anticanonical height on $X_d$ induced from an adelic Lipschitz system on $\PP^1$. 
\end{remark}

\bibliographystyle{alphaabbr}
\bibliography{bibliography.bib}

\end{document}